\def\ov#1{{\overline{#1}}}
\def\un#1{{\underline{#1}}}
\def\wh#1{{\widehat{#1}}}
\def\wt#1{{\widetilde{#1}}}
\def\?{\ ???\ \immediate\write16{}%
\immediate\write16{Warning: There was still a question mark . . . }%
\immediate\write16{}}
\newcommand{\Conv}{\operatorname{conv}}
\newcommand{\Cone}{\operatorname{cone}}
\newcommand{\rec}{\operatorname{rec}}
\newcommand{\Vol}{\operatorname{vol}}
\newcommand{\coeff}{\operatorname{coeff}}
\newcommand{\chern}{\operatorname{c}}
\newcommand{\Sym}{\operatorname{Sym}}
\newcommand{\cha}{\operatorname{\widehat{CH}}}
\newcommand{\afc}{\operatorname{\widehat{c}_{1}}}
\newcommand{\cc}{\operatorname{c}}
\newcommand{\surj}{\operatorname{surj}}
\newcommand{\inj}{\operatorname{inj}}
\newcommand{\sat}{\operatorname{sat}}
\newcommand{\h}{\operatorname{h}}
\newcommand{\tor}{\operatorname{tor}}
\newcommand{\htor}{\h^{\tor}}
\newcommand{\Nor}{\operatorname{Nor}}
\newcommand{\MV}{\operatorname{MV}}
\newcommand{\Haar}{\operatorname{Haar}}
\newcommand{\hypo}{\operatorname{hypo}}
\newcommand{\epi}{\operatorname{epi}}
\newcommand{\dist}{\operatorname{dist}}
\newcommand{\diag}{\operatorname{diag}}
\newcommand{\diam}{\operatorname{diam}}
\newcommand{\MI}{\operatorname{MI}}
\newcommand{\Hess}{\operatorname{Hess}}
\newcommand{\Ker}{\operatorname{ker}}
\newcommand{\Gal}{\operatorname{Gal}}
\newcommand{\gp}{\text{\rm gp}}
\newcommand{\dd}{\,\text{\rm d}}
\newcommand{\sep}{\text{\rm sep}}
\newcommand{\df}{\operatorname{def}}
\newcommand{\aff}{\operatorname{aff}}
\newcommand{\dc}{\operatorname{d^{c}}}
\renewcommand{\div}{\operatorname{div}}
\newcommand{\Div}{\operatorname{Div}}
\newcommand{\Pic}{\operatorname{Pic}}
\newcommand{\bfSpec}{\operatorname{\bf Spec}}
\newcommand{\Spec}{\operatorname{Spec}}
\newcommand{\mult}{\operatorname{mult}}
\newcommand{\pr}{\operatorname{pr}}
\newcommand{\inc}{\operatorname{\jmath}}
\newcommand{\psiabs}{\psi}
\newcommand{\phiK}{\phi}
\newcommand{\Stab}{\operatorname{stab}}
\newcommand{\Norm}{\operatorname{N}}
\newcommand{\ess}{\operatorname{ess}}
\newcommand{\ri}{\operatorname{ri}}
\renewcommand{\Im}{\operatorname{im}}
\newcommand{\e}{\operatorname{e}}
\newcommand{\Hom}{\operatorname{Hom}}
\newcommand{\sg}{\operatorname{sg}}
\newcommand{\FS}{{\operatorname{FS}}}
\newcommand{\can}{{\operatorname{can}}}
\newcommand{\ord}{{\operatorname{ord}}}
\newcommand{\red}{{\operatorname{red}}}
\newcommand{\val}{{\operatorname{val}}}
\newcommand{\an}{{\text{\rm an}}}
\newcommand{\alg}{{\text{\rm alg}}}
\newcommand{\Dom}{{\operatorname{dom}}}
\newcommand{\cl}{{\operatorname{cl}}}
\newcommand{\ee}{{\operatorname{\mathbf e }}}
\newcommand{\hooklongrightarrow}{\lhook\joinrel\longrightarrow}
\newcommand{\twoheadlongrightarrow}{\relbar\joinrel\twoheadrightarrow}
\def \A{\mathbb{A}}
\def \C{\mathbb{C}}
\def \D{\mathbb{D}}
\def \F{\mathbb{F}}
\def \G{\mathbb{G}}
\def \K{\mathbb{K}}
\def \N{\mathbb{N}}
\def \P{\mathbb{P}}
\def \Q{\mathbb{Q}}
\def \R{\mathbb{R}}
\def \SS{\mathbb{S}}
\def \SSinv{\mathbb{S}}
\def \T{\mathbb{T}}
\def \Z{\mathbb{Z}}
\def\cA {{\mathcal A}}
\def\cC {{\mathcal C}}
\def\cE {{\mathcal E}}
\def\cL {{\mathcal L}}
\def\cM {{\mathcal M}}
\def\cO {{\mathcal O}}
\def\cU {{\mathcal U}}
\def\cV {{\mathcal V}}
\def\cX {{\mathcal X}}
\def\cY {{\mathcal Y}}
\newcommand{\bft}{{\boldsymbol{t}}}
\newcommand{\bfx}{{\boldsymbol{x}}}
\newcommand{\bfzero}{\boldsymbol{0}}
\title[Arithmetic geometry of toric
  varieties]{Arithmetic geometry of toric
  varieties. \\ Metrics, measures and heights}
\author[Burgos Gil]{Jos\'e Ignacio Burgos Gil}
\address{Instituto de Ciencias Matem\'aticas (CSIC-UAM-UCM-UCM3).
  Calle Nicol\'as Ca\-bre\-ra~15, Campus UAM, Cantoblanco, 28049 Madrid,
  Spain} 
\email{burgos@icmat.es}
\urladdr{\url{http://www.icmat.es/miembros/burgos/}}
\author[Philippon]{Patrice Philippon} \address{Institut de
  Math{\'e}matiques de Jussieu -- U.M.R. 7586 du CNRS, \'Equipe de
  Th\'eorie des Nombres.  Case courrier 247, 4 Place Jussieu, 
  75252 Paris Cedex 05,
  France} 
\email{pph@math.jussieu.fr}
\urladdr{\url{http://www.math.jussieu.fr/~pph}}
\author[Sombra]{Mart\'in~Sombra} \address{ICREA \&
  Universitat de Barcelona, Departament d'{\`A}lgebra i Geometria.
  Gran Via~585, 08007 Bar\-ce\-lo\-na, Spain} 
\email{sombra@ub.edu}
\urladdr{\url{http://atlas.mat.ub.es/personals/sombra}}
\begin{document}

\numberwithin{equation}{section}
\numberwithin{smfthm}{section}

\theoremstyle{definition}
\newtheorem{defn}[smfthm]{Definition}
\newtheorem{notn}[smfthm]{Notation}
\newtheorem{rem}[smfthm]{Remark}
\newtheorem{exmpl}[smfthm]{Example}
\newtheorem{assumption}[smfthm]{Assumption}
\newtheorem{convention}[smfthm]{Convention}
\newtheorem{appl}[smfthm]{Application}
\newtheorem{ques}[smfthm]{Question}
\newtheorem{prop-def}[smfthm]{Proposition-Definition}

\theoremstyle{plain}
\newtheorem{lem}[smfthm]{Lemma}
\newtheorem{mainlem}[smfthm]{Main Lemma}
\newtheorem{thm}[smfthm]{Theorem}
\newtheorem{cor}[smfthm]{Corollary}
\newtheorem*{thm*}{Theorem}

\frontmatter


\begin{abstract}
  We show that the height of a toric variety with respect to a toric
  metrized line bundle can be
  expressed as the integral over a polytope of a certain adelic family
  of concave functions. To state and prove this result, we study the
  Arakelov geometry of toric varieties. In particular, we consider
  models over a discrete valuation ring, metrized line bundles, and
  their associated measures and heights.  We show that these notions
  can be translated in terms of convex analysis, and are closely
  related to objects like polyhedral complexes, concave functions,
  real Monge-Amp\`ere measures, and Legendre-Fenchel duality.

  We also present a closed formula for the integral over a polytope of
  a function of one variable composed with a linear form. This formula
  allows
  us to compute the height of toric varieties with respect to some
  interesting metrics arising from polytopes. We also compute the
  height of toric projective curves with respect to the Fubini-Study
  metric and the height of some toric bundles.
\end{abstract}

\begin{altabstract} 
  Nous montrons que la hauteur d'une vari\'et\'e torique relative \`a un
  fibr\'e en droites m\'etris\'e torique s'\'ecrit comme l'int\'egrale sur un
  polytope d'une certaine famille ad\'elique de fonctions concaves. Afin
  d'\'enoncer et d\'emontrer ce r\'esultat, nous \'etudions la g\'eom\'etrie
  d'Arakelov des vari\'et\'es toriques. En particulier, nous consid\'erons
  des mod\`eles de ces vari\'et\'es sur des anneaux de valuation discr\`ete,
  ainsi que les fibr\'es en droites m\'etris\'es et leurs mesures et
  hauteurs associ\'ees. Nous montrons que ces notions se traduisent en
  termes d'analyse convexe et sont intimement li\'ees \`a des objets tels
  que les complexes polyh\'edraux, les mesures de Monge-Amp\`ere et la
  dualit\'e de Legendre-Fenchel.

  Nous pr\'esentons \'egalement une formule close pour l'int\'egration sur
  un polytope d'une fonction d'une variable compos\'ee avec une forme
  lin\'eaire. Cette formule nous permet de calculer la hauteur de
  vari\'et\'es toriques relativement \`a plusieurs m\'etriques int\'eressantes,
  provenant de polytopes. Nous calculons aussi la hauteur des courbes
  toriques projectives relativement \`a la m\'etrique de Fubini-Study et
  la hauteur des fibr\'es toriques.
\end{altabstract}

\subjclass{Primary 14M25; Secondary 14G40, 52A41.}
\keywords{Toric variety, Berkovich space, integral model, metrized line bundle, height of a variety, concave function, 
  Legendre-Fenchel dual, real Monge-Amp\`ere measure.}  
\altkeywords{Vari\'et\'e torique, espace de Berkovich, mod\`ele entier,
  fibr\'e en droites m\'etris\'e, hauteur d'une vari\'et\'e, fonction concave,
  dual de Legendre-Fenchel dual, mesure de Monge-Amp\`ere r\'eelle.}  
\thanks{Burgos Gil and Sombra were
  partially supported by the MICINN research projects
  MTM2006-14234-C02-01 and MTM2009-14163-C02-01. Burgos Gil was also
  partially supported by the CSIC research project 2009501001 and the
  MICINN research project MTM2010-17389. 
  Sombra was also partially supported by the MINECO
  research project MTM2012-38122-C03-02.
  Philippon was partially supported by the CNRS research projects PICS
  \og Properties of the heights of arithmetic varieties\fg{} and \og
  Diophantine geometry and computer algebra\fg{}, and by the ANR
  research project \og Hauteurs, modularit\'e,
  transcendance\fg{}.}

\maketitle


\tableofcontents

\mainmatter

{\csname c@secnumdepth\endcsname=-2
\chapter{Introduction}\label{Introduction and statement of results}
}
Systems of polynomial equations appear in a wide variety of contexts
in both pure and applied mathematics. Systems arising from
applications are not random but come with a certain structure. When studying those
systems, it is important to be able to exploit that structure.

A relevant result in this direction is the
Bern\v{s}tein-Ku\v{s}nirenko-Khovanskii theorem
\cite{kushnirenko:pnnm,Bernstein:nrsp}. Let $K$ be a field with
algebraic closure $\ov K$. Let  $\Delta \subset \R^{n}$ be a lattice
polytope and  $f_{1},\dots, f_{n}\in K[t_{1}^{\pm1},\dots, t_{n}^{\pm
  1}]$ a family of Laurent polynomials whose Newton polytope is
contained in $\Delta$. The BKK theorem says that the number
(counting multiplicities) of isolated common zeros of $f_{1},\dots,
f_{n}$ in~$(\ov K^{\times})^{n}$ is bounded above by $n!$ times the
volume of $\Delta$, with equality when $f_{1},\dots, f_{n}$ is generic
among the families of Laurent polynomials with Newton polytope
contained in $\Delta$.  This shows how a geometric problem (the
counting of the number of solutions of a system of equations)
can be translated into a combinatorial, simpler one. It is commonly used to
predict when a given system of polynomial equations has a small number
of solutions. As such, it is a cornerstone of polynomial equation solving
and has motivated a large amount of work and results over the past 25
years, see for instance \cite{GKZ:drmd,sturmfels:sspe,MR2419926} and the
references therein.

A natural way to study polynomials with prescribed Newton polytope is
to associate to the polytope $\Delta$ a toric variety $X$ over $K$
equipped with an ample line bundle $L$. The polytope conveys all
the information about the pair $(X,L)$. For instance, the
degree of $X$ with respect to $L$ is given by the
formula
\begin{displaymath}
  \deg_{L}(X)= n!\Vol(\Delta),
\end{displaymath}
where $\Vol$ denotes the Lebesgue measure
of $\R^{n}$.  The Laurent polynomials $f_{i}$ can be identified with
global sections of $L$, and the BKK theorem can be deduced from this
formula. Indeed, there is a dictionary
which allows to translate algebro-geometric properties of toric
varieties in terms of combinatorial properties of polytopes and fans, and
the degree formula above is one entry in this ``toric dictionary''.

The central motivation for this text is an arithmetic analogue for
heights of this 
formula, which is the theorem stated below. 
The height is a basic arithmetic invariant of a 
proper variety over the field of rational numbers. Together with its degree, it measures the
amount of information needed to represent this variety, for instance,
via its Chow form. Hence, this invariant is also relevant in computational
algebraic geometry, see for instance \cite{Giusti_et_al:lbda,
  AvendanoKrickSombra:fbslp,SchostDahanKadri:betspd}.  The notion of
height of varieties generalizes the height of points already considered by
Siegel, Northcott, Weil and others, it is an essential tool in
Diophantine approximation and geometry.

For simplicity of the exposition, in this introduction we assume that
the pair $(X,L)$ is defined over the field of rational numbers $\Q$,
although in the rest of the book we will work with more general adelic
fields (Definition \ref{def:6}). Let
$\mathfrak{M}_{\Q}$ denote the set of places of $\Q$
and let $(\vartheta_{v})_{v\in \mathfrak{M}_{\Q}}$ be a family of concave
functions on $\Delta$ such that $\vartheta_{v}\equiv0$
for all but a finite number of $v$. We will show that, to this data,
one can associate an adelic family of metrics
$(\Vert\cdot\Vert_{v})_{v}$ on $L$. Write $\ov
L=(L,(\Vert\cdot\Vert_{v})_{v})$ for the resulting metrized line
bundle.

\begin{thm*}
  The height of $X$ with respect to $\ov L$ is given~by
  \begin{displaymath}
\h_{\ov L}(X)= (n+1)!\sum_{v\in
  \mathfrak{M}_{\Q}}\int_{\Delta}\vartheta_{v}\dd \Vol.
  \end{displaymath}
\end{thm*}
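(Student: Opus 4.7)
My plan would be to start from the arithmetic intersection-theoretic definition $\h_{\ov L}(X) = \afc(\ov L)^{n+1}\cdot X$, decompose it into a sum of local contributions indexed by the places $v\in\mathfrak{M}_{\Q}$, and then match each local contribution with $(n+1)!\int_{\Delta}\vartheta_{v}\dd\Vol$ using the toric dictionary between metrics on $L$ and concave functions on $\Delta$. Since $\vartheta_{v}\equiv 0$ at almost all places, only finitely many local terms contribute nontrivially, which is consistent with the adelic metric being the standard one at almost all places.

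\textbf{Key steps.} First, for each place $v$ I would establish the precise correspondence between the metric $\|\cdot\|_{v}$ and the concave function $\vartheta_{v}$: at the archimedean place this is a real version of a moment-map description of toric metrics, and at non-archimedean places it arises from the tropicalization map $X_{v}^{\an}\to N_{\R}$ together with a suitable model of $(X,L)$. Second, I would exploit the inductive structure of heights via the standard formula
\[
\h_{\ov L}(X) \;=\; \h_{\ov L|_{Y}}(Y) \;+\; \sum_{v\in\mathfrak{M}_{\Q}} \int_{X_{v}^{\an}} \log\|s\|_{v}\; c_{1}(\ov L_{v})^{n},
\]
applied to a toric rational section $s$ of $L$ corresponding to a lattice point (say a vertex) of $\Delta$, so that $Y$ is a union of toric strata whose polytopes are faces of $\Delta$. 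Third, using the toric dictionary I would push forward the Monge-Amp\`ere measure $c_{1}(\ov L_{v})^{n}$ under the tropicalization map and identify it, in the interior of $\Delta$, with a multiple of the real Monge-Amp\`ere measure of the Legendre-Fenchel dual of $\vartheta_{v}$. The integral $\int_{X_{v}^{\an}}\log\|s\|_{v}\,c_{1}(\ov L_{v})^{n}$ then becomes an integral over $\Delta$ of an affine function against this real Monge-Amp\`ere measure; iterated integration by parts (equivalently, Legendre-Fenchel duality applied $n+1$ times) should convert it into $(n+1)!\int_{\Delta}\vartheta_{v}\dd\Vol$, the inductive contribution being absorbed by the terms coming from the faces of $\Delta$.

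\textbf{Main obstacle.} The hardest step is the non-archimedean calculation. One needs a usable notion of Monge-Amp\`ere measure in the non-archimedean analytic setting for toric varieties, an equivariant description of that measure via tropicalization, and a comparison between algebraic models of $(X,L)$ over a DVR and the associated concave function on $\Delta$. This requires building a fair amount of Arakelov-theoretic machinery for toric varieties (models over DVRs, toric metrized line bundles, local measures and heights, tropicalization and the moment map), and checking that the combinatorics of fans and polytopes survives intact through each of these translations. By comparison, the archimedean computation reduces cleanly, via the compact torus action, to classical real convex analysis on $\Delta$. Once the per-place formulas are in place, the global statement follows by summation.
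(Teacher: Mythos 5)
Your plan matches the paper's proof in essence: classify toric metrics by concave functions $\psi_v$ on $N_\R$, push forward the associated measure under the valuation map and identify it with $n!\cM_M(\psi_v)$, feed this into the B\'ezout-type recursion for the local height, and close the induction on dimension using a Stokes/Legendre-Fenchel identity (Theorem \ref{thm:20}) to convert $\int_{N_\R}\psi_v\,\cM_M(\psi_v)$ into $\int_\Delta \psi_v^\vee\dd\Vol_M$ plus face terms. Two small points where your write-up is loose: the pushed-forward Monge-Amp\`ere measure lives on $N_\R$ (or its compactification $N_\Sigma$), not on $\Delta$ --- only after the integration-by-parts step does the integral move over to $\Delta$; and, since each local height $\h_{v,\ov L}(X;s_0,\dots,s_n)$ depends on the chosen sections and is not itself a per-place invariant, the paper first normalizes by subtracting the canonical-metric local height (which makes each summand a well-defined ``toric local height'') and then shows that the canonical global height of $X$ vanishes (Proposition \ref{prop:88}), so that the adelic sum reconstructs $\h_{\ov L}(X)$ --- the observation that the canonical integral term vanishes at every place because $\cM_M(\Psi)=\Vol_M(\Delta)\delta_0$ and $\Psi(0)=0$ is what makes this work, and it is worth stating explicitly rather than leaving it implicit in ``the standard one at almost all places.''
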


This theorem was announced in \cite{BPS09} and we prove it in
the present text. To establish it in a wide generality, we
have been led to study the Arakelov geometry of toric varieties.  In
the course of our research, we have found that a large part of the
arithmetic geometry of toric varieties can be translated in terms of
convex analysis.  In particular, we have added a number of new entries
to the arithmetic geometry chapter of the toric dictionary, including
models of toric varieties over a discrete valuation ring, metrized line
bundles, and their associated measures and heights.  These objects are
closely related to objects of convex analysis like polyhedral
complexes, concave functions, Monge-Amp\`ere measures and
Legendre-Fenchel duality.

These additions to the toric dictionary are very concrete and
well-suited for computations. In particular, they provide a new wealth of
examples in Arakelov geometry where constructions can be made explicit
and properties tested. In relation with explicit computations in
these examples, we present a closed
formula for the integral over a polytope of a function of one variable
composed with a linear form.  This formula allows us to compute the
height of toric varieties with respect to some interesting metrics
arising from polytopes. Some of these heights are related to the average
entropy of a simple random process on the polytope. We also
compute the height of toric projective curves with respect to the Fubini-Study
metric and of some toric bundles.

There are many other arithmetic invariants of toric varieties that may
be studied in terms of convex analysis. 
For instance, in the subsequent paper \cite{BMPS12} we give 
criteria for the positivity properties of a  toric
metrized line bundle and give a formula for its
arithmetic volume. 
In fact, we expect that the
results of this text are just the starting point of a program relating
the arithmetic geometry of toric varieties and convex analysis.
In this direction, we plan to obtain an arithmetic analogue of the BKK theorem
bounding the height of the solutions of a system of Laurent polynomial
equations, refining previous results in
\cite{Maillot:GAdvt,Sombra:msvtp}. 

\medskip
In the rest of this introduction, we will present the context and the
contents of our results. We will refer to the body of the
text for the precise definitions and statements.



Arakelov geometry provides a framework to define and study heights. We
leave for a moment the realm of toric varieties, and we consider a
smooth projective variety~$X$ over $\Q$ of dimension $n$ equipped with 
a regular proper integral model $\cX$. Let $X(\C)$ the analytic
space over the complex numbers associated to $X$. The main idea behind
Arakelov geometry is that the pair $(\cX,X(\C))$ should behave like a
compact variety of dimension $n+1$ \cite{Arakelov:itdas}. Following this philosophy,
Gillet and Soul\'e have developed an arithmetic intersection theory
\cite{GilletSoule:ait}.  As an application of this theory, one can
introduce a very general and precise definition, with a geometric
flavor, of the height of a variety \cite{BostGilletSoule:HpvpGf}. To
the model $\cX$, one
associates the arithmetic intersection ring $\cha^{*}(\cX)_{\Q}$. This
ring is equipped with a trace map $\int\colon \cha^{n+1}(\cX)_{\Q} \to
\R$. Given a line bundle $L$ on $X$, an arithmetic line bundle $\ov L$
is a pair $(\cL,\|\cdot\|)$, where $\cL$ is a line bundle on $\cX$
which is an integral model of $L$, and $\|\cdot\|$ is a smooth metric
on the analytification of $L$, invariant under complex conjugation. In
this setting, the 
analogue of the first Chern class of $L$ is the arithmetic first Chern
class $\afc(\ov L)\in \cha^{1}(\cX)_{\Q}$. The \emph{height} of $X$ with
  respect to $\ov L$ is then defined as
\begin{displaymath}
  \h_{\ov L}(X)=\int \afc(\ov L)^{n+1}\in \R.
\end{displaymath}
This is the arithmetic analogue of the degree of $X$ with respect to
$L$. This formalism has allowed to obtain arithmetic analogues of
important results in algebraic geometry like B\'ezout's theorem,
Riemann-Roch theorem, Lefschetz fixed point formula,
Hilbert-Samuel formula, etc.

This approach has two technical issues. In the first place, it only
works for smooth varieties and smooth metrics. In the second place, it
depends on the existence of an integral model, which puts the
Archimedean and non-Archimedean places in different footing.  For the
definition of heights, both issues were addressed by Zhang
by taking an adelic point of view and considering
uniform limits of semipositive metrics \cite{Zhang:_small}.

Many natural metrics that arise when studying line bundles on toric
varieties are not smooth, but are particular cases of the metrics
considered by Zhang. This is the case for the canonical metric of a
toric line bundle introduced by Batyrev and Tschinkel
\cite{BatyrevTschinkel:tori}, see Proposition-Definition \ref{def:57}.  
The associated canonical height of subvarieties plays an important
role in Diophantine approximation in tori, in particular in the
generalized Bogomolov and Lehmer problems, see for instance
\cite{DavidPhilippon:mhn,AmorosoViada:spst} and the references
therein.  Maillot has extended the arithmetic intersection theory of
Gillet and Soul\'e to this kind of metrics at the Archimedean place,
while maintaining the use of an integral model to handle the
non-Archimedean places \cite{Maillot:GAdvt}.

The adelic point of view of Zhang was developed by Gubler
\cite{GublerHab,Gu03} and by Chambert-Loir \cite{Cha06}.  From this
point of view, the height is defined as a sum of local
contributions. In what follows we outline this procedure, that will be recalled with
more detail in Chapter \ref{sec:metr-line-bundl}.

For the local case, let $K$ be either $\R$, $\C$, or a field complete
with respect to a nontrivial non-Archimedean absolute value. Let $X$
be a proper variety over $K$ and~$L$ a line bundle on $X$, and
consider their analytifications, respectively denoted by $X^{\an}$
and~$L^{\an}$.  In the Archimedean case, $X^\an$ is the complex space
$X(\C)$ (equipped with an anti-linear involution, if $K=\R$), whereas in the
non-Archimedean case it is the Berkovich space associated to $X$. The
basic metrics that can be put on $L^\an$ are the smooth metrics in the
Archimedean case, and the {algebraic metrics} in the non-Archimedean
case, that is, the metrics induced by an integral model of a pair
$(X,L^{\otimes e})$ with $e\ge1$.  There is a notion of semipositivity
for smooth and for algebraic metrics, and the uniform limit of such
metrics leads to the notion of \emph{semipositive{}}
  metric on $L^{\an}$. More generally, a metric on $L^{\an}$ is called 
\emph{DSP} (for \og difference of semipositive\fg{}) if it is the quotient of
two semipositive{} metrics.

Let $\ov L$ be a DSP metrized line bundle on $X$ and $Y$ a
$d$-dimensional cycle of~$X$. These data induce a (signed) measure on
$X^{\an}$, denoted $\chern_{1}(\ov L)^{\wedge d} \wedge\delta_{Y}$ by analogy
with the Archimedean smooth case, where it corresponds with the
current of integration along~$Y^{\an}$ of the $d$-th power of the first Chern
form.
This measure plays an important role in the distribution of points of
small height in the direction of the Bogomolov conjecture and its
generalizations, see for instance \cite{SUZ:epp,Bilu:ldspat,Yuan:blbav}.
Furthermore, if we have sections $s_{i}$, $i=0,\dots, d$, that meet $Y$
properly, one can define a notion of \emph{local height} $\h_{\ov
  L}(Y;s_0,\dots,s_d)$. The metrics and their associated
measures and local heights are related by the B\'ezout-type formula:
\begin{equation*}
  \h_{\ov L}(Y\cdot \div(s_{d});s_0,\dots,s_{d-1})= \h_{\ov L}(Y;s_0,\dots,s_d)+
  \int_{X^{\an}}\log||s_{d}|| \, \chern_1(\ov L)^{\wedge d} \wedge\delta_{Y}.
\end{equation*}

For the global case, consider a proper variety $X$ over $\Q$ and a line
bundle $L$ on $X$. For simplicity, assume that $X$ is projective, although
this hypothesis is not really necessary. A \emph{DSP
  quasi-algebraic} metric on $L$ is a family of DSP metrics
$\Vert\cdot\Vert_v$ on the analytic line bundles $L_v^{\an}$, $v\in
\mathfrak{M}_\Q$, such that there is an integral model of
$(X,L^{\otimes e})$, $e\ge1$, which induces $\|\cdot\|_{v}$ for all
but a finite number of $v$.  Write~$\ov L= (L, (\|\cdot
\|_{v})_{v})$, and $\ov L_{v}=(L_{v}, \|\cdot \|_{v})$ for each $v\in
\mathfrak{M}_{\Q}$. Given a $d$-dimensional cycle $Y$ of $X$, its
\emph{global height} is defined as
\begin{displaymath}
  \h_{\ov L}(Y)=\sum_{v\in \mathfrak{M}_{\Q}}\h_{\ov L_{v}}(Y;s_{0},\dots,s_{d}),
\end{displaymath}
for any family of sections $s_{i}$, $i=0,\dots, d$, meeting $Y$
properly. The fact that the metric is quasi-algebraic implies that the
right-hand side has only a finite number of nonzero terms, and the
product formula implies that this definition does not depend on the
choice of sections.  This notion can be extended to number fields,
function fields and, more generally, to
$M$-fields~\cite{Zhang:_small,Gu03}.

\medskip Now we review briefly the elements of the construction of
toric varieties from combinatorial data, see
Chapter \ref{sec:toric-varieties} for details.  Let $K$ be a field and
$\T\simeq \G_{m}^{n}$ a split torus over $K$.  Let $N
=\Hom(\G_{m},\T)\simeq \Z^n$ be the lattice of one-parameter subgroups
of $\T$ and $M=N^\vee$ the dual lattice of characters of $\T$.  Set
$N_{\R}=N\otimes_{\Z}{\R}$ and $M_{\R}=M\otimes_\Z \R$. To a fan
$\Sigma$ on $N_\R$ one can associate a {toric variety} $X_{\Sigma}$
of dimension $n$.  It is a normal variety that contains $\T$ as a
dense open subset, denoted $X_{\Sigma,0}$, and there is an action of
$\T$ on $X_{\Sigma}$ which extends the natural action of the torus on
itself. In particular, every toric variety has a distinguished point
$x_{0}$ that corresponds to the identity element of~$\T$. The variety
$X_{\Sigma }$ is proper whenever the underlying fan is {complete}.
For sake of simplicity, in this introduction we will restrict to the
proper case.

A Cartier divisor invariant under the torus action is called a
$\T$-Cartier divisor. In combinatorial terms, a $\T$-Cartier divisor
is determined by a {virtual support function on $\Sigma$}, that is, a
continuous function $\Psi\colon N_{\R}\rightarrow{\R}$ whose restriction to
each cone of~$\Sigma$ is an element of $M$.  Let $D_\Psi$ denote the
$\T$-Cartier divisor of $X_{\Sigma}$ determined by~$\Psi$.  A \emph{toric
line bundle} on $X_{\Sigma}$ is a line bundle $L$ on this toric variety, together with the
choice of a nonzero element $z\in L_{x_{0}}$. The total space of a toric
line bundle has a natural structure of toric variety whose
distinguished point agrees with $z$. A rational section of a
toric line bundle is called \emph{toric} if it is regular and nowhere
zero on the principal open subset $X_{\Sigma ,0}$, and $s(x_{0})=z$.  
Given a virtual support function $\Psi $, the line bundle
$L_{\Psi}=\cO(D_{\Psi})$ has a natural structure of toric line bundle
and a canonical toric section $s_{\Psi}$ such that
$\div(s_{\Psi })=D_{\Psi }$.  
Indeed, any line bundle on $X_{\Sigma}$ is
isomorphic to a toric line bundle of the form $L_{\Psi}$ for some $\Psi$.
The line bundle $L_{\Psi}$ is generated by global sections
(respectively, is ample) if and only if $\Psi$ is concave
(respectively, $\Psi$ is strictly concave on $\Sigma $).

Consider the lattice polytope
$$\Delta_\Psi=\{x\in M_{\R}: \langle x,u\rangle\geq\Psi(u) \mbox{ for all }
u\in N_{\R}\}\subset M_{\R}.$$ 
This polytope encodes
a lot of information about the pair $(X_\Sigma ,L_\Psi )$.
In case the virtual support function $\Psi$ is concave, it is determined by this
polytope, and the degree formula can be written more
precisely as  
\begin{equation*}
 \deg_{L_{\Psi }}(X_\Sigma)= n!\Vol_{M}(\Delta_\Psi ),  
\end{equation*} 
where the volume is computed with respect to 
the
Haar measure $\Vol_{M}$ on $M_\R$ normalized so that $M$ has covolume 1.

\medskip
In this text we extend the toric dictionary to
metrics, measures and heights as considered above.
For the local case, let $K$ be either $\R$, $\C$, or a field complete with
respect to a nontrivial non-Archimedean absolute value associated to
a discrete valuation. In this latter case, let
$K^{\circ}$ be the valuation ring, $K^{\circ\circ}$ its maximal ideal
and $\varpi$ a generator of $K^{\circ\circ}$. Let $\T$ be an
$n$-dimensional split torus over $K$, $X$ a toric variety over $K$ with torus $\T$,
and $L$ a toric line bundle on $X$.  The compact torus $\SS$ is
a closed analytic subgroup of the analytic torus $\T^{\an}$ (see
Example \ref{exm:4}) and it acts on
$X^{\an}$.  A metric $\|\cdot\|$ on $L^{\an}$ is \emph{toric} if, for
every toric section $s$, the function $\|s\|$ is invariant under the
action of $\SS$.

The correspondence that to a virtual support function assigns a toric
line bundle with a toric section can be extended to semipositive{} and
DSP metrics. Assume that $\Psi $ is concave, and let
$X_{\Sigma}$, $L_{\Psi}$ and $s_{\Psi}$ be as before. For short,
write $X=X_{\Sigma}$, $L=L_{\Psi }$ and $s=s_{\Psi }$.  There is a
fibration $$\val\colon X_{0}^{\an}\to N_{\R}$$ whose fibers are the
orbits of the action of $\SS$ on $X_{0}^{\an}$.  Now let
$\psiabs\colon N_{\R}\to \R$ be a continuous function.  We define a
metric on the restriction $L^{\an}|_{X_{0}^{\an}}$ by setting
\begin{displaymath}
\|s(p)\|_{\psiabs}=\e^{\psiabs(\val(p))}.   
\end{displaymath}

Our first addition to the toric dictionary is the following
classification result.  Assume that the function $\psiabs$ is concave and
that $|\psiabs-\Psi|$ is bounded. Then~$\|\cdot\|_{\psiabs}$
extends to a semipositive{} toric metric on $L^{\an}$ and,
moreover, every semipositive{} toric metric on $L^{\an}$ arises in this
way (Theorem~\ref{thm:13}\eqref{item:37}).  There is a similar
characterization of DSP toric metrics in terms of differences
of concave functions (Theorem \ref{thm:24}) and a characterization
of toric metrics that involves the topology of the
variety with corners associated to~$X_{\Sigma }$ (Proposition \ref{prop:14}).  As a
consequence of these classification results, we obtain a new
interpretation of the canonical metric of~$L^{\an}$ as the metric
associated to the concave function $\Psi $ under this correspondence.

We can also classify semipositive{} metrics in terms of concave
functions on polytopes: there is a bijective correspondence between
the space of continuous concave functions on~$\Delta_{\Psi}$ and the
space of semipositive{} toric metrics on $L^{\an}$
(Theorem~\ref{thm:13}\eqref{item:38}). This correspondence is induced
by the previous one and the Legendre-Fenchel duality of concave
functions. Namely, let $\|\cdot\|$ be a semipositive{} toric metric on
$L^{\an}$, write $\ov L=(L,\|\cdot\|)$ and $\psiabs $ the
corresponding concave function.  The associated \emph{roof
  function}~$\vartheta _{\ov L,s}\colon \Delta_{\Psi}\to\R$ is the
concave function defined as the Legendre-Fenchel
dual~$\psiabs^{\vee}$. One of the main outcomes of this text is that
the pair $(\Delta_{\Psi},\vartheta_{\ov L,s})$ plays, in the
arithmetic geometry of toric varieties, a role analogous to that of
the polytope in its algebraic geometry.

Our second addition to the dictionary is the following
characterization of the measure associated to a semipositive{} toric
metric. Let $X$, $\ov L$ and $\psiabs$ be as before, and write
\begin{math}
\mu_{\psiabs}=\chern_{1}^{\wedge
  n}(\ov L)
\wedge \delta_{X_{\Sigma}}  
\end{math}
for the induced measure on $X^{\an}$. Then  (Theorem \ref{thm:16})
$$(\val)_{*}(\mu_{\psiabs}|_{X^{\an}_{0}})=
n!\, \cM_{M}(\psiabs),$$ where $\cM_{M}(\psiabs)$ is the (real) Monge-Amp\`ere
measure of $\psiabs$ with respect to the lattice~$M$ (Definition
\ref{def:5}).  The measure $\mu_{\psiabs}$ is determined by this formula,
and the conditions of being invariant under the action of $\SS$ and
that the set $X^{\an}\setminus X_{0}^{\an}$ has measure zero. This
gives a direct and fairly explicit expression for the measure
associated to a semipositive{} toric metric.

The fact that each toric line bundle has a canonical metric allows us
to introduce a notion of local toric height that is independent of a
choice of sections. Let $X$ be an $n$-dimensional projective toric
variety and $\ov L$ a semipositive{} toric line bundle as before, and
let $\ov L^{\can}$ be the same toric line bundle $L$ equipped with the
canonical metric.  The \emph{toric local height} of $X$ with respect
to $\ov L$ is defined as
\begin{displaymath}
  \htor_{\ov L}(X)=\h_{\ov L}(X;s_{0},\dots,s_{n})-\h_{\ov
    L^{\can}}(X;s_{0},\dots,s_{n}), 
\end{displaymath}
for any family of sections  $s_{i}$, $i=0,\dots, n$, that meet
properly on $X$ (Definition \ref{def:35}).
Our third addition to the toric dictionary is the following formula for this toric
local height in terms of the roof function introduced above (Theorem \ref{thm:19}):
\begin{displaymath}
  \htor_{\ov L}(X)=(n+1)!\int_{\Delta _{\Psi}}\vartheta_{\ov L,s}\dd \Vol_{M}.
\end{displaymath}
More generally, the
toric local height can be defined for a family of $n+1$ DSP
toric line bundles on $X$. The formula above can be extended by
multilinearity to compute this toric local height in terms of the
mixed integral of the associated roof functions (Remark \ref{rem:7}).

\medskip For the global case, let $\Sigma$ and $\Psi$ be as before, and
consider the associated toric variety $X$ over $\Q$ equipped with a
toric line bundle $L$ and toric section $s$. Given a family of concave
functions $(\psiabs_{v})_{v\in \mathfrak{M}_{\Q}}$ such that
$|\psiabs_{v}-\Psi|$ is bounded for all $v$ and such that $\psiabs_{v}=\Psi$
for all but a finite number of $v$, the metrized toric line bundle
$\ov L=(L,(\Vert \cdot\Vert_{\psiabs_{v}})_{v})$ is
quasi-algebraic. Moreover, every semipositive{} quasi-algebraic toric
metric on $L$ arises in this way (Proposition \ref{prop:102} and
Theorem \ref{thm:23}).  The associated local roof functions
$\vartheta_{v,\ov L,s}\colon \Delta_{\Psi}\to \R$ are identically
zero except for a finite number of places.  Then, the global height of
$X$ with respect to $\ov L$ can be computed as (Theorem~\ref{thm:22})
\begin{displaymath}
  \h_{\ov L}(X)=\sum_{v\in \mathfrak{M}_{\Q}}\htor_{v,\ov L}(X_{v})=
  (n+1)!\sum _{v\in \mathfrak{M}_{\Q}}\int_{\Delta
    _{\Psi}}\vartheta_{v,\ov L,s} \dd \Vol_{M},
\end{displaymath}
which precises the theorem stated at the beginning of
this introduction. Here, $\htor_{v,\ov L}(X_{v})$ is the toric local
height for the place $v$.

\medskip
A remarkable feature of these results is that they read exactly the
same in the Archimedean and in the non-Archimedean cases.  For general
metrized line bundles, these two cases are analogous but not
identical. By contrast, the classification of toric metrics and the
formulae for the associated measures and local heights are the same in
both cases. We also point out that these results holds in greater
generality than explained in this introduction: in particular, they
hold for proper toric varieties which are not necessarily projective and, in
the global case, for general adelic fields (Definition \ref{def:6}).
We content ourselves with the case when the torus is split. For the
computation of heights, one can always reduce to the split case by
considering a suitable field extension.  Still, it would be
interesting to extend our results to the non-split case by considering
the corresponding Galois actions as, for instance,
in~\cite{ELFST:atv}.

\medskip

The toric dictionary in arithmetic geometry is very concrete and
well-suited for computations.  For instance, let $K$ be a local field,
$X$ a toric variety and $\varphi\colon X\to \P^{r}$ an equivariant
map.  Let $\ov L$ be the toric semipositive{} metrized line bundle on
$X$ induced by the canonical metric on the universal line bundle of
$\P^{r}$, and $s$ a toric section of~$L$. The concave function
$\psiabs\colon N_{\R}\to \R$ corresponding to this metric is piecewise
affine. Hence, it defines a polyhedral complex in $N_{\R}$, and it
turns out that~$(\val)_{*}(\mu_{\psiabs}|_{X_{0}^{\an}})$, the direct
image under $\val$ of the measure induced by $\ov L$, is a discrete
measure on $N_{\R}$ supported on the vertices of this polyhedral
complex (Proposition~\ref{prop:32}). The roof function $\vartheta_{\ov
  L,s}$ is the function parameterizing the upper envelope of a
polytope in $M_{\R}\times \R$ associated to $\varphi$ and the section
$s$ (Example \ref{exm:36}). The toric local height of $X$ with respect
to $\ov L$ can be computed as the integral of this piecewise affine
concave function.

Another nice example is given by toric bundles on a projective
space. For a finite sequence of integers $a_r\ge \dots\ge a_0\ge 1$,
we consider the vector bundle on $\P^{n}_{\Q}$
$$
E=\mathcal{O}(a_0)\oplus\dots \oplus\mathcal{O}(a_r).
$$
The toric bundle $\P(E)\rightarrow\P^n_{\Q}$ is defined as the bundle of
hyperplanes of the total space of $E$.  This is an $(n+r)$-dimensional
toric variety over $\Q$ which can be equipped with an ample universal line
bundle $\mathcal{O}_{\P(E)}(1)$, see \S\ref{Height of toric bundles}
for details.

We equip 
${\mathcal{O}_{\P(E)}(1)}$ with a semipositive{} adelic toric metric as
follows: the Fubini-Study metrics on each line bundle
$\mathcal{O}(a_j)$ induces a semipositive smooth toric metric on
$\mathcal{O}_{\P(E)}(1)$ for the Archimedean place of $\Q$, whereas
for the finite places we consider the corresponding canonical
metric. We show that both the corresponding
concave functions $\psiabs_{v}$ and roof functions $\vartheta_{v}$ can be
described in explicit terms (Lemma~\ref{lemm:16} and Proposition
\ref{prop:67}).  We can then compute the height of~$\P(E)$ with
respect to this metrized line bundle as (Theorem~\ref{fibrestoriques})
\begin{equation*}
  \h_{{\overline{\cO_{\P(E)}(1)}}}(\P(E)) =
\h_{\ov{\cO(1)}}(\P^n)  \hspace{-1.5mm} \sum_{{\boldsymbol{i}\in\N^{r+1}}\atop{|\boldsymbol{i}|=n+1}}
  \boldsymbol{a}^{\boldsymbol{i}} +
  \sum_{{\boldsymbol{i}\in\N^{r+1}}\atop{|\boldsymbol{i}|=n}}
A_{n,r}(\boldsymbol{i})  \, \boldsymbol{a}^{\boldsymbol{i}},
\end{equation*}
where for $\boldsymbol i=(i_{0},\dots, i_{r})\in \N^{r+1}$, we set
$|\boldsymbol{i}|=i_0+\dots+i_r$, $\boldsymbol{a}^{\boldsymbol
  i}=a_0^{i_0}\dots a_r^{i_r}$ and
$A_{n,r}(\boldsymbol{i})=\sum_{m=0}^r(i_m+1)\sum_{j=i_m+2}^{n+r+1}\frac{1}{2j}$,
while
$\h_{\ov{\cO(1)}}(\P^n_{\Q})=\sum_{i=1}^n\sum_{j=1}^i\frac{1}{2j}$
denotes the height of the projective space with respect to the
Fubini-Study metric.  In
particular, the height of $\P(E)$ is a positive rational number.

\medskip The Fubini-Study height of the projective space was computed
by Gillet and Soul\'e \cite[\S 5.4]{GilletSoule:cc}.  Other early
computations for the 
Fubini-Study height of some toric hypersurfaces where obtained
in~\cite{Dan97,CM00}.  Mourougane has determined the height of
Hirzebruch surfaces, as a consequence of his computations of Bott-Chern secondary
classes~\cite{Mou06}. A Hirzebruch surface is a toric bundle over
$\P^{1}_{\Q}$, and the result of Mourougane is a particular case of our
computations for the height of toric bundles (Remark \ref{rem:24}).

The fact that the canonical height of a toric variety is zero is
well-known. It results from its original construction by a limit process 
on the direct images of the toric variety under the so-called ``powers maps''. 
Maillot has studied the Arakelov geometry of toric varieties and line
bundles with respect to the canonical metric, including the
computation of the associated 
Chern currents and their product~\cite{Maillot:GAdvt}.

In \cite{PS08a}, Philippon and Sombra gave a formula for the canonical
height of a ``translated'' toric projective variety, a projective variety which is
the closure of a translate of
a subtorus, defined over a number field. In \cite{MR2419926}, they also
obtain a similar formula for the function field
case. Both results are particular cases of our general formula 
(Remark \ref{rem:9}). Indeed, part of our motivation for the present
text was to understand and generalize this formula in the framework of Arakelov
geometry.

For the Archimedean smooth case, our constructions are related to the 
Guillemin-Abreu classification
of K\"ahler structures on {symplectic toric varieties}~\cite{MR1969265}.
The roof function corresponding to a smooth
metrized line bundle on a smooth toric variety coincides, up to a
sign, with the so-called ``symplectic potential'' of a K\"ahler toric
variety (Remark \ref{rem:12}). 
In the Archimedean continuous case, 
Boucksom and Chen have recently considered
a similar construction
in their study of arithmetic 
Okounkov bodies~\cite{BoucksomChen:Obfs}. 
It would be interesting to further explore the connection with these
results.

\medskip
We now discuss the contents of each chapter, including some 
other results of interest. 

Section~\ref{sec:metr-line-bundl} is devoted to the first half of the
dictionary. Namely, we review DSP metrized line bundles both in the
Archimedean and in the non-Archimedean cases. For the latter case, we
recall the basic properties of Berkovich spaces of schemes. We then explain the
associated measures and heights following
\cite{Zhang:_small,Cha06,Gu03}. For simplicity, the theory presented is not as 
general as the one in~\cite{Gu03}: in the non-Archimedean case we
restrict ourselves to discrete valuation rings and in the global
case to adelic fields, while in \emph{loc. cit.} the theory is
developed for arbitrary valuations and for $M$-fields, respectively. 

Section \ref{sec:legendre-fench-dual} deals with the second half of
the dictionary, that is, convex analysis with emphasis on polyhedral
sets. Most of the material in this section is classical. We have
gathered all the required results, adapting them to our needs and adding
some new ones. We work with {concave} functions, which are the
functions which naturally arise in the theory of toric varieties. For
later reference, we have translated many of the notions and results
of convex analysis, usually stated for convex functions, in terms of
concave functions.

We first recall the basic definitions about convex sets and convex
decompositions, and then we study concave functions and the
Legendre-Fenchel duality. We introduce a notion of Legendre-Fenchel
correspondence for general closed concave functions, as a duality
between convex decompositions (Definition \ref{def:26} and Theorem
\ref{thm:8}). This is the right generalization of both the classical
Legendre transform of strictly concave differentiable functions, and the
duality between polyhedral complexes induced by a piecewise affine
concave function. We also consider the interplay between
Legendre-Fenchel duality and operations on concave functions like, for
instance, the direct and inverse images by affine maps. This latter
study will be important when considering the functoriality with
respect to equivariant morphisms between toric varieties. We next
particularize to two extreme cases: differentiable concave functions whose
stability set is a polytope that will be related to semipositive
smooth toric metrics in the Archimedean case, and to piecewise affine
concave functions that will correspond to semipositive algebraic toric
metrics in the non-Archimedean case. 
Next, we treat differences of concave functions, 
that will be related to DSP metrics.  We end this section by
studying the Monge-Amp\`ere measure associated to a concave
function. There is an interesting interplay between Monge-Amp\`ere
measures and Legendre-Fenchel duality. In this direction, we prove a
combinatorial analogue of the arithmetic B\'ezout's theorem (Theorem
\ref{thm:20}), which is a key ingredient in the proof of our formulae
for the height of a toric variety.

In Chapter \ref{sec:toric-varieties}, we study the algebraic geometry of
toric varieties over a field and of toric schemes over a discrete
valuation ring (DVR).  We start by recalling the basic constructions
and results on toric varieties, including Cartier and Weil divisors,
toric line bundles and sections, orbits and equivariant morphisms, and
positivity properties. Toric schemes over a DVR where first considered
by Mumford in \cite{Kempfals:te}, who studied and classified them in
terms of fans in $N_{\R}\times \R_{\ge0}$. In the proper case, these
schemes can be alternatively classified in terms of complete
polyhedral complexes in~$N_{\R}$~\cite{BurgosSombra:rc}. Given a
complete fan $\Sigma$ in $N_{\R}$, the models over a DVR of the proper
toric variety $X_{\Sigma}$ are classified by complete polyhedral
complexes on $N_{\R}$ whose recession fan (Definition \ref{def:69})
coincides with $\Sigma$ (Theorem \ref{thm:7b}). Let $\Pi$ be such a
polyhedral complex, and denote by $\cX_{\Pi}$ the corresponding model
of $X_{\Sigma}$.  Let~$\Psi$ be a virtual support function on $\Sigma
$ and $(L,s)$ the associated toric line bundle on~$X_{\Sigma}$ and
toric section. We show that the models of $(L,s)$ over $\cX_{\Pi}$ are
classified by functions that are rational piecewise affine on $\Pi$
and whose recession function is $\Psi$ (Theorem~\ref{thm:11b}).  We
also prove a toric version of the Nakai-Moishezon criterion for toric
schemes over a DVR, which implies that semipositive models of $(L,s)$ translate into concave functions under the above
correspondence (Theorem \ref{thm:5}).

In Chapter \ref{sec:appr-integr-metr}, we study toric metrics and their
associated measures.  For the present discussion, consider a local
field $K$, a complete fan $\Sigma$ on $N_{\R}$ and a virtual support
function $\Psi$ on~$\Sigma$, and let $(X,L)$ denote the corresponding
proper toric variety over $K$ and toric line bundle. We first
introduce a variety with corners $N_{\Sigma}$ which is a
compactification of $N_{\R}$, together with a proper map $\val\colon
X_{\Sigma}^{\an}\to N_{\Sigma}$ whose fibers are the orbits of the
action of $\SS$ on $ X_{\Sigma}^{\an}$. We first treat the
problem of obtaining a toric metric from a non-toric one
(Proposition~\ref{prop:60}) and prove the classification theorem for
toric metrics on~$L^{\an}$ (Proposition \ref{prop:14}).  We next treat
smooth metrics in the Archimedean case.  A toric smooth metric is
semipositive if and only if the associated function $\psiabs$ is concave
(Proposition \ref{prop:15}). We make explicit the associated measure
in terms of the Hessian of this function, hence in terms of the
Monge-Amp\`ere measure of~$\psiabs$ (Theorem \ref{thm:18}).  We also
observe that an arbitrary smooth metric on $L$ can be turned into a
toric smooth metric by averaging it by the action of $\SS$. If
the given metric is semipositive, so is the obtained toric smooth
metric.

Next, in the same section, we consider algebraic metrics in the
non-Archimedean case. We first show how to describe the reduction map
for toric schemes over a DVR in terms of the corresponding polyhedral
complex and the map $\val$ (Lemma~\ref{lemm:3}). We then study the
triangle formed by toric metrics, rational piecewise affine functions
and toric models (Proposition \ref{prop:59} and Theorem \ref{thm:15})
and the effect of taking a field extension (Proposition
\ref{prop:23}). Next, we treat in detail the one-dimensional case,
were one can write in explicit terms the metrics, associated functions
and measures. Back to the general case, we use these results to
complete the characterization of toric semipositive algebraic metrics
in terms of piecewise affine concave functions (Proposition
\ref{prop:61}).  We also describe the measure associated to a
semipositive toric algebraic metric in terms of the Monge-Amp\`ere
measure of its associated concave function (Theorem \ref{thm:17}).

Once we have studied smooth metrics in the Archimedean case and
algebraic metrics in the non-Archimedean case, we can study
semipositive{} toric metrics. We show that the same classification
theorem is valid in the Archimedean and non-Archimedean cases (Theorem
\ref{thm:13}). Moreover, the associated
measure is described in exactly the same way in both cases (Theorem
\ref{thm:16}). We end this section by introducing and classifying
adelic toric metrics (Definition \ref{def:59}, Proposition
\ref{prop:102} and Theorem \ref{thm:23}).

In Chapter \ref{sec:heigh-toric-vari}, we prove the formulae for the 
toric local height and for the global height of toric varieties
(theorems \ref{thm:19} and \ref{thm:22}). By using the functorial
properties of the height, we recover, from our general formula, the
formulae for the canonical height of a translated toric projective
variety in \cite[Th\'eor\`eme~0.3]{PS08a} for number fields and
in \cite[Proposition 4.1]{MR2419926} for function fields.
 
In Chapter \ref{Metricfrompolytopes}, we consider the problem of
integrating functions on polytopes. We first present a closed formula
for the integral over a polytope of a function of one variable
composed with a linear form, extending in this direction Brion's
formula for the case of a simplex \cite{Brion:PointsEntiers}
(Proposition \ref {propdefcoeffdintegration} and Corollary
\ref{cor:16}).  This allows us to compute the height of toric
varieties with respect to some interesting metrics arising from
polytopes (Proposition \ref{prop:69}).  We can interpret
some of these heights as the average entropy of a simple random
process defined by the polytope (Proposition~\ref{prop:71}).

In Chapter \ref{courbes}, we study some further examples.  We first
consider translated toric curves in~$\P^{n}_{\Q}$. For these curves,
we consider the line bundle obtained from the restriction of~$\cO(1)$
to the curve, equipped with the metric induced by the Fubiny-Study
metric at the place at infinity and by the canonical metric for the
finite places. We compute the corresponding concave function $\psiabs$
and toric local height in terms of the roots of a univariate
polynomial (Theorem \ref{thm:26}).  We finally consider toric bundles
as explained before, and compute the relevant concave functions,
measures and heights.

\vspace{3mm}
\noindent{\bf Acknowledgements.} 
Jacques-Arthur Weil assisted us, at the beginning of this project, in
the computation of the height of some toric curves. Richard Thomas
called our attention to the relationship between our formulae and the
Legendre-Fenchel duality.  We thank both of them. We also thank
Antoine Chambert-Loir, Teresa Cortadellas, Carlos D'Andrea, Antoine
Ducros, Walter Gubler, Qing Liu, Vincent Maillot, Juan Carlos Naranjo
and Angelo Vistoli for useful discussions and pointers to the
literature. Last but not least, we thank the referee for the thorough
reading of our manuscript and the numerous remarks and corrections
that have significantly improved this text.

Part of this work was done while the authors met at the Universitat de
Barcelona, the Centre de Recerca Matem\`atica (Barcelona), the
Institut de Math\'ematiques de Jussieu (Paris) and the Universit\'e
de Bordeaux.  Short courses on parts of this text were delivered at
the Morningside Center of Mathematics (Beijing), the Centro de
Investigaci\'on en Matem\'aticas (Guanajuato) and the Universidad de
Buenos Aires. We thank all of these institutions for their
hospitality.

\vfill

\pagebreak
\ 
\vskip 1cm
\begin{center}
\bfseries
\large
CONVENTIONS AND NOTATIONS  
\end{center}

\vskip 3cm

For the most part, we follow  generally accepted conventions and
notations. We also use the following:

\begin{itemize}

\item $\N$ and $\N^{\times}$ denote the set of natural numbers with 0
  and without 0, respectively;

\item a multi-index is an element $i\in \N^{S}$, where $S$ is a finite
  set. For a multi-index $i$, we write $|i|=\sum_{s\in S}i_{s}$;

\item A semigroup is a set with an associative binary operation
and an identity element. In particular, a morphism of semigroups will
send the identity element to the identity element. All considered
semigroups will be commutative; 

\item all considered rings are commutative and with a unit;

\item a scheme is a separated scheme of finite type over a
  Noetherian ring;

\item a variety is a reduced and irreducible separated scheme of
  finite type over a field;

\item $\P^{n}$ is a projective space of dimension $n$ over a
  field, with a fixed choice of homogeneous coordinates;

\item by a line bundle we mean a locally free sheaf of rank one;

\item compact spaces are Hausdorff; 

\item measures are non-negative and a signed measure is a difference
  of two measures.
\end{itemize}

For the notations and terminology introduced in this text, the reader
can locate them using the list of symbols and the index at the end of
the book.


\chapter{Metrized line bundles and their associated heights}
\label{sec:metr-line-bundl}

In this chapter,  we recall the adelic theory of heights as
introduced by Zhang \cite{Zhang:_small} and developed by Gubler
\cite{GublerHab,Gu03} and Chambert-Loir~\cite{Cha06}. These heights generalize
the ones that can be obtained from the arithmetic
intersection theory of Gillet and
Soul\'e~\cite{GilletSoule:ait,BostGilletSoule:HpvpGf}.

To explain the difference between both points of view, consider a
smooth variety~$X$ over $\Q$. In Gillet-Soul\'e's theory, we choose a
regular proper model $\cX$ over $\Z$ of $X$, and we also consider the
real analytic space $X^{\an}$ given by the set of complex points
$X(\C)$ and the anti-linear involution induced by the complex
conjugation. By contrast, in the adelic point of view we consider the
whole family of analytic spaces~$X^{\an}_{v}$, $v\in {\mathfrak
  M}_{\Q}$. For the Archimedean place, $X^{\an}_{v}$ is the real analytic space
considered before, while for the non-Archimedean places, it
is the associated Berkovich space \cite{Berkovich:stag}. Both points
of view have advantages and disadvantages. In the former point of
view, there exists a complete formalism of intersection theory and
characteristic classes, with powerful theorems like the arithmetic
Riemann-Roch theorem
and the Lefschetz fixed point theorem,
but one is restricted to smooth
varieties and needs an explicit integral model of $X$. In the
latter point of view, one can define heights, but does not dispose yet
of a complete formalism of intersection theory. Its main advantages
are that it can be easily extended to non-smooth varieties and that
there is no need of an integral model of $X$. Moreover, all places,
Archimedean and non-Archimedean, are set on a similar footing.

\section{Smooth metrics in the Archimedean case}
\label{sec:arqu-smooth-metr}

Let $X$ be a variety over $\C$ and $X^{\an}$
\nomenclature[aX01]{$X^{\an}$}{analytification of a variety over $\C$}%
\index{analytification!of a variety over $\C$}%
its associated complex analytic space. We recall the
definition of differential forms
\index{differential forms on a complex space}%
on
$X^{\an}$ introduced by Bloom and Herrera \cite{BloomHerrera:drcas}.
The space~$X^{\an}$ can be covered by a family of open subsets
$\{U_{i}\}_{i}$ such that each $U_{i}$ can be identified with a closed
analytic subset of an open ball in $\C^r$ for some $r$. On
each~$U_{i}$, the differential forms are defined as the restriction to
this subset of smooth complex-valued differential forms defined on an
open neighbourhood of $U_{i}$ in $\C^r$. Two differential forms on
$U_{i}$ are identified if they coincide on the non-singular locus
of~$U_{i}$. We denote by $\mathscr{A^{\ast}}(U_{i})$ the complex of
differential forms of $U_{i}$, which is independent of the chosen
embedding. In particular, if $U_{i}$ is
non-singular, we recover the usual complex of differential forms.
These complexes glue together to define a
sheaf~$\mathscr{A}^{\ast}_{X^{\an}}$.
\nomenclature[aAan]{$\mathscr{A}^{\ast}_{X^{\an}}$}{sheaf of
  differential forms of a complex space}%
This sheaf is equipped with
differential operators $\dd$, $\dc$, $\partial$, $\bar \partial$, an
external product and inverse images with respect to analytic
morphisms: these operations are defined locally on each
$\mathscr{A^{\ast}}(U_{i} )$ by extending the differential forms to a
neighbourhood of~$U_{i}$ in~$\C^{r}$ and applying the
corresponding operations for $\C^{r}$.  We write $\mathcal
O_{X^{\an}}$\nomenclature[aOX2]{$\mathcal O_{X^{\an}}$}{sheaf of analytic
  functions of a complex space} and $C^{\infty}_{X^{\an}}=
\mathscr{A}^{0}_{X^{\an}}$
\nomenclature[aCinf]{$C^{\infty}_{X^{\an}}$}{sheaf of smooth functions of a complex space}%
for the sheaves of analytic
functions and of smooth functions of~$X^{\an}$, respectively.

Let $L$ be an algebraic line bundle on $X$ and
$L^{\an}$
\nomenclature[aL1]{$L$}{line bundle}%
\nomenclature[aLan1]{$L^{\an}$}{analytification of a line bundle over $\C$}%
its analytification.
\index{analytification!of a line bundle}%

\begin{defn}\label{def:58} A \emph{metric}
\index{metric!on a line bundle!over $\C$}%
\nomenclature[svert11]{$\Vert\cdot\Vert$}{metric on a line bundle (Archimedean case)}%
  on $L^{\an}$ is
  an assignment that, to each open subset $U\subset X^{\an}$ and local
  section $s$ of $L^{\an}$ on $U$,
  associates a continuous function
  \begin{displaymath} \|s(\cdot)\|\colon U \longrightarrow
\mathbb{R}_{\ge 0}
  \end{displaymath} such that
  \begin{enumerate}
  \item it is compatible with the restrictions to smaller open subsets;
  \item for all $p\in U$, $\|s(p)\|=0$ if and only if $s(p)=0$;
  \item for any $p\in U$ and $\lambda \in
\mathcal{O}_{X^{\an}}(U)$,  it holds
    \begin{math} \|(\lambda s)(p)\|=|\lambda (p)|\, \|s(p)\|.
    \end{math}
  \end{enumerate}
The pair $\ov
  L:=(L,\|\cdot\|)$
  \nomenclature[aLb20]{$\ov L$}{metrized line bundle (Archimedean case)}%
  is called a \emph{metrized line bundle}.
  The metric $\|\cdot\|$ is \emph{smooth}
\index{smooth metric}%
\index{metric!smooth|see{smooth metric}}%
if for every local section $s$ of $L^{\an}$,
  the function $\|s(\cdot)\|^{2}$ is smooth.
\end{defn}

We remark that what we call ``metric'' in this text is called ``continuous
metric'' in other contexts. 

Let $\ov L=(L,\|\cdot\|)$ be a smooth metrized line bundle.  
Given a non-vanishing local section $s$ of $L^{\an}$ on an open
subset $U$, the
\emph{first Chern form}
\index{Chern form}%
of $\ov L$ is the $(1,1)$-form defined on $U$ as
$$
\chern_1(\ov{L})
= \partial\bar \partial\log\|s\|^2\in \mathscr{A}^{1,1}(U).
$$
\nomenclature[ac1L]{$\chern_1(\ov{L})$}{Chern form of a smooth metrized line bundle}%
It does not depend on the choice of local section and can be extended
to a global closed $(1,1)$-form. Observe that we are using the
algebro-geometric convention, and so $\chern_1(\ov{L})$ determines a
class in $H^{2}(X^{\an},2\pi i\, \Z )$.

\begin{exmpl}\label{exm:3}
  Let $X=\P^n_\C$ and $L=\cO(1)$, the universal line bundle of
  $\P^{n}_{\C}$.  A rational section $s$ of $\cO(1)$ can be identified
  with a homogeneous rational function $\rho_{s}\in \C(x_{0},\dots,
  x_{n})$ of degree 1. The poles of this section coincide which those
  of $\rho_{s}$. For a point $p=(p_0:\dots:p_n)\in \P^n(\C)$ and a
  rational section $s$ as above which is regular at $p$, the \emph{Fubini-Study
    metric} of $\cO(1)^{\an}$
  \index{metric!Fubini-Study}%
  is defined as
  \begin{displaymath}
    \|s(p)\|_\FS =\frac{|\rho_{s}(p_0,\dots,p_n)|}{(\sum_{i=0}^{n}|p_i|^2)^{1/2}}.
  \end{displaymath}
\nomenclature[svert31]{$\Vert\cdot\Vert_\FS$}{Fubini-Study metric of $\cO(1)^{\an}$}%
  Clearly, this definition does not depend on the choice of a
  representative of $p$. The pair $(\cO(1), \|\cdot\|_{\FS})$ is a
  metrized line bundle. 
\end{exmpl}

Many smooth metrics can be obtained as the inverse image of the
Fubini-Study metric. Let $X$ be a variety over $\C$ and $L$ a line
bundle on $X$, and assume that there is an integer $e\ge 1$ such that
$L^{\otimes e}$ is generated by global sections. Choose a basis of the
space of global sections $\Gamma (X,L^{\otimes e})$ and let $\varphi
\colon X\to \P^{M}_{\C}$ be the induced morphism. Given a local
section $s$ of $L$, let $s'$ be a local section of $\cO(1)$ such that
$s^{\otimes e}=\varphi^{\ast} s'$. Then, the smooth metric on $L^{\an}$ obtained
from the Fubini-Study metric by
inverse image is given by
\begin{equation*}
  \|s(p)\|=\|s'(\varphi(p))\|^{1/e}_{\FS}
\end{equation*}
for any $p\in X^{\an}$ which is not a pole of $s$. 

\begin{defn} \label{def:1} Let $\ov L$ be a smooth metrized line
  bundle on $X$ and
  $\D=\{z\in \C| \, |z|\le 1\}$, the unit disk of $\C$.
  \nomenclature[aD9]{$\D$}{unit disk of $\C$}%
 We say
  that $\ov L$
  is \emph{semipositive}
\index{smooth metric!semipositive}%
if, for every holomorphic map
  \begin{math}
    \varphi\colon \mathbb{D}\longrightarrow X^{\an},
  \end{math}
  \begin{equation*}
   \frac{1}{2\pi i} \int_{\D}\varphi^{\ast}\chern_{1}(\ov L)\ge 0.
  \end{equation*} 
  We say that $\ov L$ is \emph{positive}
  \index{smooth metric!positive}%
  if this integral is strictly
  positive for all non-constant holomorphic maps as before.
\end{defn}

\begin{exmpl}
  The Fubini-Study metric (Example \ref{exm:3}) is positive because
  its first Chern form defines a smooth metric on the holomorphic
  tangent bundle of $\P^{n}(\C)$ \cite[Chapter 0, \S2]{GriffithsHarris:pag}. All metrics obtained as inverse image of
  the Fubini-Study metric are semipositive.
\end{exmpl}

A family of smooth metrized line bundles $\ov L_0,\dots, \ov L_{d-1}$ on $X$
and a $d$-dimensional cycle~$Y$ of $X$ define a
signed measure on $X^{\an}$ as follows.  First suppose that $Y$ is a
subvariety of $X$ and let $\delta _{Y}$ denote the current of
integration along the analytic subvariety $Y^{\an}$,
defined as $\delta _{Y}(\omega )=\frac{1}{(2\pi
  i)^{d}}\int_{Y^{\an}}\omega$ for $\omega \in
\mathscr{A}^{2d}_{X^{\an}} $.  Then the current
$$
\chern_1(\ov L_0)\wedge \cdots\wedge \chern_1(\ov L_{d-1})\wedge \delta _{Y}
$$
is a {signed measure} on $X^{\an}$.
\index{smooth metric!signed measure associated to}%
\index{measure!associated to a smooth metric}%
\nomenclature[ac1L0]{$\chern_1(\ov L_0)\wedge \cdots\wedge
  \chern_1(\ov L_{d-1})\wedge \delta _{Y}$}{signed measure (smooth case)}%
This notion extends by linearity to $Y\in
Z_{d}(X)$.  If~$\ov L_i$, $i=0,\dots, d-1$, are semipositive and $Y$ is effective,
this signed measure is a measure.

\begin{rem} 
\label{rem:17} 
We can reduce the study of algebraic varieties and
line bundles over the field of real numbers
\index{algebraic variety over $\R$}%
to the complex case by
using the following standard technique. 
A variety $X$ over $\R$ induces a variety $X_{\C}$ over $\C$ together
with an anti-linear involution $\varsigma\colon X_{\C} \to X_{\C}$ such
that the diagram 
\begin{displaymath}
  \xymatrix{X_{\C} \ar[r]^{\varsigma} \ar[d] & X_{\C} \ar[d] \\
\Spec(\C) \ar[r]^{\varsigma} & \Spec(\C)}
\end{displaymath}
commutes, where the arrow below denotes the map induced by
complex conjugation.  Following this philosophy, we define the
analytification of $X$ as $X^{\an}_{\R}=(X^{\an}_{\C},\varsigma )$.

A line bundle $L$ on $X$ determines a line
bundle $L_{\C}$ on $X_{\C}$ and an isomorphism $\alpha\colon
\varsigma^{*}L_{\C}\to L_{\C}$ such that a section $s$ of $L_{\C}$ is
real if and only if $\alpha(\varsigma^{*} s) = s$. Thus we define
$L^{\an}_{\R}=(L^{\an}_{\C},\alpha )$. 
By a \emph{metric}
\index{algebraic variety over $\R$!metrized line bundle on}%
\index{metrized line bundle!on an algebraic variety over $\R$}%
on $L^{\an}_{\R}$ we will mean a metric $\|\cdot\|$ on $L_\C^{\an}$ such that
the map $\alpha \colon \varsigma^{*}(L_{\C}, \|\cdot\|)\to (L_{\C}, \|\cdot\|)$
is an isometry.
\nomenclature[g1800]{$\varsigma $}{anti-linear involution defined by a
  variety over $\R$}%

In this way, the above definitions can be extended to metrized line
bundles on varieties over $\R$. For instance, a real
smooth metrized line bundle is semipositive if and only if its associated
complex smooth metrized line bundle is semipositive.  The corresponding
signed measure is a measure over $X_{\C}^{\an}$ which is invariant
under~$\varsigma$. 

In the sequel, every time we have a variety over $\R$, we will work
instead with the
associated complex variety and quietly ignore the anti-linear involution
$\varsigma $, because it will not play an important role in our
results. In particular, 
if $X$ is a real variety, we will denote $X^{\an}=X_{\C}^{\an}$ for the
underlying complex space of 
$X^{\an}_{\R}$. Similarly, we will denote $L^{\an}=L^{\an}_{\C}$.
\end{rem}

\section{Berkovich spaces of schemes}
\label{sec:berkovich-spaces}
\index{Berkovich space|(}%
In this section we recall Berkovich's theory of analytic spaces. We will not
present the most general theory developed in \cite{Berkovich:stag} but we will
content ourselves with the analytic spaces associated to algebraic
varieties, that are simpler to define and enough for our purposes.

Let $K$ be a field which is complete with respect to
a nontrivial non-Archimedean absolute value $|\cdot|$. 
Such fields will be called \emph{non-Archimedean fields}.
\index{non-Archimedean field}%
Let $
K^{\circ}=\{\alpha \in K\mid |\alpha |\le 1\} $
\nomenclature[aKna2]{$K^{\circ}$}{valuation ring of a non-Archimedean field}%
be the
{valuation ring},\index{valuation ring} $ K^{\circ\circ}=\{\alpha \in
K\mid |\alpha |< 1\} $
\nomenclature[aKna3]{$K^{\circ\circ}$}{maximal ideal of a valuation ring}%
the {maximal ideal}
\index{valuation ring!maximal ideal of}%
and $k=K^{\circ}/K^{\circ\circ}$
\nomenclature[aK]{$k$}{residue field of a valuation ring}%
the {residue field}.
\index{valuation ring!residue field of}%

Let $X$ be a scheme of finite type over $K$. Following
\cite{Berkovich:stag}, we can associate an \emph{analytic 
space} $X^{\an}$ 
\nomenclature[aX02]{$X^{\an}$}{Berkovich space of a scheme}%
to the scheme $X$ as follows.  First assume that
$X=\Spec(A)$, where $A$ is a finitely generated $K$-algebra. Then, the
points of~$X^{\an}$
\index{Berkovich space!points of}%
are the multiplicative seminorms of $A$ that
extend the absolute value of $K$, see
\cite[Remark~3.4.2]{Berkovich:stag}. Every element $a$ of $A$ defines a
function $|a(\cdot)|\colon X^{\an}\to \R_{\ge 0}$ given by evaluation
of the seminorm. The topology of $X^{\an}$
\index{Berkovich space!topology of}%
is the coarsest topology
that makes the functions $|a(\cdot)|$ continuous for all $a\in A$.

A  point $p\in X^{\an}$ defines a prime ideal
\begin{math}
 \{a\in A\mid |a(p)|=0\}\subset A. 
\end{math}
This induces a map 
\begin{displaymath}
\pi \colon X^{\an}\to X=\Spec(A).  
\end{displaymath}
\nomenclature[g16000]{$\pi$}{map from $X^{\an}$ to $X$}%
Let $K(\pi(p))$ denote the function field of $\pi(p)$, that is, the
field of fractions of the quotient ring $A/\pi(p)$.  The point $p$ is a
multiplicative seminorm on $A$ and so it induces a non-Archimedean
absolute value on $K(\pi(p))$.  We denote by $\mathscr{H}(p)$
\index{Berkovich space!field associated to a point of}%
\nomenclature[aHp]{$\mathscr{H}(p)$}{field associated to a point of a
  Berkovich space}%
the completion of this field with respect to that absolute value.

Let $U$ be an open subset of $X^{\an}$. An \emph{analytic function} on
$U$ is a function
\index{Berkovich space!analytic function on}%
$$
f\colon U\longrightarrow \coprod_{p\in {U}}\mathscr{H}(p)
$$
such that, for each $p\in U$, $f(p)\in \mathscr{H}(p)$ and there is an
open neighbourhood $U'\subset U$ of~$p$ with the property that, for all
$\varepsilon >0$ and $q\in U'$, there are elements $a,b\in A$
with~$b\not \in \pi({q})$ and $|f(q)-a(q)/b(q)| <\varepsilon $.  The
analytic functions form a sheaf, denoted~$\mathcal{O}_{X^{\an}}$,
\nomenclature[aOX3]{$\mathcal{O}_{X^{\an}}$}{sheaf of analytic
  functions of a Berkovich space}%
and $(X^{\an},\mathcal{O}_{X^{\an}})$ is a locally ringed
space~\cite[\S 1.5 and Remark~3.4.2]{Berkovich:stag}. In particular,
every element $a\in A$ determines an analytic function on $X^{\an}$,
also denoted $a$. The function $|a(\cdot)|$ can then be obtained by
composing $a$ with the absolute value map
\begin{displaymath}
  |\cdot|\colon \coprod_{p\in X^{\an}}\mathscr{H}(p)\longrightarrow \R_{\ge 0},
\end{displaymath}
which justifies its notation.

Now, if $X$ is a scheme of finite type over $K$, the analytic space
$X^{\an}$ 
\index{analytification!of a variety over a non-Archimedean field}%
is defined by gluing together the affine analytic spaces
obtained from an affine open cover of~$X$. If we want to stress the
base field, we will denote $X^{\an}$ by $X_{K}^{\an}$. 

Let $K'$ be a complete extension of $K$ and $X^\an_{K'}$ the analytic
space associated to the scheme $X_{K'}$.  There is a natural map
$X_{K'}^{\an}\to X_{K}^{\an}$ defined locally by restricting seminorms.

\begin{defn} \label{def:75} A \emph{rational point}
\index{Berkovich space!rational point of}%
of $X_{K}^\an$ is a point $p\in X_K^{\an}$
  satisfying $\mathscr{H}(p)=K$.  We denote  by $X^\an(K)$
\nomenclature[aX04]{$X^\an(K)$}{rational points of a Berkovich space}%
the set of rational points of $X_K^{\an}$.
More generally, for a complete extension
  $K'$ of $K$, the set of \emph{$K'$-rational points} of $X_K^{\an}$ is
  defined as $X^\an(K')=X^\an_{K'}(K')$.  There is a map
  $X^{\an}(K')\to X_K^{\an}$, defined by composing the
  inclusion $ X^{\an}(K')\hookrightarrow X_{K'}^{\an}$ with the map
  $X_{K'}^{\an}\to X_{K}^{\an}$ as above. The set of 
\emph{algebraic points}
\index{Berkovich space!algebraic point of}%
of $X^{\an}$ is
  the union of $X^{\an}(K')$ for all finite extensions $K'$ of
  $K$. Its image in $X^{\an}$ is denoted
  $X^{\an}_{\alg}$.
\nomenclature[aX05]{$X^{\an}_{\alg}$}{algebraic points of a Berkovich space}%
We have that
  $X^{\an}_{\alg}=\{p\in X^{\an} | \, [\mathscr{H}(p):K]< \infty\}$.
\end{defn}

The basic properties of $X^{\an}$ are summarized in the following
theorem.

\begin{thm}\label{thm:6}
  Let $X$ be a scheme of finite type over $K$ and $X^{\an}$ the associated
  analytic space.
  \begin{enumerate}
  \item \label{item:1} $X^{\an}$ is a locally compact and locally
    arc-connected topological space.
  \item \label{item:2} $X^{\an}$ is Hausdorff
    (respectively  compact, arc-connected) if and only if $X$ is separated
    (respectively proper, connected).
  \item \label{item:3} The map $\pi \colon X^{\an}\to X$ is
    continuous. A locally constructible subset $T\subset X$ is open
    (respectively closed, dense) if and only if $\pi ^{-1}(T)$ is open
    (respectively closed, dense).
  \item \label{item:4} Let $\psi \colon X\longrightarrow Y$ be a
    morphism of schemes of finite type over $K$ and $\psi ^{\an}\colon
    X^{\an}\longrightarrow Y^{\an}$ its
    analytification.
    Then $\psi $
    is flat (respectively unramified, \'etale, smooth, separated,
    injective, surjective, open immersion, isomorphism) if and only if
    $\psi ^{\an}$ has the same property.
  \item \label{item:5} Let $K'$ be a complete extension of $K$. Then
    the map $\pi_{K'}:X^{\an}_{K'}\to X_{K'} $ induces a bijection
    between $X^{\an}(K')$ and $ X(K')$.
  \item \label{item:98} Set $X_{\alg}=\{p\in X | \, [K(p):K]<
    \infty\}$\nomenclature[aX03]{$X_{\alg}$}{algebraic points of a
      variety}, 
    where $K(p)$ denotes the function field of~$p$.  Then $\pi$
    induces a bijection between $X^{\an}_{\alg}$ and $X_{\alg}$. The
    subset $X^{\an}_{\alg}\subset X^{\an}$ is dense.
  \item \label{item:119} Let $\widehat K$ be the completion of the
    algebraic closure 
    $\ov K$ of $K$. Then the map $X^{\an}_{\widehat K}\to X_{K}^{\an}$
    induces an isomorphism $X^{\an}_{\widehat
      K}/\Gal(\ov K^{\sep}/K)\simeq X^{\an}_{K}$.
  \end{enumerate}
\end{thm}

\begin{proof}
  The proofs can be found in \cite{Berkovich:stag} and the next
  pointers are with respect to the numeration in this
  reference: (\ref{item:1}) follows from
  Theorem 1.2.1, Corollary~2.2.8 and Theorem 3.2.1, 
  (\ref{item:2}) is Theorem~3.4.8, (\ref{item:3}) is Corollary~3.4.5,
  (\ref{item:4}) is Proposition~3.4.6, (\ref{item:5}) is
  Theorem~3.4.1(i), while \eqref{item:98} follows from Theorem~3.4.1(i)
  and Proposition 2.1.15 and \eqref{item:119} follows from Corollary~1.3.6.
\end{proof}

\begin{rem}\label{rem:2} Not every analytic space in the sense of
  Berkovich can be obtained as the analytification of an algebraic variety. The general theory
  is based on spectra of affinoid $K$-algebras, that provide compact
  analytic spaces that are the building blocks of the more general
  analytic spaces.
\end{rem}
\index{Berkovich space|)}%

\begin{exmpl}\label{exm:4}
  Let $M\simeq \Z^{n}$ be a lattice of rank $n$ and consider the
  associated group algebra $K[M]$ and algebraic torus
  $\T_{M}=\Spec(K[M])$.\index{torus!algebraic}
  \nomenclature[aTM]{$\T_{M}$}{algebraic torus associated to a
    lattice}%
  The corresponding analytic space
  $\T_{M}^{\an}$\index{torus!analytic}
  is the set of multiplicative seminorms of $K[M]$ that
  extend the absolute value of $K$. This is an analytic group.
  We warn the reader that
  the set of points of an analytic group is 
  not an abstract group, hence some care has to be taken when speaking
  of actions and orbits. The precise definitions and basic properties
  can be found in  \cite[\S 5.1]{Berkovich:stag}. 

  The analytification $\T_{M}^{\an}$ is an analytic torus as in
  \cite[\S 6.3]{Berkovich:stag}.  
  The subset
  \begin{displaymath}
    \SS= \{ p\in \T_{M}^{\an}|\, |\chi^{m}(p)|=1 \text{ for all } m\in
  M\}.
  \end{displaymath}
  is a compact analytic subgroup, called
  the \emph{compact torus} of
  $\T_M^{\an}$.\index{compact torus}
  \nomenclature[aSMan]{$\SS $}{compact torus (non-Archimedean case)}%
\end{exmpl}

\section{Algebraic metrics in the non-Archimedean case}
\label{sec:algebr-metr-discr}
Let $K$ be a field which is complete with respect to a nontrivial
non-Archimedean absolute value. Let $K^{\circ}$ and $K^{\circ\circ}$
be as in the previous section. For 
simplicity, we will assume from now on that~$K^{\circ}$ is a {discrete
  valuation ring} (DVR), \index{valuation ring!discrete}%
\index{DVR (discrete valuation ring)}%
and we will fix a generator $\varpi$ of its maximal ideal
$K^{\circ\circ}$.  This is the only case we will need in the sequel
and it allows us to use a more elementary definition of measures and
local heights. The reader can consult \cite{Gu03,Gub07}
for the general case.  \nomenclature[g16]{$\varpi $}{generator of the
  maximal ideal of a DVR}%

Let $X$ be a variety over $K$ and $L$ a line bundle
on $X$. Let $X^{\an}$ and $L^{\an}$ be their respective
analytifications.%
\nomenclature[aLan2]{$L^{\an}$}{Berkovich analytification of a line bundle}%
\index{analytification!of a line bundle}%

\begin{defn}\label{def:2} A \emph{metric}
  \index{metric!on a line bundle!over a non-Archimedean field}%
  \nomenclature[svert12]{$\Vert\cdot\Vert$}{metric on a line bundle (non-Archimedean case)}%
  on $L^{\an}$ is
  an assignment that, to each open subset $U\subset X^{\an}$ and local
  section $s$ of $L^{\an}$ on $U$, associates a continuous function
  \begin{displaymath} \|s(\cdot)\|\colon U \longrightarrow
\mathbb{R}_{\ge 0},
  \end{displaymath} such that  
  \begin{enumerate}
  \item it is compatible with the restriction to smaller open subsets;
  \item for all $p\in U$, $\|s(p)\|=0$ if and only if $s(p)=0$;
  \item for any $\lambda \in
\mathcal{O}_{X^{\an}}(U)$,  it holds
\begin{math} \|(\lambda s)(p)\|=|\lambda(p)|\, \|s(p)\|.
\end{math}
  \end{enumerate}
The pair $\ov
  L:=(L,\|\cdot\|)$
  \nomenclature[aLb20]{$\ov L$}{metrized line bundle (non-Archimedean case)}%
  is called a \emph{metrized line bundle}.%
\end{defn}

Models of varieties and line bundles give rise to an important class
of metrics. To introduce and study these metrics, we first
consider the notion of models of varieties.  Write
$S=\Spec(K^{\circ})$.
\nomenclature[aSaz]{$S$}{scheme associated to a DRV}%
The scheme $S$ has two points: the
special point~$o$
\index{special point of $S$}%
\nomenclature[ao]{$o$}{special point of $S$}%
and the generic point $\eta$.
\index{generic point of $S$}%
\nomenclature[g07]{$\eta$}{generic point of $S$}%
Given a scheme
$\cX$ over $S$, we set $\mathcal{X}_{o}=\mathcal{X}\times \Spec (k)$
and $\mathcal{X}_{\eta}=\mathcal{X}\times \Spec (K)$ for its
{special fibre}
\index{special fiber of a scheme over $S$}%
\nomenclature[aX22]{$\mathcal{X}_{o}$}{special fiber of a scheme over $S$}%
and its {generic fibre},
\index{generic fiber of a scheme over $S$}%
\nomenclature[aX23]{$\mathcal{X}_{\eta}$}{generic fiber of a  scheme over $S$}%
respectively.

\begin{defn}\label{def:48}
  A \emph{model}\index{model!of a variety}
  \nomenclature[aX21]{$\cX$}{model of a variety}%
  over $S$ of $X$ is a flat scheme $\cX$ of finite type over $S$ together
  with a fixed isomorphism $X\simeq \mathcal{X}_{\eta}$. This
  isomorphism is part of the model, and so we can 
  identify $\cX_{\eta}$ with $X$. When $X$ is proper, we say that
  the model is \emph{proper}
  \index{model!of a variety!proper}%
  whenever the scheme $\cX$ is proper over $S$.
\end{defn}

Given a model $\cX$ of $X$, there is a reduction map
\index{reduction map} defined on a 
subset of~$X^{\an}$ with values in 
$\mathcal{X}_{o}$ \cite[\S 2.4]{Berkovich:stag}. 
This map can be described as follows. Let~$\{\mathcal{U}_{i}\}_{i\in I}$ be a finite open
cover of $\mathcal{X}$ by affine schemes over $S$ of finite type
and, for each $i$, let $\cA_{i}$ be a $K^{\circ}$-algebra such that
$\cU_{i}=\Spec(\cA_{i})$.
Set $U_{i}=\mathcal{U}_{i}\cap X$ and let $C_{i}$ be the closed
subset of $U_{i}^{\an}$ defined as
\begin{equation}\label{eq:43}
  C_{i}=\{p\in U_{i}^{\an}\mid |a(p)|\le 1, \forall a\in \cA_{i}\}
\end{equation}
For each $p\in C_{i}$, the prime ideal $\mathfrak{q}_p:=\{a\in
\cA_{i}\mid |a(p)|<1\}\subset \cA_{i}$ contains $K^{\circ\circ }\cA_{i}$ and
so it determines a point $\red(p):=\mathfrak{q}_p/K^{\circ\circ
}\cA_{i}\in \mathcal{U}_{i,o}\subset \mathcal{X}_{o}$. Consider the
subset $C=\bigcup_{i} C_{i}\subset X^{\an}$. The above maps glue
together to define a map
\begin{equation}
  \label{eq:87}
  \red\colon
C\longrightarrow \mathcal{X}_{o}.
\end{equation}
\nomenclature[ared]{$\red$}{reduction map}%
This map is surjective and anti-continuous, in the sense that the
preimage of an open subset of $\cX_{o}$ is closed in $C$ \cite[\S
2.4]{Berkovich:stag}. If both $X$ and $\cX$ are proper then, using
the valuative criterion of properness, one can see that 
$C=X^{\an}$ and the reduction map is defined on
the whole of $X^{\an}$.

\begin{prop}\label{prop:92}
  Assume that $X$ and $\cX$ are normal.
For
each irreducible component $V$ of $\cX_{o}$, there is a unique point
$\xi_{V}\in C$
\nomenclature[g14V]{$\xi_{V}$}{point associated to an irreducible component of the special fiber}%
such that
\begin{displaymath}
  \red(\xi_{V})=\eta_{V},
\end{displaymath}
where $\eta_{V}$ denotes the generic point of $V$. If we choose an
affine open subset $\mathcal{U}=\Spec(\cA)\subset \cX$ containing
 $\eta_{V}$ and we write
$A=\cA\otimes_{K^{\circ}}K$ and $U= \cU\cap X$, then $\xi_{V}$ lies in
$U^{\an}$ and it is the multiplicative seminorm on $A$ given, for
$a\in A$, by
\begin{equation}\label{eq:7}
  |a(\xi_{V})|=|\varpi|^{\ord_{V}(a)/\ord_{V}(\varpi)}, 
\end{equation}
where $\ord_{V}(a)$ denotes the order of $a$ at $\eta_{V}$. 
\end{prop}

\begin{proof}
  We first assume that $X$ and $\cX$ are affine.  Let $\cA$ be a
  $K^{\circ}$-algebra such that $\cX=\Spec(\cA)$ and set
  $A=\cA\otimes K$. Since $\cX$ is normal, $\cA_{\eta_{V}}$ is a
  discrete valuation ring. Let $\ord_{V}$ denote the valuation in this
  ring.

  We first show the existence of $\xi_{V}$. Since $\ord_{V}(\varpi
  )\ge1$, the right hand side of the equation (\ref{eq:7}) determines
  a multiplicative seminorm of $A$ that extends the absolute value of
  $K$ and hence a point $\xi_{V}\in U^{\an}\subset X^{\an}$. From the definition, it
  is clear that $|a(\xi_{V})|\le 1$ for all $a\in \cA$ and
  $|a(\xi_{V})|<1$ if and only if $a\in \eta_{V}$. Hence
  $\xi_{V}\in C$ and $\red(\xi_{V})=\eta_{V}$.

  We next prove the unicity of $\xi_{V}$. Let $p\in C$ such that
  $\red(p)=\eta_{V}$. This implies that $p$ is a multiplicative
  seminorm of $A$ that extends the absolute value of $K$ such that
  $|a(p)|\le 1$ for all $a\in \cA$ and $|a(p)|<1$ if and only if $a\in
  \eta_{V}$. In particular, this multiplicative seminorm can be
  extended to $\cA_{\eta_{V}}$. Let $\tau $ be a uniformizer of the
  maximal ideal $\eta_{V}\cA_{\eta_{V}}$ and $a\in \cA$. Write
  $a=u\tau ^{\ord_{V}(a)}$ with $u\in \cA_{\eta_{V}}^{\times}$.  Since
  $|u(p)|=1$, we deduce $|a(p)|=|\tau (p)|^{\ord_{V}(a)}$. Applying
  the same to $\varpi $, we deduce that
\begin{displaymath}
  |a(p)|=|\varpi|^{\ord_{V}(a)/\ord_{V}(\varpi)}. 
\end{displaymath}
Hence $p=\xi_{V}$.

To prove the statement in general, it is enough to observe that, if
$\cU_{1}\subset \cU_{2}$ are two affine open subsets of $\cX$
containing $\eta_{V}$, then the corresponding closed subsets verify
$C_{1}\subset C_{2}$. The result follows 
by the unicity in $C_{2}$ of the point with reduction $\eta_{V}$.
\end{proof}

Next we recall the definition of models of line bundles. 

\begin{defn} 
  \label{def:56} A \emph{model}
  \index{model!of a line bundle}%
  \nomenclature[aLz]{$\cL$}{model of a line bundle}%
  over $S$ of
  $(X,L)$ is a triple
  $(\mathcal{X},\mathcal{L},e)$,
  \nomenclature[aX41]{$(\mathcal{X},\mathcal{L},e)$}{model of a
    variety and a line bundle}%
  where $\mathcal{X}$ is a model over $S$ of $X$,
  $\mathcal{L}$ is a line bundle on $\mathcal{X}$ and $e\ge 1$ is an
  integer, together with an isomorphism $\mathcal{L}|_{X}\simeq
  L^{\otimes e}$.  When $e=1$, the model $(\mathcal{X},\mathcal{L},
  1)$ will be denoted $(\mathcal{X},\mathcal{L})$ for short.  A model
   $(\mathcal{X},\mathcal{L},e)$ is called
  \emph{proper}
  \index{model!of a line bundle!proper}%
  whenever $\cX$ is proper.
\end{defn}

We assume that the variety $X$ is proper for the rest of this section.
To a proper model of a line bundle we can associate a 
metric. 

\begin{defn}\label{def:7} Let $(\cX, \cL, e)$ be a proper model
  of $(X,L)$. Let $s$ be a local section of $L^{\an}$ defined at a
  point $p\in X^{\an}$. Let $\mathcal{U} \subset \cX$ be a
  trivializing open neighbourhood of~$\red(p)$ and $\sigma$ a
  generator of $\cL|_{\cU}$. Let $U=\cU\cap X$ and $\lambda \in
  \cO_{U^{\an}}$ such that $s^{\otimes e}= \lambda \sigma$ on
  $U^{\an}$.  Then, the \emph{metric induced by the proper model}
  $(\cX, \cL, e)$ on $L^{\an}$\index{metric!induced by a model},
  denoted $\|\cdot \|_{\cX, \cL, e}$,
  \nomenclature[svert41]{$\Vert\cdot \Vert_{\cX,\cL, e}$}{metric induced by a
    model}%
 is given by 
  \begin{displaymath}
    \|s(p)\|_{\cX, \cL, e}=|\lambda(p)|^{1/e}.
  \end{displaymath}
  This definition does neither depend on the choice of the open set $\cU$
  nor of the section~$\sigma $, and it
  gives a metric on $L^{\an}$.  The metrics on
  $L^{\an}$ obtained in this way are called
  \emph{algebraic}, and the pair $\ov L:=(L,\Vert\cdot\Vert_{\cX, \cL, e})$ is called an
  \emph{algebraic metrized line bundle}.
\index{metric!algebraic|see{algebraic metric}}%
\index{algebraic metric!}%
\index{metrized line bundle!algebraic}%
\end{defn}

Different models may give rise to the same metric.

\begin{prop}\label{prop:21}
  Let~$(\mathcal{X}, \mathcal{L},e)$ and
  $(\mathcal{X}',\cL',e')$ be proper models of $(X,L)$, and $f\colon
  \mathcal{X}'\to \mathcal{X}$ a morphism of models such that
  $(\cL')^{\otimes e}\simeq f^{\ast}\cL^{\otimes e'}$. Then the
  metrics on $L^{\an}$ induced by both models agree.
\end{prop}

\begin{proof}
  Let $s$ be a local section of $L^{\an}$ defined on a point $p\in
  X^{\an}$. Let $\cU\subset \cX$ be a trivializing open neighbourhood
  of $\red_{\cX}(p)$, the reduction of $p$ with respect to the model
  $\cX$ and $\sigma$ a generator of $\cL|_{\cU}$. Let $\lambda$ be an
  analytic function on $(\cU\cap X)^{\an}$ such that $s^{\otimes e}=
  \lambda \sigma$.

  We have that $f(\red_{\cX'}(p))= \red_{\cX}(p)$ and
  $\cU':=f^{-1}(\cU)$ is a trivializing open set of $\cL'^{\otimes e}$
  with generator $f^{*} \sigma^{\otimes e'}$.  Then $s^{\otimes ee'} =
  \lambda ^{e'} f^{*} \sigma^{\otimes e'}$ on $(\cU'\cap X)^{\an}=
  (\cU\cap X)^{\an}$.  Now the proposition follows directly from
  Definition \ref{def:7}.
\end{proof}

The inverse image of an algebraic metric is algebraic. 

\begin{prop}
  \label{prop:91} Let $\varphi\colon X_{1}\to X_{2}$ be a morphism of
  proper algebraic varieties over $K$ and $\ov L_{2}$ an algebraic
  metrized line bundle on $X_{2}$. Then $\varphi^{*}\ov L_{2}$, the
  inverse image under $\varphi$ of $\ov L_{2}$, is an algebraic
  metrized line bundle on $X_{1}$.
\end{prop}

\begin{proof}
  Let $(\cX_{2},\cL_{2}, e)$ be a proper model of $(X_{2},L_{2})$
  which induces the metric in~$\ov L_{2}$. From Nagata's
  compactification theorem (see for instance \cite{Conrad:Nc}) we can
  find a proper
  model $\cX'_{1}$ of $X_{1}$.  Let $\cX_{1}$ be the Zariski closure of the graph
  of~$\varphi$ in $\cX_{1}'\times_{S}\cX_{2}$.  This is a proper model
  of $X_{1}$ equipped with a morphism $\varphi_{S}\colon \cX_{1}\to
  \cX_{2}$.  Then $(\cX_{1},\varphi_{S}^{*}\cL_{2}, e)$ is a proper
  model of $(X_{1},\varphi^{*}L_{2})$ which induces the metric
  of~$\varphi^{*}\ov L_{2}$.
\end{proof}

Next we give a second description of an algebraic metric.  As before, let $X$ be
a proper variety over $K$ and $L$ a line bundle on $X$, and $\|\cdot\|_{\mathcal{X},
  \mathcal{L},e}$ an algebraic metric on $L^{\an}$. Let $p\in X^{\an}$
and put~$F=\mathscr{H}(p)$, which is a complete extension of $K$. Let
$F^{\circ}$ denote its valuation ring, and $o$ and $\eta$ the 
special and the generic point of $\Spec(F^{\circ})$, respectively. The
point~$p$ induces a morphism of schemes $\Spec(F)\to X$. By the
valuative criterion of properness, there is a unique extension
\begin{equation}\label{eq:26}
  \widetilde p\colon \Spec(F^{\circ})\longrightarrow \mathcal{X}.
\end{equation}
It satisfies $\widetilde p(\eta)=\pi (p)$, where $\pi\colon
X^{\an}\rightarrow X$ is the natural map introduced at the beginning
of \S \ref{sec:berkovich-spaces}, and $\widetilde p(o)=\red(p)$.

\begin{prop}
\label{prop:descmetric} With notation as above, let $s$ be
  a local section of $L$ in a neighbourhood of $\pi (p)$. Then
  \begin{equation}
    \label{eq:9}
    \|s(p)\|_{\mathcal{X},
      \mathcal{L},e}= \inf \big\{|a|^{1/e}\big|
    a\in F^{\times}, a^{-1}\wt p^{\ast} s^{\otimes e}\in
    \widetilde p^{\ast} \mathcal{L}\big\}.
  \end{equation}
\end{prop}
\begin{proof} 
  Write $\|\cdot\|=\|\cdot\|_{\mathcal{X}, \mathcal{L},e}$ for
  short. Let $\mathcal{U}=\Spec(\cA)\ni \red(p)$ be a trivializing
  open affine set of $\mathcal{L}$ and $\sigma $ a generator of
  $\mathcal{L}|_{\cU} $.  Then $s^{\otimes e}=\lambda \sigma $ with
  $\lambda$ in the fraction field of $\cA$. We have that $\lambda
  (p)\in F$ and, by definition, $\|s(p)\|=|\lambda (p)|^{1/e}$. If
  $\lambda (p)=0$, the equation is clearly satisfied. Denote
  temporarily by $\gamma$ the right-hand side of \eqref{eq:9}. If $\lambda
  (p)\not = 0$, then
  \begin{displaymath}
    \lambda (p)^{-1}\widetilde p^{\ast}s^{\otimes e}=\widetilde
    p^{\ast}\sigma \in \wt p^*\mathcal{L}.
  \end{displaymath}
  Hence $\|s(p)\|\ge \gamma$. Moreover, if $a\in F^{\times}$ is such
  that $a^{-1}\widetilde p^{\ast}s^{\otimes e}\in \wt p^*\mathcal{L}$,
  then there is an element $\alpha \in F^{\circ}\setminus \{0\}$ with
  $a^{-1}\widetilde p^{\ast}s^{\otimes e}=\alpha \widetilde
  p^{\ast}\sigma$. Therefore, $a=\alpha^{-1}\lambda (p) $ and
  $|a|^{1/e}=|\alpha |^{-1/e}|\lambda (p)|^{1/e}\ge |\lambda
  (p)|^{1/e}$. Thus, $\|s(p)\|\le \gamma$, completing the proof.
\end{proof}

We give a third description of an algebraic metric in terms of
intersection theory that makes evident the relationship with higher
dimensional Arakelov theory. Let $(\cX,\cL,e)$ be a proper model of
$(X,L)$ and $\iota\colon \cY\to \cX$ a closed algebraic curve.  Let
$\wt \cY$ be the normalization of $\cY$ and $\wt \iota\colon \wt
\cY\to \cX$ and $\rho \colon \wt \cY\to \Spec (K^{\circ})$ the induced
morphisms. Let $s$ be a rational section of $\cL$ such that the
Cartier divisor $\div (s)$
intersects properly $\cY$. Then the intersection number $(\iota\cdot
\div(s))$
\nomenclature[sdot]{$(\iota\cdot\div(s))$}{intersection number of a curve
  with a divisor}%
is defined
as
\begin{displaymath}
  (\iota \cdot \div(s))=\deg(\rho _{\ast}(\div(\wt \iota^{\ast}s))).
\end{displaymath}

\begin{prop}\label{prop:1}
  With the above notation, let $p\in X^{\an}_{\alg}$ and denote by $\widetilde
  p$ the image of the map in \eqref{eq:26}. This is a closed algebraic curve. Let $s$
  be a local section of $L$ defined at $p$ and such that $s(p)\not = 0$. Then
  \begin{equation*}
    {\log\|s(p)\|_{\mathcal{X},
      \mathcal{L},e}}=\frac{(\widetilde p\cdot
      \div (s^{\otimes 
        e}))}{e[\mathscr{H}(p):K]} {\log|\varpi|}. 
  \end{equation*}
\end{prop}

\begin{proof}
  We keep the notation in the proof of
  Proposition~\ref{prop:descmetric}. In particular, $s^{\otimes
    e}=\lambda \sigma$ with $\lambda$ in the fraction field of $\cA$,
  and $\mathscr{H}(p)=F$. Then
  \begin{equation*}
    \frac{\log\Vert s(p)\Vert_{\mathcal{X},
      \mathcal{L},e}}{\log|\varpi|}
    =\frac{\log| \lambda(p)|}{e\log|\varpi|}=\frac{\log|
      \Norm_{F/K}(\lambda(p))|}{e[F:K]\log|\varpi|}=\frac{\ord_{\varpi }
      (\Norm_{F/K}(\lambda(p)))}{e[F:K]},
  \end{equation*}
where $\Norm_{F/K}$ is the norm function of the finite extension
$F/K$. We also verify that
   \begin{multline*}
    (\wt p\cdot \div(s^{\otimes e}))=\deg(\rho _{\ast}(\div (\wt
    p^{\ast}s^{\otimes e})))=\deg(\rho _{\ast}(\div (\lambda (p))))\\=
    \deg(\div(\Norm_{F/K}(\lambda(p))))=\ord_{\varpi }
    (\Norm_{F/K}(\lambda(p))),
  \end{multline*}
  which proves the statement.
\end{proof}

\begin{exmpl}
  Let $X=\P^0_{K}=\Spec(K)$. A line bundle $L$ on $X$ is necessarily
  trivial, that is, $L\simeq K$. Consider the model $(\cX,\cL, e)$ of
  $(X,L)$ given
  by $\cX=\Spec(K^{\circ})$, $e\ge 1$, and $\cL$ a
  free $K^{\circ}$-submodule of $L^{\otimes e}$ of rank one. Let $v\in
  L^{\otimes e}$ be a basis of $\cL$. For a section $s$ of $L$ we can
  write $s^{\otimes e}=\alpha v $ with $\alpha\in K$. Hence,
  $$
  \|s\|_{\mathcal{X},
      \mathcal{L},e}=|\alpha|^{1/e}.
  $$
  All algebraic metrics on $L^{\an}$ can be obtained in this way.
\end{exmpl}

\begin{exmpl} \label{exm:2} Let $X=\P_{K}^n$ and $L=\mathcal{O}(1)$,
  the universal line bundle of ${\P_{K}^n}$.  As a model for $(X,L)$ we
  consider $\mathcal{X}=\P_{K^{\circ}}^n$, the projective space over
  $\Spec(K^{\circ})$, $\mathcal{L}=\mathcal{O}_{\P_{K^{\circ}}^n}(1)$,
  and $e=1$. A rational section $s$ of $L$ can be identified with a
  homogeneous rational function $\rho_{s}\in K(x_{0},\dots, x_{n})$
  of degree 1.

  Let $p=(p_{0}:\dots:p_{n})\in (\P_{K}^n)^\an\setminus \div(s)$ and
  set $F=\mathscr{H}(p)$.  Let $0\le i_{0}\le n$ be such that 
  $|p_{i_{0}}|=\max_{i}\{|p_{i}|\}$. Take $U \simeq
  \A_{K}^{n}$ (respectively $\mathcal{U}\simeq \A_{K^{\circ}}^{n}$) as
  the affine set $x_{i_{0}}\not=0$ over $F$ (respectively
  $F^{\circ}$). The point $p$ corresponds to the morphism
  \begin{displaymath}
    p^{\ast}\colon K[X_{0},\dots,X_{i_{0}-1},X_{i_{0}+1},\dots ,X_{n}]\longrightarrow F
  \end{displaymath}
  that sends $X_{i}$ to $p_{i}/p_{i_{0}}$. The extension $\wt p$ factors
  through the  morphism
  \begin{displaymath}
    \wt p^{\ast}\colon  K^{\circ}[X_{1},\dots,X_{i_{0}-1},X_{i_{0}+1}
    ,\dots,X_{n}]\longrightarrow F^{\circ}
  \end{displaymath}
  with the same definition. Then
  \begin{align*}
    ||s(p)|| &=\inf \big\{ |z| \ \big| z\in F^\times , z^{-1} \wt p^{\ast}s\in
    \wt
    p^*\cL\big\}\\[1mm]
    &=\inf \big\{ |z| \ \big| z\in F^\times , z^{-1}
    \rho_{s}(p_{0}/p_{i_{0}},\dots,1,\dots,p_{n}/p_{i_{0}}) \in F^{\circ}\big\}\\[1mm]
    &=\left| \frac{\rho_{s}(p_{0},\dots,p_{n})}{p_{i_{0}}}\right|\\[1mm]
    &=\frac{|\rho_{s}(p_{0},\dots,p_{n})|}{\max_{i}\{|p_{i}|\}}.
  \end{align*}
  We call this the \emph{canonical metric} of
    $\cO(1)^{\an}$
  \index{canonical metric!of $\cO(1)^{\an}$}%
  and we denote it by
  $\|\cdot\|_{\can}$.
  \nomenclature[svert21]{$\Vert\cdot\Vert_{\can}$}{canonical metric of
    $\cO(1)^{\an}$ (non-Archimedean case)}%
\end{exmpl}

Many other algebraic metrics can be obtained from Example \ref{exm:2},
by considering maps of varieties to projective spaces.  Let $X$ be a
proper variety over $K$ equipped with a line bundle $L$ such that
$L^{\otimes e}$ is generated by global sections for an integer $e\ge
1$. A set of global sections in $\Gamma (X,L^{\otimes e})$ that
generates $L^{\otimes e}$ induces a morphism $\varphi\colon X \to
\P_{K}^{n}$ and, by inverse image, a  metric
$\varphi^{*}\|\cdot\|_{\can}$ on $L$. Then
Proposition \ref{prop:91} shows that this metric is algebraic.

Now we recall the notion of semipositivity for algebraic
metrics. A curve $C$ in~$\cX$ is \emph{vertical}\index{vertical curve} if it is
contained in $\cX_{o}$. 

\begin{defn} 
\label{def:51} Let $X$ be a proper algebraic variety over $K$, $L$ a line
bundle on $X$ and $(\mathcal{X},\mathcal{L}, e)$ a proper model of
$(X,L)$. We say that $(\mathcal{X},\mathcal{L}, e)$ is a 
\emph{semipositive model}\index{model!semipositive} if, for every
vertical curve $C$ in $\cX$,  
  \begin{displaymath}
    \deg_{\mathcal{L}}(C)\ge 0.
  \end{displaymath}
Let $\Vert\cdot\Vert$ be a metric on $L$ and set $\ov
L=(L,\Vert\cdot\Vert)$. We say that $\ov L$ \emph{has a semipositive model}
\index{metric!with a semipositive model}%
if there is a semipositive model
  $(\mathcal{X},\mathcal{L}, e)$ of $(X,L)$ that
  induces the metric. 
\end{defn}

\begin{prop} \label{prop:26} Let $\varphi\colon X_{1}\to X_{2}$ be a
  morphism of proper algebraic varieties over $K$ and $\ov L_{2}$ a
  metrized line bundle on $X_{2}$ with a semipositive model. Then
  $\varphi^{*}\ov L_{2}$ is a metrized line bundle on $X_{1}$ with a
  semipositive model.
\end{prop}

\begin{proof}
  Let $(\cX_{2},\cL_{2},e)$ be a semipositive model inducing the metric
  of $\ov L_{2}$. With notations as in the proof of Proposition
  \ref{prop:91}, $(\cX_{1},\varphi_{S}^{*}\cL_{2},e)$ is a model
  inducing the metric of $\varphi^{*}\ov L_{2}$.  Let $C$ be a
  vertical curve in $\cX_{1}$. By the projection formula,
\begin{displaymath}
  \deg_{\varphi^{*}\cL_{2}}(C)=   \deg_{\cL_{2}}(\varphi_{*}C)\ge0. 
\end{displaymath}
Hence, $(\cX_{1},\varphi_{S}^{*}\cL_{2},e)$ is semipositive.
\end{proof}

\begin{exmpl}
  The canonical metric in Example~\ref{exm:2} has a semipositive
  model: for a {vertical} curve $C$, its degree with respect to
  $\cO_{\P^n_{K^{\circ}}}(1)$ equals its degree with respect to the
  restriction of this model to the special fibre.  This restriction
  identifies with $\cO_{\P^n_k}(1)$, the universal line bundle of
  $\P^{n}_{k}$, which is ample.  Hence, all the metrics obtained by
  inverse image of the canonical metric of $\cO(1)^{\an}$ have
  semipositive models.
\end{exmpl}

Finally, we recall the definition of the signed measures associated
with a family of algebraic metrics.

\begin{defn}
  \label{def:76}
  Let $\ov L_i$, $i=0,\dots, d-1$, be line bundles on $X$ equipped
  with algebraic metrics. For each $i$, choose a model
  $(\mathcal{X}_{i},\mathcal{L}_{i}, e_{i})$ that induces the metric
  of~$\ov L_{i}$. We can assume without loss of generality that the
  models $\mathcal{X}_{i}$ agree with a common model $\mathcal{X}$.
  Let $Y$ be a $d$-dimensional subvariety of $X$ and
  $\mathcal{Y}\subset \mathcal{X}$ be the closure of $Y$. Let
  $\wt{\mathcal{Y}}$ be its normalization, $\wt{\cY}_{o}$ the special
  fibre, $\wt{\cY}_{o}^{(0)}$ the set of irreducible
  components of $\wt{\cY}_{o}$, $\widetilde Y= \wt \cY_{\eta}$ the
  generic fibre, and $\widetilde Y^{\an}$ the analytification of
  $\widetilde Y$. For each $V\in \wt{\cY}_{o}^{(0)}$, consider the
  point $\xi_{V}\in \wt Y^{\an}$ given by Proposition
  \ref{prop:92}. Let $\delta _{\xi_{V}}$ be the Dirac delta measure on
  $ X^{\an}$ supported on the image of $\xi_{V}$.  We define a
  discrete {signed measure} on $X^{\an}$ 
\index{algebraic metric!signed measure associated to}%
  \index{measure!associated to an algebraic metric}%
   by
\begin{equation}
  \label{eq:11}
  \chern_1(\ov L_{0})\land\dots\land \chern_{1}(\ov L_{d-1})\land \delta _{Y}=
\sum_{V\in
    \wt{\cY}_{o}^{(0)}}\ord_{V}(\varpi)
  \frac{\deg_{\mathcal{L}_{0},\dots,\mathcal{L}_{d-1}} (V) }{e_{0}\dots e_{d-1}}
  \delta _{\xi_{V}}.
\end{equation}
\nomenclature[ac1L1]{$\chern_1(\ov L_{0})\land\dots\land 
\chern_{1}(\ov L_{d-1})\land \delta _{Y}$}{signed measure (algebraic case)}%
This notion extends by linearity to the group of $d$-dimensional cycles of $X$.
\end{defn}

This signed measure only depends on the metrics and not
on the particular choice of models~\cite[Proposition~2.7]{Cha06}. Observe that
$\ord_{V}(\varpi)$ is the multiplicity of the component $V$ in
$\wt{\cY}_{o}$ and that the total mass of this measure equals
$\deg_{L_0,\dots, L_{d-1}}(Y)$.  If $\ov L_{i}$ has a semipositive model for
all $i$ and $Y$ is effective, this signed measure is a measure.

\begin{rem}\label{rem:1}
  The above measure was introduced by Chambert-Loir in~\cite{Cha06}.  For
  the subvarieties of a projective space equipped with the canonical
  metric, it is also possible to define similar measures through the
  theory of Chow forms~\cite{Philippon:HAII}.
\end{rem}

\begin{rem}\label{rem:15} Let $\wh K$ be the completion of the
  algebraic closure $\ov K $ of $K$. 
In analogy with  Remark \ref{rem:17}, 
we could have defined a 
  continuous metric on $L^{\an}$ as a continuous metric on
  the line bundle $L^{\an}_{\wh K}$ over $X_{\wh K}^{\an}$ that is invariant under the
  action of the Galois group $\Gal(\ov K^{\sep}/K)$. The obtained theory is
  equivalent to the one outlined here and the reader should
  have no difficulties in translating results from one to the
  other. This point of view is closer to Zhang's approach in
\cite{Zhang:_small}, \end{rem}

\section{Semipositive{} and  DSP metrics, measures and local
  heights} 
\label{sec:admiss-integr-metr}

Let $K$ be either $\R$ or $\C$ (the \emph{Archimedean case})
\index{Archimedean case} as in \S \ref{sec:arqu-smooth-metr}, or a
field which is complete with respect to a nontrivial non-Archimedean
discrete absolute value (the \emph{non-Archimedean
  case})\index{non-Archimedean case} as in \S
\ref{sec:algebr-metr-discr}.  Let $X$ be a proper variety over
$K$. Its analytification $X^{\an}$ will be a complex analytic space in
the Archimedean case (equipped with an anti-linear involution when
$K=\R$), or an analytic space in the sense of Berkovich in the
non-Archimedean case.  A \emph{metrized line bundle} 
\index{metrized line bundle!} on $X$ is a pair $\ov L=(L,\|\cdot\|)$, where $L$ is a
line bundle on $X$ and $\|\cdot\|$ is a metric on $L^{\an}$.  Recall
that the operations on line bundles of tensor product, dual and
inverse image under a morphism extend to metrized line bundles.


Given two metrics $\|\cdot\|$ and $\|\cdot\|'$ on
$L^{\an}$, their quotient defines a continuous function $X^{\an}\to
\R_{>0}$ given by $\|s(p)\|/\|s(p)\|'$ for any local section $s$
of~$L$ not vanishing at $p$. The \emph{distance} 
\index{distance between metrics}%
between $\|\cdot\|$ and $\|\cdot\|'$ is defined as
the supremum of the absolute value of the logarithm of this
function. In other words,
\begin{equation}
  \label{eq:84}
\dist(\|\cdot\|,\|\cdot\|')=\sup_{p\in X^\an\setminus
  \div(s)}|\log(\|s(p)\|/\|s(p)\|')|
\end{equation}
for any nonzero rational section $s$ of $L$. 

\begin{defn} \label{def:65} Let $\ov L=(L,\Vert\cdot\Vert)$ be a
  metrized line bundle on $X$. The metric \hbox{$\|\cdot\|$}
  is  \emph{semipositive{}}
\index{metric!semipositive{}|see{semipositive{} metric}}%
  \index{semipositive{} metric}%
  if there exists a
  sequence $(\|\cdot\|_l)_{l\ge 0}$ of semipositive smooth metrics (in
  the Archimedean case) or 
  metrics with a semipositive model (in the non-Archimedean case)
   on $L^{\an}$ such that
$$
\lim_{l\to \infty}\dist(\|\cdot\|,\|\cdot\|_l)=0.
$$
If this is the case, we say that $\ov
L$ is \emph{semipositive{}}.
\index{metrized line bundle!semipositive{}}%
The metrized line bundle $\ov L$ is
called \emph{DSP} (for \og difference of semipositive\fg{})
\index{DSP (difference of semipositive) metric}%
\index{metrized line bundle!DSP}%
if there are semipositive{} line bundles $\ov M$, $\ov N$ such that $\ov L=\ov
M\otimes \ov N^{\otimes -1}$.
\end{defn}

\begin{rem}\label{rem:31}
  In the Archimedean case, if $\|\cdot\|$ is a smooth metric, one can
  verify that definitions \ref{def:65} and \ref{def:1} are
  equivalent. Thus there is no ambiguity in the use of the term
  semipositive metric. 

  In the non-Archimedean case, a metric with a
  semipositive model is semipositive. In the general case, we do not
  know whether an algebraic and semipositive metric has a
  semipositive model. 
\end{rem}

\begin{rem}\label{rem:29}
Although we define our notion of semipositivity through a limit
process, we believe that a ``good'' definition should be
intrinsic. For example, for smooth projective varieties, in the
Archimedean case \cite[Th\'eor\`eme
4.6.1]{Maillot:GAdvt} and in the non-Archimedean case of
equi-characteristic zero \cite[Theorem 5.11]{boucksom:ssmnag} our
definition is equivalent to the fact that the logarithm of the norm of
a section is a plurisubharmonic function. We hope our definition will
still agree with such an intrinsic one when the theory of
plurisubharmonic functions on Berkovich spaces matures. 

We adopt the terminology of ``DSP metric'' by analogy with the
notion of DC function, used in convex analysis to designate a function
that is a difference of two convex functions. 
\end{rem}

The tensor product and the inverse image of semipositive{} line bundles
are also semipositive{}. The tensor product, the dual and the inverse
image of DSP line bundles are also DSP.

\begin{exmpl} \label{exm:6} Let $X=\P^{n}$ be the projective space
  over $\C$ and $L=\mathcal{O}(1)$. The \emph{canonical metric}
  of~$\mathcal{O}(1)^{\an}$ 
\index{canonical metric!of $\cO(1)^{\an}$}%
  is the metric given, for $p=(p_{0}:\dots:p_{n})\in
  \P^{n}(\C)$, by
 \begin{displaymath}
    \|s(p)\|_\can =\frac{|\rho_{s}(p_0,\dots,p_n)|}{\max_i\{|p_i|\}},
  \end{displaymath}
  \nomenclature[svert22]{$\Vert\cdot\Vert_{\can}$}{canonical metric of
    $\mathcal{O}(1)^{\an}$ (Archimedean case)}%
  for any rational section $s$ of $L$ defined at $p$ and the
  homogeneous rational function $\rho_{s}\in \C(x_{0},\dots, x_{n})$
  associated to $s$.
  
This is a semipositive{} metric. Indeed, consider the $m$-power map
$[m]:\P^n\to \P^n$ defined as
$[m](p_0:\dots:p_n)=(p^m_0:\dots:p^m_n)$. The $m$-th root of the
inverse image by $[m]$ of the Fubini-Study metric of
$\mathcal{O}(1)^{\an}$ is the semipositive smooth metric on~$L^{\an}$ given by
\begin{displaymath}
     \|s(p)\|_m =\frac{|\rho _{s}(p_0,\dots,p_n)|}{(\sum_i|p_i|^{2m})^{1/2m}}.
  \end{displaymath}
 The family of metrics obtained varying $m$ converges uniformly to the
 canonical metric. 
\end{exmpl}

\begin{prop} 
  \label{prop:2} Let $Y$ be a $d$-dimensional subvariety of $X$ and
  $(L_{i},\|\cdot\|_{i})$, $i=0,\dots,d-1$, a collection of
  semipositive{} metrized line bundles on $X$. For each~$i$, let
  $(\|\cdot\|_{i,l})_{l\ge 0} $ be a sequence of semipositive smooth
  metrics (in the Archimedean case) or metrics with a semipositive
  model (in the non-Archimedean case) on $L_{i}^{\an}$ that converge
  to $\|\cdot\|_{i}$.  Then the measures $
  \chern_{1}(L_{0},\|\cdot\|_{0,l})\land \dots\land
  \chern_{1}(L_{d-1},\|\cdot\|_{d-1,l})\wedge \delta _{Y} $ converge
  weakly to a measure on $X^\an$.
\end{prop}

\begin{proof}
The non-Archimedean case is established in~\cite[Proposition~2.7(b)]{Cha06}
and in~\cite[Proposition~3.12]{Gub07}. The Archimedean case can 
be proved similarly.
\end{proof}

\begin{defn}
  \label{def:60}
  Let $\ov
  L_i=(L_{i},\|\cdot\|_{i})$, $i=0,\dots,d-1$, be a collection of
  semipositive{} metrized line bundles on $X$. For a $d$-dimensional
  subvariety $Y\subset X$, we denote by~$
  \chern_1(\ov L_{0})\land\dots\land \chern_{1}(\ov L_{d-1})\wedge
  \delta _{Y}$ the limit measure in Proposition \ref{prop:2}.
For DSP bundles
$\ov L_i$ and a $d$-dimensional cycle $Y$ of $X$, we can associate a signed measure $ \chern_1(\ov
L_{0})\land\dots\land \chern_{1}(\ov L_{d-1})\wedge \delta _{Y}$ on
$X^{\an}$ by multilinearity.
\index{DSP (difference of semipositive) metric!signed measure associated to}%
\index{measure!associated to a DSP metric}%
\nomenclature[ac1L2]{$\chern_1(\ov L_{0})\land\dots\land
  \chern_{1}(\ov L_{d-1})\land \delta _{Y}$}{signed measure (DSP case)}%
\end{defn}

This signed measure behaves well under field extensions.

\begin{prop}\label{prop:28}
  With the previous notation, let $K'$ be a finite extension
  of~$K$. Set $(X',Y')=(X,Y)\times \Spec(K')$ and let $\varphi\colon
  {X'}^{\an}\to X^{\an}$ be the induced map. Let~$\varphi^{\ast}\ov
  L_{i}$, $i=0,\dots,d-1$, be the line bundles with algebraic metrics
  on $X'$ obtained by base change. Then
  \begin{displaymath}
    \varphi_{\ast} \left(\chern_1(\varphi^{\ast}\ov L_{0})\land\dots\land
      \chern_{1}(\varphi^{\ast}\ov L_{d-1})\land \delta _{{Y'}}\right)=
    \chern_1(\ov L_{0})\land\dots\land \chern_{1}(\ov L_{d-1})\land \delta _{
      Y'}.
  \end{displaymath}
\end{prop}

\begin{proof}
This follows from \cite[Remark 3.10]{Gub07}.
\end{proof}

We also have the following functorial property.

\begin{prop} \label{prop:66} Let $\varphi\colon X'\to X$ be a morphism
  of proper varieties over $K$, $Y'$ a $d$-dimensional cycle of $X'$,
  and $\ov L_i$, $i=0,\dots,d-1$, a 
  collection of DSP metrized line bundles on $X$. Then
  \begin{displaymath}
    \varphi_{\ast} \left(\chern_1(\varphi^{\ast}\ov L_{0})\land\dots\land
      \chern_{1}(\varphi^{\ast}\ov L_{d-1})\land \delta
      _{{Y'}}\right)=
    \chern_1(\ov L_{0})\land\dots\land \chern_{1}(\ov L_{d-1})\land \delta _{
      \varphi_{*}Y'}.
  \end{displaymath}
\end{prop}
\begin{proof}
  In the non-Archimedean, this follows from \cite[Corollary
  3.9(2)]{Gub07}. In the Archime\-dean case, this follows from the
  functoriality of Chern classes, the projection formula, and the
  continuity of the direct image of measures.
\end{proof}

\begin{defn}\label{def:4}
   Let $Y$ be a $d$-dimensional cycle of $X$ and
   $(L_{i},s_{i})$, $i=0,\dots,d$, a collection of line bundles on $X$
   with a rational section.  We say that \emph{$s_{0},\dots, s_{d}$
    meet~$Y$ properly}
\index{proper intersection}%
if, for all $I\subset \{0,\dots,d\}$, each irreducible component of
$Y\cap \bigcap_{i\in I} |\div (s_{i})|$ has dimension $d-\# I$.
\end{defn}

The above signed measures allow to integrate continuous
functions on $X^{\an}$. Indeed, it is also possible to integrate certain
functions with logarithmic singularities that play an important role
in the definition of local heights. Moreover, this integration is
continuous with respect to uniform convergence of metrics. 

\begin{thm} 
  \label{prop:4} Let $Y$ be a $d$-dimensional cycle of $X$, $\ov L_i$,
  $i=0,\dots,d-1$, a collection of semipositive{} metrized line
  bundles and $(\ov L_{d},s_{d})$ a metrized line bundle with a
  rational section meeting $Y$ properly.
  \begin{enumerate}
  \item \label{item:12} The support of $\div(s_{d})$ has measure zero and the function
    $\log\|s_{d}\|_{d}$ is integrable with respect to
    the measure $ \chern_1(\ov L_{0})\land\dots\land \chern_{1}(\ov
    L_{d-1})\wedge \delta _{Y} $.

  \item \label{item:27} Let $(\|\cdot\|_{i,n})_{n\ge 1}$ be a sequence of semipositive{}
    metrics that converge to $\|\cdot\|_{i}$ for each $i$. Then
    \begin{multline*}
      \int_{X^{\an}}\log\|s_{d}\|_{d}
      \chern_{1}(\overline L_{0})\land \dots \wedge \chern_{1}(\overline L_{d-1})
      \land \delta _{Y}\\ = \lim_{n_{0},\dots,n_{d}\to \infty} 
      \int_{X^{\an}}\log\|s_{d}\|_{d,n_{d}}
      \chern_{1}(\overline L_{0,n_{0}})\land \dots \wedge \chern_{1}(\overline L_{d-1,n_{d-1}})
      \land \delta _{Y}.
    \end{multline*}
  
  \end{enumerate}
\end{thm}

\begin{proof} In the Archimedean case, when $X$ is smooth, this is
  proved in \cite[th\'eor\`emes 5.5.2(2) and 5.5.6(6)]{Maillot:GAdvt}. 
  For
  completions of number fields 
  this is proved in 
  \cite[Theorem~4.1]{ChambertThuillier:MMel}, both in the Archimedean and
  non-Archimedean cases. Their argument can be easily extended
  to cover the general case.
\end{proof}

\begin{defn} \label{def:3}
  The \emph{local height}
  \index{height of cycles!local}%
  on $X$ is the function that, to each
  $d$-dimensional cycle $Y$ and each family of DSP
  metrized line bundles with sections
  $(\ov L_{i},s_{i})$, $i=0,\dots,d$, such that the sections meet $Y$
  properly, associates a real 
  number $\h_{\ov 
    L_{0},\dots,\ov L_{d}}(Y;s_{0},\dots,s_{d})$ determined inductively by
  the properties:
  \begin{enumerate}
  \item \label{item:77} $\h(\emptyset)=0$;
  \item \label{item:78} if $Y$ is a cycle of dimension $d\ge0$, then
    \begin{multline*}
      \h_{\ov
      L_{0},\dots,\ov L_{d}}(Y;s_{0},\dots,s_{d})=
    \h_{\ov
      L_{0},\dots,\ov L_{d-1}}(Y\cdot \div s_{d};s_{0},\dots,s_{d-1})\\
    -\int_{X^{\an}}\log\|s_{d}\|_{d}
    \chern_{1}(\overline L_{0})\land \dots \wedge \chern_{1}(\overline L_{d-1})
    \land \delta _{Y}.
    \end{multline*}
    \nomenclature[ahL01]{$\h_{\ov L_{0},\dots,\ov
        L_{d}}(Y;s_{0},\dots,s_{d})$}{local height}%
  \end{enumerate}
\end{defn}
In particular, for $p\in X(K)\setminus |\div (s_{0})|$,
\begin{equation}\label{eq:96}
  \h_{\ov
      L_{0}}(p;s_{0})=-\log\|s_0(p)\|_{0}.
\end{equation}

\begin{rem} Definition \ref{def:3} makes sense thanks to Theorem
  \ref{prop:4}. We have chosen to introduce first the measures and then
  heights for simplicity of the exposition. Nevertheless, the approach
  followed in the literature is the inverse, because the proof of
  Theorem \ref{prop:4} relies on the properties of local heights. The
  interested reader can consult  \cite{Chambert-Loir10:heigh} for
  more details.
\end{rem}

\begin{rem}\label{rem:10} Definition \ref{def:3} works better when the
  variety $X$ is projective. In this case, for every cycle $Y$
there exist sections that meet $Y$
  properly, thanks to the 
  moving lemma. This does not necessarily occur for arbitrary proper
  varieties. Nevertheless, we will be able to define the global height
  (Definition \ref{def:61}) of any cycle of a proper variety by
  using Chow's lemma. Similarly we will be able to define the toric local 
  height (Definition \ref{def:35}) of any cycle of a proper toric variety.
\end{rem}

\begin{rem} When $X$ is regular and the metrics are smooth (in the
  Archimedean case) or algebraic (in the non-Archimedean case), the
  local heights of Definition \ref{def:3} agree with the local heights
  that can be derived using the Gillet-Soul\'e arithmetic intersection
  product. In particular, in the Archimedean case, this local height
  agrees with the Archimedean contribution of the Arakelov global
  height introduced by Bost, Gillet and Soul\'e in
  \cite{BostGilletSoule:HpvpGf}. In the non-Archimedean case, the
  local height with respect to an algebraic metric can be interpreted
  in terms of an intersection product. Assume that $Y$ is prime and
  choose models $(\mathcal{X}_{i},\mathcal{L}_{i},e_{i})$ of
  $(X,L_{i})$ that realize the algebraic metrics of $\ov
  L_{i}$. Without loss of generality, we assume that all the
  models $\mathcal{X}_{i}$ agree with a common model
  $\mathcal{X}$. The sections $s^{\otimes e_{i}}_{i}$ can be seen as
  rational sections of $\mathcal{L}_{i}$ over $\mathcal{X}$. With the
  notations in Definition \ref{def:76}, the equation \eqref{eq:7}
  implies that
  \begin{displaymath}
    \log\|s_{d}(\xi_{V})\|=\frac{\log|\varpi
      |\ord_{V}(s_{d}^{\otimes e_{d}})}{e_{d}\ord_{V}(\varpi )}.
  \end{displaymath}
  Therefore, in this case the recurrence in Definition \ref{def:3}\eqref{item:78} can be
  written as
    \begin{multline*}
      \h_{\ov
      L_{0},\dots,\ov L_{d}}(Y;s_{0},\dots,s_{d})=
    \h_{\ov
      L_{0},\dots,\ov L_{d-1}}(Y\cdot \div (s_{d});s_{0},\dots,s_{d-1})\\
    -\frac{\log |\varpi |}{e_{0}\dots e_{d}}\sum_{V\in
      \wt{\mathcal{Y}}_{0}^{(0)}}\ord_{V}(s_{d}^{\otimes e_{d}})
    \deg_{\mathcal{L}_{0},\dots,\mathcal{L}_{d-1}} (V).
    \end{multline*}
\end{rem} 

\begin{rem}\label{rem:5}
  It is a fundamental observation by Zhang \cite{Zhang:_small} that
  the non-Archimedean contribution of the Arakelov global height of a variety
  can be expressed in terms of a family of metrics. In particular,
  this global height only depends on the metrics and not on a
  particular choice of
  models, exhibiting the analogy between the Archimedean and
  non-Archimedean settings. The local heights were extended by Gubler
  \cite{GublerHab, Gu03} to non-necessarily discrete valuations
  and he also weakened the hypothesis of proper
  intersection. 
\end{rem}

\begin{rem}\label{rem:4}
  The local heights of Definition \ref{def:3} agree with the local
  heights introduced by Gubler, see~\cite[Proposition~3.5]{Gu03} for
  the Archimedean case and \cite[Remark~9.4]{Gu03} for the
  non-Archimedean case.  In the Archimedean case, the local height in
  \cite{Gu03} is defined in terms of a refined star product of Green
  currents based on \cite{Burgos:Gftp}. The hypothesis needed in
  Gubler's definition of local heights are weaker than the ones we
  use.  We have chosen the current definition because it is more
  elementary and suffices for our purposes.
\end{rem}

\begin{thm} \label{thm:1} The local height function satisfies the
  following properties. 
  \begin{enumerate}
  \item \label{item:7} It is symmetric and multilinear
    with respect to $\otimes$ in the
    pairs $(\ov L_{i},s_{i})$, $i=0,\dots,d$, provided that all 
    terms are defined.
  \item \label{item:8} Let $\varphi\colon X'\to X$ be a morphism of
    proper varieties over $K$, $Y$ a $d$-dimen\-sional cycle of $X'$,
    and $(\ov L_{i},s_{i})$, $i=0,\dots,d$, a collection of DSP
    metrized line bundles on $X$ with a section. Then
    \begin{displaymath}
            \h_{\varphi^{\ast} \ov
      L_{0},\dots,\varphi^{\ast} \ov L_{d}}(Y;\varphi^{\ast}
    s_{0},\dots,\varphi^{\ast} s_{d})=
      \h_{\ov
      L_{0},\dots,\ov L_{d}}(\varphi_{\ast}Y;s_{0},\dots,s_{d}),
    \end{displaymath}
    provided that both terms are defined.
  \item \label{item:9} Let $X$ be a 
    proper variety
    over $K$, $Y$ a $d$-dimen\-sional cycle of $X$, and $(\ov
    L_{i},s_{i})$, $i=0,\dots,d$, a collection of DSP metrized
    line bundles 
    on $X$ with sections that meet $Y$
    properly. Let $f $ be a rational function such that
    the sections $s_{0},\dots,s_{d-1},fs_{d}$ also meet $Y$ properly. Let
    $Z$ be the zero-cycle $Y\cdot \div 
    (s_{0})\cdots
    \div (s_{d-1})$. Then
    \begin{displaymath}
      \h_{\ov
      L_{0},\dots,\ov L_{d}}(Y;s_{0},\dots,s_{d-1},s_{d})-
      \h_{\ov
      L_{0},\dots,\ov L_{d}}(Y;s_{0},\dots,s_{d-1},fs_{d})=
    \log|f(Z)|,
    \end{displaymath}
    where, if $Z=\sum_{l} m_{l}p_{l}$, then $f(Z)=\prod_{l}
    f(p_{l})^{m_{l}}$.
  \item \label{item:6} Let $\ov {L}'_{d}=(L_{d},\|\cdot\|')$ be another choice of
    metric. Then
    \begin{multline*}
      \h_{\ov
      L_{0},\dots,\ov L_{d-1},\ov L_{d}}(Y;s_{0},\dots,s_{d})-
      \h_{\ov
      L_{0},\dots,\ov L_{d-1},\ov L'_{d}}(Y;s_{0},\dots,s_{d})=\\
    -\int_{X^{\an}}\log (\|s_{d}(p)\|/\|s_{d}(p)\|') \chern_{1}(\overline
    L_{0})\land \dots \wedge \chern_{1}(\overline L_{d-1})\land \delta _{Y}
    \end{multline*}
    is independent of the choice of sections.
  \end{enumerate}
\end{thm}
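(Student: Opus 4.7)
The plan is to establish all four properties by a simultaneous induction on the dimension $d$ of the cycle $Y$, the base case $d=-1$ (i.e.\ $Y=\emptyset$) being trivial by Definition \ref{def:3}\eqref{item:77}. By multilinearity of the measure $c_1(\ov L_0)\wedge\dots\wedge c_1(\ov L_{d-1})\wedge\delta_Y$ in the DSP bundles $\ov L_i$ and by the bilinearity of the divisor construction, the inductive formula in Definition \ref{def:3}\eqref{item:78} makes \eqref{item:7} (multilinearity in the last pair) and \eqref{item:6} immediate once \eqref{item:7} is established for the other pairs; indeed, \eqref{item:6} is just the direct difference of the two recursive formulas, and the right-hand side is independent of the choice of sections precisely because of \eqref{item:9} combined with symmetry.

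For functoriality \eqref{item:8}, I would apply the recursive formula, split the height into the intersection-theoretic term and the integral term, and treat them separately. The inductive hypothesis applied to the $(d-1)$-cycle, together with the projection formula $\varphi_\ast(Y\cdot \div(\varphi^\ast s_d))=\varphi_\ast Y\cdot \div(s_d)$, handles the first term. The integral term is handled by the change-of-variables formula for the direct image of the measure, which is precisely the content of Proposition \ref{prop:66}, together with the identity $\varphi^\ast\log\|s_d\|=\log\|\varphi^\ast s_d\|$.

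Property \eqref{item:9} is established by substituting $\log\|fs_d\|_d=\log|f|+\log\|s_d\|_d$ and $\div(fs_d)=\div(s_d)+\div(f)$ into the inductive formula. The difference of the two heights then splits as $\h_{\ov L_0,\dots,\ov L_{d-1}}(Y\cdot\div(f);s_0,\dots,s_{d-1})-\int_{X^\an}\log|f|\,c_1(\ov L_0)\wedge\dots\wedge c_1(\ov L_{d-1})\wedge\delta_Y$. Treating $\ov M=(\cO_X,|\cdot|)$ as a trivially metrized line bundle with the constant function $\log\|1\|\equiv 0$, this expression is exactly $\h_{\ov L_0,\dots,\ov L_{d-1},\ov M}(Y;s_0,\dots,s_{d-1},f)$, a local height over the zero-cycle $Z=Y\cdot\div(s_0)\cdots\div(s_{d-1})$. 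Iterating the recursion down to dimension $0$ and using \eqref{eq:96} yields $\log|f(Z)|$.

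The hard part is symmetry in \eqref{item:7}. It suffices to show that swapping $s_{d-1}$ and $s_d$ preserves the height. Applying the recursion twice on both sides and using commutativity of the intersection product $Y\cdot\div(s_d)\cdot\div(s_{d-1})=Y\cdot\div(s_{d-1})\cdot\div(s_d)$, the question reduces to the identity
\begin{align*}
\int_{X^\an}\log\|s_d\|_d\,\mu_{d-1,Y}&-\int_{X^\an}\log\|s_{d-1}\|_{d-1}\,c_1(\ov L_0)\wedge\cdots\wedge c_1(\ov L_{d-2})\wedge\delta_{Y\cdot\div(s_d)}\\
&=\int_{X^\an}\log\|s_{d-1}\|_{d-1}\,\mu'_{d-1,Y}-\int_{X^\an}\log\|s_d\|_d\,c_1(\ov L_0)\wedge\cdots\wedge c_1(\ov L_{d-2})\wedge\delta_{Y\cdot\div(s_{d-1})},
\end{align*}
where $\mu_{d-1,Y}$ and $\mu'_{d-1,Y}$ denote the obvious measures. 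This is the commutativity of the $\ast$-product of Green currents in the Archimedean smooth case, proved by Stokes' theorem from the Poincar\'e--Lelong equation $\dd\dc\log\|s\|^2=\delta_{\div(s)}-c_1(\ov L)$, and it has an analogous incarnation in the non-Archimedean algebraic case via symmetry of intersection numbers on the models. I would handle the general DSP case by choosing approximating sequences of semipositive smooth or algebraic metrics and invoking Theorem \ref{prop:4} to pass to the limit on both sides.
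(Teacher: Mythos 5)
The paper's own proof is a citation --- Gubler \cite{Gu03} for items \eqref{item:7} and \eqref{item:8}, with \eqref{item:9} and \eqref{item:6} declared to follow easily from the definition --- whereas you re-derive all four properties directly from the recursion of Definition~\ref{def:3} by induction on $d$, essentially reconstructing the machinery behind Gubler's results (projection formula via Proposition~\ref{prop:66}, star-product commutativity via Poincar\'e--Lelong and Stokes in the Archimedean smooth case, symmetry of intersection numbers on models in the non-Archimedean algebraic case, and approximation for DSP metrics). That is a legitimate and more self-contained route, and the splitting into an easy multilinear/projection-formula part and a hard commutativity part is the right decomposition.

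There is, however, a gap in your argument for \eqref{item:9}, which you present before the symmetry part of \eqref{item:7} even though it depends on it. After the substitutions $\log\|fs_d\|=\log|f|+\log\|s_d\|$ and $\div(fs_d)=\div(s_d)+\div(f)$ you correctly reduce the difference of heights to minus $\h_{\ov L_0,\dots,\ov L_{d-1},\ov M}(Y;s_0,\dots,s_{d-1},f)$ with $\ov M=(\cO_X,|\cdot|)$ trivially metrized. But iterating the recursion with $\ov M$ still in the last position first cuts by $\div(f)$, then by $\div(s_{d-1}),\dots,\div(s_1)$, and accumulates integrals of $\log|f|$ and of the $\log\|s_i\|_i$ against measures built from $c_1(\ov L_0),\dots,c_1(\ov L_{d-1})$: the end result is the height of $Y\cdot\div(f)\cdot\div(s_{d-1})\cdots\div(s_1)$ plus those integrals, not $\log|f(Z)|$ for $Z=Y\cdot\div(s_0)\cdots\div(s_{d-1})$. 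To get $\log|f(Z)|$ you must first commute the pair $(\ov M,f)$ to the front so that $c_1(\ov M)=0$ kills every integral term along the way and the recursion collapses to $\h_{\ov M}(Z;f)=-\log|f(Z)|$ by \eqref{eq:96}; that commutation is exactly the symmetry you postpone as the hard part. So the ordering of your simultaneous induction at level $d$ must be: symmetry first, then \eqref{item:9}. A minor correction in the opposite direction: \eqref{item:6} does not rely on \eqref{item:9} or on symmetry at all --- its right-hand side is manifestly section-independent, since $\|s_d\|/\|s_d\|'$ is a fixed continuous function on $X^{\an}$ and the measure involves only $\ov L_0,\dots,\ov L_{d-1}$, so the identity follows immediately by subtracting the two instances of the recursion (whose intersection-theoretic first terms coincide because $\div(s_d)$ is metric-independent).
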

\begin{proof}
In the Archimedean case,  statement \eqref{item:7} is
\cite[Proposition~3.4]{Gu03}, statement \eqref{item:8}  is
\cite[Proposition 3.6]{Gu03}. 
In the non-Archimedean case, statement \eqref{item:7} and \eqref{item:8} 
are \cite[Remark 9.3]{Gu03}.
The other two statements follow easily from the definition.
\end{proof}

\section{Metrics and global heights over adelic fields}
 \label{sec:adel-metr-glob}

To define global heights of cycles, we first introduce the notion
of adelic field, which is a generalization of the notion of global
field.  In \cite{Gu03} one can find a more general theory of global heights
  based on the concept of $M$-fields.

  \begin{defn} \label{def:6} Let $\K$ be a field and
    $\mathfrak{M}$ 
\nomenclature[aMK]{$\mathfrak{M}$}{absolute values and weights of
  an adelic field}%
\nomenclature[aKad1]{$\K$}{adelic field}%
    a family of absolute values on $\K$
    with positive real weights. The elements of $\mathfrak{M}$ are
    called \emph{places}\index{places}. For each place $v\in \mathfrak{M}$ we denote by
    $|\cdot|_v$
\nomenclature[svert01]{$\vert\cdot\vert_v$}{absolute value of an adelic field}%
the corresponding absolute value, by $n_v\in
    \R_{>0}$
\nomenclature[an]{$n_v$}{weight of an absolute value}%
the weight, and by $\K_{v}$
\nomenclature[aKad2]{$\K_v$}{completion of an adelic field}%
the completion of $\K$ with respect to $|\cdot|_v$.  We
    say that $(\K,\mathfrak{M})$ is an \emph{adelic field}
\index{adelic field}%
if
  \begin{enumerate}
  \item for each $v\in \mathfrak{M}$, the absolute value $|\cdot|_v$ is
    either Archimedean or associated to a nontrivial discrete valuation;
\item for each $\alpha\in
  \K^{\times}$, 
  $|\alpha|_v=1$ except a for a finite number of $v$.
  \end{enumerate}
  For an adelic field $(\K,\mathfrak{M})$ and $\alpha\in
  \K^{\times}$, the \emph{defect}
  \index{defect of an adelic field}%
  of $\alpha$ is
    $$
    \df(\alpha)=\sum_{v\in \mathfrak{M}}n_v\log|\alpha|_v.
    $$
    \nomenclature[ad0ef]{$\df(\alpha )$}{defect of the product formula}%
    Since $\df\colon \K^{\times}\to \R$ is a group homomorphism, we
    have that $\df(\K^{\times})$ is a subgroup of $\R$.  If
    $\df(\K^{\times})=0$, then $\K$ is said to satisfy the 
    \emph{product formula}.\index{product formula} The \emph{group of
      global heights}\index{adelic field!group of global heights of}
    of $\K$ is $\R/\!\df(\K^{\times})$.
\end{defn}

Observe that the complete fields $\K_{v}$ are either $\R$, $\C$ or of the
kind of fields considered 
in \S \ref{sec:algebr-metr-discr}.

\begin{defn}\label{def:80}
Let $(\K,\mathfrak{M})$ be an adelic field and $\F$ a finite
extension of $\K$. For each $v\in \mathfrak{M}$, put
$\mathfrak{N}_v$ for the set of
absolute values $|\cdot|_w$ of $\F$ that extend $|\cdot|_v$, with 
weight
$$ n_w=\frac{[\F_w:\K_v]}{[\F:\K]}n_v.$$ 
Set $\mathfrak{N}=\coprod_{v}
\mathfrak{N}_v$. Then $(\F,\mathfrak{N})$ is an adelic field. In this
case, we say that $(\F,\mathfrak{N})$ is an 
\emph{adelic field extension}\index{adelic field!extension of} of
$(\K,\mathfrak{M})$. 
\end{defn}

The classical examples of adelic fields are 
number fields and function fields of curves.

\begin{exmpl}\label{exempleQ}
  Let $\mathfrak{M}_\Q$ be the set of the Archimedean and $p$-adic
  absolute values of $\Q$, normalized in the standard way, with all
  weights equal to $1$.  Then $(\Q,\mathfrak{M}_{\Q})$ is an adelic
  field that satisfies the product formula. We identify
  $\mathfrak{M}_{\Q}$ with the set $\{\infty\}\cup \{\text{primes of }
  \Z\}$.  For a number field $\K$, the construction in Definition
  \ref{def:80} gives an
  adelic field $(\K,\mathfrak{M}_{\K})$ which satisfies the product
  formula too.
\end{exmpl}

\begin{exmpl}\label{exempleF}
  Consider the function field $K(C)$ of a smooth projective curve $C$
  over a field $k$. For each closed point $v\in C$ and $\alpha \in
  K(C)^{\times}$, we denote by $\ord_{v}(\alpha )$ the order of
  $\alpha $ in the discrete valuation ring $\mathcal{O}_{C,v}$.  We
  associate to each $v$ the absolute value and weight given by
  $$
  |\alpha|_v=c_{k}^{-\ord_v(\alpha)}, \quad n_{v}= [k(v):k]
  $$ 
  with
\begin{displaymath}
    c_{k}=
    \begin{cases}
      \e&\text{if } \#k = \infty,\\
      \#k\ & \text{if } \#k < \infty.
    \end{cases}
  \end{displaymath}
  Let  $\mathfrak{M}_{K(C)}$ denote this set of absolute values and
  weights. The pair $(K(C),\mathfrak{M}_{K(C)})$ is an
  adelic field which satisfies the product formula, since the degree
  of a principal divisor is zero. 

  More generally, let $\K$ be a finite
  extension of $K(C)$. Following Definition \ref{def:80} we obtain an
  adelic field extension $(\K,\mathfrak{M}_{\K/K(C)})$
  In this geometric setting, this construction can be explicited as
  follows.  Let $\pi \colon B \to C$ be a dominant morphism of smooth
  projective curves over $k$ such that the finite extension
  $K(C)\hookrightarrow \K$ can be identified with $\pi ^{\ast}\colon
  K(C)\hookrightarrow K(B)$.  For a closed point $v\in C$, the
  absolute values of~$\K$ that extend $|\cdot|_{v}$ are in bijection
  with the closed points of the fiber of $v$. For each closed point
  $w\in \pi^{-1}(v)$, the corresponding absolute value and weight are
  given, for $\beta\in K(B)^{\times}$, by
  \begin{displaymath} 
    |\beta|_w=c_{k}^{-\frac{\ord_w(\beta)}{e_{w}}}, 
    \quad n_{w}= \frac{e_{w}[k(w):k]}{[K(B):K(C)]},
  \end{displaymath}
  where $e_{w}$ is the ramification index of $w$ over $v$.
  Observe that the structure of adelic field on $\K$ depends on the
  extension and not just on the field $K(B)$. 
  For instance,  $(K(C), \mathfrak{M}_{K(C)})$ corresponds to the
  identity map $C\to C$ in the above construction, but other finite morphism
  $\pi\colon C\to C$ may give a different structure of adelic field on
  $K(C)$. The projection formula for the map $\pi $ implies that, for each
  $v\in \mathfrak{M}_{K(C)}$, the equation
  \begin{displaymath}
    [\K:K(C)]=\sum_{w\mid v}[\K_{w}:K(C)_{v}]
  \end{displaymath}
  is satisfied. From this, it is easy to deduce that $(\K,\mathfrak{M}_{\K/K(C)})$
  satisfies the product formula.
\end{exmpl}

  A simple example of an adelic field that does not satisfy the
  product formula is constructed below. This kind of adelic fields
  can be useful when studying arithmetic intersection on moduli spaces
  (see for instance \cite[\S 6]{BruinierBurgosKuehn:bpaihs}).
 
\begin{exmpl}\label{exm:43}
  Let $N\ge 2$ be an
  integer write $\mathfrak{M}_{N}=\{p\in \mathfrak{M}_{\Q}\mid p
  \nmid N\}$
  and $\K=(\Q,\mathfrak{M}_{N})$. Then $\K$ is an adelic field and 
  \begin{displaymath}
    \df(\K^{\times})=\sum_{p\mid N} \Z\log(p).
  \end{displaymath}
  
\end{exmpl}

\begin{defn} \label{def:46} Let $(\K,\mathfrak{M})$ be an adelic field. Let
  $X$ be a proper variety over~$\K$ and $L$ a line bundle on $X$. For
  each $v\in \mathfrak{M}$ set $X_{v}=X\times \Spec(\K_{v})$ and
  $L_{v}=L\times \Spec(\K_{v})$. 
  A \emph{metric}
    \index{metric!on a line bundle!over an adelic field}%
    on $L$ is a family of metrics
    $\|\cdot\|_v$, $v\in \mathfrak{M}$, where $\|\cdot\|_v$ is a
    metric on $L_v^{\an}$. We will denote by $\ov
    L=(L,(\|\cdot\|_v)_{v})$ the corresponding metrized line
    bundle. This metric is said to be
    \emph{semipositive{}}
\index{semipositive{} metric!over an adelic field}%
(respectively \emph{DSP})
\index{DSP (difference of semipositive) metric!over an adelic field}%
if $\|\cdot\|_v$ is semipositive{}
    (respectively {DSP}) for all $v\in \mathfrak{M}$.

Let $Y$ be a $d$-dimensional 
cycle of $X$ and $(\ov L_i,s_{i})$, $i=0,\dots, d$, DSP metrized line bundles
on $X$ with rational sections meeting $Y$ properly. For  $v\in\mathfrak M$, we note
  \begin{equation*}
    \h_{v,\ov L_0,\dots, \ov L_d}(Y;s_0,\dots,s_d)=  
\h_{\ov L_{0,v},\dots, \ov L_{d,v}}(Y_{v};s_{0,v},\dots,s_{d,v})
  \end{equation*}
where $s_{i,v}$ is the rational section of $L_{i,v}$ induced by
$s_{i }$. 

\end{defn}

For cycles defined over an arbitrary adelic field, the global heights
with respect to DSP metrized line bundles may not
be always defined. An obvious class of cycles where the global height
is well-defined is the following.

\begin{defn} \label{def:47} Let $(\K,\mathfrak{M})$ be an adelic
  field, $X$ a proper variety over $\K$ and~$\ov L_i$, $i=0,\dots, d$,
  a family of DSP metrized line bundles on $X$. Let $Y$ be a
  $d$-dimensional irreducible subvariety of $X$. We say that $Y$ is
  \emph{integrable}\index{integrable cycle} with respect to $\ov
  L_0,\dots,\ov L_d$ if there is a birational proper map
  $\varphi\colon Y'\to Y$ with $Y'$ projective, and rational sections
  $s_i$ of $\varphi ^{\ast} L_i$, $i=0,\dots,d$, meeting $Y'$
  properly, such that for all but a finite number of $v\in
  \mathfrak{M}$,
  \begin{displaymath}
    \h_{v,\varphi ^{\ast} \ov L_0,\dots,\varphi ^{\ast}
      \ov L_d}(Y';s_0,\dots,s_d)=0.
  \end{displaymath}
    \nomenclature[ahL011]{$\h_{v,\ov L_{0},\dots,\ov
        L_{d}}(Y;s_{0},\dots,s_{d})$}{local height in the adelic case}%
  A $d$-dimensional cycle is \emph{integrable} if all its components
  are integrable.
\end{defn}

It is clear from the definition that the notion of integrability of
cycles is closed under tensor products of DSP metrized line
bundles. In addition, it satisfies the following 
properties.

\begin{prop}\label{prop:94} Let $(\K,\mathfrak{M})$ be an adelic
  field, $X$ a proper
  variety over $\K$ and~$\ov L_i$, $i=0,\dots, d$, a family of
  DSP metrized line bundles on $X$. 
  \begin{enumerate}
  \item \label{item:10} Let $Y$ be a
  $d$-dimensional irreducible subvariety of
  $X$ which is integrable with respect to $\ov L_i$, $i=0,\dots,
    d$. Let $\varphi\colon
    Y'\to Y$ be a proper birational map, with $Y'$ projective, and
    $s_i$, $i=0,\dots,d$, rational sections of 
    $\varphi ^{\ast} L_i$ meeting $Y'$ properly. Then
    \begin{displaymath}
      \h_{v,\varphi ^{\ast} \ov L_0,\dots,\varphi ^{\ast}
        \ov L_d}(Y';s_0,\dots,s_d)=0
    \end{displaymath} 
    for all but a finite number of $v\in \mathfrak{M}$.
  \item \label{item:11} Let $\psi\colon X'\to X$ be a morphism of
    proper varieties over $\K$ and $Y$ a $d$-dimensional cycle of
    $X'$. Then $Y$ is integrable with respect to $\psi^{\ast}\ov
    L_{0},\dots,\psi^{\ast}\ov L_{d}$ if and only if $\psi_{\ast}Y$ is
    integrable with respect to $\ov L_{0},\dots,\ov L_{d}$.
  \end{enumerate}
\end{prop}

\begin{proof}
  To prove \eqref{item:10}, we start by assuming that there are rational
  sections $s'_i$, $i=0,\dots,d$,  of 
    $\varphi ^{\ast} L_i$ meeting $Y'$ properly such that
    \begin{displaymath}
      \h_{v,\varphi ^{\ast} \ov L_0,\dots,\varphi ^{\ast}
        \ov L_d}(Y';s'_0,\dots,s'_d)=0
    \end{displaymath} 
    for all but a finite number of $v\in \mathfrak{M}$. Since $Y'$ is
    projective, there are rational sections $s''_{i}$ of $
    \varphi^{\ast} L_{i}$, $i=0,\dots,d$, such
    that, for any partition $I\sqcup J=\{0,\dots,d\}$, both families of
    sections $\{s_{i},i\in I,s''_{j},j\in J\}$ and $\{s'_{i},i\in
    I,s''_{j},j\in J\}$ meet $Y'$ properly. Using the
    definition of adelic field and 
    Theorem~\ref{thm:1}\eqref{item:9} we can deduce that     
    \begin{displaymath}
      \h_{v,\varphi ^{\ast} \ov L_0,\dots,\varphi ^{\ast}
        \ov L_d}(Y';s_0,\dots,s_d)=0
    \end{displaymath} 
    for all but a finite number of $v\in \mathfrak{M}$, which proves
    the statement in this case.

    We prove now the general case. Then there is a birational proper map
    \begin{displaymath}
      Y''\overset{\varphi'}{\longrightarrow}Y,
    \end{displaymath}
    with $Y''$ projective, and sections $s'_i$ of
    $\varphi'^{\ast}L_i$, $i=0,\dots,d$, meeting $Y''$ properly such
    that
    \begin{displaymath}
      \h_{v,\varphi ^{\ast} \ov L_0,\dots,\varphi ^{\ast}
        \ov L_d}(Y'';s'_0,\dots,s'_d)=0
    \end{displaymath} 
    for all but a finite number of $v\in \mathfrak{M}$. There is a
    commutative diagram of proper birational morphisms
    \begin{displaymath}
      \xymatrix{ Y''\ar[r]^{\varphi'}& Y\\
        Y'''\ar[u]^{\varphi'''}\ar[r]^{\varphi''}& Y' \ar[u]_{\varphi}.
      }
    \end{displaymath}
    Since $Y'$ and $Y''$ are projective, we can
    find rational sections $s'_i$, $i=0,\dots,d$,  of 
    $\varphi ^{\ast} L_i$ meeting $Y'$ properly and such that the
    family of sections $\varphi''{} ^{\ast} s'_i$, $i=0,\dots,d$,
    meet $Y'''$ properly, and rational sections $s''_i$, $i=0,\dots,d$,  of 
    $\varphi'{} ^{\ast} L_i$ meeting $Y''$ properly and such that the
    family of sections $\varphi'''{} ^{\ast} s''_i$, $i=0,\dots,d$,
    meet $Y'''$ properly. Then we deduce the result in this case from the
    previous case and Theorem~\ref{thm:1}\eqref{item:8}.

    The proof of \eqref{item:11} can be done in a  similar way.
\end{proof}

\begin{defn} \label{def:61} Let $X$ be a proper variety over $\K$,
  $\ov L_0,\dots,\ov L_d$ DSP metrized line bundles on $X$, and
  $Y$ an integrable $d$-dimensional irreducible subvariety of 
  $X$. Let $Y'$
  and $s_0,\dots, s_d$ be as in Definition \ref{def:47}. The
  {global height of $Y$ with respect to $s_0,\dots,s_d$}
  is defined as
  $$ 
  \h_{\ov L_0,\dots,\ov L_d}(Y;s_0,\dots,s_d)=\sum_{v\in \mathfrak{M}}n_v
  \h_{v,\varphi^{\ast}\ov L_0,\dots,\varphi^{\ast}\ov
    L_d}(Y';s_0,\dots,s_d)\in\R. 
  $$
  The \emph{global height} of $Y$,
\index{height of cycles!global}%
  denoted $\h_{\ov L_0,\dots,\ov L_d}(Y)$,
\nomenclature[ahL03]{$\h_{\ov L_0,\dots,\ov L_d}(Y)$}{global height}%
  is the class of $\h_{\ov
    L_0,\dots,\ov L_d}(Y;s_0,\dots,s_d)$ in the quotient group
  $\R/\!\df(\K^\times)$. The global height of integrable cycles is
  defined by linearity.
\end{defn}

Observe that the global height is well-defined as an element of
$\R/\!\df(\K^\times)$ because of Theorem~\ref{thm:1}(\ref{item:9}).  In
particular, if $\K$ satisfies the product formula, the global height is
a well-defined real number.

\begin{prop}
    Let $(\F,\mathfrak{N})$ be a finite adelic
    field extension of  $(\K,\mathfrak{M})$.  Let $X$ be a $\K$-variety, $\ov
  L_{i}$, $i=0,\dots,d$, DSP metrized line bundles on $X$ and
  $Y$ a $d$-dimensional integrable cycle on $X$. Let $\pi \colon
  X_{\F}\to X$ be the morphism obtained by base change. Denote by
  $Y_{\F}$  and $\pi^{\ast}\ov
  L_{i}$, $i=0,\dots,d-1$, the cycle and DSP metrized line bundles
  obtained by base change. Then
  \begin{displaymath}
    \h_{\pi ^{\ast}\ov L_{0},\dots,\pi ^{\ast}\ov L_{d}}(Y_{\F})=
    \h_{\ov L_{0},\dots,\ov L_{d}}(Y)\text{ in }\R/\df(\F).
  \end{displaymath}
\end{prop}
\begin{proof}
    This is proved by induction using
  Proposition \ref{prop:28} in the algebraic case and the
  formula for the change
  of variables of an integral in the smooth case. Then the
  semipositive{} case follows by continuity and the DSP case by
  multilinearity. 
\end{proof}

\begin{thm} \label{thm:14} The global height of integrable cycles
  satisfies the following properties.
  \begin{enumerate} 
  \item \label{item:89} It is symmetric and multilinear
    with respect to tensor products of DSP metrized line bundles. 
  \item \label{item:93} Let $\varphi\colon X'\to X$ be a morphism of
    proper varieties over $\K$, $\ov L_{i}$, $i=0,\dots, d$, DSP
    metrized line bundles on $X$. Let $Y$ be a $d$-dimensional cycle
    of $X'$, integrable with respect to the metrized line
    bundles $\varphi^{\ast}\ov L_{0},\dots,\varphi^{\ast}\ov
    L_{d}$. Then
    \begin{equation*}
      \h_{\varphi^{\ast} \ov
        L_{0},\dots,\varphi^{\ast} \ov L_{d}}(Y)=
      \h_{\ov
        L_{0},\dots,\ov L_{d}}(\varphi_{\ast}Y).
    \end{equation*}
  \end{enumerate}
\end{thm}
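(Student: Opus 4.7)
Both parts reduce, by summing over the places of $\K$, to the corresponding local statements of Theorem~\ref{thm:1}; the preservation of integrability under pushforward (Proposition~\ref{prop:94}) ensures that the relevant global heights are defined, and the ambiguity in choices of sections is absorbed into the subgroup $\df(\K^{\times})$.

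For part~\eqref{item:89}, by Definition~\ref{def:61} combined with integrability, the global height is a finite weighted sum of local heights. Each local term is symmetric and multilinear in the DSP pairs $(\ov L_{i},s_{i})$ by Theorem~\ref{thm:1}\eqref{item:7}. A change of section $s_{i}$ to $fs_{i}$ with $f\in\K^{\times}$ alters each local height according to Theorem~\ref{thm:1}\eqref{item:9}, and summing these contributions weighted by $n_{v}$ yields an element of $\df(\K^{\times})$. Thus the class in $\R/\!\df(\K^{\times})$ is well-defined, symmetric, and multilinear in the DSP metrized line bundles; closure of the class of integrable cycles under tensor products follows from Proposition~\ref{prop:94}\eqref{item:10} together with this multilinearity of local heights.

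For part~\eqref{item:93}, by linearity we may assume $Y$ is an irreducible $d$-dimensional subvariety of $X'$, and set $Z:=\ov{\varphi(Y)}\subseteq X$. If $\dim Z<d$ then $\varphi_{*}Y=0$, and otherwise $\varphi_{*}Y=m\cdot Z$ with $m=[\kappa(Y):\kappa(Z)]$; in either case Proposition~\ref{prop:94}\eqref{item:11} yields that $\varphi_{*}Y$ is integrable. In the case $\dim Z=d$, choose projective birational proper maps $p\colon Y'\to Y$ and $q\colon Z'\to Z$, and let $W$ be a projective resolution of the graph of the induced rational map from $Y'$ to $Z'$. This produces a commutative diagram in which the morphism $\pi\colon W\to Y'$ is proper birational while $\rho\colon W\to Z'$ is generically finite of degree $m$, and the two compositions $W\to X$ agree; call this common map $\phi$. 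Choose rational sections $s_{0},\dots,s_{d}$ of $L_{0},\dots,L_{d}$ whose pullbacks meet $Y'$, $Z'$ and $W$ all properly---feasible by the moving lemma on each of these projective varieties, combined with Theorem~\ref{thm:1}\eqref{item:9} to absorb any needed modification of sections into $\df(\K^{\times})$. Two applications of Theorem~\ref{thm:1}\eqref{item:8}, first to $\pi$ and then to $\rho$, together with $\pi_{*}W=Y'$ and $\rho_{*}W=mZ'$, give at every place $v$ the chain $\h_{v}(Y';(\varphi\circ p)^{*}s_{i})=\h_{v}(W;\phi^{*}s_{i})=m\,\h_{v}(Z';q^{*}s_{i})$, whose outer terms are the local heights computing the two sides of~\eqref{item:93}; weighting by $n_{v}$ and summing yields the identity in $\R/\!\df(\K^{\times})$. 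In the case $\dim Z<d$, the right-hand side of~\eqref{item:93} is zero; choosing sections of the $L_{i}$ on $X$ that meet $Z$ properly (so that their pullbacks meet $Y'$ properly, since $Y'\to Z$ has equidimensional generic fibres), Theorem~\ref{thm:1}\eqref{item:8} applied to $Y'\to X$ at each $v$ expresses the local height of $Y'$ as that of the zero cycle, which vanishes; summing gives vanishing of the left-hand side.

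The principal technical obstacle, arising in the case $\dim Z=d$, is to produce rational sections of the $L_{i}$ on $X$ whose pullbacks simultaneously satisfy the proper-intersection condition on the three projective varieties $Y'$, $Z'$, and $W$. This is accomplished by iterated use of the moving lemma on these varieties, exploiting the invariance of the global height modulo $\df(\K^{\times})$ under multiplication of sections by elements of $\K^{\times}$ afforded by Theorem~\ref{thm:1}\eqref{item:9}.
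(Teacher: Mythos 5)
Your proposal follows the same route the paper intends: part~\eqref{item:89} is a direct unwinding of Theorem~\ref{thm:1}\eqref{item:7} and the well-definedness of the global height modulo $\df(\K^{\times})$, and part~\eqref{item:93} combines Proposition~\ref{prop:94}\eqref{item:11} (integrability under pushforward) with the local projection formula of Theorem~\ref{thm:1}\eqref{item:8}. The paper's own proof is a two-line citation, so your elaboration of the details, including the dichotomy on $\dim\varphi(Y)$ and the resolution $W$ of the graph, is appropriate and correct in structure.

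One step, however, is stated too loosely and would need repair to be rigorous: the joint moving lemma. You ask for rational sections of the $L_{i}$ living \emph{on $X$} whose pullbacks simultaneously meet $Y'$, $Z'$, and $W$ properly, and justify existence ``by the moving lemma on each of these projective varieties.'' But the moving lemma acts on the projective varieties $Y'$, $Z'$, $W$, whereas the sections you want to move live on $X$, which in Theorem~\ref{thm:14} need only be proper. Moving a section $s_{i}$ on $X$ by a rational function $f\in\K(X)^{\times}$ only modifies its restriction to $Z$ by elements of a localization of $\K(X)$ that is a proper subring of $\K(Z)$, so it is not immediate that this suffices. The cleaner variant, which avoids the issue entirely, is to pick rational sections $u_{i}$ of $q^{*}L_{i}$ directly on the projective variety $Z'$, arrange (iterated moving lemma on $Z'$ and $W$, as in the paper's proof of Proposition~\ref{prop:94}\eqref{item:10}) that $u_{i}$ meet $Z'$ properly and $\rho^{*}u_{i}$ meet $W$ properly, and then apply Theorem~\ref{thm:1}\eqref{item:8} once, along $\rho\colon W\to Z'$. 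This identifies $\sum_{v}n_{v}\h_{v}(W;\rho^{*}u)$ both with $\h_{\varphi^{*}\ov L_{\bullet}}(Y)$ (using $W\to Y$ as the projective birational cover in Definition~\ref{def:61} and Proposition~\ref{prop:94}\eqref{item:10}) and with $m\,\h_{\ov L_{\bullet}}(Z)=\h_{\ov L_{\bullet}}(\varphi_{*}Y)$, and never requires a section on the possibly non-projective $X$. Finally, a small slip in part~\eqref{item:89}: a change of section is by a rational function $f$ on the projective cover, not by a scalar $f\in\K^{\times}$; the contribution to the global height then lies in $\df(\K^{\times})$ because of Theorem~\ref{thm:1}\eqref{item:9} together with Definition~\ref{def:80}, exactly as the paper observes after Definition~\ref{def:61}.
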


\begin{proof}
  The first statement follows from Theorem \ref{thm:1}(\ref{item:7}),
  while the second follows readily from Proposition
  \ref{prop:94}\eqref{item:11} and Theorem
  \ref{thm:1}(\ref{item:8}).
\end{proof}

We turn now our attention to number fields and function fields.

\begin{defn} \label{def:66} A \emph{global field} is a finite
  extension $\K/\Q$ or $\K/K(C)$ for a smooth projective curve $C$
  over a field $k$, with the structure of adelic field given in
  examples \ref{exempleQ} or \ref{exempleF}, respectively. To lighten
  the notation, we will usually denote those global field by $\K$ and
  the set of places by $\mathfrak{M}_{\K}$, although, in the function
  field case, the structure of adelic field depends on the particular
  extension.
\end{defn}


Our use of the terminology ``global field'' is slightly more
general than the usual one where, in
the function field case, the base field is finite and the
extension is separable.

\begin{defn} \label{def:64} Let $\K$ be a global field. Let
  $X$ be a proper variety over~$\K$ and $L$ a line bundle on $X$. For
  each $v\in \mathfrak{M}_{\K}$ set $X_{v}=X\times \Spec(\K_{v})$ and
  $L_{v}=L\times \Spec(\K_{v})$. 
  A metric on $L$ is called
    \emph{quasi-algebraic}
    \index{metric!quasi-algebraic}%
    if there exists a finite subset
    $S\subset \mathfrak{M}_{\K}$ containing the Archimedean places, an
    integer $e\ge 1$ and a proper model $(\cX,\cL,e)$ over
    $\K^{\circ}_{S}$ of $(X,L)$ such that, for each $v\notin S$, the
    metric $\|\cdot\|_v$ is induced by the localization of this model
    at~$v$.
\end{defn}

For global fields and quasi-algebraic metrics, all cycles are
integrable.

\begin{prop}\label{prop:29}
  Let $\K$ be a global field and $X$ a proper variety
  over $\K$ of dimension $n$. Let $d\le n$ and  $\ov L_{i}$,
  $i=0,\dots,d$,  a family of line bundles with quasi-algebraic DSP
  metrics.  Then every $d$-dimensional cycle of $X$ is
  integrable with respect to $\ov L_0,\dots,\ov L_d$.
\end{prop}

\begin{proof} It is enough to prove that every prime cycle is
  integrable. Applying Chow's lemma to the support of the cycle
  and using that the inverse image of a quasi-algebraic metric is
  quasi-algebraic, we are reduced to the case when $X$ is projective.

  We proceed by induction on the dimension of $Y$. For $Y=\emptyset$,
  the statement is clear, and so we consider the case when
  $d=\dim(Y)\ge 0$.  Let $Y$ be a $d$-dimensional cycle of $X$ and
  $s_i$, $i=0,\dots,d$, rational sections of $L_i$ that intersect $Y$
  properly. Let $\wt Y$ be the normalization of $Y$. By the
hypothesis of quasi-algebricity, there is a finite subset $S\subset \mathfrak{M}_{\K}$ containing the
  Archimedean places such that there exists a normal proper model
  $\cY$ over $\K^{\circ}_{S}$ of $\wt Y$ and models $\cL_{i}$ of $L_{i}^{\otimes
    e_{i}}|_{Y}$,  $i=0,\dots, d$, for some integers $e_{i}\ge1$, all
  of them being line bundles over $\cY$. Then $s_{d}^{\otimes
    e_{d}}|_{\cY}$ is a  nonzero rational section of
  $\cL_{d}$ and so it defines a finite number of vertical
  components. Let $v\notin S$ be a place that is not below
  any of these vertical components. Let $\cY_{v}$ be the fibre of
  $\cY$ over $v$.
Let $V_{j}$, $j=1,\dots, l$, be the components of this fibre, and
$\xi_{j}$ the corresponding points of $\wt Y^{\an}$ given by
Proposition \ref{prop:92}. On the one hand, the measure 
$ \chern_{1}(\overline L_{0})\land \dots \wedge \chern_{1}(\overline L_{d-1})
    \land \delta _{Y}$ is concentrated on these points. On the other
    hand, by Proposition \ref{prop:descmetric},
    $\|s_{d}(\xi_{j})\|_{d,v}=1$ for all $j$. Hence, 
\begin{displaymath}
   \h_{v,\ov L_0,\dots,\ov L_d}(Y;s_0,\dots,s_d)=    \h_{v,\ov
     L_0,\dots,\ov L_{d-1}}(Y\cdot \div(s_{d});s_0,\dots,s_{d-1}),
\end{displaymath}
because of the definition of local heights. 
The statement follows then from the
inductive hypothesis. 
\end{proof}





\chapter{The Legendre-Fenchel duality}
\label{sec:legendre-fench-dual}

In this chapter, we explain the notions of convex analysis that we will
use in our study of the arithmetic of toric varieties. The central
theme is the Legendre-Fenchel duality of concave functions.  
A basic reference in this subject is the classical
book by Rockafellar \cite{Roc70} and we will refer to it for many of
the proofs.

Although the usual references in the literature deal with convex
functions, we will work instead with \emph{concave} functions.  These
are the functions which arise in the theory of toric
varieties. In this respect, we remark that the functions which are
called ``convex'' in the classical books on toric varieties
\cite{Kempfals:te,Ful93} are concave
in the sense of convex analysis. 

\section{Convex sets and convex decompositions}
\label{sec:convex-sets-convex}

Let $N_{\R}\simeq\R^{n}$ be a real vector space of dimension $n$ and 
$M_{\R}=N_{\R}^{\vee}$ its dual space. 
\nomenclature[aNl02]{$N_{\R}$}{real vector space}%
\nomenclature[aMl02]{$M_{\R}$}{vector space dual to $N_{\R}$}%
The pairing between $x\in
M_{\R}$ and $u\in N_{\R}$ 
 will be alternatively denoted by  
$\langle x,u\rangle$, $x(u)$ or $u(x)$. 

A non-empty subset $C$ of $N_{\R}$ is \emph{convex}\index{convex set}
if, for each pair of points $u_1,u_2\in C$, the line segment
\begin{displaymath}
  \ov{u_1u_2}=\{t u_1+(1-t)u_2\mid 0\le t\le 1 \}
\end{displaymath}
is contained in $C$.  Throughout this text, convex sets are assumed to
be non-empty.  A non-empty subset $\sigma\subset N_{\R}$ is a
\emph{cone}\index{cone} if $\lambda \sigma = \sigma$ for all
$\lambda\in \R_{>0}$.

The \emph{affine hull}
\index{convex set!affine hull of}%
of a convex set $C$, denoted $\aff(C)$, is the minimal affine space which contains
it.
\nomenclature[a]{$\aff(C)$}{affine hull of a convex set}%
The \emph{dimension} of $C$ is defined as the dimension of its affine hull.
\index{convex set!dimension of}%
The \emph{relative interior} of $C$,
\index{convex set!relative interior of}%
\nomenclature[ari]{$\ri(C)$}{relative interior of a convex set}%
denoted $\ri(C)$, is defined as the
interior of~$C$ relative to its affine hull. The
\emph{recession cone} 
\index{convex set!recession cone of}%
\nomenclature[arec]{$\rec(C)$}{recession cone of a convex set}%
of $C$, denoted by $\rec(C)$, is the set
\begin{displaymath}
  \rec(C)=\{u\in N_{\R}\mid C+u\subset C\}.
\end{displaymath}
It is a cone of $N_{\R}$. The \emph{cone}
\index{cone!of a convex set}%
\nomenclature[ac0]{$\cc(C)$}{cone of a convex set}%
 of $C$ is defined as 
\begin{displaymath}
  \cc(C) =\ov{\R_{>0}(C\times \{1\})}\subset N_{\R}\times
\R_{\ge 0}.
\end{displaymath}
It is a closed cone. If $C$ is closed, then $\rec(C)\times\{0\}=\cc(C)\cap
(N_{\R}\times \{0\})$.

\begin{defn}
  Let $C$ be a convex set. A convex subset $F\subset C$ is called a
  \emph{face} of $C$
\index{face of a convex set}%
  if, for every closed line segment $\ov{u_1u_2}\subset C$ such that
  $\ri(\ov{u_1u_2}) \cap F\not= \emptyset$, the inclusion~$\ov{u_1u_2}\subset F$ holds.
  A face of $C$ of codimension 1 is called a \emph{facet}.
  \index{facet}%
  A non-empty subset $F\subset C$ is called an
  \emph{exposed face} of $C$
\index{face of a convex set!exposed}%
  if there exists $x\in M_{\R}$ such that
  $$F=\{u\in C\mid \langle x,u\rangle \le
  \langle x, v\rangle ,\, \forall
  v\in C\}.$$ 
\end{defn}
Any exposed face of a convex set is a face, and the facets of a convex
set are always exposed.  However, a convex set may have faces which
are not exposed. For instance, think about the four points of junction
of the straight lines and bends of the boundary of the inner area of a
racing track in a stadium.

\begin{defn}
  \label{def:27}
  Let $\Pi$ be a non-empty collection of convex subsets of
  $N_{\R}$. The collection $\Pi$ is called
  a \emph{convex subdivision}\index{convex subdivision} if it
  satisfies the conditions:
\begin{enumerate}
\item \label{item:58} every face of an element of $\Pi $
is also in $\Pi$;
\item \label{item:59} every two elements of $\Pi$ are either
disjoint or they intersect in a
common face.
\end{enumerate}
If $\Pi $ satisfies only \eqref{item:59}, then it is called a
\emph{convex decomposition}.\index{convex decomposition} The
\emph{support} of $\Pi$
\index{convex decomposition!support of}%
\nomenclature[svert00]{$\vert\Pi\vert$}{support of a convex decomposition}%
is defined as 
the set $|\Pi|=\bigcup_{C \in \Pi } C$.  We say that $\Pi $ is
\emph{complete}\index{convex decomposition!complete} if its support is
the whole of $N_{\R}$. For a given set
$E\subset N_{\R}$, we say that $\Pi $ is a convex subdivision (or
decomposition) \emph{in $E$}
whenever $|\Pi|\subset
E$. A convex subdivision in $E$ is called \emph{complete}
\index{convex subdivision!complete}%
if $|\Pi|= E$.
\end{defn}

For instance, the collection of all faces of a convex set defines a
convex subdivision of this set. The collection of all
exposed faces of a convex set is a convex decomposition, but it is not
necessarily a convex subdivision. 

In this text, we will be mainly concerned with the polyhedral
case. Since we will only deal with polyhedra which are convex,
we call them polyhedra for short.

\begin{defn} \label{def:30} A \emph{polyhedron} of
  $N_{\R}$\index{polyhedron} is a convex set defined as the
  intersection of a finite number of closed halfspaces.  It is called
  \emph{strongly convex}
  \index{polyhedron!strongly convex}%
  if it does
  not contain any line. A \emph{polyhedral
    cone}\index{polyhedral cone} is a polyhedron $\sigma
  $ such that $\lambda \sigma =\sigma $ for all $\lambda > 0$.  
\nomenclature[g1801]{$\sigma $}{cone}%
A \emph{polytope}\index{polytope} is a bounded polyhedron. 
\end{defn}

For a polyhedron,  
there is no difference between faces and exposed faces.

By the Minkowski-Weyl theorem, polyhedra can be explicitly described
in two dual ways, either by 
the
\emph{H-representation},
\index{H-representation!of a polyhedron}%
as an intersection of half-spaces, or by the \emph{V-representation},
\index{V-representation!of a polyhedron}%
as the Minkowski sum of a cone and a 
polytope \cite[Theorem
19.1]{Roc70}. An
H-representation of a polyhedron $\Lambda$ in $N_{\R}$ is a finite set of affine 
equations~$\{(a_{j},\alpha _{j})\}_{1\le j\le k}\subset M_{\R}\times
\R$ so that
\begin{equation}
  \label{eq:10}
  \Lambda=\bigcap_{1\le j\le k} \{u\in N_{\R} \mid \langle a_j,u\rangle
  +\alpha_{j}\ge 0 \}.
\end{equation}
With this representation, the recession cone can be written as
\begin{displaymath}
  \rec(\Lambda )=\bigcap_{1\le j\le k} \{u\in N_{\R} \mid \langle a_j,u\rangle
  \ge 0 \}.
\end{displaymath}
A V-representation of a polyhedron $\Lambda'$ in $N_{\R}$ consists in a set of vectors $\{b_{j}\}_{1\le
  j\le k}$ in the tangent space $T_{0}N_{\R}(\simeq N_{\R})$ and a
non-empty set of points $\{b_{j}\}_{k+1\le j\le l}\subset N_{\R}$ such
that
\begin{equation}
  \label{eq:35}
  \Lambda'=\Cone(b_{1},\dots,b_{k})+\Conv(b_{k+1},\dots,b_{l})
\end{equation}
where
\begin{displaymath}
  \Cone(b_{1},\dots,b_{k}):= \bigg\{\sum_{j=1}^{k}\lambda_{j}b_{j}\bigg| \  
  \lambda _{j}\ge 0\bigg\}
\end{displaymath}
\nomenclature[ac0one]{$\Cone(b_{1},\dots,b_{l})$}{cone generated by a
  set of vectors}%
is the cone generated by the given vectors (with the convention that
$\Cone(\emptyset)=\{0\}$) and
\begin{displaymath}
  \Conv(b_{k+1},\dots,b_{l}):=\bigg\{\sum_{j=k+1}^{l}\lambda
  _{j}b_{j}\bigg| \  
  \lambda _{j}\ge 0,\ \sum_{j=k+1}^{l}\lambda _{j}=1\bigg\}
\end{displaymath}
\nomenclature[ac0f]{$\Conv(b_{1},\dots,b_{l})$}{convex hull of a set of points}%
is the convex hull of the given set of points.  With this second
representation, the recession cone can be obtained as 
\begin{displaymath}
\rec(\Lambda')=\Cone(b_{1},\dots,b_{k}).  
\end{displaymath}

\begin{defn} \label{def:36}
  A \emph{polyhedral complex}\index{polyhedral complex} 
in $N_{\R}$ is a finite convex
  subdivision whose elements are polyhedra. A polyhedral
  complex is called 
  \emph{strongly convex}
\index{polyhedral complex!strongly convex}%
if all of its polyhedra are strongly
  convex. It is called \emph{conic}
\index{polyhedral complex!conic}%
if all of its
  elements are cones. A strongly convex conic polyhedral complex  is
  called a \emph{fan}\index{fan}. If $\Pi $ is a polyhedral complex,
  we will denote by $\Pi ^{i}$ the subset of $i$-dimensional polyhedra
  of~$\Psi $. In particular, if $\Sigma $ is a fan, $\Sigma ^{i}$ is
  its subset of $i$-dimensional cones.
  \nomenclature[g161]{$\Pi$}{polyhedral complex}%
  \nomenclature[g1811]{$\Sigma$}{fan}%
  \nomenclature[g162]{$\Pi^{i}$}{set of $i$-dimensional polyhedra
    of a complex}%
  \nomenclature[g1813]{$\Sigma ^{i}$}{set of $i$-dimensional cones
    of a fan}%
\end{defn}

There are two natural processes for linearizing a polyhedral
complex.   
\begin{defn}\label{def:69}
The \emph{recession} of $\Pi$
\index{polyhedral complex!recession of} is defined as the collection of
polyhedral cones of $N_{\R}$ given by
\begin{displaymath}
  \rec(\Pi )=\{\rec(\Lambda )\mid \Lambda\in \Pi \}. 
\end{displaymath}
\nomenclature[arec]{$\rec(\Pi )$}{recession of a polyhedral complex}%
The \emph{cone}
of $\Pi$\index{cone!of a polyhedral complex} is defined as the
collection of cones
in  $N_{\R}\times\R$ given by  
\begin{displaymath}
 \cc (\Pi)= \big\{\cc(\Lambda)\mid \Lambda \in
\Pi\big\}\cup \big\{\sigma\times\{0\}\mid \sigma\in \rec(\Pi)\big\}.
\end{displaymath}
\nomenclature[ac00cp]{$\cc(\Pi )$}{cone of a polyhedral complex}%
\end{defn}

It is natural to ask  whether the recession or the cone of
a given polyhedral complex is a complex too. The 
following example shows that this is not always the case.

\begin{exmpl}
  \label{exm:17}
  Let $\Pi$ be the polyhedral complex in $\R^{3}$
  containing the faces of the polyhedra 
  \begin{displaymath}
    \Lambda_{1}=\{(x_{1},x_{2},0) |
    \,  x_{1},x_{2}\ge0\} , \quad \Lambda_{2}=\{( x_{1},x_{2},1) | \, x_{1}+x_{2},
    x_{1}-x_{2}\ge 0\}. 
  \end{displaymath}
  Then $\rec(\Lambda_{1})$ and $\rec(\Lambda_{2})$
  are two  cones in $\R^{2}\times \{0\}$
  whose intersection is the cone 
  $\{(x_{1},x_{2},0) |
  x_{2},x_{1}-x_{2}\ge0\} $. This cone is neither a face of  $\rec(\Lambda_{1})$
  nor of $\rec(\Lambda_{2})$. Hence $\rec(\Pi)$ is not a complex and,
  consequently, neither is $\cc(\Pi)$.
  In Figure~\ref{fig:example} we see the polyhedron $\Lambda _{1}$
  in light grey, the polyhedron $\Lambda _{2}$ in darker grey and
  $\rec(\Lambda _{2})$ as dashed lines. 
\end{exmpl}

\begin{figure}[!h]
  \centering
  \input{ejemplo.pspdftex}  
  \caption{\ }  
  \label{fig:example}
\end{figure}

Therefore, to assure that $\rec(\Pi )$ or $\cc(\Pi )$ are complexes,
we need to impose some condition on $\Pi$. This question has been
addressed in \cite{BurgosSombra:rc}.  Because of our applications, we
are mostly interested in the case when $\Pi$ is complete. It turns out
that this assumption is enough to avoid the problem raised in Example
\ref{exm:17}.
 
\begin{prop}\label{prop:7}
Let $\Pi $ be a complete polyhedral complex in $N_{\R}$. Then  
$\rec(\Pi )$ and~$\cc( \Pi)$ are complete conic polyhedral complexes
in $N_{\R}$ and $N_{\R}\times \R_{\ge0}$, respectively.
If, in addition, $\Pi$ is 
  strongly convex, then both $\rec(\Pi )$ and $\cc(\Pi)$ are fans.
\end{prop}
\begin{proof}
  This is a particular case of \cite[Theorem~3.4]{BurgosSombra:rc}. 
\end{proof}

\begin{defn}\label{def:72}
  Let $\Pi _{1}$ and $\Pi _{2}$ be two polyhedral complexes in $N_{\R}$. The
  \emph{complex of intersections} of $\Pi _{1}$ and $\Pi _{2}$
  \index{polyhedral complex!of intersections}%
  is defined as the collection of polyhedra
  \begin{displaymath}
    \Pi _{1}\cdot \Pi _{2}=\{\Lambda_{1}\cap \Lambda_{2}|
    \Lambda_{1}\in \Pi _{1}, \Lambda_{2}\in \Pi _{2}\}.
  \end{displaymath} 
\nomenclature[sdot]{$\Pi _{1}\cdot \Pi _{2}$}{complex of intersections}%
\end{defn}

\begin{lem} \label{lemm:15}
  The collection $\Pi _{1}\cdot \Pi _{2}$ is a polyhedral
  complex. If $\Pi _{1}$ and $\Pi_{2}$ are complete, then
  \begin{displaymath}
    \rec(\Pi _{1}\cdot \Pi _{2})=\rec(\Pi_{1})\cdot \rec(\Pi_{2}).
  \end{displaymath}
\end{lem}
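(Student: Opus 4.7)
For the first claim, I would verify directly the two axioms of Definition \ref{def:27}. The intersection of two convex polyhedra is a convex polyhedron (combine H-representations). Every face of $\Lambda_{1}\cap \Lambda_{2}$ has the form $F_{1}\cap F_{2}$, where $F_{i}$ is a face of $\Lambda_{i}$ --- take $F_{i}$ to be the smallest face of $\Lambda_{i}$ containing the given face, a standard fact proved by intersecting with supporting halfspaces; since $\Pi_{i}$ is a complex, $F_{i}\in \Pi _{i}$, hence $F_{1}\cap F_{2}\in \Pi_{1}\cdot \Pi_{2}$. For the common-face axiom, the identity
\begin{displaymath}
(\Lambda_{1}\cap \Lambda_{2})\cap (\Lambda'_{1}\cap \Lambda'_{2}) = (\Lambda_{1}\cap \Lambda'_{1})\cap (\Lambda_{2}\cap \Lambda'_{2})
\end{displaymath}
reduces the question to noting that each factor $\Lambda_{i}\cap \Lambda'_{i}$ is either empty (in which case the whole intersection is empty) or a common face $F_{i}\in \Pi _{i}$ of $\Lambda_{i}$ and $\Lambda'_{i}$; writing $F_{i}=\Lambda_{i}\cap \{\ell_{i}=0\}=\Lambda'_{i}\cap \{\ell_{i}=0\}$ with $\ell_{i}\ge 0$ on both, we see that $F_{1}\cap F_{2}$ is exposed as a face of both $\Lambda_{1}\cap \Lambda_{2}$ and $\Lambda'_{1}\cap \Lambda'_{2}$ by the functional $\ell_{1}+\ell_{2}$.

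For the second claim, the inclusion $\rec(\Pi_{1}\cdot \Pi_{2})\subseteq \rec(\Pi_{1})\cdot \rec(\Pi_{2})$ is immediate from the identity $\rec(\Lambda_{1}\cap \Lambda_{2})= \rec(\Lambda_{1})\cap \rec(\Lambda_{2})$, valid for closed convex sets with nonempty intersection. The reverse inclusion is where completeness intervenes. By Proposition \ref{prop:7}, $\rec(\Pi_{1}\cdot \Pi_{2})$ is a complete conic polyhedral complex in $N_{\R}$; similarly each $\rec(\Pi_{i})$ is a complete conic polyhedral complex, and applying part~1 to these two complexes shows that $\rec(\Pi_{1})\cdot \rec(\Pi_{2})$ is also a complete polyhedral complex in $N_{\R}$. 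The key general fact is that, in any polyhedral complex, the relative interiors of the cells partition the support.

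Given $u\in N_{\R}$, let $\tau\in \rec(\Pi_{1}\cdot \Pi_{2})$ be the unique cell with $u\in \ri(\tau)$. By the first inclusion, $\tau\in \rec(\Pi_{1})\cdot \rec(\Pi_{2})$, and since this latter complex is also complete, $\tau$ must agree with the unique cell of $\rec(\Pi_{1})\cdot \rec(\Pi_{2})$ whose relative interior contains $u$. As $u$ runs over $N_{\R}$ every cell of $\rec(\Pi_{1})\cdot \rec(\Pi_{2})$ is reached, so every such cell lies in $\rec(\Pi_{1}\cdot \Pi_{2})$, completing the proof. The delicate point, and the only place where completeness is needed, is precisely this reverse inclusion: without completeness, a cell $\sigma_{1}\cap \sigma_{2}$ with $\sigma_{i}=\rec(\Lambda_{i}^{0})$ need not be realized as $\rec(\Lambda_{1}\cap \Lambda_{2})$ for any $\Lambda_{i}\in \Pi_{i}$, because the natural representatives $\Lambda_{1}^{0}$ and $\Lambda_{2}^{0}$ may be disjoint; the partition-by-relative-interiors argument above bypasses the need to exhibit intersecting representatives explicitly.
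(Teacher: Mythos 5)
Your overall strategy matches the paper's: verify directly that $\Pi_1\cdot\Pi_2$ satisfies the axioms of a polyhedral complex, derive the inclusion $\rec(\Pi_1\cdot\Pi_2)\subseteq\rec(\Pi_1)\cdot\rec(\Pi_2)$ from $\rec(\Lambda_1\cap\Lambda_2)=\rec(\Lambda_1)\cap\rec(\Lambda_2)$ for polyhedra with nonempty intersection, and upgrade it to equality using completeness. Your relative-interior argument in the final step is a correct unpacking of the paper's one-line ``since both complexes are complete, they agree,'' and you correctly locate where completeness is actually used.

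There is, however, a genuine slip in your verification of the second axiom. You assert that the common face $F_i=\Lambda_i\cap\Lambda'_i$ can be written as $\Lambda_i\cap\{\ell_i=0\}=\Lambda'_i\cap\{\ell_i=0\}$ with a \emph{single} affine functional $\ell_i$ that is nonnegative on both $\Lambda_i$ and $\Lambda'_i$. This fails whenever $\Lambda_i$ and $\Lambda'_i$ lie on opposite sides of the hyperplane spanned by $F_i$, which is the typical situation in a complex. Concretely, take $\Lambda_i=[0,1]^2$ and $\Lambda'_i=[0,1]\times[-1,0]$ meeting along $F_i=[0,1]\times\{0\}$: writing $\ell_i(x,y)=ax+by+c$, vanishing on $F_i$ forces $a=c=0$, and nonnegativity on both polyhedra then forces $b=0$, so $\ell_i\equiv0$ and exposes nothing. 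The fix is easy and does not change the structure of your argument: choose $\ell_i\ge0$ on $\Lambda_i$ exposing $F_i$ in $\Lambda_i$, and a \emph{different} $\ell'_i\ge0$ on $\Lambda'_i$ exposing $F_i$ in $\Lambda'_i$; then $\ell_1+\ell_2$ exposes $F_1\cap F_2$ as a face of $\Lambda_1\cap\Lambda_2$, while $\ell'_1+\ell'_2$ exposes it as a face of $\Lambda'_1\cap\Lambda'_2$. Combined with your identity $(\Lambda_1\cap\Lambda_2)\cap(\Lambda'_1\cap\Lambda'_2)=(\Lambda_1\cap\Lambda'_1)\cap(\Lambda_2\cap\Lambda'_2)=F_1\cap F_2$, this establishes the axiom.
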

\begin{proof}
  Using the H-representation of polyhedra, one verifies that,
  if $\Lambda _{1}$ and $\Lambda _{2}$ are polyhedra with non-empty
  intersection, then any face of $\Lambda_{1} \cap \Lambda _{2}$ is
  the intersection of a face of $\Lambda _{1}$ with a face of $\Lambda
  _{2}$. This implies that $\Pi_{1}\cdot \Pi _{2}$ is a
  polyhedral complex.

  Now suppose that $\Pi_{1}$ and $\Pi _{2}$ are complete. Let $\sigma
  \in \rec(\Pi_{1}\cdot \Pi_{2})$. This means that $\sigma
  =\rec(\Lambda )$ and $\Lambda =\Lambda _{1}\cap \Lambda _{2}$ with 
  $\Lambda _{i}\in \Pi _{i}$. It is easy to verify that $\Lambda \not
  = \emptyset$ implies  
  $\rec(\Lambda )=\rec(\Lambda _{1})\cap \rec(\Lambda
  _{2})$. Therefore $\sigma \in \rec(\Pi_{1})\cdot
  \rec(\Pi_{2})$. This shows 
  \begin{displaymath}
    \rec(\Pi _{1}\cdot \Pi _{2})\subset \rec(\Pi_{1})\cdot \rec(\Pi_{2}).
  \end{displaymath}
  Since both complexes are complete, they agree. 
\end{proof}

We consider now an integral structure in $N_{\R}$. 
\nomenclature[aNl01]{$N$}{lattice}%
\nomenclature[aMl01]{$M$}{lattice dual to $N$}%
Let $N \simeq \Z^n$ be a lattice of rank~$n$ such that
$N_\R=N\otimes \R$. Set 
$M=N^\vee=\Hom(N,\Z)$ for its dual lattice so
$M_\R=M\otimes\R$. We also set $N_{\Q}=N\otimes \Q$ and
$M_{\Q}=M\otimes \Q$.

\begin{defn} \label{def:37}
  Let $\Lambda $  be a polyhedron in $N_{\R}$. We say that
  $\Lambda $ is a \emph{lattice polyhedron}
  \index{polyhedron!lattice}%
  if it admits a V-representation as (\ref{eq:35}) with 
  integral vectors and points, that is, with  $b_{j}\in N$ for
  $j=1\dots,l$. We say that it is \emph{rational} 
  \index{polyhedron!rational}%
  if it
  admits a V-representation with $b_{j}\in N_{\Q}$ for $j=1,\dots, l$.
\end{defn}
Observe that any rational polyhedron admits an H-representation as
(\ref{eq:10}) with $a_{j}\in M$ and $\alpha _{j}\in \Z$, for
$j=1,\dots.k$. 
\begin{defn} \label{def:42}
  Let $\Pi $ be a strongly convex polyhedral complex in
  $N_{\R}$. We say that $\Pi$ is \emph{lattice}
\index{polyhedral complex!lattice}%
(respectively \emph{rational})
\index{polyhedral complex!rational}%
  if all of its elements are lattice (respectively rational)
  polyhedra. For short, a {strongly convex rational polyhedral
  complex}
\index{polyhedral complex!SCR (strongly convex rational)}%
is called an \emph{SCR polyhedral complex}.
A conic SCR polyhedral complex is called a 
\emph{rational fan}.
\index{fan!rational}%
\end{defn}

\begin{rem}\label{rem:8}
  The statement of Proposition \ref{prop:7} is compatible with
  rational structures. Namely, if $\Pi $ is rational, the same is true
  for $\rec(\Pi )$ and $\cc(\Pi )$.
\end{rem}

\begin{cor} \label{cor:10} The correspondence $\Pi \mapsto \cc(\Pi )$
  is a bijection between the set of complete polyhedral complexes
  in $N_{\R}$ and the set of 
  complete conical polyhedral complexes in 
  $N_{\R}\times 
  \R_{\ge 0}$. Its
  inverse is the correspondence that, to each  conic polyhedral
  complex  $\Sigma$ in
$N_{\R}\times\R_{\ge 0}$ corresponds 
the complex 
in $N_{\R}$ obtained by intersecting ~$\Sigma$ with the hyperplane $N_{\R}\times\{1\}$.  
These bijections preserve rationality and strong convexity.
\end{cor}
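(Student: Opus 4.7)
The plan is to combine Proposition \ref{prop:7} (which supplies the forward direction) with an explicit construction of the inverse by slicing at height one, and then verify that the two constructions are mutually inverse. First I would observe that $\Pi\mapsto\cc(\Pi)$ is well-defined and takes values in complete conical polyhedral complexes in $N_{\R}\times\R_{\ge0}$ by Proposition \ref{prop:7}; the preservation of rationality is Remark \ref{rem:8}, and the preservation of strong convexity comes from the fact that a cone contained in the halfspace $N_{\R}\times\R_{\ge0}$ contains a line if and only if its intersection with $N_{\R}\times\{0\}$ does, together with the identity $\cc(\Lambda)\cap(N_{\R}\times\{0\})=\rec(\Lambda)$ and the well-known equivalence \emph{$\Lambda$ strongly convex $\Leftrightarrow$ $\rec(\Lambda)$ strongly convex}.

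Next I would define the candidate inverse. Given a complete conical polyhedral complex $\Sigma$ in $N_{\R}\times\R_{\ge0}$, set
\[
\Phi(\Sigma)=\bigl\{\sigma\cap(N_{\R}\times\{1\})\,\bigm|\,\sigma\in\Sigma,\ \sigma\not\subset N_{\R}\times\{0\}\bigr\},
\]
viewed in $N_{\R}$ via the canonical identification $N_{\R}\times\{1\}\simeq N_{\R}$. Each slice is a (non-empty) polyhedron. Because any face of $\sigma\cap(N_{\R}\times\{1\})$ is the slice of a face of $\sigma$, the family $\Phi(\Sigma)$ is closed under faces; and because two elements $\sigma_{1},\sigma_{2}\in\Sigma$ intersect in a common face whose slice is either empty or a common face of the two slices, $\Phi(\Sigma)$ is a polyhedral complex. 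Completeness $|\Phi(\Sigma)|=N_{\R}$ follows from $|\Sigma|=N_{\R}\times\R_{\ge0}$ by intersecting with the affine hyperplane at height one. Rationality and strong convexity are again manifestly preserved by the slice construction.

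The heart of the argument is to check $\Phi\circ\cc=\mathrm{id}$ and $\cc\circ\Phi=\mathrm{id}$. The first is immediate from $\cc(\Lambda)\cap(N_{\R}\times\{1\})=\Lambda\times\{1\}$, while for $\sigma\in\rec(\Pi)$ the cone $\sigma\times\{0\}$ does not contribute to $\Phi$, matching the fact that these are the cones one recovers via $\rec(\Phi(\cc(\Pi)))=\rec(\Pi)$. For the second, any $\sigma\in\Sigma$ with $\sigma\not\subset N_{\R}\times\{0\}$ satisfies $\sigma=\cc\bigl(\sigma\cap(N_{\R}\times\{1\})\bigr)$ because $\sigma$ is a closed cone generated by its level-one slice together with its intersection with the floor; while for $\sigma\subset N_{\R}\times\{0\}$, completeness of the complex $\Sigma\cap(N_{\R}\times\{0\})$ together with Proposition \ref{prop:7} applied to $\Phi(\Sigma)$ identifies $\sigma$ with some cone $\tau\times\{0\}$ where $\tau\in\rec(\Phi(\Sigma))$, which is exactly one of the pieces adjoined in the definition of $\cc(\Phi(\Sigma))$.

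I expect the main obstacle to be the verification that $\cc\circ\Phi=\mathrm{id}$, specifically the need to check that the floor of $\Sigma$ is \emph{exactly} $\{\tau\times\{0\}\mid\tau\in\rec(\Phi(\Sigma))\}$ with no spurious extra cones. This boils down to the fact that the recession fan of the slicing complex coincides with $\Sigma\cap(N_{\R}\times\{0\})$, which is essentially the content of Proposition \ref{prop:7} run in reverse; if desired, this step can be shortened by appealing directly to \cite[Theorem 3.4]{BurgosSombra:rc}, already invoked in the proof of Proposition \ref{prop:7}.
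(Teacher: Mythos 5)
Your argument is correct in substance, but it is a genuinely different route from the paper's: the text simply cites \cite[Corollary~3.12]{BurgosSombra:rc} and gives no proof, whereas you construct the inverse explicitly by slicing at height one and check the two compositions by hand. What you gain is a self-contained argument needing only the facts already recorded near Definition~\ref{def:69} and Proposition~\ref{prop:7} (that $\cc(\Lambda)\cap(N_{\R}\times\{1\})=\Lambda\times\{1\}$, that $\cc(\Lambda)\cap(N_{\R}\times\{0\})=\rec(\Lambda)$ for closed $\Lambda$, and that a closed cone in the half-space $N_{\R}\times\R_{\ge0}$ is the closure of its part above the floor); what the paper gains by citing is brevity and deferring the delicate point to a reference.

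The only place you should be more explicit is the verification that the floor of $\Sigma$ is exactly $\{\tau\times\{0\}\mid\tau\in\rec(\Phi(\Sigma))\}$, which you flag yourself. Proposition~\ref{prop:7} ``run in reverse'' is not really what is needed there, since that proposition only asserts that $\rec(\Pi)$ and $\cc(\Pi)$ are complete complexes; it does not by itself identify the floor cones of $\Sigma$. What closes the gap cleanly is the following two observations. First, for each slice $\Lambda'=\sigma'\cap(N_{\R}\times\{1\})\in\Phi(\Sigma)$, the cone $\rec(\Lambda')\times\{0\}=\sigma'\cap(N_{\R}\times\{0\})$ is a face of $\sigma'$, hence lies in $\Sigma$ and in the floor; this gives the inclusion $\rec(\Phi(\Sigma))\times\{0\}\subset\Sigma\cap(N_{\R}\times\{0\})$ of complete conical complexes in $N_{\R}\times\{0\}$ (completeness of both sides follows as you indicate). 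Second, one invokes the elementary fact that if two complete polyhedral complexes on the same ambient space satisfy $\cA\subset\cB$, then $\cA=\cB$ (for $\beta\in\cB$ take $x\in\ri(\beta)$, find $\alpha\in\cA\subset\cB$ with $x\in\ri(\alpha)$, and conclude $\alpha=\beta$ from the common-face property). With that lemma named, your argument is complete; alternatively, as you say, one can simply cite \cite[Theorem~3.4]{BurgosSombra:rc} or the same Corollary~3.12 the paper uses.
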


\begin{proof}
  This is \cite[Corollary~3.12]{BurgosSombra:rc}.
\end{proof}

\section{The Legendre-Fenchel dual of a concave function}
\label{sec:basic-prop-legendre}

Let $N_{\R}$ and $M_{\R}$ be as in the previous section.
Set $\un \R=\R\cup \{-\infty\}$ 
\nomenclature[aRz]{$\un \R$}{real line with $-\infty$ added}%
with the natural order and
arithmetic operations. 
A {function} $f\colon N_{\R}\to \un\R$
is \emph{concave}
\index{concave function}%
if 
$$
f(t u_1+(1-t)u_2) \ge tf(u_1)+(1-t) f(u_2)$$ 
for all
$u_1,u_2\in N_{\R}$, $0<t<1$ and  $f$ is not identically
$-\infty$. Observe that a function $f$ is concave in our sense if and only if $-f$ is a
proper convex function in the sense of \cite{Roc70}.
The \emph{effective domain} $\Dom(f)$
\index{effective domain!of a concave function}%
\nomenclature[ad0om1]{$\Dom(f)$}{effective domain of a concave function}%
of such a function is the
subset of points of $N_{\R}$ where $f$ takes finite values. 
It is a convex set. A concave function $f\colon
N_{\R}\to \un\R$ defines a concave function with finite values
$f\colon \Dom(f)\to 
\R$. Conversely, if
$f\colon C\to \R$ is a
concave function  defined on some convex set $C$, we can extend it to
the whole of $N_{\R}$ by declaring
that its value at any point of $N_{\R}\setminus C$ is $-\infty$. 
We will move freely from the point of view of
concave functions on the whole of $N_{\R}$ with possibly infinite values  
to the point of view of real-valued concave functions on
arbitrary convex sets.

A concave function is
\emph{closed}
\index{concave function!closed}%
if it is upper semicontinuous.  
This includes the case of 
continuous concave functions defined on closed convex sets. Given an arbitrary
concave function, there exists a unique minimal closed concave
function above $f$. This function is called the \emph{closure}
\index{concave function!closure of}%
of $f$ and is denoted by $\cl(f)$. 
\nomenclature[ac1]{$\cl(f)$}{closure of a concave function}%

Let $f$ be a concave function on $N_{\R}$. 
The \emph{Legendre-Fenchel dual} of $f$
\index{Legendre-Fenchel dual of a concave function}%
is the function
$$
f^{\vee}\colon M_{\R}\longrightarrow \un\R , \quad x\longmapsto
\inf_{u\in N_{\R}} (\langle x,u\rangle-f(u)).
$$
\nomenclature[sdual]{$f^{\vee}$}{Legendre-Fenchel dual of a concave function}%
It is a closed concave function. The Legendre-Fenchel duality is an
involution between such functions: if $f$ is closed, 
then $f^{\vee\vee}  =f$~\cite[Corollary~12.2.1]{Roc70}. 
In fact, for any concave function $f$ we have $f^{\vee\vee}  =\cl(f)$.

The effective domain of $f^{\vee}$ is called the \emph{stability set}
of $f$.
\index{stability set of a concave function}%
It can be described as
\begin{displaymath}
  \Stab(f)=\Dom(f^{\vee})=\{x\in M_{\R} \mid \langle x,u\rangle -f(u) \text{ is
    bounded below}\}. 
\end{displaymath}
\nomenclature[asatab]{$\Stab(f)$}{stability set of a concave function}%

\begin{exmpl}\label{exm:7}
The \emph{indicator function} of a convex set $C\subset
N_\R$
\index{convex set!indicator function of}%
is the
concave function $\iota _{C}$ 
\nomenclature[g09]{$\iota _{C}$}{indicator function of a convex set}%
defined as $\iota  _{C}(u)=0$ for $u\in C$ and $\iota _{C}(u)=-\infty$ for $u\not
  \in C$. Observe that $\iota  _{C}$ is the logarithm of the
  {characteristic function} of $C$.
This function is closed if and only if $C$ is a closed set. 

The \emph{support function} of a convex set $C$
\index{support function!of a convex set}%
is the function  
\begin{displaymath}
\Psi_C\colon M_\R\longrightarrow \R , \quad x\longmapsto \inf_{u\in
  C} \langle x,u\rangle.
\end{displaymath}
\nomenclature[g2301]{$\Psi_C$}{support function of a convex set}%
It is a closed concave function. 
A function $f\colon M_{\R}\to \R$ is called
\emph{conical}
\index{conical function}%
if $f(\lambda x)=\lambda f(x)$ for 
all $\lambda \ge 0$. The support function $\Psi _{C}$ is conical. The
converse is also true: all conical closed concave functions are
of the form $\Psi_C$ for a closed convex set $C$. 

We have $\iota_C^\vee= \Psi_C$ and $\Psi_C^\vee= \cl(\iota_C)=\iota_{\ov C}$.
Thus, the Legendre-Fenchel duality defines a bijective correspondence between
indicator  functions of closed convex subsets of $N_{\R}$ and
closed concave conical functions on $M_{\R}$.
\end{exmpl}

Next result shows that the Legendre-Fenchel duality is monotonous.

\begin{prop} \label{prop:93}
  Let $f$ and $g$ be concave functions 
    such that $g(u)\le f(u)$ for all $u\in N_\R$. Then $\Dom(g)\subset
    \Dom(f)$, 
    $\Stab(g)\supset \Stab(f)$ and $g^{\vee}(x)\ge f^{\vee}(x)$ for
    all~$x\in M_{\R}$. 
\end{prop}
\begin{proof}
  It follows directly from the definitions.
\end{proof}

The Legendre-Fenchel duality is continuous with
  respect to uniform 
convergence.

\begin{prop} \label{prop:3}
Let $(f_{i})_{i\ge1}$ be a sequence of concave functions  which
    converges uniformly to a  function $f$. Then $f$ is a concave
    function and the sequence $(f_{i}^{\vee})_{i\ge 1}$ converges  
    uniformly to $f^{\vee}$. In particular, there is some $i_0\ge1$ such that 
    $\Dom(f_{i})=\Dom(f)$ and $\Stab(f_i)=\Stab(f)$ for all $i\ge i_0$.
\end{prop}

\begin{proof}
  Clearly $f$ is concave.
  Let $\varepsilon >0$. Then there is an $i_{0}$ such that, for all
  $i\ge i_{0}$, $f-\varepsilon \le f_{i}\le f+\varepsilon $. By
  Proposition \ref{prop:93} this implies $\Dom(f_{i})=\Dom(f\pm\varepsilon )=\Dom(f)$ and
  $\Stab(f_i)=\Stab(f\pm\varepsilon )=\Stab(f)$ and that
  \begin{displaymath}
    f^{\vee}-\varepsilon =(f+\varepsilon )^{\vee}\le
    f_{i}^{\vee}\le(f-\varepsilon )^{\vee}=f^{\vee}+\varepsilon, 
  \end{displaymath}
  which implies the uniform convergence of $f_{i}^{\vee}$ to $f^{\vee}$.
\end{proof}

The classical Legendre duality of strictly concave differentiable
functions can be described in terms of the gradient map $\nabla f$, 
\nomenclature[snabla]{$\nabla f$}{gradient of a differentiable function}%
called in this setting the {``Legendre transform''}.
\index{Legendre transform}%
We will next show that the Legendre transform can be extended to the general concave case as
a correspondence between convex decompositions.

Let $f$ be a concave function on $N_{\R}$. The
\emph{sup-differential} of $f$ at a point
$u\in N_{\R}$
\index{sup-differential of a concave function}%
is defined as the set
\begin{displaymath}
  \partial f(u)=
    \{ x\in M_{\R} \mid \langle x,v-u\rangle \ge f(v)
    -f(u) \text{ for all } v\in N_{\R}\}
\end{displaymath}
if $u\in \Dom(f)$, and the empty set if $u\not\in \Dom(f)$.
\nomenclature[spartial]{$\partial f$}{sup-differential of a concave function}%
For an arbitrary concave function, 
the sup-differential is a generalization of the gradient.
In general, $\partial f(u)$ may contain more than one point, so the
sup-differential has to be 
regarded as a  multi-valued function. 

We say that $f$ is \emph{sup-differentiable} at a point 
$u\in N_{\R}$
\index{concave function!sup-differentiable}%
if $\partial f(u) \ne \emptyset$. 
The \emph{effective domain} of $\partial f$,
\index{effective domain!of a sup-differential}%
denoted $\Dom(\partial f)$,
is the set of points where $f$ is sup-differentiable.
\nomenclature[ad0om2]{$\Dom(\partial f)$}{effective domain of the sup-differential}%
For a subset $E \subset N_{\R}$ we define
\begin{displaymath}
  \partial f(E)= \bigcup_{u\in E} \partial f(u).
\end{displaymath}
In particular, the \emph{image} of $\partial f$
\index{sup-differential of a concave function!image of}%
is defined as $\Im(\partial f)=\partial f(N_{\R})$.

The sup-differential $\partial f(u)$ is a closed
convex set for all $u\in \Dom(\partial f)$. 
It is bounded if and only if  $u\in
\ri(\Dom(f))$. Hence, in the particular case when $\Dom(f)=N_{\R}$, we have
that $\partial f(u)$ is a bounded closed convex subset of $M_{\R}$ for
all $u\in N_{\R}$.
The effective domain of the sup-differential is not necessarily convex but it
differs very little from being convex since, by \cite[Theorem
23.4]{Roc70}, it satisfies
\begin{displaymath}
  \ri(\Dom(f)) \subset \Dom(\partial f) \subset \Dom(f).
\end{displaymath}

Let $f$ be a closed concave function and consider the pairing 
\nomenclature[aPf]{$P_{f}$}{pairing associated to a concave function}%
\begin{equation}
  \label{eq:19}
  P_{f}\colon M_{\R}\times N_{\R} \longrightarrow \un \R 
  , \quad (u,x)\longmapsto  f(u)+f^{\vee}(x)-\langle x,u\rangle.
\end{equation}
This pairing satisfies $P_{f}(u,x)\le 0$ for all $u,x$.

\begin{prop}
  \label{prop:36}
Let $f$ be a closed concave function on $N_{\R}$. 
For $u\in N_{\R}$
and $x\in M_{\R}$, the following conditions are  equivalent:
\begin{enumerate}
\item \label{item:48} $x\in \partial f(u)$;
\item \label{item:49} $u\in \partial f^{\vee}(x)$;
\item \label{item:50} $P_{f}(u,x)=0$.
\end{enumerate}
\end{prop}

\begin{proof}
  This is proved in \cite[Theorem~23.5]{Roc70}.
\end{proof}

If $f$ is closed, then $\Im(\partial f)= \Dom(\partial
f^{\vee})$ and so the 
image of the sup-differential is close to be a convex set, in the
sense that 
\begin{equation}\label{eq:634}
\ri(\Stab(f))\subset   \Im(\partial f)\subset\Stab(f).
\end{equation}

\begin{defn}\label{def:29} Let $f$ be a closed concave function on $N_{\R}$. 
We denote by  $\Pi (f)$  the collection of all sets of the form
\nomenclature[g163]{$\Pi (f)$}{convex decomposition associated to a concave function}%
\begin{displaymath}
  C _{x}:= \partial f^{\vee}(x)
\end{displaymath}
\nomenclature[aC9]{$C _{x}$}{piece of a convex decomposition}%
for some $x\in \Stab(f)$.
\end{defn}

\begin{lem} \label{lemm:19} Let $f$ be a closed concave function on
  $N_{\R}$. 
  Let $x\in \Stab (f)$. Then
  \begin{math}
    C_{x}=\{u\in N_{\R} \mid
  P_{f}(u,x)=0\}.
  \end{math}
In other words, the set $C_{x}$ is characterized by the condition
\begin{equation}\label{eq:32}
  f(u)=\langle x,u\rangle -f^{\vee}(x) \text{ for } u\in
  C_{x}
\quad \text{and} \quad f(u)< \langle x,u\rangle -f^{\vee}(x)
\text{ for } u\not\in C_{x}. 
\end{equation}
Thus the restriction of $f$ to $C_{x}$ is an affine
function with linear part given by $x$, and $C_{x}$ is the
maximal subset where this property holds.
\end{lem}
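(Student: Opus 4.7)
The plan is to derive this lemma as an essentially immediate consequence of Proposition~\ref{prop:36}, since the hard work of the Legendre--Fenchel duality is already encoded there. By definition, $C_{x}=\partial f^{\vee}(x)$, so a point $u\in N_{\R}$ lies in $C_{x}$ if and only if $u\in \partial f^{\vee}(x)$. Applying the equivalence $\eqref{item:49}\Leftrightarrow \eqref{item:50}$ of Proposition~\ref{prop:36}, this in turn is equivalent to $P_{f}(u,x)=0$. Unwinding the definition \eqref{eq:19} of $P_{f}$, this equality reads $f(u)=\langle x,u\rangle - f^{\vee}(x)$, yielding the first description of $C_{x}$.

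To obtain the strict inequality on the complement, I would invoke the remark immediately following \eqref{eq:19} that $P_{f}(u,x)\le 0$ for all pairs $(u,x)$. Hence $f(u)\le \langle x,u\rangle -f^{\vee}(x)$ for every $u\in N_{\R}$, with equality if and only if $u\in C_{x}$. Combined with the previous paragraph, this gives the dichotomy \eqref{eq:32}.

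Finally, the concluding sentence is a direct reading of \eqref{eq:32}: on $C_{x}$ the function $f$ coincides with the affine function $u\mapsto \langle x,u\rangle -f^{\vee}(x)$, whose linear part is precisely $x$; and the maximality of $C_{x}$ among subsets on which $f$ agrees with this particular affine function follows from the strict inequality valid off $C_{x}$.

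There is no real obstacle here: once Proposition~\ref{prop:36} is in hand, the whole statement is a matter of rewriting $P_{f}(u,x)=0$ in explicit form and recording the global bound $P_{f}\le 0$. The only minor point to keep in mind is the closedness hypothesis on $f$, which is needed to apply Proposition~\ref{prop:36}; this is included in the statement of the lemma so no extra work is required.
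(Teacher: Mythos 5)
Your proposal is correct and follows exactly the same route as the paper: the equivalence $u\in C_{x}\Leftrightarrow P_{f}(u,x)=0$ is read off from Proposition~\ref{prop:36}\eqref{item:49}$\Leftrightarrow$\eqref{item:50}, and the strict inequality off $C_{x}$ comes from the global bound $P_{f}\le 0$ together with the definition of $P_{f}$. You have simply spelled out in a few more words what the paper's two-sentence proof compresses.
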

\begin{proof}
  The first statement follows from the equivalence of \eqref{item:49}
  and \eqref{item:50} in Proposition~\ref{prop:36}. The second
  statement follows from the definition of $P_{f}$ and its non-positivity.  
\end{proof}

The \emph{hypograph}
\index{hypograph of a concave function}%
of a concave function $f$ is defined as the set 
\begin{displaymath}
\hypo(f)=\{ (u,\lambda)\mid u\in N_{\R}, \lambda \le f(u)\}
\subset N_{\R}\times \R.   
\end{displaymath}
\nomenclature[ah]{$\hypo(f)$}{hypograph of a concave function}%
A face of the hypograph is called \emph{non-vertical}
\index{hypograph of a concave function!non-vertical face of}%
if it projects injectively in $N_{\R}$. 

\begin{prop}
  \label{prop:33}
Let $f$ be a closed concave function on $N_{\R}$. For a subset $C \subset
N_{\R}$, the following conditions are equivalent:
\begin{enumerate}
\item \label{item:41} $C\in \Pi(f)$;
\item \label{item:61} $C= \{ u\in N_{\R} \mid x\in \partial f(u) \}$
  for a $x\in 
  M_{\R}$; 
\item \label{item:47} there exist $x_{C}\in
M_{\R}$ and $\lambda_{C}\in \R$ such that 
the set $\{ (u, \langle x_{C},u\rangle -\lambda_{C}) \mid u\in C\}$ is
an exposed face
of the hypograph of $f$. 
\end{enumerate}
In particular, the
correspondence 
\begin{displaymath}
C_{x}\mapsto \{ (u, \langle x,u\rangle -f^{\vee}(x))
\mid u\in C_{x}\}   
\end{displaymath}
is a bijection between $\Pi(f)$ and the set of
non-vertical exposed faces of $\hypo(f)$. 
\end{prop}

\begin{proof}
The equivalence between the conditions~\eqref{item:41} and
\eqref{item:61} comes directly from
Proposition~\ref{prop:36}. The equivalence with the
condition~\eqref{item:47} 
follows from~\eqref{eq:32}.
\end{proof}

\begin{prop}
  \label{prop:35}
Let $f$ be a closed concave function. Then $\Pi(f)$ is a convex decomposition
of $\Dom(\partial f)$.
\end{prop}

\begin{proof} 
The collection of non-vertical exposed faces of $\hypo(f)$ forms a
convex decomposition of a subset of $N_{\R}\times \R$. Using Proposition
\ref{prop:33} the projection to $N_{\R}$ of this decomposition agrees
with $\Pi(f)$ and so, it is a convex decomposition of $|\Pi(f)|=
\Dom(\partial f)$.
\end{proof}

We need the following result in order to properly define the
Legendre-Fenchel correspondence for an arbitrary concave function as a
bijective correspondence between convex decompositions.

\begin{lem}
  \label{lemm:14}
Let $f$ be a closed concave function and $C\in \Pi(f)$. Then for any $u_{0}\in \ri(C)$,
\begin{displaymath}
\bigcap_{u\in C}\partial f(u)= \partial f(u_{0}).  
\end{displaymath}
\end{lem}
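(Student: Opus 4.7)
\begin{Proof}{Proof plan.}
The inclusion $\bigcap_{u\in C}\partial f(u)\subset \partial f(u_{0})$ is immediate from $u_{0}\in C$, so the content is in the reverse inclusion. I would write $C=C_{x}=\partial f^{\vee}(x)$ for some $x\in \Stab(f)$, and fix an arbitrary $y\in \partial f(u_{0})$. By the equivalence of \eqref{item:48} and \eqref{item:50} in Proposition~\ref{prop:36}, the condition $y\in \partial f(u_{0})$ is the same as $P_{f}(u_{0},y)=0$, which in turn is the same (by the equivalence of \eqref{item:49} and \eqref{item:50}) as $u_{0}\in \partial f^{\vee}(y)=C_{y}$. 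Hence $u_{0}$ belongs to both $C_{x}$ and $C_{y}$, and the task reduces to showing $C_{x}\subset C_{y}$.

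Now I would exploit that $u_{0}\in \ri(C_{x})$. Given any $u\in C_{x}$, the standard ``line segment'' characterization of the relative interior produces an $\varepsilon>0$ and a point $w=u_{0}+\varepsilon(u_{0}-u)\in C_{x}$, so that
\[
u_{0}=\tfrac{\varepsilon}{1+\varepsilon}\,u+\tfrac{1}{1+\varepsilon}\,w
\]
is a strict convex combination of $u$ and $w$ inside $C_{x}$. By Lemma~\ref{lemm:19}, the restriction of $f$ to $C_{x}$ is affine with linear part $x$, so $f$ is affine on the segment $\ov{uw}$ and therefore the function $u\mapsto P_{f}(u,y)=f(u)+f^{\vee}(y)-\langle y,u\rangle$ is affine on $\ov{uw}$ (where it takes finite values since $u,w\in C_{x}\subset \Dom(f)$).

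The key step is then the following convexity argument. Evaluating this affine function at the three relevant points and using $P_{f}(u_{0},y)=0$ together with the general inequality $P_{f}\le 0$, I obtain
\[
0=P_{f}(u_{0},y)=\tfrac{\varepsilon}{1+\varepsilon}\,P_{f}(u,y)+\tfrac{1}{1+\varepsilon}\,P_{f}(w,y),
\]
which is a convex combination, with strictly positive weights, of two non-positive numbers. Hence both $P_{f}(u,y)$ and $P_{f}(w,y)$ must vanish; in particular $P_{f}(u,y)=0$, so $u\in C_{y}$, i.e.\ $y\in \partial f(u)$, as required. I do not expect any serious obstacle: the only point to watch is that $u\in C_{x}$ implies $u\in \Dom(\partial f)\subset \Dom(f)$, guaranteed by Proposition~\ref{prop:33}, so all quantities appearing in the convex combination are finite and the argument is legitimate.
\end{Proof}
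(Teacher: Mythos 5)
Your proof is correct, and it hinges on the same crucial device as the paper's: the observation that $u_0\in\ri(C)$ lets you form $w=u_0+\varepsilon(u_0-u)\in C$ so that $u_0$ lies strictly between $u$ and $w$. The paper runs that device at the level of the subgradient inequality directly (deriving $\langle x-x_0,u-u_0\rangle=0$ and substituting back into the defining inequality for $\partial f(u)$), whereas you run it through the pairing $P_f$: since $P_f(\cdot,y)$ is affine on $C_x$, $\le 0$ everywhere, and vanishes at the interior point $u_0$, it must vanish at $u$ as well. That is a slightly cleaner packaging of the final step, but it is the same argument in substance; both proofs also correctly dispatch the easy inclusion $\bigcap_{u\in C}\partial f(u)\subset\partial f(u_0)$ in a single line.
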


\begin{proof}
Fix  $x_{0}\in \Dom(\partial f^{\vee})$ such that $C=C_{x_{0}}$ and  $u_{0}\in
\ri(C)$. 
Let $x\in  \partial f(u_{0})$. Then 
\begin{equation}
  \label{eq:132}
\langle x,v-u_{0}\rangle \ge f(v)-f(u_{0}) \quad \text{for all } v\in N_{\R}.  
\end{equation}
Let $u\in C$. By~\eqref{eq:32}, we have $f(u)-f(u_{0})=\langle
x_{0},u-u_{0}\rangle$ and so the above inequality implies 
$
\langle x,u-u_{0}\rangle \ge \langle x_{0},u-u_{0}\rangle .
$ 
The fact $u_{0}\in \ri(C)$ implies $u_{0}+\lambda (u_{0}-u) \in C$ 
for some small $\lambda >0$. Applying the same argument to this
element we obtain the reverse inequality $
\langle x,u-u_{0}\rangle \le \langle x_{0},u-u_{0}\rangle$ and so
\begin{equation}
  \label{eq:34}
\langle x-x_{0},u-u_{0}\rangle =0.
\end{equation}
In particular, $f(u)-f(u_{0})= \langle
x_{0},u-u_{0}\rangle=\langle x,u-u_{0}\rangle$
and from~\eqref{eq:132} we obtain 
$$
\langle x,v-u\rangle =
\langle x,v-u_{0}\rangle +f(u_{0})-f(u) \ge 
f(v)-f(u) \quad \text{for all } v\in N_{\R}.  
$$
Hence $x\in \bigcap_{u\in C}\partial f(u)$ and so 
$\partial f(u_{0}) \subset \bigcap_{u\in C}\partial f(u)$, which
implies the stated equality. 
\end{proof}

\begin{defn}
  \label{def:26}
Let $f$ be a closed concave function.
The \emph{Legendre-Fenchel correspondence} of $f$
\index{Legendre-Fenchel correspondence of a con\-cave function}%
is defined as 
$$
\cL f \colon \Pi(f) \longrightarrow \Pi(f^{\vee}), \quad  
C\longmapsto 
\bigcap_{u\in C}\partial f(u).
$$
\nomenclature[aLzz]{$\cL f$}{Legendre-Fenchel correspondence of a concave function}%
\end{defn}

By Lemma~\ref{lemm:14}, $\cL f(C)=  \partial f(u_{0})$ for
any $u_{0}\in \ri(C)$. Hence, 
$$
\cL f(C)\in \Pi (f ^{\vee}).
$$

\begin{defn}
  \label{def:9}
Let $E, E'$ be subsets of $N_{\R}$ and $M_{\R}$
respectively, and $\Pi , \Pi '$ 
convex decompositions of $E$ and $E'$, respectively. 
We say that $\Pi $ and $\Pi '$ are \emph{dual convex
  decompositions}
\index{dual!convex decompositions}%
if there 
exists a bijective map $\Pi  \to \Pi ', C \mapsto C ^{\ast} $
\nomenclature[sast]{$C^{\ast}$}{corresponding convex set in a dual decomposition}%
such that 
\begin{enumerate}
\item for all $C, D \in \Pi $ we have $C\subset D $ if and
  only if $C^{\ast}\supset D^{\ast}$;
\item \label{item:17} for all $C\in \Pi $ the sets $C$ and $C^{\ast}$
  are contained in orthogonal affine spaces of $N_{\R}$ and $M_{\R}$,
  respectively. 
\end{enumerate}
\end{defn}

\begin{thm}
  \label{thm:8}
Let $f$ be a closed concave function, then $\cL f$ is a duality between
$\Pi(f)$ and $\Pi(f^{\vee})$ with inverse $(\cL f)^{-1}=\cL f^{\vee}$.
\end{thm}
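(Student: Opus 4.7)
The plan is to verify the three conditions of Definition \ref{def:9} for $\cL f\colon \Pi(f)\to \Pi(f^{\vee})$ and to identify its inverse with $\cL f^{\vee}$. Throughout, I will rely on the self-duality $f^{\vee\vee}=f$ (which swaps the roles of $f$ and $f^{\vee}$) and on the explicit formula $\cL f(C)=\partial f(u_{0})$ for $u_{0}\in\ri(C)$ furnished by Lemma \ref{lemm:14}. The latter already shows that $\cL f(C)$ belongs to $\Pi(f^{\vee})$, whose pieces, by Definition \ref{def:29} applied to $f^{\vee}$, are exactly the sets $\partial (f^{\vee})^{\vee}(u)=\partial f(u)$ for $u\in\Dom(f)$.

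The main step is to show $\cL f^{\vee}\circ\cL f=\Id_{\Pi(f)}$, which together with the symmetric statement obtained by interchanging $f\leftrightarrow f^{\vee}$ yields bijectivity and the identification of the inverse. Given $C=C_{x_{0}}=\partial f^{\vee}(x_{0})\in\Pi(f)$, I would pick $u_{0}\in\ri(C)$ and set $D:=\cL f(C)=\partial f(u_{0})$. The inclusion $\cL f^{\vee}(D)\subset C$ drops out of Proposition \ref{prop:36}: since $u_{0}\in\partial f^{\vee}(x_{0})$, we have $x_{0}\in D$, hence
\[
\cL f^{\vee}(D)=\bigcap_{x\in D}\partial f^{\vee}(x)\subset\partial f^{\vee}(x_{0})=C.
\]
The reverse inclusion is where the work lies: I plan to reuse the orthogonality computation \eqref{eq:34} from the proof of Lemma \ref{lemm:14}, which in fact shows that any $x\in\partial f(u_{0})$ automatically satisfies $x\in\partial f(u)$ for every $u\in C$; via Proposition \ref{prop:36} this rewrites as $u\in\partial f^{\vee}(x)$ for all such $u$ and $x$, giving $C\subset\cL f^{\vee}(D)$.

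With bijectivity established, the order-reversing property is a direct consequence of the monotonicity of the intersection defining $\cL f(C)=\bigcap_{u\in C}\partial f(u)$ together with $(\cL f)^{-1}=\cL f^{\vee}$. For the orthogonality of affine spans required by Definition \ref{def:9}\eqref{item:17}, I would again invoke \eqref{eq:34}: for any $x_{1},x_{2}\in\cL f(C)\subset \partial f(u_{0})$, any $u_{0}\in\ri(C)$, and any $u\in C$, applying \eqref{eq:34} to each of $x_{1}$ and $x_{2}$ and subtracting gives $\langle x_{1}-x_{2},u-u_{0}\rangle=0$. Letting $x_{1},x_{2}$ vary in $\cL f(C)$ and $u$ in $C$ then yields that the affine hulls $\aff(\cL f(C))$ and $\aff(C)$ have orthogonal directions.

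The principal obstacle is the reverse inclusion $C\subset\cL f^{\vee}(D)$ in the bijectivity step. Once one extracts from the proof of Lemma \ref{lemm:14} that the affine function on $C$ determined by $x_{0}$ coincides with the one determined by any other $x\in\partial f(u_{0})$ (so that such an $x$ is itself a supporting slope of $f$ along all of $C$), every other verification is essentially formal and follows from the self-duality $f^{\vee\vee}=f$.
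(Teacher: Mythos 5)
Your proof is correct and follows essentially the same route as the paper's: both rely on Lemma~\ref{lemm:14} and Proposition~\ref{prop:36} to prove $\cL f^{\vee}\circ\cL f=\Id_{\Pi(f)}$ by the same two inclusions, then obtain order-reversal from the monotonicity of $\bigcap_{u\in C}\partial f(u)$ together with the symmetry in $f\leftrightarrow f^{\vee}$, and invoke~\eqref{eq:34} for the orthogonality condition. One minor observation: for the inclusion $C\subset\cL f^{\vee}(D)$ you don't actually need to go back into the \emph{proof} of Lemma~\ref{lemm:14} — its \emph{statement} $\bigcap_{u\in C}\partial f(u)=\partial f(u_{0})$ already tells you that every $x\in D$ lies in $\partial f(u)$ for all $u\in C$, which is all you use.
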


\begin{proof}
We will prove first that  $\cL f^{\vee}= (\cL f)^{-1}$. 
Fix $C\in \Pi(f)$ and set $C'=\cL f(C)$. 
Let $y_{0}\in M_{\R}$ such that $C=C_{y_{0}}$
and let  $u_{0}\in \ri(C)$.
Hence $u_{0}\in C_{y_{0}}=\partial f^{\vee}(y_{0})$ and so 
$y_{0}\in \partial f(u_{0})=C'$ by
Proposition~\ref{prop:36} and
Lemma~\ref{lemm:14}. Hence
$$
\cL f^{\vee}(\cL f(C)) = \cL f^{\vee}(C') =\bigcap_{x\in C'} \partial f^{\vee}(x) \subset \partial f^{\vee}(y_0) = C.
$$
On the other hand, let $x_{0}\in \ri(C')$. 
In particular, $x_{0}\in \partial f(u_0)$ and so $u_{0}\in \partial
f^{\vee}(x_{0})= \cL f^{\vee}(C')$ for all $u_{0}\in
C$. It implies 
\begin{displaymath}
C \subset \cL f^{\vee}(C') = \cL f^{\vee}(\cL f(C)).  
\end{displaymath}
Thus $\cL f^{\vee}(\cL f(C))=C$ and applying the same argument to
$f^{\vee}$ we conclude that $\cL f^{\vee}= (\cL f)^{-1}$ and that $\cL
f$ is bijective.  

Now we have to prove that $\cL$  is a duality between
$\Pi(f)$ and $\Pi(f^{\vee})$. 
Let $C,D\in \Pi(f)$ such that $C\subset D$. Clearly, 
$\cL f(C) \supset \cL f(D)$.
The reciprocal follows by applying the same argument to $f^{\vee}$. 
The fact that $C$ and $\cL f(C)$ lie in orthogonal affine spaces
has already been shown during the proof of Lemma~\ref{lemm:14} above,
see~\eqref{eq:34}.  
\end{proof}

\begin{defn}\label{def:19}
Let $f$ be a closed concave function. The pair of convex decompositions
$(\Pi (f),\Pi(f^{\vee}))$ 
will be called the \emph{dual pair of convex decompositions} induced by $f$.
\index{dual!convex decompositions!induced by a concave function}%
\end{defn}

In particular, for $C\in \Pi(f)$ put $C^{*}:=\cL f(C)$. 
For any $u_{0}\in \ri(C)$ and $x_{0}\in \ri(C^{*})$, we have 
\begin{displaymath}
  C=\{u\in N_{\R} \mid
P_{f}(u,x_{0})=0\} \quad \text{and} \quad C^{*}=\{x\in M_{\R} \mid
P_{f}(u_{0},x)=0\}.
\end{displaymath}

Following~\eqref{eq:32}, the restrictions $f|_{C}$ and  $f^{\vee}|_{C^{*}}$ are 
affine functions. Observe that we can recover the Legendre-Fenchel dual
from the Legendre-Fenchel correspondence by writing, for $x\in C^{\ast}$
and any $u\in C$,
\begin{displaymath}
f^{\vee}(x)=\langle x, u\rangle -f(u).
\end{displaymath}

\begin{exmpl}
  \label{exm:15}
Let $\Vert \cdot\Vert_{2}$ denote the Euclidean norm on $\R^{2}$ and 
$B_{1}$ the unit ball. Consider the
concave function $f\colon B_{1}\to \R$ defined as $f(u)=-\Vert u\Vert_{2}$.  
Then $\Stab(f)= \R^{2}$ and the Legendre-Fenchel dual is the function defined by 
$f^{\vee}(x)=0$ if $ \Vert x\Vert_{2}\le 1$ and $f^{\vee}(x)=1-\Vert
x\Vert_{2}$ otherwise. 
The decompositions $\Pi(f)$ and $\Pi(f^{\vee})$ consist of a collection
of pieces of three different types and the Legendre-Fenchel
correspondence 
$\cL f\colon \Pi(f)\to \Pi(f^{\vee})$ is given, for $z\in S^{1}$, by
$$
\cL f(\{0\})=  B_{1} , \quad  \cL f([0,1]\cdot z)=  \{z\}
, \quad  
\cL f(\{z\})=\R_{\ge 1}\cdot z.
$$
\end{exmpl}
In the above example both decompositions are in fact subdivisions. But
this is not always the case, as shown by the next example.

\begin{exmpl} \label{exm:18}
  Let $f\colon [0,1]\to \R$ the function defined by
  \begin{displaymath}
    f(u)=
    \begin{cases}
      -u \log(u) & \text{ if }0\le u\le \e^{-1},\\
      \e^{-1} & \text{ if }\e^{-1}\le u\le 1-\e^{-1},\\
      -(1-u)\log(1-u) & \text{ if }1-\e^{-1}\le u\le 1.
    \end{cases}
  \end{displaymath}
  Then $\Stab(f)=\R$ and
  the Legendre-Fenchel dual is the function $f^{\vee}(x)=x-\e^{x-1}$ for $x\le
  0$ and  $f^{\vee}(x)=-\e^{-x-1}$ for $x\ge 0$. Then $\Dom(\partial
  f)=(0,1)$ and $\Dom(\partial f^{\vee})=\R$. Moreover,
  \begin{displaymath}
    \Pi (f)=(0,\e^{-1})\cup \{[\e^{-1},1-\e^{-1}]\}\cup
    (1-\e^{-1},1),\quad
    \Pi (f^{\vee})=\R.
  \end{displaymath}
  The Legendre-Fenchel correspondence sends bijectively
  $(0,\e^{-1})$ to $\R_{>0}$ and $(1-\e^{-1},1)$ to $\R_{<0}$, and sends the
  element $[\e^{-1},1-\e^{-1}]$ to the point $\{0\}$. In this example,
  $\Pi (f)$ is not a subdivision while $\Pi (f^{\vee})$ is.
\end{exmpl}

\section{Operations on concave functions and duality}
\label{sec:dual-operations}

In this section we consider the basic operations on concave functions
and their interplay with the Legendre-Fenchel duality. 

Let $f_{1}$ and $f_{2}$ be two concave functions on $N_{\R}$ such that their
stability sets are not disjoint. 
Their \emph{sup-convolution}
\index{sup-convolution of concave functions}%
is the function  
$$
f_{1}\boxplus f_{2}\colon N_{\R}\longrightarrow \un\R , \quad v\longmapsto
\sup_{u_{1}+u_{2}=v} (f_{1}(u_{1})+f_{2}(u_{2})).
$$
\nomenclature[ssupconv]{$f_{1}\boxplus f_{2}$}{sup-convolution of concave functions}%
This is a concave function whose effective domain is the  Minkowski
sum $\Dom(f_{1})+\Dom(f_{2})$. 
This operation is associative and
commutative whenever the terms are defined. 

The operations of pointwise addition and sup-convolution
are dual to each other.
When working with general concave functions, there are some technical issues 
in this duality that will disappear when considering uniform limits of
piecewise affine concave functions.

\begin{prop}\label{prop:10} Let  $f_{1},\dots,f_l$ be concave 
  functions on $N_{\R}$.
  \begin{enumerate}
  \item \label{item:31} If $\Stab(f_{1})\cap\dots \cap\Stab(f_{l})\not
    =\emptyset $,
    then
    \begin{equation*}
      (f_{1}\boxplus\dots \boxplus f_{l})^{\vee}=f_{1}^{\vee}+\dots
      +f_{l}^{\vee}.
    \end{equation*}
    In particular, $\Stab(f_{1}\boxplus\dots \boxplus
    f_{l})=\Stab(f_{1})\cap\dots\cap \Stab(f_{l}).$
  \item \label{item:32}If $\Dom(f_{1})\cap\dots \cap\Dom(f_{l})\not
    =\emptyset $, then
    \begin{equation*}
      (\cl(f_{1})+\dots +\cl(f_{l}))^{\vee}=\cl(f_{1}^{\vee}\boxplus\dots \boxplus
      f_{l}^{\vee}).
    \end{equation*}
  \item \label{item:33} If $\ri (\Dom(f_{1}))\cap\dots \cap\ri (\Dom(f_{l}))\not =\emptyset $, then
    \begin{equation*}
      (f_{1}+\dots +f_{l})^{\vee}=f_{1}^{\vee}\boxplus\dots \boxplus
      f_{l}^{\vee}.
    \end{equation*}
        In particular, $\Stab(f_{1}+\dots +
    f_{l})=\Stab(f_{1})+\dots +\Stab(f_{l}).$
  \end{enumerate}
\end{prop}
\begin{proof}
  This is proved in \cite[Theorem 16.4]{Roc70}.
\end{proof}

\begin{rem}\label{rem:6}
  When some of the $f_{i}$, say
  $f_{1},\dots ,f_{k}$, are piecewise affine, the statement
  \eqref{item:33} of the previous proposition
  holds under the weaker hypothesis~\cite[Theorem~20.1]{Roc70}
  \begin{displaymath}
    \Dom(f_{1})\cap\dots\cap\Dom(f_{k})\cap\ri(\Dom(f_{k+1}))
    \cap\dots\cap\ri(\Dom(f_{l}))  
    \not = \emptyset.
  \end{displaymath}
\end{rem}

Let $f\colon N_{\R}\to \un \R$ be a function. For $\lambda >0$, the \emph{left}
and \emph{right scalar multiplication} of $f$ by $\lambda $  
\index{left scalar multiplication}%
\index{right scalar multiplication}%
are the functions defined, for
$u\in N_\R$,   
by
$(\lambda
f)(u)=\lambda f(u)$ and  $(f\lambda )(u)=\lambda f(u/\lambda
)$ respectively. 
\nomenclature[smult]{$\lambda f$}{left scalar multiplication}%
\nomenclature[smult]{$f \lambda$}{right scalar multiplication}%
For a point $u_{0}\in N_{\R}$, 
the \emph{translate} of $f$ by 
$u_{0}$ \index{translate of a function} is the 
function defined as 
$(\tau_{u_{0}}f)(u)=f(u-u_{0})$ for $u\in N_\R$.
\nomenclature[g19]{$\tau_{u_{0}}f$}{translate of a concave function}%
If $f$ is concave, then its left and right multiplication by a scalar and
its translation by a point are also concave functions. 

\begin{prop} \label{prop:8}
  Let $f$ be a concave function on $N_\R$, $c,\lambda \in \R$ with $\lambda >0$, $u_{0}\in
  N_{\R}$ and $x_{0}\in M_{\R}$. Then
  \begin{enumerate}
  \item \label{item:118} $\Dom(f+c)=\Dom(f)$,
$\Stab(f +c)=\Stab(f)$ and  $(f+c)^{\vee}=f^{\vee}-c$;
\item \label{item:57} $\Dom(\lambda f)=\Dom(f)$,
$\Stab(\lambda f)=\lambda \Stab(f)$ and  $(\lambda f)^{\vee}=f^{\vee}\lambda$;
\item \label{item:42} $\Dom(f\lambda)=\lambda\Dom(f)$, $\Stab
  (f\lambda )=\Stab (f)$ and $(f\lambda )^{\vee}=\lambda f^{\vee}$;
  \item \label{item:13}    
        $\Dom(\tau _{u_{0}}f)=\Dom(f)+u_{0}$,
    $\Stab(\tau_{u_{0}}f)=\Stab(f)$ and
    $(\tau_{u_{0}}f)^{\vee}=f^{\vee} +u_{0}$;
  \item \label{item:14}
    $\Dom(f+x_{0})=\Dom(f)$,  $\Stab(f+x_{0})=\Stab(f)+x_{0}$ and  
    $(f+x_{0})^{\vee}=\tau_{x_{0}}f^{\vee}$.
\end{enumerate}
\end{prop}

\begin{proof}
  This follows easily from the
  definitions.
\end{proof}
\nomenclature[aHn]{$H$}{linear map of vector spaces}%
We next consider direct and inverse images  of concave functions 
by affine maps.
Let $Q_\R$ be a another finite dimensional real vector space  and 
set $P_{\R}=Q_{\R}^{\vee}$ for its dual space. For a linear map
$H\colon Q_{\R}\to N_{\R}$ we denote by 
$H^{\vee}\colon M_{\R}\to P_\R$ the dual map. 
\nomenclature[sdual]{$H^{\vee}$}{dual of a linear map}%
We need the following lemma in order to properly define direct images.

\begin{lem}\label{lemm:2}
  Let $H\colon Q_{\R}\to N_{\R}$ be a linear map and $g$ a concave
  function on $Q_{\R}$.  If $\Stab(g)\cap \Im(H^{\vee})\not =
  \emptyset$ then, for all $u\in N_{\R}$,
  \begin{displaymath}
 \sup_{v\in H^{-1}(u)} g(v) <\infty.    
  \end{displaymath}
\end{lem}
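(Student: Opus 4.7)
The plan is to use the hypothesis $\Stab(g)\cap \Im(H^{\vee})\neq \emptyset$ to produce a linear bound on $g$ whose value along any fiber $H^{-1}(u)$ depends only on $u$, not on the particular $v\in H^{-1}(u)$.

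First, I would pick $y\in \Stab(g)\cap \Im(H^{\vee})$ and write $y=H^{\vee}(x)$ for some $x\in M_{\R}$. The fact that $y\in \Stab(g)=\Dom(g^{\vee})$ means that $g^{\vee}(y)\in \R$ (not $-\infty$), and by the very definition of the Legendre-Fenchel dual,
\[
g^{\vee}(y)=\inf_{v\in Q_{\R}}\bigl(\langle y,v\rangle - g(v)\bigr),
\]
so for every $v\in Q_{\R}$ one has the pointwise majorisation
\[
g(v)\le \langle y,v\rangle - g^{\vee}(y).
\]

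Next, the key observation is that along the fiber $H^{-1}(u)$ the linear term $\langle y,v\rangle$ is constant. Indeed, for any $v\in H^{-1}(u)$,
\[
\langle y,v\rangle = \langle H^{\vee}(x),v\rangle = \langle x,H(v)\rangle = \langle x,u\rangle.
\]
Combining with the previous inequality yields
\[
g(v)\le \langle x,u\rangle - g^{\vee}(y)
\]
for all $v\in H^{-1}(u)$, and the right-hand side is a finite real number depending only on $u$. Taking the supremum over $v\in H^{-1}(u)$ gives the desired bound. There is no real obstacle here; the whole content of the lemma is the trivial but useful fact that a concave function admitting a supporting affine functional whose linear part factors through $H$ is necessarily bounded above on each fiber of $H$.
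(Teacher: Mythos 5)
Your proof is correct and is essentially the paper's own argument, just rephrased through $g^{\vee}$: the paper works directly with the finiteness of $\sup_{v}\bigl(g(v)-\langle H^{\vee}(x),v\rangle\bigr)$, which is exactly $-g^{\vee}(H^{\vee}(x))$, and then uses the same identity $\langle H^{\vee}(x),v\rangle=\langle x,H(v)\rangle=\langle x,u\rangle$ on the fiber. Same approach, same key step.
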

\begin{proof}
  Let $x\in M_{\R}$ such that 
  $H^{\vee}(x)\in \Stab(g)$. By the definition of the stability set, $
    \sup_{v\in Q_{\R}}(g(v)-\langle v,H^{\vee}(x) \rangle) <
    \infty$.
Thus, for any $u\in N_{\R}$, 
  \begin{align*}
    \sup_{v\in Q_{\R}}(g(v)-\langle v, H^{\vee}(x) \rangle) &=
    \sup_{v\in Q_{\R}} (g(v)-\langle x,H (v) \rangle) \\
    &\ge \sup_{v\in H^{-1}(u)}(g(v)-\langle x,H (v) \rangle)
    =  \sup_{v\in H^{-1}(u)} g(v)-\langle x,u \rangle
  \end{align*}
and so $\sup_{v\in H^{-1}(u)} g(v)$ is bounded above, as stated. 
\end{proof}

\begin{defn}\label{def:11} Let $A\colon Q_\R\to N_\R$ be an affine map
  defined as $A=H+u_0$ for a linear map $H$
and a point $u_0\in N_{\R}$. 
\nomenclature[aA]{$A$}{affine map of vector spaces}%
Let  $f$ be a concave function on $N_{\R}$
such that $\Dom(f)\cap
  \Im(A)\not = \emptyset$ and $g$ a concave function on $Q_{\R}$  
such that $\Stab(g)\cap \Im(H^{\vee})\not = \emptyset$. Then  
the \emph{inverse image} of $f$ by $A$
\index{inverse image of a concave function by an affine map}%
is defined as 
\begin{displaymath}
  A^{\ast}f\colon Q_\R \longrightarrow \underline{\R} , \quad 
  v\longmapsto f\circ A(v) ,
\end{displaymath}
\nomenclature[sast]{$A^{\ast}f$}{inverse image of a concave function
  by an affine map}%
and the \emph{direct image} of $g$ by $A$
\index{direct image of a concave function by an affine map}%
is defined as 
  \begin{displaymath}
    A_{\ast}g\colon N_\R \longrightarrow \underline{\R} , \quad 
u\longmapsto \sup_{v\in A^{-1}(u)}g(v). 
  \end{displaymath}
\nomenclature[sast]{$A_{\ast}g$}{direct image of a concave function
  by an affine map}%
\end{defn}

It is easy to see that the inverse image  $A^{\ast }f$ is concave
with effective domain $\Dom(A^{\ast}f)=A ^{-1}(\Dom(f))$. 
Similarly, the direct image 
$A_{\ast}g $ is concave with effective domain 
$\Dom(A_{\ast}g)=A(\Dom(g))$, thanks to 
Lemma \ref{lemm:2}. 

The inverse image of a closed function is also closed. 
In contrast, the direct image
of a closed function is not necessarily closed: 
consider for instance the indicator function $\iota_C$ of the set $C=\{(x,y)\in \R^{2}\mid
xy\ge 1, x>0\}$, which is a closed concave function. Let $A\colon
\R^{2}\to \R$ be the first projection. Then $A_{\ast}\iota_C$ is the
indicator function of the subset $\R_{>0}$, which is not a closed
concave function. 

We now turn to the behaviour of the sup-differential with respect to the basic operations. 
A first important property is the additivity.

\begin{prop}
  \label{prop:37}
For each $i=1,\dots,l$, let $f_i$ be a concave function and
$\lambda_i>0$ a real number.
Then, for all $u\in N_{\R}$, 
\begin{enumerate}
\item \label{item:34} $\partial \left( \sum_{i}\lambda_{i}f_{i}\right)(u) 
\supset \sum_{i} \lambda_{i}\partial (f_{i})(u)$;
\item \label{item:116} if $\ri(\Dom(f_{1}))\cap \dots\cap \ri(\Dom(f_{l})) \ne \emptyset$, then 
  \begin{equation}
    \label{eq:232}
\partial \bigg( \sum_{i}\lambda_{i}f_{i}\bigg)(u) 
= \sum_{i} \lambda_{i}\partial (f_{i})(u).    
  \end{equation}
\end{enumerate}
\end{prop}

\begin{proof}
  This is~\cite[Theorem~23.8]{Roc70}.
\end{proof}

As in Remark \ref{rem:6}, if  $f_{1},\dots,f_{k}$ are
piecewise affine, then~\eqref{eq:232} holds under the weaker hypothesis 
$$
\Dom(f_{1})
  \cap\dots\cap\Dom(f_{k}) \cap\ri (\Dom(f_{k+1})) \cap \dots \cap \ri
  (\Dom(f_{l}))\ne \emptyset.
$$

The following result gives the behaviour of the sup-differential with respect to linear
maps

\begin{prop}
  \label{prop:38}
Let $H\colon Q_{\R}\to N_{\R}$ be a linear map, $u_{0}\in N_{\R}$ and
$A=H+u_{0}$ the associated affine map. Let $f$ be a concave function on
$N_{\R}$, then 
\begin{enumerate}
\item \label{item:30} $  \partial (A^{*}f)(v)\supset H^{\vee} \partial
  f(Av)$ for all 
  $v\in Q_{\R}$;
\item  \label{item:43} if either $ \ri (\Dom(f))\cap \Im(A)\ne
  \emptyset$ or  $f$ is
  piecewise affine  and 
$ \Dom(f)\cap \Im(A)\ne \emptyset$, then  for all  $v\in
Q_{\R}$ we have
$$  \partial (A^{*}f)(v)= H^{\vee} \partial f(Av).$$
\end{enumerate}
\end{prop}

\begin{proof}
  The linear case $u_{0}=0$ is \cite[Theorem~23.9]{Roc70}. The general
  case follows from the linear case and the commutativity of
  the sup-differential and the translation. 
\end{proof}

We summarize the behaviour of direct and inverse images of affine maps 
with respect to the Legendre-Fenchel duality.

\begin{prop} \label{prop:11}
   Let $A\colon Q_\R\to N_\R$ be an affine map defined as $A=H+u_0$ for a linear map $H$ 
and a point $u_0\in N_{\R}$. Let  $f$ be a concave function on $N_{\R}$
such that $\Dom(f)\cap
  \Im(A)\not = \emptyset$ and $g$ a concave function on $Q_{\R}$  
such that $\Stab(g)\cap \Im(H^{\vee})\not = \emptyset$. Then  
  \begin{enumerate}
  \item \label{item:21} $\Stab(A_{\ast}g)=(H^\vee)^{-1}(\Stab(g))$ and
 $$
(A _{\ast}g)^{\vee}=(H ^{\vee}) ^{\ast}(g^{\vee})+u_{0};
$$
  \item \label{item:22} 
$H^{\vee}(\Stab(f))\subset \Stab(A^{\ast}f)
      \subset \ov {H^{\vee}(\Stab(f))}
      $    and 
$$
(A ^{\ast} \cl(f))^{\vee}
=\cl((H ^{\vee})_{\ast}(f^{\vee}-u_{0}));
$$
  \item \label{item:23} if $\ri(\Dom(f))\cap
    \Im(A)\not = \emptyset$ then $\Stab(A^{\ast}f)=H^{\vee}(\Stab(f))$
    and, for all $y$ in this set,
    \begin{equation*}
      (A^{\ast} f)^{\vee}(y)=(H^{\vee})_{\ast}(f^{\vee}-u_{0})(y)=
      \max_{x\in(H^{\vee})^{-1}(y)}(f^{\vee}(x)-\langle x,u_{0}\rangle).
    \end{equation*}
    Moreover, if $f$ is closed and $y\in \ri (\Stab(A^{\ast}f))$, 
    then a point $x\in (H^{\vee})^{-1}(y)$ attains this maximum if and
    only if there exists $v\in Q_{\R}$ such that
    $x\in \partial f(Av)$. The element $v$ verifies
    $y\in \partial(A^{*}f)(v)$.  
  \end{enumerate}
\end{prop}

\begin{proof}
By Proposition~\ref{prop:8}(\ref{item:13},\ref{item:14}),
\begin{equation}
  \label{eq:6}
  A^{\ast}(f)= (H+{u_{0}})^{\ast}(f)= H^{\ast}(\tau _{-u_{0}}f),\quad
  A_{\ast}g= (H+{u_{0}})_{\ast}g= \tau_{u_{0}}(H_{\ast}g).
\end{equation}
Then, except for the second half of \eqref{item:23}, the result
follows by combining
(\ref{eq:6}) with the case when $A$ is a linear map, treated in~\cite[Theorem
16.3]{Roc70}.

To prove the second half of \eqref{item:23}, we first note that 
the concave function
$$
(f^{\vee}-u_{0}) |_{(H^{\vee})^{-1}(y)}
$$
attains its maximum at a point $x$ if and only if its sup-differential
at $x$ contains $0$. 
We consider the linear inclusion
$$
\iota\colon \Ker(H^{\vee}) \hookrightarrow M_{\R} 
$$
and denote by $\iota ^{\vee}\colon N_{\R}\to N_{\R}/\Im(H)$ its dual.  
We fix a point $$x_{0}\in(H^{\vee})^{-1}(y)\cap \ri(\Stab(f)),$$
that exists because 
$y\in \ri(\Stab(A^{\ast}f))$ and $\Stab(A^{\ast}f)=H^{\vee}(\Stab(f))$.

Set $F=(\iota+x_{0})^{*} (f^{\vee}-u_{0})$. Up to a translation, $F$ is
the restriction of
$f^{\vee}-u_{0}$ to $(H^{\vee})^{-1}(y)$. Since, by choice, 
$x_{0}\in \ri(\Dom (f^{\vee}-u_{0}))\cap \Im(\iota+x_{0})$, we can
apply Proposition \ref{prop:38}\eqref{item:43} to the concave
function $f^{\vee}-u_{0}$ and the affine map $\iota+x_{0}$ to deduce
that, for any 
$z\in \Ker(H^{\vee}) $, we have
$$
\partial F(z) = \iota^{\vee}(\partial f^{\vee}(z+x_{0}) - u_{0}).
$$ 
Therefore $0\in \partial F(z)$ if and only if  
$\partial  f^{\vee}(z+x_{0}) \cap (\Ker(\iota^{\vee})+u_{0}) \not =
\emptyset$.
Since $\Ker(\iota^{\vee})+u_{0}=\Im(A)$, a point 
$x=z+x_{0}\in (H^{\vee})^{-1}(y)$ attains the maximum if and only if
there is a $v\in Q_{\R} $ such that $Av\in \partial
f^{\vee}(x)$. Being $f$ closed, by Proposition \ref{prop:36} this is
equivalent to 
$x\in \partial f(Av)$ for some $v\in Q_{\R}$. By Proposition
\ref{prop:38}, $v$ satisfies $y\in \partial
(A^{*}f)(v)$.  
\end{proof}

In particular, the operations of direct and inverse image of
\emph{linear} maps are dual to each other. 
In the notation of Proposition~\ref{prop:11} and assuming for simplicity
$\ri(\Dom(f))\cap\Im(H)\not = \emptyset$, we have 
$$
(H_{\ast}g)^{\vee}=(H ^{\vee}) ^{\ast}(g^{\vee}) , \quad
 (H^{\ast} f)^{\vee}=(H^{\vee})_{\ast}(f^{\vee}),
$$
while the stability sets relate by 
$\Stab(H_{\ast}g)=(H^\vee)^{-1}(\Stab(g))$ and $\Stab(H^{\ast}f)=H^{\vee}(\Stab(f))$.

The last concept we recall in this section is the notion of recession
function of
a concave function.

\begin{defn}\label{def:12}  The \emph{recession function} of a
concave function $f\colon N_{\R}\to \un \R$,
\index{recession function!of a concave function}%
denoted $\rec(f)$, is the function
\begin{displaymath}
  \rec(f)\colon N_{\R} \longrightarrow \un\R , \quad u\longmapsto \inf_{v\in \Dom(f)} (f(u+v)-f(v)).
\end{displaymath}
\nomenclature[arec]{$\rec(f)$}{recession function of a concave function}%
This is a concave  conical function. If $f$ is
closed, its recession function can be defined as the limit
\begin{equation}\label{eq:49}
  \rec(f)(u)=\lim_{\lambda\to\infty} \lambda ^{-1} f(v_{0}+\lambda u)
\end{equation}
for any $v_{0}\in \Dom(f)$ \cite[Theorem 8.5]{Roc70}.
\end{defn}

It is clear from the definition that $\Dom(\rec(f))\subset
\rec(\Dom(f))$. The equality does not hold in general, as can be seen
by considering the concave  function $\R\to\R$, $u\mapsto -\exp(u)$.

If $f$
is closed then  the function $\rec(f)$ is closed~\cite[Theorem
8.5]{Roc70}. 
Hence it is natural to regard recession functions as support functions.

\begin{prop} \label{prop:17}
  Let $f$ be a concave function. Then $\rec(f^{\vee})$ is the support function of 
  $\Dom(f)$. 
  If $f$ is closed, then
  $\rec(f)$ is the support function of $\Stab(f)$.
\end{prop}
\begin{proof}
  This is \cite[Theorem 13.3]{Roc70}.
\end{proof}

\section{The differentiable case}
\label{sec:differentiable-case}

In this section we make explicit 
the Legendre-Fenchel duality for smooth concave functions,
following~\cite[Chapter~26]{Roc70}.

In the differentiable and strictly concave case, the decompositions
$\Pi(f)$ and~$\Pi(f^{\vee})$ consist of the collection of
all points of $\Dom(\partial f)$ and of $\Dom(\partial f^{\vee})$
respectively. The Legendre-Fenchel correspondence agrees with the
gradient map, and it is called the {Legendre transform} in this context.
\index{Legendre transform}%

Recall that a function $f\colon N_{\R}\to \un\R$ is
{differentiable} at a point $u\in N_{\R}$ 
\index{differentiable function} 
with $f(u)> -\infty$, if there exists some linear form $\nabla f(u)\in
M_{\R}$ such that
$$
f(v)=f(u)+\langle \nabla f (u),v-u\rangle +o(||v-u||),
$$
where $||\cdot||$ denotes any fixed norm on $N_{\R}$. This
linear form $\nabla(f)(u)$ is the gradient of $f$ in the
classical sense.  
It can be shown that a concave function $f$ is differentiable at a
point $u\in \Dom(f)$ if and only if $\partial f(u) $ consists of a
single element. If this is the case, then $\partial f(u)= \{\nabla
f(u)\}$~\cite[Theorem~25.1]{Roc70}. 
Hence, the gradient and the sup-differential agree in the differentiable case.

Let $C\subset N_{\R}$ be a  convex set. 
A function $f\colon C\to\R$ is \emph{strictly concave}
\index{strictly concave function}%
if 
$f(t u_1+(1-t)u_2) > tf(u_1)+(1-t) f(u_2)$ for all
different $u_1,u_2\in C$ and $0< t< 1$. 

\begin{defn}
  \label{def:28}
Let $C\subset N_{\R}$ be an open convex set
and $||\cdot||$ any fixed norm on $M_{\R}$.
A differentiable concave function  $f\colon
C\to\R$ is of \emph{Legendre type}
\index{concave function!of Legendre type}%
if it is strictly concave and
$\lim_{i\to \infty} \Vert\nabla f (u_i)\Vert\to \infty$ for every sequence
  $(u_i)_{i\ge1}$ converging to a point in the boundary of $C$. 
In particular, any differentiable and strictly concave function on
$N_{\R}$ is of Legendre type. 
\end{defn}

The stability
set of a function of Legendre type has maximal dimension \cite[Theorem
26.5]{Roc70}. Therefore its
relative interior agrees with its interior and, in this case, we will use the
classical notation  $\Stab(f)^{\circ}$ for the interior of
$\Stab(f)$.

The following result summarizes the basics properties of the
Legendre-Fenchel duality acting on functions of Legendre type. 

\begin{thm}   \label{thm:2}
Let
$f\colon C\to\R$ be a
concave function of Legendre type
defined on an open set $C\subset N_{\R}$ and let 
$D=\nabla f (C)\subset M_{\R}$ be the image of the gradient map. Then 
  \begin{enumerate}
  \item \label{item:19} $D=\Stab(f)^{\circ}$;
  \item \label{item:15} $f^{\vee}|_{D}$ is a concave function of
    Legendre type;
  \item \label{item:16}  $\nabla f \colon C\to D$ is a homeomorphism and
    $(\nabla f)^{-1}=\nabla f^{\vee}$;
  \item \label{item:51} for all $x\in D$ we have $f^{\vee}(x)=
    \langle x, (\nabla f)^{-1}(x)\rangle -f( (\nabla f)^{-1}(x))$.
  \end{enumerate}
\end{thm}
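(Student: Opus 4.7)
My plan is to derive all four statements from the general Legendre-Fenchel correspondence of Theorem \ref{thm:8}, specialized to the differentiable case. I will work with the closed extension $\bar f$ obtained by setting $f\equiv -\infty$ on $N_{\R}\setminus C$ and taking the upper semicontinuous hull; this has the same Legendre-Fenchel dual as $f$, so $\Stab(\bar f)=\Stab(f)$ and $\partial\bar f^{\vee}=\partial f^{\vee}$. The key observation is that since $f$ is differentiable on $C$, for every $u\in C$ the classical fact recalled before Definition~\ref{def:28} gives $\partial\bar f(u)=\{\nabla f(u)\}$, so $\nabla f\colon C\to M_{\R}$ is well defined. Strict concavity forces $\nabla f$ to be injective (two points with the same gradient would give a common supporting hyperplane, hence linearity on the connecting segment). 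Hence the decomposition $\Pi(\bar f)$ contains the singletons $\{u\}$, $u\in C$, and the corresponding dual pieces $\cL\bar f(\{u\})=\partial\bar f(u)=\{\nabla f(u)\}$ are singletons too.

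To establish (1), I first note that $D=\nabla f(C)$ is open in $M_{\R}$: indeed, $\nabla f$ is continuous on $C$ (standard property of gradients of concave functions), and by a dimension-counting argument using strict concavity, $\nabla f$ maps the open set $C\subset N_{\R}$ to a set of full dimension. Since $D\subset \Im(\partial\bar f)\subset\Stab(f)$ by \eqref{eq:634}, openness of $D$ gives $D\subset\Stab(f)^{\circ}$. For the reverse inclusion, the crucial input is the Legendre blow-up condition, which I will use to show that $\Dom(\partial\bar f)=C$: any finite-slope supporting hyperplane of $\bar f$ at a point $u_{0}\in \partial C$ would bound $\|\nabla f(u_{i})\|$ along any sequence $u_{i}\to u_{0}$ in $C$, contradicting the blow-up. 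Hence $\Im(\partial\bar f)=D$, and combining with $\ri(\Stab(f))\subset\Im(\partial\bar f)$ from \eqref{eq:634}, plus the fact that $\Stab(f)$ has full dimension (since $D$ is open and non-empty), yields $\Stab(f)^{\circ}=\ri(\Stab(f))\subset D$.

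Statements (3) and (4) then follow quickly. By Proposition~\ref{prop:36}, for $u\in C$ and $x=\nabla f(u)$ one has $u\in\partial f^{\vee}(x)$; moreover, $v\in\partial f^{\vee}(x)$ forces $x\in\partial f(v)=\{\nabla f(v)\}$, and injectivity of $\nabla f$ gives $v=u$. Thus $\partial f^{\vee}(x)=\{u\}$ is a singleton, so $f^{\vee}$ is differentiable at $x$ with $\nabla f^{\vee}(x)=u=(\nabla f)^{-1}(x)$. This proves $(\nabla f)^{-1}=\nabla f^{\vee}$, and continuity of both gradients makes $\nabla f\colon C\to D$ a homeomorphism. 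Formula (4) is then the specialization of \eqref{eq:332} to the singleton piece $\{u\}\in\Pi(f)$ and its dual $\{x\}$: $f^{\vee}(x)=\langle x,u\rangle-f(u)$.

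Finally, for (2), differentiability of $f^{\vee}$ on $D$ has just been established, and strict concavity of $f^{\vee}|_{D}$ follows from the injectivity of $\nabla f^{\vee}=(\nabla f)^{-1}$ (a differentiable concave function on an open convex set is strictly concave iff its gradient is injective). The main obstacle — and the step I expect to require the most care — is verifying the gradient blow-up for $f^{\vee}$ at the boundary of $D$. The plan is: given a sequence $(x_{i})$ in $D$ converging to $x_{0}\in\partial D$, set $u_{i}=\nabla f^{\vee}(x_{i})\in C$; if $(u_{i})$ were bounded, a subsequential limit $u_{0}\in\overline{C}$ would either lie in $C$, forcing $x_{0}=\nabla f(u_{0})\in D$ by continuity (contradicting $x_{0}\in\partial D$), or lie in $\partial C$, in which case the Legendre hypothesis on $f$ would give $\|x_{i}\|=\|\nabla f(u_{i})\|\to\infty$, contradicting $x_{i}\to x_{0}$. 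Hence $\|\nabla f^{\vee}(x_{i})\|\to\infty$, which completes the proof that $f^{\vee}|_{D}$ is of Legendre type.
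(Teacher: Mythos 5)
The paper disposes of this theorem with a one-line citation to Rockafellar's Theorem~26.5; your proposal instead reconstructs a full proof inside the paper's own Legendre--Fenchel framework (Proposition~\ref{prop:36}, Lemma~\ref{lemm:19}, equations~\eqref{eq:634} and~\eqref{eq:332}, Theorem~\ref{thm:8}). That is a legitimate and genuinely different presentation, and the overall chain of reasoning is sound: closing $f$, identifying $\partial\bar f$ with the singleton $\{\nabla f\}$ on $C$, extracting injectivity from strict concavity, and bootstrapping through $\partial f^{\vee}$ is exactly how one reproves Rockafellar's result from scratch.

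Two steps, however, are passed off as though they were routine when in fact they carry the whole weight of the theorem. First, the openness of $D=\nabla f(C)$: a ``dimension-counting argument using strict concavity'' is not a proof. The standard honest justification is Brouwer's invariance of domain (a continuous injection from an open subset of $\R^{n}$ into $\R^{n}$ is an open map); Rockafellar's own proof avoids Brouwer by a different convexity argument, but either way this needs to be named, not gestured at. Second, the claim $\Dom(\partial\bar f)=C$: you argue that a support functional at a boundary point $u_{0}\in\partial C$ ``would bound $\|\nabla f(u_{i})\|$'' along sequences $u_{i}\to u_{0}$, but the subgradient inequality $f(u)\le f(u_{0})+\langle x,u-u_{0}\rangle$ only controls the component of $\nabla f(u_{i})$ in the direction $u_{i}-u_{0}$, not the full norm. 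Making this rigorous requires the monotonicity of the subdifferential together with a comparison against a fixed interior point; this is precisely the content of Rockafellar's Theorem~26.1 on essentially smooth functions and is the technical heart of Theorem~26.5. You should either quote that result or supply the monotonicity argument. Once those two steps are shored up, the rest of your proposal --- the identification $\partial f^{\vee}(\nabla f(u))=\{u\}$ via Proposition~\ref{prop:36}, formula~\eqref{eq:332} for statement~\eqref{item:51}, and the dichotomy ($u_{0}\in C$ versus $u_{0}\in\partial C$) establishing the blow-up for $\nabla f^{\vee}$ --- is correct and cleanly organised.
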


\begin{proof}
This follows from~\cite[Theorem~26.5]{Roc70}. 
\end{proof}

\begin{exmpl}
  \label{exm:5}
Consider the function
$$
f_\FS\colon\R^n \longrightarrow  \R , \quad u\longmapsto
-\frac{1}{2} \log\Big(1+\sum_{i=1}^n \e^{-2u_i}\Big).
$$
Let $\Delta ^{n}=\{(x_{1},\dots,x_{n})\subset \R^{n}\mid x_{i}\ge
0,\sum x_{i}\le 1\}$ be the 
\emph{standard simplex} of $\R^{n}$.
\index{standard simplex}%
\nomenclature[g04n]{$\Delta ^{n}$}{standard simplex}%
For
$(x_{1},\dots,x_{n})\in \Delta ^{n}$,
write $x_0=1-\sum_{i=1}^n x_i$ and set 
\begin{equation}
  \label{eq:103}
\varepsilon_{n}\colon  \Delta^n\longrightarrow \R ,\quad  x\longmapsto -\sum_{i=0}^n
x_i\log(x_i).   
\end{equation}
We have 
$\displaystyle \nabla f_{\FS} (u)= \frac{1}{1+\sum_{i=1}^n \e^{-2u_i}}
\left(\e^{-2u_1},\dots,\e^{-2u_n}\right) $ and so
\begin{align*}
\frac{1}{2}\varepsilon_{n}(\nabla f_{\FS} (u))&= \frac{\sum_{i=1}^n
  \e^{-2u_i} u_i}{1+\sum_{i=1}^n \e^{-2u_i}}  +\frac{1}{2}
\log\Big(1+\sum_{i=1}^n \e^{-2u_i}\Big)= \langle \nabla f_\FS (u), u \rangle - f_\FS(u),
\end{align*}
which shows that $\Stab(f_{\FS})=\Delta^{n}$ and that $f_\FS^\vee=\frac{1}{2} \varepsilon_{n}$.
\end{exmpl}


The fact that the sup-differential agrees with the gradient and is single-valued can
simplify some statements. 
It is interesting to make explicit the computation of the Legendre-Fenchel
dual of the inverse image by an affine map of a concave function of
Legendre type.

\begin{prop}
\label{prop:27}
Let $A\colon Q_\R\to N_\R$ be an affine map defined as $A=H+u_0$ for
an injective linear map $H$ 
and a point $u_0\in N_{\R}$. 
Let $f\colon C\to \R$ be a concave function of Legendre type 
defined on an open convex set $C\subset
N_{\R}$ such that $C\cap \Im(A) \not = \emptyset$. Then 
$A^{*}f$ is a concave function of Legendre type on $A^{-1}(C)$, 
$$
\Stab(A^{\ast}f)^{\circ}=\Im(\nabla (A^{*}f)) =
H^{\vee}(\Im(\nabla f))=H^{\vee}(\Stab(f)^{\circ}) ,
$$
    and, for all $v\in A^{-1}C$,
    \begin{equation*}
      (A^{\ast} f)^{\vee}( \nabla (A^{\ast }f)(v))= f^{\vee}(\nabla
      f(Av)) - \langle \nabla f(Av),u_{0}\rangle .
    \end{equation*}
Moreover, there is a section $\imath _{A,f}$
of $H^{\vee}|_{\Stab(f)^{\circ}}$ such that the
diagram
\begin{displaymath}
  \begin{split}
  \xymatrix{
    & A^{-1}C
    \ar[dd]^{A}\ar[dl]_{A^{\ast}f}\ar[rr]^{\nabla(A^{\ast}f)}& & 
    \Stab(A^{\ast}f)^{\circ}\ar[dd]_{\imath _{A,f}} \ar[rd]^{(A^{\ast}f)^{\vee}}&\\
    \R & & & & \R\\
    & C \ar[ul]^{f}\ar[rr]_{\nabla f}& &\Stab(f)^{\circ}\ar[ru]_{f^{\vee}-u_{0}}&
  }    
  \end{split}
\end{displaymath}
commutes.
\end{prop}
\begin{proof}
This follows readily from Proposition~\ref{prop:11}.  
\end{proof}

The section $\imath _{A,f}$  embeds $\Stab(A^{\ast}f)^{\circ}$ as a real
submanifold  of $\Stab(f)^{\circ}$. 
Varying~$u_{0}$ in a suitable space of parameters, we obtain a foliation of
$\Stab(f)^{\circ}$ by ``parallel'' submanifolds. We illustrate this
phenomenon with an example in dimension~2. 

\begin{exmpl} \label{exm:16} 
Consider the function $f\colon \R^{2}\to \R$ given by
$$f(u_1,u_2) =
-\frac{1}{2}\log\left(1+\e^{-2u_1}+\e^{-4u_1-2u_2}+\e^{-2u_1-4u_2}\right).$$
It is a concave function of Legendre type whose stability set is the
polytope $\Delta=\Conv((0,0),(1,0),(2,1),(1,2))$.
The restriction of its Legendre-Fenchel dual to $\Delta ^{\circ}$ is
also a concave function of Legendre type.

For $c\in \R$, consider the affine  map 
\begin{displaymath}
A_c\colon \R\to \R^{2}, \quad 
u\longmapsto (-u, u+c).
\end{displaymath}
We write $A_{c}=H+(0,c)$ for a linear function $H$. 
The dual of $H$ is the function $H^\vee\colon
\R^2\to\R$, $(x_1,x_2)\mapsto x_2-x_1$.
Then $\Stab(A_c^*f)^\circ=H^\vee(\Delta^\circ)$ is the open
interval~$(-1,1)$. By Proposition \ref{prop:27}, there is a map 
$\imath_{A_c,f}$
embedding $(-1,1)$ into $\Delta^\circ$ in such a way that
$\imath_{A_c,f}\circ\nabla(A_c^*f) = (\nabla f)\circ A_c$. For
$u\in\R$,
\begin{align*}
\nabla(A_c^*f) (u)&=
\frac{\e^{-2u-4c}-\e^{2u}-\e^{2u-2c}}{1+\e^{2u}+\e^{2u-2c}+\e^{-2u-4c}}\in
(-1,1),\\
(\nabla f)\circ A_c(u) &=
\frac{\left(\e^{2u}+2\e^{2u-2c}+\e^{-2u-4c},\e^{2u-2c}+2\e^{-2u-4c}\right)}
{1+\e^{2u}+\e^{2u-2c}+\e^{-2u-4c}}\in\Delta^\circ.
\end{align*}
From this, we compute $\imath_{A_c,f}(x)=\left(x_1,x_2\right)$ with
$$\left\{\begin{aligned}
    x_1 &= \frac{-\e^{-2c}}{2(1+\e^{-2c})}x +
    \frac{2+3\e^{-2c}}{2(1+\e^{-2c})}
    \left(\frac{x^2}{\sqrt{\rho_c^2+(1-\rho_c^2)x^2}+\rho_c} +
      \frac{\rho_c}{1+\rho_c}\right),\\
    x_2 &= \frac{2+\e^{-2c}}{2(1+\e^{-2c})}x +
    \frac{2+3\e^{-2c}}{2(1+\e^{-2c})}
    \left(\frac{x^2}{\sqrt{\rho_c^2+(1-\rho_c^2)x^2}+\rho_c} +
      \frac{\rho_c}{1+\rho_c}\right),
\end{aligned}\right.$$
where we have set $\rho_c=2\e^{-2c}\sqrt{1+\e^{-2c}}$ for short. In
particular, the image of the map $\imath_{A_c,f}$ is an arc of conic:
namely the
intersection of 
$\Delta^\circ$ with the conic of equation
$$(x_2-x_1)^2 = (1-\rho_c^2)L_c(x_1,x_2)^2 + 2\rho_cL_c(x_1,x_2),$$
with $L_c(x_1,x_2)=\frac{2+\e^{-2c}}{2+3\e^{-2c}}x_1 +
\frac{\e^{-2c}}{2+3\e^{-2c}}x_2 - \frac{\rho_c}{1+\rho_c}$. 
Varying $c\in \R$, these arcs of conics form a
foliation of $\Delta^\circ$, they all pass through the vertex $(1,2)$
as $x\to 1$, and their other end as $x\to-1$ parameterizes the
relative interior of the edge
$\Conv((1,0),(2,1))$, see Figure~\ref{fig:10}.

\begin{figure}[htpb]
\centerline{
\includegraphics[height=4.5cm]{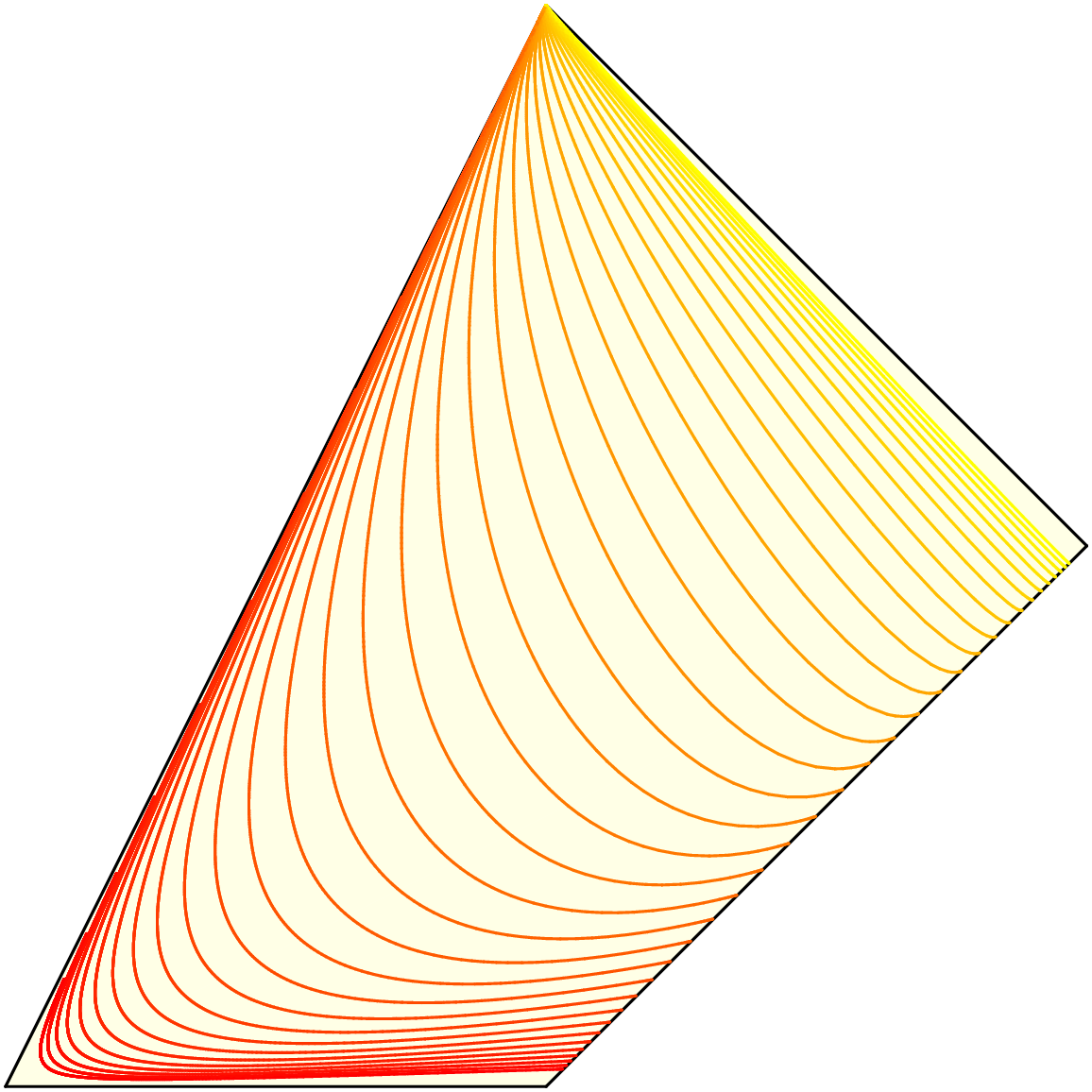}}
\caption{A foliation of $\Delta^\circ$ by curves}\label{fig:10}
\end{figure}
\end{exmpl}

\section{The piecewise affine case}
\label{sec:functions-polyhedra}

The Legendre-Fenchel duality for piecewise affine concave
functions can be described
in combinatorial terms. Moreover, some technical issues of the general
theory disappear when 
dealing with piecewise affine concave functions on convex polyhedra and
uniform limits of such functions.    

\begin{defn} \label{def:20}
  Let $C\subset N_{\R}$ be a polyhedron. A function
  $f\colon C\to \R$ is 
   \emph{piecewise affine}\index{piecewise affine function} if there
   is a
   finite cover of $C$ by closed subsets such
   that the restriction of $f$ to each of these subsets is an affine
   function. Such affine functions are called \emph{defining
     functions}\index{piecewise affine function!defining functions of a} of $f$.
   A concave function $f\colon N_{\R}\to \un \R$ is said to be
   \emph{piecewise affine} if $\Dom(f)$ is a polyhedron and the
   restriction $f|_{\Dom(f)}$ is piecewise affine.
\end{defn}

\begin{rem}\label{rem:30}
  Considering a polyhedron $C$ inside its affine envelope, we may
  think of it as a
  closed domain, that is, the closure of an open set. By an argument
  of general topology, if a closed domain $C$ is a finite union of closed
  subsets $C=\bigcup_{i=1}^{m}D_{i}$, then
  $C=\bigcup_{i=1}^{m}\overline {D^{\circ}_{i}}$. Therefore, when the
  domain is a polyhedron, 
  our definition of piecewise affine function agrees with the notion
  of piecewise linear function of~\cite{Ovchinnikov:maxmin}.
\end{rem}

\begin{lem}\label{lemm:4}
  Let $f$ be a piecewise affine function defined on a
  polyhedron~$C\subset N_{\R}$. Then there exists a polyhedral
   complex $\Pi$ in $C$ such that the restriction of $f$ to each
   polyhedron of $\Pi$ is an affine function.
\end{lem}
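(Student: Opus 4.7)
The plan is to build $\Pi$ as the polyhedral complex obtained by refining $C$ with the hyperplane arrangement determined by the affine pieces of $f$, and then to verify, cell by cell, that $f$ is forced to coincide with a single affine function on each resulting polyhedron.

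Concretely, I would start from a finite closed cover $C = C_{1}\cup\cdots\cup C_{k}$ with $f|_{C_{i}}$ equal to the restriction of an affine function $a_{i}$ on $N_{\R}$, as provided by the definition. After merging indices I may assume the $a_{i}$ are pairwise distinct, so that for each $i\neq j$ the locus $H_{ij} = \{u\in N_{\R}:a_{i}(u)=a_{j}(u)\}$ is an affine hyperplane. The common refinement of $C$ by the finite arrangement $\{H_{ij}\}_{i\neq j}$, obtained by intersecting $C$ with all choices of closed half-spaces bounded by these hyperplanes and then taking faces, is a finite polyhedral complex $\Pi$ in $C$ with $|\Pi|=C$.

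Now fix $\Lambda\in\Pi$. By construction each difference $a_{i}-a_{j}$ either vanishes identically on $\aff(\Lambda)$ or has constant nonzero sign on $\ri(\Lambda)$. Declare $i\sim j$ iff $a_{i}=a_{j}$ on $\aff(\Lambda)$; for each equivalence class $\alpha$ let $\tilde a_{\alpha}$ denote the common affine function on $\aff(\Lambda)$ and put $P_{\alpha}=\bigcup_{i\in\alpha}C_{i}$, so $f|_{P_{\alpha}\cap\Lambda}=\tilde a_{\alpha}|_{P_{\alpha}\cap\Lambda}$. The key observation is that the sets $\ri(\Lambda)\cap P_{\alpha}$, for varying $\alpha$, are pairwise disjoint and closed in $\ri(\Lambda)$. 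Closedness is clear, and disjointness holds because for $i\in\alpha$, $j\in\beta$ with $\alpha\neq\beta$ one has $a_{i}\neq a_{j}$ on $\aff(\Lambda)$, hence by construction of $\Pi$ we have $a_{i}(u)\neq a_{j}(u)$ for every $u\in\ri(\Lambda)$, so a common point in $C_{i}\cap C_{j}\cap\ri(\Lambda)$ would force $f$ to take two different values at once. Since $\ri(\Lambda)$ is convex and therefore connected, the decomposition $\ri(\Lambda)=\bigsqcup_{\alpha}(\ri(\Lambda)\cap P_{\alpha})$ has exactly one non-empty term, say for $\alpha_{0}$. Thus $f=\tilde a_{\alpha_{0}}$ on $\ri(\Lambda)$, and by continuity of $f$ (which follows from the compatibility $a_{i}=a_{j}$ on $C_{i}\cap C_{j}$) the same equality holds on $\Lambda$.

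The step I expect to demand the most care is the disjointness argument: it is essential that two distinct equivalence classes on a given $\Lambda$ really produce affine functions that never coincide on $\ri(\Lambda)$, which is precisely what the refinement by the arrangement $\{H_{ij}\}$ arranges. Once this is in place, the rest reduces to the standard fact that a connected space is not a non-trivial disjoint union of closed subsets, together with routine polyhedral bookkeeping.
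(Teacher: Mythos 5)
Your proof is correct, and it takes a genuinely different route from the paper. The paper dispatches this lemma in one line by invoking the max-min representation of piecewise affine functions from Ovchinnikov's paper (cited as~\cite{Ovchinnikov:maxmin}): once one knows that $f$ can be written as $\max_i \min_{j\in S_i} a_j$ for finitely many affine functions $a_j$, the arrangement of the hyperplanes $\{a_j=a_k\}$ refines $C$ into a complex on which each max-min collapses to a single $a_j$. You instead work directly from the closed cover $\{C_i\}$ provided by the definition, build the arrangement of the hyperplanes $\{a_i=a_j\}$, and prove cell by cell that $f$ must be affine via a connectedness argument: on each $\ri(\Lambda)$ the sets $\ri(\Lambda)\cap P_\alpha$ give a finite partition into disjoint closed pieces, so only one can be nonempty. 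This bypasses the max-min representation entirely, making the argument self-contained and more elementary; it also makes explicit the one place where a little care is needed, namely that two inequivalent $a_i,a_j$ cannot coincide anywhere on $\ri(\Lambda)$, which is exactly the standard fact that an affine function nonnegative on a polyhedron and not identically zero on its affine hull is strictly positive on the relative interior. The paper's route is shorter to state but outsources the real work to the cited representation theorem; yours is longer but proves everything from scratch. Both yield the slightly stronger conclusion $|\Pi|=C$.
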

\begin{proof}
  This is an easy consequence of the max-min representation of
  piecewise affine functions in~\cite{Ovchinnikov:maxmin}, that can be
  applied thanks to Remark \ref{rem:30}. 
\end{proof}

\begin{defn} \label{def:54}
Let $C$ be a polyhedron, $\Pi $ 
a polyhedral complex in $C$ and $f\colon C\to \R$ 
a piecewise affine function. We say that $\Pi $ and $f$ are
\emph{compatible}
\index{piecewise affine function!on a polyhedral complex}%
\index{polyhedral complex!compatible with a piecewise affine function}%
if $f$ is affine on each polyhedron of $\Pi$.
Alternatively, we say that $f$ is a 
piecewise affine function \emph{on  $\Pi $}. 
If the function $f$ is concave, it is said to be \emph{strictly
  concave on $\Pi$}
\index{strictly concave function!on a polyhedral complex}%
if $\Pi=\Pi(f)$. The polyhedral complex $\Pi $ is said to be
\emph{regular}
\index{polyhedral complex!regular}%
if there exists a concave piecewise affine function $f$
such that $\Pi =\Pi (f)$.
\end{defn}


As was the case for polyhedra, piecewise affine concave
functions 
\index{piecewise affine concave function}%
can be described in two dual ways, which we refer as the
H-representation and the V-representation. 

 For the
\emph{H-representation},
\index{H-representation!of a piecewise affine concave function}%
we consider a polyhedron 
\begin{displaymath}
  \Lambda=\bigcap_{1\le j\le k} \{u\in N_{\R} \mid \langle a_j,u\rangle
  +\alpha_{j}\ge 0 \}
\end{displaymath}
as in
\eqref{eq:10} and a set of affine equations $\{(a_{j},\alpha
_{j})\}_{k+1\le j\le l}\subset M_{\R}\times \R$. We then define a
concave function on $N_{\R}$ as
\begin{equation}
  \label{eq:38}
  f(u)=
  \begin{cases}
    \min_{k+1\le j \le l}(\langle a_{j},u\rangle +\alpha_{j}) & 
    \text{ for }  u\in \Lambda,\\
    -\infty & \text{ for } u\notin \Lambda.
  \end{cases}
\end{equation}
The equation (\ref{eq:38}) is an H-representation of the function $f$. 
With this representation, the recession function of $f$ is given by 
\begin{displaymath}
  \rec(f)(u)=    \min_{k+1\le j \le l}\langle a_{j},u\rangle,  \quad
  \text{ for } u\in \rec(\Lambda)
\end{displaymath}
and $\rec(f)(u)=-\infty$ for $u\notin \rec(\Lambda)$.
  In particular,
\begin{equation}
  \label{eq:18}
  \Dom(\rec(f))=\rec(\Dom(f)),\quad
  \Stab(\rec(f))=\Stab(f).
\end{equation}
For the \emph{V-representation},
\index{V-representation!of a piecewise affine concave function}%
we consider a polyhedron 
\begin{displaymath}
  \Lambda'=\Cone(b_{1},\dots,b_{k})+\Conv(b_{k+1},\dots,b_{l})
\end{displaymath}
as
in~\eqref{eq:35}, a set of slopes $\{\beta _{j}\}_{1\le j\le k}\subset
\R$ and a set of values $\{\beta _{j}\}_{k+1\le j\le l} \subset \R$. 
We then define a concave function on $N_{\R}$  as
\begin{equation}
  \label{eq:39}
g(u) = \sup\bigg\{\sum_{j=1}^{l}\lambda_{j}\beta _{j}\bigg|\
\lambda_{j}\ge 0, \ \sum_{j=k+1}^{l}\lambda_{j}=1, \
\sum_{j=1}^{l}\lambda_{j}b_{j}=u
\bigg \}.   
\end{equation}
This equation is the V-representation of the function $g$.
With this second representation, we obtain the recession function as
\begin{displaymath}
\rec(g)(u) = \sup\bigg\{\sum_{j=1}^{k}\lambda_{j}\beta _{j} \bigg|\
\lambda_{j}\ge 0, \ 
\sum_{j=1}^{k}\lambda_{j}b_{j}=u
\bigg \}.     
\end{displaymath}
We will typically use the H-representation for functions on
$N_{\R}$ while we will use the V-representation for functions in
$M_{\R}$. 

As we have already mentioned, the Legendre-Fenchel duality of 
piecewise affine concave functions can be described in combinatorial terms.

\begin{prop}\label{prop:19} Let $\Lambda $ be a polyhedron in $N_{\R}$ and 
$f$ a piecewise affine concave function with $\Dom(f)=\Lambda $
given as
\begin{align*}
  \Lambda&=\bigcap_{1\le j\le k} \{u\in N_{\R} \mid \langle a_j,u\rangle
  +\alpha_{j}\ge 0 \},\\
  f(u)&=    \min_{k+1\le j \le l}(\langle a_{j},u\rangle +\alpha_{j})  \quad
    \text{for }  u\in \Lambda
\end{align*}
with $a_{j}\in M_{\R}$ and $\alpha_{j}\in \R$. 
Then
\begin{align*}
\Stab(f)&= \Cone(a_{1},\dots,a_{k})
+\Conv(a_{k+1},\dots,a_{l}),\\ 
f^{\vee}(x) &= \sup\bigg\{\sum_{j=1}^{l}-\lambda_{j}\alpha_{j}\bigg|\ 
\lambda_{j}\ge 0, \sum_{j=k+1}^{l}\lambda_{j}=1, \  
\sum_{j=1}^{l}\lambda_{j}a_{j}=x \bigg \}\    \text{ for }  x\in \Stab(f). 
\end{align*}
\end{prop}
\begin{proof}
  This is proved in \cite[pages~172--174]{Roc70}.
\end{proof}

\begin{exmpl} \label{exm:1} Let $\Lambda $ be a polyhedron in
  $N_{\R}$. Then both the indicator function~$\iota_{\Lambda}$
and the support function
  $\Psi_{\Lambda}$
are concave and piecewise
  affine. We have~$\Psi_{\Lambda}^{\vee}=\iota_{\Lambda}$.  In
  particular, if we fix an isomorphism $N_{\R}\simeq \R^{n}$, the
  function
$$
\Psi _{\Delta ^{n}} \colon N_{\R}\longrightarrow \R , \quad 
(u_{1},\dots,u_{n})\longmapsto \min\{ 0, u_{1},\dots, u_{n}\} 
$$
is the support function of the {standard simplex}
\index{standard simplex}%
\index{support function!of the standard simplex}%
$\Delta^n=\Conv(\bfzero,e^{\vee}_1,\dots,e^{\vee}_{n})\subset
M_{\R}$, 
where $\{e_{1},\dots,
e_{n}\}$ is the standard basis of $\R^{n}$ and $\{e_{1}^{\vee},\dots,
e_{n}^{\vee}\}$ is the dual basis. Hence, $\Stab(\Psi _{\Delta ^{n}})=\Delta^n$ and
$\Psi _{\Delta ^{n}}^{\vee}=\iota  _{\Delta^{n}}$.
\end{exmpl}

Let $\Lambda $ be a polyhedron in $N_{\R}$ and 
$f$ a piecewise affine concave function with $\Dom(f)=\Lambda
$. Then $\Dom(\partial f)=\Lambda$ and $\Pi(f)$ and $\Pi(f^{\vee})$
are convex decompositions of
$\Lambda$ and of $\Lambda':=\Stab(f)$ respectively.
By Theorem~\ref{thm:8}, the Legendre-Fenchel correspondence 
$$
\cL f \colon \Pi (f)\longrightarrow \Pi (f^{\vee}) 
$$
is a duality in the sense of Definition~\ref{def:9}.
However in the polyhedral case, these decompositions are dual in a stronger
sense.
We need to introduce some more definitions before we can properly 
state this duality. 

\begin{defn} \label{def:78}
Let $\Lambda $ be a polyhedron and $K$ a face of $\Lambda $. The 
\emph{angle} of $\Lambda $ at $K$
\index{polyhedron!angle at a face}%
is defined as
$$
\angle(K,\Lambda )= \{ t(u-v) \mid u\in \Lambda , v\in K, t\ge 0\}.
$$
\nomenclature[sdtangle]{$\angle(K,\Lambda )$}{angle of a polyhedron at a face}%
It is a polyhedral cone.   
\end{defn}

\begin{defn} \label{def:77}
The \emph{dual} of a convex cone $\sigma\subset N_{\R}$
\index{dual!of a convex cone}%
is defined as
$$
\sigma ^{\vee} = \{x\in M_{\R} \mid \langle x,u\rangle \ge 0 
\text{ for all } 
  u\in \sigma \}.
$$
\nomenclature[sdual]{$\sigma ^{\vee}$}{dual of a convex cone}%
This is a convex closed cone.     
\end{defn}

If $\sigma $ is a convex closed cone, then $\sigma ^{\vee\vee}=\sigma
$. The following result is a direct consequence of Proposition
\ref{prop:19}.

\begin{cor}\label{cor:2}
Let  $f$ be a piecewise affine concave function on $N_{\R}$. Then
\begin{displaymath}
  \rec(\Dom(f))^{\vee}=\rec(\Stab(f)).
\end{displaymath}  
In particular, if $\Dom(f)=N_{\R}$, then $\Stab(f)$ is a polytope. 
\end{cor}

\begin{defn}\label{def:10}
Let $C, C'$ be polyhedra in $N_{\R}$ and $M_{\R}$,
respectively, and~$\Pi , \Pi '$ 
polyhedral complexes in $C$ and $C'$, respectively. 
We say that $\Pi $ and $\Pi '$ are \emph{dual} polyhedral
  complexes
\index{dual!polyhedral complexes}%
if there is a bijective map $\Pi  \to \Pi ', \Lambda \mapsto\Lambda ^{*} $
such that 
\begin{enumerate}
\item for all $\Lambda ,K \in \Pi $, the inclusion $K\subset \Lambda $
  holds if and
  only if $K^{*}\supset \Lambda ^{*}$;
\item \label{item:18} for all $\Lambda ,K \in \Pi $, if 
  $K\subset \Lambda $, then $\angle(\Lambda^*,K^*
  )=\angle(K,\Lambda )^{\vee}$.
\end{enumerate}
\end{defn}

For $\Lambda\in \Pi$, the angle $\angle(\Lambda ,\Lambda )$ is the
linear subspace generated 
by differences of points in $\Lambda $. 
Condition~(\ref{item:18}) above implies that $\angle(\Lambda ,\Lambda )$
and $\angle(\Lambda^{*} ,\Lambda ^{*})$ are orthogonal. In 
particular, $\dim(\Lambda )+\dim(\Lambda ^{*})=n$.

\begin{prop}\label{prop:63} Let $f$ be a piecewise affine concave
  function with $\Lambda =\Dom(f)$ and $\Lambda '=\Stab(f)$. Then 
  $\Pi(f)$ and $\Pi(f^{\vee})$ are polyhedral complexes in $\Lambda $
  and $\Lambda '$ respectively. Moreover, they are dual of each other.
  In particular, the vertices of~$\Pi(f)$ are in bijection with the
  polyhedra of $\Pi(f^{\vee})$ of maximal dimension.
\end{prop}
\begin{proof}
  This is proved in \cite[Proposition~1]{PassareRullgard2004}.
\end{proof}

\begin{exmpl} \label{exm:8}
Consider the  standard simplex $\Delta^{n}$ of Example \ref{exm:1}. 
Its indicator function induces the standard polyhedral complex in
$\Delta^{n}$ 
consisting of the collection of its faces. 
The dual of $\iota_{\Delta^{n}}$, the support function $\Psi _{\Delta ^{n}}$,
induces a fan $\Sigma_{\Delta ^{n}}:=\Pi(\Psi _{\Delta ^{n}})$ of
$N_{\R}$.  
The duality between these polyhedral complexes can be made explicit as
\begin{displaymath}
\Pi(\iota_{\Delta^{n}})\longrightarrow \Sigma_{\Delta ^{n}},\quad
F\longmapsto \angle(F,\Delta^{n})^{\vee}.
\end{displaymath}
\end{exmpl}

 \begin{exmpl}
   \label{exm:19}
   The previous example can be generalized to an arbitrary polytope
   $\Delta\subset M_{\R}$ . The indicator function $\iota _{\Delta }$
   induces the standard decomposition of $\Delta$ into its faces and,
   by duality, the support function $\Psi _{\Delta }$ induces a
   polyhedral complex $\Sigma _{\Delta }:=\Pi (\Psi _{\Delta })$
   \nomenclature[g1814]{$\Sigma _{\Delta}$}{fan associated to a
     polytope}%
   made of cones. If $\Delta $ is of maximal dimension, then $\Sigma
   _{\Delta }$ is a fan.

   The faces of $\Delta$ are in one-to-one correspondence with the
   cones of $\Sigma_{\Delta}$ through the Legendre-Fenchel correspondence.
   For a face $F$ of $\Delta$, its corresponding cone is
   $$
   \sigma_F:= F^{*}=\{ u\in N_\R\mid \langle u,y-x\rangle \ge 0 \mbox{ for all }
   y\in \Delta, x\in F\}.
   $$
   \nomenclature[g1802]{$\sigma_F$}{cone dual to a face}%
   Reciprocally, to each cone $\sigma $ corresponds a face
   of $\Delta$ of complementary dimension 
   $$
   F_\sigma:=\sigma^{*}=\{ x\in \Delta\mid \langle x,u\rangle =
   \Psi_{\Delta }(u) \mbox{ 
     for all } u\in \sigma\}. 
   $$
   \nomenclature[aF]{$F_\sigma$}{face dual to a cone}%
   On a cone $\sigma\in \Sigma$, the function $\Psi _{\Delta }$ is
   defined by any 
   vector $m_{\sigma}$ in the affine space~$\aff(F_{\sigma})$. The
   cone $\sigma $ is normal to $F_{\sigma}$. 

   We will use the notation $F_{\sigma }$ in a more general
   context. If $\Sigma $ is a refinement of $\Sigma _\Delta $ and $\sigma \in
   \Sigma $, we will denote by $F_{\sigma }$ the face of $\Delta $
   given by the condition
   \begin{displaymath}
     F_{\sigma }=\{ x\in \Delta \mid \langle u,y-x\rangle \ge 0 \mbox{ for all }
   y\in \Delta, u\in \sigma \}.
   \end{displaymath}
\end{exmpl}

For piecewise affine concave functions, the operations of taking the
recession function and the associated 
polyhedral complex commute with each other.

\begin{prop} \label{prop:80}
  Let $f$ be a piecewise affine concave function on $N_{\R}$. Then
    \begin{displaymath}
      \Pi (\rec(f))=\rec(\Pi(f)).
    \end{displaymath}
\end{prop}
\begin{proof}
  Let $P_{f}(u,x)=f(u)+f^{\vee}(x)-\left< u,x\right>$ be the function
  introduced in \eqref{eq:19}. For each $x\in \Stab(f)$ write
  $P_{f,x}(u)=P_{f}(u,x)$ which is a piecewise affine concave function. Let
  $C_x$ be as in Definition \ref{def:29}. By 
  Lemma \ref{lemm:19},
  \begin{displaymath}
    C_{x}=\{u\in \Dom(f)\mid P_{f,x}(u)=0\}. 
  \end{displaymath}
  Write $P'(v)=\rec(f)(v)-\left<u,x\right>$. Then
  $P'=\rec(P_{f,x})$. 

  We claim that, for each $x\in \Stab(f)$,
  \begin{displaymath}
    \rec(C_{x})=\{v\in \Dom(\rec(f))\mid P'(v)=0\}.
  \end{displaymath}
Let $v\in \rec(C_{x})$. Clearly $v\in \Dom(\rec(f))$ and,
  since $x\in \Stab(f)$, the set $C_{x}$ is non-empty. Let $u_{0}\in
  C_{x}$. Then, for each $\lambda >0$, $u_{0}+\lambda v\in
  C_{x}$. Therefore,
  \begin{displaymath}
    P'(v)=\lim_{\lambda \to \infty}\frac{P_{f,x}(u_{0}+\lambda
      v)-P_{f,x}(u_{0})}{\lambda }=0. 
  \end{displaymath}
  Conversely, let $v\in \Dom(\rec(f))$ satisfying $P'(v)=0$ and
  $u\in C_{x}$. On the one hand, by
  the properties of the function $P_{f}$, we have $P_{f,x}(u+v)\le 0$. On
  the other hand, since $P'=\rec(P_{f,x})$ and $P_{f,x}$ is a
  piecewise affine concave function,
  \begin{displaymath}
    P_{f,x}(u+v)-P_{f,x}(u)\ge P'(v)=0.
  \end{displaymath}
  Thus $P_{f,x}(u+v)\ge P_{f,x}(u) = 0$ and finally $P_{f,x}(u+v) = 0$. This
  implies that, if $u\in C_{x}$ then $u+v\in C_{x}$, showing 
  $v\in \rec(C_{x})$. Hence the claim is proved.

  By definition $\Pi (f)=\{C_{x}\}_{x\in \Stab(f)}$. Hence $\rec(\Pi
  (f))=\{\rec(C_{x})\}_{x\in \Stab(f)}$. For each $x\in \Stab(\rec(f))$,
  write
  \begin{displaymath}
    C'_{x}=\{v\in \Dom(\rec(f))\mid P'(v)=0\}.
  \end{displaymath}
  Then $\Pi (\rec(f))=\{C'_{x}\}_{x\in \Stab(\rec(f))}$. The result
  follows from the previous claim and the fact that
  $\Stab(f)=\Stab(\rec(f))$ by \eqref{eq:18}.
\end{proof}

Now we want to study the compatibility of Legendre-Fenchel duality and
integral and rational structures.

\begin{defn} \label{def:13} Let $L$ be a lattice in a finite
  dimensional real vector space $L_{\R}=L\otimes \R$ and $L^{\vee}$
  the dual lattice. A piecewise
  affine concave function $f$ on
  $L_{\R}$ is an \emph{H-lattice}
  \index{piecewise affine concave function!H-lattice}%
  concave function 
  if it has an H-representation as (\ref{eq:38}) with $a_{j}\in
  L^{\vee}$ and $\alpha _{j}\in \Z$ for $j=1,\dots,l$.
  It is a \emph{V-lattice} concave function
  \index{piecewise affine concave function!V-lattice}%
  if it has a V-representation as (\ref{eq:39}) with $b_{j}\in L$ and
  $\beta _{j}\in \Z$, for $j=1,\dots,l$. 
  We say that $f$ is a
  \emph{rational} piecewise affine concave function
  \index{piecewise affine concave function!rational}%
  if it has an H-representation as before with  $a_{j}\in
  L^{\vee}\otimes \Q$ and $\alpha _{j}\in \Q$ for $j=1,\dots,l$, 
  or equivalently, a V-representation with $b_{j}\in L\otimes \Q$ and
  $\beta _{j}\in \Q$.
\end{defn}

Observe that the domain of a V-lattice concave function is a lattice
polyhedron, whereas the domain of an H-lattice concave function is a
rational polyhedron.

Let $N \simeq \Z^n$ be a lattice of rank $n$ such that
$N_\R=N\otimes \R$. Set 
$M=N^\vee=\Hom(N,\Z)$ for its dual lattice, so
$M_\R=M\otimes\R$. We also set $N_{\Q}=N\otimes \Q$ and
$M_{\Q}=M\otimes \Q$.

\begin{rem}
The notion of H-lattice concave functions defined on the whole
$N_{\R}$ coincides with the notion of 
{tropical Laurent polynomials}
\index{tropical Laurent polynomial}%
over the integers, that is, the elements of the group 
semi-algebra $\Z_{\text{trop}}[N]$,
where the arithmetic operations of the base semi-ring
$\Z_{\text{trop}}=(\Z,\oplus,
\odot)$ are defined as
$ x\oplus y=\min(x,y)$ and $x\odot y=x+y$.  
\end{rem}

\begin{prop}\label{prop:18} Let $f$ be a 
 piecewise affine concave 
  function on $N_{\R}$.
  \begin{enumerate}
\item \label{item:26} $f$ is an H-lattice concave function 
  (respectively, a 
rational piecewise affine concave function) if and only if $f^{\vee}$
is a V-lattice concave function (respectively, a  rational piecewise
    affine concave  function) on $M_{\R}$.
\item  \label{item:20} $\rec(f)$ is an H-lattice concave function if and only if
    $\Stab(f)$ is a lattice polyhedron. In this case $\rec(f)$ is the
    support function of $\Stab(f)$.
    \end{enumerate}
\end{prop}
\begin{proof}
  This follows easily from Proposition \ref{prop:19}.
\end{proof}

\begin{exmpl}
  \label{exm:13} If $\Delta $ is a lattice polytope, its
  indicator function is a V-lattice function, its support
  function $\Psi _{\Delta }$ is an H-lattice function and, when
  $\Delta $ has maximal dimension, the fan
  $\Sigma _{\Delta }$ is a rational fan. In particular, if the
  isomorphism $N\simeq \R^{n}$ of Example \ref{exm:1} is given by the
  choice of an integral basis
  $e_{1},\dots,e_{n}$  of 
  $N$, then $\Delta ^{n}$ is a lattice polytope, the function $\Psi
  _{\Delta^{n} }$ 
  is an H-lattice concave function and $\Sigma
  _{\Delta ^{n}}$ is a rational fan. If we write
  $e_{0}=-\sum_{i=1}^{n}e_{i}$, this is the fan generated by the
  vectors $e_{0},e_{1}, 
  \dots,e_{n}$ in the sense that each cone of
  $\Sigma_{\Delta ^{n}}$ is the cone generated by a strict subset of the
  above set of vectors. Figure \ref{fig:simplex} illustrates the case $n=2$. 
 \end{exmpl}
 \begin{figure}[h]
   \centering
   \input{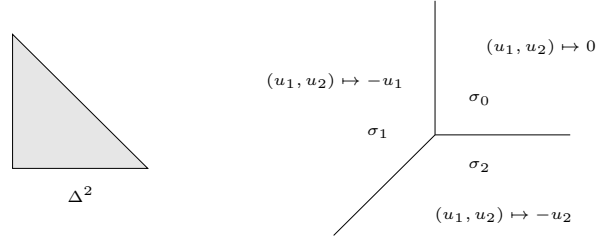}
   \caption{The standard simplex $\Delta ^{2}$, its associated fan and
   support function}
   \label{fig:simplex}
 \end{figure}

\begin{defn} \label{def:52}
 Let $\Lambda$ and $\Lambda'$ be polyhedra in $N_{\R}$ and in
 $M_{\R}$, respectively.  We set $\mathscr{P}(\Lambda,\Lambda')$ for
 the {space of piecewise affine concave functions with effective
   domain $\Lambda$ and stability set
   $\Lambda'$}.
\nomenclature[aPiece2]{$\mathscr{P}(\Lambda,\Lambda')$}{spaces of piecewise affine
   functions with given effective domain and stability set}%
We also set $\ov{\mathscr{P}}(\Lambda,\Lambda')$
\nomenclature[aPiece6]{$\ov{\mathscr{P}}(\Lambda,\Lambda')$}{closure
  of $\mathscr{P}(\Lambda,\Lambda')$}%
 for the closure of this space
 with respect to uniform convergence.  We set
$$
\mathscr{P}(\Lambda)= \bigcup_{\Lambda'}
\mathscr{P}(\Lambda,\Lambda'), \quad 
\ov{\mathscr{P}}(\Lambda)= \bigcup_{\Lambda'} \ov{\mathscr{P}}(\Lambda,\Lambda')
$$
\nomenclature[aPiece3]{$\mathscr{P}(\Lambda)$, $\mathscr{P}(\Lambda)_{\Z}$}{spaces of piecewise affine
   functions with given effective domain}%
\nomenclature[aPziece5]{$\ov{\mathscr{P}}(\Lambda)$,
  $\ov{\mathscr{P}}(\Lambda)_{\Z}$}{closures of $\mathscr{P}(\Lambda)$
and  $\mathscr{P}(\Lambda)_{\Z}$}%
for the {space of piecewise affine concave functions with
  effective domain $\Lambda$} and for its closure
with respect to uniform convergence, respectively. If we want to single
out the elements of the previous spaces whose stability set is a
lattice polyhedron  we will write
$$
\mathscr{P}(\Lambda)_{\Z}= \bigcup_{\Lambda' \text{ lat. pol.}}
\mathscr{P}(\Lambda,\Lambda'), \quad 
\ov{\mathscr{P}}(\Lambda)_{\Z}= \bigcup_{\Lambda'\text{ lat. pol.}} \ov{\mathscr{P}}(\Lambda,\Lambda'),
$$
where the union runs over all lattice polyhedrons.
We also set
$$
\mathscr{P}= \bigcup_{\Lambda,\Lambda'}
\mathscr{P}(\Lambda,\Lambda'), \quad 
\ov{\mathscr{P}}= \bigcup_{\Lambda,\Lambda'} \ov{\mathscr{P}}(\Lambda,\Lambda').
$$
\nomenclature[aPiece1]{$\mathscr{P}$}{space of piecewise affine functions}%
\nomenclature[aPiece4]{$\ov{\mathscr{P}}$}{closure of $\mathscr{P}$}%
When we need to specify the vector space $N_{\R}$ we will denote it as
a subindex as in~$\mathscr{P}_{N_{\R}}$ or $\ov{\mathscr{P}}_{N_{\R}}$.
\end{defn}

The following propositions contain the basic properties of the Legendre-Fenchel
duality acting on  $\ov{\mathscr{P}}$.
The elements in $\ov{\mathscr{P}}$ are continuous
functions on polyhedra. In particular, they are closed concave functions. 
Observe that when working with uniform limits of piecewise affine
concave functions,
the technical issues in \S \ref{sec:basic-prop-legendre} disappear. 

\begin{prop}\label{prop:13} The piecewise affine concave functions and
  their uniform limits satisfy the following properties. 
  \begin{enumerate}
  \item \label{item:74} Let $f\in \ov{\mathscr{P}}_{N_{\R}}$. Then
    $f^{\vee\vee}=f$. 
  \item \label{item:73}  If $f\in \mathscr{P}(\Lambda,\Lambda')$
    (respectively $f\in 
    \ov{\mathscr{P}}(\Lambda,\Lambda')$) then $f^{\vee}\in
    \mathscr{P}(\Lambda',\Lambda)$ (respectively $f^{\vee}\in
    \ov{\mathscr{P}}(\Lambda',\Lambda)$).
  \item \label{item:54} If $f \in \ov{\mathscr{P}}(\Lambda) $ then
    $\Dom(\rec(f))=\rec(\Lambda)$.
  \item \label{item:117} Let $f_{i}\in \mathscr{P}(\Lambda_{i}, \Lambda_{i}')$
    (respectively $f_{i}\in \ov{\mathscr{P}}(\Lambda_{i},
    \Lambda_{i}')$), $i=1,2$, with $\Lambda_{1}\cap\Lambda_{2}\not
    =\emptyset$. Then $f_{1}+f_{2}\in
    \mathscr{P}(\Lambda_{1}\cap\Lambda_{2},
    \Lambda_{1}'+\Lambda_{2}')$ (respectively $f_{1}+f_{2}\in \ov{
      \mathscr{P}}(\Lambda_{1}\cap\Lambda_{2},
    \Lambda_{1}'+\Lambda_{2}')$) and
    $(f_{1}+f_{2})^{\vee}=f_{1}^{\vee}\boxplus f_{2}^{\vee}$.
  \item \label{item:120} Let $f_{i}\in \mathscr{P}(\Lambda_{i}, \Lambda_{i}')$
    (respectively $f_{i}\in \ov{\mathscr{P}}(\Lambda_{i},
    \Lambda_{i}')$), $i=1,2$, with $\Lambda_{1}'\cap\Lambda_{2}'\not
    =\emptyset$. Then $f_{1}\boxplus f_{2}\in
    \mathscr{P}(\Lambda_{1}+\Lambda_{2},
    \Lambda_{1}'\cap\Lambda_{2}')$ (respectively $f_{1}+f_{2}\in \ov{
      \mathscr{P}}(\Lambda_{1}+\Lambda_{2},
      \Lambda_{1}'\cap\Lambda_{2}')$) and $(f_{1}\boxplus
    f_{2})^{\vee}=f_{1}^{\vee}+ f_{2}^{\vee}$.
  \item \label{item:123} Let $(f_{i})_{i\ge 1} \subset \ov{\mathscr {P}}$ be a sequence converging 
    uniformly to a function $f$. Then $f\in \ov{\mathscr {P}}$.
  \end{enumerate}
\end{prop}
\begin{proof}
  All the statements follow, either directly from the definition,
  or the propositions~\ref{prop:19} and~\ref{prop:3}.  
\end{proof}

\begin{prop} \label{prop:101} Let $A\colon Q_\R\to N_\R$ be an affine
  map defined as $A=H+u_0$ for a linear map $H$ and a point
  $u_0\in N_{\R}$. Let $f\in \mathscr{P}_{N_{\R}}$ (respectively $f\in
  \ov{\mathscr{P}}_{N_{\R}}$) with $\Dom(f)\cap\Im(A)\not
  =\emptyset$ and $g\in \mathscr{P}_{Q_{\R}}$ (respectively $g\in
  \ov{\mathscr{P}}_{Q_{\R}}$) such that $\Stab(g)\cap
  \Im(H^{\vee})\not = \emptyset$.  Then $A^{\ast}f\in
  {\mathscr{P}}_{Q_{\R}}$ (respectively $A^{*}f\in
  \ov{\mathscr{P}}_{Q_{\R}}$) and $A_{*}g \in \mathscr{P}_{N_{\R}}$
  (respectively $A_{*}g\in \ov{\mathscr{P}}_{N_{\R}}$). Moreover,
  \begin{enumerate}
  \item \label{item:121} $\Stab(A^{\ast}f)=H^{\vee}(\Stab(f))$, $
    (A^{\ast} f)^{\vee}=(H^{\vee})_{\ast}(f^{\vee}-u_{0})$ and, for all
    $y\in \Stab(A^{\ast}f)$,
$$      (A^{\ast} f)^{\vee}(y)=
      \max_{x\in(H^{\vee})^{-1}(y)}(f^{\vee}(x)-\langle x,u_{0}\rangle);$$
    \item \label{item:122} $\Stab(A_{\ast}g)=(H^\vee)^{-1}(\Stab(g))$,
      $ (A _{\ast}g)^{\vee}=(H ^{\vee}) ^{\ast}(g^{\vee})+u_{0}$
      and, for all $u\in \Dom(A_{*}g)$,
  $$A_{*}g(u)= \max_{v\in A^{-1}(u)} g(v).
$$ 
  \end{enumerate}
\end{prop}

\begin{proof}
  These statements follow either from Proposition~\ref{prop:11} or from
  \cite[Corollary~19.3.1]{Roc70}.
\end{proof}

We will be concerned mainly with functions in $\ov{\mathscr{P}}$ whose
effective domain is either a polytope or the whole space
$N_{\R}$. These are the kind of functions that arise when considering
proper toric varieties. The functions in $\mathscr{P}(N_{\R})$ can be
realized as the inverse image of the support function of the standard
simplex, while the functions of $\mathscr{P}(\Delta )$ can be realized
as direct images of the indicator function of the standard simplex.

\begin{lem}\label{lemm:17}
  Let $f\in \mathscr{P}(N_{\R})$ and let $f(u)=\min_{0\le i\le r}(a_{i}(u)+\alpha
  _{i})$ be an H-represen\-ta\-tion of $f$. Write $\boldsymbol{ \alpha}= 
(\alpha _{i}-\alpha _{0})_{i=1,\dots,r}$, and  consider the
  linear map $H\colon N_{\R}\to \R^{r}$ given by
  \begin{math}
    H(u)=(a_{i}(u)-a_{0}(u))_{i=1,\dots,r}
  \end{math}
  and the affine map $A=H+\boldsymbol{ \alpha}.$ Then
  \begin{enumerate}
  \item \label{item:75} $f=A^{\ast}\Psi _{\Delta ^{r}}+a_{0}+\alpha _{0};$
  \item \label{item:76} $f^{\vee}=\tau _{a_{0}}(H^{\vee})_{\ast}(\iota _{\Delta ^{r}}
    -\boldsymbol{ \alpha})-\alpha _{0}.$
  \end{enumerate}
  This second function can be alternatively described as 
 the function which param\-eterizes the upper envelope of
the extended polytope
$$
\Conv((a_{1},-\alpha_{1}),\dots,(a_{l},-\alpha_{l}))\subset M_{\R}\times
\R.
$$
\end{lem}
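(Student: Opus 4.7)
The plan is to establish (1) by a direct computation from the explicit formula for $\Psi_{\Delta^r}$, and then to deduce (2) and the upper envelope description as formal consequences of the Legendre--Fenchel duality rules recorded in Propositions \ref{prop:8} and \ref{prop:101}. First I would recall from Example \ref{exm:1} that $\Psi_{\Delta^r}(v_1,\dots,v_r) = \min\{0,v_1,\dots,v_r\}$ and that $\Psi_{\Delta^r}^\vee = \iota_{\Delta^r}$. Substituting $A(u) = (a_i(u)-a_0(u)+\alpha_i-\alpha_0)_{1\le i\le r}$ into this expression and factoring $a_0(u)+\alpha_0$ out of the minimum gives
\[
A^*\Psi_{\Delta^r}(u) \;=\; \min_{0\le i\le r}(a_i(u)+\alpha_i) - a_0(u) - \alpha_0 \;=\; f(u)-a_0(u)-\alpha_0,
\]
which is (1).

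For (2), I would dualize (1) in stages. Setting $g=A^*\Psi_{\Delta^r}$, the translation rule of Proposition \ref{prop:8}\eqref{item:14} together with the elementary identity $(h+c)^\vee = h^\vee - c$ for a scalar $c\in\R$ yields $f^\vee = (g+a_0+\alpha_0)^\vee = \tau_{a_0}g^\vee - \alpha_0$. Applying Proposition \ref{prop:101}\eqref{item:121} to the affine map $A=H+\boldsymbol{\alpha}$ and using $\Psi_{\Delta^r}^\vee=\iota_{\Delta^r}$ gives $g^\vee = (H^\vee)_*(\iota_{\Delta^r}-\boldsymbol{\alpha})$. Combining the two identities establishes (2).

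For the upper envelope description, I would unwind the direct image in (2). Since $H^\vee$ sends the $i$-th vector of the dual standard basis of $(\R^r)^\vee$ to $a_i-a_0\in M_\R$, introducing barycentric coordinates $(x_0,x_1,\dots,x_r)$ on $\Delta^r$ via $x_0=1-\sum_{i=1}^r x_i$ converts the condition $H^\vee(x)=y-a_0$ into $\sum_{i=0}^r x_i a_i = y$ and transforms the expression $-\langle x,\boldsymbol{\alpha}\rangle - \alpha_0$ into $-\sum_{i=0}^r x_i\alpha_i$. Hence
\[
f^\vee(y) \;=\; \sup\Bigl\{-\sum_{i=0}^r x_i\alpha_i \;\Big|\; x_0,\dots,x_r\ge 0,\ \sum_{i=0}^r x_i=1,\ \sum_{i=0}^r x_i a_i = y\Bigr\},
\]
which is precisely the function parameterizing the upper envelope of the polytope $\Conv((a_0,-\alpha_0),\dots,(a_r,-\alpha_r))\subset M_\R\times\R$ by its projection to $M_\R$ (the range ``$1,\dots,l$'' displayed in the statement appears to be a typo for ``$0,\dots,r$''). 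No real obstacle arises: once $A$ is chosen so as to exhibit the given H-representation of $f$ as the inverse image of $\Psi_{\Delta^r}$, the rest is bookkeeping with the duality formulas, the only mild care being to track each translation and each linear functional in the correct space.
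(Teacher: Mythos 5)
Your proposal is correct and follows essentially the same route as the paper: part (1) by direct substitution into the explicit expression $\Psi_{\Delta^r}=\min\{0,u_1,\dots,u_r\}$, and part (2) by the duality rules for inverse images along affine maps (Proposition \ref{prop:101}) together with the translation law (Proposition \ref{prop:8}). The only cosmetic difference is in the final claim: the paper simply cites Proposition \ref{prop:19} (the H-representation/V-representation duality) to read off the upper-envelope description, whereas you derive it by unwinding the direct image in (2); the two are equivalent bookkeeping exercises. You are also right that ``$1,\dots,l$'' in the displayed polytope should read ``$0,\dots,r$'' so as to include the vertex $(a_0,-\alpha_0)$.
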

\begin{proof}
  Statement \eqref{item:75} follows from the explicit description of
  $\Psi _{\Delta ^{r}}$ in Example~\ref{exm:1}. Statement
  \eqref{item:76} follows from Proposition \ref{prop:8}, Proposition
  \ref{prop:101}\eqref{item:121} and Example~\ref{exm:1}. The last
  statement is a consequence of Proposition \ref{prop:19}. 
\end{proof}

The next proposition characterizes the elements of
$\ov{\mathscr{P}}(N_{\R})$ and $\ov{\mathscr{P}}(\Delta )$ for a
polytope $\Delta $.

\begin{prop}\label{prop:16} Let $\Delta $ be a convex polytope of
    $M_{\R}$.
  \begin{enumerate}
  \item \label{item:24}  The space 
    $\ov{\mathscr{P}}(\Delta, N_{\R})$ agrees
    with the space of all continuous 
    concave functions on $\Delta $.
  \item \label{item:25} A concave function $f$ belongs to  $\ov{
    \mathscr{P}}(N_{\R},\Delta )$ if and only if
    $\Dom(f)=N_{\R}$ and $|f-\Psi _{\Delta }|$ is bounded.
  \end{enumerate}
\end{prop}
\begin{proof}
  We start by proving~\eqref{item:24}. By the properties of uniform
  convergence, it is clear that any element of
  $\ov{\mathscr{P}}(\Delta, N_{\R}
    )$ is concave and continuous.  Conversely, a continuous function
  $f$ on $\Delta $ is uniformly continuous because $\Delta $ is
  compact. Therefore, given $\varepsilon >0$ there is a $\delta >0$
  such that $|f(u)-f(v)|<\varepsilon $ for all $u,v\in \Delta$ such
  that $\|u-v\|<\delta $.  By compactness, we can find a triangulation
  $\Delta =\bigcup_i \Delta _{i}$ with $\diam(\Delta _{i})<\delta
  $. Let $\{b_{j}\}_{j}$ be the vertices of this triangulation and
  consider the function $g\in \mathscr{P}(\Delta ,N_{\R})$ defined as
  \begin{displaymath}
    g(u) = \sup\bigg\{\sum_{j=1}^{l}\lambda_{j}f(b _{j})\bigg|\ 
    \lambda_{j}\ge 0, \sum_{j}\lambda_{j}=1, 
    \sum_{j}\lambda_{j}a_{j}=x
    \bigg\}. 
  \end{displaymath}
  For $u\in \Delta $, let $b_{j_{0}},\dots,b_{j_{n}}$ denote the vertices
  of an element of the triangulation containing $u$. We
  write $u=\lambda _{j_{0}}u_{j_{0}}+\dots +\lambda
  _{j_{n}}u_{j_{n}}$ for some $\lambda_{j_i}\ge 0 $ and $\lambda
  _{j_{0}}+\dots +\lambda 
  _{j_{n}}=1$. By concavity, we have
  \begin{displaymath}
    f(u)\ge g(u)\ge \sum _{k=0}^{n}\lambda _{j_{k}}f(u_{j_{k}})\ge f(u)-\varepsilon, 
  \end{displaymath}
  which shows that any continuous function on $\Delta $ can
  be arbitrarily approximated by elements of $\mathscr{P}(\Delta,N_{\R} )$. 

  We now prove~\eqref{item:25}. Let $f\in
  \ov{\mathscr{P}}(N_{\R},\Delta )$. By definition, 
  for each $\varepsilon >0$ we can find a function $g\in
  \mathscr{P}(N_{\R},\Delta )$ with $\sup|f-g|\le \varepsilon $. In
  particular, $|f-g|$ is bounded. Furthermore, $\rec(g)=\Psi _{\Delta
  }$ and $|g-\rec(g)|$ is bounded because $g\in 
  \mathscr{P}(N_{\R})$. Hence $\Dom(f)=
  \Dom(g)=N_{\R}$ and $|f-\Psi _{\Delta }|$ is bounded. 

  Conversely, let $f$ be a concave function such that
  $\Dom(f)=N_{\R}$ and $|f-\Psi _{\Delta }|$ is
  bounded. Then $\Stab(f)=\Stab(\Psi _{\Delta })=
  \Delta$. By \cite[Theorem 12.2]{Roc70} $f^{\vee}$ is a closed
  concave function on $\Delta$. Since $\Delta $ is a polytope, by
  \cite[Theorem 10.2]{Roc70}, $f^{\vee}$ is continuous on $\Delta $.  
  Hence we can apply~\eqref{item:24} to $f^{\vee}$ to obtain functions
  $g_i\in\mathscr{P}(\Delta,N_{\R})$ approaching $f^{\vee}$
  uniformly. By Proposition \ref{prop:3}, we
  conclude that the functions $g_i^{\vee} \in \mathscr{P}(N_{\R},\Delta )$
  approach $f$ uniformly and so $f\in \ov{\mathscr{P}}(N_{\R},\Delta )$.
\end{proof}

\begin{prop}\label{prop:22}
  Let $\Delta $ be a lattice polytope of $M_{\R}$. Then the subset of rational
  piecewise affine concave functions in  $\ov{\mathscr{P}}(\Delta,N_{\R}
  )$ (respectively, in  $\ov{\mathscr{P}}(N_{\R},\Delta )$)
    is dense with respect to uniform convergence. 
\end{prop}
\begin{proof}
  This follows from Proposition \ref{prop:16} and the density of
  rational numbers.
\end{proof}

\section{Differences of concave functions}
\label{sec:diff-conc-funct}

Let $C\subset N_{\R }$ be a convex set.
A function $f\colon C\to \R$ is called a \emph{difference of concave
functions} or a \emph{DC function}
\index{difference of concave functions}%
\index{DC function|see{difference of concave functions}}%
if it can be written as $f=g-h$ for  concave functions $g,h\colon
C\to \R$. DC functions play an important role in non-convex
optimization and have been widely studied, see for
instance~\cite{DCprogramming} and the references therein.
We will be interested in a subclass of DC functions, namely those which
are a difference of uniform limits of piecewise affine concave
  functions.
\index{piecewise affine concave function!difference of uniform limits of}%

\begin{defn}
  \label{def:23}
For a polyhedron $\Lambda$ in $N_{\R}$ we
set
\begin{displaymath}
{\mathscr{D}}(\Lambda)= \{ g-h \mid g,h\in
\mathscr{P}(\Lambda)\}, \quad 
\ov{\mathscr{D}}(\Lambda)= \{ g-h \mid g,h\in \ov{\mathscr{P}}(\Lambda)\}.  
 \nomenclature[aDz91]{${\mathscr{D}(\Lambda)}, {\mathscr{D}(\Lambda)_{\Z}}$}{spaces of differences of piecewise affine concave functions}%
 \nomenclature[aDz91\ov]{$\ov{\mathscr{D}}(\Lambda), \ov{\mathscr{D}}(\Lambda)_{\Z}$}{spaces of differences of uniform limits of piecewise affine concave functions}%
\end{displaymath}
and 
\begin{displaymath}
{\mathscr{D}}(\Lambda)_{\Z}= \{ g-h \mid g,h\in
\mathscr{P}(\Lambda)_{\Z}\}, \quad 
\ov{\mathscr{D}}(\Lambda)_{\Z}= \{ g-h \mid g,h\in \ov{\mathscr{P}}(\Lambda)_{\Z}\}.  
\end{displaymath}
\end{defn}

These spaces are closed under the operations of taking finite linear
combinations, upper envelope  and lower
envelope.

\begin{prop}
  \label{prop:41}
Let $\Lambda$ be a polyhedron in $N_{\R}$ and
$f_{1},\dots, f_{l}$ functions in ${\mathscr{D}}(\Lambda)$
(respectively in $\ov{\mathscr{D}}(\Lambda)$,  ${\mathscr{D}}(\Lambda)_{\Z}$
or $\ov{\mathscr{D}}(\Lambda)_{\Z}$). Then the functions
\begin{enumerate}
\item \label{item:39} $\sum_{i}\lambda_{i}f_{i}$ for any
  $\lambda_{i}\in \R$ (respectively $\lambda_{i}\in \Z$ for ${\mathscr{D}}(\Lambda)_{\Z}$
or $\ov{\mathscr{D}}(\Lambda)_{\Z}$), 
\item \label{item:40} $\max_{i} f_{i}$, $\min_{i} f_{i}$
\end{enumerate}
are also in ${\mathscr{D}}(\Lambda)$ (respectively in $\ov{\mathscr{D}}(\Lambda)$,
 ${\mathscr{D}}(\Lambda)_{\Z}$ or $\ov{\mathscr{D}}(\Lambda)_{\Z}$).
\end{prop}

\begin{proof}
Statement  \eqref{item:39} is obvious. For the statement~\eqref{item:40}, write
  $f_{i}=g_{i}-h_{i}$ with $g_{i},h_{i}$ in $\mathscr{P}(\Lambda)$
  (respectively, in $\ov{\mathscr{P}}(\Lambda)$).  Then the upper
  envelope admits the DC decomposition $\max_{i} f_{i} =g-h$ with
  $$
  g:= \sum_j g_{j}, \quad 
  h:=\min_i \bigg( h_{i}+ \sum_{j\ne i} g_{j}\bigg),  
  $$
  which are both concave functions in $\mathscr{P}(\Lambda)$
  (respectively, in $\ov{\mathscr{P}}(\Lambda)$).  This shows that
  $\max_{i} f_{i}$ is in $\mathscr{D}(\Lambda)$ (respectively, in
  $\ov{\mathscr{D}}(\Lambda)$).  The statement for the lower
  envelope follows similarly.
\end{proof}

In particular, if $f$ lies in $\mathscr{D}(\Lambda)$
or in $\ov{\mathscr{D}}(\Lambda)$, the same holds for the functions
$|f|$, $\max (f,0)$ and $\min(f,0)$.

\begin{cor}
  \label{cor:6}
The space $ \mathscr{D}(\Lambda)$ coincides with the space of
piecewise affine functions on $\Lambda$.
\end{cor}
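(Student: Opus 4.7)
The plan is to prove both inclusions separately. The inclusion $\mathscr{D}(\Lambda) \subset \{\text{piecewise affine functions on } \Lambda\}$ is essentially a tautology: any element of $\mathscr{P}(\Lambda)$ is by definition piecewise affine on $\Lambda$, and piecewise affine functions on $\Lambda$ form a (real) vector space, so their differences are again piecewise affine on $\Lambda$.

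For the converse inclusion, I would first observe that every affine function $a\colon N_{\R}\to \R$, viewed as a function on $\Lambda$, is concave and piecewise affine with effective domain $\Lambda$, hence lies in $\mathscr{P}(\Lambda)$ and therefore in $\mathscr{D}(\Lambda)$. Now let $f$ be an arbitrary piecewise affine function on $\Lambda$. By the max-min representation of piecewise affine functions (Ovchinnikov's theorem, already invoked in the proof of Lemma \ref{lemm:4}), there exists a finite collection of affine functions $\{a_{ij}\}_{i,j}$ such that
\begin{displaymath}
 f(u)=\max_{i}\min_{j} a_{ij}(u) \quad \text{for all } u\in \Lambda.
\end{displaymath}
Since each $a_{ij}\in \mathscr{P}(\Lambda)\subset \mathscr{D}(\Lambda)$, applying Proposition \ref{prop:41}\eqref{item:40} (closure of $\mathscr{D}(\Lambda)$ under finite minima) to the inner expressions yields functions $\min_{j} a_{ij}\in \mathscr{D}(\Lambda)$, and applying it a second time (closure under finite maxima) gives $f\in \mathscr{D}(\Lambda)$.

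The main step, which is really the only non-formal point, is invoking the max-min representation on the polyhedron $\Lambda$; this is exactly what was needed in Lemma \ref{lemm:4} and is used here in the same way, so there is no new obstacle. The argument also makes it clear that the statement in fact gives an explicit DC decomposition: unwinding the proof of Proposition \ref{prop:41}\eqref{item:40}, one obtains $f=g-h$ with $g,h\in \mathscr{P}(\Lambda)$ built as sums and lower envelopes of the affine pieces $a_{ij}$.
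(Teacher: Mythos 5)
Your proof is correct and follows exactly the same route as the paper's: the first inclusion is immediate, and the converse is obtained by combining Ovchinnikov's max-min representation of piecewise affine functions with Proposition~\ref{prop:41}\eqref{item:40}. The paper states this in one line; you have simply spelled out the intermediate observation that affine functions lie in $\mathscr{P}(\Lambda)$ and the two successive applications of closure under min and max.
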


\begin{proof}
  This follows from the  max-min representation of
  piecewise affine functions in~\cite{Ovchinnikov:maxmin} and
  Proposition~\ref{prop:41}\eqref{item:40}.
\end{proof}

Some constructions for concave functions can be extended to this kind
of functions. In
particular, we can define the recession of a function in~$
\ov{\mathscr{D}}(\Lambda)$.

\begin{defn}
  \label{def:24}
Let $\Lambda$ be a polyhedron in $N_{\R}$ and $f\in\ov{\mathscr{D}}(\Lambda)$. 
The \emph{recession function} of $f$
\index{recession function!of a difference of concave functions}%
\nomenclature[arec]{$\rec(f)$}{recession of a difference of concave functions}%
is defined as 
\begin{equation}\label{eq:50}
  \rec(f)\colon \rec(\Lambda) \longrightarrow \R, \quad 
u\longmapsto 
\lim_{\lambda\to\infty} \frac{f(v_{0}+\lambda u)-f(v_{0})}{\lambda}
\end{equation}
for any $v_{0}\in \Lambda$.
\end{defn}

Write $f=g-h $ for any $g,h\in \ov{\mathscr{P}}(\Lambda )$.  By
Proposition \ref{prop:13}\eqref{item:54}, the effective domain of both
$\rec(g)$ and $\rec(h)$ is $\rec(\Lambda)$. Therefore,
by~\eqref{eq:49}, for all $u\in \rec(\Lambda)$, the
limit~\eqref{eq:50} exists and
$$
\rec(f)(u)=\rec(g)(u)-\rec(h)(u).
$$
Observe that the recession function of a function in $
\mathscr{D}(\Lambda)$
is a piecewise linear function on a subdivision of the 
cone $\rec(\Lambda)$ into polyhedral cones. 
Observe also that 
$$
|f-\rec(f)|\le |g-\rec(g)|+|h-\rec(h)| = O(1).
$$  
We will be mostly interested in the case when $\Lambda=N_{\R}$. 

\begin{prop}
  \label{prop:42}
Let $\Vert\cdot\Vert $ be any metric on $N_{\R}$ and $f\in
\ov{\mathscr{D}}(N_{\R})$. Then there exists a constant $\kappa >0$ such that, for all $u,v\in N_{\R}$,
\begin{displaymath}
  |f(u)-f(v)| \le \kappa \Vert u-v\Vert.
\end{displaymath}
\end{prop}

A function which verifies the conclusion of this proposition is called
\emph{Lipchitzian}.
\index{Lipchitzian function}%

\begin{proof}
  Let $f=g-h $ with $g,h\in \ov{\mathscr{P}}(N_{\R})$. 
The effective domain of the recessions of $g$ and of $h$ is the whole
of $N_{\R}$. 
By~\cite[Theorem~10.5]{Roc70}, both $g$ and $h$ are 
Lipschitzian, hence so is $f$. 
\end{proof}

Observe that   $\ov {\mathscr{D}}(N_{\R})$ is \emph{not} the
completion of  $ {\mathscr{D}}(N_{\R})$ with respect to uniform
convergence and neither $\ov {\mathscr{D}}(N_{\R})_{\Z}$ is the
completion of  $ {\mathscr{D}}(N_{\R})_{\Z}$. It is easy to construct functions  which are 
uniform limits of piecewise affine ones but do not verify the
Lipschitz condition. 

We will consider the integral and rational structures on the space of
piecewise affine functions. We will use the notation previous to
Definition \ref{def:13}.

\begin{defn}\label{def:31}
  Let $\Lambda $ be a polyhedron and $f\in
  {\mathscr{D}}(\Lambda )$. We say that $f$ is an \emph{H-lattice}
\index{piecewise affine function!H-lattice}%
  (respectively \emph{V-lattice}) function
\index{piecewise affine function!V-lattice}%
if it can be written as the difference of two H-lattice (respectively
  V-lattice) concave functions with effective domain $\Lambda $. 
We say that $f$ is a \emph{rational
    piecewise affine} function
  \index{piecewise affine function!rational}%
  if it is the difference of two rational piecewise affine concave
  functions with effective domain $\Lambda $.
\end{defn}

\begin{prop}\label{prop:9}
  If $f$ is an H-lattice function (respectively a
  rational piecewise
  affine function) on a polyhedron $\Lambda \subset N_{\R}$, then
  there is a
  polyhedral complex
  $\Pi $ in $N_{\R}$, with $|\Pi |=\Lambda $, such that, for every
  $\Lambda' \in \Pi $, 
  \begin{displaymath}
    f|_{\Lambda' }(u)=\langle m_{\Lambda' },u\rangle + l_{\Lambda' },
  \end{displaymath}
  with $(m_{\Lambda' },l_{\Lambda' })\in M\times \Z$
  (respectively $(m_{\Lambda' },l_{\Lambda' })\in M_{\Q}\times
  \Q$). Conversely, every piecewise affine 
  function on $\Lambda $ such that its defining functions have
  integral (respectively rational) coefficients, is an H-lattice function
  (respectively a rational piecewise affine function).
\end{prop}
\begin{proof}
  We will prove the statement for H-lattice functions. The statement for
  rational piecewise affine functions is proved with the same
  argument. If $f$ is an H-lattice function, we can write
  $f=g-h$, where $g$ and $h$ are H-lattice concave functions. We
  obtain $\Pi $ as any common refinement of $\Pi (g)$ and $\Pi 
  (h)$ to a polyhedral complex. Then the statement follows from the
  definition of 
  H-lattice concave functions.

  We also prove the converse only for H-lattice functions. Let $g_{i}$,
  $i=1,\dots,n$, be the set of $H$-lattice distinct defining functions
  of $f$. By \cite[Theorem 2.1]{Ovchinnikov:maxmin} there is a family
  $\{S_{j}\}_{j\in J}$ of subsets of $\{1,\dots,n\}$ such that, for all 
  $x\in \Lambda$,  
  \begin{displaymath}
    f(x)=\max_{j\in J} \min_{i\in S_{j}}g_{i}(x).
  \end{displaymath}
  For $j\in J$, write $f_{j}=\min_{i\in S_{j}}g_{i}$. It is an
  H-lattice concave function. Then we can write
  \begin{displaymath}
    f(x)=\sum_{j\in J}f_{j}(x)-\left(\sum_{j\in J}f_{j}(x)-\max_{j\in J}
    f_{j}(x)\right).  
  \end{displaymath}
  Since both $\sum_{j\in J}f_{j}$ and $\sum_{j\in J}f_{j}-\max_{j\in J}
    f_{j}=\min_{j\in J}\sum_{i\in J\setminus \{j\}}f_{i}$ are
    H-lattice concave functions on $\Lambda $, we conclude that $f$
    is an H-lattice function. 
\end{proof}

\begin{defn}\label{def:33}
  Let $f$ be a rational piecewise
  affine function on $N_{\R}$, and let $\Pi $ and $\{(m_{\Lambda
  },l_{\Lambda })\}_{\Lambda \in \Pi }$ be as in Proposition
  \ref{prop:9}. 
  The family $\{(m_{\Lambda },l_{\Lambda })\}_{\Lambda \in \Pi }$ is
  called a set of \emph{defining vectors} of $f$.
  \index{defining vectors!of a piecewise affine function}%
\end{defn}

\begin{prop}\label{prop:43}
  Let $\Pi $ be a complete SCR polyhedral
  complex in $N_{\R}$ and  $f$  an H-lattice function
  on $\Pi $. Then $\rec(f)$ 
  is a conic H-lattice function on the fan $\rec(\Pi )$.
\end{prop}
\begin{proof}

  Let $\Lambda \in \Pi $ and $(m,l)\in M\times \Z$ such that $f
  (u)=\langle m,u\rangle+l $ for $u\in \Lambda$. Then, by the
  definition of $\rec(f)$, it is clear that $ \rec(f)|_{\rec(\Lambda
    )}(u)=\langle m,u\rangle$. Hence, $\rec(f)$ is a conic H-lattice
  function on $\rec(\Pi )$.
\end{proof}

\section{Monge-Amp\`ere measures}
\label{sec:monge-ampere}

Let  $f\colon C\to \R$ be a concave function 
of class~$\cC^{2}$ on an open convex set $C\subset \R^{n}$.
Its {Hessian matrix}\index{Hessian matrix} 
\begin{displaymath}
  \Hess(f)(u):=\left(\frac{\partial^{2} f}{\partial u_{i}\partial
      u_{j} }(u)\right)_{1\le i,j\le n} 
\end{displaymath}
is a negative semi-definite matrix which quantifies the curvature of
$f$ at the point $u$. The {real Monge-Amp\`ere operator}
is defined as
$(-1)^{n}$ times the determinant of this matrix. This notion can be
extended as a measure to the case of an arbitrary concave function.  A
good reference for Monge-Amp\`ere measures
is~\cite{MR0454331}. 

Let $\mu $ be a Haar measure
\nomenclature[g12]{$\mu $}{Haar measure on $M_{\R}$}%
 of $M_{\R}$. Assume that we choose linear
coordinates $(x_{1},\dots,x_{n})$ of $M_{\R}$ such that $\mu $ is
the measure associated to the differential form $\omega =\dd
x_{1}\land \dots \land \dd x_{n}$ and the orientation of $M_{\R}$
defined by this system of coordinates. Let $(u_{1},\dots,u_{n})$ be the dual
coordinates of $N_{\R}$. We will use the induced orientation to
identify a top differential form with a signed measure.

\begin{defn} \label{def:5}
Let $f$ be a closed concave function on $N_{\R}$. The \emph{real Monge-Amp\`ere
  measure} of $f$ with respect to $\mu $
\index{Monge-Amp\`ere measure}%
is defined, for a Borel subset $E$ of $N_{\R}$, as
\begin{displaymath}
  \cM_{\mu }(f)(E)=\mu (\partial f (E)).
\end{displaymath}
\nomenclature[aMzA1]{$\cM_{\mu }(f)$}{Monge-Amp\`ere measure
  associated to a concave function and a measure}%
It is a measure with support contained in $\Dom(\partial f)$. 
The correspondence $f\mapsto   \cM_{\mu }(f)$ is called the \emph{Monge-Amp\`ere operator}.
\index{Monge-Amp\`ere operator}%

When
the measure $\mu $ is clear from the context, we will drop it from the
notation. 
Moreover, since we are not going to consider complex Monge-Amp\`ere
measures, we will simply call $\cM_{\mu}(f)$ the Monge-Amp\`ere
measure of $f$. 
\end{defn}

By \eqref{eq:634}, the total mass of the Monge-Amp\`ere measure is
given by
\begin{equation}
  \label{eq:13}
  \cM_{\mu}(f)(N_{\R})=\mu(\Stab(f)).
\end{equation}
In particular, when $\Stab(f)$ is bounded,
$\cM_{\mu}(f)$ is a finite measure.
 
\begin{prop} \label{prop:87}
The {Monge-Amp\`ere measure} is a continuous map from the space
of concave functions with the topology defined by uniform
convergence on compact sets to the space of $\sigma$-finite measures
on $N_{\R}$ with
the weak topology. 
\end{prop}

\begin{proof}
This is proved in~\cite[\S3]{MR0454331}. 
\end{proof}

The two basic examples of Monge-Amp\`ere measures that we are
interested in, are the ones associated to smooth functions and the ones
associated to piecewise linear functions. 

\begin{prop} \label{prop:5}
Let $C$ be an open convex set in $N_{\R}$ and $f \in \cC^{2}(C)$ a
concave function. Then
$$
\cM_{\mu}(f)=(-1)^{n}\det(\Hess(f))\dd
u_{1}\land \dots \land \dd u_{n},
$$
where the Hessian matrix is calculated with respect to the coordinates
$(u_{1},\dots,u_{n})$.   
\end{prop}
\begin{proof}
  This is \cite[Proposition~3.4]{MR0454331}
\end{proof}

By contrast, the Monge-Amp\`ere measure of a piecewise affine
concave function is a discrete measure supported on the 
vertices of a polyhedral complex.
 
\begin{prop} \label{prop:32}
Let $f$ be a piecewise affine concave function with $\Dom(f)=N_{\R}$ and $(\Pi
(f),\Pi (f^{\vee}))$ the dual pair of polyhedral complexes associated to
$f$. Denote 
by $\Lambda \mapsto \Lambda ^{\ast}$ the correspondence
$\mathcal{L}f$. Then  
\begin{displaymath}
  \cM_{\mu }(f)=\sum_{v\in \Pi(f)^{0}} \mu(\partial f(v)) \delta_{v}= 
  \sum_{v\in \Pi(f)^{0}} \mu(v^{\ast}) \delta_{v}=\sum_{\Lambda \in
    \Pi(f^{\vee})^{n}} \mu(\Lambda ) \delta_{\Lambda ^{\ast}}, 
\end{displaymath}
where $\delta _{v}$ is the Dirac measure supported on $v$.
\end{prop}

\begin{proof}
  This follows easily from the definition of $\cM(f)$ and the
  properties of the Legendre correspondence of piecewise affine functions. 
\end{proof}

\begin{exmpl}
  \label{exm:30}
Let $\Delta\subset M_{\R}$ be a polytope and $\Psi_{\Delta}$
its support function. Since $\Pi (\Psi _{\Delta })$ is a fan, it has
the origin as its single vertex. Moreover, $0^{\ast}=\Delta
$. Therefore,  
\begin{displaymath}
  \cM_{\mu }(\Psi_{\Delta})=\mu(\Delta) \delta_{0}.
\end{displaymath}
\end{exmpl}

The following relation between Monge-Amp\`ere measure and
Legendre-Fenchel duality is one of the key ingredients in the computation of
the height of a toric variety. We will consider the
$(n-1)$-differential form on $N_{\R}$
\begin{displaymath}
  \lambda =\sum_{i=1}^{n}(-1)^{i-1}x_{i}\dd x_{1}\land\dots\land
  \widehat {\dd x_{i}}\land \dots \land \dd x_{n}.
\end{displaymath}
It satisfies $\dd \lambda =n\omega $.

Let $D\subset N_{\R}$ be a compact convex set and set $\partial
D=D\setminus \ri(D)$ for its relative boundary. If the interior of $D$
is non-empty, by using \cite[theorems 25.5 and 10.4]{Roc70} one can
show that $D$ has piecewise smooth boundary in the sense of
\cite[Definition 7.2.17]{Mardsen:mtaa}. Therefore, for any continuous
function $g$ on $D$ the integral
\begin{displaymath}
    \int_{\partial D } g \,\lambda
\end{displaymath}
is well-defined. If the interior of $D$ is empty, then we define this
integral as zero.  

\begin{thm}\label{thm:20}
  Let $f\colon N_{\R}\to \R$ be a concave function such that
  $D=\Stab(f)$ is compact. Then
  \begin{equation}
    \label{eq:63}
    -\int_{N_{\R}}f\dd\cM_{\mu }(f)=
    (n+1)\int_{D }f^{\vee} \dd \mu -
    \int_{\partial D } f^{\vee} \lambda.
  \end{equation}
\end{thm}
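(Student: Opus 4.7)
The plan is to establish the identity first in the smooth strictly concave case via a change of variables induced by the Legendre transform and a Stokes' theorem computation, and then to extend to an arbitrary closed concave function with compact stability set by a density argument.

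Assume first that $f$ is of class $\cC^{2}$ on $N_{\R}$ and strictly concave of Legendre type in the sense of Definition~\ref{def:28}. By Theorem~\ref{thm:2}, the gradient $\nabla f\colon N_{\R}\to D^{\circ}$ is a diffeomorphism with inverse $\nabla f^{\vee}$, and by Proposition~\ref{prop:5} the Monge-Amp\`ere measure $\cM_{\mu}(f)$ is the pull-back of $\mu$ by $\nabla f$. Combining this change of variables with the Legendre identity $f(\nabla f^{\vee}(x))=\langle x,\nabla f^{\vee}(x)\rangle - f^{\vee}(x)$ of Theorem~\ref{thm:2}\eqref{item:51}, I obtain
\begin{equation*}
\int_{N_{\R}}f\,\cM_{\mu}(f) \;=\;\int_{D}\langle x,\nabla f^{\vee}(x)\rangle\dd\mu\;-\;\int_{D} f^{\vee}\dd\mu.
\end{equation*}

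Next I compute the first integral on the right by Stokes' theorem on $D$. A direct expansion shows $\dd f^{\vee}\wedge\lambda = \langle x,\nabla f^{\vee}\rangle\,\omega$, and together with $\dd\lambda = n\,\omega$ this gives $\dd(f^{\vee}\lambda)=\langle x,\nabla f^{\vee}\rangle\,\omega+n\, f^{\vee}\omega$. Applying Stokes' theorem (which is licit because $\partial D$ is piecewise smooth and $f^{\vee}$ is continuous up to the boundary of $D$, being the Legendre-Fenchel dual of a function whose stability set equals $D$),
\begin{equation*}
\int_{D}\langle x,\nabla f^{\vee}\rangle\dd\mu \;=\;\int_{\partial D} f^{\vee}\lambda \;-\; n\int_{D} f^{\vee}\dd\mu.
\end{equation*}
Substituting this back and multiplying by $-n!$ yields the desired identity in the smooth strictly concave case, since $n\cdot n!+n!=(n+1)!$.

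Finally, to reach the general case, I approximate $f$ uniformly on compact subsets of $N_{\R}$ by a sequence $(f_{k})_{k\ge 1}$ of smooth strictly concave functions of Legendre type with $\Stab(f_{k})$ compact and converging to $D$ in Hausdorff distance (for instance, by convolving $f$ with a smooth mollifier supported in a shrinking neighbourhood of the origin and adding a quadratic perturbation $-\varepsilon_{k}\Vert u\Vert^{2}$ with $\varepsilon_{k}\to 0$). By Proposition~\ref{prop:87}, the measures $\cM_{\mu}(f_{k})$ converge weakly to $\cM_{\mu}(f)$, and by Proposition~\ref{prop:3} the duals $f_{k}^{\vee}$ converge uniformly to $f^{\vee}$ on $D$; the left-hand side and the integral $\int_{D} f^{\vee}\dd\mu$ then pass to the limit without difficulty. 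The main obstacle is the convergence of the boundary term $\int_{\partial D_{k}}f_{k}^{\vee}\lambda$ to $\int_{\partial D}f^{\vee}\lambda$, since $\partial D_{k}$ varies with $k$. This is handled by writing both sides as integrals over a common neighbourhood of $\partial D$ via Stokes applied to $\dd(f_{k}^{\vee}\lambda)$ on $D_{k}\setminus D_{k}^{\delta}$ for a small inward offset $D_{k}^{\delta}\subset D_{k}$, and using the uniform convergence of $f_{k}^{\vee}$ and the Hausdorff convergence $D_{k}\to D$ together with the piecewise smoothness of $\partial D$ to control the error uniformly in $k$.
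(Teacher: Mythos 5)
The smooth, strictly concave case is handled exactly as in the paper: pull back $\mu$ by $\nabla f$, apply the Legendre identity, expand $\dd(f^{\vee}\lambda)$ and invoke Stokes'. That part is correct and requires no further comment.

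The density argument has a genuine gap, and in fact the paper's own argument shows how to avoid it. The paper approximates $f$ by smooth strictly concave functions $f_{k}$ that converge to $f$ \emph{globally uniformly} on $N_{\R}$. Proposition~\ref{prop:3} then forces $\Stab(f_{k})=\Stab(f)=D$ for all large $k$, so the domain of integration in the right-hand side of \eqref{eq:63} is literally frozen; once $D$ is fixed, uniform convergence of $f_{k}^{\vee}$ on $D$ (again Proposition~\ref{prop:3}) and weak convergence of $\cM_{\mu}(f_{k})$ to the finite measure $\cM_{\mu}(f)$ (Proposition~\ref{prop:87}) give convergence of both sides, with no boundary gymnastics at all. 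You instead opt for convergence uniform only on compacts, which does \emph{not} hypothesize what Proposition~\ref{prop:3} requires, so you lose control of $\Stab(f_{k})$ — and this is exactly what makes your "main obstacle" appear. The shrinking-collar / inward-offset argument you sketch for the boundary term is not spelled out in a way that clearly closes it, and it becomes unnecessary the moment you work in the uniform topology.

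Worse, the concrete approximation scheme you propose does not have the property you claim for it. For $g_{k}=f\ast\rho_{k}-\varepsilon_{k}\Vert u\Vert^{2}$: since $f$ (and hence $f\ast\rho_{k}$) has at most linear growth (being $\Psi_{D}$ up to a bounded function), the expression $\langle x,u\rangle-g_{k}(u)=\langle x,u\rangle-f\ast\rho_{k}(u)+\varepsilon_{k}\Vert u\Vert^{2}$ is bounded below in $u$ for \emph{every} $x\in M_{\R}$, because the quadratic term dominates. Therefore $\Stab(g_{k})=M_{\R}$, not a compact neighbourhood of $D$, and there is no boundary term $\partial D_{k}$ to speak of. The quadratic perturbation, rather than producing a compact stability set converging to $D$, blows the stability set up to all of $M_{\R}$. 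To repair the argument you should abandon this construction, take a sequence of smooth strictly concave functions converging to $f$ in the sup norm (the existence of such a sequence is exactly the claim the paper relies on, using compactness of $D$), and then use Proposition~\ref{prop:3} to conclude $\Stab(f_{k})=D$ eventually, after which the convergence of every term in \eqref{eq:63} is immediate.
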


To prove this result,  we will use the following lemma  to reduce to
the case of strictly concave smooth functions.

  \begin{lem}\label{lemm:20} Let $f\colon N_{\R}\to \R$ be a concave
    function such that $\Stab(f)$ is bounded and has non-empty
    interior. Then there is a sequence of strictly concave smooth
    functions $(f_{l})_{l\ge 1}$ that converges to $f$ uniformly in
    $N_{\R}$. 
  \end{lem}

  \begin{proof}
    Let $\|\cdot\|$ be the Euclidean norm on $N_{\R}$ induced by the
    choice of linear coordinates. By \cite[Corollary
    13.3.3]{Roc70}, the hypothesis that
    $\Stab(f)$ is bounded implies that there is a constant $\kappa>0$
    such that, for all $x,y\in N_{\R}$,
    \begin{equation}\label{eq:8}
    |f(x)-f(y)|\le \kappa \|x-y\|.
    \end{equation}
    For $l\ge 1$, consider the Gaussian function
    \begin{displaymath}
      \rho _{l}(x)=\frac{l^{n}}{(2\pi)^{n/2}}\e^{\frac{-l^{2}\|x\|^{2}}{2}}.
    \end{displaymath}
    We define
    \begin{displaymath}
      f_{l}(x)=\int_{N_{\R}}\rho _{l}(x-y)f(y)\dd \mu (y).
    \end{displaymath}

    The fact that $\rho_{l}$ is smooth implies that $f_{l}$ is smooth
    too, and the facts that $\Stab(f)$ has non-empty interior and that
    $\rho_{l}$ is strictly positive on the whole of $N_{\R}$ imply
    that $f_{l}$ is strictly concave. The equation (\ref{eq:8}) implies
    that the sequence $(f_{k})_{k\in \N}$ converges uniformly
    to~$f$.
  \end{proof}

\begin{proof}[{Proof of Theorem \ref{thm:20}}]
  If the interior of $D$ is empty, then both sides of the equation
  \eqref{eq:63} are zero. Therefore, the theorem is trivially true in
  this case. Thus, we may assume that~$D $ has non-empty
  interior. 

  Since $\Stab(f)$ is compact, the right-hand side of \eqref{eq:63} is
  continuous with respect to uniform 
  convergence of functions, thanks to Proposition~\ref{prop:3}.  
  Moreover, Proposition \ref{prop:87} and the fact that 
  $\cM_{\mu }(f)$ is finite imply that the left-hand side is also
  continuous with respect to uniform
  convergence. By Lemma \ref{lemm:20}, we can find  a sequence of strictly
  concave smooth functions $(f_{l})_{l\ge 1}$ that converges uniformly to
  $f$. Hence, we may 
  assume that $f$ is smooth and strictly concave. In this case,
  the Legendre transform $\nabla f\colon N_{\R}\to D ^{\circ}$
  is a diffeomorphism (Theorem \ref{thm:2}). 

  By the definition of the Monge-Amp\`ere measure,
  \begin{equation}\label{eq:67}
    \int_{N_{\R}}f\dd \cM_{\mu }(f)=\int_{D}f((\nabla
    f)^{-1}x)\dd \mu (x),
  \end{equation}
  which, in particular, shows that the integral on the left is
  convergent for smooth strictly concave functions with compact
  stability set. Therefore, it is convergent for any concave function within
  the hypothesis of the theorem.

  By Theorem \ref{thm:2}\eqref{item:51}, 
  \begin{equation}\label{eq:64}
    -f((\nabla f)^{-1}(x))=f^{\vee}(x)-\langle (\nabla f)^{-1}(x),x\rangle.  
  \end{equation}
  Moreover, 
  \begin{align}
    \dd(f^{\vee} \lambda )(x)&=\dd f^{\vee}\land \lambda (x)+
    f^{\vee}\dd \lambda (x)\notag\\ 
    &=\langle \nabla f^{\vee}(x),x\rangle\omega +nf^{\vee}\omega \notag\\
    &=\langle (\nabla f)^{-1}(x),x\rangle\omega +nf^{\vee}\omega, \label{eq:66} 
  \end{align}
  where the last equality follows from Theorem \ref{thm:2}\eqref{item:16}.
  The result is obtained by combining the equations \eqref{eq:67},
  \eqref{eq:64} and \eqref{eq:66} with the piecewise smooth Stokes'
  theorem \cite[Theorem 8.2.20]{Mardsen:mtaa}.
\end{proof}

We now particularize Theorem \ref{thm:20} to the case when the Haar
measure comes from a lattice and the convex set is a lattice
polytope of maximal dimension. 

\begin{defn} \label{def:38}
Let $L$ be a lattice and set $L_{\R}=L\otimes \R$.   
We denote by $\Vol_{L}$ the {Haar measure} on $L_{\R}$
\index{normalized Haar measure}%
\nomenclature[avaolL]{$\Vol_{L}$}{normalized Haar measure}%
normalized so that $L$ has covolume $1$. 
\end{defn}

Let 
$N$ be a lattice of $ N_{\R}$ and set 
$M=N^\vee$ for its dual lattice. For a concave function $f$, we denote by
$\mathcal{M}_{M}(f)$ the 
Monge-Amp\`ere measure with respect to the normalized Haar measure $\Vol_{M}$.
\nomenclature[aMzA2]{$\cM_{M}(f)$}{Monge-Amp\`ere measure associated to a concave function and a lattice}%

\begin{notn} \label{def:79}
Let $\Lambda $ be a rational polyhedron in $M_{\R}$ and $\aff(\Lambda )$
its affine hull. We denote by
$L_{\Lambda }$ the linear subspace of $M_{\R}$ associated to
$\aff(\Lambda )$ and by  $M(\Lambda )$
the induced lattice $M\cap
L_{\Lambda }$. 
\nomenclature[aMl08]{$M(\Lambda )$}{sublattice associated to a
  polyhedron in $M_{\R}$}%
By definition, $\Vol_{M(\Lambda )}$ is a measure on
$L_{\Lambda }$, and we will denote also by $\Vol_{M(\Lambda )}$ the
measure induced on $\aff(\Lambda )$. 
\nomenclature[aLl]{$L_{\Lambda }$}{linear space associated to a polyhedron}%
If $v\in N_{\R}$ is orthogonal to~$L_{\Lambda }$, we define
$\langle \Lambda, v \rangle=\langle x,v\rangle$ for any $x\in
\Lambda $.
Furthermore, when $\dim(\Lambda )=n$ and~$F$ is a facet of $\Lambda $,
we will denote by $v_{F}\in N$ 
\nomenclature[avaalp2]{$v_{F}$}{integral inner orthogonal vector 
  of a facet}%
the vector of minimal length that is orthogonal to $L_{F}$ and
satisfies $\langle F,v_{F}\rangle\le\langle x,v_{F}\rangle$ for
each $x\in \Lambda$. In other words,~$v_{F}$ is the minimal
inner integral orthogonal vector of~$F$ as a facet of $\Lambda $.      
\end{notn}

\begin{cor}\label{cor:7}
  Let $f\colon N_{\R}\to \R$ be a concave function such that $\Delta
  =\Stab(f)$ is a lattice polytope of dimension $n$. Then
  \begin{displaymath}
        -\int_{N_{\R}}f\dd\cM_{M}(f)=
        (n+1)\int_{\Delta }f^{\vee}\dd \Vol_{M}+ \sum_{F}\langle
        F,v_{F}\rangle \int_{F}f^{\vee}\dd\Vol_{M(F)},
  \end{displaymath}
where the sum is over the facets $F$ of $\Delta$.
\end{cor}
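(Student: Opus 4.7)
The plan is to apply Theorem \ref{thm:20} directly, with $\mu = \Vol_M$ and $D=\Delta$ --- whose boundary is piecewise smooth as a union of the facets $F$ --- and then rewrite the boundary integral $\int_{\partial \Delta} f^{\vee}\lambda$ as the sum over facets claimed in the statement. Choose a basis of $M$ so that $\omega = \dd x_{1}\wedge\dots\wedge \dd x_{n}$ induces $\Vol_M$; the problem then reduces to identifying $\lambda|_F$ with a scalar multiple of $\dd \Vol_{M(F)}$ for each facet $F$ of $\Delta$.

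First I would use that $\lambda = \iota_{X}\omega$ where $X = \sum_{i} x_{i}\, \partial/\partial x_{i}$ is the radial vector field. For a facet $F$ with outward Euclidean unit normal $\nu_F$, this gives the standard formula $\lambda|_{F} = \langle x,\nu_{F}\rangle\, \dd S_{\text{Eucl}}$, where $\dd S_{\text{Eucl}}$ is the induced Euclidean surface measure with the boundary orientation. Since $v_{F}\in N$ is the minimal \emph{inner} integral orthogonal vector to $F$, we have $\nu_{F} = -v_{F}/|v_{F}|$ and $\langle v_{F},x\rangle = \langle v_{F},F\rangle$ is constant on $F$, so
\begin{equation*}
  \lambda|_{F} = -\frac{\langle v_{F},F\rangle}{|v_{F}|}\, \dd S_{\text{Eucl}}.
\end{equation*}

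The core lattice step is to compare $\dd S_{\text{Eucl}}$ with $\dd \Vol_{M(F)}$. Because $\Delta$ is of dimension $n$ and $v_F$ is the minimal inner integral orthogonal vector to a facet, $v_{F}$ is primitive in $N$; thus there exists $u\in M$ with $\langle u,v_{F}\rangle = 1$, and extending any $\Z$-basis $w_{1},\dots,w_{n-1}$ of $M(F) = M\cap L_{F}$ by $u$ gives a $\Z$-basis of $M$. Decomposing $u$ orthogonally along $v_{F}$ and $L_{F}$, the identity $\det(u,w_{1},\dots,w_{n-1}) = \pm 1$ becomes
\begin{equation*}
  \pm 1 \;=\; \frac{\langle u,v_{F}\rangle}{|v_{F}|}\cdot \Vol_{\text{Eucl}}(w_{1},\dots,w_{n-1}) \;=\; \frac{1}{|v_{F}|}\cdot \Vol_{\text{Eucl}}(w_{1},\dots,w_{n-1}),
\end{equation*}
so the Euclidean covolume of $M(F)$ in $L_{F}$ equals $|v_{F}|$, i.e.\ $\dd S_{\text{Eucl}} = |v_{F}|\, \dd \Vol_{M(F)}$. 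Hence $\lambda|_{F} = -\langle v_{F},F\rangle\, \dd \Vol_{M(F)}$.

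Substituting this into the boundary integral of Theorem \ref{thm:20} and summing over facets gives
\begin{equation*}
  -n!\int_{\partial \Delta} f^{\vee}\lambda \;=\; \sum_{F}\langle v_{F},F\rangle\, n!\int_{F} f^{\vee}\, \dd \Vol_{M(F)},
\end{equation*}
which combined with the interior term $(n+1)!\int_{\Delta} f^{\vee}\dd \Vol_{M}$ yields the corollary. The main obstacle is the sign-and-normalization bookkeeping culminating in the formula $\dd S_{\text{Eucl}} = |v_{F}|\, \dd \Vol_{M(F)}$, whose validity rests crucially on the primitivity of $v_{F}$; once this identification is established, the result follows by direct substitution into Theorem \ref{thm:20}.
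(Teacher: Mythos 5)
Your proof is correct and follows the same overall route as the paper: invoke Theorem~\ref{thm:20} with $D=\Delta$ and $\mu=\Vol_M$, and establish the restriction formula $\lambda|_{F} = -\langle v_{F},F\rangle\,\dd\Vol_{M(F)}$ on each facet before substituting back. The paper obtains this identity more directly, by choosing an integral basis $(m_1,\dots,m_n)$ of $M$ with $(m_2,\dots,m_n)$ a $\Z$-basis of $M(F)$ and $m_1$ pointing in the exterior direction, in which coordinates $\lambda|_F$ is read off at once, rather than routing through an auxiliary Euclidean structure, the outward unit normal $\nu_F=-v_F/|v_F|$, and the covolume identity $\dd S_{\text{Eucl}}=|v_F|\,\dd\Vol_{M(F)}$ as you do; your version is correct (the primitivity of $v_F$ you use is exactly what makes the adapted-basis choice possible in the paper's version) but somewhat longer.
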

\begin{proof}
  We choose $(m_1,\dots,m_n)$ a basis of $M$ such that
  $(m_{2},\dots,m_{n})$ is a basis of $M(F)$ and $m_{1}$ points to the
  exterior direction. Expressing $\lambda $ in this basis we obtain
  \begin{displaymath}
    \lambda |_{F}=-\langle F,v_{F}\rangle \dd \Vol_{M(F)}.
  \end{displaymath}
The result then follows from Theorem \ref{thm:20}.
\end{proof}

In \S \ref{sec:heigh-toric-vari}, we will see that we can express the
height of a toric variety in terms of integrals of the form
$\int_{\Delta}f^{\vee}\dd\Vol_{M}$ 
as in the above result.
In some situations, it will be useful to translate those integrals
to integrals on $N_{\R}$. 

Let $f\colon N_{\R}\to \R$ be a concave function and $g\colon
\Stab(f)\to \R$  an integrable function. We consider the signed
measure on $N_{\R}$ defined, for a Borel subset $E$ of $N_{\R}$,~as
\begin{displaymath}
   \cM_{M,g}(f)(E)= \int_{\partial f(E)} g\dd\Vol_{M}.
\end{displaymath}
Clearly,  $\cM_{M,g}(f)$
is uniformly continuous with respect to $\cM_{M}(f)$. By the
Radon-Nicodym theorem, there is an $\cM_{M}(f)$-measurable
function, that we
denote $g\circ \partial f$, such that \nomenclature[spartiala]{$g\circ \partial f$}{an $\cM(f)$-measurable
  function}%
\begin{equation}\label{eq:121}
  \int_{E} g\circ \partial f\dd\cM_{M}(f) = \int_{E}\dd\cM_{M,g}(f) =
  \int_{\partial f(E)} g\dd\Vol_{M}.
\end{equation}

\begin{exmpl} \label{exm:29} When the function $f$ is differentiable or
  piecewise affine, the measurable function $f^{\vee}\circ \partial f$
  can be made explicit.
  \begin{enumerate}
  \item \label{item:96} Let $f\in \cC^{2}(N_{\R})$. Proposition
    \ref{prop:5} and the change of variables formula imply $
    g\circ \partial f= g\circ \nabla f$. For the particular case when
    $g=f^{\vee}$, Theorem~\ref{thm:2}\eqref{item:51} implies, for
    $u\in N_{\R}$,
    \begin{displaymath}
  f^{\vee}\circ \partial f(u)= \langle
    \nabla f(u),u\rangle -f(u).    
    \end{displaymath}

\item \label{item:97} Let $f$ a piecewise affine concave function on
  $N_{\R}$.
By Proposition \ref{prop:32}, $\cM_{M}(f)$ is supported in the finite set $\Pi(f)^{0}$ and
so is  $\cM_{M,g}(f)$. For $v\in  \Pi(f)^{0}$ write $v^{*}\in
\Pi(f^{\vee})^{n}$ for the dual polyhedron. 
Then $ g\circ \partial f(v)= \frac{1}{\Vol_{M}(v^{*})}\int_{v^{*}}g\dd
\Vol_{M}$, which implies
\begin{displaymath}
  f^{\vee}\circ \partial f(v)=
\frac{1}{\Vol_{M}(v^{*})}\int_{v^{*}} \langle x,v\rangle \dd \Vol_{M}
- f(v).
\end{displaymath}
The function $f^{\vee}\circ \partial f$ is defined as a
$\cM_{M}(f)$-measurable function. Therefore, only its values at the
points $v\in  \Pi(f)^{0}$ are well defined. Nevertheless, we can
extend the function $f^{\vee}\circ \partial f$ to the whole $N_{\R}$
by writing
\begin{displaymath}
    f^{\vee}\circ \partial f(u)=
\frac{1}{\Vol_{\mu }(\partial f(u))}\int_{\partial f(u)} \langle x,u\rangle \dd \mu 
- f(u)
\end{displaymath}
for any Haar measure $\mu $ on the affine space determined by
$\partial f(u)$.
  \end{enumerate}
\end{exmpl}

The Monge-Amp\`ere operator is homogeneous of
degree $n$. There is an associated
multilinear operator, introduced by Passare and
Rullg{\aa}rd~\cite{PassareRullgard2004},  which takes $n$ concave functions
as arguments.

\begin{defn}\label{def:18}
Let $f_{1},\dots,f_{n}$ be closed concave functions on $N_{\R}$. The \emph{mixed
  Monge-Amp\`ere measure}
\index{Monge-Amp\`ere measure!mixed}%
is defined by the formula
\begin{displaymath}
  \cM_{M}(f_{1},\dots, f_{n})=\frac{1}{n!} \sum_{j=1}^n (-1)^{n - j}
  \sum_{1 \le i_1 < \cdots < i_j \le n} 
  \cM_{M}(f_{i_{1}}+\dots+f_{i_{j}}).
\end{displaymath}
\nomenclature[aMzA3]{$\cM_{M}(f_{1},\dots, f_{n})$}{mixed
  Monge-Amp\`ere measure}%
\end{defn}

In principle, the mixed Monge-Amp\`ere measure is a signed
measure. Nevertheless it can be shown that it is a measure (see \cite[\S
5]{PassareRullgard2004}). Moreover, it is symmetric and multilinear
in the concave functions $f_{i}$ with respect to the pointwise addition.

\begin{prop}
  \label{prop:39}
  The {mixed Monge-Amp\`ere measure} is a continuous map from the space
  of $n$-tuples of concave functions with the topology defined by uniform
  convergence on compact sets to the space of $\sigma$-finite measures on $N_{\R}$ with 
  the weak topology.
\end{prop}

\begin{proof}
The general mixed case reduces to the unmixed case
$f_1=\dots=f_{n}$, which is  Proposition~\ref{prop:87}. 
\end{proof}

\begin{defn}
  \label{def:41}
The \emph{mixed volume}
\index{mixed volume of a family of compact convex sets}%
of a family of compact convex sets $Q_1, \dots, Q_n$ of $M_{\R}$ 
is defined as
\begin{displaymath} 
\MV_M(Q_1, \dots, Q_n) = \sum_{j=1}^n (-1)^{n - j} \sum_{1 \le i_1 < \cdots < i_j \le n} \Vol_M(Q_{i_1} + \cdots + Q_{i_j}) 
\end{displaymath} 
\end{defn}
\nomenclature[aMzixed1]{$\MV_M(Q_1, \dots, Q_n)$}{mixed volume of a
  family of convex sets}%
Since $\MV_M(Q, \dots,
Q) = n! \, \Vol_M(Q)$, the mixed volume is a generalization of the
volume of a convex body.  The mixed volume is symmetric and linear
in 
each variable $Q_i$ with respect to the Min\-kows\-ki sum, and
monotone with respect to inclusion~\cite[Chapter~IV]{Ewa96}.

The total mass of the mixed Monge-Amp\`ere measure is given by a mixed
volume.

\begin{prop}\label{prop:98}
Let $f_{1},\dots,f_{n}$ be concave functions such that
$\ri(\Dom(f_{1}))\cap \dots \cap \ri(\Dom(f_{n}))\ne \emptyset$, then
\begin{displaymath}
  \cM_{M}(f_{1},\dots, f_{n})(N_{\R})=\frac{1}{n!}\MV_{M}(\Stab(f_{1}),
  \dots, \Stab(f_{n})).
\end{displaymath}
\end{prop}

\begin{proof}
  If $\Dom(f_{i})=N_{\R}$ for all $i$, this is proved in
  \cite[Proposition 3(iv)]{PassareRullgard2004}. In the general case,
  this follows from the definitions of mixed Monge-Amp\`ere measure
  and mixed volume, the equation (\ref{eq:13}) and Proposition
  \ref{prop:10}\eqref{item:33}. 
\end{proof}

Following \cite{PS08a}, we introduce an extension of the notion of integral of a
concave function. 

\begin{defn}
\label{def:49}
Let $Q_{i}$, $i=0,\dots, n$, be a family of compact convex subsets of
$M_{\R}$ and $ g_{i}\colon
Q_{i}\to \R$ a  concave function on $Q_{i}$.    
The \emph{mixed integral of $g_{0},\dots, g_{n}$}
\index{mixed integral of a family of concave functions}%
is defined as
\begin{equation*}
\MI_M(g_{0},\dots, g_{n}) = \sum_{j=0}^n (-1)^{n - j} \sum_{0 \le i_0
  < \cdots < i_j \le n} \int_{Q_{i_0} + \cdots + Q_{i_j}}
g_{i_0} \boxplus \cdots \boxplus g_{i_j} \dd \Vol_{M}. 
\end{equation*} 
\nomenclature[aMzixed]{$\MI_M(g_{0},\dots, g_{n})$}{mixed integral of
  a family of concave functions}%
\end{defn}

For a compact convex subset $Q\subset M_{\R}$ and a concave function
$g$ on $Q$, we have $\MI_M(g, \dots, g)=
(n+1)! \int_{Q}g  \dd \Vol_{M}$. The mixed integral is symmetric and
additive in each variable $g_{i}$ with respect to the sup-convolution. 
For a scalar $\lambda\in \R_{\ge0}$, we have  $\MI_M(\lambda
g_{0},\dots, \lambda g_{n})= \lambda\MI_M(g_{0},\dots, g_{n})$. 
We refer to \cite{PS08a, MR2419926} for the proofs and more information about this notion.


\chapter{Toric varieties}
\label{sec:toric-varieties}

In this chapter we recall some basic facts about the algebraic
geometry of toric varieties and schemes. In the first place, we consider toric
varieties over a field and then toric schemes over a DVR. We refer
to \cite{Kempfals:te,Oda88,Ful93,Ewa96,CoxLittleSchenck:tv} for more
details.

We will use the notations of the previous section concerning concave
functions and polyhedra, with the proviso that the vector space
$N_{\R}$ will always be equipped with a 
lattice $N$ and most of the objects we consider will be compatible with
this integral structure, even if not said explicitly. In particular,
from now on, by a \emph{fan} (Definition~\ref{def:42})
\index{fan}%
\index{fan!rational}%
we will  
mean a rational fan and by a 
\emph{polytope}
\index{polytope!lattice}%
we will mean a lattice polytope.

\section{Fans and toric varieties}
\label{Toric varieties}

Let $K$ be a field and $\T\simeq \G_{m}^{n}$ a split torus over~$K$. 
We alternatively  denote it by $\T_{K}$ if we want to refer to 
its  field of definition.
\nomenclature[aT]{$\T$}{split algebraic torus}%
\begin{defn}\label{def:16}
  A \emph{toric variety}
\index{toric variety}%
is a normal variety $X$ over $K$ equipped with a dense open embedding
  $\T\hookrightarrow X$ and an action $\mu \colon \T\times X\to X$ 
  that extends the action of $\T$ on itself by
  translations. When we want to stress the torus, we will call~$X$ a
  toric variety \emph{with torus} $\T$.
  \nomenclature[g12]{$\mu $}{action of a torus on a toric variety}%
\end{defn}

Toric varieties can be
described in combinatorial terms as we recall in the sequel.
Let $N =\Hom(\G_{m},\T)\simeq \Z^n$ be the lattice of one-parameter
subgroups of $\T$ and 
$M=N^\vee=\Hom(N,\Z)$ its dual lattice. For a ring $R$ we set $N_R=N\otimes R$ and 
$M_R=M\otimes R$. 
We will use the additive notation for the group operations in $N$ and $M$. There is a
canonical isomorphism $M\simeq \Hom(\T,\G_{m})$ with the group of
characters of $\T$. For $m\in
M$ we will denote by $\chi^{m}$ the corresponding character.  

To a fan $\Sigma$ we associate a
\nomenclature[g1813]{$\Sigma$}{rational fan}%
toric variety $X_\Sigma$ over $K$ by gluing together the affine toric
varieties corresponding to the cones of the fan. 
\nomenclature[aX12]{$X_{\Sigma }$}{toric variety associated to a fan}%
For $\sigma\in
\Sigma$, let $\sigma ^{\vee}$ be the dual cone (Definition
\ref{def:77}) and set 
\nomenclature[aMl04]{$M_\sigma$}{semigroup of $M$ associated to a cone}%
$$
M_\sigma=\sigma^\vee\cap M=\{m\in M\mid \langle m,u\rangle\ge 0,
\ \forall u\in \sigma \} 
$$ 
for the 
saturated semigroup of its lattice points. We consider the 
{semigroup algebra}
\index{semigroup algebra}%
\nomenclature[aKsg1]{$K[M_\sigma]$}{semigroup $K$-algebra of a cone}%
\nomenclature[g22a]{$\chi^m$}{character of $\T$}%
$$
K[M_\sigma]=\Big\{\sum_{m\in M_\sigma} \alpha_m\chi^m\Big| \alpha_m\in
K,  \alpha_m=0  \text{ for almost all }m\Big\}
$$
of formal finite sums of elements of $M_\sigma$, with the natural ring
structure. It is an integrally closed domain of Krull dimension
$n$. We set $X_\sigma=\Spec(K[M_\sigma])$ for the associated 
{affine toric variety}.
\index{toric variety!affine}%
\nomenclature[aX11]{$X_\sigma$}{affine toric variety}%
If $\tau$ is a face of $\sigma$, then
$K[M_\tau]$ is a localization of $K[M_\sigma]$. Hence there is an
inclusion of open sets 
$$
X_\tau=\Spec(K[M_\tau]) \hooklongrightarrow X_\sigma=\Spec(K[M_\sigma]).
$$
For $\sigma, \sigma'\in\Sigma$, the affine toric varieties $X_\sigma$,
$X_{\sigma'}$ glue together through the open subset  $X_{\sigma \cap
  \sigma'}$ 
corresponding to their common face.
Thus these affine varieties  glue together to form the 
{toric variety}\index{toric variety!associated to a fan}
$$
X_\Sigma=\bigcup_{\sigma\in\Sigma}X_\sigma.
$$
 This is a normal variety
over $K$ of dimension $n$. When we need to specify the field of
definition we will denote it as $X_{\Sigma ,K}$.  
We denote by $\mathcal{O}_{X_{\Sigma }}$ 
\nomenclature[aOX1]{$\mathcal{O}_{X}$}{sheaf of algebraic functions of
  a scheme}%
its structural sheaf and by
$\mathcal{K}_{X_{\Sigma }}$ 
\nomenclature[aKX]{$\mathcal{K}_{X}$}{sheaf of rational functions of a
scheme}%
its sheaf of rational functions. The open
subsets $X_{\sigma }\subset X_{\Sigma }$ may be denoted by $X_{\Sigma
  ,\sigma }$ when we want to include the ambient toric variety in the
notation.

The cone $\{0\}$, that we denote simply by $0$, 
is a face of every cone
and its associated affine scheme 
$$
X_0=\Spec(K[M])
$$
is an open subset of all the schemes $X_\sigma$.
This variety is an algebraic group over~$K$ canonically isomorphic to
$\T$. We identify this variety with $\T$ and
call it the \emph{principal open subset} of $X_{\Sigma }$.
\index{toric variety!principal open subset of}%
\nomenclature[aX13]{$X_{\Sigma ,0}$}{principal open subset}%

For each $\sigma\in\Sigma$, the homomorphism 
\begin{displaymath}
  K[M_\sigma]\to K[M]\otimes K[M_\sigma],\quad  \chi^m\mapsto\chi^m\otimes\chi^m 
\end{displaymath}
induces an action of 
$\T$ on $X_\sigma$. This action is compatible with the inclusion
of open sets and so it extends to an action on the whole of $X_\Sigma$ 
$$
\mu\colon \T\times X_\Sigma \longrightarrow X_\Sigma.
$$
Thus we have obtained a toric variety in
the sense of Definition \ref{def:16}. In fact, all toric varieties are
obtained in this way.

\begin{thm} 
\label{thm:10}
The correspondence $\Sigma\mapsto X_{\Sigma}$ is a bijection 
between the set of fans in $N_{\R}$ and the set of
isomorphism classes of toric varieties with torus $\T$.
\end{thm}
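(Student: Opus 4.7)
The plan is to construct an inverse $X \mapsto \Sigma(X)$ and check that $\Sigma(X_{\Sigma}) = \Sigma$ and $X \cong X_{\Sigma(X)}$ as toric varieties with torus $\T$. The construction in the text has already produced a toric variety from a fan, so well-definedness of the forward map is clear. For injectivity, I observe that each cone $\sigma\in\Sigma$ gives a $\T$-invariant affine open $X_{\sigma}\subset X_{\Sigma}$, and the character decomposition
\[
\Gamma(X_{\sigma},\mathcal{O}_{X_{\Sigma}}) = \bigoplus_{m\in M_{\sigma}} K\chi^{m}
\]
exhibits $M_{\sigma}$ as the set of weights of the $\T$-action. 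Hence the cone is recovered as $\sigma=\{u\in N_{\R}: \langle m,u\rangle\ge 0,\ \forall m\in M_{\sigma}\}$ from $X_{\sigma}$ together with its $\T$-structure, and a $\T$-equivariant isomorphism $X_{\Sigma}\cong X_{\Sigma'}$ matches maximal $\T$-invariant affine opens on both sides and therefore matches the cones.

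For surjectivity, the essential input is Sumihiro's theorem: any normal variety with an action of a split torus $\T$ admits a cover by $\T$-invariant affine open subsets. Given a toric variety $X$ with torus $\T$ and such a cover $\{U_{i}=\Spec A_{i}\}$, the inclusion $\T\hookrightarrow U_{i}$ identifies $A_{i}$ with a $\T$-stable $K$-subalgebra of $K[M]$, and decomposition into weight spaces gives
\[
A_{i}=\bigoplus_{m\in S_{i}} K\chi^{m}
\]
for a subsemigroup $S_{i}\subset M$. I would then verify the three properties:
\begin{enumerate}
\item $S_{i}$ is finitely generated, since $A_{i}$ is a finitely generated $K$-algebra;
\item $S_{i}$ is saturated in $M$, since $A_{i}$ is integrally closed in $K[M]$ by normality of $X$;
\item $S_{i}$ generates $M$ as a group, since $\T\subset U_{i}$ is dense so $\mathrm{Frac}(A_{i})=K(M)$.
\end{enumerate}
By Gordan's lemma and its converse for saturated finitely generated subsemigroups, there is a unique strongly convex rational polyhedral cone $\sigma_{i}\subset N_{\R}$ with $S_{i}=\sigma_{i}^{\vee}\cap M$, and an induced $\T$-equivariant isomorphism $U_{i}\cong X_{\sigma_{i}}$.

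The main obstacle is to show that the cones $\{\sigma_{i}\}$ and their faces assemble into a fan $\Sigma(X)$. Concretely, for each pair of indices $i,j$, one must check that $U_{i}\cap U_{j}$ is $\T$-invariant affine and that the corresponding cone is a common face of $\sigma_{i}$ and of $\sigma_{j}$. Affineness of $U_{i}\cap U_{j}$ uses separatedness of $X$; the face property follows from comparing the weight decomposition of $\Gamma(U_{i}\cap U_{j},\mathcal{O}_{X})$ with $S_{i}$ and $S_{j}$: an element of $S_{i}\cap(-S_{j})$ becomes invertible in $\Gamma(U_{i}\cap U_{j},\mathcal{O}_{X})$, and the closedness of the diagonal $X\to X\times_{K}X$ forces the subsemigroup of such invertible weights to cut $S_{i}$ out along a face, realizing $\sigma_{i}\cap\sigma_{j}$ simultaneously as faces of $\sigma_{i}$ and of $\sigma_{j}$. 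Once $\Sigma(X)$ is shown to be a fan, the local isomorphisms $U_{i}\cong X_{\sigma_{i}}$ are mutually compatible on overlaps (they are determined by the $\T$-action on the ring of functions) and glue to a $\T$-equivariant isomorphism $X\cong X_{\Sigma(X)}$. The composites $\Sigma\mapsto X_{\Sigma}\mapsto\Sigma(X_{\Sigma})$ and $X\mapsto\Sigma(X)\mapsto X_{\Sigma(X)}$ are then the identity by construction, completing the bijection.
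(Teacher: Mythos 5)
Your proposal is correct, and in fact it supplies an actual argument where the paper does not: the paper's ``proof'' of this theorem is the single line citing Kempf--Knudsen--Mumford--Saint-Donat, \emph{Toroidal Embeddings~I}, \S I.2, Theorem~6(i). What you sketch is the standard modern proof, and you have identified the right key inputs. For injectivity, maximal $\T$-invariant affine opens and the weight decomposition of their coordinate rings recover the cones intrinsically, provided one uses that isomorphisms of ``toric varieties with torus $\T$'' are required by the paper's Definition~3.1 to restrict to the identity on $\T$ (merely $\T$-equivariant isomorphisms could twist by an automorphism of $N$, so this hypothesis matters and is the reason the cones, not merely the $\operatorname{GL}(N)$-orbits of cones, are recovered). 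For surjectivity, Sumihiro's theorem is indeed the essential nontrivial ingredient, and your three conditions on each $S_i$ (finitely generated, saturated, generating $M$ as a group) are exactly what is needed to produce a strongly convex rational polyhedral cone with $S_i=\sigma_i^\vee\cap M$. The step you compress most is the gluing: the point is that separatedness gives surjectivity of $A_i\otimes_K A_j\to\Gamma(U_i\cap U_j,\cO_X)$, hence $S_{ij}=S_i+S_j$, and then a purely combinatorial separation lemma (there exists $m\in S_i\cap(-S_j)$ with $\sigma_i\cap m^{\perp}=\sigma_i\cap\sigma_j=\sigma_j\cap m^{\perp}$) makes $\sigma_i\cap\sigma_j$ a common face. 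Your phrase about ``closedness of the diagonal forcing the invertible weights to cut $S_i$ out along a face'' gestures at this correctly, but in a careful write-up that sentence should be expanded into the $S_{ij}=S_i+S_j$ identity plus the separation lemma, since as stated it is doing the bulk of the work in one breath.
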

\begin{proof} This result is \cite[\S I.2, Theorem 6(i)]{Kempfals:te}.
\end{proof}

For each $\sigma \in \Sigma $, the set of $K$-rational points in
$X_{\sigma }$ can be
identified with 
the set of semigroup homomorphisms from $(M_\sigma,+)$ to the semigroup
$(K,\times):=K^{\times}\cup\{0\}$. That is,
$$
X_\sigma(K)= \Hom_{\sg}(M_\sigma,(K,\times)).
$$
In particular,  the set of $K$-rational points of the algebraic torus
can be written intrinsically as
$$
\T(K)= \Hom_{\sg}(M_0,(K,\times))=\Hom_{\gp}(M,K^\times)\simeq(K^\times)^n.
$$
\nomenclature[aHomsg]{$\Hom_{\sg}$}{semigroup homomorphisms}%
\nomenclature[aHomgp]{$\Hom_{\gp}$}{group homomorphisms}%
Every affine toric variety has a {distinguished rational point}:
\index{toric variety!distinguished point of}%
 we will
denote by
\nomenclature[ax00]{$x_{\sigma }$}{distinguished point of an affine toric variety}%
$x_{\sigma }\in X_{\sigma }(K)=\Hom_{\sg}(M_\sigma,(K,\times))$ the
point given
by the semigroup homomorphism
\begin{displaymath}
 M_{\sigma }\ni m\longmapsto
 \begin{cases}
   1&\text{ if }-m\in M_{\sigma },\\
   0 &\text{ otherwise}.
 \end{cases}
\end{displaymath}
For instance, the point $x_{0}\in X_{0}=\T$ is the unit of $\T$. 

Most algebro-geometric properties of the toric scheme
translate into combinatorial properties of the fan. In particular,
$X_\Sigma$ is proper if and only if the fan is 
\emph{complete}
\index{fan!complete}%
in the sense that $|\Sigma|=N_\R$.
 The variety $X_\Sigma$ is smooth if and
only if every cone $\sigma\in\Sigma$ can be written as
$\sigma=\R_{\ge0}v_1+\cdots+\R_{\ge0}v_k$ with $v_1,\dots,v_k$
which are part of an integral basis of $N$. The variety $X_{\Sigma }$
is projective if and only if the fan $\Sigma $ is complete and regular
(Definition \ref{def:54}).

\begin{exmpl} \label{exm:10}
  Let
  $\Sigma_{\Delta ^{n}} $ be the fan in Example
  \ref{exm:8}. The toric variety
  $X_{\Sigma _{\Delta ^{n}}}$ is the projective space
  $\P^{n}_{K}$.
  \index{projective space!as a toric variety}%
More generally, to a polytope $\Delta \subset
  M_{\R}$ of maximal dimension we can
  associate a complete toric variety $X_{\Sigma _{\Delta }}$, where
  $\Sigma _{\Delta }$ is the fan of Example \ref{exm:19}.
\end{exmpl}

\section{Orbits and equivariant morphisms}
\label{Orbits}
\typeout{Orbits}

The action of the torus induces a decomposition of a toric variety
into disjoint orbits. These orbits are in one to one correspondence
with the cones of the fan.
Let $\sigma \in
\Sigma $ and set
\nomenclature[aNl05]{$N(\sigma)$}{quotient lattice associated to a cone}%
\nomenclature[aMl05]{$M(\sigma)$}{dual sublattice associated to a cone}%
\nomenclature[sdtbot]{$\sigma ^{\bot}$}{orthogonal space of a cone}%
\begin{equation}
  \label{eq:53}
N(\sigma)=N/(N\cap\R\sigma),\quad M(\sigma)= N(\sigma)^\vee=
M\cap \sigma^\bot,
\end{equation}
where $\R\sigma $ is the linear space spanned by $\sigma $ and $\sigma
^{\bot}$ is the orthogonal space to $\sigma $. 
We will denote by $\pi_\sigma\colon N\to N(\sigma)$ the projection of
lattices. By abuse of notation, we will also denote by
$\pi_\sigma\colon N_{\R}\to N(\sigma)_{\R}$ the induced projection of
vector spaces.
\nomenclature[g16001]{$\pi_\sigma$}{projection associated to a cone}%

The orthogonal space $\sigma^\bot$ is the maximal linear space inside
$\sigma^\vee$ and $M(\sigma)$ is the maximal subgroup sitting inside
the semigroup $M_\sigma$.
\index{orbit!in a toric variety}%
Set\nomenclature[aOrbit1]{$O(\sigma)$}{orbit in a toric variety}
$$O(\sigma)=\Spec(K[M(\sigma)]),$$
which is a torus over $K$ of dimension $n-\dim(\sigma)$. 
The surjection of rings
$$
K[M_\sigma]\longrightarrow K[M(\sigma)], \quad \chi^a\longmapsto 
\begin{cases}
    \chi^a & \text{
      if } a\in
    \sigma^\bot , \\ 
    0 & \text{ if } a\notin \sigma^\bot, 
\end{cases}
$$
induces a closed immersion
$O(\sigma)
\hookrightarrow X_\sigma$. In terms of
rational points, the inclusion $O(\sigma)(K)
\hookrightarrow X_\sigma(K)$ sends a group homomorphism $\gamma \colon
M(\sigma )\to K^{\times}$ to the semigroup homomorphism $\wt \gamma
\colon M_{\sigma
}\to (K,\times)$ obtained by extending $\gamma $ by zero. In
particular, the distinguished point $x_{\sigma }\in X_{\sigma }(K)$
belongs to the image of $O(\sigma )(K)$ by the above
inclusion. 
Composing with the open immersion $X_{\sigma }\hookrightarrow
X_{\Sigma }$, we identify $O(\sigma )$  with a locally closed
subvariety of $X_{\Sigma }$. For
instance, the orbit associated to the 
cone $0$ agrees with the principal open subset $X_0$.
In fact, if 
we consider $x_{\sigma }$ as a rational point of $X_{\Sigma
}$, then $O(\sigma )$ agrees with the orbit of $x_{\sigma
}$ by~$\T$. 

\nomenclature[aVo1]{$V(\sigma)$}{closure of an orbit of a toric variety}%
We denote by $V(\sigma)$ the Zariski closure of $O(\sigma)$ with its
induced structure of  closed subvariety of $X_\Sigma$.
The subvariety $V(\sigma )$ has a natural structure of toric
variety. To see it, we consider the fan
on $N(\sigma )_{\R}$
\begin{equation}
  \label{eq:62}
  \Sigma(\sigma):=\{ \pi_\sigma(\tau)| \tau \supset\sigma \}.
\end{equation}
This fan is  called the \emph{star} of $\sigma$ in $\Sigma$.
\index{star!of a cone in a fan}%
\nomenclature[g1815]{$\Sigma(\sigma)$}{star of a cone in a fan}%
  For each $\tau \in \Sigma $ with $\sigma \subset \tau $, set
  $\ov \tau =\pi _{\sigma }(\tau )\in \Sigma (\sigma )$. Then,
  \begin{math}
    M(\sigma )_{\ov \tau }=M(\sigma )\cap M_{\tau }.
  \end{math}
  There is a surjection of rings
  $$
  K[M_\tau ]\longrightarrow K[M(\sigma)_{\ov \tau }], \quad \chi^m\longmapsto 
  \begin{cases}
    \chi^m & \text{
      if } m\in
    \sigma^\bot , \\ 
    0 & \text{ if } m\notin \sigma^\bot, 
  \end{cases}
  $$
  that defines a closed immersion $X_{\ov \tau }\hookrightarrow
  X_{\tau }$. These maps glue together to give a closed immersion
  $\iota_{\sigma }\colon X_{\Sigma (\sigma )}\hookrightarrow X_{\Sigma
  }$.
\nomenclature[g0918]{$\iota_{\sigma }$}{closed immersion of the closure of an orbit into a toric variety}%

\begin{prop}\label{prop:73}
  The closed immersion $\iota _{\sigma }$ induces an isomorphism 
  \begin{math}
    X_{\Sigma (\sigma )}\simeq V(\sigma ).
  \end{math}
\end{prop}
\begin{proof}
 Since the image
  of each $X_{\ov \tau }$ contains $O(\sigma )$ as a dense orbit, we
  deduce the result from the construction of $\iota _{\sigma }$.
\end{proof}

In view of this proposition, we will identify $V(\sigma
)$ with  $X_{\Sigma (\sigma )}$ and consider it as a toric variety.

We now discuss more general equivariant morphisms of toric varieties.
\begin{defn}
  \label{def:25}
Let $\T_{i}\simeq \G_{m}^{n_{i}}$, $i=1,2$, be split tori over
$K$, and $\varrho \colon \T_{1}\to \T_{2}$ a group morphism. Let
$X_{i}$, $i=1,2$, be toric varieties with torus $\T_{i}$. A
morphism $\varphi\colon X_{1}\to X_{2}$ is 
\emph{$\varrho $-equivariant}
\index{equivariant morphism!of toric varieties}%
if the diagram
\begin{displaymath}
  \xymatrix{
    \T_{1}\times X_{1} \ar[r]^{\mu_{1} } \ar[d]_{\varrho \times \varphi}&
    X_{1}\ar[d]^{\varphi}\\
    \T_{2}\times X_{2} \ar[r]^{\mu_{2} } &
    X_{2}
  }
\end{displaymath}
is commutative.
\nomenclature[g1701]{$\varrho $}{morphism of tori}%
A morphism $\varphi\colon X_{1}\to X_{2}$ is 
\emph{$\varrho$-toric}
\index{toric morphism!of toric varieties}%
if its restriction to $\T_{1}$ agrees with $\varrho $.  We say that
$\varphi$ is \emph{equivariant} or \emph{toric} if it is $\varrho $-equivariant
or $\varrho $-toric, respectively, for some $\varrho$.
\end{defn}

Toric morphisms are equivariant. Indeed, a morphism is
toric if and only if it is equivariant and sends the distinguished
point $x_{1,0}\in X_{1}(K)$ to the distinguished point
$x_{2,0}\in X_{2}(K)$.

The inclusion $V(\sigma )\to X_{\Sigma }$ 
is an example of equivariant morphism that is not toric. Moreover,
the underlying morphism of tori
depends on the choice of a section
of the projection $\pi _{\sigma }\colon N\to N(\sigma )$.

A general equivariant morphism is obtained by composing an equivariant
morphism whose image intersects the principal open subset, with the
inclusion of this image as the Zariski closure of an orbit.

Equivariant morphisms whose image intersects the
principal open subset can be characterized in combinatorial terms. Let
$\T_{i}$, $i=1,2$, be split tori over $K$. Put 
  $N_{i}=\Hom(\G_{m},\T_{i})$ and let $\Sigma _{i}$ be fans in
  $N_{i,\R}$.
\nomenclature[aHm]{$H$}{linear map of lattices}%
Let $H\colon N_{1}\to N_{2}$ be a linear map
  such that, for every cone $\sigma_{1}\in \Sigma_{1}$, there exists
  a cone $\sigma_{2}\in \Sigma_{2}$ with $H(\sigma_{1}) \subset
  \sigma_{2}$, and let $p\in X_{\Sigma_{2},0}(K)$ be a rational point.
The linear map induces a group homomorphism
\begin{displaymath}
 \varrho _{H}\colon \T_{1}\to
\T_{2}. 
\end{displaymath}  
\nomenclature[g1702]{$\varrho_{H} $}{morphism of tori induced by $H$}%
Let $\sigma_{i}\in
\Sigma_{i}$, $i=1,2,$ be cones such that
$H(\sigma_{1}) \subset \sigma_{2}$. Let 
$H^{\vee}\colon M_{2}\to M_{1}$ be the map dual to 
$H$. Then there is a homomorphism of semigroups $M_{2,\sigma_{2}}\to
M_{1,\sigma_{1}}$ which we also denote by $H^{\vee}$.
For a monomial 
$\chi ^{m}\in K[M_{2,\sigma _{2}}]$ we denote by $\chi ^{H^{\vee}m}$ its image in
$K[{M_{1,\sigma_{1}}}]$. The assignment $\chi ^{m}\mapsto \chi
^{m}(p)\chi ^{H^{\vee}m}$ 
induces morphisms of algebras
$
  K[M_{2,\sigma _{2}}]\to K[M_{1,\sigma _{1}}]
$
that, in turn, induce morphisms
$$
X_{\sigma_{1}}=\Spec(K[M_{1,\sigma _{1}}]) \longrightarrow 
X_{\sigma_{2}}=\Spec(K[M_{2,\sigma _{2}}]). 
$$
These morphisms are compatible with the restriction to open
subsets, and they glue together into a $\varrho_{H}$-equivariant morphism
\begin{equation}\label{eq:16}
  \varphi_{p,H}\colon X_{\Sigma_1} \longrightarrow X_{\Sigma_{2}}.  
\end{equation}
\nomenclature[g21pH]{$\varphi_{p,H}$}{equivariant morphism of toric varieties}%
\index{equivariant morphism!of toric varieties}%
In case $p=x_{2,0}$, the distinguished
  point on the principal open subset of $X_{\Sigma _{2}}$, this morphism
  is a toric morphism
\index{equivariant morphism!of toric varieties}%
and will be 
  denoted as $\varphi_{H}$ for short.
\nomenclature[g215]{$\varphi_{H}$}{toric morphism of toric varieties}%

\begin{rem} \label{rem:26} The restriction of $\varphi_{p,H}$ to
  the principal open subset
  can be written in coordinates by choosing
  bases of $N_{1}$ and $N_{2}$. Let $n_{i}$  be the rank of
  $N_{i}$. The chosen bases determine  
  isomorphisms $X_{\Sigma_{i} ,0}\simeq \G_{m}^{n_{i}}$, which give
  coordinates $\bfx =(x_{1},\dots,x_{n_{1}})$ 
  and $\bft =(t_{1},\dots,t_{n_{2}})$ for $X_{\Sigma_{1} ,0}$ and $X_{\Sigma_{2}
    ,0}$, respectively. We write the linear map $H$ with respect
  to these basis as a matrix, and we denote its rows by 
  $a_{i}$, $i=1,\dots,n_{2}$. Write  $p=(p_{1},\dots
  ,p_{n_{2}})$.
  In these
  coordinates, the morphism~$\varphi_{p,H}$ is given by
  \begin{displaymath}
    \varphi_{p,H}(\bfx)=
    (p_{1}\bfx^{a_{1}},\dots, p_{n_{2}}\bfx^{a_{n_{2}}}).
  \end{displaymath}  
\end{rem}

\begin{thm}\label{thm:25}
  Let $\T_{i}$, $N_{i}$ and $\Sigma _{i}$,  $i=1,2$, be as
  above. Then the correspondence $(p,H)\mapsto \varphi_{p,H}$ is a
  bijection between
  \begin{enumerate}
  \item the set 
    of pairs $(p,H)$, where $H\colon N_{1}\to N_{2}$ is a linear map
    such that for every cone $\sigma_{1}\in \Sigma_{1}$ there exists
    a cone $\sigma_{2}\in \Sigma_{2}$ with $H(\sigma_{1}) \subset
    \sigma_{2}$, and $p$ is a rational point of $ X_{\Sigma_{2},0}(K)$, 
  \item the set of equivariant
  morphisms $\varphi\colon X_{\Sigma _{1}}\to X_{\Sigma _{2}}$ whose image
  intersects the principal open subset of $X_{\Sigma _{2}}$. 
  \end{enumerate}
\end{thm}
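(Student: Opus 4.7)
The plan is to exhibit an explicit inverse to the map $(p,H)\mapsto \varphi_{p,H}$ by analysing equivariant morphisms through their restriction to the principal open subset.

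First I would verify that the construction of $\varphi_{p,H}$ is well defined. For a face $\tau_1$ of $\sigma_1$ the semigroup $M_{1,\sigma_1}$ sits inside $M_{1,\tau_1}$, and the assignment $\chi^m\mapsto \chi^m(p)\,\chi^{H^\vee m}$ is compatible with the corresponding localisation, so the local morphisms glue to a global morphism $\varphi_{p,H}\colon X_{\Sigma_1}\to X_{\Sigma_2}$. Restricted to the principal open subsets, $\varphi_{p,H}$ is the map $t\mapsto \rho_H(t)\cdot p$ from $\T_1$ to $\T_2$, which is $\rho_H$-equivariant; equivariance then extends to all of $X_{\Sigma_1}$ by density of $X_{\Sigma_1,0}$ in the separated scheme $X_{\Sigma_1}$. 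Moreover $\varphi_{p,H}(x_{1,0})=p\in X_{\Sigma_2,0}$, so the image meets the principal open subset.

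Next I would construct the inverse. Let $\varphi\colon X_{\Sigma_1}\to X_{\Sigma_2}$ be a $\rho$-equivariant morphism whose image meets $X_{\Sigma_2,0}$. The preimage $\varphi^{-1}(X_{\Sigma_2,0})$ is a non-empty open $\T_1$-invariant subset of $X_{\Sigma_1}$ (invariance follows from the equivariance of $\varphi$ and the $\T_2$-invariance of $X_{\Sigma_2,0}$); since it meets the dense orbit $X_{\Sigma_1,0}$ it must contain it. Setting $p=\varphi(x_{1,0})\in X_{\Sigma_2,0}(K)$, the equivariance identity gives $\varphi(t)=\rho(t)\cdot p$ for every $t\in\T_1$. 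A group homomorphism between split tori is determined by a unique lattice homomorphism, so $\rho=\rho_H$ for a unique $H\colon N_1\to N_2$. Then $\varphi$ and $\varphi_{p,H}$ agree on the dense open subset $X_{\Sigma_1,0}$ and hence on all of $X_{\Sigma_1}$.

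The main obstacle is to verify that the pair $(p,H)$ recovered from $\varphi$ satisfies the cone compatibility condition: for each $\sigma_1\in \Sigma_1$ there is $\sigma_2\in \Sigma_2$ with $H(\sigma_1)\subset \sigma_2$. This is the content of the classical combinatorial classification of toric morphisms and I would argue it by the valuative criterion: for $u\in \sigma_1\cap N_1$ the one-parameter subgroup $\lambda_u\colon \G_m\to \T_1$ extends to a morphism $\A^1\to X_{\sigma_1}$ because $u\in \sigma_1$, and hence the composition $\varphi\circ\lambda_u$ extends to $\A^1\to X_{\Sigma_2}$; this extension coincides with $z\mapsto \lambda_{H(u)}(z)\cdot p$, and since $p\in X_{\Sigma_2,0}$ the standard criterion for extension of a one-parameter subgroup on a toric variety forces $H(u)$ to lie in some cone of $\Sigma_2$. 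Since the lattice points of $\sigma_1$ are dense in $\sigma_1$ and $\Sigma_2$ is a finite fan, continuity shows that the whole cone $H(\sigma_1)$ lies in a single $\sigma_2\in\Sigma_2$. Finally, injectivity of $(p,H)\mapsto\varphi_{p,H}$ is immediate: evaluation at $x_{1,0}$ recovers $p$, and the restriction to $\T_1$ together with translation by $p^{-1}$ recovers $\rho_H$, hence $H$.
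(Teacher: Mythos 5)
Your overall strategy---restrict to the torus to recover $(p,H)$, then verify the cone-compatibility condition directly---amounts to a de novo reproof of the classical classification of equivariant toric morphisms. The paper is much more economical: it writes any such $\varphi$ as $t_p\circ\phi$ with $p=\varphi(x_{1,0})$ and $\phi=t_{p}^{-1}\circ\varphi$ a \emph{toric} (distinguished-point preserving) morphism, and then simply cites the classification of toric morphisms (Oda, Theorem~1.13) rather than rederiving it. Your version tries to supply that classification from scratch. The well-definedness, equivariance, and injectivity parts of your argument are fine, but there is a genuine gap at the crucial cone-compatibility step.

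The gap is this. Your one-parameter-subgroup argument shows, for each lattice $u\in\sigma_1$, that $\lim_{z\to 0}\lambda_{H(u)}(z)\cdot p$ exists in $X_{\Sigma_2}$, hence that $H(u)\in|\Sigma_2|$. By density and continuity this yields $H(\sigma_1)\subset|\Sigma_2|$, but that is strictly weaker than what you need, namely that $H(\sigma_1)$ lies in a \emph{single} cone of $\Sigma_2$. A convex cone contained in the support of a fan need not lie in one of its cones: in $\R^2$ with $\Sigma_2$ the fan of $\P^2$, the strongly convex cone $\Cone((1,1),(-1,-2))$ is contained in $|\Sigma_2|=\R^2$ yet straddles the maximal cones $\Cone(e_1,e_2)$ and $\Cone(-e_1-e_2,e_1)$. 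Finiteness of the fan does not rescue the ``continuity'' step either, since a convex cone written as a finite union of closed convex subcones need not equal any single one of them.

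To close the gap you must use not merely the \emph{existence} of these limits but the fact that they all give the \emph{same point}. For $u\in\ri(\sigma_1)$ with a positive multiple in $N_1$ one has $\lim_{z\to 0}\lambda_u(z)=x_{\sigma_1}$, so the composed limit $\lim_{z\to 0}\lambda_{H(u)}(z)\cdot p$ equals $\varphi(x_{\sigma_1})$, a single point lying in a unique orbit $O(\sigma_2)$ of $X_{\Sigma_2}$. Since translation by $p$ permutes the orbits of $X_{\Sigma_2}$, the limit lands in $O(\sigma_2)$ precisely when $H(u)\in\ri(\sigma_2)$. Thus a dense set of $u$ in $\ri(\sigma_1)$ is carried by $H$ into $\ri(\sigma_2)$ for one and the same $\sigma_2$, and passing to closures gives $H(\sigma_1)\subset\sigma_2$.
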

\begin{proof}
  For a point $p\in X_{\Sigma_{2},0}(K)=\T_{2}(K)$, let $t_{p}\colon
  X_{\Sigma _{2}}\to X_{\Sigma _2}$ be the morphism induced by the toric action. Denote by
  $x_{1,0}\in X_{\Sigma _{1}}(K)$ the distinguished point of
  the principal open subset of $X_{\Sigma _{1}}$. The
  correspondence $\varphi\mapsto (t_{\varphi(x_{1,0})}^{-1}\circ
  \varphi,\varphi(x_{1,0}))$ establishes a bijection between the set
  of equivariant morphisms $\varphi\colon X_{\Sigma _{1}}\to X_{\Sigma
    _{2}}$ whose image intersects the principal open subset of $X_{\Sigma
    _{2}}$ and the set of pairs $(\varphi_{0} ,p)$, where $\varphi_{0} \colon
  X_{\Sigma _{1}}\to X_{\Sigma _{2}}$ is a toric morphism and $p\in
  X_{\Sigma_{2},0}(K)$ is a rational point in the principal open
  subset. Then the result follows from \cite[Theorem~1.13]{Oda88}.
\end{proof}

Following \cite[Proposition~1.14]{Oda88}, we now show how to refine the
Stein factorization for an equivariant  
morphism whose image intersects the principal open
subset\index{equivariant morphism!Stein factorization of},  in
terms of combinatorial data. Let $N_{i}$, $\Sigma_{i}$, $H$ and $p$ be as in Theorem \ref{thm:25}.
The  linear map $H$ factorizes as
\begin{displaymath}
  N_{1}\overset{H_{\surj}}{\twoheadlongrightarrow} N_{3}:=H(N_{1})
\overset{H_{\sat}}{\hooklongrightarrow} N_{4}:=\sat(N_{3}) 
\overset{H_{\inj}}{\hooklongrightarrow} N_{2},
\end{displaymath}
where $N_{3}$ is the image of $H$ and $N_{4}$ is the saturation of
$N_{3}$ with respect to $N_{2}$. Clearly $N_{3,\R}=N_{4,\R}$. 
By restriction, the fan $\Sigma_{2}$ induces a fan in
this linear space. We will call this fan either $\Sigma _{3}$ or
$\Sigma _{4}$, depending on the lattice we are considering.
Applying the combinatorial construction of equivariant morphisms, we
obtain the following factorization of $\varphi_{p,H}$:
\begin{equation}\label{eq:139}
  X_{\Sigma _{1}}\overset {\varphi_{H_{\surj}}}{\longrightarrow}
  X_{\Sigma _{3}}\overset {\varphi_{H_{\sat}}}{\longrightarrow}
  X_{\Sigma _{4}}\overset {\varphi_{p,H_{\inj}}}{\longrightarrow}
  X_{\Sigma _{2}}.
\end{equation}
The first morphism has connected fibres, the second morphism is finite
and surjective, and the third morphism is also finite.  Therefore,
$\varphi_{H_{\surj}}$ and $ {\varphi_{p,H_{\inj}}} \circ
\varphi_{H_{\sat}}$ give a Stein factorization of
$\varphi_{p,H}$. Furthermore, by \cite[Corollary~1.16]{Oda88},
\begin{equation}
  \label{eq:54}
  \deg(\varphi_{H_{\sat}})=[N_{4}:N_{3}].
\end{equation}
The morphism $\varphi_{p,H_{\inj}}$ can
be further factorized as a normalization followed by a closed
immersion.  In  what follows, we describe this latter factorization
with independent notations. 

Consider a
saturated sublattice~$Q$ of~$N$,~$\Sigma$ a fan in $N_{\R}$ and $p\in
X_{\Sigma,0}(K)$.  
\nomenclature[aQ]{$Q$}{saturated sublattice of $N$}%
Let $\Sigma _{Q}$ be the induced fan in $Q_{\R}$ and
$\iota\colon Q\hookrightarrow N$ the inclusion of $Q$ into $N$.  
\nomenclature[g09]{$\iota $}{inclusion of a saturated sublattice}%
Then
we have a finite equivariant morphism
\begin{displaymath}
  \varphi_{p,\iota}\colon X_{\Sigma_{Q}}\longrightarrow X_{\Sigma}.
\end{displaymath}
Set $P=Q^\vee=M/Q^\bot$ 
\nomenclature[aP]{$P$}{lattice dual to $Q$}%
and let $\iota^{\vee}\colon M\to P$ be the dual of
$\iota$. 
Let $\sigma\in \Sigma$ and $\sigma'=\sigma\cap Q_{\R} \in\Sigma_Q$.
The natural semigroup homomorphisms $M_{\sigma }\to P_{\sigma '}$ factors as
$$
M_\sigma \twoheadlongrightarrow {M_{Q,\sigma}}:=(M_\sigma+Q^\bot)/Q^\bot
\hooklongrightarrow P_{\sigma'}:=P\cap (\sigma')^\vee. 
$$
The first arrow is the projection and will be denoted as $m\mapsto [m]$,
while the second one is the inclusion of ${M_{Q,\sigma}}$ into its
saturation with respect to $P$. We have a  diagram of $K$-algebra
morphisms
$$ 
K[M_{\sigma }]\twoheadlongrightarrow K[{M_{Q,\sigma}}]\hooklongrightarrow
K[P_{\sigma'}],$$
where the left map is given by $\chi^{m}\mapsto
\chi^{m}(p)\chi^{[m]}$, and the right map is given by
$\chi^{[m]}\mapsto \chi^{\iota^{\vee}m}$. 
Let $Y_{\sigma,Q,p} \simeq \Spec(K[{M_{Q,\sigma}}])$ be the closed subvariety of
$X_\sigma$ given by the left surjection. Then we have
induced maps
$$
X_{\sigma'}\twoheadlongrightarrow Y_{\sigma,Q,p} \hooklongrightarrow X_\sigma. 
$$
These maps are compatible with the restriction to open subsets and so
they glue together into a factorization of $\varphi_{p,\iota}$:
\begin{equation}
  \label{eq:31}
  X_{\Sigma_Q} \twoheadlongrightarrow Y_{\Sigma, Q,p} \hooklongrightarrow X_\Sigma.  
\end{equation}
Denote by $Y_{\Sigma, Q,p,0}$  the orbit of $p$ under the
action of the subtorus of $\T$ determined by $Q$
Then $Y_{\Sigma, Q,p}$ is the closure of $Y_{\Sigma, Q,p,0}$, while the toric
variety $X_{\Sigma_Q}$ is the normalization of $Y_{\Sigma, Q,p}$. 
When $p=x_{0}$, the subvariety $Y_{\Sigma ,Q,p}$ will be denoted by
$Y_{\Sigma ,Q}$ for short.  \nomenclature[aY18Qp]{$Y_{\Sigma
    ,Q,p}$}{translated toric subvariety}%
\nomenclature[aY18Q]{$Y_{\Sigma ,Q}$}{toric subvariety}%

Observe in the previous construction that, when $\sigma =0$, hence $\sigma
'=0$, then $M_{Q,0}=P_{0}$. Therefore the difference between
$X_{\Sigma_Q}$ and $Y_{\Sigma, Q,p}$ is
concentrated in the complement of the principal open subset:

\begin{prop}\label{prop:110} The normalization map $X_{\Sigma_Q}\to
  Y_{\Sigma, Q,p}$ induces an isomorphism $X_{\Sigma _{Q},0}\to Y_{\Sigma, Q,p,0}$.
\end{prop}

\begin{defn}\label{def:74}
  A subvariety $Y$  of $X_{\Sigma}$ will be
  called a \emph{toric subvariety} 
\index{toric subvariety}%
  (respectively, a \emph{translated toric subvariety})
\index{toric subvariety!translated}%
if it is of the form $Y_{\Sigma,Q}$ (respectively,
  $Y_{\Sigma ,Q,p}$) for a saturated sublattice $Q\subset N$ and
  $p\in X_{\Sigma,0}(K)$.
\end{defn}

A translated toric subvariety is not
necessarily a toric variety in the sense of Definition \ref{def:16},
since it may be non-normal.

\begin{exmpl} Let $N=\Z^{2}$, $(a,b)\in N$
  with $\gcd(a,b)=1$ and $\iota \colon Q\hookrightarrow N$ the
  saturated 
  sublattice generated by 
  $(a,b)$. Let $\Sigma $ be the fan in 
  $N_{\R}$ of Example \ref{exm:8}. Then $X_{\Sigma }=\P^{2}$ with
  projective coordinates $(x_{0}:x_{1}:x_{2})$. The fan
  induced in $Q_{\R}$ has three cones: $\Sigma
  _{Q}=\{\R_{\le 0},\{0\},\R_{\ge 0}\}$. Thus $X_{\Sigma
    _{Q}}=\P^{1}$. Let $p=(1:p_{1}:p_{2})$ be a point of $X_{\Sigma,0
  }(K)$. Then
  $\varphi_{p,\iota}((1:t))=(1:p_{1}t^{a}:p_{2}t^{b})$. Therefore, 
  $Y_{\Sigma, Q,p}$ is the curve of equation
  \begin{displaymath}
    p_{2}^{a}x_{0}^{a}x_{1}^{b}- p_{1}^{b}x_{0}^{b}x_{2}^{a}=0.
  \end{displaymath}
In general, this curve is not normal. Hence it is not a toric variety.
\end{exmpl}

We end this section by stating the compatibility between equivariant
morphisms and orbits. 

\begin{prop}\label{prop:112} With the notations of Theorem
  \ref{thm:25}. Let $\sigma _{1}\in \Sigma _{1}$ and let $\sigma
  _{2}\in \Sigma _{2}$ be the unique cone such that $H(\sigma
  _{1})\subset \sigma _{2}$ and $H(\sigma _{1})\cap \ri(\sigma
  _{2})\not = \emptyset$. Let $H'\colon N_{1}(\sigma _{1})\to
  N_{2}(\sigma _{2})$ be the linear map induced by $H$ and let $p'\in
  O(\sigma _{2})=\Spec(K[M_{2}(\sigma _{2})])$ be the point determined
  by the map $K[M_{2}(\sigma _{2})]\to K$, $\chi^{m}\mapsto
  \chi^{m}(p)$, $m\in M_{2}(\sigma _{2})$. Then there is a commutative
  diagram
  \begin{displaymath}
    \xymatrix{
      X_{\Sigma _{1}(\sigma
        _{1})}\ar[r]^{\varphi_{p',H'}}\ar[d]_{\iota _{\sigma _{1}}}
      & X_{\Sigma _{2}(\sigma
        _{2})}\ar[d]^{\iota _{\sigma _{2}}}\\
      X_{\Sigma_{1}}\ar[r]_{\varphi_{p,H}}
      & X_{\Sigma _{2}}.
    }
  \end{displaymath}  
\end{prop}

\section{$\T$-Cartier divisors and toric line bundles}
\label{Equivariant Cartier divisors}
\typeout{Equivariant Cartier divisors}

When studying toric varieties, the objects that admit a combinatorial
description are those that are compatible with the torus action. These
objects are enough for many purposes.  For instance, the divisor class
group of a toric variety is generated by invariant divisors.

 Let
$\pi _{2}\colon \T\times X\to X$ denote the projection to the second factor and
$\mu \colon \T\times X\to X$ the torus action. A Cartier divisor $D$ is invariant 
if and only if  
\nomenclature[aDcart1]{$D$}{Cartier divisor}%
$$\pi _{2}^{\ast}D=\mu^{\ast}D.$$

\begin{defn} \label{def:14}
Let  $X$ be a toric
variety with torus $\T$.
A Cartier divisor on $X$ is called a \emph{$\T$-Cartier
  divisor}
\index{T-Cartier divisor@$\T$-Cartier divisor!on a toric variety}%
if it is 
invariant under the action of $\T$ on $X$. 
\end{defn}

The combinatorial description of $\T$-Cartier divisors is done in
terms of virtual support functions.

\begin{defn}\label{def:50}
Let  $\Sigma$ be a fan in $N_\R$.
A 
function  $\Psi\colon |\Sigma |\to \R$ is called a 
\emph{virtual support function} on  $\Sigma$
\index{virtual support function}%
\index{support function!virtual|see{virtual support function}}%
if it is a conic $H$-lattice function (Definition \ref{def:31}). 
Alternatively, a virtual support function is a 
function  $\Psi\colon |\Sigma |\to \R$ such that, for every cone
$\sigma\in \Sigma$, there exists $m_{\sigma}\in M$ with $\Psi(u) =
\langle m_\sigma, 
u\rangle$ for  all $u\in \sigma$.
A set of functionals $\{m_{\sigma }\}_{\sigma\in\Sigma}$ as above
is called a  
set of \emph{defining vectors} of $\Psi $.
A concave virtual
support function on a complete fan will be called a \emph{support function}.
\index{support function!on a fan}%
\index{defining vectors!of a virtual support function}%
\end{defn}
\nomenclature[g2300]{$\Psi $}{virtual support function}%
\nomenclature[am18]{$m_{\sigma }$}{defining vector of a virtual support function}%

A support function on a complete fan in the sense of the previous definition, is the
support function of a polytope as in Example \ref{exm:7}:
it is the support function of the polytope
\begin{displaymath}
  \Conv(\{m_{\sigma}\}_{\sigma \in \Sigma ^{n}})\subset M_{\R},
\end{displaymath}
where $\Sigma ^{n}$ is the subset of $n$-dimensional cones of $\Sigma $. 

Two vectors $m,m'\in M$
define the same functional on a cone $\sigma$ if and only if $m-m'\in \sigma^\bot$.
Hence, for a given virtual support function 
$\Psi$ on a fan $\Sigma$, each defining vector 
$m_\sigma$ is unique up to the orthogonal space $\sigma^\bot$. In particular, $m_\sigma
\in M$ is uniquely defined for $\sigma\in\Sigma^n$ and, in the other
extreme, $m_0$ can be any point of~$M$.

Let $\{m_\sigma\}_{\sigma\in \Sigma }$ be a set of defining vectors of $\Psi$.  
These vectors have to satisfy the
compatibility condition
\begin{displaymath}
m_\sigma|_{\sigma\cap\sigma'}=m_{\sigma'}|_{\sigma\cap\sigma'} 
\mbox{ for all } \sigma,\sigma'\in\Sigma.
\end{displaymath}
On each open set $X_\sigma $, the vector $m_{\sigma }$ determines a
rational function $\chi^{-m_\sigma }$.  For $\sigma,
\sigma'\in\Sigma$, the above compatibility condition implies that
$\chi^{-m_\sigma}/\chi^{-m_{\sigma'}}$ is a regular function on the overlap
$X_\sigma\cap X_{\sigma'}=X_{\sigma\cap\sigma'}$ and so $\Psi $
determines a Cartier divisor on $X_\Sigma$:
\nomenclature[aDcart2]{$D_{\Psi }$}{$\T$-Cartier divisor on a toric variety}%
\begin{displaymath}
D_\Psi:=\left\{ (X_{\sigma },\chi^{-m_\sigma})\right \}_{\sigma \in \Sigma }.  
\end{displaymath}
This Cartier divisor does not depend on the choice of defining vectors
and it is a $\T$-Cartier divisor. All $\T$-Cartier divisors are
obtained in this way.

\begin{thm}\label{thm:3} Let $\Sigma $ be a fan in $N_{\R}$ and
  $X_{\Sigma }$ the corresponding toric variety. The
  correspondence  $\Psi\mapsto D_{\Psi}$ is a bijection
  between the set of virtual support functions on
  $\Sigma $ and the set of $\T$-Cartier divisors on $X_{\Sigma }$. Two
  Cartier divisors $D_{\Psi _{1}}$ and $D_{\Psi _{2}}$ are rationally equivalent
  if and only if the function $\Psi _{1}-\Psi _{2}$ is linear.
\end{thm}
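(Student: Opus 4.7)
The plan is to verify the three assertions --- injectivity, surjectivity, and the rational equivalence criterion --- separately. The map $\Psi \mapsto D_\Psi$ has already been constructed in the paragraph preceding the theorem via the local data $\{(X_\sigma,\chi^{-m_\sigma})\}_{\sigma\in\Sigma}$, so I only need to establish these properties.

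For injectivity, suppose $D_{\Psi_1}=D_{\Psi_2}$ and let $\{m_{\sigma,i}\}_{\sigma\in\Sigma}$ be defining vectors for $\Psi_i$. On each affine open $X_\sigma=\Spec(K[M_\sigma])$, the local equations $\chi^{-m_{\sigma,1}}$ and $\chi^{-m_{\sigma,2}}$ must differ by a unit of $\mathcal{O}_{X_\sigma}^\times$. The characters $\chi^m$ which are units in $K[M_\sigma]$ are those with $m\in M_\sigma\cap(-M_\sigma)=\sigma^\bot\cap M$, so $m_{\sigma,1}-m_{\sigma,2}\in\sigma^\bot$ and hence $\Psi_1|_\sigma=\Psi_2|_\sigma$. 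Since this holds for every $\sigma\in\Sigma$, we conclude $\Psi_1=\Psi_2$.

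For surjectivity, let $D$ be a $\T$-Cartier divisor on $X_\Sigma$. Restrict to each invariant affine open $X_\sigma$: the key step is to show that a $\T$-invariant Cartier divisor on $X_\sigma$ is always given by a character. The coordinate ring $K[M_\sigma]$ is graded by the semigroup $M_\sigma$, with grading induced by the $\T$-action; invariance of the divisor class translates into the fact that, modulo units, a local defining equation is homogeneous of some weight $-m_\sigma\in M$, uniquely determined modulo $\sigma^\bot\cap M$. Hence $D|_{X_\sigma}$ is defined by $\chi^{-m_\sigma}$ for some $m_\sigma\in M$. Compatibility on the overlaps $X_\sigma\cap X_{\sigma'}=X_{\sigma\cap\sigma'}$ forces $m_\sigma$ and $m_{\sigma'}$ to agree on $\sigma\cap\sigma'$, which is precisely the compatibility condition~\eqref{eq:21}. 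Thus $\{m_\sigma\}_{\sigma\in\Sigma}$ defines a virtual support function $\Psi$ on $\Sigma$ with $D_\Psi=D$.

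For the rational equivalence statement, since $D_{\Psi_1}-D_{\Psi_2}=D_{\Psi_1-\Psi_2}$, it is enough to show that $D_\Psi$ is principal if and only if $\Psi$ is linear. If $\Psi(u)=\langle m,u\rangle$ for some $m\in M$, choosing $m_\sigma=m$ for all $\sigma$ gives $D_\Psi=\div(\chi^{-m})$, principal. Conversely, suppose $D_\Psi=\div(f)$. Restricting to the principal open subset $X_0=\T$, where $D_\Psi|_{X_0}=0$ (taking $m_0=0$), we see that $f|_{X_0}$ is a unit in $K[M]$, hence of the form $c\chi^m$ with $c\in K^\times$ and $m\in M$. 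Then $D_\Psi=\div(\chi^m)=D_{\Psi'}$ where $\Psi'(u)=-\langle m,u\rangle$, and by the already established injectivity $\Psi=\Psi'$ is linear.

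The main obstacle is the surjectivity step --- specifically the claim that any $\T$-invariant Cartier divisor on the affine toric variety $X_\sigma$ is given by a character. This uses the $M_\sigma$-grading on $K[M_\sigma]$ in an essential way and is the point where the combinatorial structure is extracted from the geometric invariance. Once this is in hand, the other two steps are routine checks using the already-defined dictionary.
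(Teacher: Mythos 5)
Your overall structure is correct and follows exactly the approach of the source the paper cites (\cite[\S I.2, Theorem 9]{Kempfals:te}, essentially the same as \cite[\S 3.4]{Ful93}): injectivity via units of $K[M_\sigma]$, surjectivity by reducing to invariant affines, and the rational equivalence criterion by restricting to the principal orbit. The injectivity and rational equivalence parts are complete and correct.

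The surjectivity step, which you correctly identify as the main obstacle, is stated more smoothly than it is earned. You write that ``modulo units, a local defining equation is homogeneous of some weight $-m_\sigma$,'' but a Cartier divisor is only \emph{locally} principal in the Zariski topology, and a generic Zariski open of $X_\sigma$ is not $\T$-invariant, so the grading argument does not apply to a local defining equation on an arbitrary open subset. To speak of homogeneity you first need a $\T$-invariant trivializing open, and the only one available inside $X_\sigma$ is $X_\sigma$ itself. In other words, before the grading argument can be run, you need to know that $D|_{X_\sigma}$ is \emph{globally} principal on $X_\sigma$, i.e.\ that $\Pic(X_\sigma)=0$. This is a genuine theorem for affine toric varieties, not a formal consequence of the setup, and it is not mentioned in your argument. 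One standard way to fill the gap without invoking $\Pic(X_\sigma)=0$ as a black box: view $\mathcal{O}(D)|_{X_\sigma}$ as a $\T$-equivariant invertible fractional ideal $L \subset K(X)$, hence a graded $K[M_\sigma]$-module; when $\sigma$ is full-dimensional, the semigroup ring has a unique graded maximal ideal $\mathfrak{m}$, so $L/\mathfrak{m}L$ is one-dimensional concentrated in a single degree $m_0$, and graded Nakayama shows $L$ is generated by a homogeneous element $\chi^{m_0}$ (the non-full-dimensional case reduces to this by splitting off a torus factor). Alternatively, one can use the semi-invariance argument: $\mu^*f/\pi_2^*f$ lies in $\Gamma(\T\times X_\sigma,\mathcal{O}^\times)$, which is $K^\times\times M\times(\sigma^\perp\cap M)$; evaluating at the identity shows this is a pure character of $\T$, hence $f$ is semi-invariant and must be a scalar multiple of a character. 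Either way, some such step is needed before your grading remark is justified.
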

\begin{proof} This is proved in \cite[\S I.2, Theorem 9]{Kempfals:te}.
\end{proof}

We next recall the relationship between Cartier divisors and line
bundles in the toric case. 

\begin{defn} \label{def:71} Let $X$ be a toric variety and $L$ a line
  bundle on $X$. A \emph{toric structure}
  \index{toric structure on a line bundle}%
on $L$ is the choice of a
  nonzero vector $z$ on the fibre
  $L_{x_{0}}=x_{0}^{\ast}L$ over the distinguished point. A
  \emph{toric line bundle}
\index{toric line bundle!on a toric variety}%
is a pair $(L,z)$, where $L$ is a line bundle
  on $X$ and~$z$ is a toric structure on $L$.
  A rational section $s$ of a toric line bundle
  is a \emph{toric section}
\index{toric line bundle!on a toric variety!toric section of}%
  if it is regular and nowhere 
  vanishing on the principal open subset $X_{0}$ and $s(x_{0})=z$.
  In order not to burden the notation, a toric line bundle will
  generally be denoted by $L$, the vector $z$ being implicit.
\end{defn}

\begin{rem} \label{rem:3} Let $L$ be a toric line bundle and denote by
  $0$ its zero section. Let $V(L)=\bfSpec_{X}(\Sym(L^{\vee}))$ be the
  total space of $L$. Then $\T':=V(L|_{\T})\setminus 0(\T)$ admits a
  unique structure of split torus of dimension $n+1$ characterized by
  the properties
  \begin{enumerate}
  \item $z$ is the unit of $\T'$;
  \item the projection $\T'\to \T$ is a morphism of algebraic groups;
  \item every toric section $s$ induces a morphism of algebraic groups
    $\T\to \T'$.
  \end{enumerate}
  The terminology ``toric structure'', ``toric line bundle'' and ``toric section''
    comes from the fact that $V(L)$ 
     admits a unique structure of
    toric variety with torus $\T'$ satisfying the conditions:
    \begin{enumerate}
    \item $z$ is the distinguished point of the
    principal open subset;
  \item the structural morphism $V(L)\to X$ is a toric
    morphism;
  \item for each point $x\in X$ and vector $w\in L_{x}$, the morphism $\G_{m}\to V(L)$, given
    by scalar multiplication $\lambda \mapsto \lambda w$, is equivariant;
  \item every toric section $s$ 
    determines a toric morphism $U\to V(L)$, where $U$ is the 
    invariant open subset of regular points of $s$.
    \end{enumerate}
This can be shown using the construction of $V(L)$ as a toric
variety in \cite[Proposition~2.1]{Oda88}.
  \end{rem}

\begin{rem}
Every toric line bundle equipped with a toric section admits a
  unique structure of $\T$-equivariant line bundle such that the toric section
  becomes an invariant section. Conversely, every $\T$-equivariant
  toric line bundle 
  admits a unique invariant toric section. Thus, there is a natural
  bijection between the space of $\T$-equivariant toric line bundles
  and the space of toric line bundles with a toric section. In
  particular, every line bundle admits a structure of $\T$-equivariant line
  bundle. This is not the case for higher rank vector
  bundles on toric varieties, nor for line bundles on other spaces with group
  actions like, for instance, elliptic curves. 
\end{rem}

To a  Cartier divisor $D$, one associates an invertible
sheaf of fractional ideals of~$\mathcal{K}_{X}$, denoted 
$\mathcal{O}(D)$. When $D$ is a $\T$-Cartier divisor given by a set
of defining vectors $\{m_\sigma\}_{\sigma\in \Sigma }$, the sheaf 
$\mathcal{O}(D)$ 
\nomenclature[aOX4]{$\mathcal{O}(D)$}{line bundle associated to a Cartier divisor}%
can be realized as the subsheaf of
$\mathcal{O}_{X}$-modules generated, in each open subset
$X_{\sigma }$, by the rational function $\chi^{m_{\sigma }}$. The
section $1\in \mathcal{K}_{X}$ provides us with a distinguished
rational 
section $s_{D}$ such that $\div(s_{D})=D$. Since $D$ is supported on
the complement of the principal open subset, $s_{D}$ is regular and
nowhere vanishing on $X_{0}$. We set
$z=s_{D}(x_{0})$. This is a toric structure on $\mathcal{O}(D)$. From
now on, we will assume that $\mathcal{O}(D)$ is equipped with this toric
structure. Then $((\mathcal{O}(D),z),s_{D})$ is a 
toric line bundle with a toric section.

\begin{thm}\label{thm:27}
  Let $X$ be a toric variety with torus $\T$. Then
  the
  correspondence $D\mapsto ((\mathcal{O}(D),s_{D}(x_{0})),s_{D})$
  determines a bijection between
  the sets of
  \begin{enumerate}
  \item $\T$-Cartier divisors on $X$,
  \item isomorphism classes of pairs $(L,s)$ where $L$ is a toric line
    bundle and $s$ is a toric section.
  \end{enumerate}
\end{thm}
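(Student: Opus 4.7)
The plan is to construct an inverse to the map $D \mapsto ((\cO(D), s_D(x_0)), s_D)$, namely $(L, s) \mapsto \div(s)$, and to verify that the two compositions are the identity on isomorphism classes. Since $s$ is a rational section of $L$, the Cartier divisor $\div(s)$ is well-defined; the substantive points are to show it is $\T$-invariant and that this map does invert the original one.

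To prove that $\div(s)$ is a $\T$-Cartier divisor I would exhibit a virtual support function $\Psi$ on $\Sigma$ with $\div(s) = D_\Psi$; by Theorem \ref{thm:3} this suffices. For each $\sigma \in \Sigma$, pick a trivializing section $\tau_\sigma$ of $L|_{X_\sigma}$ and write $s|_{X_\sigma} = g_\sigma\, \tau_\sigma$ with $g_\sigma \in \mathcal{K}_{X}(X_\sigma)$. Since $s$ is regular and nowhere vanishing on the principal open subset $X_0 \subset X_\sigma$, the restriction $g_\sigma|_{X_0}$ is an invertible regular function on the torus $X_0$, and every such function has the form $\alpha_\sigma \chi^{-m_\sigma}$ for some $\alpha_\sigma \in K^{\times}$ and $m_\sigma \in M$. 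Rescaling $\tau_\sigma$ by $\alpha_\sigma$, one may assume $g_\sigma|_{X_0} = \chi^{-m_\sigma}$; since $X_0$ is dense in $X_\sigma$, this equality of rational functions holds on all of $X_\sigma$. On each overlap $X_{\sigma \cap \sigma'}$, comparing the two expressions $g_\sigma \tau_\sigma = g_{\sigma'} \tau_{\sigma'}$ yields $\tau_\sigma/\tau_{\sigma'} = \chi^{m_\sigma - m_{\sigma'}}$, which is a regular invertible function on $X_{\sigma \cap \sigma'}$ precisely when $m_\sigma - m_{\sigma'} \in (\sigma \cap \sigma')^{\bot}$. This is the compatibility condition \eqref{eq:21}, so the assignment $\Psi(u) := \langle m_\sigma, u\rangle$ for $u \in \sigma$ is a well-defined virtual support function on $\Sigma$, and by construction $\div(s) = D_\Psi$.

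It remains to verify the two compositions. Starting from a $\T$-Cartier divisor $D$, the equality $\div(s_D) = D$ holds by the very definition of $s_D$ in \eqref{eq:23}. Conversely, given $(L, s)$ with $D = \div(s)$, the assignment $t \mapsto (t/s)\, s_D$ on rational sections defines an $\mathcal{O}_{X}$-linear isomorphism $L \to \cO(D)$ carrying $s$ to $s_D$, and hence the toric structure $z = s(x_0)$ to $s_D(x_0)$; this provides the required isomorphism of pairs. The main technical nub is the normalization step above, which relies on the explicit description of invertible regular functions on the torus together with the density of $X_0$ in each $X_\sigma$; once this has been carried out, all remaining verifications follow from the definitions and from Theorem \ref{thm:3}.
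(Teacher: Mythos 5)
Your proof is correct and follows essentially the same route as the paper's: trivialize $L$ on each $X_\sigma$, use that nowhere-vanishing regular functions on the torus are constants times monomials, normalize away the constants, and read off the virtual support function from the exponents. The one thing left tacit is that a trivializing section $\tau_\sigma$ of $L|_{X_\sigma}$ actually exists — this is the fact that every line bundle on an affine toric variety is trivial, which the paper invokes explicitly and which you should state rather than assume.
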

\begin{proof}
  We have already shown that a $\T$-Cartier divisor produces a toric line
  bundle with a toric section. Let now $((L,z),s)$ be a toric line bundle
  equipped with a toric
  section and $\Sigma $ the fan that defines $X$. Since every line
  bundle on an affine toric variety 
  is trivial, for each $\sigma \in \Sigma $ we can find a section $s_\sigma $
  that generates $L$ on $X_{\sigma }$ and
  such that $s_{\sigma }(x_{0})=z$. Since $s$ is regular and
  nowhere vanishing on $X_{0}$ and $s(x_{0})=z$, we
  can find elements $m_{\sigma }\in M$ such 
  that $s=\chi^{-m_{\sigma }}s_{\sigma } $, because any
  regular nowhere vanishing function on a torus is a constant times a
  monomial. The elements
  $m_{\sigma }$ glue together to define a virtual support function
  $\Psi$ on $\Sigma $ that does not depend on the chosen
  trivialization. It is easy to see that the correspondence 
  $(L,s)\mapsto D_{\Psi }$ is the inverse of the previous one, which
  proves the theorem.     
\end{proof}

Thanks to this result and Theorem \ref{thm:3}, we can freely move
  between the languages of virtual support functions, $\T$-Cartier
  divisors, and toric line 
  bundles with a toric section.

\begin{notn}\label{def:73}
  Let $\Psi $ be a virtual support function, we write
  $((L_{\Psi },z_{\Psi }),s_{\Psi })$ for the toric line bundle with
  toric section 
  associated to the $\T$-Cartier divisor $D_{\Psi }$ by Theorem
  \ref{thm:27}. When we do not need to make explicit the vector $z_{\Psi }$,
  we will simply write $(L_{\Psi },s_{\Psi })$.
  \nomenclature[aL23]{$L_{\Psi }$}{toric line bundle}%
  \nomenclature[asa23]{$s_{\Psi }$}{toric section}%
  \nomenclature[az23]{$z_{\Psi }$}{toric structure}%
  Conversely, given a toric line bundle $L$ with toric section $s$ we will
  denote $\Psi_{L,s}$ the corresponding virtual support function.
\end{notn}

We next recall the relationship between Cartier divisors and Weil
divisors in the toric case.
\begin{defn}
  \label{def:21}
A \emph{$\T$-Weil divisor} on a toric variety  $X$
\index{T-Weil divisor@$\T$-Weil divisor}%
is a finite formal linear combination of hypersurfaces of $X$ which are
invariant under the torus action.
\end{defn}

The invariant hypersurfaces of a toric variety are
particular cases of the toric subvarieties 
considered in the previous section: they are the varieties of the form
$V(\tau)$ for $\tau\in \Sigma^{1}$ a ray.
Hence, a $\T$-Weil divisor is a finite formal linear combination
of subvarieties of the form $V(\tau)$ for $\tau\in \Sigma^{1}$.

There is a correspondence that to each Cartier divisor on $X$ associates a
Weil divisor. To the 
$\T$-Cartier divisor $D_{\Psi}$, it corresponds the $\T$-Weil divisor
\begin{equation}
  \label{eq:45}
   [D_\Psi]=\sum_{\tau\in \Sigma^1} -\Psi(v_\tau) V(\tau),
\end{equation}
where $v_\tau\in N$ is the smallest nonzero lattice point in $\tau$.
\nomenclature[sbracket]{$[D]$}{Weil divisor associated to a Cartier
  divisor}%
\nomenclature[avaaalp1]{$v_{\tau }$}{smallest nonzero lattice point in a
  ray}%

\begin{exmpl}\label{exm:9} We continue with the notation of examples
  \ref{exm:13} and \ref{exm:10}. The fan~$\Sigma _{\Delta ^{n}}$ has
  $n+1$ rays.  For each $i=0,\dots,n$, the closure of the orbit
  corresponding to the ray generated by the vector $e_{i}$ is the
standard  hyperplane of $\P^{n}$ 
  \nomenclature[aHo]{$H_{i}$}{standard hyperplane of $\P^{n}$}%
  $$
  H_{i}:=V(\langle e_{i}\rangle)=\{(p_{0}:\dots:p_{n})\in \P^{n}\mid p_{i}=0\}.
  $$  
  The function $\Psi _{\Delta ^{n}}$ is a support function on 
  $\Sigma_{\Delta ^{n}}$ and the $\T$-Weil divisor associated to 
  $D_{\Psi _{\Delta ^{n}}}$ is $[D_{\Psi _{\Delta ^{n}}}]=H_{0}$.
\end{exmpl}

For a toric variety $X_{\Sigma }$ of dimension $n$, we denote by
$\Div_{\T}(X_{\Sigma })$ its group of $\T$-Cartier divisors, and by
$Z_{n-1}^{\T}(X_{\Sigma })$ its group of $\T$-Weil divisors.  
\nomenclature[aDivT]{$\Div_{\T}(X_{\Sigma })$}{group of $\T$-Cartier divisors}%
\nomenclature[aZT]{$Z_{n-1}^{\T}(X_{\Sigma })$}{group of $\T$-Weil divisors}%
Recall that $\Pic(X_{\Sigma })$, the {Picard group} of $X_{\Sigma }$,
\index{Picard group}%
\nomenclature[apic]{$\Pic(X)$}{Picard group} is the group of
isomorphism classes of line bundles. Let $A_{n-1}(X_{\Sigma })$
denote the {Chow group} of cycles of dimension $n-1$.  
\index{Chow group}%
\nomenclature[aa]{$A_{d}(X)$}{Chow group of $d$-dimensional
  cycles}%
The following result shows that these
groups can be computed in terms of invariant divisors.

\begin{thm}\label{thm:9}
  Let $\Sigma $ be a fan in $N_{\R}$ that is not contained in
  any hyperplane. Then there is a commutative diagram with exact rows
  \begin{displaymath}
    \xymatrix{
      0 \ar[r] & M \ar[r] \ar@{=}[d] & \Div_{\T}(X_{\Sigma }) \ar[r]
      \ar@{^{(}->}[d] & \Pic(X_{\Sigma })  \ar[r]
      \ar@{^{(}->}[d] & 0 \\
      0 \ar[r] & M \ar[r] & Z_{n-1}^{\T}(X_{\Sigma }) \ar[r]
      & A_{n-1}(X_{\Sigma })  \ar[r]
      & 0
    }.
  \end{displaymath}
\end{thm}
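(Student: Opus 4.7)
The plan is to verify each row separately, then compatibility of the vertical maps, using the dictionary between $\T$-Cartier divisors and virtual support functions already established in Theorems \ref{thm:3} and \ref{thm:27} as the main engine.

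For the top row, the map $M\to \Div_{\T}(X_{\Sigma})$ sends $m\mapsto \div(\chi^{m})$; in terms of virtual support functions this is $m\mapsto -m|_{|\Sigma|}$, viewed as a linear (hence concave) function on $|\Sigma|$. Its injectivity is precisely the hypothesis that $|\Sigma|$ is not contained in a hyperplane: a linear form on $N_{\R}$ that vanishes on $|\Sigma|$ must be zero. The map $\Div_{\T}(X_{\Sigma})\to \Pic(X_{\Sigma})$ is $D\mapsto [\mathcal{O}(D)]$; its surjectivity is the statement (mentioned before Theorem \ref{thm:27} and deduced from the normality/toric structure, see \cite[\S 3.4]{Ful93}) that every line bundle on $X_{\Sigma}$ is isomorphic to $\mathcal{O}(D_{\Psi})$ for some virtual support function $\Psi$. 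Exactness in the middle follows from Theorem \ref{thm:3}: $D_{\Psi_{1}}$ and $D_{\Psi_{2}}$ are rationally equivalent if and only if $\Psi_{1}-\Psi_{2}$ is (the restriction of) a linear function, i.e.\ lies in the image of $M$; equivalently $\mathcal{O}(D_{\Psi})$ is trivial iff $\Psi=-m|_{|\Sigma|}$ for some $m\in M$.

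For the bottom row, the map $M\to Z_{n-1}^{\T}(X_{\Sigma})$ sends $m$ to the principal Weil divisor $\div(\chi^{m})=\sum_{\tau\in \Sigma^{1}}\langle m,v_{\tau}\rangle V(\tau)$, which is the Weil incarnation of~\eqref{eq:45} applied to the linear support function $\Psi=-m$. Injectivity again uses the hypothesis on $|\Sigma|$: since the rays of $\Sigma$ span $N_{\R}$, a vector $m\in M$ that pairs to zero with every $v_{\tau}$ vanishes. Surjectivity of $Z_{n-1}^{\T}(X_{\Sigma})\to A_{n-1}(X_{\Sigma})$ is the classical fact that invariant prime divisors generate the Chow group of a toric variety: one stratifies $X_{\Sigma}$ by torus orbits and uses the right-exact localization sequence together with $A_{n-1}(\T')=0$ for any torus $\T'$ of positive dimension, which forces every $(n-1)$-cycle to be rationally equivalent to one supported on the complement of~$X_{\Sigma,0}$, a finite union of $V(\tau)$ with $\tau\in \Sigma^{1}$. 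Exactness at $Z_{n-1}^{\T}(X_{\Sigma})$ is then obtained by combining this with the exactness of the top row and the injectivity of the right-hand vertical map discussed below.

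For the vertical maps, the identity on $M$ is compatible with the two horizontal maps by the explicit formula~\eqref{eq:45} with $\Psi=-m$. The middle vertical map $\Div_{\T}(X_{\Sigma})\to Z_{n-1}^{\T}(X_{\Sigma})$, $D_{\Psi}\mapsto [D_{\Psi}]=\sum_{\tau}-\Psi(v_{\tau})V(\tau)$, is injective because if all values $\Psi(v_{\tau})$ vanish then each defining vector $m_{\sigma}$ vanishes on every ray of $\sigma$, hence on $\sigma$, so $\Psi\equiv 0$; this uses only strong convexity of the cones of $\Sigma$. A diagram chase (or quotienting) then produces the right-hand vertical map $\Pic(X_{\Sigma})\to A_{n-1}(X_{\Sigma})$ and its injectivity follows from the injectivity of the middle vertical map together with the fact that the two left squares commute.

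The main obstacle, and really the only nontrivial point, is the surjectivity of $Z_{n-1}^{\T}(X_{\Sigma})\to A_{n-1}(X_{\Sigma})$: one must show that every $(n-1)$-cycle on $X_{\Sigma}$ is rationally equivalent to an invariant one. This is where the orbit-cone correspondence of \S\ref{Orbits} is essential, since it supplies the stratification $X_{\Sigma}=\bigsqcup_{\sigma\in\Sigma}O(\sigma)$ needed to run the localization argument; the remaining verifications are purely formal consequences of the combinatorial description of $\T$-Cartier divisors.
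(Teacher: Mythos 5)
Your proposal gives a genuine argument where the paper simply cites Fulton \cite[\S3.4]{Ful93}, so the comparison is with the standard literature rather than with the paper's text. Most of what you write is correct and faithful to that standard argument: the injectivity of $M\to\Div_{\T}(X_\Sigma)$ and of $M\to Z_{n-1}^{\T}(X_\Sigma)$ via the span-the-rays hypothesis, the surjectivity of $\Div_{\T}\to\Pic$ (every line bundle is isomorphic to some $\cO(D_\Psi)$), exactness at $\Div_{\T}$ via Theorem~\ref{thm:3}, the surjectivity of $Z_{n-1}^{\T}\to A_{n-1}$ via the localization sequence for $X_{\Sigma,0}\simeq\T$, and the combinatorial injectivity of $\Div_{\T}\hookrightarrow Z_{n-1}^{\T}$ are all handled correctly.

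The gap is in how you try to get the two remaining assertions, namely exactness at $Z_{n-1}^{\T}(X_\Sigma)$ and injectivity of $\Pic(X_\Sigma)\to A_{n-1}(X_\Sigma)$: as written, each is derived from the other, so the argument is circular. Concretely, you obtain injectivity of $\Pic\to A_{n-1}$ by ``quotienting'' from the injectivity of $\Div_{\T}\hookrightarrow Z_{n-1}^{\T}$ --- but the identifications $\Pic=\Div_{\T}/M$ and $A_{n-1}=Z_{n-1}^{\T}/M$ that make this a statement about quotients are precisely the exactness of the two rows, and the second of these is what you are still trying to prove; and then you invoke this injectivity, together with the top row, to get exactness of the bottom row. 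To break the loop you need an independent input. The cleanest one, which is what Fulton actually does, is direct: if $D\in Z_{n-1}^{\T}(X_\Sigma)$ satisfies $D\sim 0$, write $D=\div(f)$ for a rational function $f$; since $D$ is supported on $X_\Sigma\setminus X_{\Sigma,0}$, the restriction $f|_{X_{\Sigma,0}}$ has trivial divisor on $\T$, so it is a unit of $K[M]$, i.e.\ $f|_{\T}=c\,\chi^{m}$ for some $c\in K^{\times}$ and $m\in M$; hence $D=\div(\chi^{m})$. Alternatively, your diagram chase can be salvaged, but only after supplying two facts that you do not state: (i)~$\Pic(X)\to Cl(X)$ is injective for any normal integral variety, a standard fact that does not require the toric diagram at all; and (ii)~a principal $T$-Weil divisor on a normal toric variety is automatically $T$-Cartier (it is Cartier because it is principal, and the Cartier representative is $T$-invariant because on a normal variety a Cartier divisor is determined by its Weil divisor, so uniqueness forces $t^{*}E=E$ for all $t\in\T$). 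With (i) and (ii) in hand, the lift $\tilde D$ of $D$ to $\Div_{\T}$ exists and the rest of your chase goes through; without them, the step from ``$D\sim 0$ in $Z_{n-1}^{\T}$'' to ``$D\in\operatorname{im}(M)$'' is not justified.
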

\begin{proof}
This is the first proposition in \cite[\S 3.4]{Ful93}.
\end{proof}
\begin{rem}
  In the previous theorem, the hypothesis that $\Sigma $ is not
  contained in any hyperplane is only needed for the injectivity of
  the second arrow in each row of the diagram.
\end{rem}

In view of Theorem~\ref{thm:27}, the upper exact sequence of the
diagram in Theorem~\ref{thm:9} can be interpreted as follows.

\begin{cor} \label{cor:18} Let $X$ be a toric variety with torus $\T$.
  \begin{enumerate}
  \item \label{item:101} Every toric line bundle $L$ on $X$ admits a toric
    section. Moreover, if $s$ and~$s'$ are two toric
    sections, then there exists $m\in M$ such that $s'=\chi^{m} s$.
  \item \label{item:102}   If the fan $\Sigma $ that defines $X$ is not contained in any
  hyperplane, and $L$ and $L'$ are toric line bundles on
  $X$, then there is at most one isomorphism between them.
  \end{enumerate}
\end{cor}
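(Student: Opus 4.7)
The plan is to deduce both statements from Theorem~\ref{thm:27} and Theorem~\ref{thm:9} combined with the standard description of units on toric varieties. Recall that any regular nowhere vanishing function on the split torus $\T = X_{0}$ has the form $c\,\chi^{m}$ for some $c\in K^{\times}$ and $m\in M$, since $K[M]^{\times}=K^{\times}\times M$.

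For the first part of \eqref{item:101}, I start with a toric line bundle $(L,z)$. By the surjectivity of $\Div_{\T}(X_{\Sigma})\to\Pic(X_{\Sigma})$ in Theorem~\ref{thm:9}, there is a $\T$-Cartier divisor $D=D_{\Psi}$ and an isomorphism of line bundles $\phi\colon \mathcal{O}(D)\to L$. The canonical section $s_{D}$ of $\mathcal{O}(D)$ is regular and nowhere vanishing on $X_{0}$, so $s_{D}(x_{0})\neq 0$, and thus $\phi(s_{D}(x_{0}))=\lambda z$ for some $\lambda\in K^{\times}$. Replacing $\phi$ by $\lambda^{-1}\phi$ we may assume $\phi(s_{D}(x_{0}))=z$; then $s:=\phi(s_{D})$ is regular and nowhere vanishing on $X_{0}$ with $s(x_{0})=z$, hence a toric section of $L$. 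For the uniqueness-up-to-characters part of \eqref{item:101}, if $s$ and $s'$ are two toric sections of $L$, their ratio $f=s'/s$ is a rational function regular and nowhere vanishing on $X_{0}$, so $f=c\,\chi^{m}$ for some $c\in K^{\times}$ and $m\in M$. Evaluating at $x_{0}$ and using $s(x_{0})=s'(x_{0})=z$ together with $\chi^{m}(x_{0})=1$ forces $c=1$, hence $s'=\chi^{m}s$.

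For \eqref{item:102}, suppose $\varphi_{1},\varphi_{2}\colon (L,z)\to (L',z')$ are two isomorphisms of toric line bundles. Then $\psi:=\varphi_{1}\circ\varphi_{2}^{-1}$ is an automorphism of $L'$ satisfying $\psi(z')=z'$. Such an automorphism is multiplication by a global unit $f\in\mathcal{O}_{X_{\Sigma}}^{\times}(X_{\Sigma})$. The restriction of $f$ to $X_{0}$ has the form $c\,\chi^{m}$, and the fact that $f$ extends to a regular nowhere vanishing function on every affine chart $X_{\sigma}$ forces $m\in\sigma^{\vee}\cap(-\sigma^{\vee})=\sigma^{\perp}$ for every $\sigma\in\Sigma$, i.e.\ $m\in|\Sigma|^{\perp}$. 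The assumption that $\Sigma$ is not contained in any hyperplane means that $|\Sigma|$ spans $N_{\R}$, and so $m=0$ and $f=c$ is a constant. Evaluating $\psi(z')=z'$ gives $c=1$, hence $\varphi_{1}=\varphi_{2}$. Equivalently, this last step can be read off from the injectivity of $M\to\Div_{\T}(X_{\Sigma})$ in Theorem~\ref{thm:9}, which holds under precisely this hypothesis on $\Sigma$.

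The only point that requires a little care is the rescaling argument in~\eqref{item:101}: one must check that after rescaling, the isomorphism $\phi$ continues to produce, on the nose, a section whose value at $x_{0}$ coincides with the prescribed toric structure $z$, rather than just with some nonzero multiple. No serious obstacle is expected.
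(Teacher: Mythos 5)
Your proof is correct and takes the same route the authors intended, namely combining the exact sequence of Theorem~\ref{thm:9} with the classification in Theorem~\ref{thm:27}; since the paper's proof of the corollary consists only of a pointer to those two theorems, your argument is essentially a careful unpacking of it (including the same ``units on a split torus are $c\,\chi^{m}$'' fact that already appears in the proof of Theorem~\ref{thm:27}). The rescaling step you flag at the end has in fact already been handled correctly in your existence argument, so no residual gap remains.
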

\begin{proof}
  This follows from theorems \ref{thm:9} and \ref{thm:27}.
\end{proof}

We next study the intersection of a $\T$-Cartier divisor with the
closure of an orbit.  Let $\Sigma $ be a fan in $N_{\R}$ and  $\Psi $
the virtual support function on
$\Sigma$ given by the set of defining vectors $\{m_{\tau}\}_{\tau\in
  \Sigma}$.  Let $\sigma$ be a cone of $\Sigma $ and $\iota _{\sigma }\colon
V(\sigma )\hookrightarrow X_{\Sigma }$ the associated closed
immersion. We consider first the case when $\Psi|_{\sigma}=0$. 
Let $\tau
\supset \sigma $ be another cone of $\Sigma $. For vectors $u\in \tau$
and $v\in \R\sigma$ such that $u+v\in 
\tau$, the condition $\Psi|_{\sigma}=0$ implies 
$$
\Psi (u+v)= \langle m_\tau, u+v\rangle =
\langle m_\tau, u\rangle = \Psi(u)
$$
because $m_\tau\big|_{\R\sigma}=0$.
Hence, we can define a function
  \nomenclature[g2303]{$\Psi(\sigma)$}{virtual support function
    induced on a quotient}%
\begin{equation}
  \label{eq:4}
\Psi(\sigma)\colon N(\sigma)_{\R}\longrightarrow \R , \quad
u+\R\sigma\longmapsto  \Psi 
(u+v)   
\end{equation}
for any $v\in \R\sigma$ such that $u+v\in \bigcup_{\tau\supset \sigma}
\tau$. 

It is easy to produce
a set of defining vectors of $\Psi (\sigma )$. For
each cone $\tau \supset \sigma $ we denote by
$\ov \tau =\pi _{\sigma}(\tau )$ the corresponding cone in $\Sigma
(\sigma)$. Since $m_\tau\big|_{\R\sigma}=0$, then $m_{\tau }\in
M(\sigma )=M\cap \sigma ^{\perp}$. We set $m_{\ov \tau
}=m_{\tau }\in M(\sigma )$. 

\begin{prop}\label{prop:72}
Let notation be as above. If $\Psi|_{\sigma }=0$, then $D_{\Psi }$
  intersects $V(\sigma )$ properly and $\iota_\sigma^{\ast}D_{\Psi
  }=D_{\Psi (\sigma )}$.  Moreover, $\{m_{\ov \tau }\}_{\ov \tau \in
    \Sigma (\sigma )}$ is a set of defining vectors of~$\Psi (\sigma
  )$.
\end{prop}
\begin{proof}
   The $\T$-Cartier divisor $D_{\Psi }$ is given by $\{(X_{\tau },\chi^{-m_{\tau
     }})\}_{\tau\in \Sigma}$. If $m_{\sigma }=0$, the local equation of $D_{\Psi }$ in
   $X_{\sigma }$ is $\chi^{0}=1$. Therefore, the orbit $O(\sigma )$
   does not meet the support of $D_{\Psi }$. Hence $V(\sigma )$ and
   $D_{\Psi }$ intersect properly.

   To see that $\{m_{\ov \tau
   }\}_{\ov \tau \in \Sigma (\sigma )}$ is a set of defining vectors, we
   pick a point $\ov u\in \ov \tau $ and we choose $u\in \tau $ such
   that $\pi _{\sigma }(u)=\ov u$. Then
   \begin{displaymath}
     \Psi (\sigma )(\ov u)=\Psi (u)=m_{\tau }(u)=m_{\ov \tau }(\ov u),
   \end{displaymath}
   which proves the claim. Now, using
   the characterization of $\Psi (\sigma )$ in terms of defining
   vectors, we have
   \begin{displaymath}
     \iota_\sigma^{\ast}D_{\Psi}=\{(X_{\tau }\cap V(\sigma ),\chi ^{-m_{\tau
       }}\mid_{X_{\tau }\cap V(\sigma )})\}_{\ov \tau}=
     \{(X_{\ov \tau },\chi ^{-m_{\ov \tau
       }})\}_{\ov \tau}=D_{\Psi (\sigma )}.
   \end{displaymath}
\end{proof}

When $\Psi|_{\sigma}\not=0$, the cycles $D_{\Psi}$ and $V(\sigma)$ do not
intersect properly, and we can only intersect $D_{\Psi}$ with $V(\sigma)$
up to rational equivalence. To this end, we choose any $m_{\sigma}'$
such that $\Psi(u) =\langle m'_{\sigma},u\rangle$ for every $u\in
\sigma$.  Then the divisor $D_{\Psi -m_{\sigma }'}$ is rationally
equivalent to $D_{\Psi }$ and $\Psi -m_{\sigma }'|_{\sigma}=0$. 
By the above result, this divisor intersects  $V(\sigma )$
properly, and its restriction to $V(\sigma )$ is given by the
virtual support function $(\Psi -m_{\sigma }')(\sigma)$.

\begin{exmpl}
  \label{exm:20}
We can use the above description of the restriction of
a line bundle to an orbit to compute the degree of an orbit of
dimension one. Let $\Sigma $ be a complete fan and
$\tau \in \Sigma^{n-1} $. Hence $V(\tau )$ is a toric curve. Let $\sigma _{1}$ and $\sigma _{2}$
be the two $n$-dimensional cones that have $\tau $ as a common
face. Let $\Psi $ be a virtual support function.  Choose
$v\in \sigma _{1}$ such that $\pi _{\tau }(v)$ is a generator of the
lattice $N(\tau )$. Then, by \eqref{eq:45} and \eqref{eq:4},
\begin{displaymath}
  \deg_{D_{\Psi }}(V(\tau ))=
  \deg(\iota_{\tau}^{*}D_{\Psi})=m_{\sigma
    _{2}}(v)-m_{\sigma _{1}}(v). 
\end{displaymath}
\end{exmpl}

Let now $(L,z)$ be a toric line bundle on $X_{\Sigma }$ and
$\sigma\in \Sigma$. The line bundle $\iota ^{\ast}_{\sigma }L$ on
$V(\sigma )$ has an 
induced toric structure.
Let $s$ be a
toric section of~$L$ that is regular and nowhere vanishing on
$X_{\sigma }$, and set $z_{\sigma }=s(x_{\sigma })\in L_{x_{\sigma
  }}\setminus \{0\}$. If~$s'$ is another such section, then
$s'=\chi^{m}s $ for an $m\in M$ such that $m|_{\sigma }=0$, by
Corollary~\ref{cor:18}. Therefore $s'(x_{\sigma })=s(x_{\sigma
})$. Hence, $z_{\sigma }$ does not depend on the choice of section and
$(\iota ^{\ast}_{\sigma }L, z_{\sigma })$ is the induced toric line
bundle. The following result follows easily from the constructions.

\begin{prop} \label{prop:81} Let $(L,z)$ be a toric line bundle on
  $X_{\Sigma }$ and $\sigma \in \Sigma $. Let $\Psi $ be a virtual
  support function such that $\Psi
  |_{\sigma }=0$ and $(L,z)\simeq (L_{\Psi},z_{\Psi })$ as toric line
  bundles. Then $\iota ^{\ast}_{\sigma }(L,z)\simeq
  (L_{\Psi(\sigma)},z_{\Psi(\sigma )})$.
\end{prop}

We next study the inverse image of a $\T$-Cartier divisor with respect
to equivariant morphisms as those in Theorem \ref{thm:25}. Let
$N_{i}$, $\Sigma _{i}$, $i=1,2$, and let $H\colon N_{1}\to N_{2}$ and $p\in
X_{\Sigma _{2},0}(K)$ be as in Theorem \ref{thm:25}. Let
$\varphi_{p,H}$ be the associated equivariant morphism, $\Psi $ a
virtual support function on $\Sigma_{2} $ and 
$\{m'_{\tau'}\}_{\tau'\in \Sigma _{2}}$ a set of defining vectors
of $\Psi $.  For each cone $\tau \in\Sigma _{1}$ we choose a cone
$\tau' \in \Sigma _{2}$ such that $H(\tau )\subset \tau '$ and we
write $m_{\tau }=H^{\vee}(m'_{ \tau '})$.  The following result
follows easily from the definitions

\begin{prop}\label{prop:70}
  The divisor $D_{\Psi }$ intersects properly the image of
  $\varphi_{p,H}$. The function 
  $\Psi\circ H$ is a virtual support function on $\Sigma _{1}$ and
\begin{displaymath}
  \varphi^{\ast}_{p,H}D_{\Psi}=D_{\Psi \circ H}.
\end{displaymath}  
Moreover, $\{m_{\tau }\}_{\tau \in \Sigma _{1}}$ is a set of defining
vectors of $\Psi\circ H$.
\end{prop}

\begin{rem}\label{rem:19}
  If $L$ is a toric line bundle on $X_{\Sigma _{2}}$ and $\varphi$ is
  a toric morphism, then $\varphi^{\ast}L$ has an induced toric
  structure.
\index{toric structure on a line bundle}%
Namely, $\varphi^{\ast}(L,z) = 
  (\varphi^{\ast}L,\varphi^{\ast}z)$.  By contrast, if
  $\varphi\colon X_{\Sigma _{1}}\to X_{\Sigma _{2}}$ is a general
  equivariant morphism that meets the principal open subset, there is no
  natural toric structure on $\varphi^{\ast}L$, because
  the image of the distinguished point $x_{1,0}$ does not need to
  agree with $x_{2,0}$.  If $(L,s)$ is a toric line bundle equipped with
  a toric section, then we set $\varphi^{\ast}(L,s)=
  ((\varphi^{\ast}L,(\varphi^{\ast}s)(x_{1,0})),\varphi^{\ast}s)$. However,
  the underlying toric bundle of $\varphi^{\ast}(L,s)$ depends on
  the choice of the toric section.
\end{rem}

\section{Positivity properties of $\T$-Cartier divisors}
\label{pos_divisors}

Let $\Sigma$ be a fan in $N_{\R}$ and 
$\Psi$ a virtual support function on $\Sigma$.
In this section, we will assume that $\Sigma $ is complete or,
equivalently, that the variety $X_\Sigma $ is proper. 

Many geometric properties of the pair $(X_\Sigma, D_\Psi)$
can be read directly from $\Psi$. For instance, the following result
relates the concavity of the virtual support function $\Psi $ with the
positivity of $D_{\Psi }$. 

\begin{prop}\label{prop:99} Let $\mathcal{O}(D_\Psi)$ be the line
  bundle associated to $D_{\Psi }$.
  \begin{enumerate}
  \item \label{item:56} $\mathcal{O}(D_\Psi)$ is generated by global 
    sections if and only if $\Psi$ is concave.
  \item \label{item:62}$\mathcal{O}(D_\Psi)$ is ample if and only if
    $\Psi$ is strictly concave on $\Sigma $.   
  \end{enumerate}
\end{prop}
\begin{proof}
  This is classical, see for instance \cite[\S 3.4]{Ful93}.
\end{proof}
 
In the latter case, the fan $\Sigma $ agrees with the polyhedral
complex $\Pi (\Psi )$ (Definition~\ref{def:29}) and the pair
$(X_\Sigma, D_\Psi)$ is completely determined by $\Psi $. Thus, the
variety $X_{\Sigma }$ is projective if and only if the fan $\Sigma $
is complete and regular (Definition \ref{def:54}). 


We associate to $\Psi$ the subset of $M_\R$ 
\nomenclature[g0423]{$\Delta_\Psi$}{polytope associated to a virtual
  support function}%
$$
\Delta_\Psi=\{x\in M_\R\mid  \langle x,u\rangle \ge \Psi(u) \mbox{ for
  all } u\in N_\R\}.
$$
This set is either empty or a lattice polytope.
\index{polytope!associated to a virtual support function}%
\index{toric variety!associated to a polytope}%
When $\cO(D_{\Psi })$ is generated by
global sections, the polytope $\Delta
_{\Psi }$ agrees with $\Stab(\Psi )$, and $\Psi$ is the support
function of $\Delta
_{\Psi }$.

The polytope $\Delta _{\Psi }$ encodes a lot of information about
the pair $(X_\Sigma ,D_\Psi )$. 
For instance,
we can read from it the space of global sections of
$\mathcal{O}(D_{\Psi })$.
\begin{prop}
A monomial rational section
$\chi^{m}\in \mathcal{K}_{X_{\Sigma }}$, $m\in M$, is a regular global
section of $\mathcal{O}(D_{\Psi })$ if and only if $m\in \Delta _{\Psi
}$. Moreover, the set $\{\chi^{m}\}_{m\in M\cap \Delta _{\Psi }}$ is a
$K$-basis of the space of global sections $\Gamma
(X_{\Sigma},\mathcal{O}(D_{\Psi }))$.     
\end{prop}
\begin{proof}
  See for instance \cite[\S 3.4]{Ful93}.
\end{proof}

Also the intersection number between toric divisors can be read off
from the corresponding polytopes. 

\begin{prop}
  \label{prop:46}
Let $D_{\Psi_{i}}$, $i=1,\dots,n$, be $\T$-Cartier divisors on
$X_{\Sigma}$ generated by their global sections. Then 
\begin{equation}
  \label{eq:65}
  (D_{\Psi_{1}}\cdot \dots\cdot D_{\Psi_{n}})=
  \MV_{M}(\Delta_{\Psi_{1}},\dots,\Delta_{\Psi_{n}}).
\end{equation}
where $ \MV_{M}$ denotes the mixed volume function associated to the
Haar measure $\Vol_{M}$ on $M_\R$ (Definition \ref{def:41}).  In
particular, for a $\T$-Cartier divisor  $D_{\Psi }$ generated by its
global sections,
\begin{equation}
  \label{eq:3}
 \deg_{D_{\Psi }}(X_\Sigma)=(D_{\Psi }^n)=n!\Vol_{M}(\Delta_\Psi ).  
\end{equation} 
\end{prop}

\begin{proof}
  This follows from \cite[Proposition~2.10]{Oda88}.
\end{proof}

\begin{rem}\label{rem:21}
  The intersection multiplicity and the degree in the above
  proposition only depend on the isomorphism class of the line bundles
  $\cO(D_{\Psi _{i}})$ and not on the $\T$-Cartier divisors
  themselves. It is easy to check directly that the right-hand sides of
  \eqref{eq:65} and 
  \eqref{eq:3} only depend on the isomorphism classes of the line
  bundles. In  
  fact, let $L$ be a toric line bundle generated by global sections
  and $s_{1}$, $s_{2}$ two toric 
  sections.  For $i=1,2$, set $D_{i}=\div(s_{i})$ and let $\Psi _{i}$
  be the corresponding support function and $\Delta _{i}$ the
  associated polytope. Then $s_{2}=\chi^{m}s_{1}$
  for some $m\in M$. Thus $\Psi _{2}=\Psi _{1}-m$ and $\Delta _{2}=\Delta
  _{1}-m$. Since the volume and the mixed volume are invariant under
  translation, we see that these formulae do not depend on the choice
  of sections. 
\end{rem}

\begin{defn}
  A \emph{polarized toric variety}
\index{toric variety!polarized}%
  is a pair $(X_{\Sigma},D_{\Psi
  })$, where $X_{\Sigma }$ is a toric variety and $D_{\Psi }$ is an
  ample $\T$-Cartier divisor.
\end{defn}

Polarized toric varieties can be classified in terms of their polytopes. 

\begin{thm} \label{thm:21}\ 
The correspondence $(X_{\Sigma},D_{\Psi})\mapsto
  \Delta_{\Psi}$ is a bijection between the set of 
  polarized 
  toric varieties
  and the set of lattice
  polytopes of 
  dimension $n$ of $M$. 
  Two ample $\T$-Cartier divisors $D_{\Psi}$
  and $D_{\Psi'}$ on a toric variety $X_{\Sigma}$ are rationally
  equivalent if and only if $\Delta _{\Psi'}$ is the
  translate of $\Delta _{\Psi}$ by an element of $M$. 
\end{thm}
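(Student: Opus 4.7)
The plan is to verify that the map is well-defined with image in the set of $n$-dimensional lattice polytopes, then construct an explicit inverse via the normal fan and support function of a polytope (as in Example~\ref{exm:19}), and finally reduce the rational equivalence statement to Theorem~\ref{thm:3}.

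First, I would check that the map is well-defined. Let $(X_{\Sigma},D_{\Psi})$ be a polarized toric variety. Since $D_{\Psi}$ is ample, $\Psi$ is strictly concave on $\Sigma$; in particular, $\Psi$ is concave, so $\cO(D_{\Psi})$ is generated by global sections and $\Psi$ coincides with the support function of $\Delta_{\Psi}$ (so $\Delta_{\Psi}$ is non-empty). If $\{m_{\sigma}\}_{\sigma\in\Sigma^{n}}\subset M$ is a family of defining vectors, then $\Delta_{\Psi}=\Conv(\{m_{\sigma}\}_{\sigma\in\Sigma^{n}})$ is a lattice polytope. To see that $\dim(\Delta_{\Psi})=n$, I would invoke the degree formula of Proposition~\ref{prop:46}: $n!\Vol_{M}(\Delta_{\Psi})=\deg_{D_{\Psi}}(X_{\Sigma})>0$ because $D_{\Psi}$ is ample and $X_{\Sigma}$ has dimension $n$, so $\Delta_{\Psi}$ has non-empty interior.

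Next, I would construct the inverse. Given a lattice polytope $\Delta\subset M_{\R}$ of dimension $n$, apply the construction of Example~\ref{exm:19}: let $\Sigma_{\Delta}=\Pi(\Psi_{\Delta})$ be the normal fan of $\Delta$, which is a complete rational fan since $\dim(\Delta)=n$ (Example~\ref{exm:13}), and let $\Psi_{\Delta}$ be the support function of $\Delta$, which is a support function on $\Sigma_{\Delta}$ that is strictly concave by construction. This yields a polarized toric variety $(X_{\Sigma_{\Delta}},D_{\Psi_{\Delta}})$, and by definition of the support function we recover $\Delta_{\Psi_{\Delta}}=\Delta$. To verify this is indeed a two-sided inverse, I would argue: given $(X_{\Sigma},D_{\Psi})$ polarized, strict concavity of $\Psi$ on $\Sigma$ means $\Sigma=\Pi(\Psi)$; on the other hand, because $\Psi$ is the support function of $\Delta_{\Psi}$, we have $\Pi(\Psi)=\Sigma_{\Delta_{\Psi}}$ and $\Psi=\Psi_{\Delta_{\Psi}}$. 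Hence $(\Sigma,\Psi)$ is completely recovered from $\Delta_{\Psi}$, which gives injectivity. Combined with the construction above, this establishes the bijection.

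For the last statement, I would use Theorem~\ref{thm:3}: two $\T$-Cartier divisors $D_{\Psi}$ and $D_{\Psi'}$ on $X_{\Sigma}$ are rationally equivalent if and only if $\Psi-\Psi'$ is (the restriction of) a linear function on $|\Sigma|=N_{\R}$, that is, $\Psi'=\Psi-m$ for some $m\in M$. From the defining inequality of $\Delta_{\Psi}$ it is immediate that $\Delta_{\Psi-m}=\Delta_{\Psi}+m$, so rational equivalence of $D_{\Psi}$ and $D_{\Psi'}$ is equivalent to $\Delta_{\Psi'}=\Delta_{\Psi}+m$ for some $m\in M$.

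The main obstacle will be the bookkeeping in the injectivity step: one must match the two combinatorial descriptions of the fan associated to a polytope (as $\Pi(\Psi_{\Delta})$ and as the collection of cones $\sigma_{F}$ dual to faces $F$ of $\Delta$) and check that under the hypothesis of ampleness, strict concavity forces the fan on which $\Psi$ is defined to coincide with the normal fan of $\Delta_{\Psi}$; everything else reduces to direct applications of Theorem~\ref{thm:3}, Example~\ref{exm:19}, and Proposition~\ref{prop:46}.
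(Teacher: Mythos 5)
Your proof follows essentially the same route as the paper's (which is quite terse, simply observing that $\Psi \mapsto \Delta_\Psi$ and $\Delta \mapsto \Psi_\Delta$ are inverse constructions and citing Theorem~\ref{thm:3} and Remark~\ref{rem:21}); you supply the details that the paper leaves implicit, which is sound. One small caveat: your appeal to Proposition~\ref{prop:46} to get $\dim(\Delta_\Psi)=n$ works fine, although the paper's own framework would also give it directly from Legendre--Fenchel duality (since strict concavity forces $\Sigma=\Pi(\Psi)$, and by Proposition~\ref{prop:63} the element of $\Pi(\Psi^\vee)$ dual to the cone $\{0\}\in\Sigma$ is $\Delta_\Psi$ itself, of complementary dimension $n$). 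Also, a sign slip: from the defining inequality one gets $\Delta_{\Psi-m}=\Delta_\Psi- m$, not $\Delta_\Psi+m$; this does not affect the conclusion, since either way $\Delta_{\Psi'}$ is a translate of $\Delta_\Psi$ by an element of $M$, and it matches Remark~\ref{rem:21}'s statement $\Delta_2=\Delta_1-m$.
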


\begin{proof}
If $\Psi$ is a
strictly concave function on $\Sigma$, then
$\Delta_{\Psi}$ is an 
$n$-dimensional lattice polytope.
Conversely, if $\Delta$ is a lattice polytope in $M_{\R}$, then 
 $ \Psi_{\Delta}$, the support function of $\Delta$, is a strictly
 concave function on  
 the complete fan $\Sigma_{\Delta}=\Pi(\Psi_{\Delta})$ (see
 examples~\ref{exm:19} and~~\ref{exm:13}). Therefore, the result
 follows from Theorem 
 \ref{thm:3} and the construction in Remark \ref{rem:21}. 
\end{proof}

\begin{rem} \label{rem:11} When $D_{\Psi}$ is only generated by its
  global sections, the polytope~$\Delta_{\Psi}$ may not determine the
  variety $X_{\Sigma }$, but it does determine a polarized toric
  variety that is the image of $X_{\Sigma }$ by a toric morphism.
  Write $\Delta=\Delta_{\Psi}$ for
  short. Let $M(\Delta)$ be as in Notation
  \ref{def:79} and choose $m\in \aff(\Delta )\cap M$. Set
  $N(\Delta)=M(\Delta)^{\vee}$. The translated polytope
  $\Delta-m$ has the same dimension as its ambient space
  $L_{\Delta }=M(\Delta)_{\R}$. By the theorem above, it defines a complete fan
  $\Sigma_{\Delta}$ in $N(\Delta)_{\R}$ together with a support
  function $\Psi_{\Delta}\colon N(\Delta)\to \R $. The projection
  $N\to N(\Delta)$ induces a toric morphism
  \begin{displaymath}
    \varphi\colon   X_{\Sigma}\longrightarrow X_{\Sigma_{\Delta}},
  \end{displaymath}
  the divisor $D_{\Psi_{\Delta}}$ is ample, and 
  $D_{\Psi}=\varphi^{*}D_{\Psi_{\Delta}}+\div(\chi^{-m})$.
\end{rem}

\begin{exmpl} \label{exm:26}  The projective morphisms associated to
  $\T$-Cartier divisors generated by global sections can also be made
  explicit in 
  terms of the lattice points of the associated polytopes. 
  Consider a proper toric variety $X_{\Sigma }$ of dimension
  $n$ equipped with a
  $\T$-Cartier divisor $D_{\Psi }$ generated by global sections.
  Let $m_0,\dots, m_r\in \Delta_\Psi\cap M$ be such that
  $\Conv(m_0,\dots, m_r)=\Delta _{\Psi }$. These vectors determine an
  H-representation 
  $\Psi =\min_{i=0,\dots,r}m_{i}$. Let $H\colon N_{\R}\to \R^{r}$ be
  the linear map defined by $H(u)= (m_{i}(u)-m_{0}(u))_{i=1,\dots,r}$.
  By Lemma
  \ref{lemm:17}, $\Psi =H ^{\ast} \Psi _{\Delta ^{r}}+m_{0}$.

  In $\R^{r}$ we
  consider the fan $\Sigma _{\Delta ^{r}}$, whose associated  toric
  variety is $\P^{r}$.  One easily verifies
  that,
  for each $\sigma \in \Sigma $, there is $\sigma'\in \Sigma
  _{\Delta ^{r}}$ with $H(\sigma )\subset \sigma '$.
  Let $p=(p_{0}:\dots:p_{r})$ be an arbitrary rational point of the
  principal open subset of  $\P^{r}$. 
  The equivariant morphism $\varphi_{p,H} \colon X \to \P^{r}_K$
  can be written explicitly as $(p_{0}\chi^{m_0}: 
  \dots: p_{r}\chi^{m_r})$. Moreover, $D_{\Psi }=\varphi_{p,H}
  ^{\ast}D_{\Psi _{\Delta ^{r}}} +\div(\chi ^{-m_{0}})$.
\end{exmpl}

The orbits of a polarized toric variety $(X_{\Sigma},D_{\Psi})$ are in
one-to-one correspondence with the
faces of  $\Delta _{\Psi }$.

\begin{prop}\label{prop:34} Let $\Sigma$ be a complete fan in
  $N_{\R}$ and $\Psi$ a strictly concave function on $\Sigma$. The
  correspondence $F\mapsto O(\sigma_{F})$ is a bijection between the
  set of faces of $\Delta_{\Psi}$ and the set of the orbits under the
  action of $\T$ on $X_{\Sigma}$.
\end{prop}

\begin{proof}
This follows from Example~\ref{exm:19}.
\end{proof}

The equation \eqref{eq:45} gives a formula for the Weil divisor $[D_{\Psi }]$
in terms of the virtual support function $\Psi $. When the line bundle
$\mathcal{O}(D_{\Psi })$ is ample,  we can
interpret this formula in terms of the facets of the polytope $\Delta
_{\Psi }$. 

Let $D_{\Psi}$ be an
ample divisor on $X_{\Sigma}$. The polytope $\Delta _{\Psi }$ has
maximal dimension $n$. For each facet $F$ of $\Delta _{\Psi }$,
let $v_{F}$ be as in Notation \ref{def:79}. The ray $\tau
_{F}=\R_{\ge 0}v_{F}$ is a cone of $\Sigma $.

\begin{prop}  \label{prop:65} With the previous hypothesis,
  \begin{displaymath}
    \div(s_{\Psi })=[D_{\Psi }]=\sum_{F}-\langle F,v_{F}\rangle V(\tau  _{F}),
  \end{displaymath}
  where the sum is over the facets $F$ of $\Delta$.
\end{prop}
\begin{proof}
  Since $\Psi $ is strictly concave on $\Sigma $, the Legendre-Fenchel
  correspondence shows that the set of rays of the form $\tau _{F}$
  agrees with the set $\Sigma ^{1}$. Moreover, $\Psi (v_{F})=\langle
  F,v_{F}\rangle$, because $\Psi $ is the support function of
  $\Delta $. The proposition then follows from \eqref{eq:45}.
\end{proof}

For a $\T$-Cartier divisor generated by global sections, we can
interpret its intersection with the closure of an orbit, and its inverse
image with respect to an equivariant morphism, in terms of direct and
inverse images of concave functions.

\begin{prop}  \label{prop:47} Let $\Sigma$ be a complete fan in
  $N_{\R}$ and $\Psi\colon N_{\R}\to \R $ a
  support function on $\Sigma$.
  \begin{enumerate}
  \item \label{item:52} Let $\sigma\in \Sigma$, $F_{\sigma }$ the
    associated face of $\Delta_{\Psi}$, and
    $m_{\sigma}'\in F_{\sigma }\cap M$.  Let $\pi _{\sigma }\colon N_{\R}\to
    N(\sigma )_{\R}$ be the natural projection. Then
\begin{equation}\label{eq:76}
  (\Psi-m'_{\sigma })(\sigma)=(\pi _{\sigma
    })_{\ast}(\Psi-m_{\sigma}').
\end{equation}
In particular, the  restriction of $D_{\Psi-m'_{\sigma }}$ to $V(\sigma)$ 
is given by the concave function $(\pi _{\sigma
    })_{\ast}(\Psi-m_{\sigma}')$.
Moreover,  the 
associated polytope is 
\begin{equation}
  \label{eq:77}
 \Delta_{(\Psi-m_{\sigma }')(\sigma)}
=F_\sigma-m_\sigma'\subset M(\sigma)_\R=\sigma^\bot. 
\end{equation}
\item \label{item:53} Let $H\colon N'\to N$ be a linear map and
  $H^{\vee}\colon M\to M'$ its dual map, where $M'=(N')^{\vee}$. Let
  $\Sigma '$ be a 
  fan in $N'_{\R}$ such that, for each $\sigma '\in \Sigma '$ there is
  $\sigma \in \Sigma $ with $H(\sigma ')\subset \sigma $, and let $p\in
  X_{\Sigma ,0}(K)$. Then
\begin{equation}
  \label{eq:80}
 \varphi_{p,H}^{\ast}D_{\Psi }=D_{H
  ^{\ast}\Psi }, 
\end{equation}
and the associated polytope is
\begin{equation}
  \label{eq:81}
 \Delta_{H
  ^{\ast}\Psi }=H^{\vee}(\Delta_\Psi) \subset M'_{\R}. 
\end{equation}
  \end{enumerate}
\end{prop}
\begin{proof}
  The equation \eqref{eq:76} follows from \eqref{eq:4}, while the equation
  \eqref{eq:80} follows from Prop\-osi\-tion \ref{prop:70}. Then
  \eqref{eq:77} and \eqref{eq:81} follow from Proposition
  \ref{prop:101}.
\end{proof}

As a consequence of the above construction, we can compute easily the
degree of any orbit.

\begin{cor} \label{cor:9} Let $\Sigma$ be a complete fan in $N_{\R}$,
  $\Psi\colon N_{\R}\to \R $ a support function on
  $\Sigma$, and $\sigma \in \Sigma $ a cone of dimension
  $n-k$. Then
  \begin{displaymath}
    \deg_{D_{\Psi }}(V(\sigma ))=k! \Vol_{M(F_{\sigma })}(F_{\sigma }).
  \end{displaymath}
\end{cor}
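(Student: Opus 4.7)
The plan is to reduce to the degree formula \eqref{eq:3} applied to the toric subvariety $V(\sigma)$, after replacing $D_{\Psi}$ with a rationally equivalent $\T$-Cartier divisor that meets $V(\sigma)$ properly. Since $F_{\sigma}$ is a lattice polytope (being a face of the lattice polytope $\Delta_{\Psi}$), we may pick a lattice vector $m'_{\sigma}\in F_{\sigma}\cap M$. Then $\Psi-m'_{\sigma}$ is a support function on $\Sigma$ that vanishes on $\sigma$, the divisor $D_{\Psi-m'_{\sigma}}$ is rationally equivalent to $D_{\Psi}$ (they differ by the principal divisor $\div(\chi^{m'_\sigma})$), and it intersects $V(\sigma)$ properly by Proposition~\ref{prop:72}.

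Next I would invoke Proposition~\ref{prop:47}\eqref{item:52}: the restriction $\iota_{\sigma}^{*}D_{\Psi-m'_{\sigma}}$ equals $D_{(\Psi-m'_{\sigma})(\sigma)}$ on $V(\sigma)\simeq X_{\Sigma(\sigma)}$, and the associated polytope inside $M(\sigma)_{\R}=\sigma^{\perp}$ is the translate $F_{\sigma}-m'_{\sigma}$. Since $\Psi$ is a support function on $\Sigma$, the same is true of $\Psi-m'_{\sigma}$, so the induced function $(\Psi-m'_{\sigma})(\sigma)$ is concave on the complete fan $\Sigma(\sigma)$ in $N(\sigma)_{\R}$; hence $\cO(\iota_{\sigma}^{*}D_{\Psi-m'_{\sigma}})$ is generated by global sections on the $k$-dimensional toric variety $V(\sigma)$.

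Now I would apply the degree formula \eqref{eq:3} to the polarized toric variety $(V(\sigma),\iota_{\sigma}^{*}D_{\Psi-m'_{\sigma}})$, obtaining
\begin{displaymath}
  \deg_{D_{\Psi}}(V(\sigma))=\deg_{\iota_{\sigma}^{*}D_{\Psi-m'_{\sigma}}}(V(\sigma))=k!\,\Vol_{M(\sigma)}(F_{\sigma}-m'_{\sigma})=k!\,\Vol_{M(\sigma)}(F_{\sigma}),
\end{displaymath}
where the last equality uses translation invariance of Haar measure.

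Finally, the only remaining point is to identify $\Vol_{M(\sigma)}(F_{\sigma})$ with $\Vol_{M(F_{\sigma})}(F_{\sigma})$, which I expect to be the one spot that deserves a brief sentence rather than a reference. The affine hull of $F_{\sigma}$ is a translate of the linear subspace $L_{F_{\sigma}}\subset M_{\R}$ of dimension $\dim F_{\sigma}=n-\dim\sigma=k$, and the duality of Example~\ref{exm:19} forces $L_{F_{\sigma}}\subset\sigma^{\perp}$; comparing dimensions gives $L_{F_{\sigma}}=\sigma^{\perp}$ as real vector spaces, whence $M(F_{\sigma})=M\cap L_{F_{\sigma}}=M\cap\sigma^{\perp}=M(\sigma)$ by definition of the two lattices. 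The normalized measures therefore coincide on $\aff(F_{\sigma})$, which completes the proof.
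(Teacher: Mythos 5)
Your proof is correct and follows the same route the paper takes: reduce to the degree formula \eqref{eq:3} via Proposition~\ref{prop:47}\eqref{item:52} (equation~\eqref{eq:77}), then identify $M(\sigma)=M(F_\sigma)$ by showing $L_{F_\sigma}=\sigma^\perp$. The paper states this much more tersely but with identical content; you have simply filled in the intermediate steps explicitly.
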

\begin{proof}
  In view of \eqref{eq:77} and \eqref{eq:3}, it is enough to
  prove that $M(\sigma )=M(F_{\sigma })$. But this follows from the
  fact that $L_{F_{\sigma }}=\sigma ^{\perp}$ (see Notation \ref{def:79}).
\end{proof}

\begin{exmpl} \label{exm:21}
Let $\tau\in \Sigma^{n-1}$. The degree of the curve $V(\tau)$ 
agrees with the lattice length of $F_{\tau }$.
\end{exmpl}

We will also need the toric version of the {Nakai-Moishezon
  criterion}.
\index{Nakai-Moishezon criterion!for a toric variety}%

\begin{thm}\label{thm:4} 
  Let $X_{\Sigma}$ be a proper toric variety and
  $D_{\Psi }$ a $\T$-Cartier divisor on~$X_{\Sigma}$.
  \begin{enumerate}
  \item \label{item:29} The following properties are equivalent:
    \begin{enumerate}
    \item \label{item:63} $D_{\Psi }$ is ample;
    \item \label{item:81} $(D_{\Psi }\cdot C) > 0$ for every curve $C$
      in $X_{\Sigma }$; 
    \item \label{item:103} $(D_{\Psi }\cdot V(\tau ))>0$ for every $\tau \in
      \Sigma^{n-1} $;
    \item \label{item:80} the function $\Psi $ is strictly concave on $\Sigma $.
    \end{enumerate}
  \item \label{item:44} The following properties are equivalent:
    \begin{enumerate}
    \item \label{item:104} $D_{\Psi }$ is generated by its global sections;
    \item \label{item:105} $(D_{\Psi }\cdot C) \ge 0$ for every curve
      $C$ in $X_{\Sigma 
      }$;
    \item \label{item:106} $(D_{\Psi }\cdot V(\tau ))\ge 0$ for every
      $\tau \in \Sigma^{n-1} $; 
    \item \label{item:113} the function $\Psi $ is concave.
    \end{enumerate}
  \end{enumerate}
\end{thm}

\begin{proof} The equivalence between \eqref{item:63} and
  \eqref{item:80} and between \eqref{item:104} and \eqref{item:113} is
  Proposition \ref{prop:99}.  The rest of \eqref{item:29} and
  \eqref{item:44} is proved in \cite{Mavlyutov:shtv}, see also
  \cite[Theorem~2.18]{Oda88} for \eqref{item:29} in the case of smooth
  toric varieties.
\end{proof}

A direct consequence of theorems \ref{thm:4} and \ref{thm:9} is
that, in a toric variety, a divisor is nef if and only if it is
generated by global sections, and every ample divisor is generated by
global sections.

\section{Toric schemes over a discrete valuation ring}
\label{Toric schemes}

In this section we recall some basic facts about the algebraic
geometry of toric schemes over a DVR. These toric  
schemes were introduced in
\cite[Chapter~IV, \S 3]{Kempfals:te}, and we refer to this reference 
for more details or to \cite{Gub12} for a study of toric schemes
over general valuation rings and their relation with tropical
geometry. They are described and classified in terms of
 fans in $N_{\R}\times
\R_{\ge 0}$. In this section
we will mostly consider
proper toric schemes over a DVR. As a
consequence of Corollary~\ref{cor:10}, proper toric schemes over a~DVR can be
described 
and classified in terms of complete SCR polyhedral
complexes in $N_{\R}$ as, for instance, in
\cite{NishinouSiebert:tdtvtc}. 

Let $K$ be a field equipped with a nontrivial discrete valuation
$\val_{K}\colon K^{\times}\twoheadrightarrow
\R$\index{valuation map!of a non-Archimedean
  field}\nomenclature[aval1]{$\val_{K}$}{valuation map of a field} whose group of values is $\Z$.  In
this section we do not assume $K$ to be complete. As usual, we denote
by $K^{\circ}$
the valuation ring, by
$K^{\circ\circ}$
its maximal ideal, by  $\varpi $ a generator of $K^{\circ\circ}$ and by $k$
the residue
field. Since the group of values of  $\val_K$ is $\Z$, then
$\val_{K}(\varpi )=1$. 
We denote by $S$ the base scheme $S=\Spec(K^{\circ})$, by $\eta$ and
$o$ the generic and the special points of $S$ and, 
for a scheme $\mathcal{X}$ over $S$, we set
$\mathcal{X}_\eta=\mathcal{X}\times_S \Spec(K)$ and  
$\mathcal{X}_o=\mathcal{X}\times_S \Spec(k)$ for its generic and special fibre 
respectively.
We will denote by $\T_{S}=\T_{K^{0}}\simeq \G_{m,S}^{n}$ a
split torus over $S$. Let $\T=\T_{K}$, $N$ and $M$ be as in
\S \ref{Toric varieties}. We will write $\wt N=N\oplus \Z$ and  $\wt
M=M\oplus \Z$.
\nomenclature[aNl03]{$\wt N$}{$N\oplus \Z$}%
\nomenclature[aMl03]{$\wt M$}{$M\oplus \Z$}%

\begin{defn}\label{def:15}
A \emph{toric scheme over $S$} of relative dimension
  $n$\index{toric scheme} is a normal integral separated $S$-scheme
of finite type, $\cX$, equipped with a dense open embedding
$\T_{K}\hookrightarrow \cX_{\eta}$ and an $S$-action of 
$\T_{S}$ over $\cX$ that extends the action of $\T_{K}$ on itself by
translations. If we want to stress the torus acting on $\cX$ we will
call them toric schemes \emph{with torus $\T_{S}$}.
\index{torus!over $S$, acting on a toric scheme}%
\end{defn}

If $\cX$ is a toric scheme over $S$, then $\cX_{\eta }$
is a toric variety over $K$ with torus $\T$.

\begin{defn}\label{def:40} Let $X$ be a toric variety over $K$ with
  torus $\T_{K}$ and let $\cX$ be a toric scheme over $S$ with
  torus $\T_{S}$. We say that $\cX$ is a \emph{toric model} of
    $X$ over $S$
\index{toric model!of a toric variety}%
if the identity of $\T_{K}$ can be extended to an isomorphism
  from $X$ to $\cX_{\eta}$. 

  If $\cX$ and $\cX'$ are toric models of $X$ and $\alpha \colon \cX\to
  \cX'$ is an $S$-morphism, we say that $\alpha $ is a \emph{morphism
    of toric models}
\index{toric model!of a toric variety!morphism of}%
if its restriction to $\T_{K}$ is the identity.
\end{defn}
Since, by definition, a toric scheme is integral and contains $\T$ as a
dense open subset, it is flat over $S$. Thus a toric model
is a particular case of a model as in 
Definition \ref{def:48}.


Let $\wt \Sigma $ be a fan in $N_{\R}\times \R_{\ge
  0}$. 
\nomenclature[g1816]{$\wt \Sigma $}{fan in $N_{\R}\times \R_{\ge 0}$}%
To the fan $\widetilde \Sigma $ we associate a toric scheme
$\cX_{\widetilde \Sigma }$ over $S$.
\index{toric scheme!associated to a fan}%
Let $\sigma \in \wt \Sigma $ be a cone and $\sigma ^{\vee}\subset \wt
M_{\R}$ its dual cone. 
Set $\wt M_{\sigma }=\wt M\cap \sigma ^{\vee}$. 
Let
$K^{\circ}[\wt M_{\sigma }]$ be the semigroup
$K^{\circ}$-algebra of $\wt M_{\sigma }$. 
\nomenclature[aKsg2]{$K^{\circ}[\wt M_{\sigma }]$}{semigroup
  $K^{\circ}$-algebra of a cone}%
By definition, $(0,1)\in \wt
M_{\sigma }$. Thus $(\chi ^{(0,1)}-\varpi )$ is an ideal of
$K^{\circ}[\wt M_{\sigma }]$.   
\nomenclature[aMl06]{$\wt M_{\sigma }$}{semigroup of $\wt M$
  associated to a cone}%
There is a natural isomorphism
\begin{multline}\label{eq:72}
K^{\circ}[\wt M_{\sigma  }]/(\chi ^{(0,1)}-\varpi )
\simeq\\\Big\{\sum_{(m,l)\in \wt M_{\sigma }}
\alpha_{m,l}\varpi^{l}\chi^m\,\Big | \,\alpha_{m,l}\in K^{\circ}
\text{ and }\alpha_{m,l}=0 \text{ for almost all } (m,l) 
\Big\}
\end{multline}
that we use to identify both rings. The ring $K^{\circ}[\wt M_{\sigma }]/(\chi
^{(0,1)}-\varpi )$ is an integrally closed domain.
We set
$$\cX_{\sigma } =\Spec(K^{\circ}[\wt M_{\sigma }]/(\chi
^{(0,1)}-\varpi ))$$  
for the associated {affine toric scheme} over $S$.
\index{toric scheme!affine}%
\nomenclature[aX31]{$\cX_{\sigma }$}{affine toric scheme associated to
a cone}%
For short we will use the notation
\begin{equation}\label{eq:25}
  K^{\circ}[\cX_{\sigma }]=K^{\circ}[\wt M_{\sigma }]/(\chi
^{(0,1)}-\varpi ).
\end{equation}
\nomenclature[aKsg3]{$K^{\circ}[\cX_{\sigma }]$}{ring of functions of
  an affine toric scheme}%
For cones $\sigma  ,\sigma '\in \wt \Sigma  $, with $\sigma  \subset
\sigma  '$ we have a natural open immersion of affine schemes
$\cX_{\sigma }\hookrightarrow \cX_{\sigma '}$. Using these open
immersions as gluing data, we define the scheme
$$
\cX_{\widetilde \Sigma }=\bigcup_{\sigma  \in\wt \Sigma  }\cX_{\sigma }.
$$
\nomenclature[aX33]{$\cX_{\wt \Sigma }$}{toric scheme associated to
a fan}%
This is a reduced and irreducible normal scheme of finite type over $S$
of relative dimension $n$.

There are two types of cones in $\wt \Sigma $. The ones that are
contained in the hyperplane $N_{\R}\times \{0\}$, and the ones that
are not.  If $\sigma $ is contained in $N_{\R}\times \{0\}$, then
$(0,-1)\in \wt M_{\sigma }$, and $\varpi $ is invertible in
$K^{\circ}[\cX_{\sigma }]$. Therefore $K^{\circ}[\cX_{\sigma}]\simeq
K[M_{\sigma }]$; hence $\cX_{\sigma }$ is contained in the
generic fibre and it agrees with the affine toric variety $X_{\sigma
}$. If $\sigma $ is not contained in $N_{\R}\times \{0\}$, then
$\cX_{\sigma }$ is not contained in the generic fibre. 

To stress the difference between both types of affine schemes we will
use the following notations.
Let $\Pi $ be the SCR polyhedral complex in $N_{\R}$ obtained by
intersecting $\wt \Sigma $ by the hyperplane $N_{\R}\times \{1\}$ 
as in Corollary  \ref{cor:10}, and 
$\Sigma $ the fan in $N_{\R}$ obtained by intersecting $\wt \Sigma $
with $N_{\R}\times \{0\}$.
For $\Lambda \in \Pi $, the cone $\cc(\Lambda)\in \widetilde \Sigma $
is not contained in $N\times \{0\}$. We will write $\wt M_{\Lambda
}=\wt 
M_{\cc(\Lambda) }$, $K^{\circ}[\wt M_{\Lambda }]=K^{\circ}[\wt
M_{\cc(\Lambda) }]$, $\cX_{\Lambda} =\cX_{\cc(\Lambda)}$ and 
$K^{\circ}[\cX_{\Lambda}]=K^{\circ}[\cX_{\cc(\Lambda)}]$.  
\nomenclature[aX32]{$\cX_{\Lambda}$}{affine toric scheme associated to
a polyhedron}%
\nomenclature[aMl07]{$\wt M_{\Lambda }$}{semigroup associated to a
  polyhedron}%
\nomenclature[aKsg4]{$K^{\circ}[\wt M_{\Lambda }]$}{semigroup
  $K^{\circ}$-algebra of a polyhedron}%
\nomenclature[aKsg5]{$K^{\circ}[\cX_{\Lambda }]$}{ring of functions of
  an affine toric scheme}%

Given polyhedrons $\Lambda ,\Lambda '\in \Pi $,
with $\Lambda \subset 
\Lambda '$, we have a natural open immersion of affine toric schemes
$\cX_{\Lambda }\hookrightarrow \cX_{\Lambda '}$. 
Moreover, if a cone
$\sigma \in \Sigma $ is a face of a cone $\cc(\Lambda)$ for some
$\Lambda\in\Pi$,   
then  the affine toric variety $X_{\sigma }$ is also
an open subscheme of $\cX_{\Lambda }$. The open cover \eqref{eq:25}
can be written as 
$$
\cX_{\widetilde \Sigma }=\bigcup_{\Lambda \in\Pi }\cX_{\Lambda  }\cup
\bigcup _{\sigma 
  \in \Sigma } X_{\sigma }.
$$
We will
reserve the notation $\cX_{\Lambda }$, $\Lambda \in \Pi $, for the affine
toric schemes that are not contained in the generic fibre and denote
by $X_{\sigma }$, $\sigma \in \Sigma $, the affine toric schemes
contained in the generic fibre, because they are toric varieties
over $K$. 

The scheme $\cX_{0}$ corresponding to the polyhedron $0:=\{0\}$ 
is a 
group $S$-scheme which is canonically isomorphic to $\T_{S}$.
The $S$-action of $\T_{S}$ over 
$\cX_{\widetilde \Sigma}$ is constructed as in the case of varieties over a
field. Moreover there are open immersions $\T_K\hookrightarrow 
\cX_{\eta}\hookrightarrow
\cX_{\widetilde \Sigma}$  
of schemes over $S$ and the action of $\T_S$ on $\cX_{\widetilde
  \Sigma}$ extends the
action of $\T_K$ on itself. 
Thus $\cX_{\widetilde \Sigma}$ is a toric scheme over $S$.
Moreover, the fan $\Sigma$ defines a toric variety
over $K$ which coincides with the generic fibre $\cX_{\widetilde \Sigma,\eta}$.
Thus, $\cX_{\widetilde \Sigma}$ is a toric model of $X_{\Sigma}$. 
The special fibre
\begin{math}
  \cX_{\widetilde \Sigma ,o}=\cX_{\widetilde
    \Sigma}\underset{S}{\times} \Spec(k)  
\end{math}
has an induced action by $\T_{k}$, but, in general, it
is not a toric variety over $k$, because it
is not irreducible nor reduced. The reduced schemes
associated to 
its irreducible components are toric varieties 
over $k$ with this action.

Every toric scheme over $S$ can be obtained by the above
construction. Indeed, this 
construction  gives a classification of  
toric schemes by fans in $N_{\R}\times \R_{\ge 0}$
\cite[\S IV.3(e)]{Kempfals:te}.

If the fan $\wt \Sigma $ is complete,
then the scheme $\cX_{\wt \Sigma }$ is proper over $S$. In this case the set
$\{\cX_{\Lambda }\}_{\Lambda \in \Pi }$ is an open cover of
$\cX_{\wt \Sigma }$. Proper toric schemes over $S$ can also be classified by complete
SCR polyhedral complexes in $N_{\R}$. This
is not the case for general toric schemes over $S$ as is shown in \cite{BurgosSombra:rc}.

\begin{thm}\label{thm:7}
  The correspondence $\Pi\mapsto
  \cX_{\cc(\Pi)}$, where $\cc(\Pi )$ is the fan introduced
  in Definition \ref{def:69},
  is a bijection between the set of complete SCR polyhedral
  complexes in $N_\R$ and the set of isomorphism classes of
  proper toric schemes over $S$ of relative dimension~$n$.
\end{thm}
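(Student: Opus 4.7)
The plan is to reduce the statement to two facts already at our disposal: the bijection between polyhedral complexes in $N_{\R}$ and conical polyhedral complexes in $N_{\R}\times\R_{\ge 0}$ established in Corollary~\ref{cor:10}, and the classification of arbitrary toric schemes over $S$ by fans in $N_{\R}\times\R_{\ge 0}$ recalled (from \cite[\S IV.3(e)]{Kempfals:te}) in the paragraphs preceding the theorem. The only genuinely new point to verify is that, under this second bijection, completeness of the fan in $N_{\R}\times\R_{\ge 0}$ matches properness of the corresponding toric scheme over $S$.

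First, I would note that Corollary~\ref{cor:10} gives a bijection $\Pi\mapsto\cc(\Pi)$ between complete polyhedral complexes in $N_{\R}$ and complete conical polyhedral complexes in $N_{\R}\times\R_{\ge 0}$, and that this bijection preserves both rationality and strong convexity. Thus it restricts to a bijection between complete SCR polyhedral complexes in $N_{\R}$ and complete fans in $N_{\R}\times\R_{\ge 0}$. Composed with $\wt\Sigma\mapsto\cX_{\wt\Sigma}$, this yields the map $\Pi\mapsto\cX_{\cc(\Pi)}$ of the statement, and composition of two bijections is a bijection provided the second one also restricts correctly, i.e.\ provided $\wt\Sigma$ is complete if and only if $\cX_{\wt\Sigma}$ is proper over $S$.

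The main obstacle is the equivalence between completeness of $\wt\Sigma$ and properness of $\cX_{\wt\Sigma}$, and I would approach it via the valuative criterion. In one direction, assume $\wt\Sigma$ is complete. Since $\cX_{\wt\Sigma}$ is separated and of finite type over $S$, it suffices to check the valuative criterion on morphisms from spectra of DVRs with generic point landing in the dense open orbit $\T_{K}\subset\cX_{\wt\Sigma,\eta}$. Such a morphism from $\Spec L$, together with the structural morphism to $S$, encodes a group homomorphism $M\to L^{\times}$ and a valuation on $L$ extending that on $K$; taking composition with the valuation produces an element $(u,e)\in N_{\R}\times\R_{\ge 0}$ (with $e>0$ the ramification index). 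By completeness, $(u,e)$ lies in some cone $\sigma\in\wt\Sigma$, and inspection of the ring $K^{\circ}[\cX_{\sigma}]$ in~\eqref{eq:72} shows directly that every character in $\wt M_{\sigma}$ has non-negative valuation at this point, so the morphism extends to $\Spec R\to\cX_{\sigma}\subset\cX_{\wt\Sigma}$. Uniqueness of the extension follows from the separatedness of $\cX_{\wt\Sigma}$.

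Conversely, assume $\cX_{\wt\Sigma}$ is proper over $S$. For any rational $(u,e)\in N_{\Q}\times\R_{\ge 0}$, with $e\ge 0$, one can construct a one-parameter family: when $e>0$, take an extension of $K$ of ramification index a suitable multiple of the denominator of $u$, use the cocharacter of $\T$ determined by $u$ to get a morphism from the punctured spectrum, and apply the valuative criterion to extend it; when $e=0$, work instead inside the generic fibre and use properness of $\cX_{\wt\Sigma,\eta}=X_{\rec(\Pi)}$ over $K$, which follows from properness over $S$. In both cases the extended point must lie in some affine chart $\cX_{\sigma}$, which forces $(u,e)\in\sigma$. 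Since rational points are dense in $N_{\R}\times\R_{\ge 0}$ and each cone of $\wt\Sigma$ is closed, we conclude that $|\wt\Sigma|=N_{\R}\times\R_{\ge 0}$, i.e.\ $\wt\Sigma$ is complete. This establishes the required equivalence and completes the proof.
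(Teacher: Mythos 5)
Your proof is correct and follows the same route as the paper: reduce to Corollary~\ref{cor:10} together with the classification of toric schemes over $S$ by fans in $N_{\R}\times\R_{\ge0}$ from \cite[\S IV.3(e)]{Kempfals:te}. The paper simply cites that reference for the matching between completeness of $\wt\Sigma$ and properness over $S$, whereas you unpack it via the valuative criterion; that expansion is sound (including the standard reduction to $R$-points whose generic point lands in the dense torus), but it does not change the structure of the argument.
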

 \begin{proof}
   Follows from \cite[\S IV.3(e)]{Kempfals:te}  
and Corollary~\ref{cor:10}.
 \end{proof}

If we are interested in toric schemes as toric models of a toric
variety, we can restate the previous result as follows. 

\begin{thm}\label{thm:7b}
  Let $\Sigma $ be a complete fan in $N_{\R}$. Then
  there is a bijective correspondence between equivariant 
  isomorphism classes of proper toric models over 
  $S$ of $X_{\Sigma }$ and complete SCR polyhedral complexes $\Pi$ in $N_\R$
  such that $\rec(\Pi )= \Sigma $.
\end{thm}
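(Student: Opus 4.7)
My plan is to deduce this statement directly from Theorem \ref{thm:7} by computing the generic fibre of $\cX_{\cc(\Pi)}$ and then applying the classification of toric varieties in Theorem \ref{thm:10}. Theorem \ref{thm:7} already establishes a bijection between complete SCR polyhedral complexes $\Pi$ in $N_\R$ and isomorphism classes of proper toric schemes over $S$ of relative dimension $n$; what remains is to identify precisely those $\Pi$ for which $\cX_{\cc(\Pi)}$ is a model of the fixed toric variety $X_\Sigma$.

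The crucial step is to show that the generic fibre of $\cX_{\cc(\Pi)}$ is canonically isomorphic to $X_{\rec(\Pi)}$, and that this isomorphism restricts to the identity on $\T_K$. For this I would dissect $\cc(\Pi)$ into its two types of cones, as in Definition \ref{def:69}: the cones $\cc(\Lambda)$ for $\Lambda\in\Pi$, which give affine opens $\cX_\Lambda$ not contained in the generic fibre, and the cones $\sigma\times\{0\}$ for $\sigma\in\rec(\Pi)$, which lie in the hyperplane $N_\R\times\{0\}$ and give affine opens that are honest $K$-toric varieties $X_\sigma\subset\cX_{\cc(\Pi),\eta}$. Since $\rec(\Lambda)\times\{0\}=\cc(\Lambda)\cap(N_\R\times\{0\})$ is the face at infinity of $\cc(\Lambda)$, the localisation of $K^\circ[\cX_\Lambda]$ at $\varpi$ recovers the coordinate ring of $X_{\rec(\Lambda)}$, so that $(\cX_\Lambda)_\eta=X_{\rec(\Lambda)}$. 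Gluing over $\Lambda\in\Pi$ produces the canonical identification $\cX_{\cc(\Pi),\eta}=X_{\rec(\Pi)}$, and by construction both sides contain $\T_K$ as the same distinguished open subset.

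Combining this with theorems \ref{thm:7} and \ref{thm:10}, the bijection follows in both directions. If $\Pi$ satisfies $\rec(\Pi)=\Sigma$, then $\cX_{\cc(\Pi)}$ is a proper toric model of $X_\Sigma$ via the canonical isomorphism $X_\Sigma=X_{\rec(\Pi)}\simeq\cX_{\cc(\Pi),\eta}$, which is the identity on $\T_K$ in the sense of Definition \ref{def:40}. Conversely, if $\cX$ is any proper toric model of $X_\Sigma$, Theorem \ref{thm:7} yields a unique $\Pi$ with $\cX\simeq\cX_{\cc(\Pi)}$ as toric schemes; the induced isomorphism on generic fibres gives an equivariant isomorphism $X_\Sigma\simeq X_{\rec(\Pi)}$ that is the identity on $\T_K$, and Theorem \ref{thm:10} then forces $\rec(\Pi)=\Sigma$. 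The same argument applied to morphisms between two models shows that the bijection respects equivariant isomorphism of models.

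The main subtlety to watch out for is the alignment between the notion of ``equivariant isomorphism of toric models'' used here (which, by Definition \ref{def:40}, requires the restriction to $\T_K$ to be the identity) and the notion of isomorphism used in Theorem \ref{thm:7}. Because a toric scheme in the sense of Definition \ref{def:15} is equipped with a specified dense open embedding of $\T_K$ into its generic fibre, the natural isomorphisms in Theorem \ref{thm:7} do restrict to the identity on $\T_K$, which is precisely what is needed to apply Theorem \ref{thm:10} to the equivariant isomorphism $X_{\rec(\Pi)}\simeq X_\Sigma$ extracted above. Once this compatibility is confirmed, no further work is required.
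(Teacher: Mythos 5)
Your proposal is correct and follows essentially the same route as the paper: the paper's proof is the single line ``Follows easily from Theorem \ref{thm:7}'', and the content you supply (identifying $\cX_{\cc(\Pi),\eta}$ with $X_{\rec(\Pi)}$ by localising at $\varpi$, and then invoking Theorem \ref{thm:10} together with the compatibility of the isomorphism notions) is precisely what makes that deduction work. Your closing observation about the alignment between ``isomorphism of toric schemes'' in Theorem \ref{thm:7} and ``morphism of toric models'' in Definition \ref{def:40} is the only genuinely delicate point, and you handle it correctly by noting that a toric scheme in the sense of Definition \ref{def:15} carries the embedding $\T_K\hookrightarrow\cX_\eta$ as part of its data.
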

\begin{proof}
  Follows easily from Theorem \ref{thm:7}.
\end{proof}

For the rest of the section we will restrict ourselves to the proper
case and we will denote by $\Pi $ a complete SCR polyhedral
complex. 
\index{toric scheme!proper, associated to a SCR polyhedral complex}%
To it we associate a complete fan $\cc(\Pi )$ in $N_{\R}\times \R_{\ge
0}$ and a complete fan $\rec(\Pi )$ in $N_{\R}$. 
For short, we will use the notation
 \begin{displaymath}
   \cX_{\Pi }=\cX_{\cc(\Pi )},
 \end{displaymath}
and we will identify the generic fibre $\cX_{\Pi,\eta}$ with the
toric variety $X_{\rec(\Pi )}$.
\nomenclature[aX34]{$\cX_{\Pi }$}{toric scheme associated to a
  polyhedral complex}%
\begin{exmpl} \label{exm:14}
  We continue with Example \ref{exm:10}. The fan $\Sigma _{\Delta
    ^{n}}$ is in particular an SCR polyhedral complex and
  the associated toric scheme over $S$ is $\P^{n}_{S}$, the 
{projective space} over $S$.\index{projective space!as a toric scheme}
\end{exmpl}

This example can be generalized to any complete fan $\Sigma$ in
$N_{\R}$.

\begin{defn}\label{def:43} Let $\Sigma $ be a complete fan in
  $N_{\R}$. Then $\Sigma $ is also a complete SCR polyhedral complex.
  Clearly $\rec({\Sigma })=\Sigma $. 
  The toric scheme $\cX_{{\Sigma }}$ is a model over $S$ of
  $X_\Sigma$ which is called the \emph{canonical model}.
\index{canonical model!of a toric variety}%
  Its special fibre
  \begin{displaymath}
    \cX_{\Sigma ,o}=X_{\Sigma ,k}
  \end{displaymath}
  is the toric variety over $k$ defined by the fan $\Sigma $.  
\end{defn}

The description of {toric orbits}
\index{orbit!in a toric scheme}%
in the case of a toric scheme over a
DVR is more involved than the case of toric
varieties over a field, because we have to consider two kind of orbits. 

In the first place, there is a bijection between $\rec(\Pi )$
and the set of orbits under the action of $\T_K$ on $\cX_{\Pi ,\eta}$,
that sends a cone $\sigma\in \rec(\Pi)$ to the orbit $O(\sigma
)\subset \cX_{\Pi ,\eta}=X_{\rec(\Pi)}$ as in the case of toric varieties
over a field. We will denote by $\cV(\sigma)$ the Zariski closure in
$\cX_{\Pi}$ of 
the orbit 
$O(\sigma )$ with its structure of reduced closed subscheme. Then $\cV(\sigma)$
is a {horizontal $S$-scheme},
\index{horizontal scheme}%
\index{horizontal orbit in a toric scheme}%
in the sense that the structure morphism $\cV(\sigma
)\to S$ is dominant, of relative dimension $n-\dim(\sigma)$.
\nomenclature[aVo2]{$\cV(\sigma )$}{horizontal closure of an orbit of a toric
  scheme}%

Next we describe $\cV(\sigma)$ as a toric scheme over $S$. 
As before, we write
$N(\sigma )=N/(N\cap \R \sigma )$ and let $\pi _{\sigma }\colon
N_{\R}\to N(\sigma )_{\R}$ be the linear projection. Each polyhedron
$\Lambda \in \Pi$ such that $\sigma 
\subset \rec(\Lambda )$ defines a polyhedron $\pi _{\sigma }(\Lambda
)$ in $N(\sigma )_{\R}$. One verifies that these polyhedra form a
complete SCR 
polyhedral complex in $N(\sigma )_{\R}$, that we denote $\Pi
(\sigma )$. This polyhedral complex is called the \emph{star} of $\sigma$ 
in $\Pi$.
\index{star!of a cone in a polyhedral complex}%
\nomenclature[g164]{$\Pi (\sigma )$}{star of a cone in a polyhedral complex}%
\begin{prop}\label{prop:74}
  There is a canonical isomorphism of toric schemes 
  $$ \cX_{\Pi (\sigma )}\longrightarrow \cV(\sigma ).$$
\end{prop}
\begin{proof}
  The proof is analogous to the proof of Proposition \ref{prop:73}.
\end{proof}

In the second place, there is a bijection between $\Pi$ and the set of
orbits under the action of $\T_k$ on $\cX_{o}$ over the closed point
$o$. Given a polyhedron $\Lambda \in \Pi $, we set
\nomenclature[aNl10]{$\wt N(\Lambda)$}{quotient lattice of $\wt N$ associated to
  a polyhedron}%
\nomenclature[aMl10]{$\wt M(\Lambda)$}{sublattice of $\wt M$ associated to
  a polyhedron}%
\begin{displaymath}
  \wt N(\Lambda)=\wt N / (\wt N \cap \R\negthinspace \cc(\Lambda)) ,
  \quad
  \wt M(\Lambda)= \wt N(\Lambda)^{\vee}= \wt M\cap \cc(\Lambda)^{\bot}.
\end{displaymath}
We denote $\T(\Lambda)=\Spec(k[\wt M(\Lambda)])$.
\nomenclature[aOrbit2]{$O(\Lambda)$}{vertical orbit in a toric scheme}%
This is a torus
over the residue field $k$ of dimension $n-\dim(\Lambda)$. There is a
surjection of rings
\begin{displaymath}
  K^{\circ}[\wt M_{\Lambda}]\longrightarrow k[\wt M(\Lambda)] , \quad
\chi^{(m,l)}\longmapsto
\begin{cases}
  \chi^{(m,l)} & \text{ if } (m,l)\in  \wt M(\Lambda), \\
  0 & \text{ if } (m,l)\notin  \wt M(\Lambda).
\end{cases}
\end{displaymath}
Since the element $(0,1)$ does not belong to 
$\wt M(\Lambda )$, this surjection sends the ideal $(\chi ^{(0,1)}-\varpi
)$ to zero. Therefore, it factorizes through a surjection 
$K^{\circ}[\cX_{\Lambda }]\to k[\wt
M(\Lambda)]$, that defines a closed immersion $\T(\Lambda
)\hookrightarrow \cX_{\Lambda }$. Let $O(\Lambda )$ be the image of
this map and $V(\Lambda )$ 
\nomenclature[aVo3]{$V(\Lambda )$}{vertical closure of an orbit of a
  toric scheme}%
the Zariski closure of this orbit in
$\cX_{\Pi}$.
The subscheme
$O(\Lambda )$ is contained in the special
fibre $\cX_{\Pi ,o}$, because the surjection sends
$\varpi $ to zero. By this reason, the orbits of this type will be called
\emph{vertical}.
\index{vertical orbit in a toric scheme}%
Therefore, $V(\Lambda )$ is a vertical cycle
in the sense 
that its image by the structure morphism is the closed point $o$.

The variety $V(\Lambda)$ has a structure of toric
variety with torus $\T(\Lambda )$. This structure is not canonical
because the closed immersion  $\T(\Lambda
)\hookrightarrow \cX_{\Lambda }$ depends on the choice of $\varpi$.
We can describe this structure as follows.
For each polyhedron $\Lambda '$ such that $\Lambda $ is a face of
$\Lambda '$, the image of
$\cc(\Lambda ')$ under the projection $\pi _{\Lambda }\colon \wt
N_{\R}\to \wt N(\Lambda 
)_{\R}$ is a strongly convex rational cone that we denote 
$\sigma _{\Lambda '}$. The cones $\sigma _{\Lambda '}$ form a fan
of $\wt N(\Lambda )_{\R}$ that we denote $\Pi (\Lambda )$. Observe
that the fan $\Pi (\Lambda )$ is the analogue of the star of a cone
defined in \eqref{eq:62}. 
\index{star!of a polyhedron in a polyhedral complex}%
For each cone $\sigma \in \Pi (\Lambda )$
there is a unique polyhedron $\Lambda _{\sigma }\in \Pi $ such that
$\Lambda $ is a face of $\Lambda _{\sigma }$ and $\sigma =\pi
_{\Lambda }(\cc(\Lambda _{\sigma }))$. 

\begin{prop}\label{prop:75} There is an isomorphism of toric varieties over
  $k$ 
  \begin{displaymath}
    X_{\Pi(\Lambda ),k}\longrightarrow V(\Lambda ).
  \end{displaymath}
\end{prop}
\begin{proof}
  Again, the proof is analogous to the proof of Proposition
  \ref{prop:73}. 
\end{proof}

The description of the adjacency relations between orbits is similar
to the one for toric varieties over a field. 
The 
orbit $V(\Lambda)$ is contained in
$V(\Lambda ')$ if and only if the polyhedron $\Lambda '$ is
a face of 
the polyhedron $\Lambda$. Similarly, $\cV(\sigma )$ is contained in
$\cV(\sigma ')$ if and only if $\sigma '$ is a face of $\sigma$. 
Finally, $V(\Lambda )$ is contained in $\cV(\sigma )$ if and only if
$\sigma $ is a face of 
the cone $\rec(\Lambda )$.

\begin{rem}\label{rem:13}
As a consequence of the above construction, we see that there is a
one-to-one correspondence between the 
vertices of $\Pi$ and the components of the special fibre. For each
$v\in \Pi^{0}$, the component $V(v)$ is a toric variety over $k$
defined by the fan $\Pi(v)$ in $\wt N_{\R}/\R(v,1)$.
The orbits contained in $V(v)$ correspond to the polyhedra $\Lambda\in \Pi$
containing $v$. In particular, the components given by two vertices
$v,v'\in\Pi^{0}$ share an 
orbit of dimension $l$ if and only if there exists a polyhedron
of dimension $n-l$ containing both $v$ and $v'$.
\end{rem}

To each polyhedron $\Lambda \in \Pi $, hence to each vertical orbit, we can
associate a combinatorial invariant, 
which we call its multiplicity. For a vertex $v\in \Pi ^{0}$, this
invariant agrees with the order of vanishing of
$\varpi $ along the component $V(v)$ (see \eqref{eq:44}).

Denote by $\inc\colon N\to \wt N$ the inclusion $\inc(u)=(u,0)$ and by
$\pr\colon \wt M \to M$ the projection $\pr(m,l)=m$. We identify
$N$ with its image. We set
\nomenclature[aNl09]{$N(\Lambda)$}{quotient lattice of $N$ associated to
  a polyhedron}%
\nomenclature[aMl09]{$M(\Lambda)$}{sublattice associated to
  a polyhedron in $N_{\R}$}%
\begin{displaymath}
  N(\Lambda )= N/(N\cap \R\negthinspace \cc(\Lambda )), \quad
  M(\Lambda )= M\cap \pr(\cc(\Lambda )^{\bot}).
\end{displaymath}
\begin{rem}
  The lattice $M(\Lambda )$ can also be described as
  $M(\Lambda )=M\cap L_{\Lambda }^{\bot}$. Therefore, for a cone $\sigma
  \subset N_{\R}$,
  the notation just introduced agrees with the one
  in~\eqref{eq:53}. Here, the polyhedron $\Lambda $ is contained in
  $N_{\R}$.  By contrast, for a polyhedron $\Gamma  \subset M_{\R}$,
  we follow 
  Notation \ref{def:79}, so
  $M(\Gamma )=M\cap L_{\Gamma }$. 
\end{rem}

Then  $\inc$  and $\pr$ induce inclusions of lattices of finite index 
 $N(\Lambda )\to \wt
N(\Lambda )$ and $\wt M(\Lambda )\to M(\Lambda )$, that we denote also
by $\inc$ and $\pr$, respectively. 
These inclusions are dual of each other and in particular,
their indexes agree.

\begin{defn}\label{def:22}
  The \emph{multiplicity} of a polyhedron $\Lambda \in \Pi $
  \index{multiplicity of a polyhedron}%
  \nomenclature[am1b]{$\mult(\Lambda)$}{multiplicity of a polyhedron}%
  is defined as 
  \begin{displaymath}
    \mult(\Lambda)=[M(\Lambda ):\pr(\wt M(\Lambda ))]=[\wt
  N(\Lambda ):\inc(N(\Lambda ))]. 
  \end{displaymath}
\end{defn}

\begin{lem}\label{lemm:13}
If $\Lambda\in \Pi $, 
then 
$
\mult(\Lambda)= \min \{n\ge 1
\mid \exists p\in \aff(\Lambda ),\ np\in N\}
$.
\end{lem}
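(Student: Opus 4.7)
My plan is to compute the multiplicity by relating both $N(\Lambda)$ and $\wt N(\Lambda)$ to the short exact sequence $0 \to N \to \wt N \to \Z \to 0$, where the last map is the projection $(u,k)\mapsto k$.

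First I would unwind the definition of $\R\,\cc(\Lambda)$ inside $\wt N_\R = N_\R\times\R$. Fixing any point $p_0\in\Lambda$, the cone $\cc(\Lambda)$ contains $(p_0,1)$ together with $\rec(\Lambda)\times\{0\}$, and contains all $(p,1)$ for $p\in\Lambda$; taking the linear span gives
$$\R\,\cc(\Lambda) = \R\cdot(p_0,1) + L_\Lambda\times\{0\},$$
where $L_\Lambda$ is the linear subspace parallel to $\aff(\Lambda)$. From this description an element $(u,k)\in\wt N$ lies in $\R\,\cc(\Lambda)$ if and only if either $k=0$ and $u\in L_\Lambda$, or $k\neq 0$ and $u/k\in\aff(\Lambda)$. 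In particular, $\inc(N)\cap\R\,\cc(\Lambda) = (N\cap L_\Lambda)\times\{0\}$, which (via the convention that $N\cap\R\,\cc(\Lambda)$ means its preimage in $N$) is exactly $N\cap L_\Lambda$.

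Next I would apply the snake lemma to the commutative diagram with exact rows
$$\begin{array}{ccccccccc}
0 & \to & N\cap L_\Lambda & \to & \wt N\cap \R\,\cc(\Lambda) & \to & I & \to & 0 \\
& & \downarrow & & \downarrow & & \downarrow & & \\
0 & \to & N & \to & \wt N & \to & \Z & \to & 0
\end{array}$$
where $I\subset\Z$ denotes the image of $\wt N\cap \R\,\cc(\Lambda)$ under the second projection (the top row is exact by the description above: the kernel of the projection restricted to $\wt N\cap\R\,\cc(\Lambda)$ is exactly $(N\cap L_\Lambda)\times\{0\}$, and the map onto $I$ is surjective by definition). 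Quotienting rows then yields a short exact sequence
$$0 \longrightarrow N(\Lambda) \stackrel{\inc}{\longrightarrow} \wt N(\Lambda) \longrightarrow \Z/I \longrightarrow 0,$$
so $\mult(\Lambda) = [\wt N(\Lambda):\inc(N(\Lambda))] = \#(\Z/I)$.

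It remains to identify $I$ with $n_0\Z$, where $n_0=\min\{n\geq 1\mid \exists p\in\aff(\Lambda),\,np\in N\}$. By the explicit description of $\wt N\cap\R\,\cc(\Lambda)$, a positive integer $k$ lies in $I$ if and only if there exists $u\in N$ with $u/k\in\aff(\Lambda)$, i.e.\ $k\cdot p\in N$ for some $p\in\aff(\Lambda)$. Thus $I\cap\Z_{>0}$ is precisely the set of positive $k$ admitting such a $p$; since $I$ is a subgroup of $\Z$, its positive generator is exactly $n_0$, so $I=n_0\Z$ and $\mult(\Lambda)=n_0$, as claimed. The only mildly delicate step is the explicit description of $\R\,\cc(\Lambda)$, but this is a direct consequence of the definition $\cc(\Lambda)=\overline{\R_{>0}(\Lambda\times\{1\})}$; everything else is formal.
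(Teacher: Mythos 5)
Your proof is correct. Both you and the paper proceed by fitting the inclusion $N(\Lambda)\hookrightarrow \wt N(\Lambda)$ into a snake-lemma argument built from the short exact sequence $0\to N\to \wt N\to \Z\to 0$, so the two arguments are close in spirit, but the bookkeeping differs. The paper works directly at the quotient level: it sends $\Z$ into $\wt N(\Lambda)$ via $n\mapsto [(0,n)]$, assembles a $3\times 3$ diagram around $N(\Lambda)$, $\wt N(\Lambda)$, and this copy of $\Z$, shows the bottom map is an isomorphism, and then identifies $N(\Lambda)\cap\Z$ with the desired set. You instead stay at the level of unquotiented lattices: you identify $N\cap\R\,\cc(\Lambda)=N\cap L_\Lambda$, apply the snake lemma to two copies of the basic sequence (one restricted to $\R\,\cc(\Lambda)$), and read off $\wt N(\Lambda)/N(\Lambda)\cong \Z/I$ where $I$ is the image of $\wt N\cap\R\,\cc(\Lambda)$ in $\Z$. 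This buys you a small but genuine simplification: you never need to discuss whether $\Z\to\wt N(\Lambda)$ is injective (a point the paper quietly glosses over, and one that in fact fails when $\Lambda=\{0\}$, although the statement survives there for trivial reasons), and the identification of $I$ with $n_0\Z$ falls out immediately because $I$ is a subgroup of $\Z$ by construction. The explicit description of $\R\,\cc(\Lambda)$ you give is a correct and useful preliminary that the paper leaves implicit.
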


\begin{proof}
We consider
  the inclusion $\Z\to \wt N(\Lambda )$ that sends $n\in \Z$ to the
  class of $(0,n)$. There is a commutative diagram with exact rows and columns
  \begin{displaymath}
    \xymatrix{
      &0\ar[d]&0\ar[d]&&\\
      0\ar[r]&N(\Lambda )\cap \Z \ar[d]\ar[r]& 
      \Z \ar[d]\ar[r] & \Z/(N(\Lambda )\cap \Z)\ar[d]\ar[r] & 0\\
      0\ar[r]&N(\Lambda ) \ar[d]\ar[r]& \wt N(\Lambda ) \ar[d]\ar[r]& 
      \wt N(\Lambda )/N(\Lambda )\ar[r] & 0\\
      & N(\Lambda ) /(N(\Lambda )\cap \Z )\ar[d] \ar[r] & 
      \wt N(\Lambda ) /\Z \ar[d] & &\\
      & 0 & 0 & &
    }
  \end{displaymath}
It is easy to see that the bottom arrow in the diagram 
is an isomorphism. By the Snake lemma the right vertical arrow is an
isomorphism. Therefore
\begin{displaymath}
  \mult(\Lambda )=[\Z: \Z\cap N(\Lambda )].
\end{displaymath}
We verify that 
\begin{math}
  \Z \cap N(\Lambda )=\{n\in \Z
\mid \exists p\in \aff(\Lambda),\ np\in N\},
\end{math}
from which the lemma follows.
\end{proof}

We now discuss equivariant morphisms of toric schemes. 

\begin{defn} \label{def:39}
Let $\T_{i}$, $i=1,2$, be split tori over $S$ and $\varrho\colon
\T_{1}\to  \T_{2}$ a morphism of algebraic group schemes. 
Let $\cX_{i}$ be toric schemes over $S$ with torus $\T_{i}$ and let
$\mu_{i}$ denote the corresponding action.
A morphism $\varphi\colon \cX_{1}\to \cX_{2}$ is \emph{$\varrho$-equivariant}
\index{equivariant morphism!of toric schemes}%
if
the diagram 
\begin{displaymath}
  \xymatrix{ \T_{1}\times \cX_{1}
    \ar[r]^{\mu_{1}}\ar[d]_{\varrho \times \varphi}
& \cX_{1} \ar[d]^{\varphi} \\ 
\T_{2}\times \cX_{2} \ar[r]^{\mu_{2}}
& \cX_{2}
  }
\end{displaymath}
commutes. A morphism $\varphi\colon \cX_{1}\to \cX_{2}$ is
\emph{$\varrho$-toric}
\index{toric morphism!of toric schemes}%
if its restriction to $\T_{1,\eta}$, the torus over $K$,
coincides with that of $\varrho$.
\end{defn}

It can be verified that a toric morphism of schemes over $S$ is also 
equivariant.
In the sequel, we extend the construction of equivariant
morphisms in \S \ref{Orbits} to proper toric schemes. Before that, we
need to relate rational points on 
the open orbit of the toric variety with lattice points in $N$.
\begin{defn}\label{def:32} The valuation map of the field,
$\val_{K}\colon K^{\times}\to \Z$, induces a 
{valuation map} on $\T(K)$,
\index{valuation map!of a torus}%
also denoted $\val_{K}\colon \T(K)\to N$, by the
  identifications
$\T(K)=\Hom(M,K^{\times})$ and $N=\Hom(M,\Z)$.
\end{defn}
Let $\T_{S,i}$, $i=1,2$, be split tori over $S$. For each $i$, let  
$N_{i}$ be the corresponding lattice and $\Pi_{i}$ a complete
SCR polyhedral complex in $N_{i,\R}$. 
Let $A\colon N_{1}\to N_{2}$ be an affine map 
such that, for every $\Lambda_{1}\in \Pi_{1}$, there exists
$\Lambda_{2}\in\Pi_{2}$ with $A(\Lambda_{1})\subset \Lambda_{2}$.
Let $p\in
\cX_{\Pi_{2},0}(K)=\T_{2}(K)$ such that $\val_{K}(p)=A(0)$.
Write $A=H+ \val_{K}(p)$, where $H\colon N_{1}\to
N_{2}$ is a linear map. $H$ induces a morphism of algebraic groups
\begin{displaymath}
  \varrho_{H}\colon \T_{S,1}\longrightarrow \T_{S,2}.
\end{displaymath}
Let $\Sigma _{i}=\rec(\Pi _{i})$. For each cone $\sigma _{1}\in
\Sigma _{1}$, there exists a cone $\sigma _{2}\in \Sigma _{2}$ with
$H(\sigma _{1})\subset \sigma _{2}$. Therefore $H$ and $p$ define an
equivariant morphism $\varphi_{p,H}\colon X_{\Sigma _{1}}\to X_{\Sigma
    _{2}}$ of toric varieties over $K$ as in Theorem \ref{thm:25}. 

\begin{prop}\label{prop:45} With the above hypothesis,
  the morphism $\varphi_{p,H}$ can be extended to a
  $\varrho _{H}$-equivariant morphism 
  \begin{displaymath}
    \Phi _{p,A}\colon \cX_{\Pi _{1}}\longrightarrow \cX_{\Pi _{2}}.
  \end{displaymath}
\end{prop}
\nomenclature[g21z]{$\Phi _{p,A}$}{equivariant morphism of toric schemes}%
\begin{proof}
Let $\Lambda_{i}\in \Pi_{i}$ such that $A(\Lambda_{1})\subset
\Lambda_{2}$. Then the map $\wt M_{2}\to \wt M_{1}$ given by
$(m,l)\mapsto (H^{\vee}m, \langle m,\val_{K}(p)\rangle+ 
l)$ for $m\in M$ and $l\in \Z$ (which is just the dual of the
linearization of $A$) induces a 
morphism of semigroups $\wt M_{2,\Lambda_{2}}\to \wt
M_{1,\Lambda_{1}}$. Since $\chi^{m}(p)\varpi^{-\langle m, \val_{K}(p)\rangle}$
belongs to 
$K^{\circ}$, the assignment 
\begin{displaymath}
\chi^{(m,l)}\longmapsto (\chi^{m}(p)\varpi^{-\langle m,\val_{K}(p)\rangle})
\chi^{(H^{\vee}m,\langle m, \val_{K}(p)\rangle+l)} 
\end{displaymath}
defines a ring morphism $K^{\circ}[\wt M_{2,\Lambda_{2}}]\to
K^{\circ}[\wt M_{1,\Lambda_{1}}]$. This morphism sends
$\chi^{(0,1)}-\varpi $ to $\chi^{(0,1)}-\varpi $, hence induces a
morphism  $K^{\circ}[\cX_{\Lambda_{2}}]\to
K^{\circ}[\cX_{\Lambda_{1}}]$ and a map $\cX_{\Lambda_{1}}\to
\cX_{\Lambda_{2}}$. Varying $\Lambda _{1}$ and $\Lambda _{2}$ we
obtain maps that glue together into a map 
\begin{displaymath}
  \Phi_{p,A}\colon \cX_{\Pi_{1}}\longrightarrow \cX_{\Pi_{2}}.
\end{displaymath}
By construction, this map extends $\varphi_{p,H}$ and is
equivariant with respect to the morphism $\varrho_{H}$. 
\end{proof}

As an example of the above construction, we consider the toric
subschemes associated to orbits under the 
action of subtori. Let $N$ be a lattice, $\Pi $ a complete SCR
polyhedral complex in $N_{\R}$ and set $\Sigma =\rec(\Pi )$.
Let $Q\subset N$ be a saturated sublattice and let $p\in X_{\Sigma ,0}(K)$.
We set $u_{0}=\val_{K}(p)$.
We
consider the affine map $A\colon Q_{\R}\to N_{\R}$ given by
$A (v)=v+u_{0}$. Recall that the sublattice $Q$ and the point $p$ induce
maps of toric varieties \eqref{eq:31}
$$
X_{\Sigma_Q} \longrightarrow Y_{\Sigma_Q,p}\hooklongrightarrow X_\Sigma.
$$
We want to identify the toric model of $X_{\Sigma_Q}$ induced by
the toric model $\cX_{\Pi }$  of $X_\Sigma$.
We define the complete SCR polyhedral complex
$\Pi _{Q,u_{0}}=A ^{-1}\Pi $ of $Q_{\R}$. Then, $\rec(\Pi
_{Q,u_{0}})= \Sigma_{Q}$. 
Applying the construction of Proposition \ref{prop:45},
 we obtain an equivariant morphism of schemes over $S$
\begin{displaymath}
  \cX_{\Pi _{Q,u_{0}}} 
  \longrightarrow \cX_{\Pi }.
\end{displaymath}
The image of this map is
the Zariski closure of $Y_{\Sigma_Q,p }$ and $\cX_{\Pi _{Q,u_{0}}}$ is a
toric model of $X_{\Sigma _Q}$. This map will be denoted either as
$\Phi _{p,A}$ or $\Phi _{p,Q}$.
Observe that the abstract toric scheme $ \cX_{\Pi _{Q,u_{0}}}$ only
depends on $Q$ and on $\val_{K}(p)$.


\section{$\T$-Cartier divisors on toric schemes}
\label{T Cartier divisors schemes}

The theory of $\T$-Cartier divisors carries over to the case of
toric schemes over a DVR. We keep the notations of the previous
section. In particular, $K$ is a field equipped with a nontrivial
discrete valuation $\val_{K}$. Let $\cX$ be a toric
scheme over $S=\Spec(K^{\circ})$ with torus $\T_{S}$. 
There are two morphisms from $\T_{S}\times \cX$ to
$\cX$: the toric action, that we denote by $\mu $,
and the second projection, that we denote by $\pi_{2}$.
A Cartier divisor $D$ on $\cX$ is called a 
\emph{$\T$-Cartier divisor}
\index{T-Cartier divisor@$\T$-Cartier divisor!on a toric scheme}%
if 
$\mu ^{\ast}D=\pi _{2}^{\ast}D.$

$\T$-Cartier divisors over a toric scheme can be described
combinatorially.
For simplicity, we will discuss only the case of
proper schemes. So, let  $\Pi $ be a
complete SCR polyhedral complex in $N_\R$, and $\cX_{\Pi }$ the
corresponding toric scheme.  
Let $\phiK $ be an H-lattice function on 
$\Pi$ (Definitions \ref{def:31} and \ref{def:54}).
Then $\phiK $ defines a $\T$-Cartier
divisor in a way similar to the one for toric varieties 
over a field. 
We recall that the schemes $\{\cX_{\Lambda}\}_{\Lambda\in \Pi}$ form
an open cover of $\cX_\Pi$.
\nomenclature[g2111]{$\phiK $}{H-lattice function}%
\nomenclature[aDcart3]{$D_\phiK $}{$\T$-Cartier divisor on a toric scheme}%
Choose a set of defining vectors 
$\{(m_\Lambda ,l_{\Lambda})\}_{\Lambda \in \Pi }$ of $\phiK  $. Then we set
\begin{equation} \label{eq:59}
  D_{\phiK }=\{(\cX_{\Lambda },\varpi ^{-l_{\Lambda }}\chi^{-m_{\Lambda
    }})\}_{\Lambda  \in \Pi }, 
\end{equation}
where we are using the identification \eqref{eq:72}.
The divisor
$D_{\phiK }$ only depends on $\phiK $ and not on a particular choice of defining vectors. 

We consider now toric schemes and $\T$-Cartier divisors over
$S$ as models of toric varieties and $\T$-Cartier divisors over $K$. 

\begin{defn}\label{def:17}
  Let $\Sigma $ be a complete fan in $N_{\R}$ and $\Psi $ a virtual
  support function on $\Sigma $. Let $(X_{\Sigma },D_{\Psi})$ be the
  associated toric variety and $\T$-Cartier divisor defined over $K$.  
  A \emph{toric model} of
    $(X_{\Sigma },D_{\Psi  })$
\index{toric model!of a $\T$-Cartier divisor}%
\index{model!toric|see{toric model}}%
is a triple
  $(\cX,D,e)$, where $\cX$ is a toric model over
  $S$ of $X$,  
  $D$ is a $\T$-Cartier divisor on $\cX$ and $e>0$ is an integer such that the isomorphism
  $\iota \colon X_{\Sigma 
  }\to \cX_{\eta}$ that extends the identity of $\T_{K}$ satisfies
  $\iota^{\ast}(D)=eD_{\Psi }$. 
  When $e=1$, the toric model $(\cX,D,1)$ will be denoted simply by
  $(\cX,D)$. A toric model will be
  called \emph{proper}
\index{toric model!of a $\T$-Cartier divisor!proper}%
whenever the scheme $\cX$ is proper over $S$. 
\end{defn}

\begin{exmpl}\label{exm:23} We continue with Example
  \ref{exm:14}. The function $\Psi _{\Delta ^{n}}$ is an H-lattice
  concave function on $\Sigma _{\Delta ^{n}}$ and
  $(\P^{n}_{S},D_{\Psi 
    _{\Delta ^{n}}})$ is a proper toric model of
  $(\P^{n}_{K},D_{\Psi _{\Delta ^{n}}})$. 
\end{exmpl}

This example can be generalized as follows.

\begin{defn}\label{def:55}
  Let $\Sigma $ be a complete fan in
  $N_{\R}$ and $\Psi $  a virtual support function on $\Sigma $. Then 
  $\Sigma $ is a complete SCR polyhedral complex in $N_{\R}$  and
  $\Psi $ is
  a rational piecewise affine function on $\Sigma $. Then 
  $(\cX_{{\Sigma }},D_{\Psi })$ is a model over
  $S$ of  $(X_\Sigma,D_{\Psi })$, which is called the
  \emph{canonical model}.
\index{canonical model!of a $\T$-Cartier divisor}%
\index{model!canonical|see{canonical model}}%
\end{defn}

\begin{defn}\label{def:70} Let $\cX$ be a toric scheme and $\cL$ a
  line bundle on $\cX$. A \emph{toric structure}
\index{toric structure on a line bundle}%
on $\cL$ is the choice of an
  element $z$ of the fibre  $\cL_{x_{0}}$, where $x_{0}\in
  \cX_{\eta}$ is the distinguished point.  
  A \emph{toric line bundle}
\index{toric line bundle!on a toric scheme}%
  on $\cX$ is a
  pair $(\cL,z)$, where $\cL$ is a line bundle over $\cX$ and
  $z$ is a toric structure on $\cL$. Frequently, when the toric
  structure is clear from the context, the element
  $z$ will be omitted from the notation and a toric line bundle will
  be denoted by the underlying line bundle. 
  A \emph{toric section}
\index{toric line bundle!on a toric scheme!toric section of}%
is a rational 
  section that is regular and non vanishing over the principal open
  subset $X_{0}\subset \cX_{\eta}$ and such that $s(x_{0})=z$. Exactly as in the case
  of toric varieties over a field, each $\T$-Cartier divisor defines a
  toric line bundle $\cO(D)$ together with a toric section. When the
  $\T$-Cartier divisor comes from an H-lattice function $\phiK $, the
  toric line bundle and toric section will be denoted $\cL_{\phiK }$
  and $s_{\phiK }$ respectively.
\nomenclature[aLza]{$\cL_{\phiK }$}{model of a toric line bundle}%
\nomenclature[asabs]{$s_{\phiK }$}{toric section on a toric scheme}%
\end{defn}

The following result follows directly form the definitions.

\begin{prop}\label{prop:89} Let $(X_{\Sigma },D_{\Psi })$ be a toric
  variety with a $\T$-Cartier divisor.
  Every toric model $(\cX,D,e)$ of $(X_{\Sigma },D_{\Psi })$
  induces a model $(\cX,\mathcal{O}(D),e)$ of $(X_{\Sigma
  },L_{\Psi })$, in the sense of Definition
  \ref{def:56}, where the identification of $\mathcal{O}(D)|_{X_\Sigma
  }$ with $L_{\Psi }^{\otimes e}$ matches the toric
  sections determined by the Cartier divisors (Theorem
  \ref{thm:27}). Such models will be called  
  \emph{toric models}.
\end{prop}

\begin{prop-def} \label{prop:50}
 We say that two toric
models $(\cX_{i},D_{i},e_{i})$, $i=1,2$, are 
\emph{equivalent},
\index{toric model!of a $\T$-Cartier divisor!equivalence of}%
if there  
exists a toric model
$(\cX',D',e')$ of $(X 
_{\Sigma },D_{\Psi })$ and morphisms of 
toric models $\alpha _{i}\colon \cX'\to \cX_{i}$, $i=1,2$, such that 
$e'\alpha _{i}^{\ast}D_{i}=e_{i}D'$. This is an
equivalence relation.  
\end{prop-def}
\begin{proof}
  Symmetry and reflexivity are straightforward. For transitivity assume
  that we have toric models $(\cX_{i},D_{i},e_{i})$,
  $i=1,2,3$, that the first and second model are equivalent
  through $(\cX',D',e')$ and that the second and the third are
  equivalent through $(\cX'',D'',e'')$. Then, by Theorem
  \ref{thm:7b},  $\cX'$ and
  $\cX''$ are defined by SCR polyhedral complexes $\Pi '$ and $\Pi''$
  respectively, with $\rec(\Pi ')=\rec(\Pi'')=\Sigma $. Let
  $\Pi'''=\Pi '\cdot \Pi''$. By Lemma \ref{lemm:15},
  $\rec(\Pi''')=\Sigma $. Thus $\Pi'''$ determines a model $\cX'''$ of
  $X_{\Sigma }$.  This model has morphisms $\beta '$ and $\beta ''$
  to 
  $\cX'$ and $\cX''$ respectively. We put $e'''=e'e''$ and
  $D'''=e''\beta '{}^{\ast} D'=e'\beta ''{}^{\ast} D''$. Now it is
  easy to verify that $(\cX''',D''',e''')$ provides the
  transitivity property.
\end{proof}

We are interested in proper toric models and equivalence classes
because, by Definition \ref{def:7}, a proper toric model of $(X_{\Sigma
},D_{\Psi })$ induces an algebraic metric on $L_{\Psi }^{\an}$.   
By Proposition \ref{prop:21}, equivalent toric models define the same
algebraic metric.

We can classify proper models of $\T$-Cartier divisors (and therefore of
toric line bundles) in terms of H-lattice functions. We first recall
the classification of $\T$-Cartier divisors.

\begin{thm} \label{thm:11}
  Let $\Pi $ be a complete SCR polyhedral complex
  in $N_{\R}$ and $\cX_{\Pi }$  the associated toric scheme
  over $S$. The correspondence  $\phiK\mapsto D_{\phiK}$ is an isomorphism
  between the group of H-lattice functions
  on $\Pi$ and the group of $\T$-Cartier divisors on $\cX_{\Pi
  }$. Moreover, if $\phiK _{1}$ and $\phiK _{2}$ are two H-lattice
  functions on $\Pi $, then the divisors $D_{\phiK _{1}}$ and $D_{\phiK
    _{2}}$ are rationally equivalent if and only if $\phiK _{1}-\phiK
  _{2}$ is affine.
\end{thm}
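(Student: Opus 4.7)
The strategy is to mirror the proof of Theorem \ref{thm:3} for toric varieties over a field, while carefully keeping track of the extra $\Z$-factor in $\wt M = M\oplus\Z$ that records valuations of $\varpi$. The plan is to first verify that $\psi \mapsto D_{\psi}$ is a well-defined group homomorphism, then establish injectivity and surjectivity by a local analysis on the affine charts $\cX_{\Lambda}$, and finally characterize rational equivalence using the classification of $\T$-invariant rational functions on $\cX_{\Pi}$.

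The first step is to check that the $\T$-Cartier divisor $D_{\psi}$ defined by $\{(\cX_{\Lambda}, \varpi^{-l_{\Lambda}}\chi^{-m_{\Lambda}})\}_{\Lambda\in\Pi}$ depends only on $\psi$, and that the data is compatible on overlaps. If $\{(m_{\Lambda}, l_{\Lambda})\}$ and $\{(m_{\Lambda}', l_{\Lambda}')\}$ are two sets of defining vectors for $\psi$, then by construction $(m_{\Lambda}-m_{\Lambda}', l_{\Lambda}-l_{\Lambda}')$ vanishes on $\cc(\Lambda)$, hence lies in $\wt M\cap \cc(\Lambda)^{\bot}=\wt M(\Lambda)$. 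The corresponding monomial is therefore a unit in $K^{\circ}[\cX_{\Lambda}]$, giving the same Cartier divisor. The compatibility on $\cX_{\Lambda}\cap\cX_{\Lambda'}=\cX_{\Lambda\cap\Lambda'}$ follows similarly, and the additivity $D_{\psi_1+\psi_2}=D_{\psi_1}+D_{\psi_2}$ is immediate from the definition. Torus invariance of $D_{\psi}$ is clear because the local equations are monomials (modulo $\chi^{(0,1)}-\varpi$).

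For injectivity, if $D_{\psi}=0$ then on each chart $\cX_{\Lambda}$ the function $\varpi^{-l_{\Lambda}}\chi^{-m_{\Lambda}}$ is a unit in $K^{\circ}[\cX_{\Lambda}]$. Since a monomial in the semigroup algebra $K^{\circ}[\wt M_{\Lambda}]/(\chi^{(0,1)}-\varpi)$ is a unit if and only if its exponent lies in $\wt M(\Lambda)$, we get $(m_{\Lambda},l_{\Lambda})\in\wt M(\Lambda)$, i.e.\ the affine function $u\mapsto \langle m_\Lambda,u\rangle+l_\Lambda$ vanishes on $\Lambda$. Since $\Pi$ is complete, this forces $\psi\equiv 0$. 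For surjectivity, start with a $\T$-Cartier divisor $D$ on $\cX_{\Pi}$ and restrict to each affine chart $\cX_{\Lambda}$. A $\T$-invariant Cartier divisor on $\cX_{\Lambda}$ must have a local equation that is a $\T$-eigenfunction up to a unit; such an eigenfunction in $\Frac(K^{\circ}[\cX_{\Lambda}])$ lifts to a monomial $\varpi^{-l_{\Lambda}}\chi^{-m_{\Lambda}}$ with $(m_{\Lambda},l_{\Lambda})\in\wt M$. The compatibility on overlaps, together with the criterion for a monomial to be a unit, gives exactly the compatibility condition~\eqref{eq:21} for $\{(m_{\Lambda},l_{\Lambda})\}$ to be the defining vectors of an H-lattice function $\psi$ on $\Pi$ (using Proposition \ref{prop:9}), and by construction $D=D_{\psi}$.

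Finally, for the rational equivalence statement, $D_{\psi_1}$ and $D_{\psi_2}$ are rationally equivalent if and only if $D_{\psi_1-\psi_2}=\div(f)$ for some nonzero rational function $f$ on $\cX_{\Pi}$. As $\cX_{\Pi}$ is integral with function field $K(M)=\Frac(K[M])$, and since $D_{\psi_1-\psi_2}$ is $\T$-invariant, one can arrange $f$ to be a $\T$-eigenfunction, hence of the form $c\,\varpi^{l}\chi^{m}$ with $c\in (K^{\circ})^{\times}$, $(m,l)\in\wt M$. Then $D_{\psi_1-\psi_2}=\div(\varpi^{l}\chi^{m})$ translates, via the local description, into the assertion that $\psi_{1}-\psi_{2}$ equals the affine function $u\mapsto \langle m,u\rangle+l$ on $N_{\R}$. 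Conversely, any affine H-lattice function arises this way, giving the desired characterization.

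The main obstacle is the surjectivity step: one must check that a $\T$-invariant Cartier divisor on the affine toric scheme $\cX_{\Lambda}=\Spec(K^{\circ}[\wt M_{\Lambda}]/(\chi^{(0,1)}-\varpi))$ admits a $\T$-eigenfunction as local equation. This requires analyzing the $\T$-action on $\Frac(K^{\circ}[\cX_{\Lambda}])$ and showing that the invariant rational functions modulo units are precisely the images of monomials in $\wt M$, which in turn rests on the fact that $K^{\circ}[\cX_{\Lambda}]$ is normal (so one can work with Weil divisors) and that its Weil divisor class group is generated by $\T$-invariant prime divisors—the analogue, over $S$, of the second exact sequence in Theorem~\ref{thm:9}.
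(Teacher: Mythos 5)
Your proposal is correct and fills in exactly the kind of argument that is referenced but not reproduced in the paper: the paper's proof of Theorem~\ref{thm:11} is a one-line citation to [KKMS73, \S IV.3(h)], and your proof plan unfolds what is in that reference, mirroring the proof of the field case (Theorem~\ref{thm:3}) with the extra $\Z$-factor in $\wt M$ and the relation $\chi^{(0,1)}=\varpi$ tracked throughout. The well-definedness, injectivity (via the criterion that a monomial $\chi^{(m,l)}$ is a unit in $K^{\circ}[\cX_{\Lambda}]$ iff $(m,l)\in\wt M(\Lambda)$, hence $\psi|_{\Lambda}\equiv 0$ on each piece of a complete $\Pi$) and rational-equivalence steps are all right. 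On the surjectivity step you correctly flag the crux, though the phrasing ``invariant rational functions modulo units'' should be ``$\T$-semi-invariant (eigen)functions modulo units''; the actual load-bearing facts are that $\Pic(\cX_{\Lambda})=0$ for $\Lambda\in\Pi$ (so locally $D=\div(f)$) and that a rational $f$ with $\T$-invariant $\div(f)$ may be normalized, using that invertible global functions on $\cX_{\Lambda}$ are monomials with exponent in $\wt M(\Lambda)$, to a $\T$-eigenfunction, hence a monomial $\varpi^{-l_{\Lambda}}\chi^{-m_{\Lambda}}$. That is exactly the argument in the cited \S IV.3(h), so there is no gap, only a small terminological inaccuracy.
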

\begin{proof} The result 
follows from \cite[\S IV.3(h)]{Kempfals:te}.  
\end{proof}

We next derive the classification theorem for models of $\T$-Cartier divisors.

\begin{thm} \label{thm:11b}
  Let $\Sigma $ be a complete fan in $N_{\R}$
    and $\Psi $ a virtual 
  support function on $\Sigma $.
  Then the correspondence $(\Pi,\phiK)\mapsto (\cX_{\Pi},D_{\phiK})$ is a
  bijection between:
  \begin{itemize}
  \item[$\bullet$] the set of pairs $(\Pi
    ,\phiK )$, where $\Pi$ is a complete SCR polyhedral complex in
    $N_\R$ with $\rec(\Pi)$= $\Sigma $ and $\phiK $ is an H-lattice function
    on $\Pi$ such that $\rec(\phiK )=\Psi $;
  \item[$\bullet$]  the set of isomorphism
  classes of toric models $(\cX,D)$ of $(X_{\Sigma },D_{\Psi })$.
  \end{itemize}
\end{thm}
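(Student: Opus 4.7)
\begin{Proof}{Proof sketch.}
The strategy is to combine the two classification results already at hand, Theorem~\ref{thm:7b} (for the underlying toric scheme) and Theorem~\ref{thm:11} (for the $\T$-Cartier divisor on a fixed toric scheme), and then verify that the recession conditions on $(\Pi,\psi)$ exactly encode the compatibility with $(X_\Sigma,D_\Psi)$ on the generic fibre.

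First, the plan is to show that the assignment $(\Pi,\psi)\mapsto (\cX_\Pi,D_\psi)$ is well-defined and produces a toric model of $(X_\Sigma,D_\Psi)$. Since $\rec(\Pi)=\Sigma$, Theorem~\ref{thm:7b} guarantees that $\cX_\Pi$ is a proper toric model of $X_\Sigma$, so it remains to check that the restriction of the $\T$-Cartier divisor $D_\psi$ to the generic fibre $\cX_{\Pi,\eta}=X_\Sigma$ equals $D_\Psi$. For this, let $\{(m_\Lambda,l_\Lambda)\}_{\Lambda\in\Pi}$ be a set of defining vectors for $\psi$; by Proposition~\ref{prop:43} (and the explicit computation in its proof) the restriction $\rec(\psi)|_{\rec(\Lambda)}$ is given by the vector $m_\Lambda$. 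The local equation of $D_\psi$ on $\cX_\Lambda$ is $\varpi^{-l_\Lambda}\chi^{-m_\Lambda}$; restricting to the generic fibre kills the unit $\varpi^{-l_\Lambda}$, so the divisor becomes $\chi^{-m_\Lambda}$ on $\cX_{\Lambda,\eta}$. Because every cone $\sigma\in\Sigma$ is a face of some $\cc(\Lambda)$ with $\sigma\subset\rec(\Lambda)$, the open subsets $X_\sigma\subset\cX_{\Lambda,\eta}$ cover $X_\Sigma$, and the collection $\{(X_\sigma,\chi^{-m_\Lambda}|_{X_\sigma})\}$ is precisely the $\T$-Cartier divisor associated to $\rec(\psi)$ via~\eqref{eq:23}. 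The hypothesis $\rec(\psi)=\Psi$ then identifies this with $D_\Psi$.

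Second, I would construct the inverse map. Given a proper toric model $(\cX,D)$ of $(X_\Sigma,D_\Psi)$, Theorem~\ref{thm:7b} produces a unique complete SCR polyhedral complex $\Pi$ in $N_\R$ with $\rec(\Pi)=\Sigma$ and an equivariant isomorphism $\cX\simeq\cX_\Pi$, while Theorem~\ref{thm:11} (applied to $\cX_\Pi$) produces a unique H-lattice function $\psi$ on $\Pi$ with $D=D_\psi$. The condition that $(\cX,D)$ is a toric model of $(X_\Sigma,D_\Psi)$ means that $D|_{\cX_\eta}=D_\Psi$ under the canonical identification, so by the computation of the previous paragraph we get $D_{\rec(\psi)}=D_\Psi$ on $X_\Sigma$. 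Since a virtual support function on a complete fan is determined by its associated $\T$-Cartier divisor (Theorem~\ref{thm:3}, applied using that $\Sigma$ is not contained in any hyperplane because it is complete), this forces $\rec(\psi)=\Psi$.

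Third, bijectivity follows at once: the two constructions are mutually inverse since in each step we invoke the bijective classifications of Theorems~\ref{thm:7b} and~\ref{thm:11}, and the recession conditions match up on both sides. The main technical point, and the only place where real work is needed beyond invoking previous theorems, is the generic-fibre computation relating the defining vectors of $\psi$ on a polyhedron $\Lambda$ to the defining vectors of $\rec(\psi)$ on the cone $\rec(\Lambda)$; this is essentially a restatement of Proposition~\ref{prop:43} and the explicit description~\eqref{eq:72} of the ring $K^\circ[\cX_\Lambda]$, which sends $\varpi$ to a unit upon inverting it.
\end{Proof}
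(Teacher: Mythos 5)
Your proposal is correct and follows exactly the paper's route: observe that the recession function governs the restriction of $D_\psi$ to the generic fibre (so $\iota^\ast D_\psi = D_{\rec(\psi)}$), and then combine the classification of toric models (Theorem~\ref{thm:7b}) with the classification of $\T$-Cartier divisors on a fixed toric scheme (Theorem~\ref{thm:11}). Your version merely makes explicit the generic-fibre computation in terms of defining vectors, which the paper subsumes in a single sentence; the small caveat is that the ``not contained in a hyperplane'' hypothesis you cite for pinning down $\Psi$ from $D_\Psi$ is unnecessary here, since Theorem~\ref{thm:3} (not Theorem~\ref{thm:9}) already gives that bijection without it.
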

\begin{proof}
Denote by $\iota\colon X_{\Sigma }=X_{\rec(\Pi) } \to \cX_{\Pi }$
the open immersion of the generic fibre.
The recession function (Definition \ref{def:24}) determines the
restriction of the $\T$-Cartier 
divisor to the fibre over the generic point. Therefore, 
 when $\phiK $ is an H-lattice
function on $\Pi $ with $\rec(\phiK )=\Psi $, we have that
\begin{displaymath}
  \iota^{\ast}D_{\phiK }=D_{\rec(\phiK)}=D_{\Psi }.
\end{displaymath}
Thus $(\cX_{\Pi },D_{\phiK })$ is a toric model of $(X_{\Sigma
},D_{\Psi })$. The statement follows from Theorem~\ref{thm:7b} and
Theorem \ref{thm:11}.
\end{proof}

\begin{rem}
  Let $\Sigma $ be a complete fan in $N_{\R}$
    and $\Psi $ a virtual 
  support function on $\Sigma $. Let $(\cX,D,e)$ be a toric model of
  $(X_{\Sigma },D_{\Psi })$. Then, by Theorem \ref{thm:11b}, there exists a
  complete SCR polyhedral complex $\Pi $ in $N_{\R}$ with
  $\rec(\Pi )=\Sigma $ and a rational piecewise affine function $\phiK
  $ on $\Pi $ such that $e\phiK $ is an H-lattice function, $\rec(\phiK
  )=\Psi $ and $(\cX,D,e )=(\cX_{\Pi },D_{e\phiK },e)$. Moreover, if
  $(\cX',D',e' )$ is another toric model that gives the function $\phiK
  '$, then 
  both models are equivalent if and only if $\phiK =\phiK '$. 
  Thus, to every toric model we have associated a rational piecewise
  affine function $\phiK 
  $ on $\Pi $ such that $\rec(\phiK
  )=\Psi $. Two equivalent models give rise to the same function. 

  The converse is not true.
  Given a rational piecewise affine function $\phiK $, with $\rec(\phiK
  )=\Psi $, we can find a complete SCR polyhedral complex $\Pi $ such
  that $\phiK $ is piecewise affine on $\Pi $. But, in general,
  $\rec(\Pi )$ does not agree with $\Sigma $.  What we can expect is
  that $\Sigma ':=\rec(\Pi )$
  is a refinement of $\Sigma $. Therefore the function $\phiK $
  gives us an equivalence
  class of toric models of $(X_{\Sigma '},D_{\Psi })$. But $\phiK $
  may not determine an equivalence class of
  toric models of $(X_{\Sigma },D_{\Psi })$. In Corollary \ref{cor:11}
  in next section we
  will give a necessary condition for a function $\phiK $ to define an
  equivalence class of toric models of $(X_{\Sigma },D_{\Psi })$ and
  in Example \ref{exm:27} we will exhibit a function that does not
  satisfy this necessary condition. By
  contrast, as we will see in Theorem \ref{thm:12}, the concave case
  is much more transparent.
\end{rem}

The correspondence between $\T$-Cartier divisors and $\T$-Weil
\index{T-Weil divisor@$\T$-Weil divisor}%
divisors has to take into account that we have two types of
orbits. Each vertex $v  \in\Pi ^0$ defines a vertical 
invariant 
prime Weil divisor $V(v )$ and every ray $\tau \in \rec(\Pi)^{1}$ defines 
a horizontal prime Weil divisor $\cV(\tau )$. If $v \in\Pi ^0$ is a
vertex, by Lemma \ref{lemm:13}, its multiplicity $\mult(v)$  is
the smallest positive integer $\nu\ge1$ such
that $\nu v\in N$. If $\tau $ is a ray, we denote
by $v_{\tau }$ the smallest lattice point of $\tau\setminus\{0\} $. 

\begin{prop} \label{prop:51}
Let $\phiK $ be an H-lattice function on $\Pi $. Let $D_{\phiK }$ be the
associated $\T$-Cartier divisor. Then the corresponding $\T$-Weil
divisor is given by
\begin{displaymath}
  [D_{\phiK }]=
  \sum_{v \in \Pi ^0} -\mult(v)\phiK (v ) V(v )+
  \sum_{\tau \in \rec(\Pi )^{1}} -\rec(\phiK)(v_{\tau })\cV(\tau ).  
\end{displaymath}  
\end{prop}
\begin{proof}
  By Lemma \ref{lemm:13}, for $v\in \Pi^{0}$, the vector $\mult(v)v$
  is the minimal lattice vector in 
  the ray $\cc(v)$. Now it is easy to adapt 
  the proof of \cite[\S 3.3, Lemma]{Ful93} to prove this proposition. 
\end{proof}

\begin{exmpl} \label{exm:24}
Consider the constant H-lattice function
$\phiK (u)=-1$. This function corresponds to the principal divisor
$\div (\varpi )$. Then
\begin{equation}\label{eq:44}
  \div(\varpi )=\sum_{v \in \Pi ^{0}}\mult(v)V(v).
\end{equation}
Thus, for a vertex $v$, the multiplicity of $v$ agrees with the
multiplicity of the divisor $V(v)$ in the special fibre $\div(\varpi
)$. In particular, the special fibre $\cX_{\Pi ,o}$ is reduced if and only
if all vertices of $\Pi ^{0}$ belong to $N$.
\end{exmpl}

We next study the restriction of $\T$-Cartier divisors to orbits and
their inverse image by equivariant morphisms. Let $\Pi $ be a complete
SCR polyhedral 
complex  in $N_{\R}$, and $\phiK $ an H-lattice
function on $\Pi $. Set $\Sigma =\rec(\Pi )$, and
$\Psi =\rec(\phiK )$. Choose sets of defining vectors
 $\{(m_{\Lambda },l_{\Lambda
})\}_{\Lambda \in \Pi }$ and 
$\{m_{\sigma }\}_{\sigma \in 
  \Sigma }$ for $\phiK$ and $\Psi $, respectively.

Let $\sigma \in \Sigma $. We describe the restriction of
$D_{\phiK }$ to $\cV(\sigma)$, the closure of a horizontal orbit. As in the
case of toric varieties over a field, we first consider the case when
$\Psi |_{\sigma}=0$. 
Recall that $\cV(\sigma )$ agrees with the toric scheme associated to the
polyhedral complex $\Pi (\sigma )$ and that each element of $\Pi
(\sigma )$ is the image by $\pi _{\sigma }\colon N_{\R}\to N(\sigma
)_{\R}$ of a 
polyhedron $\Lambda \in \Pi $ with 
$\sigma \subset \rec(\Lambda )$. The condition $\Psi |_{\sigma}=0$ implies
that we can define
\begin{equation}\label{eq:73}
\phiK(\sigma)\colon N(\sigma)_{\R}\longrightarrow \R , \quad
u+\R\sigma\longmapsto  \phiK  
(u+v)   
\end{equation}
for any $v\in \R\sigma$ such that $u+v\in \bigcup_{\rec(\Lambda )\supset \sigma}
\Lambda $. The function $\phiK (\sigma )$ can also be described in terms of defining
vectors. For each $\Lambda \in \Pi $ with $\sigma \subset \rec(\Lambda
)$, we will denote $\ov \Lambda \in \Pi (\sigma )$ for its image by
$\pi _{\sigma }$. For each $\Lambda $ as before, the condition
$\Psi |_{\sigma}=0$ implies that $m_{\Lambda }\in M(\sigma )$. Hence we
define $(m_{\ov \Lambda },l_{\ov \Lambda })=(m_{\Lambda },l_{\Lambda
})$ for $\Lambda \in \Pi $ with $\rec(\Lambda )\supset \sigma $.  

\begin{prop}\label{prop:76} If $\Psi |_{\sigma}=0$ then the divisor
  $D_{\phiK }$ and the horizontal orbit $\cV(\sigma )$ intersect
  properly. Moreover, the set $\{(m_{\ov \Lambda },l_{\ov \Lambda
  })\}_{\ov \Lambda \in \Pi (\sigma )}$ is a set of defining vectors
  of $\phiK (\sigma )$ and the restriction of $D_{\phiK }$ to
  $\cV(\sigma )$ is $D_{\phiK (\sigma )}$. 
\end{prop}
\begin{proof}
  The proof is analogous to the proof of Proposition \ref{prop:72}. 
\end{proof}

If $\Psi |_{\sigma}\not=0$, then $\cV(\sigma )$ and
$D_{\phiK }$ do not intersect properly and we can only restrict $D_{\phiK}$ with $\cV(\sigma)$
up to rational equivalence. To this end, we consider the divisor
$D_{\phiK -m_{\sigma }}$, that is rationally
equivalent to $D_{\phiK }$ and intersects
properly with $\cV(\sigma )$. The restriction of  
this divisor to $\cV(\sigma )$ corresponds to the H-lattice function $(\phiK -m_{\sigma
})(\sigma)$ 
as defined above.  

Let now $\Lambda \in \Pi $ be a polyhedron. We will denote by 
$\wt \pi _{\Lambda }\colon \wt N\to \wt N(\Lambda )$ and $\pi
_{\Lambda }\colon N\to N(\Lambda )$ the projections and by 
$\wt \pi _{\Lambda }^{\vee}\colon \wt M(\Lambda )\to \wt M$ and $\pi
_{\Lambda }^{\vee}\colon M(\Lambda )\to M$ the dual maps. We will use
the same notation for the linear maps obtained by tensoring with $\R$.

We first assume that $\phiK
|_{\Lambda }=0$. 
If $u\in \wt N(\Lambda
)_{\R}$, then there exists a polyhedron $\Lambda '$ with $\Lambda $ a
face of $\Lambda '$ and a point $(v,r)\in \cc(\Lambda ')$ that
is sent to $u$ under the projection $\wt \pi _{\Lambda }$. Then we set  
\begin{equation}\label{eq:74}
\phiK(\Lambda )\colon \wt N(\Lambda )_{\R}\longrightarrow \R , \quad
u \longmapsto  r\phiK (v/r)=m_{\Lambda' }(v)+rl_{\Lambda' }.  
\end{equation}
The condition $\phiK |_{\Lambda }=0$ implies that the above equation
does not depend on the choice of $(v,r)$.

We can describe also $\phiK(\Lambda )$ in terms of defining vectors.
For each cone $\sigma \in \Pi (\Lambda )$ let $\Lambda _{\sigma }\in
\Pi $ be the polyhedron that has $\Lambda $ as a face and such that
$\cc(\Lambda )$ is mapped to $\sigma $ by $\wt \pi _{\Lambda }$.  The
condition $\phiK |_{\Lambda }=0$ implies that $(m_{\Lambda _{\sigma
  }},l_{\Lambda _{\sigma }})\in \wt M(\Lambda )$. We set $m_{\sigma
}=(m_{\Lambda _{\sigma }},l_{\Lambda _{\sigma }})$.

\begin{prop}\label{prop:77}
  If $\phiK |_{\Lambda }=0$ then the divisor $D_{\phiK }$ intersects
  properly the orbit $V(\Lambda )$. Moreover, the set $\{m_{\sigma
  }\}_{\sigma \in \Pi (\Lambda )}$ is a set of defining vectors of
  $\phiK (\Lambda )$ and the restriction of $D_{\phiK }$ to $V(\Lambda
  )$ is the divisor $D_{\phiK (\Lambda )}$.
\end{prop}
\begin{proof}
  The proof is analogous to that of Proposition \ref{prop:72}.
\end{proof}

As before, when $\phiK |_{\Lambda }\not=0$, we can only restrict $D_{\phiK }$ to
$V(\Lambda )$ up to rational equivalence. In this case we just apply
the previous proposition to the function $\phiK -m_{\Lambda
}-l_{\Lambda }$. 

\begin{exmpl} \label{exm:22}
We particularize \eqref{eq:74} to the case of one-dimensional vertical
orbits. 
Let $\Lambda $ be a $(n-1)$-dimensional
polyhedron. Hence $V(\Lambda )$ is a  vertical curve. Let $\Lambda
_{1}$ and $\Lambda _{2}$ 
be the two $n$-dimensional polyhedron that have $\Lambda $ as a common
face.  Let
$v\in N_{\Q}$ such that the class $[(v,0)]$ is a generator of the
lattice $\wt N(\Lambda )$ and the affine space $(v,0)+\R \cc(\Lambda
)$ meets $\cc(\Lambda _{1})$. This second condition fixes one of the
two generators of $\wt N(\Lambda )$. Then, by the equation \eqref{eq:45}
\begin{equation}
\label{eq:47}  
  \deg_{D_{\phiK }}(V(\Lambda ))=
  \deg([D_{\phiK}|_{V(\Lambda )}])=m_{\Lambda 
    _{2}}(v)-m_{\Lambda _{1}}(v). 
\end{equation}
\end{exmpl}

\medskip
We end this section discussing the inverse image of a $\T$-Cartier
divisor by an equivariant morphism. With the notation of Proposition
\ref{prop:45}, let $\phiK $ be an H-lattice function on $\Pi _{2}$, and
$\{(m_{\Lambda },l_{\Lambda })\}_{\Lambda \in \Pi _{2}}$  a set
of defining vectors of $\phiK $. For each $\Gamma \in \Pi _{1}$ we
choose a polyhedron $\Gamma '\in \Pi _{2}$ such that $A(\Gamma
)\subset \Gamma '$. We set $m_{\Gamma }=H^{\vee}(m_{\Gamma '})$ and
$l_{\Gamma }=m_{\Gamma '}(\val_{K}(p))+l_{\Gamma '}$. The following
proposition follows easily.

\begin{prop} \label{prop:78}
  The divisor $D_{\phiK }$ intersects properly the image of
  $\Phi_{p,A}$. The function 
  $\phiK\circ A$ is an H-lattice function on $\Pi  _{1}$ and
\begin{displaymath}
  \Phi^{\ast}_{p,A}D_{\phiK}=D_{\phiK \circ A}.
\end{displaymath}  
Moreover, $\{(m_{\Gamma },l_{\Gamma })\}_{\Gamma \in \Pi  _{1}}$ is a set of defining
vectors of $\phiK\circ A$.
\end{prop}

\section{Positivity on toric schemes}
\label{sec:posit-prop-}

The relationship between the positivity of the line bundle and the
concavity of the virtual support function can be extended to the case
of toric schemes over a DVR. In particular, we have the following
version of the {Nakai-Moishezon criterion}.
\index{Nakai-Moishezon criterion!for a toric scheme}%

\begin{thm} \label{thm:5}
  Let $\Pi$ be a complete SCR complex in $N_{\R}$ and
  $\cX_{\Pi }$ its associate toric scheme over $S$. Let $\phiK $ be an
   H-lattice function on $\Pi $ and
  $D_{\phiK }$ the corresponding $\T$-Cartier divisor on $\cX_{\Pi}$.
  \begin{enumerate}
  \item \label{item:45} The following properties are equivalent:
    \begin{enumerate}
    \item \label{item:60} $D_{\phiK }$ is ample;
    \item \label{item:64} $D_{\phiK }\cdot C > 0$ for every vertical
      curve $C$ contained in 
      $X_{\Pi ,o}$; 
    \item \label{item:65} $D_{\phiK  }\cdot V(\Lambda  )>0$ for every
      $(n-1)$-dimensional polyhedron
      $\Lambda \in \Pi $;
    \item \label{item:66} The function $\phiK $ is strictly concave on $\Pi $.
    \end{enumerate}
  \item \label{item:67} The following properties are equivalent:
    \begin{enumerate}
    \item \label{item:68} $D_{\phiK }$ is generated by global sections;
    \item \label{item:69} $D_{\phiK }\cdot C \ge 0$ for every vertical
      curve $C$ contained 
      in $X_{\Sigma,o }$;
    \item \label{item:70} $D_{\phiK }\cdot V(\Lambda )\ge 0$ for every
      $(n-1)$-dimensional polyhedron 
      $\Lambda \in \Pi $;
    \item \label{item:71} The function $\phiK $ is concave.
    \end{enumerate}
  \end{enumerate}
 \end{thm}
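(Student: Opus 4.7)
The plan is to prove both parts simultaneously through the cycle (a) $\Rightarrow$ (b) $\Rightarrow$ (c) $\Leftrightarrow$ (d) $\Rightarrow$ (a). Here (a) $\Rightarrow$ (b) is standard, and (b) $\Rightarrow$ (c) is immediate since $V(\Lambda)$ is a one-dimensional vertical subscheme for every $\Lambda \in \Pi^{n-1}$.

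For (c) $\Leftrightarrow$ (d) I will argue locally at each codimension-one polyhedron. Fix $\Lambda \in \Pi^{n-1}$ and let $\Lambda_1, \Lambda_2 \in \Pi^n$ be the two maximal polyhedra sharing $\Lambda$ as a common face, with defining vectors $(m_{\Lambda_i}, l_{\Lambda_i}) \in \wt M$. Choose $v \in N_\Q$ as in Example~\ref{exm:22}. The intersection formula \eqref{eq:47} then gives
\[\deg_{D_\psi}(V(\Lambda)) = m_{\Lambda_2}(v) - m_{\Lambda_1}(v).\]
A brief calculation, parallel to the one underlying Example~\ref{exm:20}, shows that this quantity is positive (respectively, non-negative) precisely when $\psi$ is strictly concave (respectively, concave) across the face $\Lambda$, once the choice of $v$ is oriented correctly. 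Both concavity and strict concavity of $\psi$ on $\Pi$ are detected face-by-face in codimension one, so this yields the two equivalences at once.

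The main implication is (d) $\Rightarrow$ (a), and the strategy is to reduce to Theorem~\ref{thm:4} fibrewise. For a proper scheme over the Noetherian base $S$, a line bundle is ample if and only if its restriction to each fibre is ample, and ampleness on a reducible variety is detected on each irreducible component. We therefore need only check positivity of $D_\psi$ on the generic fibre $\cX_{\Pi,\eta} = X_\Sigma$ (with $\Sigma = \rec(\Pi)$) and on each component $V(v)$ of the special fibre for $v \in \Pi^0$. On the generic fibre, by \eqref{eq:28} and Proposition~\ref{prop:43} the restriction is $D_{\rec(\psi)}$, and Proposition~\ref{prop:80} translates strict concavity (respectively, concavity) of $\psi$ on $\Pi$ into the same property of $\rec(\psi)$ on $\Sigma$, so that Theorem~\ref{thm:4} concludes. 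On each vertical component, Proposition~\ref{prop:75} identifies $V(v)$ with $X_{\Pi(v),k}$, and Proposition~\ref{prop:77}, applied after subtracting a defining vector $(m_v, l_v)$ at $v$, identifies the restriction of $D_\psi$ with the $\T$-Cartier divisor associated to the function $(\psi - m_v - l_v)(v)$ on $\Pi(v)$; strict concavity (respectively, concavity) of this function on $\Pi(v)$ follows from the corresponding property of $\psi$ around $v$, and Theorem~\ref{thm:4} again applies. For the globally generated case, an alternative direct argument is available: given an H-representation $\psi = \min_{i}(\langle m_i, \cdot\rangle + l_i)$, the elements $\varpi^{l_i}\chi^{m_i}$ are global sections of $\cO(D_\psi)$ that generate at every point, because at any point of any chart $\cX_\Lambda$ the section indexed by the containing maximal polyhedron attains the minimum and hence gives a local generator.

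The main obstacle will be the transfer of (strict) concavity from $\psi$ on $\Pi$ to the induced function $(\psi - m_v - l_v)(v)$ on the local fan $\Pi(v)$. This is essentially combinatorial bookkeeping: the polyhedra of $\Pi$ containing $v$ correspond to the cones of $\Pi(v)$ via projection along $(v,1)$, and their defining affine pieces descend to defining linear pieces of the star function, preserving the relative positions that codify concavity. Once this identification is verified, the rest of the argument runs through the cited results without difficulty.
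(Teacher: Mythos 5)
Your proof is correct in outline, and for the steps the two proofs share---(a)$\Rightarrow$(b)$\Rightarrow$(c) is clear, and (c)$\Leftrightarrow$(d) via equation \eqref{eq:47}---your argument coincides with the paper's. The substantive difference is in the implication (d)$\Rightarrow$(a), where the paper simply cites \cite[\S IV.3(k)]{Kempfals:te} for the ample case and gives a direct sections argument for the globally generated case. You instead carry out a fibrewise reduction over the DVR base: you invoke the fact that (over $S = \Spec K^\circ$, which is Noetherian affine with unique closed point) a line bundle on a proper $S$-scheme is ample precisely when its restriction to the special fibre is ample (EGA III~4.7.1), that ampleness on a reducible or non-reduced proper $k$-scheme is detected on each irreducible component with its reduced structure, and then you appeal to Theorem~\ref{thm:4} on each component $V(v) \simeq X_{\Pi(v),k}$ and on the generic fibre $X_{\rec(\Pi)}$. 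This gives a self-contained route within the framework of the paper, at the cost of importing the EGA result and of the ``bookkeeping'' you single out. Incidentally, the generic-fibre check is actually redundant: the open locus of $S$ over which the fibre restriction is ample contains the unique closed point, hence is all of $S$.

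The one place where some genuine care is needed---not a flaw in the plan, but the part you compress with ``without difficulty''---is the transfer of \emph{strict} concavity from $\psi$ on $\Pi$ to $(\psi - m_v - l_v)(v)$ on the star $\Pi(v)$. Proposition~\ref{prop:53} gives you that the star function is a support function (hence concave) and identifies its stability set with the polytope $\Delta_{\psi,v} \simeq v^*$; what remains is to show that when $\Pi = \Pi(\psi)$, the cones $\sigma_{\Lambda'}$ of $\Pi(v)$ are exactly the normal cones of the dual faces of $v^*$, so that no two adjacent maximal cones of $\Pi(v)$ carry the same linear piece. This is essentially reading off the discussion following Proposition~\ref{prop:53} together with the dimension pairing of Theorem~\ref{thm:8}, but it is the heart of the reduction and would need to be written out if one actually executed the plan this way.
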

 \begin{proof}
   In both cases, the fact that (a) implies
   (b) and that (b) implies (c) is
   clear. The fact that (c) implies (d)
   follows from the equation \eqref{eq:47}. The fact that
   \eqref{item:66} implies \eqref{item:60} is
   \cite[\S IV.3(k)]{Kempfals:te}. 

   Finally, we prove that
   \eqref{item:71} implies \eqref{item:68}. Let $\phiK $ be an
   H-lattice concave function. Each pair $(m,l)\in \wt M$
   defines a rational section $\varpi ^{l}\chi^{m}s_{\phiK }$ of
   $D_{\phiK }$. The section is regular if and only if the function
   $m(u)+l$ lies above $\phiK $. Moreover, for a polyhedron $\Lambda \in
   \Pi $,  this section does not vanish
   on $\cX_{\Lambda }$ if and only if $\phiK (u)=m(u)+l$ for all $u\in
   \Lambda $. Therefore, the affine pieces of the graph of $\phiK $
   define a set of global sections that generate $\mathcal{O}(D_{\phiK
   })$.        
 \end{proof}

 Let $\Sigma $ be a complete fan in $N_{\R}$ and $\Psi $ a virtual
 support function on $\Sigma $.  Let $X_{\Sigma }$ and $D_{\Psi}$ be
 the associated proper toric variety over $K$ and $\T$-Cartier
 divisor.

 \begin{defn} \label{def:8} Let $(\cX,D,e)$ be a toric model of
   $(X_{\Sigma },D_{\Psi})$. Then $(\cX,D,e)$ is
   \emph{semipositive}
  \index{toric model!of a $\T$-Cartier divisor!semipositive}
if the $\T$-Cartier divisor $D$ satisfies any of the equivalent
conditions of Theorem \ref{thm:5}\eqref{item:67}. 
\end{defn}

Observe that, if a toric model $(\cX,D,e)$ of
  $(X_{\Sigma },D_{\Psi})$ is semipositive, then $(\cX,\cO(D),e)$ is a
  semipositive model of $(X_{\Sigma },\cO(D_{\Psi}))$ in the sense of Definition
  \ref{def:51}.     
Equivalence classes of semipositive toric models are classified by
rational concave functions. 

\begin{thm} \label{thm:12} Let $\Sigma $ be a complete fan in $N_{\R}$
  and $\Psi $ a virtual support function on $\Sigma $.  Then the
  correspondence of Theorem \ref{thm:11b} induces a bijective
  correspondence between the space of equivalence classes of
  semipositive toric models of $(X_{\Sigma },D_{\Psi })$ over~$S$ and
  the space of rational piecewise affine concave functions $\phiK $ on
  $N_{\R}$ with 
  $\rec(\phiK )=\Psi $.
\end{thm}
\begin{proof}
  Let $(\cX,D,e)$ be a semipositive toric model. By Theorem
  \ref{thm:11b}, 
  to the pair $(\cX,D)$ corresponds a pair $(\Pi ,\phiK ')$, where $\phiK
  '$ is an H-lattice function on $\Pi $, $\rec(\Pi )=\Sigma $
  and $\rec(\phiK ')=e\Psi $. By Theorem \ref{thm:5}, the function
  $\phiK '$ is concave. We put $\phiK =\frac{1}{e}\phiK '$. It is clear
  that equivalent models produce the same function.

  Conversely, let $\phiK $ be a rational piecewise affine concave
  function. Let $\Pi '=\Pi (\phiK )$. This is a rational polyhedral
  complex. Let $\Sigma '=\rec (\Pi ')$. This is a conic rational
  polyhedral complex. By Proposition \ref{prop:80}, $\Sigma '=\Pi
  (\Psi )$. Since $\Psi=\rec(\phiK) $ is concave, hence a support
  function on $\Sigma $, we deduce that $\Sigma $ is a refinement of
  $\Sigma '$. Put $\Pi =\Pi '\cdot \Sigma $ (Definition
  \ref{def:72}). Since $\Pi '$ is a rational polyhedral complex and
  $\Sigma $ is a fan, then $\Pi $ is an SCR polyhedral
  complex. Moreover, by Lemma \ref{lemm:15},
  \begin{displaymath}
    \rec(\Pi )=\rec(\Pi'\cdot \Sigma )=\rec(\Pi')\cdot \rec(\Sigma
    )=\Sigma '\cdot \Sigma =\Sigma.
  \end{displaymath}

  Let $e>0$ be an
  integer such that 
  $e\phiK $ is 
  an H-lattice function. Then $(\cX_{\Pi },D_{e\phiK },e)$ is a
  toric model of $(X_{\Sigma },D_{\Psi })$. Both procedures are
  inverse of each other.   
\end{proof}

A direct consequence of Theorem \ref{thm:12} is that the $\T$-Cartier
divisor $D_{\Psi}$ admits a semipositive model if and only if $\Psi$
is concave, hence a support function. By Proposition
\ref{prop:99}\eqref{item:56}, this is equivalent
to the fact that $D_{\Psi}$ is generated by global sections.

Recall that, for a toric variety over a field, a $\T$-Cartier divisor
generated by global sections can be determined either by the
support function $\Psi $ or by its stability set $\Delta _{\Psi }$. In
the case of toric schemes over a DVR, if $\phiK$ is a
concave rational piecewise affine function on $\Pi $ and $\Psi
=\rec(\phiK )$, then  the stability set of $\phiK $ agrees with the
stability set of 
$\Psi $. Then the equivalence class of toric models determined by
$\phiK $ is also determined by the Legendre-Fenchel dual function $\phiK
^{\vee}$. 

\begin{cor} \label{cor:3} Let $\Sigma $ be a complete fan in $N_{\R}$
  and $\Psi $ a support function on $\Sigma $. The correspondence of
  Theorem \ref{thm:12} and Legendre-Fenchel duality induce a bijection
  between the space of equivalence classes of semipositive toric models of
  $(X_{\Sigma },D_{\Psi })$ and that of rational piecewise affine concave
  functions on $M_{\R}$ with effective domain $\Delta _{\Psi }$.
\end{cor}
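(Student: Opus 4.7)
The plan is to compose the bijection of Theorem \ref{thm:12} with the Legendre-Fenchel duality, and then verify that the two classes of rational piecewise affine concave functions match up under this duality.

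First, I would invoke Theorem \ref{thm:12} to identify the set of equivalence classes of semipositive toric models of $(X_{\Sigma},D_{\Psi})$ with the set $\mathscr{A}$ of rational piecewise affine concave functions $\psi$ on $N_{\R}$ satisfying $\rec(\psi)=\Psi$. Next, I would use the Legendre-Fenchel duality to associate to each such $\psi$ the dual function $\psi^{\vee}$. Since $\psi$ is a closed concave function (being piecewise affine on $N_{\R}$), duality is an involution by Proposition \ref{prop:13}\eqref{item:74}, so $\psi \mapsto \psi^{\vee}$ is already a bijection onto its image; it remains to identify that image as the set $\mathscr{B}$ of rational piecewise affine concave functions on $M_{\R}$ with effective domain $\Delta_{\Psi}$.

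For the identification of the image, the key point is that whenever $\rec(\psi)=\Psi$, equation \eqref{eq:18} gives $\Stab(\psi)=\Stab(\rec(\psi))=\Stab(\Psi)=\Delta_{\Psi}$, so the effective domain of $\psi^{\vee}$ is exactly $\Delta_{\Psi}$. Conversely, if $g$ is a rational piecewise affine concave function on $M_{\R}$ with $\Dom(g)=\Delta_{\Psi}$, then by Proposition \ref{prop:13}\eqref{item:73} its dual $g^{\vee}$ is a rational piecewise affine concave function on $N_{\R}$ with stability set $\Delta_{\Psi}$; by \eqref{eq:18} applied to $g^{\vee}$ and the fact that $\Psi$ is the support function of $\Delta_{\Psi}$, one obtains $\rec(g^{\vee})=\Psi$, so $g^{\vee}\in\mathscr{A}$. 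Preservation of rationality in both directions follows from Proposition \ref{prop:18}\eqref{item:26}, and preservation of the piecewise affine character in both directions follows from Proposition \ref{prop:13}\eqref{item:73}.

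Combining these observations, the composition
\[
\{\text{equiv.~classes of semipos.~toric models}\} \longleftrightarrow \mathscr{A} \longleftrightarrow \mathscr{B}
\]
sending a model to $\psi^{\vee}$, where $\psi$ is the function attached to it by Theorem \ref{thm:12}, is the desired bijection. The only potential subtlety, and the step I would check most carefully, is the verification that $\rec(\psi)=\Psi$ really corresponds under duality to $\Dom(\psi^{\vee})=\Delta_{\Psi}$ (and not merely $\Dom(\psi^{\vee})\subset\Delta_{\Psi}$ or some translate); this is handled by combining \eqref{eq:18} with the defining equality $\Stab(\Psi)=\Delta_{\Psi}$ for a support function, noting that $\Psi$ is closed so that $\rec$ and $\Stab$ are reciprocal via Proposition \ref{prop:17}.
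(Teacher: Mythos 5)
Your proof is correct and takes essentially the same route as the paper's: the paper's one-line proof cites exactly Theorem \ref{thm:12}, Proposition \ref{prop:18}, and Proposition \ref{prop:13}, and you have simply unwound what that composition does, including the key check that $\rec(\psi)=\Psi$ corresponds under Legendre-Fenchel duality to $\Dom(\psi^{\vee})=\Delta_{\Psi}$ via \eqref{eq:18} and Proposition \ref{prop:17}. No gaps.
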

\begin{proof}
  From Theorem \ref{thm:12}, the space of equivalence classes of
  semipositive toric models of 
  $(X_{\Sigma },D_{\Psi })$ is in bijection with the space of
  rational piecewise affine concave functions $\phiK $ on $N_{\R}$ with 
  $\rec(\phiK )=\Psi $. 

  Let $\phiK$ be a function in this latter space. Then $\Dom(\phiK)=N_{\R}$
  and $\Stab(\phiK)=\Delta 
  _{\Psi }$. By
  propositions \ref{prop:18}\eqref{item:26}
  and \ref{prop:13}\eqref{item:73}, the function $\phiK^{\vee}$ is a
  rational piecewise affine concave
  function on $M_{\R}$ with effective domain~$\Delta _{\Psi
  }$. Conversely, if $\vartheta $ is a rational piecewise affine concave
  function on $M_{\R}$ with effective domain~$\Delta _{\Psi
  }$, then, by the same propositions, $\vartheta ^{\vee}$ is a
  rational piecewise affine concave function with effective domain $N_{\R}$
  and stability set $\Delta _{\Psi }$. By Proposition
  \ref{prop:18}\eqref{item:20}, the function $\rec(\vartheta ^{\vee})$
  agrees with $\Psi $. By Proposition \ref{prop:13}\eqref{item:72}
  the above correspondences are inverse of each other, thus
  stablishing the bijection.
\end{proof}

Let $\Pi$ be a complete SCR complex in $N_{\R}$ and $\phiK $ an
H-lattice concave function on~$\Pi $.  Then the $\T$-Cartier divisor $D_{\phiK}$ is generated by
global sections and we can interpret its
restriction to toric orbits in terms of direct and inverse images of
concave functions.

\begin{prop} \label{prop:52}
  Let $\Pi $ be a complete SCR polyhedral complex in $N_{\R}$ and
  $\phiK $ an H-lattice concave function on $\Pi $. Set $\Sigma
  =\rec(\Pi )$ and $\Psi =\rec(\phiK )$. Let $\sigma \in \Sigma $ and
  $m_{\sigma }\in M$ such that $\Psi |_{\sigma }=m_{\sigma
  }|_{\sigma }$. Let $\pi
  _{\sigma }\colon N_{\R}\to N(\sigma )_{\R}$ be the projection and
  $\pi^{\vee}_{\sigma }\colon M(\sigma )_{\R}\to M_{\R}$ the dual
  inclusion. Then
  \begin{equation}
    \label{eq:69}
    (\phiK -m_{\sigma })(\sigma )=(\pi _{\sigma })_{\ast}(\phiK -m_{\sigma }),
  \end{equation}
  Hence the restriction of the divisor $D_{\phiK -m_{\sigma }}$ to
  $\cV(\sigma )$ corresponds to the H-lattice concave function $(\pi
  _{\sigma })_{\ast}(\phiK -m_{\sigma })$. Dually,
  \begin{equation}
    \label{eq:70}
    (\phiK -m_{\sigma })(\sigma )^{\vee}=(\pi^{\vee} _{\sigma }+m_{\sigma
    })^{\ast}\phiK ^{\vee}.  
  \end{equation}
  In other words, the
  Legendre-Fenchel dual of $(\phiK -m_{\sigma })(\sigma )$ is the
  restriction of $\phiK ^{\vee}$ to the face $F_{\sigma }$ translated
  by $-m_{\sigma }$.
\end{prop}
\begin{proof} For the equation \eqref{eq:69},
we suppose without loss of generality that $m_{\sigma}=0$, and hence
$\Psi |_{\sigma }=0$. Let $u\in N(\sigma )_{\R}$. Then, the function
$\phiK |_{\pi ^{-1}_{\sigma }(u)}$ is concave. Let $\Lambda \in \Pi $
such that $\rec(\Lambda )=\sigma $ and $\pi ^{-1}_{\sigma }(u)\cap
\Lambda \not = \emptyset$. Then, $\pi
^{-1}_{\sigma }(u)\cap \Lambda $ is a polyhedron of maximal dimension
in $\pi
^{-1}_{\sigma }(u)$. The restriction of  $\phiK $ to this polyhedron
is constant and, by \eqref{eq:73}, agrees with
$\phiK(\sigma )(u)$. Therefore, by concavity,
\begin{displaymath}
  (\pi_{\sigma })_{\ast} \phiK (u)=\max_{v\in \pi ^{-1}_{\sigma }(u)}\phiK (v),
\end{displaymath}
agrees with $\phiK(\sigma )(u)$. Thus we obtain 
\eqref{eq:69}. The equation \eqref{eq:70} follows from the previous
equation and Proposition \ref{prop:101}\eqref{item:122}. To prove
\eqref{eq:70} when $m_{\sigma }\not = 0$ we use  Proposition
\ref{prop:8}\eqref{item:14}.  
\end{proof}

We now consider the case of a vertical orbit. For a function $\phiK $
as before, with $\Psi =\rec(\phiK )$,  we denote by $\cc(\phiK )\colon
\wt N_{\R}\to \underline \R$ 
the concave function given by
\begin{displaymath}
  \cc(\phiK )(u,r)=
  \begin{cases}
    r\phiK (u/r)&\text{ if }r>0, \\
    \Psi (u)&\text{ if }r=0, \\
    -\infty&\text{ if }r<0.
  \end{cases}
\end{displaymath}
The function $\cc(\phiK )$ is a support function on $\cc(\Pi
)$.

\begin{lem}\label{lemm:12}
  The stability set of $\cc(\phiK)$ is the epigraph
  $\epi(-\phiK^{\vee})\subset \wt M_{\R}$.
\end{lem}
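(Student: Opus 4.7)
The plan is to compute $\Stab(\cc(\psi))$ directly from its definition and show that it coincides with $\epi(-\psi^\vee)$. By the very definition of the stability set, a pair $(m,l)\in \wt M_\R$ lies in $\Stab(\cc(\psi))$ if and only if the expression
\[
(u,r)\longmapsto \langle m,u\rangle + lr - \cc(\psi)(u,r)
\]
is bounded below on $\wt N_\R$. I would analyse this condition by splitting into cases according to the sign of $r$.

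The case $r<0$ is automatic, since $\cc(\psi)(u,r)=-\infty$ makes the expression equal to $+\infty$. For $r>0$, the substitution $v=u/r$ rewrites the expression as $r\bigl(\langle m,v\rangle + l - \psi(v)\bigr)$. Letting $r\to \infty$, boundedness below forces $\langle m,v\rangle - \psi(v) + l\ge 0$ for every $v\in N_\R$; taking the infimum over $v$, this is equivalent to $l\ge -\psi^\vee(m)$ (with the natural convention excluding $\psi^\vee(m)=-\infty$). Conversely, if $l\ge -\psi^\vee(m)>-\infty$, then the expression is nonnegative for all $v\in N_\R$ and all $r>0$, hence bounded below. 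The remaining case $r=0$ demands $m\in \Stab(\Psi)$; but by~\eqref{eq:18} we have $\Stab(\Psi)=\Stab(\psi)=\Dom(\psi^\vee)$, a condition already implied by $l\ge -\psi^\vee(m)$.

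Combining these three cases gives $(m,l)\in \Stab(\cc(\psi))$ if and only if $l\ge -\psi^\vee(m)$, which is exactly the defining condition for $\epi(-\psi^\vee)$. There is no serious obstacle here; the only subtlety is to interpret the inequality $l\ge -\psi^\vee(m)$ consistently in $\underline{\R}$ so that points $m\notin \Dom(\psi^\vee)$ are automatically excluded from both sides of the claimed equality.
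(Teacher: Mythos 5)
Your proof is correct. You compute $\Stab(\cc(\psi))$ directly from the definition of the stability set, splitting according to the sign of $r$: the case $r<0$ is vacuous, the case $r>0$ (after the homogeneity substitution $v=u/r$) forces $\langle m,v\rangle+l-\psi(v)\ge 0$ for all $v$ by letting $r\to\infty$, which is precisely $l\ge-\psi^\vee(m)$, and the case $r=0$ is absorbed into this because $l\ge-\psi^\vee(m)$ already forces $m\in\Dom(\psi^\vee)=\Stab(\psi)=\Stab(\Psi)$ using~\eqref{eq:18}.

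This is a genuinely different route from the paper's. The paper writes down the H-representation of $\cc(\psi)$ and then invokes Proposition~\ref{prop:19} twice: once to get $\Stab(\cc(\psi))=\R_{\ge 0}(0,1)+\Conv(\{(m_\Lambda,l_\Lambda)\}_\Lambda)$, and once to identify that Minkowski sum with $\epi(-\psi^\vee)$ via the V-representation formula for $\psi^\vee$. That argument is purely combinatorial and leans on the piecewise affine machinery already set up. Your argument uses none of that combinatorial structure; it is a first-principles analysis of when $\langle m,u\rangle+lr-\cc(\psi)(u,r)$ is bounded below, and so it is both more elementary and somewhat more general — it would apply, with appropriate care about closedness of the stability set, to any closed concave $\psi$ whose recession function has $\Stab(\rec\psi)=\Stab(\psi)$, not only to H-lattice functions. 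The trade-off is that the paper's version falls out almost for free once Proposition~\ref{prop:19} is in hand, whereas yours requires a careful case split; but both are short and correct.
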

\begin{proof}
  The H-representation of $\cc(\phiK)$ is
  \begin{align*}
    \Dom(\cc(\phiK))&=\{(u,r)\in \wt N_{\R}\mid r\ge 0\},\\ 
    \cc(\phiK )(u,r)&=\min_{\Lambda }(m_{\Lambda }(u)+l_{\Lambda }r).
  \end{align*}
By Proposition \ref{prop:19}
\begin{displaymath}
  \Stab(\cc(\phiK ))=\R_{\ge 0}(0,1)+\Conv(\{(m_{\Lambda },l_{\Lambda
  })\}_{\Lambda \in \Pi }).
\end{displaymath}
Furthermore, by the same proposition, for $x\in \Stab(\phiK )$, 
\begin{displaymath}
  \phiK ^{\vee}(x)=\sup\left\{
    \sum_{\Lambda }-\lambda _{\Lambda }l_{\Lambda }\bigg|
    \lambda _{\Lambda }\ge 0, \sum_{\Lambda }\lambda _{\Lambda
    }=1, \sum_{\Lambda }\lambda _{\Lambda }m_{\Lambda }=x
  \right\}.
\end{displaymath}
Hence $\epi(-\phiK ^{\vee})=\R_{\ge 0}(0,1)+\Conv(\{(m_{\Lambda },l_{\Lambda
  })\}_{\Lambda \in \Pi }),$ which proves the statement.
\end{proof}

\begin{prop}\label{prop:53}
  Let $\Pi$ and $\phiK $ be as before and let $\Lambda \in \Pi $. Let
  $m_{\Lambda }\in M$ and $l_{\Lambda }\in \Z$ be such that $\phiK
  |_{\Lambda }=(m_{\Lambda }+l_{\Lambda })|_{\Lambda }$. Let $\wt \pi
  _{\Lambda }\colon \wt N_{\R}\to \wt N(\Lambda )_{\R}$ be the
  projection, and
  $\wt \pi^{\vee} _{\Lambda }\colon \wt M(\Lambda )_{\R}\to \wt M_{\R}$ the
  dual map. 
  Then
  \begin{equation}
    \label{eq:71}
    (\phiK -m_{\Lambda }-l_{\Lambda })(\Lambda )=
    (\wt \pi _{\Lambda })_{\ast}(\cc(\phiK -m_{\Lambda }-l_{\Lambda })).
  \end{equation}
  Moreover, this is a support function on the fan
  $\Pi (\Lambda )$. Its stability set is 
  the polytope $\Delta _{\phiK ,\Lambda }:=(\wt \pi^{\vee}
  _{\Lambda }+(m_{\Lambda 
  },l_{\Lambda }))^{-1}\epi(-\phiK ^{\vee})$. Hence, the restriction of the divisor $D_{\phiK
    -m_{\Lambda }-l_{\Lambda }}$ to the variety $V(\Lambda )$
  is the divisor associated to the support function of $\Delta _{\phiK ,\Lambda }$.
\end{prop}
\begin{proof}
  To prove the equation \eqref{eq:71} we may assume that $m_{\Lambda }=0$
  and $l_{\Lambda }=0$. Let $u\in \wt N(\Lambda )_{\R}$. Then, the
  function $\cc(\phiK) |_{\wt \pi ^{-1}_{\Lambda }(u)}$ is concave. Let
  $\Lambda' \in \Pi $ such that $\Lambda $ is a face of $\Lambda '$
  and $\wt \pi ^{-1}_{\Lambda }(u)\cap \cc(\Lambda') \not =
  \emptyset$. Then, $\wt \pi ^{-1}_{\Lambda }(u)\cap \cc(\Lambda ')$ is a
  polyhedron of maximal dimension of $\wt \pi ^{-1}_{\Lambda }(u)$ and the
  restriction of $\cc(\phiK) $ to this polyhedron is constant and, by
 \eqref{eq:74}, agrees with $\phiK(\Lambda )(u)$. Therefore,
  by concavity,
  \begin{displaymath}
    (\wt \pi_{\Lambda})_{\ast} \cc(\phiK) (u)=\max_{v\in \pi ^{-1}_{\sigma
      }(u)}\cc(\phiK) (v), 
  \end{displaymath}
  agrees with $\phiK(\Lambda  )(u)$. This proves the equation
  \eqref{eq:71}.  

  Back in the general case when $m_{\Lambda }$ and
  $l_{\Lambda }$ may be different from zero, by Proposition
  \ref{prop:101}, Proposition \ref{prop:8}\eqref{item:14} and Lemma
  \ref{lemm:12} we have
  \begin{align*}
    \Stab((\wt \pi _{\Lambda })_{\ast}(\cc(\phiK -m_{\Lambda
    }-l_{\Lambda })))&= (\wt \pi^{\vee}_{\Lambda })^{-1}\Stab(\cc(\phiK
    -m_{\Lambda }-l_{\Lambda
    }))\\
    &= (\wt \pi^{\vee}_{\Lambda })^{-1}(\Stab(\cc(\phiK)) -(m_{\Lambda
    },l_{\Lambda
    }))\\
    &=(\wt \pi^{\vee} _{\Lambda }+(m_{\Lambda },l_{\Lambda
    }))^{-1}\Stab(\cc(\phiK )) \\
    &=(\wt \pi^{\vee}_{\Lambda }+(m_{\Lambda },l_{\Lambda }))^{-1}
    \epi(-\phiK^{\vee} ).
  \end{align*}
  The remaining statements are clear.
\end{proof}
We next interpret the above result in terms of dual polyhedral
complexes. Let $\Pi (\phiK )$ and $\Pi (\phiK ^{\vee})$ be the pair of
dual polyhedral complexes associated to $\phiK $. Since $\phiK $ is
piecewise affine on $\Pi $, then $\Pi $ is a refinement of $\Pi
(\phiK )$. For each $\Lambda \in \Pi $ we will denote by $\overline
{\Lambda }\in \Pi (\phiK )$ the smallest element of $\Pi (\phiK )$ that
contains $\Lambda $. It is characterized by the fact that $\ri(\Lambda
)\cap \ri(\overline \Lambda )\not =\emptyset.$ Let $\Lambda ^{\ast}\in
\Pi (\phiK ^{\vee})$ be the polyhedron $\Lambda ^{\ast}=\cL \phiK
(\overline \Lambda )$. This polyhedron agrees with $\partial \phiK
(u_{0})$ for any $u_{0}\in \ri(\Lambda )$.  Then the function $\phiK
^{\vee}|_{\Lambda ^{\ast}}$ is affine. The polyhedron $\Lambda
^{\ast}-m_{\Lambda }$ is contained in $M(\Lambda )_{\R}$. 
The polyhedron
\begin{displaymath}
  \wt {\Lambda ^{\ast}}=\{(x,-\phiK ^{\vee}(x))|x\in \Lambda ^{\ast}\}
\end{displaymath}
is a face of $\epi(-\phiK ^{\vee})$ and it agrees with the intersection
of the image of $\pi^{\vee}_{\Lambda }+(m_{\Lambda },l_{\Lambda })$ with
this epigraph.  We consider the commutative diagram of lattices
\begin{displaymath}
  \xymatrix{
  \wt M(\Lambda )\ar[rr]^{\wt \pi ^{\vee}_{\Lambda } +(m_{\Lambda} ,l_{\Lambda })}
  \ar[d]_{\pr} &&
  \wt M \ar[d]^{\pr}\\
  M(\Lambda )\ar[rr]^{\pi ^{\vee}_{\Lambda } +m_{\Lambda }} &&M,}
\end{displaymath}
where $\pi ^{\vee}_{\Lambda }$ is the inclusion $M(\Lambda )\subset M$, 
and the corresponding commutative diagram of real vector spaces
obtained by tensoring with $\R$. This diagram induces a commutative
diagram of polytopes
\begin{displaymath}
  \xymatrix{
  \Delta _{\phiK ,\Lambda }\ar[rr]^{ \wt \pi ^{\vee}_{\Lambda
    }+(m_{\Lambda} ,l_{\Lambda })} 
  \ar[d]_{\pr} &&
  \wt {\Lambda ^{\ast}} \ar[d]^{\pr}\\
  \Lambda ^{\ast}-m_{\Lambda }\ar[rr]^{\pi ^{\vee}_{\Lambda }
    +m_{\Lambda }} && \Lambda ^{\ast},
}
\end{displaymath} 
where all the arrows are isomorphisms. 

In other words, the polytope $\Delta _{\phiK ,\Lambda }$ associated
to the restriction of $D_{\phiK -m_{\Lambda }-l_{\Lambda }}$ to
$V(\Lambda )$ is obtained as follows. We include $\wt M(\Lambda
)_{\R}$ in $\wt M_{\R}$ throughout the affine map $\wt \pi ^{\vee}_{\Lambda
} +(m_{\Lambda} ,l_{\Lambda })$. The image of this map intersects
the polyhedron $\epi(-\phiK ^{\vee})$ in the face of it that lies above
$\Lambda ^{\ast}$. 
The inverse image of this face agrees with $\Delta _{\phiK ,\Lambda }$.

Since we have an explicit description of the polytope $\Delta _{\phiK ,\Lambda }$,
we can easily calculate the degree with respect to $D_{\phiK }$ of an
orbit $V(\Lambda )$.

\begin{prop}\label{prop:31}
  Let $\Pi $ be a complete SCR polyhedral complex in $N_{\R}$ and $\phiK
  $ an H-lattice concave function on $\Pi
  $. Let $\Lambda
  \in \Pi $ be a polyhedron of dimension $n-k$, $u_{0}\in
  \ri(\Lambda )$  and $\Lambda 
  ^{\ast}=\partial \phiK (u_{0})$.
  Then
  \begin{equation}
    \label{eq:27}
    \mult(\Lambda) \deg_{D_{\phiK }}(V(\Lambda)
    )=k! \Vol_{M(\Lambda )}(\Lambda ^{\ast}),       
  \end{equation}
  where $\mult(\Lambda )$ is the multiplicity of $\Lambda $ (see
  Definition \ref{def:22}).
\end{prop}
\begin{proof}
  From the description of $D_{\phiK }|_{V(\Lambda )}$ and Proposition
  \ref{prop:46}, we know that 
  \begin{displaymath}
    \deg_{D_{\phiK }}(V(\Lambda))=k! \Vol_{\wt M(\Lambda )}(\Delta _{\phiK
      ,\Lambda} ). 
  \end{displaymath}
  Since
  \begin{displaymath}
    \Vol_{\wt M(\Lambda )}(\Delta _{\phiK
      ,\Lambda} )=\frac{1}{[M(\Lambda ):\wt M(\Lambda )]}
    \Vol_{M(\Lambda )}(\Lambda ^{\ast}),
  \end{displaymath}
  the result follows from the definition of the multiplicity.
\end{proof}

\begin{rem}
  If $\dim(\Lambda ^{\ast})< k$, then both sides of \eqref{eq:27} are
  zero. If $\dim(\Lambda ^{\ast})=k$, then $M(\Lambda )=M(\Lambda
  ^{\ast})$ and $\Vol_{M(\Lambda )}(\Lambda ^{\ast})$ agrees with the
  lattice volume of $\Lambda ^{\ast}$.
\end{rem}

We now interpret the inverse image by an equivariant morphism, of a $\T$-Cartier
divisor generated by global sections, in terms of direct and inverse
images of concave functions. 

\begin{prop}\label{prop:48}
  With the hypothesis of Proposition \ref{prop:45}, let $\phiK_{2}$
  be an H-lattice concave function on $\Pi _{2}$ and $D_{\phiK _{2}}$
  the corresponding $\T$-Cartier divisor. Then $\Phi
  _{p,A}^{\ast}D_{\phiK _{2}}$ is the $\T$-Cartier divisor associated
  to the H-lattice concave function $\phiK _{1}=A^{\ast}\phiK
  _{2}$. Moreover the Legendre-Fenchel dual is given by
  \begin{displaymath}
    \phiK _{1}^{\vee}=(H^{\vee})_{\ast}(\phiK_{2}^{\vee}-\val_{K}(p)).
  \end{displaymath}
\end{prop}
\begin{proof}
  The first statement is Proposition \ref{prop:78}. The second statement
  follows from Proposition \ref{prop:101}\eqref{item:121}. 
\end{proof}




\begin{exmpl}\label{exm:12}
  Let $\Sigma $ be a complete fan in $N_{\R}$ and $\Psi $ a support
  function on~$\Sigma $. By Theorem \ref{thm:12}, any equivalence
  class of semipositive models of $(X_{\Sigma },D_{\Psi })$ is
  determined by a rational piecewise affine concave function $\phiK $
  with $\rec(\phiK )=\Psi $. By Lemma \ref{lemm:17}, any such function
  can be realized as the inverse image by an affine map of the support
  function of a standard simplex. Using the previous proposition, any
  equivalence class of semipositive toric models can be induced by an
  equivariant projective morphism.

  More explicitly, let $e>0$ be an integer such that $e\phiK $ is an
  H-lattice concave
  function.  Let $\Pi$ be a complete SCR complex in $N_{\R}$
  compatible by $e\phiK $ and such that $\rec(\Pi )=\Sigma $ (see the
  proof of Theorem \ref{thm:12}). Then, $(\cX_{\Pi },D_{e\phiK },e)$ is
  a toric model of $(X_{\Sigma },D_{\Psi })$ in the class determined
  by $\phiK $.

  Choose an H-representation
  \begin{math}
    e\phiK (u)=\min_{0\le i\le r}(m_{i}(u)+l_{i})
  \end{math}
  with $(m_{i},l_{i})\in \wt M$ for $i=0,\dots,r$.
  Put
  $\boldsymbol{\alpha }=(l_{1}-l_{0},\dots,l_{r}-l_{0})$. Let $H$ and $A$
  be as in
  Lemma \ref{lemm:17}. In our case, $H$ is a morphism of lattices and
  \begin{displaymath}
    e\phiK =A^{\ast}\Psi _{\Delta ^{r}}+m_{0}+l_{0}.
  \end{displaymath}
  We follow examples \ref{exm:10}, \ref{exm:9}, \ref{exm:26} and
  \ref{exm:23}, and consider $\P^{r}_{S}$ as a toric scheme over $S$.
  Let $p=(p_{0}:\dots:p_{r})$ be a rational point in the principal open
  subset of $\P^{r}_{K}$ such that $\val_{K}(p)=\boldsymbol{\alpha
  }$. Observe that, in this example, the map $\val_{K}$ from the set of
  rational points of the principal open subset of
  $\P^{r}_{K}$ to $N$ (Definition \ref{def:32}) is given explicitly by
  the formula
  \begin{displaymath}
    \val_{K}(p_{0}:\dots:p_{r})=(\val_{K}(p_{1}/p_{0}),\dots,\val_{K}(p_{r}/p_{0})).
  \end{displaymath}
  One can verify that the hypothesis of Proposition \ref{prop:45} are
  satisfied. 
  Let
  $\Phi _{p,A}\colon  \cX_{\Pi }\to \P^{r}_{S}$ be the associated morphism. 
  Then 
  \begin{displaymath}
    D_{e\phiK }=\Phi_{p,A}^{\ast}
    D_{\Psi _{\Delta ^{r}}}+\div(\varpi ^{-l_{0}}\chi ^{-m_{0}}).
  \end{displaymath}
\end{exmpl}


\chapter{Metrics and measures on toric varieties} 
\label{sec:appr-integr-metr}

In this chapter, we study the metrics on a toric line bundle that are
invariant under the action of the compact torus.  Our aim is to obtain
a characterization, in terms of convex analysis, of semipositive toric
metrics and of their associated measures.

We set the notation for  most of this chapter. Let $K$ be
either $\R$, $\C$ or a field which is complete with respect to a
non-Archimedean absolute value. In the non-Archimedean case, we will
use the notations of \S \ref{sec:algebr-metr-discr}, although, for the
moment, we do not assume that the absolute value is associated to a
discrete valuation. 

Let $\T$ be an $n$-dimensional split torus over $K$. Set $N$ and
$M=N^{\vee}$ for the corresponding lattices and let $\Sigma $ be a fan
in $N_{\R}$ as in  \S\ref{Toric varieties}. For each cone $\sigma \in
\Sigma$, we will denote by $X_{\sigma}$ the corresponding affine toric
variety and by $X_{\sigma }^{\an}$ its analytification. These spaces
glue together into a toric variety $X_{\Sigma}$ and an analytic space
$X_{\Sigma}^{\an}$, respectively.  When $K=\C$, the latter  agrees with the
complex analytic space $X_{\Sigma }(\C)$ whereas in the
non-Archimedean case, it is a Berkovich space.  When $K=\R$, we will
use the technique of Remark~\ref{rem:17} to reduce the study of
$X_{\Sigma}^{\an}$ to that of the associated complex analytic space.

\section{The variety with corners $X_{\Sigma }(\R_{\ge 0})$}
\label{sec:semi-analyt-vari}

The variety with corners associated to the fan $\Sigma$ is a partial
compactification of $N_{\R}$, and can be seen as a real analogue of
the toric variety $X_{\Sigma}$. It was introduced by Mumford in
\cite{AMRT:sclsv}. More recently, it has also appeared in the context
of tropical
geometry as the \og extended tropicalization\fg{} of
\cite{Kashiwara:ttg} and \cite{Payne:alt}.

With notations as above, for a cone $\sigma \in \Sigma $ we set
\begin{displaymath}
X_{\sigma }(\R_{\ge 0})=\Hom_{\sg}(M_{\sigma },(\R_{\ge 0},\times)).
\end{displaymath}
On $X_{\sigma }(\R_{\ge 0})$, we put the coarsest topology such that,
for each $m\in M_{\sigma }$, the map $X_{\sigma }(\R_{\ge 0})\to
\R_{\ge 0}$ given by $\gamma \mapsto \gamma (m)$ is continuous. 
Using \cite[\S 1.2, Proposition~2]{Ful93}, we can see that, if $\tau $
is a face of $\sigma $, then there is a dense
open immersion
$X_{\tau }(\R_{\ge 0})\hookrightarrow X_{\sigma }(\R_{\ge 0})$. Hence
the topological spaces $X_{\sigma }(\R_{\ge 0})$ glue together to
define a topological space $X_{\Sigma }(\R_{\ge 0})$.
\nomenclature[aX14]{$X_{\Sigma }(\R_{\ge 0})$}{variety with corners associated to a toric variety}%
This is the \emph{variety with corners} associated to $X_\Sigma$.
\index{variety with corners associated to a toric variety}%
Analogously to
the algebraic case, one can prove that 
this topological space is Hausdorff and that the spaces $X_{\sigma
}(\R_{\ge 0})$ can be identified with open subspaces of $X_{\Sigma
}(\R_{\ge 0})$ satisfying, for~$\sigma,\sigma'\in \Sigma$, 
\begin{displaymath}
  X_{\sigma }(\R_{\ge 0})\cap X_{\sigma' }(\R_{\ge 0})=X_{\sigma\cap
    \sigma ' }(\R_{\ge 0}).  
\end{displaymath}
We have that
\begin{displaymath}
  \T(\R_{\ge 0}):=\Hom_{\sg}(M, \R_{\ge0})=\Hom_{\gp}(M, \R_{>0})\simeq (\R_{> 0})^{n}
\end{displaymath}
is a topological Abelian group that acts on
$X_{\Sigma }(\R_{\ge0})$.

 For each $\sigma \in \Sigma $ there is a continuous
map $\rho _{\sigma } \colon  X^{\an}_{\sigma }\to X_{\sigma }(\R_{\ge
  0})$. This map is given, in the 
Archimedean case, by
\begin{displaymath}
  X^{\an}_{\sigma }=\Hom_{\sg}(M_{\sigma },(\C,\times))
  \overset{|\cdot|}{\longrightarrow} \Hom_{\sg}(M_{\sigma
  },(\R_{\ge 0},\times))= X_{\sigma }(\R_{\ge 0}).
\end{displaymath}
In the non-Archimedean case, since a point $p\in X_{\sigma }^{\an}$
corresponds to a multiplicative seminorm on $K[M_{\sigma }]$ and a
point in $X_{\sigma }(\R_{\ge 0})$ corresponds to a semigroup
homomorphism from $M_{\sigma }$ to $(\R_{\ge 0},\times)$, we define 
$\rho _{\sigma } (p)$  
as the semigroup homomorphism  that to an element $m\in
M_{\sigma }$ corresponds $|\chi ^{m}(p)|$. 

In both cases, these maps glue together to
define a continuous map
\nomenclature[g1703]{$\rho_{\Sigma} $}{projection of an analytic toric
  variety onto a variety with corners}%
\begin{equation}
  \label{eq:111}
 \rho  _{\Sigma }:X_{\Sigma }^{\an} \to
X_{\Sigma }(\R_{\ge  0}). 
\end{equation}
\begin{lem}\label{lemm:1}
For $\sigma\in \Sigma$, the map $\rho _{\Sigma }$ satisfies $\rho _{\Sigma }^{-1}(X_{\sigma
  }(\R_{\ge 0}))=X_{\sigma }^{\an}$.
\end{lem}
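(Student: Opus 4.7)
The plan is to reduce the problem to checking that the open inclusions $X_\sigma \subset X_\Sigma$ and $X_\sigma(\R_{\ge 0}) \subset X_\Sigma(\R_{\ge 0})$ are cut out by the same explicit conditions via the map $\rho_\Sigma$. The inclusion $X_\sigma^{\an} \subset \rho_\Sigma^{-1}(X_\sigma(\R_{\ge 0}))$ is immediate from the definition of $\rho_\Sigma$ as a gluing of the local maps $\rho_\sigma$, so the real content is the reverse inclusion.

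For the reverse inclusion, take $p \in \rho_\Sigma^{-1}(X_\sigma(\R_{\ge 0}))$. Since the $X_\tau^{\an}$ cover $X_\Sigma^{\an}$, pick some cone $\tau \in \Sigma$ with $p \in X_\tau^{\an}$. Because $\sigma \cap \tau$ is a face of $\tau$, one can pick $m_0 \in M_\tau = \tau^\vee \cap M$ such that $\sigma \cap \tau = \tau \cap m_0^\perp$; then $M_{\sigma \cap \tau} = M_\tau + \Z(-m_0)$, so $K[M_{\sigma \cap \tau}]$ is obtained from $K[M_\tau]$ by inverting $\chi^{m_0}$. Consequently, inside $X_\tau^{\an}$ the open subset $X_{\sigma \cap \tau}^{\an} = X_\sigma^{\an} \cap X_\tau^{\an}$ is characterized by the condition $|\chi^{m_0}(\cdot)| > 0$, since in the Archimedean case this is the condition that the seminorm (coming from an actual point in $X_\tau(\C)$) extends to the localized algebra, and in the Berkovich case a multiplicative seminorm on $K[M_\tau]$ extends to $K[M_\tau][\chi^{-m_0}]$ precisely when it does not vanish on $\chi^{m_0}$.

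Parallelly, in the variety with corners a semigroup homomorphism $\gamma \colon M_\tau \to (\R_{\ge 0},\times)$ extends (necessarily uniquely) to $M_{\sigma \cap \tau}$ exactly when $\gamma(m_0) > 0$, so $X_{\sigma \cap \tau}(\R_{\ge 0}) = X_\sigma(\R_{\ge 0}) \cap X_\tau(\R_{\ge 0})$ is the open subset of $X_\tau(\R_{\ge 0})$ defined by $\gamma(m_0) > 0$. By the very definition of $\rho_\tau$, we have $\rho_\tau(p)(m_0) = |\chi^{m_0}(p)|$, so the two open conditions coincide: $\rho_\tau(p) \in X_{\sigma \cap \tau}(\R_{\ge 0})$ if and only if $p \in X_{\sigma \cap \tau}^{\an}$. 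Since $\rho_\Sigma(p) \in X_\sigma(\R_{\ge 0}) \cap X_\tau(\R_{\ge 0}) = X_{\sigma \cap \tau}(\R_{\ge 0})$ by hypothesis, we conclude $p \in X_{\sigma \cap \tau}^{\an} \subset X_\sigma^{\an}$, as required.

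The one point that requires a little care—and will be the main technical step—is the identification of the intersections $X_\sigma^{\an} \cap X_\tau^{\an} = X_{\sigma \cap \tau}^{\an}$ and $X_\sigma(\R_{\ge 0}) \cap X_\tau(\R_{\ge 0}) = X_{\sigma \cap \tau}(\R_{\ge 0})$ together with their description as the non-vanishing loci of $\chi^{m_0}$ (resp.\ $m_0$). For the Berkovich side this is essentially Theorem \ref{thm:6}\eqref{item:4} applied to the open immersion $X_{\sigma \cap \tau} \hookrightarrow X_\tau$ combined with the standard description of analytifications of principal open subsets via the non-vanishing locus of a function; for the variety with corners it is a direct semigroup-theoretic check. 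Once these descriptions are in place, the equality $\rho_\tau(p)(m_0) = |\chi^{m_0}(p)|$ finishes the proof.
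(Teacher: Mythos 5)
Your proof is correct and follows essentially the same route as the paper: take a cone $\tau$ with $p \in X_\tau^{\an}$, form the common face $\sigma\cap\tau$, and use the fact (Fulton, \S1.2 Prop.\ 2) that $K[M_{\sigma\cap\tau}]$ is the localization of $K[M_\tau]$ at a single monomial $\chi^{m_0}$, so that membership in $X_{\sigma\cap\tau}^{\an}$ is detected by $|\chi^{m_0}(p)| > 0$, which $\rho_\tau$ translates into the corresponding condition on the variety with corners. The only cosmetic differences are notational (you call the ambient cone $\tau$ where the paper calls it $\sigma'$) and that you spell out the Archimedean case, which the paper leaves to the reader.
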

\begin{proof}
  By construction, $X_{\sigma }^{\an}\subset \rho  _{\Sigma }
  ^{-1}(X_{\sigma }(\R_{\ge 0}))$. For the reverse inclusion we will
  write 
  only the non-Archimedean case. Assume that $p\in \rho  _{\Sigma }
  ^{-1}(X_{\sigma }(\R_{\ge 0}))$. There is a cone $\sigma '$ with $p\in
  X_{\sigma '}^{\an}$. Let $\tau =\sigma\cap \sigma'$ be the common
  face. Then $p$ is 
  a multiplicative 
  seminorm of $K[M_{\sigma' }]$ and we show next that it can be
  extended to a multiplicative seminorm of $K[M_{\tau }]$. By
  \cite[\S 1.2 Proposition 2]{Ful93}  there is an element $u\in M_{\sigma'
  }$ such that $M_{\tau }=M_{\sigma' }+\Z_{\ge 0}(-u)$. Hence
  $K[M_{\tau }]=K[M_{\sigma' }+\Z_{\ge 0}(-u)]$. Since $\rho  _{\Sigma
  } (p)\in X_{\tau 
  }(\R_{\ge 0})$ we have that $|\chi ^{u}(p)|\not = 0$. Therefore $p$
  extends to a multiplicative seminorm of $K[M_{\tau }]$. Hence $p\in
  X^{\an }_{\tau} \subset X^{\an}_{\sigma }$.
\end{proof}

\begin{cor}\label{cor:29}
  The map $\rho _{\Sigma }\colon X_{\Sigma }^{\an}\to X_{\Sigma
  }(\R_{\ge 0})$ is proper.
\end{cor}
\begin{proof}
When $\Sigma $ is complete, the analytic space $X_{\Sigma }^{\an}$ is
compact. 
Since $X_{\sigma
  }(\R_{\ge 0})$ is Hausdorff, 
the map $\rho  _{\Sigma }$ is proper. 
Assume now that $\Sigma $ is not necessarily complete. Let $\sigma \in
\Sigma $ be a cone. Let $\Sigma '$ be a complete fan that contains
$\sigma $.  By Lemma~\ref{lemm:1}, the fact that $\rho _{\Sigma '}$ is proper
implies that $\rho _{\sigma }$
is proper. Since the condition of being proper is local on $X_{\Sigma
  }(\R_{\ge 0})$, the fact that $\rho _{\sigma }$ is proper for all
  cones $\sigma \in \Sigma $ implies that $\rho _{\Sigma }$ is proper.
\end{proof}

We denote by $\ee\colon \R \to \R_{>0}$ the map $u\mapsto
\exp(-u)$. This map induces a homeomorphism
\begin{displaymath}
  N_{\R}=\Hom(M,\R)\longrightarrow \Hom_{\sg}(M,(\R_{\ge
    0},\times))=X_{0}(\R_{\ge 0})
\end{displaymath}
that we also denote by $\ee$.
\nomenclature[ae]{$\ee$}{parameterization of a variety with corners}%

We define a valuation map in both Archimedean and non-Archimedean cases, by setting, for $\alpha\in
K^{\times}$, \index{valuation map!of an Archimedean field}\nomenclature[aval1]{$\val$}{valuation map}%
\begin{displaymath}
  \val(\alpha)= -\log
  |\alpha|.
\end{displaymath}

Next, we define a map $\val\colon X_{0}^{\an}\to N_{\R}$.
For $p\in X^{\an}_{0}$, we denote by
$\val(p)\in \Hom_{\sg}(M,\R)=N_{\R}$ the morphism given by 
\begin{equation}
    \label{eq:42}
    m\longmapsto \langle m, \val(p)\rangle = -\log
  |\chi^{m}(p)|.
\end{equation}
Then, there is a commutative diagram
\begin{equation}\label{eq:82}
  \xymatrix{ & X_{0}^{\an}\ar[dl]_{\val} \ar[d] ^{\rho_{0}}\\
    N_{\R}\ar[r]_{\ee}&X_{0}(\R_{\ge 0}).
  }
\end{equation}

When $K$ is non-Archimedean and the associated valuation
is discrete, we set
\begin{equation}\label{eq:95}
  \lambda _{K}=
    -\log|\varpi |, \quad  \ee_{K}(u)=\ee(\lambda _{K}u), \quad 
\val_{K}(p)=\frac{\val(p)}{\lambda _{K} }
\end{equation}
for $u\in N_{\R}$ and $p\in X_{0}^{\an}$. 
\nomenclature[g11k]{$\lambda_K$}{scalar associated to a non-Archimedean local field}%
\nomenclature[aea]{$\ee_K$}{normalized parameterization of a variety with corners}%
\nomenclature[aval1]{$\val_K$}{normalized valuation map}%
This latter map extends
the map $\val_{K}\colon \T(K)\to N$ of Definition \ref{def:32}. In
order to make some statements more compact, if $K$ is Archimedean, we
will write $\lambda _{K}=1$,  $\ee_{K}=\ee$ and $\val_{K}=\val$.

\begin{rem} \label{rem:34}
  The map $\val$ only depends on the
  absolute value and is invariant 
  under valued field extensions. It can be defined for arbitrary
  valued fields. The map $\val_{K}$ is the valuation normalized with
  respect to the field $K$. It is only
  defined for discrete valuations. The advantage of $\val_K$ is that
  the image of a rational point belongs to the lattice, that is
  \begin{displaymath}
    \val_{K}(X_{0}(K))\subset N.
  \end{displaymath}
\end{rem}

The map $\ee$ allows us to see $X_{\Sigma }(\R_{\ge 0})$ as a
partial compactification on $N_{\R}$.
Following \cite[Chapter I, \S 1]{AMRT:sclsv} we can give another
description of the topology of $X_{\Sigma }(\R_{\ge 0})$.
For $\sigma \in \Sigma $, we consider the set
\nomenclature[aNl14]{$N_{\sigma}$}{compactification of $N_{\R}$ with respect to a cone}%
\begin{equation}\label{eq:48}
  N_{\sigma }=\coprod_{\tau \text{ face of }\sigma  }N(\tau)_\R,
\end{equation}
where $N(\tau )$ denotes the lattice introduced in (\ref{eq:53}). 

We first extend the map $\ee$ to a map $N_{\sigma }\to
X_{\sigma }(\R_{\ge 0})$. For $\tau $ a face of $\sigma $, we consider
the semigroup $M(\tau )\cup \{-\infty\}$. Each group homomorphisms
$M(\tau )\to (\R_{>0},\times)$ can be extended to a semigroup morphism 
$M(\tau )\cup\{-\infty\}\to (\R_{\ge 0},\times)$ by sending $-\infty$ to $0$.
There is a morphism of semigroups $M_{\sigma
}\to M(\tau )\cup \{-\infty\}$ given by
\begin{displaymath}
  m\longmapsto
  \begin{cases}
    m&\text{ if }m\in \tau ^{\perp},\\
    -\infty&\text{ otherwise,}
  \end{cases}
\end{displaymath}
that induces an injective map 
$$N(\tau )_{\R}\overset{\ee}{\longrightarrow}\Hom_{\gp}(M(\tau
),(\R_{> 0},\times))\longrightarrow 
\Hom_{\sg}(M_{\sigma },(\R_{\ge 0},\times))=X_{\sigma }(\R_{\ge 0}).
$$
Glueing together these maps for all faces of $\sigma $ we obtain the map
$\ee\colon N_{\sigma }\to
X_{\sigma }(\R_{\ge 0})$. One may verify that this map is a
bijection. 

We
next define a topology on $N_{\sigma }$. To this end,
we choose a positive definite bilinear pairing
in $N_{\R}$. Hence we can identify the quotient spaces $N(\tau
)_\R$ with subspaces of $N_{\R}$, that, for simplicity, we will
denote also by $N(\tau )_{\R}$.
For a point $u\in N(\tau)_\R$, let $U\subset N(\tau)_\R$
be a neighbourhood of $u$. For each $\tau '$ face of $\tau $, $\tau $
induces a cone $\pi _{\tau '}(\tau )$ contained in $N(\tau
')_{\R}$. If $p\in \tau$, then 
its image $\pi _{\tau '}(p)$ in $N(\tau ')_{\R}$ is contained in $\pi
_{\tau '}(\tau )$. We write  
\begin{equation}\label{eq:20}
  W(\tau ,U,p)=\coprod _{\tau ' \text { face of }\tau }\pi _{\tau ' }(U+p+\tau).
\end{equation}
The topology of $N_{\sigma }$ is defined by the fact that $\{W(\tau
,U,p)\}_{U,p}$ is a basis of neighbourhoods of 
$u$ in $N_{\sigma }$. With this topology, the map $\ee\colon N_{\sigma }\to
X_{\sigma }(\R_{\ge 0})$ is a  
homeomorphism. 
\nomenclature[aNzl14]{$N_{\Sigma}$}{compactification of $N_{\R}$ with respect to a fan}%

We write
\begin{displaymath}
  N_{\Sigma }=\coprod_{\sigma \in \Sigma  }N(\sigma )_\R,
\end{displaymath}
and put on $ N_{\Sigma }$ the topology that makes $\{N_{\sigma
}\}_{\sigma \in \Sigma }$ an open cover. Then the map
$\ee$ 
extends to a homeomorphism between $N_{\Sigma }$ and $X_{\Sigma
}(\R_{\ge 0})$ and the map $\val$ in (\ref{eq:82}) extends to a proper continuous
map $\val\colon X^{\an}_{\Sigma }\to N_{\Sigma }$ such that the diagram
\begin{equation}\label{eq:89}
  \xymatrix{ & X_{\Sigma }^{\an}\ar[dl]_{\val} \ar[d] ^{\rho_{\Sigma }}\\
    N_{\Sigma }\ar[r]_{\ee}&X_{\Sigma }(\R_{\ge 0})
  }
\end{equation}
is commutative.

\begin{rem}
In case we are given a strictly concave support function $\Psi$ on  a
complete fan $\Sigma$, then $N_\Sigma$ is homeomorphic to the polytope
$\Delta_\Psi$ introduced in \S \ref{pos_divisors}. An
homeomorphism is obtained as the composition of $\ee$ with the
\nomenclature[g12]{$\mu $}{moment map}%
moment map\index{moment map} $\mu\colon X_{\Sigma}(\R_{\ge
  0})\rightarrow \Delta_\Psi$ induced by $\Psi$: 
$$\begin{matrix}
N_{\Sigma} &\buildrel{\ee}\over{\longrightarrow} &X_{\Sigma}(\R_{\ge 0}) &\buildrel{\mu}\over\longrightarrow &\Delta_\Psi\\[2mm]
u &\longmapsto &\ee(u) &\longmapsto &\frac{\sum\exp(-\langle m,u\rangle)m}{\sum\exp(-\langle m,u\rangle)}
\end{matrix}$$
where the sums in the last expression are over the elements $m\in
M\cap \Delta _{\Psi }$.
\end{rem}

We end this section stating the functorial properties of the space
$X_{\Sigma }(\R_{\ge 0})$. We start by studying field extensions.
Assume that $K$ is non-Archimedean and let $K'$ be a complete valued
field extension of $K$. As explained before Definition \ref{def:75},
there is a map $\nu\colon X^{\an}_{\Sigma,K' }\to X^{\an}_{\Sigma,K }$.
\begin{prop}\label{prop:113}
  The diagram
  \begin{displaymath}
    \xymatrix{X_{\Sigma,K' }^{\an}\ar[d]_{\val} \ar[dr] ^(.7){\rho_{\Sigma,K'
        }}|\hole  \ar[r]^{\nu} & X_{\Sigma,K }^{\an}\ar[dl]_(.7){\val} \ar[d]
      ^{\rho_{\Sigma,K }}\\
    N_{\Sigma }\ar[r]_{\ee}&X_{\Sigma }(\R_{\ge 0})
    }
  \end{displaymath}
  is commutative. 
\end{prop}
\begin{proof}
  The commutativity of the diagram follows from the fact that the map
  $X^{\an}_{\Sigma,K' }\to X^{\an}_{\Sigma,K }$ is given by
  restricting seminorms. 
\end{proof}

We next study the inclusion of closed orbits.
Let $N$ and $\Sigma $ be as before and
  $\sigma \in \Sigma $. Recall that there is an induced fan $\Sigma
  (\sigma )$ in $N(\sigma )_{\R}$ defined in (\ref{eq:62}) and a
  closed immersion $\iota_{\sigma }\colon X_{\Sigma (\sigma )}\to
  X_{\Sigma }$ defined before Proposition \ref{prop:73}. We will also
  denote by $\iota _{\sigma }$ the induced 
  morphism of analytic spaces. The map $\ee$ gives us an
  homeomorphism $N(\sigma )_{\R}\to X_{\Sigma (\sigma ),0}(\R_{\ge 0})$. Hence
  the natural map $N(\sigma
  )_{\R}\hookrightarrow N_{\sigma }$ induces an inclusion  $X_{\Sigma
    (\sigma ),0}(\R_{\ge 0})\to X_{\sigma }(\R_{\ge 0})$.

\begin{prop}\label{prop:55}   The inclusion  $X_{\Sigma
    (\sigma ),0}(\R_{\ge 0})\to X_{\sigma }(\R_{\ge 0})$ extends to a
  continuous map $\iota_{\sigma }\colon X_{\Sigma (\sigma
    )}(\R_{\ge 0})\to X_{\Sigma }(\R_{\ge 0})$. Moreover, there is a
  commutative diagram
  \begin{displaymath}
    \xymatrix{
      X_{\Sigma (\sigma )}^{\an} \ar[r]^{\iota_{\sigma }}\ar[d]_{\rho _{\Sigma (\sigma
          )}}& 
      X_{\Sigma }^{\an}\ar[d]^{\rho _{\Sigma}}\\
      X_{\Sigma (\sigma )}(\R_{\ge 0}) \ar[r]^{\iota_{\sigma }}& X_{\Sigma }(\R_{\ge 0}).
    }
  \end{displaymath}
\end{prop}
\begin{proof} To construct the map $\iota _{\sigma }$ at the level of
  varieties with corners one can imitate the construction of the
  morphism $\iota _{\sigma }$ given before Proposition
  \ref{prop:73}. It is possible to verify that the obtained map is
  continuous. To prove the commutativity of the diagram, it is enough
  to restrict oneself to the principal open affine subset $X_{\Sigma
    (\sigma ),0}^{\an}$ or $X_{\Sigma (\sigma ),0}(\R_{\ge 0})$, where
  it follows from the concrete description of points either as
  multiplicative seminorms or as semigroup homomorphisms. We leave to
  the reader the verification of the details.
\end{proof}

\begin{notn} \label{def:90} Let $N_{i}$ and $\Sigma _{i}$ be a
  complete fan in $N_{i,\R}$, $i=1,2$. Let $H\colon N_{1}\to N_{2}$ be
  a linear map such that, for each cone $\sigma_{1} \in \Sigma _{1}$,
  there is a cone $\sigma _{2}\in \Sigma _{2}$ with $H(\sigma
  _{1})\subset \sigma _{2}$. Let $p\in X_{\Sigma_{2},0 }(K)$ and
  $A\colon N_{1,\R}\to N_{2,\R}$ the affine map $A=H+\val(p)$. By
  Theorem \ref{thm:25} there is an equivariant morphism
  $\varphi_{p,H}\colon X_{\Sigma_{1}}\to X_{\Sigma_{2}}$. We denote
  also by $\varphi_{p,H}$ both, the corresponding morphism between
  analytic spaces, and the map
\begin{equation}\label{eq:123}
 \varphi_{p,H}=\ee\circ A\circ \ee^{-1}\colon X_{\Sigma_{1},0}(\R_{\ge 0})\to
  X_{\Sigma_{2},0}(\R_{\ge 0}),
\end{equation}
even though  the latter depends only on $\val(p)$ and $H$ and not on $p$ itself.
\end{notn}

The proof of the following
proposition is left to the reader.

\begin{prop}\label{prop:56} The map (\ref{eq:123})
  extends to a continuous map
  $X_{\Sigma_{1}}(\R_{\ge 0})\to
  X_{\Sigma_{2}}(\R_{\ge 0})$ that we also denote by
  $\varphi_{p,H}$. Moreover, there is a commutative diagram 
  \begin{displaymath}
    \xymatrix{
      X_{\Sigma _{1}}^{\an} \ar[r]^{\varphi_{p,H}}\ar[d]_{\rho _{\Sigma_{1}}}& 
      X_{\Sigma_{2} }^{\an}\ar[d]^{\rho _{\Sigma_{2}}}\\
      X_{\Sigma _{1}}(\R_{\ge 0}) \ar[r]^{\varphi_{p,H}}& X_{\Sigma _{2}}(\R_{\ge 0}).
    }
  \end{displaymath} 
\end{prop}

\section{Analytic torus actions}
\label{sec:non-arch-analyt}

When $K$ is Archimedean, the analytic torus
$\T^{\an}\simeq(\C^{\times})^{n}$ is a group which acts on the
analytic toric variety $X_{\Sigma}^{\an}=X_{\Sigma}(\C)$. 
The \emph{compact torus} \index{compact torus}
\nomenclature[aSMan]{$\SS $}{compact torus (Archimedean case)}%
 of $\T^{\an}$ is defined as the subset 
\begin{displaymath}
  \SS=\{p\in \T^{\an}\mid |\chi^{m}(p)|=1 \text{ for all } m\in M\}.
\end{displaymath}
It is a compact topological subgroup of $\T^{\an}$, homeomorphic to
$(S^{1})^{n}$ and which
has a Haar measure of total volume $1$.
  The map
$\rho_{\Sigma}$ defined in (\ref{eq:111})
is equivariant, in the sense that, for all $t\in\T^{\an}$ and $p\in X_{\Sigma}^{\an}$, 
\begin{displaymath}
  \rho_{\Sigma}(t\cdot p)= \rho_{0}(t)\cdot\rho_{\Sigma}(p).
\end{displaymath}
The orbits of the action of $\SS$ on $X_{\Sigma}^{\an}$ agree with the
fibers of the map $\rho_{\Sigma}$ defined in~(\ref{eq:111}): for a
point $p\in X_{\Sigma}^{\an}$,
\begin{equation}\label{eq:138}
  \SS\cdot p= \rho_{\Sigma}^{-1}(\rho_{\Sigma}(p)). 
\end{equation}
Therefore the variety with corners $X_{\Sigma }(\R_{\ge 0})$ can be
understood as the quotient of $X_{\Sigma }^{\an}$ by the action of the
closed subgroup $\SS $. Since the map $\rho _{\Sigma }$ is
proper, the topology of $X_{\Sigma }(\R_{\ge 0})$ is the final
topology with respect to this map.

In the non-Archimedean case, the analogues of these properties are more
subtle. For the remainder of the section we will assume that $K$ is
non-Archimedean. 
Following \cite[Chapter 5]{Berkovich:stag}, an \emph{analytic group} \index{analytic group}
$G$ over $K$ is an analytic space over $K$ endowed with three
morphisms $G\times G\to G$ (multiplication), $\Spec(K)^{\an}\to G$
(identity) and $G\to G$ (inversion), satisfying the natural
conditions. 

An \emph{action} \index{analytic action} of $G$ on an analytic space $X$ over $K$ is a morphism
\begin{displaymath}
  \mu\colon G\times X\longrightarrow X,
\end{displaymath}
also satisfying the natural conditions in this context. 
\nomenclature[g12mu]{$\mu $}{action of an analytic group on an analytic space}%

The rational points $G(K)$ form an abstract group but, in general, the
set of points of the topological space underlying the analytic space $G$ has no natural group
structure induced by the analytic group structure. Instead, we can
define a correspondence that associates, 
to $g\in G$ and $p\in X$, the subset of points
\begin{displaymath}
  g\cdot p=\mu (\pr^{-1}(g,p)),
\end{displaymath}
where $\pr\colon G\times X\to G\times_{\mbox{\tiny topo}}X$ is the projection induced
from the functorial properties of the direct product of sets, the first product being in the category of analytic spaces whereas the second is in the category of topological spaces. The set
$g\cdot p$ may contain more than one point as shown in Corollary
\ref{cor:31}, for example. 
This \og  multiplication\fg{} of points satisfies the properties that,
for all $g,h\in G$ and $p,q\in X$, 
\begin{align}
g\cdot(h\cdot p)&=
(g\cdot h)\cdot p,\label{eq:128}\\ 
p\in g\cdot q &\Longleftrightarrow q\in g^{-1}\cdot p,\label{eq:135}  
\end{align}
where $g^{-1}$ denotes the image of $g$ by the inversion map
\cite[Proposition~5.1.1(i)]{Berkovich:stag}. 
If either $g\in G(K)$ or $p\in X(K)$, then $g\cdot p$ consists of a
single point.  

A non-empty subset $H\subset G$ is a subgroup if it satisfies that, for
all $g,h\in H$, $g^{-1}\in H$ and $g\cdot h \subset H$.
For a subgroup $H$ and a point $x\in X$, the orbit of $p$ with respect
to $H$ is defined as the subset 
$$H\cdot p=\bigcup_{h\in H}h\cdot p .$$
By (\ref{eq:135}), different  orbits are either disjoint or coincide. 

Although we have defined $g\cdot p$ as a set, for some special
elements of $G$ or $X$  
we can single out a distinguished point of this subset with good
properties. 

Let $K'$ and $K''$ be two complete valued field extensions of $K$, recall that
the tensor product $K'\otimes_K K''$ has a tensor product norm defined as
\begin{displaymath}
  \|\gamma \|= \inf_{\gamma =\sum_{i}\alpha _{i}\otimes \beta
    _{i}}\max_{i}|\alpha _{i}||\beta _{i}|. 
\end{displaymath}
Then $K'\wh\otimes K''$ is defined as the completion of $K'\otimes_K
K''$ with respect to this norm.

\begin{defn}\label{def:81} Let $Z$ be an analytic space over $K$. A
  point $p\in Z$ is called \emph{peaked} if, for any complete valued
  field \index{peaked point}
  extension $K'$ of $K$, the tensor product norm of $\mathscr{H}(p)\wh
  \otimes K'$ is multiplicative.
\end{defn}

Let $g\in  G$ and $p\in X$. The set $\pr^{-1}(g, p)$ can be identified with
the set of multiplicative seminorms of $\mathscr{H}(g)\wh
\otimes \mathscr{H}(p) $ that are bounded by the tensor product
norm. 

\begin{defn}\label{def:85}
  Let $g\in G$ and $p\in X$. It follows from the definition that if
  one of these points is peaked, then the tensor product norm of
  $\mathscr{H}(g)\wh \otimes \mathscr{H}(p) $ is multiplicative, and
  so it defines a point of $\pr^{-1}(g, p)\in G\times X$. We denote by
  $g\ast p\in X$ the image by $\mu$ of this point.
\end{defn}
\nomenclature[sast]{$\ast$}{star product with a peaked point}%

\begin{rem}\label{rem:33}
  Assume that $G$ and $X$ are the analytification of an affine algebraic
  group $\Spec(A)$ and of an affine algebraic variety $\Spec(B)$ over $K$
  respectively. Assume also that the action is induced by a morphism
  $B\to A\otimes B$. Let 
  $g\in G$ and $p\in X$. If either $g$ or $p$ is peaked,
  then the point $g\ast p\in X$ is given by the multiplicative seminorm of $B$
  induced by the tensor product norm of $\mathscr{H}(g)\wh\otimes
  \mathscr{H}(p)$ through the composition  $B\to A\otimes B\to
  \mathscr{H}(g)\wh\otimes 
  \mathscr{H}(p)$.  
\end{rem}

\begin{prop}\label{prop:109} Let $G$ be an analytic group and $X$ an
  analytic space with an action of $G$.
  \begin{enumerate}
  \item \label{item:129} The points of $G(K)$ and of $X(K)$ are
    peaked. If either $g\in 
    G$ or $p\in X$ is rational, then $g\cdot p=\{g\ast p\}$.
  \item \label{item:130}If $g\in
    G$ and $p\in X$ are peaked, then $g\ast p$ is peaked.
  \item \label{item:131}If two of the three points $g_{1}\in
    G$, $g_{2}\in
    G$ and $p\in X$ are peaked, then $(g_{1}\ast g_{2})\ast
    p=g_{1}\ast (g_{2}\ast p)$.
  \item \label{item:132}If $g\in G$ is peaked, then the map $X\to X$ given by
    $p\mapsto g\ast p$ is continuous. If $p\in X$ is peaked then the map $G\to X$ given by
    $g\mapsto g\ast p$ is continuous.
  \end{enumerate}
\end{prop}
\begin{proof}
  The first statement follows directly from the definition. The
  remaining statements are proved in \cite[Proposition 5.2.8]{Berkovich:stag}.
\end{proof}

\begin{prop}\label{prop:111}
  Let $\varphi\colon Y\to Z$ be a closed immersion of algebraic varieties over
  $K$. Then $p\in Y^{\an}$ is peaked if and only if $\varphi^{\an}(p)$ is peaked.
\end{prop}
\begin{proof}
  Since $\varphi$ is a closed immersion, we have that
  $\mathscr{H}(\varphi^{\an}(p))=\mathscr{H}(p)$, which implies the result. 
\end{proof}

The example of interest for us is when $G$ and $X$ are the
analytification of a
split algebraic torus and an algebraic toric variety
over $K$, respectively.   
Let notations be as at the beginning of this chapter and assume that
$K$ is non-Archimedean.
Then the analytic torus $\T^{\an}$ is an analytic group as above.

The map $\rho _{\sigma }$ is equivariant in the following sense.

\begin{prop}\label{prop:105} Let $t\in \T^{\an}$ and $p\in X_{\Sigma
  }^{\an}$. Then
  \begin{displaymath}
    \rho _{\Sigma }(t\cdot p)=\rho _{0}(t)\cdot \rho _{\Sigma }(p).
  \end{displaymath}
\end{prop}
\begin{proof}
  We can assume that $p\in X^{\an}_{\sigma }$ for a cone $\sigma \in \Sigma
  $. The set $\pr^{-1}(t,p)$ is the set of multiplicative seminorms of
  $K[M]\otimes K[M_{\sigma }]$ that extend the absolute value of $K$
  and that satisfy, for $f\in K[M]$ and $g\in K[M_{\sigma }]$,
  \begin{equation}
    \label{eq:127}
   |f\otimes 1|=|f(t)|, \quad |1\otimes g|=|g(p)|. 
  \end{equation}
  Therefore, if $m\in
  M_{\sigma }$, and $q\in t\cdot p$ is the image by $\mu $ of a multiplicative
  seminorm $|\cdot|_{q}$ of $K[M]\otimes K[M_{\sigma }]$ satisfying
  (\ref{eq:127}), 
  then
  \begin{displaymath}
    \rho _{\sigma }(q)(m)=|\chi^{m}(q)|=|\chi^{m}\otimes
    \chi^{m}|_{q}=|(\chi^{m}\otimes 1) (1\otimes
    \chi^{m})|_{q}
  \end{displaymath}
  By the multiplicativity of $|\cdot|_{q}$
  \begin{displaymath}
    \rho _{\sigma }(q)(m)=|(\chi^{m}\otimes 1)|_{q}|(1\otimes
    \chi^{m})|_{q}.
  \end{displaymath}
  By (\ref{eq:127}),
  \begin{displaymath}
    \rho _{\sigma }(q)(m)=|\chi^{m}(t)||\chi^{m}(p)|=\rho _{0}(t)(m)\rho
    _{\sigma }(p)(m)= (\rho _{0}(t)\cdot \rho _{\sigma }(p))(m),
  \end{displaymath}
  proving the result.
\end{proof}

The compact torus $\SS\subset \T^{\an}$
is a subgroup in the analytic sense and its 
underlying topological space is compact. However,
as discussed previously it is not an abstract group. Thus we cannot apply
the theory of locally compact topological groups to obtain a Haar
measure on $\SS$. The role of the Haar measure of $\SS$ will be played by a
Dirac delta measure centred at a special point of $\SS$. 

\begin{defn}\label{def:86} The \emph{Gauss norm} of $K[M]$ is the norm 
 given, for $f=\sum \alpha _{m}\chi ^{m}\in K[M]$, by
  \begin{math}
    \max_{m}|\alpha _{m}|.
  \end{math}
\index{Gauss norm}
\end{defn}

The following result is classical.

\begin{prop}\label{prop:104}
    The Gauss norm is multiplicative. 
\end{prop}
\begin{proof}
Let
  $f=\sum_{m} \alpha _{m}\chi^{m}$ and $g=\sum_{l} \beta
  _{l}\chi^{l}$ and write $fg=\sum_{k} \epsilon  _{k}\chi^{k}$ with
  $\epsilon _{k}=\sum_{m+l=k}\alpha _{m}\beta _{l}$. Then, since
  the absolute value of $K$ is ultrametric, 
  \begin{displaymath}
    \max_{k\in M_{\sigma }} (|\epsilon _{k}|)\le
    \max_{m\in M_{\sigma }} (|\alpha_{m} |)
    \max_{l\in M_{\sigma }} (|\beta _{l}|). 
  \end{displaymath}
  Let $C_{f}=\{m\in M_{\sigma }|\max_{m'}(|\alpha _{m'}|)=|\alpha
  _{m}|\}$ and define $C_{g}$ analogously. Let
  $r$ be a vertex of the Minkowski sum
  $\Conv(C_{f})+\Conv(C_{g})$. Then there is a unique decomposition
  $r=m_{r}+l_{r}$ with $m_{r}\in C_{f}$ and $l_{r}\in C_{g}$. Hence
  $\epsilon _{r}= \alpha _{m_{r}}\beta _{l_{r}}$. Thus
  \begin{displaymath}
    \max_{k\in M_{\sigma }} (|\epsilon _{k}|)\ge
    |\epsilon _{r}|=
    \max_{m\in M_{\sigma }} (|\alpha_{m} |)
    \max_{l\in M_{\sigma }} (|\beta _{l}|),
  \end{displaymath}
  which concludes the proof.
\end{proof}

\begin{defn}\label{def:87}
    The \emph{Gauss point} of $\T^{\an}$ is the point $\zeta $ corresponding to
  the Gauss norm of $K[M]$. Thus, if $f=\sum \alpha _{m}\chi ^{m}\in
  K[M]$, then \nomenclature[g06Gauss]{$\zeta$}{Gauss point}%
  \begin{displaymath}
    |f(\zeta)|=\max_{m}|\alpha _{m}|.
  \end{displaymath}
  It is clear that $\zeta\in \SS\subset \T^{\an}$. \index{Gauss point}
\end{defn}

The Gauss point satisfies the following invariance
property, that indicates that it is reasonable to consider the Dirac
delta measure $\delta _{\zeta}$ as the non-Archimedean analogue of the
Haar measure on 
$\SS$. 

\begin{prop} \label{prop:97} The Gauss point $\zeta$ is peaked. Moreover, 
for any $t\in \SS$ one has $t\ast\zeta=\zeta$. 
\end{prop}
\begin{proof} 
Let $K'$ be a complete valued field  over $K$. We denote by $\wh{K'[M]}$
the completion of $K'[M]$ with respect to the Gauss norm. Since there
is an isometry
\begin{displaymath}
  \wh{K[M]}\wh{\otimes}K'=\wh{K'[M]},
\end{displaymath}
and the Gauss norm is multiplicative,
then \cite[Lemma 5.2.2]{Berkovich:stag} implies that the Gauss point
is peaked. 

 Let $f=\sum\alpha_{m}\chi^{m}\in
K[M]$. The action of $\T$ on itself is given by the morphism of
algebras $K[M]\to K[M]\otimes K[M]$ that sends $f$ to $\sum \alpha
_{m}\chi^{m}\otimes \chi^{m}$.

For $t\in  \T^{\an}$, by Remark \ref{rem:33}, the value $|f(t\ast
\zeta)|$ is the norm
of the image of $f$ in $\mathscr{H}(\zeta)\wh{\otimes}
\mathscr{H}(t)$. 
Since the map
\begin{displaymath}
  \wh{K[M]}\wh{\otimes} \mathscr{H}(t)\to
  \mathscr{H}(\zeta)\wh{\otimes} \mathscr{H}(t)
\end{displaymath}
is an isometric embedding, it is enough to compute the norm of the
image of $f$ in $\wh{K[M]}\wh{\otimes}
\mathscr{H}(t)=\wh{\mathscr{H}(t)[M]}$. Therefore
\begin{displaymath}
|f(t\ast \zeta)|= \left|\sum\alpha_{m}\chi^{m}(t)\chi^{m}\right|=
\max_{m} (|\alpha_{m}| |\chi^{m}(t)|).  
\end{displaymath}
Assume now that $t\in \SS$. Then
$|\chi^{m}(t)|=1$ for all $m\in M$. Thus $|f(t\ast \zeta)|=\max_{m}
|\alpha_{m}|$ and so $t\ast \zeta=\zeta $.
  \end{proof}

\begin{cor}\label{cor:31} The Gauss point satisfies
  \begin{math}
    \zeta\cdot\zeta=\SS.
  \end{math}
\end{cor}
\begin{proof}
  Since $\SS$ is a subgroup, $\zeta\cdot\zeta\subset \SS$. Let now
  $t\in \SS$. By Proposition \ref{prop:97}, $\zeta=\zeta\ast t\in
  \zeta\cdot t$. By
  (\ref{eq:135}), $t\in \zeta\cdot \zeta^{-1}$. Since
  $\zeta=\zeta^{-1}$, we deduce that $t\in \zeta\cdot \zeta$, proving the
  result. 
\end{proof}

On each fibre of the map $\rho _{\Sigma }$ there is a point
with similar properties to those of the Gauss point, giving a
continuous section of  $\rho _{\Sigma }$.

\begin{prop-def}\label{prop:54} Let $\sigma \in \Sigma $.
  For each $\gamma \in \Hom_{\sg}(M_{\sigma },\R_{\ge 0})$, the
  seminorm that, to a function $\sum \alpha _{m}\chi^{m}\in K[M_{\sigma
  }]$ assigns the value $\max_{m}(|\alpha _{m}|\gamma (m))$, is a
  multiplicative seminorm on $K[M_{\sigma
  }]$ that extends the absolute value of $K$. Therefore it determines a point of
  $X^{\an}_{\sigma }$ that we denote $\theta_{\sigma }(\gamma
  )$. The maps $\theta_{\sigma }$ are injective, continuous and
  proper. Moreover, they glue together to define a map
\nomenclature[g0818]{$\theta _{\Sigma }$}{injection of the variety with corners in the corresponding analytic toric variety}%
  \begin{displaymath}
    \theta _{\Sigma }\colon X_{\Sigma }(\R_{\ge 0})\longrightarrow X_{\Sigma}^{\an}
  \end{displaymath}
  that is a continuous and proper section of $\rho _{\Sigma }$. 
\end{prop-def}
\begin{proof} Let $\gamma \in \Hom_{\sg}(M_{\sigma },\R_{\ge 0})$. The
  fact that the seminorm $\theta_{\sigma }(\gamma
  )$ extends the absolute value of $K$ is clear. That it is multiplicative is
  proved with an argument similar to the one in the proof of
  Proposition \ref{prop:104}. Thus we obtain a point $\theta _{\sigma }(\gamma
  )\in X^{\an}_{\sigma }$. 

  We show next that the map $\theta _{\sigma }$ is continuous. The
  topology of $X^{\an}_{\sigma }$ is the coarsest topology that makes
  the functions $p\mapsto |f(p)|$ continuous for all $f\in K[M_{\sigma
  }]$. Thus to show that $\theta _{\sigma }$ is continuous, it is enough
  to show that the map $\gamma \mapsto |f(\theta _{\sigma }(\gamma ))|$ is
  continuous on $X_{\sigma }(\R_{\ge 0})=\Hom_{\sg}(M_{\sigma
  },\R_{\ge 0})$.
  The topology of $X_{\sigma }(\R_{\ge 0})$ is the
  coarsest topology such that, for each $m\in M_{\sigma }$, the map $\gamma
  \mapsto \gamma (m)$ is continuous. Since, for $f=\sum_{m\in M_{\sigma
    }}\alpha _{m}\chi ^{m}$, 
 $$
  |f(\theta _{\sigma }(\gamma ))|=\max(|\alpha _{m}|\gamma (m)),
 $$
  we deduce that $\theta _{\sigma }$ is continuous. 

  The facts that the maps $\theta _{\sigma }$ glue together to give a
  continuous map $\theta _{\Sigma }$ and that $\theta _{\Sigma }$ is a
  section of $\rho _{\Sigma }$ follow easily from the definitions. The
  analogue of Lemma \ref{lemm:1} is true for $\theta_{\Sigma }$, hence
  $\theta _{\sigma }$ is proper by the argument of the proof of
  Corollary \ref{cor:29}.
\end{proof}

Observe that $\zeta=\theta _{0}(1)$. 
The following result extends Proposition \ref{prop:97} to the points in
the image of $\theta_\Sigma $.
\begin{prop} \label{prop:96}
Let $t\in \T^{\an}$, $p\in X_{\Sigma }^{\an}$,
$\gamma\in X_{\Sigma}(\R_{\ge0})$ and $\tau  \in \T(\R_{\ge
0})$.  Then the points $\theta_{\Sigma}(\gamma)$ and $\theta_{0}(\tau
)$ are peaked and
\begin{displaymath}
t\ast \theta_{\Sigma}(\gamma)=
\theta_{\Sigma}(\rho_{0}(t)\cdot \gamma), \qquad 
\theta _{0}(\tau )\ast p=\theta _{\Sigma }(\tau \cdot \rho _{\Sigma }(p)).
\end{displaymath}
\end{prop}
  
\begin{proof}
Let  $\sigma\in \Sigma$ such that $\gamma\in X_{\sigma}(\R_{\ge0})$. 
By similar arguments as those in the proof of Proposition
\ref{prop:97}, we see that $\theta _{\sigma }(\gamma )$ is peaked and
that, for $f=\sum \alpha _{m}\chi ^{m}\in K[M_{\sigma }]$,
\begin{displaymath}
  |f(t\ast \theta _{\sigma }(\gamma ))|=\max_{m}|\alpha
  _{m}||\chi^{m}(t)|\gamma (m) =
  \max_{m} |\alpha
  _{m}|(\rho _{0}(t)\cdot \gamma) (m)=|f(\theta _{\sigma }(\rho
  _{0}(t)\cdot \gamma))|,
\end{displaymath}
which proves the first formula. The rest of the proposition can be
proved along the same lines. 
  \end{proof}

  A direct consequence of Proposition \ref{prop:96} is the following
  equivariance result for~$\theta _{\Sigma }$. It implies that
  $(\Im(\theta_{0}), *)$ is a topological group acting by $*$ on the
  topological space $\Im(\theta_{\Sigma})$, with an action isomorphic
  to the action of $\T(\R_{\ge0})$ on $X_{\Sigma}(\R_{\ge 0})$.

\begin{cor}\label{cor:32} Let $\tau \in \T(\R_{\ge 0})$ and $\gamma
  \in X_{\Sigma }(\R_{\ge 0})$. Then
  \begin{displaymath}
 \theta _{\Sigma }(\tau \cdot
  \gamma )=\theta _{0}(\tau) \ast \theta _{\Sigma }(\gamma ).   
  \end{displaymath}
\end{cor}

The orbits of the action of
$\SS$ on $X_{\Sigma}^{\an}$ agree with the fibers of the map
$\rho_{\Sigma}$.
\begin{prop}\label{prop:106}
 Let $p\in X_{\Sigma}^{\an}$. Then
\begin{displaymath}
  \SS\cdot p= \rho_{\Sigma}^{-1}(\rho_{\Sigma}(p)). 
\end{displaymath}  
\end{prop}
\begin{proof}
  By Proposition \ref{prop:105}, $\rho _{\Sigma }(\SS\cdot p)=\rho
  _{\Sigma }(p)$ and so $\SS\cdot p\subset
  \rho_{\Sigma}^{-1}(\rho_{\Sigma}(p))$.

  Conversely, let $q\in \rho_{\Sigma}^{-1}(\rho_{\Sigma}(p))$. By Proposition
  \ref{prop:96},  $\zeta \ast p=\zeta \ast q$. Therefore $\SS\cdot
  p\cap \SS\cdot q\not = \emptyset$. Thus, both orbits agree and $q\in
  \SS\cdot p$, concluding the proof.
\end{proof}

The previous proposition shows that, also in the non-Archimedean case, the
space $X_{\Sigma }(\R_{\ge 0})$  can be understood as the quotient of $X_{\Sigma
}^{\an}$ by the action of the closed subgroup $\SS $. Note that,
since the map $\rho _{\Sigma }$ is proper, the topology of $X_{\Sigma
}(\R_{\ge 0})$ is the final topology with respect to this map. 

We next discuss the functorial properties for the map $\theta_{\Sigma} $.
Let $K'$ be a complete valued field extension of $K$, and consider the map
$\nu\colon X^{\an}_{\Sigma,K' }\to X^{\an}_{\Sigma,K }$. 
\begin{prop}\label{prop:114}
  The diagram
  \begin{displaymath}
    \xymatrix{X_{\Sigma,K' }^{\an} \ar[r]^{\nu} & X_{\Sigma,K }^{\an}\\
    &X_{\Sigma }(\R_{\ge 0})\ar[lu]^{\theta _{\Sigma ,K'}}\ar[u]_{\theta _{\Sigma,K }}
    }
  \end{displaymath}
  is commutative. 
\end{prop}
\begin{proof}
  The commutativity of the diagram follows from the fact that the map
  $X^{\an}_{\Sigma,K' }\to X^{\an}_{\Sigma,K }$ is given by
  restricting seminorms. 
\end{proof}

The map $\theta _{\Sigma }$ is compatible with the inclusion of
closure of orbits. The proof of the following proposition is left to
the reader.

\begin{prop} \label{prop:107} With the notations of Proposition
  \ref{prop:55}, there is
  a commutative diagram
  \begin{displaymath}
    \xymatrix{
      X_{\Sigma (\sigma )}^{\an} \ar[r]^{\iota_{\sigma }}& X_{\Sigma }^{\an}\\
      X_{\Sigma (\sigma )}(\R_{\ge 0})\ar[r]^{\iota_{\sigma
        }}\ar[u]^{\theta _{\Sigma(\sigma )}} 
      & X_{\Sigma }(\R_{\ge 0}). \ar[u]_{\theta _{\Sigma}}
    }
  \end{displaymath}  
\end{prop}

In some cases, the map $\theta _{\Sigma }$ is compatible with
equivariant maps.    

\begin{prop}\label{prop:108} With Notation \ref{def:90}, assume that
  the dual linear map $H^{\vee}$ is injective. Then the diagram
  \begin{displaymath}
        \xymatrix{
      X_{\Sigma _{1}}^{\an} \ar[r]^{\varphi_{p,H}}& 
      X_{\Sigma_{2} }^{\an}\\
      X_{\Sigma _{1}}(\R_{\ge 0}) \ar[r]^{\varphi_{p,H}}\ar[u]^{\theta 
        _{\Sigma_{1}}}
      & X_{\Sigma _{2}}(\R_{\ge 0})\ar[u]_{\theta _{\Sigma_{2}}}
    }
  \end{displaymath}
is commutative.
\end{prop}
\begin{proof}
  It is enough to treat the local case. Write $M_{i}$ for the dual
  lattice of $N_{i}$, $i=1,2$, and $H^{\vee}\colon M_{2}\to M_{1}$ for
  the dual of $H$.  Let $\sigma \in \Sigma _{1}$ be a cone, $q\in
  X_{\sigma }^{\an}$, and $f=\sum \alpha _{m} \chi^{m}\in
  K[M_{2,\sigma }]$. Then
  \begin{displaymath}
    |f(\varphi_{p,H}(q))|=\left|\sum_{m\in M_{2}}\alpha
      _{m}\chi^{m}(p)\chi^{H^{\vee}(m)}(q)\right|=
    \left| \sum_{n\in M_{1}}\left(
        \sum_{\substack{m\in M_{2}\\H^{\vee}(m)=n}}\alpha _{m}\chi^{m}(p)\right)\chi^{n}(q)\right|
  \end{displaymath}
  If $\gamma \in X_{\sigma }(\R_{\ge 0})$ and $q=\theta _{\sigma
  }(\gamma )$ then
  \begin{displaymath}
    |f(\varphi_{p,H}(q))|=\max_{n\in M_{1}}\left|
        \sum_{\substack{m\in M_{2}\\H^{\vee}(m)=n}}\alpha
        _{m}\chi^{m}(p)\right|\gamma (n). 
  \end{displaymath}
  Since $H^{\vee}$ is injective
  \begin{displaymath}
        |f(\varphi_{p,H}(q))|=\max_{m\in M_{2}}|
        \alpha
        _{m}|\,|\chi^{m}(p)|\gamma (H^{\vee}(m)).
      \end{displaymath}
  But, by Proposition \ref{prop:56}, $|\chi^{m}(p)|\gamma (H^{\vee}(m))=\rho _{0}(p)(m)\gamma
  (H^{\vee}(m))=\varphi_{p,H}(\gamma )(m)$. Thus
  \begin{displaymath}
    |f(\varphi_{p,H}(\theta_{\Sigma _{1}} (\gamma )))| = \max_{m\in M_2}|\alpha_m|\varphi_{p,H}(\gamma )(m) = |f(\theta
    _{\Sigma _{2}}(\varphi_{p,H}(\gamma )))|, 
  \end{displaymath}
  concluding the proof.
\end{proof}

\begin{cor}\label{cor:33} With Notation \ref{def:90}, for any point $\gamma \in X_{\Sigma
    _{1}}(\R_{\ge 0})$, the point $\varphi_{p,H}(\theta _{\Sigma_{1}
  }(\gamma ))$ is peaked.
\end{cor}
\begin{proof} We first treat the case when $\gamma \in  X_{\Sigma
    _{1},0}(\R_{\ge 0})$. Following (\ref{eq:139}), we factorize
  $\varphi_{p,H}$ as  
  \begin{displaymath}
    X_{\Sigma _{1}}\overset {\varphi_{H_{\surj}}}{\longrightarrow}
    X_{\Sigma _{3}}\overset {\varphi_{H_{\sat}}}{\longrightarrow}
    X_{\Sigma _{4}}\overset {\varphi_{p,H_{\inj}}}{\longrightarrow}
    X_{\Sigma _{2}}.
  \end{displaymath}
  Since $H_{\surj}^{\vee}$ and $H^{\vee}_{\sat}$ are
  injective, by Proposition \ref{prop:108}, we deduce that
  \begin{displaymath}
    \varphi_{H_{\sat}}(\varphi_{H_{\surj}}(\theta_{\Sigma _{1}}(\gamma
    )))=
    \theta _{\Sigma _{4}}(\varphi_{H_{\sat}}(\varphi_{H_{\surj}}(\gamma
    ))).
  \end{displaymath}
  By Proposition \ref{prop:96}, this latter point is peaked. By Proposition
  \ref{prop:110}, the map $\varphi_{p,H_{\inj}}\colon X_{\Sigma
    _{4},0}\to X_{\Sigma _{2},0}$ is a closed immersion. Therefore, by
  Proposition \ref{prop:111}, we deduce that $\varphi_{p,H}(\theta _{\Sigma_{1}
  }(\gamma ))$ is peaked, proving the result in this case.

  The general case follows from the previous one  together with
  propositions \ref{prop:112} and \ref{prop:111}.
\end{proof}

\section{Toric metrics}
\label{sec:equiv-herm-metr}

With the notations at the beginning of this chapter, assume
furthermore that $\Sigma $ is complete. Let $L$ be a toric line bundle
on $X_{\Sigma }$ and $s$ a toric section of $L$
(Definition~\ref{def:71}). By theorems \ref{thm:27} and
\ref{thm:3}, we can find a virtual support function $\Psi$ on $\Sigma
$ such that there is an isomorphism $L\simeq \mathcal{O}(D_{\Psi })$
that sends $s$ to $s_{\Psi }$.  The algebraic line bundle $L$ defines
an analytic line bundle $L^{\an}$ on $X_{\Sigma }^{\an}$. Let $\ov
L=(L,\|\cdot\|)$, where $\|\cdot\|$ is a metric on $L^{ \an}$.

Every {toric} object has a certain invariance property with
respect to the action of $\T$. This is also the case for
metrics. Since $\T^{\an}$ is non compact, we cannot ask for a metric
to be $\T^{\an}$-invariant, but we can impose
$\SS $-invariance. In view of equation (\ref{eq:138}) and Proposition
\ref{prop:106}, a function $f\colon \T^{\an}\to \R$ will be called
\emph{$\SS$-invariant} if it is constant along the fibres of $\rho _{0}$.

We need a preliminary result.  

\begin{prop} \label{prop:79}
  With notations as above, if the function 
  $p\mapsto \|s(p)\|$ is $\SS$-invariant, then, for every toric
  section $s'$, the function $p\mapsto \|s'(p)\|$ is 
  $\,\SS $-invariant too.  
\end{prop}
\begin{proof}
  If $s'$ is a toric section of $L$, then there is an element $m\in M$
  such that $s'=\chi^{m}s$. Since for any element $t\in \SS $ we
  have $|\chi^{m}(t)|=1$, if the function $\|s(p)\|$ is
  $\SS $-invariant, then the function
  \begin{math}
    \|s'(p)\|=\|\chi^{m}(p)s(p)\|
  \end{math} is also $\SS $-invariant.
\end{proof}

\begin{defn}\label{def:44} 
Let $L$ be a toric line bundle on $X_{\Sigma }$. A metric on
$L^{\an}$ is  \emph{toric}
\index{toric metric}\index{metric!toric|see{toric metric}}%
if the function $p\mapsto \|s(p)\|$ is $\SS $-invariant.
\end{defn}

Given an arbitrary metric on a toric line bundle, we can associate to
it a toric metric by an averaging process.

\begin{defn} \label{def:82} Let $L$ be a toric line bundle on
  $X_{\Sigma}$ and $\|\cdot \|$ a metric on $L^{\an}$. 
For $\sigma\in \Sigma$, let $s_{\sigma}$ be a toric section of $L$
which is regular and non-vanishing in $X_{\sigma}$. 

  If $K$ is Archimedean, we set, for $p\in X^{\an}_{\sigma}$,
\begin{displaymath}
\|s_{\sigma}(p)\|_{\SS}= 
\exp\bigg(\int_{\SS }\log \|s_{\sigma}(t \cdot
  p)\|\dd\mu_{\Haar} (t)\bigg),
\end{displaymath}
where $\mu_{\Haar} $ is the Haar measure of $\SS $ of total
volume 1.

If $K$ is non-Archimedean, we set, for $p\in X^{\an}_{\sigma}$,
\begin{displaymath}
\|s_{\sigma}(p)\|_{\SS}= \|s_{\sigma} (\theta_{\Sigma}(\rho_{\Sigma}(p)))\|
\end{displaymath}
where $\rho_{\Sigma}$ is defined in (\ref{eq:111}) and 
$\theta_{\Sigma}$ in Proposition-Definition \ref{prop:54}.

It is easy to verify that these functions define a toric metric
  $\|\cdot\|_{\SS}$ on $L^{\an}$.
\nomenclature[svert53]{$\Vert\cdot\Vert_{\SSinv}$}{toric metric from a metric}%
\end{defn}
Observe that the previous definition is compatible with the idea
that $\delta _{\zeta}$ is the analogue, in the non-Archimedean case, of
the Haar measure of $\SS$ of total volume 1, because
$\theta_{\Sigma}(\rho_{\Sigma}(p))=\zeta\ast p$. 

\begin{prop} \label{prop:60}
  The averaging process in Definition \ref{def:82} is
  multiplicative with respect to products of metrized line bundles,
  is continuous with respect to uniform convergence of metrics and leaves
  invariant toric metrics.
\end{prop}

\begin{proof}
  This follow easily from the definition of
  $\|\cdot\|_{\SS}$.  
\end{proof}

To the metrized line bundle $\ov L $ and the section $s$ we associate
the function $g_{\ov L,s}\colon X_{0}^{\an}\to \R$ given by $g_{\ov
  L,s}(p)=-\log\|s(p)\|$. In the Archimedean case, the
function $g_{\ov L,s}$ is $1/2$ times the usual Green function associated to
the metrized line bundle $\ov L$ and the section $s$. 
\nomenclature[agLs]{$g_{\ov L,s}$}{Green function on $X_{0}^{\an}$}%
The metric $\|\cdot\| $ is
toric if and only if the function $g_{\ov L,s}$ is
$\SS $-invariant. In this case we can form the commutative
diagram
\begin{equation}\label{eq:55}
  \xymatrix{ X_{0}^{\an}
    \ar[r]^{g_{\ov L,s }}\ar[d]_{\val} & \R\\
    N_{\R}\ar@{-->}[ru]
  }
\end{equation}
The dashed arrow exists as a continuous function because $\rho  _{0
}$, hence $\val$, is a proper surjective map and, by
$\SS $-invariance, $g_{\ov 
L,s} $
is constant along the 
fibres. This justifies the following definition.

\begin{defn} \label{def:68} Let $L$ be a toric line bundle on
  $X_{\Sigma}$ and $s$ a
  toric section of~$L$. Let~$\|\cdot\|$ be a  metric on $L^{\an}$ and set
  $\ov L=(L,\|\cdot\|)$.  
We define the function $\psiabs_{\ov L,s}
  \colon N_{\R}\to \R$ given, for $u\in N_{\R}$, by
\begin{equation} \label{eq:40}
  \psiabs_{\ov L,s} (u)=\log \|s(p)\|_{\SS}
\end{equation}
for any $p\in X_{0}^{\an}$ with $\val(p)=u$. 
When the line bundle and the section are clear
from the context, we will alternatively denote this function as
$\psiabs_{\|\cdot \|}$. \nomenclature[g23]{$\psiabs_{\ov L,s}$,
  $\psiabs_{\Vert\cdot\Vert}$}{function on $N_{\R}$ associated to a
  metrized toric line bundle and section}%
\end{defn}

The facts that $\|\cdot\|_{\SS}$ is $\SS $-invariant and that
$s$ is a nowhere vanishing regular section on $X_{0}^{\an}$
imply that (\ref{eq:40}) gives a well--defined continuous function on
$N_{\R} $.
In the case when $\|\cdot\|$ is toric, we have that
\begin{equation}
  \label{eq:98}
  \psiabs_{\ov L,s} (u)=\log \|s(p)\|
\end{equation}
for $u\in N_{\R}$ and any $p\in X_{0}^{\an}$ with $\val(p)=u$. 

We will also use the following variant of the function $\psiabs_{\ov
  L,s}$. It will be most useful when treating  metrics
induced by integral models.

\begin{defn}\label{def:88} Let notations be as in Definition
  \ref{def:68} and suppose that absolute value of $K$ is either 
Archimedean or associated to a discrete valuation. 
We define the function $\phiK_{\ov L,s}
  \colon N_{\R}\to \R$ given, for $u\in N_{\R}$, by
\begin{equation*}
  \phiK_{\ov L,s} (u)=\frac{\log \|s(p)\|_{\SS}}{\lambda_{K}}
\end{equation*}
for any $p\in X_{0}^{\an}$ with $\val_{K}(p)=u$. 
When the line bundle and the section are clear
from the context, we will alternatively denote this function as
$\phiK_{\|\cdot \|}$.  \nomenclature[g2123]{$\phiK_{\ov L,s}$,
  $\phiK_{\Vert\cdot\Vert}$}{function on $N_{\R}$ associated to a
  metrized toric line bundle and section}%
\end{defn}


\begin{rem} \label{rem:35} The function $\phiK_{\ov L,s}$ agrees with
  the right multiplication $\psiabs_{\ov L,s}\lambda ^{-1}_{K}$, that
  is,
  \begin{equation} \label{eq:140}
    \phiK_{\ov L,s}(u)= \lambda ^{-1}_{K}{\psiabs_{\ov L,s}(\lambda _{K}u)}
  \end{equation}
  for all $u\in N_{\R}$. Hence, the functions $\phiK_{\|\cdot\|}$ and
  $\psiabs_{\|\cdot\|}$ carry the same information and it is easy to
  move from one to the other. The difference between both functions is
  similar to the difference between $\val_K$ and $\val$ discussed in
  Remark~\ref{rem:34}.
\end{rem}

We study the effect of taking a field extension.  Let ${K'}/K$ be a
finite extension of complete valued fields.  We denote by $e_{{K'}/K}$ the
{ramification degree} of $K'$ over $K$. \index{ramification degree of
  a finite field extension}%

\begin{prop}\label{prop:115}
  Let notations be as in Definition \ref{def:68} and consider a finite
  extension of complete valued fields $K'/K$. Let $\ov L{'}$ and $s'$
  be the metrized toric line bundle and toric section on
  $X_{\Sigma}\times\Spec(K')$ obtained after base change to $K'$. Then
  \begin{displaymath}
    \psiabs_{\ov L',s'}=\psiabs_{\ov L,s},\quad        \phiK_{\ov L',s'}=\phiK_{\ov L,s}e_{K'/K}, 
  \end{displaymath}
  that is,
  \begin{math}
    \phiK_{\ov L',s'}(u)=e_{K'/K}\phiK_{\ov L,s}\big(\frac{u}{e_{K'/K}}\big)
  \end{math}
  for all $u\in N_{\R}$.
\end{prop}

\begin{proof}
  The first statement follows from the definition of $\psiabs_{\ov
    L,s}$ and propositions \ref{prop:113} and \ref{prop:114}. The
  second statement follows from the first one, equation (\ref{eq:140})
  and the fact that  $\lambda _{K}=e_{{K'}/K}\lambda _{{K'}}$.
\end{proof}

\begin{exmpl} \label{exm:37} With the notation in examples \ref{exm:1}
  and \ref{exm:9}, consider the standard simplex 
\index{standard simplex} $\Delta ^{n}$ with fan $\Sigma= \Sigma _{\Delta ^{n}}$
  and support function $\Psi=\Psi _{\Delta ^{n}}$. To these data
  correspond the toric variety $X_{\Sigma}=\P^{n}$, the toric line
  bundle $L_{\Psi}=\mathcal{O}(1)$, and the toric section $s_{\Psi}$
  whose associated Weil divisor is the hyperplane at infinity~$H_{0}$.
  \begin{enumerate}
  \item \label{item:107} The canonical metrics
\index{canonical metric!of $\cO(1)^{\an}$}%
$\|\cdot\|_{\can}$ in
    examples~\ref{exm:2} and \ref{exm:6} are toric and both satisfy
    $\psiabs_{\|\cdot\|_{\can}}=\Psi$.
  \item \label{item:108} The Fubini-Study metric
\index{metric!Fubini-Study}%
    $\|\cdot\|_{\FS}$ in
    Example \ref{exm:3} is also toric and satisfies
    $\psiabs_{\|\cdot\|_{\FS}}=f_{\FS}$, where $f_{\FS}$ is the function in Example
    \ref{exm:5}.
  \end{enumerate}
\end{exmpl}

In general, the space of toric metrics on the line bundle $L$ can be put into a
one-to-one correspondence with a certain class of continuous functions
on $N_{\R}$.

\begin{prop}\label{prop:14} Let $\Sigma $ be a complete fan in
  $N_{\R}$ and $\Psi $ a virtual support function
  on $\Sigma $. Let $X_{\Sigma}$ and $L=\mathcal{O}(D_{\Psi })$ be the
  associated proper toric variety over $K$ and toric line bundle.  
  \begin{enumerate}
  \item \label{item:124} Given a metric $\|\cdot\|$ on $L^{\an}$, the
    function $\psiabs_{\|\cdot\|}-\Psi$ extends to a continuous function on $N_{\Sigma }$.
  \item \label{item:125} The
  correspondence $\|\cdot\| \mapsto \psiabs _{\|\cdot\| }$ 
  is a bijection between 
  the set of toric metrics on $L^{\an}$ and the
  set of continuous 
  functions $\psiabs\colon N_{\R}\to \R$ with the property that $\psiabs -\Psi $ can be
  extended to a continuous function on $N_{\Sigma }$. 
  \end{enumerate}
\end{prop}
\begin{proof}
  We first prove \eqref{item:124}. Let
  $\{m_{\sigma }\}$ be a set of defining vectors of $\Psi $. For each
  cone $\sigma \in \Sigma $, the
  section $s_{\sigma}=\chi ^{m_{\sigma }}s$ is a  nowhere vanishing regular
  section on $X_{\sigma }^{\an}$. By \eqref{eq:42},
  for $p\in X^{\an}_{0}$, 
  \begin{displaymath}
  \psiabs _{\|\cdot\| }(\val(p))-\langle m_{\sigma }, \val(p)\rangle
=\log\|s(p)\|_{\SS}+\log|\chi ^{m_{\sigma }}(p)|
=\log\|s_{\sigma}(p)\|_{\SS}.
  \end{displaymath}
  Since $\|s_{\sigma}\|_{\SS}$ is a nowhere vanishing regular function
  on $X^{\an}_{\sigma}$, the function
  $\log\|s_{\sigma}\|_{\SS}$ is a continuous function on 
  $X_{\sigma }^{\an}$ that is $\SS $-invariant. So it defines a
  continuous function on 
  $X_{\sigma}(\R_{\ge 0})$. As a consequence, $\psiabs _{\|\cdot\| }-m_{\sigma }$ extends
  to a continuous function on $N_{\sigma }$. 

  Now, if we see that $\Psi -m_{\sigma }$ extends also to a continuous
  function on $N_{\sigma }$ we will be able to extend $\psiabs
  _{\|\cdot\| }-\Psi $ to a continuous function on $N_{\sigma }$ for
  every $\sigma \in \Sigma $ and therefore to $N_{\Sigma }$.

  Let $\tau $ be a face of $\sigma $ and 
  let $u\in N(\tau )_{\R}$. Let $W(\tau ,U,p)$ be a neighbourhood of
  $u$ as in 
  \eqref{eq:20}. By taking $U$ small enough and $p\in\tau$ far away from the origin, we can
  assume that $ W(\tau ,U,p)\cap N_\R $ is contained in the set of cones
  that have $\tau $ as a face. Since $\Psi $ and $m_{\sigma }$ agree
  when restricted to $\sigma$ (hence when restricted to $\tau$) it
  follows that, if $w+t\in W(\tau ,U,p)\cap N_\R $ with $w\in U$ and $t\in
  p+\tau$, then $(\Psi -m_{\sigma })(w+t)$ only depends on $w$ and not
  on $t$. Hence it can be extended to a continuous function on the
  whole $W(\tau ,U,p)$. By moving $\tau $, $u$, $U$ and $p$ we see
  that it can be 
  extended to a continuous function on $N_{\sigma }$, which completes
  the proof of the first statement. 

  For the second statement, let now $\psiabs $ be a function on
  $N_{\R}$ such that $\psiabs -\Psi $ extends to a continuous function on
  $N_{\Sigma }$. We define a toric metric $\|\cdot\| $ on
  the restriction $L^{\an}|_{X_{0}^{\an}}$ by the formula
  \begin{equation} \label{eq:101}
    \|s(p)\|=\exp(\psiabs (\val(p))).
  \end{equation}
  Then, by the argument before, $\psiabs -m_{\sigma }$ extends to a
  continuous function on $N_{\sigma }$, which proves that $\|\cdot\|$
  extends to a metric over $X_{\sigma }^{\an}$. Varying $\sigma \in
  \Sigma $ we obtain that $\|\cdot\|$ extends to a metric over
  $X_{\Sigma }^{\an}$. We can verify that this assignment is
  the inverse of the correspondence $\|\cdot\|\mapsto\psiabs_{\|\cdot\|}$,
  when the latter  is restricted to the space of toric metrics on
  $L^{\an}$. 
\end{proof}

\begin{rem}\label{rem:36} Assume that $K$ is non-Archimedean and with
  discrete valuation. Let $\psiabs\colon N_{\R}\to \R$ be a function
  and consider the right multiplication $\phiK =\psiabs \lambda
  ^{-1}_{K}$. Since $\Psi $ is conic, $\Psi \lambda ^{-1}_{K}=\Psi
  $. Therefore $\psiabs-\Psi$ extends to a continuous function on
  $N_{\Sigma }$ if and only if $\phiK-\Psi$ does. Thus the statement
  of Proposition \ref{prop:14} remains true if we replace the function
  $\psiabs$ by the function $\phiK$.
\end{rem}

\begin{notn} \label{def:84}
For a function $\psiabs\colon N_{\R}\to \R$ with the property that $\psiabs
-\Psi $ can be extended to a continuous function on $N_{\Sigma }$, we
denote by $\|\cdot\|_{\psiabs } $
\nomenclature[svert531]{$\Vert\cdot\Vert_{\psiabs}$}{toric metric from a
  function}%
the metric given by the correspondence in Proposition
\ref{prop:14}\eqref{item:125}. It is the metric defined in
(\ref{eq:101}) above.
\end{notn}

\begin{cor} \label{cor:8} For any metric
  $\|\cdot\| $ on $L^{\an}$, the function $|\psiabs_{\|\cdot\| } -\Psi |$ is bounded.
\end{cor}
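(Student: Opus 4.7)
The plan is to deduce this directly from Proposition \ref{prop:14} together with the compactness of $N_\Sigma$. By Proposition \ref{prop:14}, since $\|\cdot\|$ is a toric metric on $L^{\an} = \mathcal{O}(D_\Psi)^{\an}$, the function $\psi_{\|\cdot\|} - \Psi$ extends to a continuous function on the whole of $N_\Sigma$.

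Next, I would observe that $N_\Sigma$ is compact. Indeed, the homeomorphism $\ee_K\colon N_\Sigma \to X_\Sigma(\R_{\ge 0})$ constructed in \S\ref{sec:semi-analyt-vari} identifies $N_\Sigma$ with the variety with corners associated to $X_\Sigma$. The fan $\Sigma$ is complete (this is a standing assumption since the beginning of \S\ref{sec:equiv-herm-metr}), so by Theorem \ref{thm:6}\eqref{item:2}, the analytic space $X_\Sigma^{\an}$ is compact. Since the map $\rho_\Sigma\colon X_\Sigma^{\an} \to X_\Sigma(\R_{\ge 0})$ is continuous and surjective, $X_\Sigma(\R_{\ge 0})$ is compact, and hence so is $N_\Sigma$.

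Since $\psi_{\|\cdot\|} - \Psi$ extends continuously to the compact space $N_\Sigma$, this extension is bounded. Restricting back to the open dense subset $N_\R \subset N_\Sigma$ yields the desired conclusion that $|\psi_{\|\cdot\|} - \Psi|$ is bounded on $N_\R$. There is no real obstacle here — the work has already been done in Proposition \ref{prop:14}, and this corollary is essentially a compactness packaging of that result.
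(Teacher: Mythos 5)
Your proof is correct and follows essentially the same route as the paper: invoke Proposition \ref{prop:14} for the continuous extension of $\psi_{\|\cdot\|}-\Psi$ to $N_\Sigma$, then use compactness of $N_\Sigma$ (equivalent to completeness of $\Sigma$) to get boundedness. The paper's proof is just a terser version of the same argument; your detour through $X_\Sigma^{\an}$ and $\rho_\Sigma$ to justify compactness is fine but not needed, since $N_\Sigma\simeq X_\Sigma(\R_{\ge 0})$ is compact directly when $\Sigma$ is complete.
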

\begin{proof}
  Since we are assuming that $\Sigma $ is complete, the space $N_{\Sigma
  }\simeq X_{\Sigma }(\R_{\ge 0})$ is compact. Thus the corollary
  follows from Proposition \ref{prop:14}\eqref{item:124}.
\end{proof}

\begin{prop} \label{prop:24} The correspondence $(\ov L,s)\mapsto \psiabs
  _{\ov L,s}$ satisfies the following properties.
  \begin{enumerate}
  \item \label{item:72} Let $\ov L_{i}=(L_{i},\|\cdot\|_{i})$,
    $i=1,2$, be toric line bundles equipped with a metric and $s_{i}$
    a toric section of $L_{i}$. Then
\begin{displaymath}
  \psiabs_{\ov L_{1}\otimes \ov L_{2}, s_{1}\otimes s_{2}}= 
  \psiabs_{\ov L_{1}, s_{1}} +\psiabs_{\ov L_{2}, s_{2}} .
\end{displaymath}
 
\item \label{item:79} Let $\ov L=(L,\|\cdot\|)$ be a toric line bundle
  equipped with a metric and $s$ a toric section of $L$.  Then
\begin{displaymath}
    \psiabs_{\ov L^{\otimes -1}, s^{\otimes-1}} =   -\psiabs_{\ov L, s}. 
\end{displaymath}
\item \label{item:126} Let $(L,s)$ be a toric line bundle and section, 
  and $(\|\cdot\|_{l})_{l\ge1}$ a sequence of metrics on $L$
  converging to a metric $\|\cdot\|$ with respect to the distance in
  (\ref{eq:84}). Then $\psiabs_{\|\cdot\|_{l}}$
  converges uniformly to $\psiabs_{\|\cdot\|}$.
\end{enumerate}
\end{prop}

\begin{proof}
This follows easily from the definitions.
\end{proof}

 A consequence of Proposition \ref{prop:14}\eqref{item:125} is that every toric line
 bundle has a distinguished toric metric. 

\begin{prop-def} \label{def:57}
  Let $\Sigma $ be a  complete fan, $X_{\Sigma }$ the corresponding
  toric variety, and $L$ a toric line bundle on
  $X_{\Sigma }$. Let $s$
  be a toric section of $L$ and $\Psi $ the virtual 
  support function on $\Sigma $ associated to $(L,s)$ by theorems
  \ref{thm:27} and \ref{thm:3}. The metric on $L^{\an}$
  associated to the 
  function $\Psi $ by Proposition \ref{prop:14}\eqref{item:125} only depends on the
  structure of toric line bundle of $L$. This toric metric is called the
  \emph{canonical metric} of $L^{\an}$
  \index{canonical metric!of a toric line bundle}%
\index{metric!canonical|see{canonical metric}}%
and is denoted $\|\cdot\|_{\can}$.
  We write $\ov L^{\can}=(L,\|\cdot \|_{\can})$.
\nomenclature[svert23]{$\Vert\cdot\Vert_{\can}$}{canonical metric of a toric line bundle}%
\nomenclature[aLcan]{$\ov L^{\can}$}{toric line bundle with its canonical metric}%
\end{prop-def}
\begin{proof}
  Let $s'$ be another toric section of $L$. Then there is an element
  $m\in M$ such that $s'=\chi^{m}s$. The corresponding virtual
  support function is $\Psi'=\Psi -m $. Denote by $ \|\cdot\|$ and
  $\|\cdot\|'$ the metrics associated to $s,\Psi $ and to $s',\Psi '$
  respectively. Then
  \begin{displaymath}
    \|s(p)\|'=\|\chi^{-m}s'(p)\|'=
    \e^{(m+\Psi ')(\val(p))}=
    \e^{\Psi (\val(p))}=\|s(p)\|.
  \end{displaymath}
  Thus both metrics agree.
\end{proof}

The canonical metrics $\|\cdot\|_{\can}$ in examples~\ref{exm:2} and
\ref{exm:6} are particular cases of the canonical metric of
Proposition-Definition \ref{def:57}.

\begin{prop}\label{prop:90} The canonical metric is compatible with the
  tensor product of line bundles.
  \begin{enumerate}
  \item \label{item:35}
Let $L_{i}$, $i=1,2$, be toric line bundles on $X$. Then $\ov {L_{1}\otimes
  L_{2}}^{\can}=\ov {L_{1}}^{\can}\otimes \ov{L_{2}}^{\can}$. 
\item \label{item:36} 
Let $L$  be a toric  line bundle on $X$. 
Then $\ov{L^{\otimes-1}}^{\can}= (\ov{L}^{\can})^{\otimes-1}$. 
\end{enumerate}
\end{prop}

\begin{proof}
This follows easily from the definitions.
\end{proof}

Next we describe the behaviour of the correspondence of Proposition
\ref{prop:14}\eqref{item:125} with respect to equivariant morphisms. We start
with the case of orbits. Let $\Sigma $ be a complete fan in $N$ and
$\Psi $ a virtual support function on $\Sigma $. Let
$L$ and $s$ be the associated toric line bundle and toric section, and
$\{m_{\sigma 
}\}_{\sigma \in \Sigma }$ a set of defining vectors of $\Psi $. Let
$\sigma \in \Sigma $ and  $V(\sigma )$  the corresponding closed
subvariety. As in Proposition \ref{prop:81}, the restriction of $L$ to
$V(\sigma )$ is a toric line bundle. Since $V(\sigma )$ and $\div(s)$ may not 
intersect properly we can not restrict $s$ directly to $V(\sigma
)$. By contrast, $D_{\Psi -m_{\sigma }}=\div (\chi ^{m_{\sigma }}s)$
intersects properly $V(\sigma )$ and we can restrict the section $\chi
^{m_{\sigma }}s$ to $V(\sigma )$ to obtain a toric section of
$\mathcal{O}(D_{(\Psi-m_{\sigma })(\sigma )})\simeq L\mid _{V(\sigma
  )}$. Denote 
$\iota\colon V(\sigma )\to X_{\Sigma }$ the closed
immersion.
For short, 
we write $s_{\sigma}=\chi
^{m_{\sigma }}s$. Then $\iota ^{\ast}s_{\sigma}$ is a nowhere vanishing
section on $O(\sigma )$. Recall that $V(\sigma )$ has a structure of
toric variety given by the fan $\Sigma (\sigma )$ on $N(\sigma )$
(Proposition~\ref{prop:73}). The principal open subset of $V(\sigma )$
is the orbit $O(\sigma )$.
 
Let $\|\cdot\|$ be a metric on $L^{\an}$ and write
$\ov L=(L,\|\cdot\|)$.  By the proof of Proposition~\ref{prop:14}, the
function $\psiabs _{\ov L,s}-m_{\sigma }=\psiabs_{\ov L,s_{\sigma}}$ can be extended
to a continuous function on $N_{\sigma }$ that we denote $\ov {\psiabs
}_{\ov L,s_{\sigma}}$.

\begin{prop}\label{prop:57}
  The function $\psiabs _{\iota ^{\ast}\ov L,\iota
    ^{\ast}s_{\sigma}} \colon N(\sigma )_{\R}\to
  \R$ agrees with the restriction of $\ov{\psiabs} _{\ov
    L,s_{\sigma}}$ to the subset  $N(\sigma )_{\R}$ of $N_{\sigma }$.
\end{prop}
\begin{proof}
  The section $s$ is  nowhere vanishing on $X_{\Sigma ,\sigma }$. Therefore, the function $g_{\ov L,
    s_{\sigma}}\colon X^{\an}_{0}\to \R$ of diagram
  \eqref{eq:55} can be extended to a continuous function on $X^{\an}_{\Sigma
    ,\sigma }$ that we also denote $g_{\ov L,
    s_{\sigma}}$.  By the definition of the inverse image of a metric, there is a commutative diagram
  \begin{displaymath}
  \xymatrix{ O(\sigma )^{\an}\ar [r]^{\iota }
    \ar[dr] _{g_{\iota ^{\ast}\ov L, \iota
    ^{\ast}s_{\sigma}}}
 & X^{\an}_{\Sigma ,\sigma }\ar[d]^{g_{\ov L, s_{\sigma}}}\\
    & \R.
  }
  \end{displaymath}
  We next prove the result in the Archimedean case. Let $\T_{\sigma }$
  be the torus corresponding to the quotient lattice $N(\sigma )$, and
  $\SS_{\sigma }$ the compact subtorus of $\T^{\an}_{\sigma }$. Denote
  by $\pi _{\sigma }\colon \SS\to \SS_{\sigma }$. Let $\mu
  _{\Haar,\sigma}$ be that Haar measure of $\SS_{\sigma }$ of total
  measure $1$. Then $\mu
  _{\Haar,\sigma}=(\pi _{\sigma })_{\ast}\mu _{\Haar}$.
  The inclusion
  $\iota$ satisfies that, for $t\in \SS$ and $p\in O(\sigma )^{\an}$
  then $\iota(\pi _{\sigma }(t)\cdot p)=t\cdot \iota(p)$. Thus
  \begin{multline*}
    \log \|\iota^{\ast}s_{\sigma }(p)\|_{\SS_{\sigma }}=
    -\int _{\SS_{\sigma }}g_{\iota ^{\ast}\ov L, \iota
    ^{\ast}s_{\sigma}}(t\cdot p)\dd
    \mu _{\Haar,\sigma}(t)\\=
    -\int _{\SS}g_{\ov L, s_{\sigma}}(t\cdot \iota(p))\dd
    \mu _{\Haar}(t)=\log\|s_{\sigma }(\iota (p))\|_{\SS},
  \end{multline*}
  which implies the result.

We next prove the statement in the  non-Archimedean case. By
propositions \ref{prop:55} and 
\ref{prop:107},
\begin{multline*}
  \log \|\iota^{\ast}s_{\sigma }(p)\|_{\SS_{\sigma }}=
  -g_{\iota ^{\ast}\ov L, \iota
    ^{\ast}s_{\sigma}}(\theta _{\Sigma (\sigma )}(\rho _{\Sigma
    (\sigma )}(p)))\\
  =-g_{\ov L, s_{\sigma}}(\theta _{\Sigma }(\rho _{\Sigma }(\iota (p))))
  =\log \|s_{\sigma }(\iota (p))\|_{\SS},
\end{multline*}
which implies the result.
\end{proof}

\begin{cor} \label{cor:19}
  Let $\ov L$ be a toric line bundle on $X_{\Sigma }$ equipped
  with the canonical metric,  $\sigma\in \Sigma $ and $\iota
  \colon V(\sigma )\to X_{\Sigma }$ the closed
  immersion. Then the 
  restriction $\iota^{\ast}\ov L$ is a toric line bundle
  equipped with the canonical metric.
\end{cor}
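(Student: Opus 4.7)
The plan is to reduce everything to the defining property of the canonical metric, namely that $\psi_{\ov L^{\can}, s} = \Psi$ for any toric section $s$ and its associated virtual support function $\Psi$. First I would fix the data: by Theorem~\ref{thm:27} and Theorem~\ref{thm:3}, pick a virtual support function $\Psi$ on $\Sigma$ and a toric section $s$ so that $(L,s) \simeq (L_{\Psi}, s_{\Psi})$, and choose a set of defining vectors $\{m_{\tau}\}_{\tau \in \Sigma}$ for $\Psi$. The section $s' = \chi^{m_{\sigma}} s$ is a toric section whose divisor $D_{\Psi - m_{\sigma}}$ meets $V(\sigma)$ properly (Proposition~\ref{prop:72}), so $\iota^{\ast} s'$ is a well-defined toric section of $\iota^{\ast} L$.

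Next I would compute both the source and target functions. On the source side, Proposition-Definition~\ref{def:57} gives $\psi_{\ov L^{\can}, s} = \Psi$, and Proposition~\ref{prop:24}\eqref{item:72} yields
\[
\psi_{\ov L^{\can}, s'} \;=\; \Psi - m_{\sigma}.
\]
Because $\Psi - m_{\sigma}$ vanishes identically on $\sigma$, it extends continuously to $N_{\sigma}$ (this is exactly the argument in the proof of Proposition~\ref{prop:14}), and its restriction to $N(\sigma)_{\R} \subset N_{\sigma}$ is, by the definition of the direct-image description of $(\Psi - m_{\sigma})(\sigma)$ in~\eqref{eq:4}, equal to the virtual support function $(\Psi - m_{\sigma})(\sigma)$ on the star $\Sigma(\sigma)$.

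Now I would invoke Proposition~\ref{prop:57} applied to $\ov L = \ov L^{\can}$ and the section $s'$: this gives
\[
\psi_{\iota^{\ast} \ov L^{\can},\, \iota^{\ast} s'} \;=\; \overline{\psi_{\ov L^{\can}, s'}}\big|_{N(\sigma)_{\R}} \;=\; (\Psi - m_{\sigma})(\sigma).
\]
On the target side, Proposition~\ref{prop:81} identifies $(\iota^{\ast} L, \iota^{\ast} s')$ with $(L_{(\Psi - m_{\sigma})(\sigma)}, s_{(\Psi - m_{\sigma})(\sigma)})$ as a toric line bundle with toric section on $V(\sigma) = X_{\Sigma(\sigma)}$. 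Hence $(\Psi - m_{\sigma})(\sigma)$ is precisely the virtual support function associated to $(\iota^{\ast} L, \iota^{\ast} s')$, so the equality above says that the metric $\iota^{\ast} \Vert \cdot \Vert_{\can}$ is the one singled out by Proposition-Definition~\ref{def:57}, i.e.\ the canonical metric on $\iota^{\ast} L$.

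The only mildly delicate point is keeping the identifications straight: one must verify that the continuous extension of $\Psi - m_{\sigma}$ to $N_{\sigma}$ restricts on $N(\sigma)_{\R}$ to the function $(\Psi - m_{\sigma})(\sigma)$ defined by~\eqref{eq:4}, but this is immediate from the very definition of that function (take $v \in \R\sigma$ with $u+v$ in a cone containing $\sigma$, and use $(\Psi - m_{\sigma})|_{\sigma} = 0$). Everything else is a direct concatenation of the propositions cited above, and no genuine obstacle arises.
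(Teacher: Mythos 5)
Your argument is correct and follows the same path as the paper: choose or normalise a toric section whose support function vanishes on $\sigma$ (equivalently, whose divisor meets $V(\sigma)$ properly), extend that function continuously to $N_\sigma$, apply Proposition~\ref{prop:57} to identify the induced function on $N(\sigma)_{\R}$ with $(\Psi-m_\sigma)(\sigma)$, and use Proposition~\ref{prop:81} to recognise this as the support function of $\iota^\ast L$ with the section $\iota^\ast s'$. One small citation quibble: the identity $\psi_{\ov L^{\can}, s'} = \Psi - m_\sigma$ is most directly a consequence of Proposition-Definition~\ref{def:57} itself (the canonical metric assigns to each toric section its associated virtual support function, and $s' = \chi^{m_\sigma}s$ corresponds to $\Psi - m_\sigma$ by Theorem~\ref{thm:3} and Corollary~\ref{cor:18}\eqref{item:101}), not of Proposition~\ref{prop:24}\eqref{item:72}, which concerns tensor products of line bundles rather than a change of toric section on a fixed bundle.
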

\begin{proof}
  Choose a toric section $s$ of $L$ whose divisor meets $V(\sigma )$
  properly. Let $\Psi $ be the corresponding 
  virtual support function. The condition of proper intersection is
  equivalent to $\Psi |_{\sigma }=0$. Then $\Psi $ extends to a
  continuous function 
  $\ov \Psi $ on $N_{\sigma }$ and the restriction of $\ov \Psi $ to
  $N(\sigma )$ is equal to $\Psi (\sigma )$ (Proposition
  \ref{prop:72}). Hence the result follows 
  from Proposition~\ref{prop:57}.   
\end{proof}

We next study the case of an equivariant morphism whose image intersects
the principal open subset.  Let $N_{i}$, $\Sigma _{i}$, $i=1,2$, $H$, $p$
and $A$ be as in Proposition \ref{prop:56}. Let $\Psi _{2}$ be a
virtual support function on $\Sigma _{2}$ and let $\Psi _{1}=\Psi
_{2}\circ H$. This is a virtual support function on $\Sigma _{1}$. Let
$(L_{i},s_{i})$ be the corresponding toric line bundles and
sections. By Proposition \ref{prop:70} and Theorem \ref{thm:27}, there
is an isomorphism $\varphi_{p,H}^{\ast}L_{2}\simeq L_{1} $ that sends
$\varphi_{p,H}^{\ast} s_{2}$ to $s_{1}$. We use this isomorphism to
identify them.  The following result
follows from Proposition \ref{prop:56} and is left to the reader.

\begin{prop}\label{prop:58} Let $\|\cdot\|$ be a toric metric on
$L_{2}^{\an}$ and write $\ov L_{2}=(L_{2},\|\cdot\|)$, $\ov
L_{1}=(L_{1},\varphi_{p,H}^{\ast}\|\cdot\|)$.
   The equality
   \begin{math}
     \psiabs _{\ov L_{1},s_{1}}=\psiabs _{\ov L_{2},s_{2}}\circ A
   \end{math}
   holds.
\end{prop}

The canonical metric is stable by
inverse image under toric morphisms. The following result follows
easily from the 
definitions. 

\begin{cor}\label{cor:20}
  Assume furthermore that $p=x_{0}$ and so the equivariant morphism
  $\varphi_{p,H}=\varphi_{H}\colon  X_{\Sigma _{1}}\to X_{\Sigma
    _{2}}$ is a toric morphism. If $\ov L$ is a 
  toric line bundle on $X_{\Sigma _{2}}$ equipped with the canonical
  metric, then $\varphi_{H}^{\ast}\ov L$ is a toric line bundle equipped
  with the canonical metric. 
\end{cor}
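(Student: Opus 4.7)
The plan is to combine Proposition \ref{prop:58} with the characterization of the canonical metric given by Proposition-Definition \ref{def:57}. Fix a toric section $s$ of $L$ and let $\Psi$ be the virtual support function on $\Sigma_{2}$ associated to the pair $(L,s)$ via theorems \ref{thm:27} and \ref{thm:3}. Since $\ov L$ carries the canonical metric, we have by definition $\psi_{\ov L,s}=\Psi$.

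Next, since $p=x_{0}$, the affine map appearing in Proposition \ref{prop:56} reduces to the linear map $A=H$, because $\val(x_{0})=0$. By Proposition \ref{prop:70}, the pullback $\varphi_{H}^{\ast}(L,s)=(\varphi_{H}^{\ast}L,\varphi_{H}^{\ast}s)$ corresponds to the virtual support function $\Psi\circ H$ on $\Sigma_{1}$, and in particular $\varphi_{H}^{\ast}s$ is a toric section of $\varphi_{H}^{\ast}L$. Applying Proposition \ref{prop:58} to the metric $\|\cdot\|_{\can}$ on $L^{\an}$, we obtain
\begin{displaymath}
\psi_{\varphi_{H}^{\ast}\ov L,\,\varphi_{H}^{\ast}s}=\psi_{\ov L,s}\circ H=\Psi\circ H.
\end{displaymath}

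Thus the function on $N_{1,\R}$ associated to the metrized toric line bundle $\varphi_{H}^{\ast}\ov L$ together with the toric section $\varphi_{H}^{\ast}s$ coincides with the virtual support function corresponding to $(\varphi_{H}^{\ast}L,\varphi_{H}^{\ast}s)$. By Proposition-Definition \ref{def:57}, this is precisely the characterization of the canonical metric on $\varphi_{H}^{\ast}L^{\an}$. Hence $\varphi_{H}^{\ast}\ov L$ is the toric line bundle $\varphi_{H}^{\ast}L$ equipped with its canonical metric, as claimed. There is no real obstacle here: the only points to verify are that $p=x_{0}$ indeed produces a linear (not merely affine) map $A=H$, so that the pullback section is again toric in the sense of Definition \ref{def:71}, and that the identification of the pullback line bundle with $L_{\Psi\circ H}$ is compatible with toric sections, both of which are recorded in Proposition \ref{prop:70} and Remark \ref{rem:19}.
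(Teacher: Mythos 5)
Your proof is correct and supplies precisely the chain of facts that the paper summarizes as ``follows easily from the definitions'': $\val(x_0)=0$ so $A=H$, Proposition~\ref{prop:58} gives $\psi_{\varphi_H^\ast\ov L,\varphi_H^\ast s}=\Psi\circ H$, and Proposition~\ref{prop:70} identifies $\Psi\circ H$ as the virtual support function of $(\varphi_H^\ast L,\varphi_H^\ast s)$, so Proposition-Definition~\ref{def:57} identifies the pulled-back metric as canonical. This matches the approach the paper intends.
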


The inverse image of the canonical metric by an equivariant map does
not need to be the canonical metric. In fact, the analogue of 
Example \ref{exm:12} in terms of metrics shows that many
different metrics can be obtained as the inverse image of the
canonical metric on the projective space.

\begin{exmpl} \label{exm:35}
Let $\Sigma $ be a complete fan in $N_{\R}$ and  $X_{\Sigma }$ the
corresponding toric variety. Recall the description of the
projective space $\P^{r}$ as a toric variety
\index{projective space} given in Example
\ref{exm:10}.  
Let $H\colon N\to
\Z^{r}$ be a linear map such that, for each $\sigma \in \Sigma $ there
exist $\tau \in \Sigma _{\Delta ^{r}}$ with $H(\sigma )\subset \tau
$. Let $p\in \P^{r}_{0}(K)$. Then we have an equivariant morphism
$\varphi_{p,H}\colon X_{\Sigma }\to \P^{r}$. Consider the support
function $\Psi _{\Delta ^{r}}$ on $\Sigma _{\Delta ^{r}}$. Then
$L_{\Psi _{\Delta ^{r}}}=\mathcal{O}_{\P^{r}}(1)$. Write 
$L=\varphi^{\ast}_{p,H}L_{\Psi _{\Delta ^{r}}}$,
$s=\varphi^{\ast}_{p,H}s_{\Psi _{\Delta ^{r}}}$ and $\Psi
=H^{\ast}\Psi _{\Delta ^{r}}$. Thus $(L,s)=(L_{\Psi },s_{\Psi })$. 

Set $A=H+\val(p)$ for the affine map.
Let $\|\cdot\|$ be
the metric on $L^{\an}$ induced by the canonical metric of
$\mathcal{O}(D_{\Psi _{\Delta ^{r}}})^{\an}$ and let $\psiabs $ be the function
associated to it by Definition~\ref{def:68}. By Proposition \ref{prop:58},
$\psiabs =A^{\ast}\Psi
_{\Delta ^{r}}$. This is a piecewise affine concave function on $N_{\R}$
with  $\rec(\psiabs )=\Psi $ that can be made explicit as follows.

Let $\{e_{1},\dots,e_{r}\}$ be the standard basis of $\Z^{r}$ and 
$\{e_{1}^{\vee},\dots,e_{r}^{\vee}\}$  the dual basis. Write
$m_{i}=e_{i}^{\vee}\circ H\in M$ and
$l_{i}=e_{i}^{\vee}(\val(p))\in \R$. 
Then
\begin{displaymath}
  \Psi =\min\{0,m_{1},\dots,m_{r}\},\quad
  \psiabs =\min\{0,m_{1}+l_{1},\dots,m_{r}+l_{r}\}.
\end{displaymath}

\end{exmpl}

We want to characterize all the functions that can be obtained with a
slight generalization of the previous construction. In this case we
will use the function $\phiK $ instead of $\psiabs$.

\begin{prop}\label{prop:44} Assume that the absolute value of $K$ is
  either Archimedean or associated to a discrete
  valuation. 
 Let $\Sigma $ be a complete fan in $N$ and $\Psi $ a support
 function on~$\Sigma $. Write $L=L_{\Psi }$ and $s=s_{\Psi }$. Let
 $\phiK \colon N_{\R}\to \R$ a
 piecewise affine concave function with $\rec(\phiK )=\Psi $, that has 
 an $H$-representation
 \begin{displaymath}
   \phiK =\min_{i=0,\dots,r}\{m_{i}+l_{i}\},
 \end{displaymath}
 with $m_{i}\in
 M_{\Q}$ and $l_{i}\in \R$ in the Archimedean case and $l_{i}\in \Q$
 in the non-Archimedean case. Then there is an
 equivariant morphism $\varphi\colon X_{\Sigma }\to \P^{r}$, an
 integer $e>0$ and an isomorphism $L^{\otimes e}\simeq \varphi^{\ast} \mathcal{O}(1)$
 such that the metric $\|\cdot\|$ induced on $L^{\an}$ by the canonical metric of
 $\mathcal{O}(1)^{\an}$ satisfies $\phiK_{\|\cdot\|} =\phiK$.
\end{prop}
\begin{proof}
  First observe that the condition $l_{i}\in \R$ in the Archimedean
  case and $l_{i}\in \Q$ in the non-Archimedean case is equivalent to
  the condition $l_{i}\in \Q\,\val_{K}(K^{\times})$. Let $e>0$ be an
  integer such that $em_{i}\in M$ and $el_{i}\in \val_{K}(K^{\times})$
  for $i=0,\dots,r$.

  Consider the linear map $H\colon N_{\R}\to \R^{r}$ given by
  \begin{math}
    H(u)=(em_{i}(u)-em_{0}(u))_{i=1,\dots,r}
  \end{math}
  and the affine map $A=H+ \boldsymbol{l}$ with
  $\boldsymbol{l}=(el_{i}-el_{0})_{i=1,\dots,r}$. By Lemma
  \ref{lemm:17},
  \begin{displaymath}
    e\phiK=A^{\ast}\Psi _{\Delta ^{r}}+em_{0}+el _{0}.
  \end{displaymath}
  For each $\sigma\in \Sigma$, we claim that there exists
  $\sigma_{i_{0}}\in \Sigma_{\Delta^{r}}$ such that $H(\sigma)\subset
  \sigma_{i_{0}}$. Indeed, $\Psi(u)=\min_{i}\{m_{i}(u)\}$. Since
  $\Psi$ is a support function on $\Sigma$, for each $\sigma\in
  \Sigma$, there exists an $i_{0}$ such that $\Psi(u)= m_{i_{0}}(u)$
  for all $u\in \sigma$.  Writing $e_{0}^{\vee}=0$, this condition
  implies
  \begin{displaymath}
    \min_{0\le i\le r}\{e_{i}^{\vee}(H(u))\}= e_{i_{0}}^{\vee}(H(u))
    \quad \text{for all } u\in \sigma.
  \end{displaymath}
  Hence, $H(\sigma) \subset \sigma_{i_{0}}$, where $\sigma_{i_{0}}\in
  \Sigma_{\Delta^{r}}$ is the cone $\{v| \min_{0\le i\le
    r}\{e_{i}^{\vee}(v)\}= e_{i_{0}}^{\vee}(v)\}$ and the claim is proved.
  
  Therefore, we can apply Theorem \ref{thm:25} and given a point $p\in
  \P^{r}(K)$ such that $\val_K(p)= 
  \boldsymbol{l}$, there is an equivariant map $\varphi_{p,H}\colon
  X_{\Sigma}\to \P^{r}$.  By Example \ref{exm:26}, there is an
  isomorphism $ L^{\otimes e} \simeq \varphi_{p,H}^{*}\mathcal{O}(1)$
  and $a\in K^{\times}$ with $\val_K(a)=l_{0}$ such that
  $(a^{-1}\chi ^{-m_{0}}s)^{\otimes e}$ corresponds to
  $\varphi_{p,H}^{*}( s_{\Psi _{\Delta^{r}}})$.

Let $\ov L$ be the line bundle $L$ equipped with the metric induced by
the above isomorphism and the canonical metric of
$\mathcal{O}(1)^{\an}$. Then 
\begin{displaymath}
  \phiK_{\ov L,s}= \phiK_{\ov L,a^{-1}\chi ^{-m_{0}}s}+
  m_{0}+l_{0}
=\frac{1}{e}  A^{\ast}\Psi _{\Delta ^{r}}+m_{0}+l _{0}=\phiK,
\end{displaymath}
as stated.
\end{proof}

\begin{cor} \label{cor:22}
Let $\phiK$ be as in Proposition \ref{prop:44} and $\psiabs=\phiK
\lambda _{K}$. Then
the metric $\|\cdot\|_{\psiabs }$ is semipositive{}.
\end{cor}

\begin{proof}
  This follows readily from the previous result together with Example
  \ref{exm:6} in the Archimedean case and Example \ref{exm:2} in the
  non-Archimedean case and the fact that the inverse image of a
  semipositive{} metric is also semipositive{}. 
\end{proof}
The conclusion of Proposition \ref{prop:58} is not true for non toric
metrics, because the averaging process in Definition \ref{def:82} does
not commute with inverse images by equivariant
morphisms. Nevertheless, with the notations before Proposition
\ref{prop:58}, we can compute $\psiabs _{\ov L_{2},s_{2}}\circ A$ as
an average over all equivariant morphisms associated with the affine
map $A$.  In the non-Archimedean case, this averaging process will be
described by a limit process on algebraic points of
$X^{\an}_{\Sigma_{2}}$. Recall that the algebraic points of a
Berkovich space are dense. Since Berkovich
spaces are not necessary metrizable, in principle, one should
approximate an arbitrary point by a \emph{net} of algebraic
points. Nevertheless, thanks to 
\cite[Th\'eor\`eme 5.3]{Poineau:eba}, Berkovich spaces are of type
Fr\'echet-Uryshon, which implies that every point can be approximated by
a \emph{sequence} of algebraic points.

The next result will be
needed in the proof of Proposition~\ref{prop:61}. 

\begin{prop}\label{prop:95} With the notations  previous to Proposition
  \ref{prop:58}, let $\|\cdot\|$ be a metric on $L_{2}^{\an}$.
  \begin{enumerate}
  \item \label{item:128} Assume that $K$ is Archimedean.  Let $p\in
    X_{\Sigma_2,0}(K)$ and put $u_{0}=\val(p)$.  Then, for $u\in
    N_{1,\R}$,
  \begin{displaymath}
    \psiabs _{\|\cdot\|}(u_{0}+H(u))=\int_{\SS_{2}}\psiabs
    _{\varphi_{t\cdot p,H}^{\ast}\|\cdot\|}(u)\dd\mu_{\Haar_{2}}(t),
  \end{displaymath}
where $\SS_{2}$ is the compact subtorus of the torus associated
to the lattice $N_{2}$, and  $\mu _{\Haar,2}$ is the Haar measure of $\SS_{2}$ of
total volume 1.

\item \label{item:127} Assume that $K$ is non-Archimedean. Let 
  $u_0\in \lambda_{K}N_{2,\Q}$ and $(q_{i})_{i\in \N}$ be a sequence of
  points $q_{i}\in \val^{-1}(u_{0})\cap X_{\Sigma_{2},\alg}^{\an}$ with
  $\lim_{i\to \infty} q_{i}=\theta_{\Sigma_{2}} \circ \ee
  (u_{0})$. For each $i\in \N$, let $K'_{i}$ be a finite extension of
  $K$ and $\widetilde q_{i}\in X_{\Sigma _{2},0}(K'_{i})$ a point over
  $q_{i}$. We denote by $\|\cdot\|_{K'_{i}}$ the metric induced on the
  line bundle $L_{2,K'_{i}}$ by base change. Then, for $u\in N_{1,\R}$, 
  \begin{displaymath}
    \psiabs _{\|\cdot\|}(u_{0}+H(u))= \lim_{i\to \infty}\psiabs
    _{\varphi_{\wt q_{i},H}^{\ast}\|\cdot\|_{K'_{i}}}(u). 
  \end{displaymath}
  \end{enumerate}
\end{prop}
\begin{proof}
  We first prove \eqref{item:128}. Let $q\in X_{\Sigma _{1}}^{\an}$
  with $\val(q)=u$. By definition,
  \begin{displaymath}
    \int_{\SS_{2}}\psiabs
    _{\varphi_{t_{2}\cdot
        p,H}^{\ast}\|\cdot\|}(u)\dd\mu_{\Haar_{2}}(t_{2})=
    -\int_{\SS_{2}}\int_{\SS_{1}}g
    _{\varphi_{t_{2}\cdot p,H}^{\ast}\|\cdot\|}(t_{1}\cdot
    q)\dd\mu_{\Haar_{1}}(t_{1})\dd\mu_{\Haar_{2}}(t_{2}). 
  \end{displaymath}
  where $\SS_{i}$ is the compact subtorus of the torus associated to
  the lattice $N_{i}$, and $\mu _{\Haar,i}$ is the Haar measure of
  $\SS_{i}$ of total volume 1, $i=1,2$.  Let $\varrho _{H}\colon
  \T_{1}\to \T_{2}$ be the morphism of tori induced by the linear map
  $H$. Now we compute
  \begin{displaymath}
    g
    _{\varphi_{t_{2}\cdot p,H}^{\ast}\|\cdot\|}(t_{1}\cdot
    q)=g_{\|\cdot\|}(\varphi_{t_{2}\cdot p,H}(t_{1}\cdot q))
    =g_{\|\cdot\|}(t_{2}\cdot \varrho _{H}(t_{1})\cdot\varphi_{p,H}(q)).
  \end{displaymath}

  Consider the morphism of compact tori $\varrho \colon \SS_{2}\times
  \SS_{1}\to \SS_{2}$ given by $\varrho (t_{2},t_{1})=t_{2}\cdot \varrho
  _{H}(t_{1})$. The measure $\varrho _{\ast}(\mu _{\Haar,1}\times \mu
  _{\Haar,2})$ is an invariant measure on $\SS_{2}$ of total volume
  $1$. Thus agrees with $\mu _{\Haar,2}$. Hence
  \begin{multline*}
    \int_{\SS_{2}}\int_{\SS_{1}}
    g_{\|\cdot\|}(t_{2}\cdot \varrho _{H}(t_{1})\cdot\varphi_{p,H}(q))
    \dd\mu_{\Haar_{1}}(t_{1})\dd\mu_{\Haar_{2}}(t_{2})\\=
    \int_{\SS_{2}}
    g_{\|\cdot\|}(t_{2}\cdot\varphi_{p,H}(q))
    \dd\mu_{\Haar_{2}}(t_{2}).
  \end{multline*}
  Since $\val(\varphi_{p,H}(q))=u_{0}+H(u)$, we obtain,
  \begin{displaymath}
    -\int_{\SS_{2}}
    g_{\|\cdot\|}(t_{2}\cdot\varphi_{p,H}(q))
    \dd\mu_{\Haar_{2}}(t_{2})=\psiabs _{\|\cdot\|}(u_{0}+H(u)),
  \end{displaymath}
  proving the result.

  Next we prove \eqref{item:127}. In view of Proposition
  \ref{prop:115}, it is enough to treat the case when $K$ is
  algebraically closed and hence $K'_{i}=K$ and $\wt q_{i}=q_{i}$. 
  Then, by definitions \ref{def:68} and \ref{def:82},
  \begin{displaymath}
    \lim_{i\to\infty}\psiabs
    _{\varphi_{q_{i},H}^{\ast}\|\cdot\|}(u)=
    -\lim_{i\to\infty}g
    _{\varphi_{q_{i},H}^{\ast}\|\cdot\|}(\theta _{\Sigma _{1}}(\ee(u)))=
    -\lim_{i\to\infty}g
    _{\|\cdot\|}(\varphi_{q_{i},H}(\theta _{\Sigma _{1}} (\ee(u)))).
  \end{displaymath}
  Identifying $X_{\Sigma _{2},0}$ with $\T_{2}$ and denoting by
  $x_{0}$ the distinguished point of $X_{\Sigma _{2},0}(K)$, we
  obtain, using \ref{prop:109}\eqref{item:129}
  \begin{displaymath}
    \varphi_{q_{i},H}(\theta _{\Sigma _{1}} (\ee(u)))=
    q_{i}\cdot\varphi_{x_0,H}(\theta _{\Sigma _{1}} (\ee(u)))=
    q_{i}\ast\varphi_{x_0,H}(\theta _{\Sigma _{1}} (\ee(u))).
  \end{displaymath}
  By Corollary \ref{cor:33}, the point $\varphi_{x_0,H}(\theta
  _{\Sigma _{1}} (\ee(u)))$ is peaked. Thus, by Proposition~\ref{prop:109}\eqref{item:132},
  \begin{displaymath}
    \lim_{i\to\infty} g
    _{\|\cdot\|}(q_{i}\ast\varphi_{x_0,H}(\theta _{\Sigma _{1}}
    (\ee(u))))=g
    _{\|\cdot\|}(\theta_{\Sigma_{2}} (\ee
  (u_{0})))\ast \varphi_{x_0,H}(\theta _{\Sigma _{1}}
    (\ee(u))).
  \end{displaymath}
 By propositions \ref{prop:96} and \ref{prop:56}
 \begin{multline*}
   \theta_{\Sigma_{2}} (\ee
  (u_{0}))\ast \varphi_{x_0,H}(\theta _{\Sigma _{1}}
    (\ee(u)))=\theta_{\Sigma_{2}} (\ee
  (u_{0})\cdot \rho _{\Sigma _{2}}( \varphi_{x_0,H}(\theta _{\Sigma _{1}}
    (\ee(u)))))\\=\theta_{\Sigma_{2}} (\ee
  (u_{0})\cdot  \varphi_{x_0,H}(\ee(u)))= \theta_{\Sigma_{2}} (\ee
  (u_{0}+H(u))).
 \end{multline*}
 Therefore
 \begin{displaymath}
   \lim_{i\to\infty}\psiabs
    _{\varphi_{q_{i},H}^{\ast}\|\cdot\|}(u)=-g
    _{\|\cdot\|}(\theta_{\Sigma_{2}} (\ee
  (u_{0}+H(u))))=\psiabs_{\|\cdot\|}(u_{0}+H(u)),
 \end{displaymath}
proving the result.
\end{proof}

\section{Smooth metrics and their associated measures}   
\label{sec:pos-smooth-metr}

We now discuss the relationship between semipositivity of smooth
metrics and concavity of the associated function in the Archimedean
case. Moreover we will determine the associated measure. 

We keep the notation at the beginning of the chapter but we restrict
to the case when $K$ is either $\R$ or $\C$ and $\Sigma$ is a complete
fan.  Let $\Psi $ be a virtual support function on $\Sigma $, with $L$
and $s$ the corresponding toric line bundle and section. Let $L^{\an}$ be
the analytic line bundle on $X_{\Sigma }^{\an}$ associated to $L$.

\begin{prop}\label{prop:15}  Let $\|\cdot\| $ be a
  smooth toric metric 
  on $L^{\an}$. Then $\|\cdot\| $ is
  semipositive if and only if the function $\psiabs =\psiabs _{\|\cdot\| }$
  is concave.
\index{smooth metric!semipositive}%
\end{prop}
\begin{proof}
  Since the condition of being semipositive is closed, it is enough to
  check it in the open set $X_{0}^{\an}$. We choose an integral basis
  of $M=N^{\vee}$. This determines isomorphisms
  \begin{displaymath}
    X_{0}^{\an}\simeq (\C^{\times})^{n},\quad X_{0}(\R_{\ge
      0})\simeq(\R_{>0})^{n},\quad
    N_{\C}\simeq \C^{n},\quad N_{\R}\simeq \R^{n}.
  \end{displaymath}
  Let $z_{1},\dots ,z_{n}$ be the coordinates of $X_{0}^{\an}$
  and $u_{1},\dots,u_{n}$ the coordinates of $N_{\R}$ determined by
  these isomorphisms. With these coordinates, the map 
  $$\val \colon X_{0}^{\an}\to N_{\R}$$ 
  is given by
  \begin{displaymath}
    \val(z_{1},\dots,z_{n})= \frac{-1}{2}
    (\log(z_{1}\bar z_{1}),\dots,\log(z_{n}\bar z_{n})).
  \end{displaymath}
  As usual, we set $\ov L=(L,\|\cdot\| )$ and $g=g_{\ov L,s}=-\log
  \|s\|$. Then the integral valued first Chern class is given by
  \begin{equation}
    \label{eq:22}
    \frac{1}{2\pi i}\chern_{1}(\overline L)=\frac{-1}{\pi i}\partial\bar \partial g
    =\frac{i}{\pi}\sum_{k,l}\frac{\partial^{2}g}{\partial
      z_{k} \partial \bar z_{l}}\dd z_{k}\land \dd \bar z_{l}.
  \end{equation}
  The standard orientation of the unit disk $\D \subset \C$ is
  given by $\dd x \land \dd y=(i/2)\dd z\land \dd \bar z$. Hence,  
  the metric of $\ov L$ is semipositive if and only if the matrix
  $G=(\frac{\partial^{2}g}{\partial 
      z_{k}\partial \bar z_{l}})_{k,l}$ is semi-positive
    definite. Since
    \begin{equation}\label{eq:24}
      \frac{\partial^{2}g}{\partial
      z_{k}\partial \bar z_{l}}=
    \frac{-1}{4 z_{k}\bar z_{l}}\frac{\partial^{2}\psiabs }{\partial
      u_{k}\partial \bar u_{l}},
    \end{equation}
    if we write $\Hess (\psiabs )=(\frac{\partial^{2}\psiabs }{\partial 
      u_{k}\partial \bar u_{l}})_{k,l}$ and $Z=\diag
    ((2z_{1})^{-1},\dots,(2z_{n})^{-1})$, 
    then $G=-\bar Z^{t}\Hess (\psiabs ) Z$. Therefore $G$ is
    semi-positive definite 
    if and only if $\Hess(\psiabs )$ is semi-negative definite, hence, if
    and only if 
    $\psiabs $ is concave.
\end{proof}

\begin{prop} \label{prop:30} 
Let $\|\cdot\|$ be a smooth metric on $L^{\an}$. 
Then $\|\cdot\|_{\SS}$ is also smooth. Moreover, if
$\|\cdot\|$ is semipositive, then $\|\cdot\|_{\SSinv}$ is semipositive
too.
\end{prop}

\begin{proof} The first statement follows from the definition of
  $\|\cdot\|_{\SS}$ and the preservation of smoothness under
  integration of $\log\|s_{\sigma}\|$ along the compact subsets $
  \SS \cdot p$ for $p\in X^{\an}_{\sigma}$, $\sigma\in \Sigma$.

For the second statement, we have
  \begin{displaymath}
\chern_{1}(L,\|\cdot\|_{\SSinv})=\int_{\SS }t^{\ast}
\chern_{1}(L,\|\cdot\|)\dd\mu_{\Haar}(t)
  \end{displaymath} 
where $t^{*}$ denotes the inverse image under the multiplication map
$t\colon X_{\Sigma}^{\an}\to  X_{\Sigma}^{\an}$. 
Therefore, if $(L,\|\cdot\|)$ is semipositive,
then $(L,\|\cdot\|_{\SSinv})$ is semipositive too.
\end{proof}

As a direct consequence of propositions \ref{prop:30} and
\ref{prop:15}, if the line bundle $L^{\an}$ admits a semipositive smooth metric,
then its virtual support function $\Psi $ is concave. By Proposition
\ref{prop:99}\eqref{item:56}, this latter condition is equivalent to
the fact that $L$ is generated by global sections.

For a semipositive smooth toric metric $\|\cdot \|$ on $L^{\an}$, we can
characterize the associated measure on $X^{\an}$ in terms of the
Monge-Amp\`ere measure of the
concave function~$\psiabs_{\|\cdot\|}$.

\begin{defn} \label{def:83} Let $\psiabs \colon N_{\R}\to \R$ be a
  concave function and $\mathcal{M}_{M}(\psiabs )$ the Monge-Amp\`ere
  measure associated to $\psiabs$ and the lattice $M$.
  \index{Monge-Amp\`ere measure}%
  \nomenclature[aMzA4]{$\ov{\mathcal{M}}_{M}(\psiabs )$}{Monge-Amp\`ere
    measure on $N_{\Sigma}$}%
  We denote by $\ov{\mathcal{M}}_{M}(\psiabs )$ the measure on $N_{\Sigma
  }$ given by
  \begin{equation*} 
    \ov{\mathcal{M}}_{M}(\psiabs )(E)= \mathcal{M}_{M}(\psiabs )(E\cap N_{\R}) 
  \end{equation*}
  for any Borel subset $E$ of $N_{\Sigma }$. We will use the same
  notation for the mixed Monge-Amp\`ere measure.
\end{defn}

By its very definition, the measure $\ov{\mathcal{M}}_{M}(\psiabs )$ is
bounded with 
total mass $$\ov{\mathcal{M}}_{M}(\psiabs )(N_{\Sigma
})=\Vol_{M}(\Delta _{\Psi })$$ and the set $N_{\Sigma }\setminus
N_{\R}$ has measure zero.

\begin{thm}\label{thm:18}
  Let $\|\cdot\| $ be a semipositive
  smooth toric metric 
  on $L^{\an}$.
  Let $\chern_{1}(\ov L)^{\wedge n}\land \delta 
  _{X_{\Sigma }}$ be the measure defined by $\ov L$. Then,
  \begin{equation}\label{eq:56}
    \val_{\ast} (\chern_{1}(\ov L)^{\wedge n}\land \delta
    _{X_{\Sigma }})= 
    n!\,\ov{\mathcal{M}}_{M}(\psiabs ),
  \end{equation}  
  where $\val $ is the map in the diagram \eqref{eq:89}. In
  addition, this measure is uniquely 
  characterized by the equation 
  \eqref{eq:56} and the property of being $\SS $-invariant.
\end{thm}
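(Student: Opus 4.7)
The plan is to check equation \eqref{eq:56} by an explicit local computation on the principal open orbit $X_{0}^{\an}$, using the $\SS^{\an}$-invariance of the toric metric to reduce to this open set, and then to deduce uniqueness from a disintegration argument along $\val$.

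First, since $\|\cdot\|$ is toric, the smooth function $\log\|s\|^2$ is $\SS^{\an}$-invariant on $X_0^{\an}$, and hence so are the form $c_1(\ov L)=\partial\bar\partial\log\|s\|^2$, its $n$-th power, and the measure $c_1(\ov L)^n\wedge\delta_{X_\Sigma}$. As this measure is given by a smooth $(n,n)$-form, it is absolutely continuous with respect to the Lebesgue measure of the $2n$-dimensional real manifold $X_\Sigma^{\an}$. The complement $X_\Sigma^{\an}\setminus X_0^{\an}$ is the finite union of the orbit closures $V(\sigma)^{\an}$ for $\sigma\neq 0$, each an analytic subvariety of real dimension at most $2n-2$, hence Lebesgue-negligible. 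It therefore suffices to compute the direct image by $\val$ of the restriction of the measure to $X_0^{\an}$.

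Fixing an integral basis of $M$ provides identifications $X_0^{\an}\simeq(\C^\times)^n$ with coordinates $(z_1,\dots,z_n)$ and $N_\R\simeq\R^n$ with dual coordinates $(u_1,\dots,u_n)$, related by $u_k=-\log|z_k|$, so that $\log\|s\|=\psi\circ\val$. The computation of the Hessian carried out in the proof of Proposition \ref{prop:15}, together with the Leibniz expansion of the wedge power of a $(1,1)$-form, gives
\begin{equation*}
c_1(\ov L)^n = \frac{n!\,\det(\Hess\psi)(\val z)}{2^n\,|z_1|^2\cdots|z_n|^2}\bigwedge_{k=1}^n\dd z_k\wedge\dd\bar z_k.
\end{equation*}
The identity $\dd z_k\wedge\dd\bar z_k=-2i\,\dd x_k\wedge\dd y_k$, combined with the normalizing factor $(2\pi i)^{-n}$ in the definition of $\delta_{X_\Sigma}$, produces a global scalar $(-1)^n/(2\pi)^n$. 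Passing to polar coordinates $z_k=r_k\e^{i\theta_k}$ and then to $u_k=-\log r_k$, the positive Lebesgue element $\dd x_k\wedge\dd y_k$ becomes $\e^{-2u_k}\dd u_k\,\dd\theta_k$, cancelling exactly the factor $|z_k|^{-2}=\e^{2u_k}$. Integrating the angular variables $\theta_k$ over the compact torus fibre $(S^1)^n$ of $\val$ contributes a factor $(2\pi)^n$, and what remains on $N_\R$ is $(-1)^n n!\,\det(\Hess\psi)(u)\,\dd u_1\cdots\dd u_n$, which by Proposition \ref{prop:5} equals $n!\,\cM_M(\psi)$. Extending by zero to $N_\Sigma\setminus N_\R$, where both sides vanish by the first step, gives \eqref{eq:56}.

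For the uniqueness, let $\mu$ be any $\SS^{\an}$-invariant (signed) measure on $X_\Sigma^{\an}$ with $\val_*\mu=n!\,\ov\cM_M(\psi)$. Since $\ov\cM_M(\psi)$ is supported in $N_\R$ and $\val^{-1}(N_\Sigma\setminus N_\R)=X_\Sigma^{\an}\setminus X_0^{\an}$, the measure $\mu$ is concentrated on $X_0^{\an}$. On this open subset $\val$ is proper with fibres equal to the $\SS^{\an}$-orbits, and each such fibre (a compact real torus) carries a unique $\SS^{\an}$-invariant probability measure, namely its normalized Haar measure; disintegration of measures along $\val$ then reconstructs $\mu$ uniquely from $\val_*\mu$. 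The most delicate point of the argument is the precise bookkeeping of signs and factors of $2\pi i$ during the expansion of $c_1(\ov L)^n$ and the polar change of variables, so that the final expression matches exactly the formula of Proposition \ref{prop:5}; the reduction to $X_0^{\an}$ and the disintegration step are routine.
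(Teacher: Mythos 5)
Your proposal is correct and follows essentially the same route as the paper: restrict to $X_0^{\an}$ because the boundary has Lebesgue measure zero, compute in coordinates from an integral basis of $M$ using the Hessian identity from Proposition~\ref{prop:15}, integrate out the compact torus fibre, and identify the result via Proposition~\ref{prop:5}. The only cosmetic difference is that the paper pulls back under the single complexified exponential $\wt{\ee}\colon N_\C\to X_0(\C)$, $z\mapsto\exp(-z)$, while you split the change of variables into polar coordinates followed by $u_k=-\log r_k$; both lead to the same cancellation of $|z_k|^{-2}$ against the Jacobian and the same $(2\pi)^n$ from the angular integral.
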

\begin{proof} 
  Since the measure $\chern_{1}(\ov L)^{\wedge n}\land \delta 
  _{X_{\Sigma }}$ is given by a smooth volume form and $X^{\an}_{\Sigma
  }\setminus X^{\an}_{0}$ is a set of Lebesgue measure 
  zero, the measure $\chern_{1}(\ov L)^{\wedge n}\land \delta 
  _{X_{\Sigma }}$ is determined by its restriction to the dense open subset 
  $X_{0}^{\an}$. Thus, to prove \eqref{eq:56} it is enough to
  show that
  \begin{equation}\label{eq:90}
    \val_{\ast} (\chern_{1}(\ov L)^{\wedge n}\land \delta
    _{X_{\Sigma }}|_{X^{\an}_{0}})= 
    n!\mathcal{M}_{M}(\psiabs ).
  \end{equation}  

  We use the coordinate system of the proof of Proposition
  \ref{prop:15}.
  We denote by $\wt {\ee}\colon  N_{\C}\to X_{0}(\C)$ the map induced
  by the 
  morphism $\C\to \C^{\times}$ given by $z\mapsto \exp(-z)$.
  We write $u_{k}+i v_{k}$ for the complex coordinates of
  $N_{\C}$. Then
  \begin{equation}
    \label{eq:83}
    \wt \ee^{\ast}\left(\frac{\dd z_{k}\land \dd \bar z _{k}}
    {z_{k}\bar z_{k}}\right)= (-2i)\dd
  u_{k}\land \dd v_{k}.
  \end{equation}
  Using now the equations \eqref{eq:22}, \eqref{eq:24} and \eqref{eq:83},
  we obtain that
  \begin{align*}
    \frac{1}{(2\pi i)^{n}}\wt {\ee} ^{\ast} \chern_{1}(\ov L)^{\wedge n}&=\wt
    {\ee} ^{\ast}\left(
    \frac{i^{n}}{\pi^{n}} n!\det (G) \dd z_{1}\land \dd\bar z_{1}\land \dots\land 
    \dd z_{n}\land \dd \bar z_{n}\right)\\
    &=\frac{(-1)^{n}}{(2 \pi)^{n}} n!\det (\Hess(\psiabs  ))
    \dd u_{1}\land \dd v_{1}\land \dots \land \dd u_{n}\land \dd v_{n}.
  \end{align*}
  Since the map $\val$ is the composition of $\wt{\ee}^{-1}$ with the
  projection $N_{\C}\to N_{\R}$,
  integrating with respect
  to the variables $v_{1},\dots ,v_{n}$ in the domain $[0,2\pi]^{n}$,
  taking into account the natural orientation of $\C^{n}$ and the
  orientation of $N_{\R}$ given by the coordinate system,  and the
  fact that  
  the normalization factor $1/(2\pi i)^{n}$ is implicit in
  the current $\delta _{X_{\Sigma }}$, we obtain
  \begin{displaymath}
    \val_{\ast} (\chern_{1}(\ov L)^{\wedge n}\land \delta
    _{X_{\Sigma }}|_{X_{0}^{\an}} )=
    (-1)^{n}n!\det (\Hess (\psiabs )) \dd u_{1}\land\dots\land \dd u_{n}.
  \end{displaymath}
  Thus the equation \eqref{eq:90} follows from Proposition \ref{prop:5}.
  Finally, the last statement follows from the fact
  that, in a compact Abelian group there is a unique Haar measure
  with fixed total volume. 
\end{proof}

We end this section by making explicit the compatibility of the previous
constructions with the conjugation in the case of toric
varieties over $\R$.
Let $\Sigma $ be a fan in $N_{\R}$, and $X_{\Sigma ,\R}$
and $X_{\Sigma ,\C}$ the corresponding toric varieties over $\R$ and
$\C$.  Recall that the
underlying complex analytic
spaces of $X^{\an}_{\Sigma ,\C}$ and $X^{\an}_{\Sigma ,\R}$ agree
(see Remark \ref{rem:17}) and are denoted by $X^{\an}_{\Sigma
}$. 

\begin{prop}\label{prop:40} Let $\Sigma $ be a complete fan and
   $\varsigma \colon X^{\an}_{\Sigma }\to X^{\an}_{\Sigma }$
  the anti-linear involution of  Remark \ref{rem:17}.
  \begin{enumerate}
  \item \label{item:28} There are commutative diagrams
    \begin{displaymath}
      \xymatrix{
        X_{\Sigma }^{\an}\ar[r]^{\varsigma } \ar[rd]_{\rho _{\Sigma}}
        &X^{\an}_{\Sigma}\ar[d]^{\rho _{\Sigma}}\\ 
        & X_{\Sigma }(\R_{\ge 0}),}\qquad
      \xymatrix{
        X_{\Sigma }^{\an}\ar[r]^{\varsigma } \ar[rd]_{\val}
        &X^{\an}_{\Sigma}\ar[d]^{\val}\\ 
        & N_{\Sigma }.}
    \end{displaymath}
  \item \label{item:55} Let $L_{\R}$ be a line bundle on $X_{\Sigma ,\R}$ and $L_{\C}$
the line bundle over $X_{\Sigma ,\C}$ obtained by base change. The
assignment that to each metric $\|\cdot\|_{\R}$ on 
    $L_{\R}$ associates the metric $\|\cdot\|_{\C}$ on $L_{\C}$ given
    by forgetting the anti-linear involution, induces a bijection between the set of toric
    metrics on $L_{\R}$ and the set of toric metrics on
    $L_{\C}$. Moreover $\psiabs _{\|\cdot\|_{\C}}=\psiabs
    _{\|\cdot\|_{\R}}$. 
  \end{enumerate}
\end{prop}
\begin{proof} We first prove \eqref{item:28}. The first commutativity
  follows from the invariance of the absolute value under complex
  conjugation and the second follows from the first and the commutativity
  of diagram \eqref{eq:89}.

  To prove \eqref{item:55} we have to show that, if $\|\cdot\|$ is a
  toric metric on $L_{\C}$, then it is compatible with complex
  conjugation. That is, if $s$ a toric section of $L_{\C}$ defined over
  $\R$ and $p\in X^{\an}_{0}$, then $\|s(p)\|=\|s(\varsigma (p))\|$. Since
  the fibres of $\rho _{\Sigma }$ are orbits under $\SS $, by
  \eqref{item:28}, there is an element $t\in \SS $ such that
  $\varsigma (p)=t(p)$. Since the metric is toric
  \begin{displaymath}
    \|s(\varsigma (p))\|=\|s(t(p))\|=\|s(p)\|.
  \end{displaymath}
  The last assertion is clear because the definitions of $\psiabs
  _{\|\cdot\|_{\C}}$ and $\psiabs
    _{\|\cdot\|_{\R}}$ agree. 
\end{proof}

\section{Algebraic metrics from toric models}
\label{sec:toric-algebr-metr}

Next we study some properties of algebraic metrics with particular
emphasis on the ones that arise from toric models.  We keep the
notation at the beginning of the chapter and we assume that $K$ is a
complete field with respect to an absolute value associated to a
nontrivial discrete valuation. 
We also assume that the fan $\Sigma$ is
complete.

Since we will discuss the relationship between metrics and algebraic
models it is preferable to work with the functions $\phiK $ of
Definition \ref{def:88} instead of the functions of Definition
\ref{def:68}.

We begin by studying the relationship between the maps $\val_K$ and $\red$.

\begin{lem}\label{lemm:3}
  Let $\Pi $ be a complete SCR polyhedral complex of $N_{\R}$ such that
  $\rec (\Pi )=\Sigma $. Let $\cX:=\cX_{\Pi }$ be the model of
  $X_{\Sigma }$ determined by $\Pi $. Let $\Lambda \in \Pi $ and
  $p\in X_{0}^{\an}$. Then
  $\red(p)\in \cX_{\Lambda }$ if and only if $\val_{K}(p)\in \Lambda $.
\end{lem}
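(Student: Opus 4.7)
The plan is to unwind the definitions of both $\red$ and $\val_K$ on the affine open $\cX_{\Lambda}$ and translate the membership $\red(p)\in \cX_{\Lambda}$ into a system of inequalities on the vector $\val_K(p)\in N_{\R}$.

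First, recall from equations~\eqref{eq:43} and~\eqref{eq:87} that $\red(p)\in \cX_{\Lambda}$ means exactly that $p$ lies in the closed subset
\[
C_{\Lambda}=\{q\in X_{0}^{\an} \mid |a(q)|\le 1\text{ for all }a\in K^{\circ}[\cX_{\Lambda}]\}.
\]
Using the presentation~\eqref{eq:72}, the $K^{\circ}$-algebra $K^{\circ}[\cX_{\Lambda}]$ is generated as a $K^{\circ}$-module by the monomials $\varpi^{l}\chi^{m}$ with $(m,l)\in \wt M_{\Lambda}=\wt M\cap \cc(\Lambda)^{\vee}$. Since $p\in X_{0}^{\an}$ and hence the seminorm is multiplicative on these monomials, the condition $p\in C_{\Lambda}$ is equivalent to $|\varpi^{l}\chi^{m}(p)|\le 1$ for every $(m,l)\in \wt M_{\Lambda}$.

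Next, I would compute this inequality explicitly using~\eqref{eq:42}. Since $|\chi^{m}(p)|=\exp(-\lambda_{K}\langle m,\val_{K}(p)\rangle)$ and $|\varpi|=\exp(-\lambda_{K})$, we have
\[
|\varpi^{l}\chi^{m}(p)|\le 1 \iff l+\langle m,\val_{K}(p)\rangle \ge 0.
\]
Thus $\red(p)\in \cX_{\Lambda}$ if and only if $l+\langle m,\val_{K}(p)\rangle\ge 0$ for every $(m,l)\in \wt M_{\Lambda}$.

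It remains to identify this last condition with the membership $\val_{K}(p)\in \Lambda$. From the definition $\cc(\Lambda)=\overline{\R_{>0}(\Lambda\times\{1\})}$, the pair $(u,1)$ lies in $\cc(\Lambda)$ for every $u\in \Lambda$, so any $(m,l)\in \cc(\Lambda)^{\vee}$ satisfies $\langle m,u\rangle+l\ge 0$ on $\Lambda$. This proves the direction $\val_K(p)\in \Lambda \Rightarrow \red(p)\in \cX_\Lambda$. For the converse, $\Lambda$ is a rational polyhedron (Definition~\ref{def:37}), so it admits an integral H\nobreakdash-represen\-ta\-tion $\Lambda=\bigcap_{j}\{u\in N_{\R}\mid \langle m_{j},u\rangle+l_{j}\ge 0\}$ with $(m_{j},l_{j})\in \wt M$; by construction each $(m_{j},l_{j})$ belongs to $\wt M_{\Lambda}=\cc(\Lambda)^{\vee}\cap \wt M$. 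Applying the inequalities obtained in the previous paragraph to these particular $(m_{j},l_{j})$ yields exactly the defining inequalities of $\Lambda$ at the point $\val_{K}(p)$, hence $\val_{K}(p)\in \Lambda$.

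The only delicate point is the translation between $\Lambda$ and the integer points of $\cc(\Lambda)^{\vee}$, that is, the fact that a rational polyhedron can be recovered from the integer affine inequalities it satisfies. This is standard and is precisely the reason for introducing the cone construction $\Lambda\mapsto \cc(\Lambda)$ in Corollary~\ref{cor:10}, so no serious obstacle appears; the rest is a direct computation with the formulas for $\red$ and $\val_K$.
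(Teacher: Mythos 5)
Your proof is correct and follows essentially the same chain of equivalences as the paper, just traversed in the opposite direction: you start from $\red(p)\in\cX_\Lambda$, pass to the monomial inequalities $l+\langle m,\val_K(p)\rangle\ge 0$ for $(m,l)\in\wt M_\Lambda$, and land on $\val_K(p)\in\Lambda$. The one place where you add genuine value is the last step: the paper simply asserts that $\val(p)\in\Lambda$ is, ``by the definition of the semigroup $\wt M_\Lambda$,'' equivalent to the monomial inequalities, whereas you make both implications explicit --- the easy one from the definition of $\cc(\Lambda)^\vee$, and the harder converse by pulling out an integral H-representation of $\Lambda$ and noting that its defining pairs $(m_j,l_j)$ belong to $\wt M_\Lambda$. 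That filled-in detail is exactly what is implicitly invoked in the paper (it is Gordan's lemma / the fact that a rational polyhedron is cut out by its integral valid inequalities), so you have exposed a mildly suppressed step rather than changed the argument.
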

\begin{proof} By the definition of the semigroup $\wt M_{\Lambda }$, 
  the condition $\val_K(p)\in \Lambda $ holds if and only if $\langle
  m,\val_K(p)\rangle +l \ge 0$ for all $(m,l)\in \wt M_{\Lambda
  }$. This is equivalent to $\log |\chi ^{-m}(p)|+\log |\varpi
  |^{-l}\ge 0$ for all   $(m,l)\in \wt M_{\Lambda
  }$. In turn, this is equivalent to $|\chi ^{m}(p)\varpi^{l}|\le 1$,
  for all 
  $(m,l)\in \wt M_{\Lambda }$. Hence, $\val_K(p)\in \Lambda $ if and
  only if $|a(p)|\le 1$ for all $a\in  K^{\circ}[\cX_{\Lambda }]$,
  which is exactly the condition  $\red(p)\in
  \cX_{\Lambda }$ (see \eqref{eq:43}). 
\end{proof}

\begin{cor}\label{cor:1}
  With the same hypothesis as in Lemma \ref{lemm:3}, $\red(p)\in O(\Lambda
  )$ if and only if $\val_K(p)\in \ri(\Lambda )$.
\end{cor}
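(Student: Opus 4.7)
The idea is to refine Lemma~\ref{lemm:3} by pinpointing which orbit of $\cX_{\Lambda,o}$ contains $\red(p)$, using the explicit presentation of $O(\Lambda)$ as a closed subscheme of $\cX_\Lambda$. By construction, the closed immersion $O(\Lambda)\hookrightarrow \cX_\Lambda$ is defined by the kernel $I$ of the surjection $K^{\circ}[\cX_\Lambda]\to k[\wt M(\Lambda)]$, and $I$ is generated by the monomials $\chi^{(m,l)}$ with $(m,l)\in \wt M_\Lambda\setminus \wt M(\Lambda)$ (note that $\varpi=\chi^{(0,1)}$ is among these generators since $(0,1)\notin \wt M(\Lambda)$). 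Since $O(\Lambda)$ is closed in $\cX_\Lambda$, we have $\red(p)\in O(\Lambda)$ if and only if $\red(p)\in \cX_\Lambda$ and the prime $\mathfrak{q}_p\subset K^{\circ}[\cX_\Lambda]$ contains $I$, i.e.\ $|\chi^{(m,l)}(p)|<1$ for every $(m,l)\in \wt M_\Lambda\setminus \wt M(\Lambda)$.

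Next I would translate these pointwise norm conditions into linear inequalities on $\val(p)$. Using the computation $\log |\chi^{(m,l)}(p)|=-\lambda_K(l+\langle m,\val(p)\rangle)$ (cf.\ the proof of Lemma~\ref{lemm:3}), the generator-by-generator condition $|\chi^{(m,l)}(p)|<1$ reads $l+\langle m,\val(p)\rangle >0$. Combined with Lemma~\ref{lemm:3}, the corollary is thereby reduced to the following purely convex-geometric statement: for $u\in N_{\R}$, one has $u\in \ri(\Lambda)$ if and only if
\begin{enumerate}
\item[(i)] $l+\langle m,u\rangle\ge 0$ for every $(m,l)\in \wt M_\Lambda$, and
\item[(ii)] the inequality in (i) is strict whenever $(m,l)\in \wt M_\Lambda\setminus \wt M(\Lambda)$.
\end{enumerate}

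Finally I would establish this equivalence in both directions. For $(\Leftarrow)$: if $u\in\ri(\Lambda)$ and $(m,l)\in \wt M_\Lambda$ satisfies $l+\langle m,u\rangle=0$, then the affine function $f(v)=l+\langle m,v\rangle$ is non-negative on $\Lambda$ and vanishes at the relative interior point $u$; by a standard convexity argument (for any $v\in \Lambda$ one has $u+\varepsilon(u-v)\in \Lambda$ for small $\varepsilon>0$, forcing $f(v)\le 0$), $f$ must vanish identically on $\Lambda$, so $(m,l)\in \wt M(\Lambda)$. For $(\Rightarrow)$: if $u\in \Lambda\setminus\ri(\Lambda)$, then $u$ lies in some proper exposed face $F$ of $\Lambda$. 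Because $\Lambda$ is a rational polyhedron (it is an element of the SCR complex $\Pi$), the face $F$ is cut out inside $\Lambda$ by a rational affine inequality, which after clearing denominators provides a pair $(m,l)\in \wt M_\Lambda\setminus \wt M(\Lambda)$ with $l+\langle m,u\rangle=0$, contradicting (ii).

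The main obstacle is the first paragraph: one must argue cleanly that $\red(p)\in O(\Lambda)$ is equivalent to $\mathfrak{q}_p\supset I$, even when $\red(p)$ is not the generic point of $O(\Lambda)$. The convex-geometric equivalence in the last paragraph is routine once the rationality of $\Lambda$ is invoked to produce an integral supporting functional.
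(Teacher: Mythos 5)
Your argument is correct, and it follows a genuinely different route from the paper's. The paper's proof is a one-liner that simply cites Lemma~\ref{lemm:3}, the orbit stratification of the special fibre $\cX_{\Lambda,o}=\coprod_{\Lambda'\text{ face of }\Lambda}O(\Lambda')$, and the identity $\ri(\Lambda)=\Lambda\setminus\bigcup_{\Lambda'\text{ proper face}}\Lambda'$: if $\val(p)\in\ri(\Lambda)$ then Lemma~\ref{lemm:3} puts $\red(p)\in\cX_\Lambda$ but in no $\cX_{\Lambda'}$ for any proper face $\Lambda'$, and by the disjoint orbit decomposition the only stratum left is $O(\Lambda)$; the converse runs the same way. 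You instead unwind that abstract stratification into an explicit ideal-theoretic computation (identifying $I=\ker\bigl(K^{\circ}[\cX_\Lambda]\to k[\wt M(\Lambda)]\bigr)$ with the monomial ideal generated by $\chi^{(m,l)}$, $(m,l)\in\wt M_\Lambda\setminus\wt M(\Lambda)$) and then separately re-prove the convex-geometric characterization of $\ri(\Lambda)$ by strict vs.\ non-strict inequalities against rational supporting functionals. Both directions of your convex-geometry lemma are sound; in particular the $(\Rightarrow)$ direction correctly uses rationality of $\Lambda$ to produce the integral pair $(m,l)$. The paper's approach is shorter because the hard combinatorial content has already been encoded in the construction of the orbits $O(\Lambda')$ and their adjacency; your approach buys explicitness at the cost of redoing a bit of that work by hand.

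The concern you flag about the first paragraph is unfounded. The orbit $O(\Lambda)$ is constructed as the image of a \emph{closed} immersion $\T(\Lambda)\hookrightarrow\cX_\Lambda$, so as a subset of the affine scheme $\cX_\Lambda=\Spec\bigl(K^{\circ}[\cX_\Lambda]\bigr)$ it is exactly $V(I)$. Since $\red(p)$ is, by definition, the prime ideal $\mathfrak{q}_p\subset K^{\circ}[\cX_\Lambda]$ (viewed as a point of the special fibre), the statement $\red(p)\in O(\Lambda)$ is precisely $\mathfrak{q}_p\supset I$, for \emph{any} prime, generic or not. No additional argument is needed.
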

\begin{proof}
  This follows from Lemma \ref{lemm:3} and the fact that the special
  fibre is
  \begin{displaymath}
    \cX_{\Lambda ,o}=\coprod_{\Lambda '
      \text{ face of }\Lambda }O(\Lambda '),
  \end{displaymath}
  and $\ri(\Lambda )=\Lambda \setminus \bigcup_{\Lambda ' \text{
      proper face of }\Lambda }\Lambda '$.
\end{proof}

Let $\Psi $ be a virtual support function on $\Sigma $ and $(L,s)$ the
corresponding toric line bundle and section.  We denote by $L^{\an}$
the analytic line bundle on $X_{\Sigma }^{\an}$ associated to $L$.
Let $\Pi $ be a complete SCR polyhedral complex in $N_{\R}$ such that
$\rec(\Pi )=\Sigma $ and $\phiK$ a rational piecewise affine function
on $\Pi $ with $\rec(\phiK )=\Psi $. Let $e>0$ be an integer such that
$e\phiK $ is an H-lattice function. By Theorem \ref{thm:11b}, the pair
$(\Pi ,e\phiK )$ determines a toric model $(\cX_{\Pi },\cL_{e\phiK },e)$
of $(X_{\Sigma },L)$. We will write $\cL=\cL_{e\phiK }$ for short. Definition
\ref{def:7} gives us an algebraic metric $\|\cdot\|_{\cL}$ on
$L^{\an}$.  The following proposition closes the circle.

\begin{prop} \label{prop:59} The algebraic metric $\|\cdot\|_{\cL}$ is toric and
  the equality $\phiK
  _{\|\cdot\|_{\cL}}=\phiK $ holds. The function $\phiK -\Psi $ extends to a
  continuous function on $N_{\Sigma }$ and the metric $\|\cdot\|_{\phiK\lambda _{K}
  }$ associated to $\phiK\lambda _{K} $ (Notation~\ref{def:84}) agrees with
  $\|\cdot\|_{\cL}$. 
\end{prop}
\begin{proof} The tensor product
  $s^{\otimes e}$ defines a rational section of $\cL$.
  Let $\Lambda \in \Pi $ and choose $m_{\Lambda }\in 
  M$, $l_{\Lambda }\in \Z$ such that $e\phiK |_{\Lambda }=m_{\Lambda
  }+l_{\Lambda }|_{\Lambda }$. 
  Let $u\in \Lambda $ and $p\in X^{\an}_{0 }$ with
  $u=\val_K(p)$. 
  Then $\red(p)\in \cX_{\Lambda 
  }$. But in $\cX_{\Lambda }$ the section $\chi ^{m_{\Lambda }}\varpi
  ^{l_{\Lambda }}s^{\otimes e}$ is regular and non-vanishing. Therefore, by
  Definition \ref{def:7},
  \begin{displaymath}
    \|\chi ^{m_{\Lambda }}(p)\varpi
    ^{l_{\Lambda }}s^{\otimes e}(p)\|_{\cL}=1.    
  \end{displaymath}
  Thus
  \begin{displaymath}
  \frac{1}{\lambda _{K}}\log\|s(p)\|_{\cL}
  =\frac{1}{e\lambda_{K}}\log|\chi ^{-m_{\Lambda }}(p)\varpi
    ^{-l_{\Lambda }}|
    =\frac{1}{e}(\langle m_{\Lambda },u\rangle +l_{\Lambda })
    =\phiK(u), 
  \end{displaymath}
  which shows that the metric is toric. Moreover,
  \begin{displaymath}
    \phiK _{\|\cdot\|_{\cL} }(u)=  \frac{1}{\lambda _{K}}\log\|s(p)\|_{\cL}=\phiK(u),
  \end{displaymath}
  and therefore, $\phiK $ agrees with the function associated to the metric
  $\|\cdot\|_{\cL}$. By Proposition \ref{prop:14}\eqref{item:124}, and
  Remark \ref{rem:36}, $\phiK -\Psi $ extends to a
  continuous function on $N_{\Sigma }$ and the metric $\|\cdot\|_{\phiK\lambda _{K}
  }$ agrees with
  $\|\cdot\|_{\cL}$. 
\end{proof}

\begin{exmpl} \label{exm:28} In the non-Archimedean case, the canonical
  metric of Proposi\-tion-Definition 
  \ref{def:57} is the toric algebraic metric induced by the canonical
  model of  Definition \ref{def:55}.
\index{canonical metric!of a toric line bundle}
\index{canonical model!of a $\T$-Cartier divisor}%
\end{exmpl}

Proposition \ref{prop:59} imposes a necessary condition for a rational
piecewise affine function to determine a model of
$(X_{\Sigma },L_{\Psi })$. 

\begin{cor} \label{cor:11} Let $\Psi$ be a virtual support function on
  $\Sigma $ and $\phiK $  a rational piecewise affine function on
  $N_{\R}$, with $\rec(\phiK )=\Psi $, such that there exists a
  complete SCR polyhedral complex $\Pi $ with $\rec(\Pi )=\Sigma $ and
  $\phiK $ piecewise affine on $\Pi $. Then $\phiK -\Psi $ can be
  extended to a continuous function on $N_{\Sigma }$.
\end{cor}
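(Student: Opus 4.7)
The plan is to reduce the claim directly to Proposition \ref{prop:59} by producing a toric model of $(X_{\Sigma},L_{\Psi})$ whose associated algebraic metric corresponds to $\psi$ under the formalism of Proposition \ref{prop:14}.

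First, since $\psi$ is rational piecewise affine, I choose an integer $e>0$ such that $e\psi$ is an H-lattice function on $\Pi$ (using Proposition \ref{prop:9} and the fact that a common denominator of the finitely many defining vectors of $\psi$ on $\Pi$ exists). Because $\rec(\psi)=\Psi$, it follows that $\rec(e\psi)=e\Psi$, and by hypothesis $\rec(\Pi)=\Sigma$. Thus the pair $(\Pi,e\psi)$ satisfies the conditions of Theorem \ref{thm:11b} relative to the virtual support function $e\Psi$, and so it determines a toric model $(\cX_{\Pi},D_{e\psi})$ of $(X_{\Sigma},D_{e\Psi})$. Equivalently, $(\cX_{\Pi},D_{e\psi},e)$ is a toric model of $(X_{\Sigma},D_{\Psi})$ in the sense of Definition \ref{def:17}, and by Proposition \ref{prop:89} it yields a model $(\cX_{\Pi},\cL_{e\psi},e)$ of $(X_{\Sigma},L_{\Psi})$ in the sense of Definition \ref{def:56}.

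Next, Definition \ref{def:7} attaches to this model an algebraic metric $\|\cdot\|_{\cL_{e\psi}}$ on $L_{\Psi}^{\an}$. By Proposition \ref{prop:59} applied to $\Pi$ and $\psi$, the function $\psi_{\|\cdot\|_{\cL_{e\psi}}}$ on $N_{\R}$ associated to this metric through Definition \ref{def:68} coincides with $\psi$. The conclusion of that same proposition then tells us that $\psi-\Psi$ extends continuously to $N_{\Sigma}$, which is exactly what the corollary asserts.

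No hard step is involved: the content is a bookkeeping assembly of the classification theorems. The only minor point to verify is that the hypotheses of Theorem \ref{thm:11b} are met once $e$ is chosen, i.e.\ that the piecewise-affine-on-$\Pi$ hypothesis gives a genuine $\T$-Cartier divisor via the semigroup description used to build $\cX_{\Pi}$; this is immediate from the definition of H-lattice function and the explicit local equations $\varpi^{-l_{\Lambda}}\chi^{-m_{\Lambda}}$ on each open $\cX_{\Lambda}$.
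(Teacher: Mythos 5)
Your proposal is correct and follows essentially the same route as the paper: clear denominators to get an H-lattice function, build the toric model $(\cX_{\Pi},D_{e\psi},e)$ via Theorem \ref{thm:11b}, take the induced algebraic metric, and invoke Proposition \ref{prop:59} (whose statement already asserts both $\psi_{\|\cdot\|}=\psi$ and the continuous extendability of $\psi-\Psi$ to $N_{\Sigma}$, with the latter resting on the classification in Proposition \ref{prop:14}). No gaps.
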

\begin{proof}
  If there exists such a SCR polyhedral complex $\Pi $, then $\Pi $
  and $\phiK $ determine a model of $\mathcal{O}(D_{\Psi })$ and hence
  an algebraic metric $\|\cdot\|$ arising from a toric model. By
  Proposition \ref{prop:59}, $\phiK =\phiK _{\|\cdot\|}$ and, by
  Proposition~\ref{prop:14}\eqref{item:124}, the function $\phiK
  _{\|\cdot\|}-\Psi $ extends to a continuous function on $N_{\Sigma
  }$.
\end{proof}

\begin{exmpl} \label{exm:27}
  Let $N=\Z^{2}$ and consider the fan $\Sigma $ generated by the vectors
  $e_{0}=(-1,-1)$, $e_{1}=(1,0)$ and $e_{2}=(0,1)$. Then $X_{\Sigma
  }=\P^{2}$. The
  virtual support function $\Psi =0$ corresponds to the trivial line
  bundle $\mathcal{O}_{\P^{2}}$.
  Consider the function
  \begin{displaymath}
    \phiK (x,y)=
    \begin{cases}
      0&\text{ if }x\le 0,\\
      x&\text{ if }0\le x\le 1,\\
      1&\text{ if }1\le x.\\
    \end{cases}
  \end{displaymath}
  Then $\rec(\phiK )=\Psi $, but $\phiK $ does not extend to a
  continuous function on $N_{\Sigma }$ and therefore it does not
 determine a model of
  $(X_{\Sigma },\mathcal{O}_{\P^{2}})$. By contrast, let $\Sigma '$ be the
  fan obtained subdividing $\Sigma $ by adding the edge corresponding
  to $e'=(0,-1)$. Then $\pi\colon X_{\Sigma '}\to X_{\Sigma}$ is isomorphic to a blow-up of
  $\P^{2}$ at one point.  The function $\phiK $ extends to a continuous
  function on $N_{\Sigma '}$ and it corresponds to a toric model of
  $(X_{\Sigma '},\pi^{*}\mathcal{O}_{\P^{2}})$.
\end{exmpl}

\begin{ques} 
Is the condition in Corollary \ref{cor:11} also
  sufficient? In other words,
  let $N$, $\Sigma $ and $\Psi $ be as before and $\phiK $  a
  rational piecewise affine function on $N_{\R}$ such that $\phiK
  -\Psi $ can be extended to a continuous function on $N_{\Sigma
  }$. Does it 
  exist a complete SCR polyhedral complex $\Pi $ such that $\rec(\Pi
  )=\Sigma $ and $\phiK $ is piecewise affine on~$\Pi $? 
\end{ques}

\begin{rem}\label{rem:25}
  By the proof of Theorem \ref{thm:12} and Corollary \ref{cor:11},
  when $\phiK $ is a piecewise affine concave function, the conditions
  \begin{enumerate}
  \item $|\phiK -\Psi |$ is bounded;
  \item $\phiK -\Psi $ can be extended to a continuous function on $N_{\Sigma }$;
  \item there exist  a complete SCR polyhedral complex $\Pi $ with $\rec(\Pi
  )=\Sigma $ and $\phiK $ piecewise affine on $\Pi $;
  \end{enumerate}
  are equivalent. In particular, the answer to the above question is
  positive when $\phiK $ is concave.
\end{rem}

\begin{cor} \label{cor:12} Let $\Sigma $ be a complete fan and
 $\Psi $ a support function on $\Sigma $. Let~$\phiK $ be a
  rational piecewise 
affine concave function on $N_{\R}$ with $\rec(\phiK )=\Psi $. Then
  the metric $\|\cdot\|_{\phiK \lambda _{K}} $ is algebraic and has a semipositive
  toric model.
\end{cor}
\begin{proof} 
  By Theorem \ref{thm:12}, the concave function $\phiK $ determines an equivalence class of
  semipositive toric models of $(X_{\Sigma },L)$. Any toric model in
  this class defines an algebraic metric on $L^{\an}$. By
  Proposition~\ref{prop:21}, this metric only depends on $\phiK $ and
  we denote it by $\|\cdot\|$.  Proposition \ref{prop:59} implies that
  $\|\cdot\|_{\phiK\lambda _{K}}=\|\cdot\|$, hence this metric is given by a semipositive toric
  model. 
\end{proof}

We have seen that rational piecewise affine functions give rise to
toric algebraic metrics. We now study the converse. In fact this
converse is more general, in the sense that any algebraic metric
determines a rational piecewise affine function. 

\begin{thm}\label{thm:15} Let $\Sigma $ be a complete fan,
  $\Psi $ a virtual support function on $\Sigma $ and
  $(L,s)$ the corresponding toric line bundle and section.   
  Let $\|\cdot \|$ be an algebraic metric on
  $L^{\an}$. 
  \begin{enumerate}
  \item \label{item:114} The function $\phiK _{\|\cdot\|}$ is rational
    piecewise affine. 
  \item  \label{item:115} If  $\|\cdot\|$ is toric
    and $\phiK _{\|\cdot\|}$ is concave, then this metric has
    a semipositive toric model.
  \end{enumerate}
\end{thm}
Before proving the theorem, we introduce a variant of the function
$\phiK $ for rational functions.  Let $g$ be a rational function on
$X_{\Sigma }$. Then we consider the function $\phiK _{g}\colon
N_{\R}\to \R$ defined, for $u\in N_{\R}$, as \nomenclature[g2114]{$\phiK _{g}$}{function on $N_{\R}$ associated to a rational function}%
\begin{displaymath}
\phiK _{g}(u)=\frac{\log |g \circ \theta
_{0}\circ \ee_{{K}}(u)|}{\lambda_{K} }
\end{displaymath}
where $\theta_{0}$ is defined in Proposition-Definition \ref{prop:54}
and $\ee_{K}$ in~(\ref{eq:95}).

\begin{lem}\label{lemm:6}
  Let $g$ be a rational function on $X_{\Sigma }$. Then the function
  $\phiK _{g}$ is an H-lattice function (Definition
  \ref{def:31}). In particular, it is piecewise affine.
\end{lem}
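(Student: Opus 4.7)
The plan is to reduce to the case of a Laurent polynomial and then write down the function $\psi_g$ explicitly using Proposition-Definition \ref{prop:54}.

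First, I would use the fact that $\T = X_{\Sigma,0}$ is a dense open subset of $X_{\Sigma}$, so the field of rational functions satisfies $K(X_{\Sigma}) = K(\T) = \operatorname{Frac}(K[M])$. Hence any rational function can be written as $g = g_{1}/g_{2}$ with $g_{1}, g_{2} \in K[M]$ two nonzero Laurent polynomials. By Proposition-Definition \ref{prop:54}, the Gauss point $\theta_{0}(\ee_{K}(u))$ is the multiplicative seminorm on $K[M]$ that sends $f = \sum_{m} \alpha_{m}\chi^{m}$ to $\sup_{m}(|\alpha_{m}|\exp(-\lambda_{K}\langle m,u\rangle))$, and this multiplicative seminorm extends uniquely to the fraction field by $|g_{1}/g_{2}| = |g_{1}|/|g_{2}|$. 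In particular, the quantity is nonzero and finite for every nonzero Laurent polynomial, so $\psi_{g}(u)$ is well defined for every $u \in N_{\R}$, and we obtain the decomposition
\begin{displaymath}
\psi_{g}(u) = \psi_{g_{1}}(u) - \psi_{g_{2}}(u).
\end{displaymath}

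Next, I would compute $\psi_{g_{i}}$ explicitly. For $g_{i} = \sum_{m \in A_{i}} \alpha_{m}\chi^{m}$ with $A_{i}\subset M$ finite and $\alpha_{m} \in K^{\times}$, we have $\log|\alpha_{m}| = \val(\alpha_{m})\log|\varpi| = -\val(\alpha_{m})\lambda_{K}$, so dividing by $\lambda_{K}$ the seminorm formula becomes
\begin{displaymath}
\psi_{g_{i}}(u) = \max_{m \in A_{i}}\bigl(-\langle m,u\rangle - \val(\alpha_{m})\bigr).
\end{displaymath}
This is a maximum of affine functions whose linear parts $-m$ lie in $M$ and whose constant terms $-\val(\alpha_{m})$ lie in $\Z$. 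Equivalently, $-\psi_{g_{i}}$ is a minimum of integral affine functions, which is an H-lattice concave function (in the sense of Definition \ref{def:13}); hence $\psi_{g_{i}} = 0 - (-\psi_{g_{i}})$ is a difference of two H-lattice concave functions, and is therefore an H-lattice function on $N_{\R}$ in the sense of Definition \ref{def:31}.

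Finally, $\psi_{g} = \psi_{g_{1}} - \psi_{g_{2}}$ is a difference of two H-lattice functions, hence itself an H-lattice function by the closure of $\mathscr{D}(N_{\R})$ under subtraction (Proposition \ref{prop:41}\eqref{item:39}). The last assertion that $\psi_{g}$ is piecewise affine then follows from Corollary \ref{cor:6}, which identifies $\mathscr{D}(N_{\R})$ with the space of piecewise affine functions on $N_{\R}$. There is no real obstacle in this argument; the only point that needs care is checking that the Gauss seminorm extends to $K(\T)$ without any degeneracy, which is immediate from Proposition-Definition \ref{prop:54} because $\gamma = \ee_{K}(u)$ takes only strictly positive values.
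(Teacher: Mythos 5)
Your proposal is correct and follows essentially the same approach as the paper: write $g$ as a ratio of Laurent polynomials, evaluate the Gauss seminorm of Proposition-Definition~\ref{prop:54} at $\theta_{0}(\ee_{K}(u))$ to express each $\psi_{g_{i}}$ as a maximum of integral affine forms, and recognize $\psi_{g}$ as a difference of two H-lattice concave functions. The only stylistic difference is that the paper directly exhibits $\psi_{g}$ as $\min_{m}(\langle m,u\rangle+\ord(\beta_{m}))-\min_{m}(\langle m,u\rangle+\ord(\alpha_{m}))$, whereas you route through the (equivalent, slightly roundabout) observation that each $\psi_{g_{i}}$ is itself an H-lattice function and then invoke closure under subtraction.
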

\begin{proof}
  The function $g$ can be written as 
  $g=\frac {\sum _{m\in M}\alpha _{m}\chi^{m}}
  {\sum _{m\in M}\beta_{m}\chi^{m}}$. Then
  \begin{align*} 
    \phiK _{g}(u)&=\frac{1}{\lambda_{K} }\log|g\circ \theta _{0}\circ \ee_{{K}
    }(u)|\\
    & = \frac{1}{\lambda_{K} }\log\Bigl|\sum _{m\in M}\alpha
    _{m}\chi^{m}(\theta _{0}\circ \ee_{K} (u))\Bigr|-\frac{1}{\lambda_{K}
    }\log\Bigl|\sum _{m\in M}\beta _{m}\chi^{m}(\theta _{0}\circ \ee_{K}(u))\Bigr|\\
    &= \max _{m\in M}\Bigl(\frac{\log |\alpha _{m}|}{\lambda_{K} }-\left <
      m,u\right >\Bigr)- \max _{m\in M}\Bigl(\frac{\log |\beta
      _{m}|}{\lambda_{K} }-\left < m,u\right >\Bigr)\\
    & = \max _{m\in M}(-\val_K (\alpha _{m})-\left < m,u\right >)- \max
    _{m\in M}(-\val_K (\beta
    _{m})-\left < m,u\right >)\\
    & = \min _{m\in M}(\left < m,u\right >+\val_K (\beta
    _{m}))- \min
    _{m\in M}(\left < m,u\right >+\val_K (\alpha 
    _{m})).
  \end{align*}
Thus, it is the difference of two H-lattice concave functions.
\end{proof}

\begin{proof}[Proof of Theorem \ref{thm:15}] 
  Since the metric is algebraic, there exist a proper $K^{\circ}$-
  scheme $\mathcal{X}$ and a line bundle $\mathcal{L}$ on
  $\mathcal{X}$ such that the base change of $(\mathcal{X},
  \mathcal{L})$  to $K$ is isomorphic to $(X_{\Sigma
  },L^{\otimes e})$. Let $\{\cU_{i},s_{i}\}$ be a
  trivialization of $ \mathcal{L}$. Let $C_{i}=\red^{-1}(\cU_{i}\cap
  \mathcal{X}_{o})$. The subsets $C_{i}$ form a finite closed cover of
  $X_{\Sigma }^{\an}$. On $\cU_{i}$ we can write $s_{\Psi }^{\otimes
    e}=g_{i}s_{i}$ for a certain rational function $g_{i}$. Therefore,
  on $C_{i}$, we have $\log \|s_{\Psi }(p)\|=\frac{\log
    |g_{i}(p)|}{e}$. By Lemma \ref{lemm:6}, it follows that there is a
  finite closed cover of $N_{\R}$ and the restriction of $\phiK
  _{\|\cdot\|}$ to each of these closed subsets is rational piecewise
  affine. Therefore $\phiK
  _{\|\cdot\|}$ is rational piecewise affine. This proves
  \eqref{item:114}.
 
  We now prove the statement \eqref{item:115}. By \eqref{item:114} and
  Proposition~\ref{prop:14}\eqref{item:124}, the concave function
  $\phiK_{\|\cdot\|}$ is rational piecewise affine with recession
  function equal to $\Psi$. Since $\|\cdot\|$ is toric,
  Proposition~\ref{prop:14}\eqref{item:125} implies that it agrees
  with the metric associated to $\phiK_{\|\cdot\|}\lambda _{K}$. The
  statement then follows from Corollary \ref{cor:12}.
\end{proof}




 We now study the effect of taking a field extension. 

\begin{prop}\label{prop:23}
Let $\Sigma $ be a complete fan in $N_{\R}$ and $\Pi $ a
complete SCR
polyhedral complex in $N_{\R}$ with $\Sigma =\rec(\Pi )$.
Let $\Pi' $ be the polyhedral complex in $N_{\R}$
  obtained from $\Pi $ by applying a homothety of ratio $e_{{K'}/K}
  $. Then
  \begin{displaymath}
    \cX_{\Pi',{{K'}}^{\circ}}=\Nor(\cX_{\Pi ,K^{\circ}}\times \Spec({K'}^{\circ})),
  \end{displaymath}
  where $\Nor$ denotes the normalization of a scheme.
\end{prop}
\begin{proof}

  The statement can be checked locally. Let
  $\Lambda $ be a polyhedron of $\Pi $. Let $\Lambda '= e_{{K'}/K}
  \Lambda $. Then it is clear that
  \begin{displaymath}
    K^{\circ}[\cX_{\Lambda }]\underset{K^{\circ}}{\otimes }
    {K'}^{\circ}\subset  {K'}^{\circ}[\cX_{\Lambda' }].
  \end{displaymath}
  Since the right-hand side ring is integrally closed, the integral
  closure of the left side ring is contained in the right side
  ring. Therefore we need to prove that ${K'}[\wt M_{\Lambda'
  }]$ is integral over the left side ring. Let $(a,l)\in \wt
  M_{\Lambda '}$. Thus $(e_{{K'}/K}a,l)\in \wt M_{\Lambda }$. Then the
  monomial $\chi ^{a}\varpi '{}^{l}\in 
  {K'}^{\circ}[\cX_{\Lambda' }]$ satisfies
  \begin{displaymath}
    (\chi ^{a}\varpi '{}^{l})^{e_{{K'}/K}}=
    (\chi ^{e_{{K'}/K}a}\varpi ^{l})\in K^{\circ}[\cX_{\Lambda
    }]\underset{K^{\circ}}{\otimes } 
    {K'}^{\circ}.  
  \end{displaymath}
  Hence $\chi ^{a}\varpi '{}^{l}$ is integral over $K^{\circ}[\cX_{\Lambda
    }]\underset{K^{\circ}}{\otimes } 
    {K'}^{\circ}.$ Since these monomials generate
    ${K'}^{\circ}[\cX_{\Lambda' }]$, we obtain 
  the result.
\end{proof}

\section{The one-dimensional case}
\label{sec:one-dimensional-case}

We now study in detail the non-Archimedean one-dimensional
case. Besides being a concrete example of the relationship between
functions, models, algebraic metrics, and measures, it is also a
crucial step in the proof that a toric metric is semipositive if and
only if the corresponding function is concave. Of this equivalence, up
to now we have proved only one implication and the reverse implication
will be proved in the next section.

The only one-dimensional proper toric variety over a field is the
projective line.  Algebraic metrics over this toric variety come from
integral models of line bundles. We will describe these in detail. The first
part of this section dissects models of the projective line itself,
while the second part turns to models of line bundles and metrics.  A
good reference for curves over local rings or more generally over
Dedekind domains is the book \cite{Liu:agac}, where the reader can
find most of the results that we need.

Let $K$ be a field which is complete with respect to an absolute value
associated to a nontrivial discrete valuation. We will use the
notation in \S\ref{sec:berkovich-spaces}. In particular, $K^{\circ}$
denotes the valuation ring of $K$.  

\begin{defn}\label{def:45}
Let $X$ be an integral
  projective curve over $K$. A \emph{semi-stable}
\index{model!semi-stable}%
model of $X$ is an integral
  projective scheme $\cX$ of finite type over $\Spec
  (K^{\circ})$ with an
  isomorphism $X\to \cX_{\eta}$, such that the special fibre
  $\cX_{o}$, after extension of scalars to the
  algebraic closure of the residue field,  
  is reduced and its singular points are ordinary double points. We
  will say that the model is regular if the scheme $\cX$ is regular.
\end{defn}

This definition is the specialization of
\cite[Definition~2.1]{Liu:StableReduction} to models of curves over a
DVR. Note that a semi-stable model as in Definition \ref{def:45} is a
semi-stable curve over $\Spec(K^{\circ})$ in the sense
of \cite[Definition~10.3.14]{Liu:agac} because, by \cite[Proposition
4.3.9]{Liu:agac} the hypothesis on $\cX$ imply that it is flat over $\Spec
  (K^{\circ})$.

To a semi-stable model whose special fibre has split double points, we
can associate the
dual graph of the special 
fibre. This graph contains one vertex for each irreducible component
of the special fibre and one edge for each double point, see 
\cite[Definition~10.3.17]{Liu:agac}.

We will use the following result to eventually reduce any integral model of
$\P^{1}_{K}$ to a simpler form.

\begin{prop} \label{prop:12}
Let $\cX$ be a
  projective model over $K^{\circ}$ of $\P^{1}_{K}$.
  Then there exists a
  finite extension $K'$ of $K$ with valuation ring ${K'}^{\circ}$,
  a regular semi-stable model $\cX'$ of $\P^{1}_{K'}$, and
  a proper morphism of models $\mathcal{X}'\to 
  \mathcal{X}\times \Spec({K'}^{\circ})$.  
\end{prop}
\begin{proof}
  The existence of the finite extension $K'$ and a semi-stable model
  dominating $\cX\times \Spec({K'}^{\circ})$ is guaranteed by
  \cite[Corollary~2.8]{Liu:StableReduction}. Then
  \cite[Proposition~2.7]{Liu:StableReduction} implies the existence of
  a regular semi-stable model dominating $\cX\times \Spec({K'}^{\circ})$.
\end{proof}

Consider the toric variety
$X_{\Sigma }\simeq \P^{1}$. We can choose an isomorphism
$N\simeq \Z$ and $N_{\R}\simeq \R$. Then $\Sigma
=\{\R_{-},\{0\},\R_{+}\}$. Let $0$ denote the invariant point of $\P^{1}_{K}$
corresponding to the cone $\R_{+}$ and $\infty$ the invariant point
corresponding to the cone $\R_{-}$.  

Let $\mathcal{X}$ be a regular semi-stable model of $\P^{1}_{K}$. We
will assume that all the components and double points of the special
fibre are defined over the residue field $k=K^{\circ}/K^{\circ \circ}$
and that each irreducible component contains a rational point. Since
the generic fibre $\mathcal{X}_{\eta}\simeq \P^{1}_{K}$ is connected
and of genus zero, by \cite[Lemma~10.3.18]{Liu:agac}, we deduce that
all components of the special fibre are rational curves and that its
dual graph is a tree. Let $D_{0}$ and $D_{\infty}$ denote the
horizontal divisors corresponding to the point~$0$ and $\infty$ of
$\P^{1}_{K}$. Then, there is a chain of rational curves that links the
divisor $D_{0}$ with $D_{\infty}$ and that is contained in the special
fibre. We will denote the irreducible components of the special fibre
that form this chain by $E_{0},\dots,E_{l} $, in such a way that the
component $E_{0}$ meets $D_{0}$, the component $E_{l}$ meets
$D_{\infty}$ and, for $0<i<l$, the component $E_{i}$ meets only
$E_{i-1}$ and $E_{i+1}$. The other components of $\mathcal{X}_{o}$
will be grouped in branches, each branch has its root in one of the
components $E_{i}$. We will denote by $F_{i,j}$, $j\in \Theta _{i}$
the components that belong to a branch with root in~$E_{i}$.
\nomenclature[aEF]{$E_{i},F_{i,j}$}{components of the special fibre of
  a semi-stable model of $\P^{1}$}%
\nomenclature[g08]{$\Theta _{i}$}{set of components of the special
  fibre of a semi-stable model of $\P^{1}$}%
We are not giving any particular order to the sets $\Theta _{i}$.

We denote by $E\cdot F$ the intersection product of two $1$-cycles of
$\cX$. 
\nomenclature[scdot]{$E\cdot F$}{intersection product of two
  $1$-cycles on a surface}%
Since the special fibre is reduced, we have
\begin{displaymath}
  \div(\varpi )=\sum_{i=0}^{l}\bigg(E_{i}+\sum_{j\in \Theta _{i}}F_{i,j}\bigg).
\end{displaymath}
Again by the assumption of semi-stability, the intersection product of
two different components of $\mathcal{X}_{o}$ is either $1$, if they
meet, or zero, if they do not meet. Since the intersection product of
$\div(\varpi )$ with any component of $\mathcal{X}_{o}$ is zero, we
deduce that, if $E$ is any component of
$\mathcal{X}_{o}$, the self-intersection product $E\cdot E$ is equal
to minus the number of components that meet $E$. In
particular, all components $F_{i,j}$ that are terminal, are
$(-1)$-curves. By Castelnuovo's criterion
\cite[Theorem~9.3.8]{Liu:agac}, we can successively blow-down 
all the components 
$F_{i,j}$ to obtain a new regular semi-stable model of $\P^{1}_{K}$ whose
special fibre consist of a chain of rational curves. For reasons that
will become apparent later, we denote this model as
$\cX_{\SSinv}$.  
\nomenclature[aX42]{$\cX_{\SSinv}$}{toric model associated to a
  semi-stable model}%

\begin{lem} \label{lemm:8}
  If we view $\chi^1$ as a rational function on $\mathcal{X}$, then there is an integer $a$ such that
\begin{displaymath}
  \div(\chi^1)=D_{0}-D_{\infty}+\sum_{i=0}^{l}(a-i)\bigg(E_{i}+\sum_{j\in \Theta_{i}}F_{i,j}\bigg). 
\end{displaymath}
\end{lem}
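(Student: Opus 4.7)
The plan is to determine $\div(t)$ by combining its horizontal behaviour on the generic fibre with the orthogonality of principal divisors to complete vertical curves.

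Since $t\in K(\P^{1})$ has a simple zero at $0$ and a simple pole at $\infty$, and $D_{0}$, $D_{\infty}$ are the closures of these points in $\mathcal{X}$, the decomposition
\begin{displaymath}
\div(t) = D_{0} - D_{\infty} + V
\end{displaymath}
holds, with $V=\sum_{i=0}^{k}\alpha_{i}E_{i}+\sum_{i,j}\beta_{i,j}F_{i,j}$ a vertical divisor. The key input is that on the proper regular arithmetic surface $\mathcal{X}$, any principal divisor has zero intersection with every complete vertical curve. Hence $\div(t)\cdot C=0$ for every component $C$ of $\mathcal{X}_{o}$. Since $\mathcal{X}$ is semi-stable, two distinct components meet transversally in at most one point, and the condition $\div(\varpi)\cdot C=0$ (which was used to identify the special fibre as a tree of $(-n_{C})$-curves, where $n_{C}$ is the number of neighbours of $C$) translates the orthogonality into a tractable linear system for the coefficients $\alpha_{i}$, $\beta_{i,j}$, together with the boundary data $D_{0}\cdot E_{0}=D_{\infty}\cdot E_{k}=1$ and $D_{0}\cdot C=D_{\infty}\cdot C=0$ for every other vertical component $C$.

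The first step, and the main combinatorial obstacle, is to show that all the coefficients $\beta_{i,j}$ inside the branch rooted at $E_{i}$ coincide with $\alpha_{i}$. I would prove this by induction on a branch, working from the leaves toward the root. A terminal component $F$ meets exactly one neighbour $F'$, so $F^{2}=-1$ and the equation $V\cdot F=0$ gives $\beta_{F}=\beta_{F'}$; pruning $F$ reduces the branch to a smaller one in which $F'$ becomes terminal, and iterating propagates the equality $\beta_{i,j}=\alpha_{i}$ throughout the whole branch. Once this is established, the contribution of each branch to $V\cdot E_{i}$ collapses: if $E_{i}$ meets $r_{i}$ components of its branch directly, those components contribute $r_{i}\alpha_{i}$ to $V\cdot E_{i}$, while $E_{i}$ contributes a $-r_{i}\alpha_{i}$ summand through its self-intersection, so the two cancel exactly.

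The second step is then a straightforward one-dimensional calculation on the chain. For an interior component ($0<i<k$), the condition $\div(t)\cdot E_{i}=0$ reduces after the cancellation above to $\alpha_{i-1}-2\alpha_{i}+\alpha_{i+1}=0$, so the sequence $(\alpha_{i})$ is arithmetic. For the endpoint $E_{0}$, using $D_{0}\cdot E_{0}=1$ and $D_{\infty}\cdot E_{0}=0$, the relation $\div(t)\cdot E_{0}=0$ becomes $1+(\alpha_{1}-\alpha_{0})=0$, fixing the common difference as $-1$. The analogous computation at $E_{k}$ (using $D_{\infty}\cdot E_{k}=1$) is automatic and serves as a consistency check. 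Setting $a:=\alpha_{0}$ yields $\alpha_{i}=a-i$ and $\beta_{i,j}=a-i$ for every $j\in\Theta_{i}$, which is precisely the claimed formula.
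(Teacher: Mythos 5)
Your proposal is correct and follows essentially the same route as the paper: decompose $\div(t)$ as $D_0-D_\infty$ plus a vertical part, use $\div(t)\cdot C=0$ for every vertical component $C$ together with $C^2=-(\text{number of neighbours})$ coming from $\div(\varpi)\cdot C=0$, propagate equality of coefficients from the leaves of each branch down to its root $E_i$, and then read off the arithmetic progression $\alpha_i=a_0-i$ from the chain relations at $E_0$ and the interior $E_i$. The paper states the branch-propagation step as a single intersection identity (with $r-1$ of the $r$ neighbours having the known coefficient) rather than as a "prune and iterate" recursion, but the two are the same argument.
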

\begin{proof}
It is clear that
\begin{displaymath}
  \div(\chi^1)=D_{0}-D_{\infty}+\sum_{i=0}^{l}a_{i}E_{i}+\sum_{j\in \Theta
    _{i}}a_{i,j}F_{i,j} 
\end{displaymath}
for certain coefficients $a_{i}$ and $a_{i,j}$ that we want to
determine as much as possible. 

If a component $F$ of
$\mathcal{X}_{0}$, with coefficient
$a$, does not meet $D_{0}$ nor $D_{\infty}$, but meets $r\ge 1$ other
components, and the coefficients of $r-1$
of these components are equal to $a$, while the coefficient of the
remaining component is $b$, we obtain that
\begin{displaymath}
  0=\div(\chi^1)\cdot F=a F\cdot F+a(r-1)+b=-r a+a(r-1)+b=b-a
\end{displaymath}
Thus $b=a$. Starting with the components $F_{i,j}$ that are terminal,
we deduce 
that, for all $i$ and $j\in \Theta _{i}$, $a_{i}=a_{i,j}$. Therefore,
\begin{displaymath}
  \div(\chi^1)=D_{0}-D_{\infty}+\sum_{i=0}^{l}a_{i}\bigg(E_{i}+\sum_{j\in \Theta
    _{i}}F_{i,j}\bigg). 
\end{displaymath}
In particular, the lemma is proved for $l=0$. Assume now that
$l>0$. 

It only remains to show that $a_{i}=a_{0}-i$, that we prove by
induction. For $i=1$, 
we compute
\begin{displaymath}
  0=\div(\chi^1)\cdot E_{0}=D_{0}\cdot E_{0}+a_{0}E_{0}\cdot
  E_{0}+a_{0}\sum _{j\in \Theta _{0}}F_{0,j}\cdot
  E_{0}+a_{1}E_{1}\cdot E_{0}=1-a_{0}+a_{1}.
\end{displaymath}
Thus $a_{1}=a_{0}-1$. For $1<i\le l$, by induction hypothesis,
$a_{i-1}=a_{i-2}-1$. Then 
\begin{displaymath}
  0=\div(\chi^1)\cdot E_{i-1}=a_{i-2}-2a_{i-1}+a_{i}=1-a_{i-1}+a_{i}.
\end{displaymath}
  Hence $a_{i}=a_{i-1}-1$. Applying again the induction hypothesis, we
  deduce $a_i=a_{0}-i$, proving the lemma. 
\end{proof}

The determination of $\div(\chi^1)$ allows us to give a partial
description of the map $\red\colon X_{\Sigma }^{\an}\to
\mathcal{X}_{o}$. The points of $\mathcal{X}_{o}$ appearing in this
description, are the points $q_{0}:=D_{0}\cap E_{0}$,
$q_{i}:=E_{i-1}\cap E_{i}$, $i=1,\dots,l$, $q_{l+1}:= E_{l}\cap
D_{\infty}$ and the generic points of the components $E_{i}$ that we
denote $\eta_{i}$, $i=0,\dots,l$.

\begin{lem}\label{lemm:11}
  Let $p\in X_{\Sigma }^{\an}$. Then
  \begin{displaymath}
    \red(p)=
    \begin{cases}
      q_{0}& \text{ if } |\chi^1(p)|<|\varpi |^{a},\\
      q_{i}&\text{ if } |\varpi
      |^{a-i+1}<|\chi^1(p)|<|\varpi |^{a-i},\\ 
      q_{l+1}&\text{ if } |\varpi
      |^{a-l}<|\chi^1(p)|,\\
      \eta_{i} &
      \text{ if } |\chi^1(p)|=|\varpi |^{a-i} \text{ and }p\in\Im(\theta
      _{\Sigma }). 
    \end{cases}
  \end{displaymath}
\end{lem}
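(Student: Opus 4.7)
The plan is to prove the four cases by analysing the reduction map locally at each distinguished point, combining Lemma~\ref{lemm:8} (which gives the orders of $t$ along the components of $\cX_o$) with Proposition~\ref{prop:92} (which identifies $\xi_{E_i}$) and the semi-stability of the model. The natural structure is to first compute $|t(p)|$ given $\red(p)$, and then observe that the resulting ranges distinguish the cases.

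For the generic-point case, Proposition~\ref{prop:92} applied to the reduced special fibre produces a unique preimage $\xi_{E_i}$ of $\eta_i$, with $|t(\xi_{E_i})| = |\varpi|^{\ord_{E_i}(t)/\ord_{E_i}(\varpi)} = |\varpi|^{a-i}$ by Lemma~\ref{lemm:8}. The section $\theta_\Sigma$ is injective, and for $u \in N_\R = \R$ the seminorm $\theta_\Sigma(u)$ evaluated on the monomial $t$ gives $|t(\theta_\Sigma(u))| = |\varpi|^u$. Hence any $p \in \Im(\theta_\Sigma)$ with $|t(p)| = |\varpi|^{a-i}$ must equal $\theta_\Sigma(a-i) = \xi_{E_i}$, so $\red(p) = \eta_i$.

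Next, for each intersection $q_i$ with $1 \le i \le k$, semi-stability supplies local parameters $f, g \in \cO_{\cX, q_i}$ with $E_{i-1} = \{f = 0\}$ and $E_i = \{g = 0\}$; reducedness of $\cX_o$ forces $\varpi = u f g$ for a unit $u$ at $q_i$, and since $E_{i-1}$ and $E_i$ are the only components through $q_i$, Lemma~\ref{lemm:8} gives $t = v f^{a-i+1} g^{a-i}$ for a unit $v$. If $\red(p) = q_i$, then $\mathfrak{p}_p$ restricts to the maximal ideal of $q_i$ in any trivializing affine open, so $|f(p)|, |g(p)| < 1$ while $|u(p)| = |v(p)| = 1$; writing $|f(p)| = |\varpi|^\alpha$ with $\alpha \in (0,1)$ (forced by $|f(p)||g(p)| = |\varpi|$), we compute
\[
|t(p)| = |\varpi|^{\alpha(a-i+1) + (1-\alpha)(a-i)} = |\varpi|^{a-i+\alpha} \in \bigl(|\varpi|^{a-i+1}, |\varpi|^{a-i}\bigr).
\]
A parallel argument at $q_0$ and $q_{k+1}$, using local equations of the horizontal divisors $D_0$ and $D_\infty$ (which do not contribute to $\div(\varpi)$), yields $|g(p)| = |\varpi|$ exactly, hence $|t(p)| = |f(p)| \cdot |\varpi|^a < |\varpi|^a$, and symmetrically for $q_{k+1}$.

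For the converse I would enumerate the remaining possible targets of $\red(p)$: closed or generic points of $E_i$ distinct from $q_i$ and $q_{i+1}$, and any point on a branch $F_{i,j}$ rooted at $E_i$. At each such point the local expression for $t$ has either a single factor $g^{a-i}$ at a smooth point of one component, or $g_1^{a-i} g_2^{a-i}$ at a node between two components, using the inductive determination in the proof of Lemma~\ref{lemm:8} that \emph{every} branch component inherits the coefficient $a-i$ from its root $E_i$. Together with $\varpi = u g$ or $\varpi = u g_1 g_2$, this gives $|t(p)| = |\varpi|^{a-i}$ exactly. Consequently the open intervals $(|\varpi|^{a-i+1}, |\varpi|^{a-i})$ and the unbounded rays $|t(p)| < |\varpi|^a$, $|t(p)| > |\varpi|^{a-k}$ are attained only for $\red(p) = q_i$, $q_0$, and $q_{k+1}$ respectively. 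The main technical care lies in this branch analysis: one must verify that no reduction into a branch, at any depth or at any intra-branch node, yields a value of $|t(p)|$ strictly inside one of the open intervals.
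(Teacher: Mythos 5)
The bulk of your proposal is sound and in fact closely mirrors the paper's local-coordinate computation at the nodes $q_i$, just run in the opposite direction (you start from $\red(p)=q_i$ and derive $|t(p)|$; the paper starts from the range of $|t(p)|$ and derives $\red(p)$). Your local factorizations $\varpi=ufg$, $t=vf^{a-i+1}g^{a-i}$ at the nodes and $t=vg^{a-i}$ at smooth points are correct consequences of Lemma~\ref{lemm:8} and semi-stability, and the branch analysis that every component rooted at $E_i$ carries coefficient $a-i$ is exactly what makes the open intervals detect $q_i$ and nothing else.

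There is, however, a genuine gap in your treatment of the fourth case. You write that any $p\in\Im(\theta_\Sigma)$ with $|t(p)|=|\varpi|^{a-i}$ ``must equal $\theta_\Sigma(a-i)=\xi_{E_i}$,'' but the equality $\theta_\Sigma(a-i)=\xi_{E_i}$ is not an observation; it is precisely the claim being proved, and it cannot be inferred from $|t|$ alone. Your own remaining-cases analysis shows that \emph{every} point with reduction in $E_i\setminus\{q_i,q_{i+1}\}$ or in any branch $F_{i,j}$ satisfies $|t(p)|=|\varpi|^{a-i}$; in particular $\xi_{E_i}$ and all the $\xi_{F_{i,j}}$ have the same $\val_K$ (this is the content of Lemma~\ref{lemm:9}, stated just after the one at hand), so knowing $|t(p)|=|\varpi|^{a-i}$ does not pin down which of these is the point $\theta_\Sigma(a-i)$. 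Equivalently, you are implicitly asserting $\xi_{E_i}\in\Im(\theta_\Sigma)$, i.e.\ that the divisorial valuation $\ord_{E_i}$ on $K(t)$ is the Gauss (monomial) valuation of radius $|\varpi|^{a-i}$ centered at $0$; that is a nontrivial statement distinguishing $E_i$ from the components of the branches. The paper supplies the missing step by a direct computation in the affine chart $A=K^\circ[x,y]/(xy-\varpi)$ with $x=t\varpi^{-a+i}$, $y=t^{-1}\varpi^{a-i+1}$: writing $f\in A$ as a Laurent series $\sum_m\beta_m t^m$ and using the multiplicative-sup formula $|f(p)|=\sup_m|\beta_m|\,|t(p)|^m$ valid on $\Im(\theta_\Sigma)$ (Proposition-Definition~\ref{prop:54}), one checks that $\{f\in A:|f(p)|<1\}=(y,\varpi)$, which is exactly the prime of the generic point $\eta_i$ of $E_i$. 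That computation, or something equivalent, needs to be included.
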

\begin{proof}
  Let $1\le i \le l$. The rational function $x:=\chi^1\varpi ^{-a+i}$ has a
  zero of order one along the component $E_{i-1}$ and the support of its divisor
  does not contain the component $E_{i}$. On the other hand, the
  rational function $y:=\chi^{-1}\varpi ^{a-i+1}$ has a
  zero of order one along the component $E_{i}$ and the support of its divisor
  does not contain the component $E_{i-1}$. Thus $\{x,y\}$ is a
  system of parameters in a neighbourhood of~$q_{i}$. We denote
  \begin{displaymath}
    A=K^{\circ}[\chi^1\varpi ^{-a+i},\chi^{-1}\varpi
  ^{a-i+1}]\simeq K^{\circ}[x,y]/(xy-\varpi ).
  \end{displaymath}

  The local ring at
  the point $q_{i}$ is $A_{(x,y)}$. Let $p$ be a
  point such that  $|\varpi
      |^{a-i+1}<|\chi^1(p)|<|\varpi |^{a-i}$. Therefore, for $f\in A$ we have
      $|f(p)|\le 1$. Moreover, if 
      $f\in (x,y)$, then
      $|f(p)|<1$. Since the ideal $(x,y)$ is maximal, we deduce that, for $f\in
      A$,  the condition 
      $|f(p)|<1$ is equivalent to the condition $f\in (x,y)$. This
      implies that $\red(p)=q_{i}$. A similar 
      argument works for $q_{0}$ and $q_{l+1}$. 

      Assume now that $p\in\Im(\theta _{\Sigma })$ and that
      $|\chi^1(p)|=|\varpi |^{a-i}$. If $i\not = 0$ we consider again the
      ring $A$, but in this case $|x(p)|=|\chi^1(p)\varpi ^{-a+i}|=1$. Let
      $I=\{f\in A\mid |f(p)|<1\}$. It is clear that $(y,\varpi
      )\subset I$. 
      For $f=\sum_{m\in \Z}\beta _{m}\chi^{m}\in A$, since $p\in
      \Im(\theta _{\Sigma })$, we have
      \begin{displaymath}
        |f(p)|=\sup_{m}(|\beta _{m}||\chi^1(p)|^{m}).
      \end{displaymath}
      This implies that $I\subset (y,\varpi )$. Hence $I$ is the ideal
      that defines the component $E_{i}$ and this is equivalent to
      $\red(p)=\eta_{i}$. The case $i=0$ is analogous. 
\end{proof}

The image by $\red$ of the remaining points of $ X_{\Sigma }^{\an}$
is not characterized only by the value of $|\chi^1(p)|$. Using a proof
similar to that of the lemma, one can show that, if $|\chi^1(p)|=|\varpi
|^{a-i}$ then $\red(p)$ belongs either to $E_{i}$ or to any of the 
components $F_{i,j}$, $j\in \Theta_{i}$. 

We denote by $\xi _{i}$ (respectively $\xi _{i,j}$) the point of $X_{\Sigma
}^{\an}$ associated by Proposition~\ref{prop:92} to the component $E_{i}$
(respectively $F_{i,j}$). These points satisfy $\red(\xi _{i})=\eta _{i}$ and
$\red(\xi_{i,j})=\eta _{i,j}$, where $\eta _{i,j}$ is the generic point of $F_{i,j}$. 

\begin{lem}\label{lemm:9}
  Let $0\le i\le l$. Then, for every $j\in \Theta _{i}$,
  \begin{displaymath}
    \val_K(\xi _{i})=\val_K(\xi_{i,j})=a-i,
  \end{displaymath}
  where $a$ is the integer of Lemma \ref{lemm:8}.
  Furthermore, for $j\in\Theta_i$ we have
  $$\xi_i = \theta_\Sigma(\rho_\Sigma(\xi_i)) = \theta_\Sigma(\rho_\Sigma(\xi_{i,j})).$$
\end{lem}
\begin{proof}
  We consider the rational function $\varpi ^{-a+i}\chi^1$. Since the
  support of $\div(\varpi ^{-a+i}\chi^1)$ does not contain the component
  $E_{i}$ nor any of the components $F_{i,j}$, we have
  \begin{displaymath}
    |\varpi ^{-a+i}\chi^1(\xi _{i})|=
    |\varpi ^{-a+i}\chi^1(\xi _{i,j})|=1.
  \end{displaymath}
  Then, using \eqref{eq:42} we deduce
  \begin{displaymath}
    \val_K(\xi_{i})=\frac{-\log|\chi ^{1}(\xi_{i})|}{\lambda _{K}
    }= \frac{-\log|\varpi ^{a-i}|}{-\log |\varpi |}=a-i
  \end{displaymath}
and similarly with $\xi_{i,j}$, which proves the first part of the statement.

From these equalities and the definitions \eqref{eq:111} and
\ref{prop:54} of the maps $\theta_\Sigma$ and $\rho_\Sigma$, the point
$\theta_\Sigma(\rho_\Sigma(\xi_i))\in X^\an_\Sigma$ is the
multiplicative seminorm
$$\sum_{m\in M}\alpha_m\chi^m \longmapsto \max_{m\in M}|\alpha_m\chi^m(\xi_i)| = |\varpi|^{\min_{m\in M}(\val_K(\alpha_m)+(a-i)m)}.$$
By Proposition \ref{prop:92}, the point $\xi_i\in X^\an_\Sigma$
is the multiplicative semi-norm
$$\sum_{m\in M}\alpha_m\chi^m \mapsto |\varpi|^{\ord_{E_i}(\sum_{m\in M}\alpha_m\chi^m)}.$$
Using the same notations as in the proof of Lemma \ref{lemm:11}, any
element $f = \sum_{m\in M}\alpha_m\chi^m \in A$ can be written $f =
\sum_{m\in\Z}\beta_m \varpi^{\val_K(\alpha_m)+(a-i+1)m}y^{-m}$ where
the $\beta_m$'s are units in $K^\circ$. Now, the ideal of definition
of $E_i$ is $(y,\varpi)$ and therefore $\ord_{E_i}(f) =
\min_{m\in\Z}(\val_K(\alpha_m)+(a-i)m)$. Thus, the point $\xi_i$
coincide with $\theta_\Sigma(\rho_\Sigma(\xi_i))$. This shows the
equality $\xi_i = \theta_\Sigma(\rho_\Sigma(\xi_i))$.  The remaining
equality follows then from the fact that $\rho_\Sigma(\xi_{i,j}) =
\rho_\Sigma(\xi_i)$, $j\in\Theta_i$, the image by $\rho_{\Sigma}$
depending only on the valuation $\val_K(\xi_i) = \val_K(\xi_{i,j})$.
This completes the proof of the statement.
\end{proof}

Let now $\Psi $ be a virtual support function on $\Sigma $ and
$L=\mathcal{O}(D_{\Psi })$ the associated toric line bundle on
$X_{\Sigma}$.  Let $e\ge 1$ and $\mathcal{L}$ a line bundle on
$\mathcal{X}$ which is a model over $\Spec(K^{\circ})$ of $L^{\otimes
  e}$. Let $\|\cdot\|$ be the metric on $L^{\an}$ given by the proper
model $(\cX,\cL,e)$ and $\phiK_{\|\cdot\|}$ the corresponding function
on $N_{\R}\simeq \R$.

Let $m_{0},m_{\infty}\in M\simeq\Z$ such that
\begin{displaymath}
  \Psi (u)=
  \begin{cases}
    m_{\infty} u & \text{ if } u\le 0,\\
    m_{0} u & \text{ if } u\ge 0.
  \end{cases}
\end{displaymath}
Then the divisor of the toric section $s_{\Psi}$ is given by
$\div(s_{\Psi })=-m_{0}[0]+m_{\infty}[\infty]$, and so $L\simeq
\mathcal{O}(m_{\infty}-m_{0})$.  

Let's now consider $s_{\Psi }^{\otimes e}$ as a rational section of
$\mathcal{L}$ and denote by $D$ its associated Cartier divisor, so
that $\cL\simeq\cO(D)$. Then
\begin{equation}\label{eq:52}
D=
  -em_{0}D_{0}+em_{\infty}D_{\infty}+\sum_{i=0}^{l}\bigg(\alpha
    _{i}E_{i} +\sum_{j\in \Theta
    _{i}}\alpha _{i,j}F_{i,j}\bigg)
\end{equation}
for certain coefficients $\alpha _{i}$ and $\alpha _{i,j}$. To have
more compact formulae, we will use the conventions
\begin{equation}   \label{eq:2}
\begin{matrix}
 E_{-1}=D_{0}, &  \alpha _{-1}=-em_{0}, & \Theta
  _{-1}=\emptyset ,\\ 
E_{l+1}=D_{\infty}, & \alpha
_{l+1}=em_{\infty}, & \Theta _{l+1}=\emptyset.
\end{matrix}
\end{equation}
For example, the equation (\ref{eq:52}) can then be written as
\begin{equation*}
D=\sum_{i=-1}^{l+1}\bigg(\alpha
    _{i}E_{i} +\sum_{j\in \Theta
      _{i}}\alpha _{i,j}F_{i,j}\bigg).  
\end{equation*}

\begin{lem} \label{lemm:10}
  The function $\phiK _{\|\cdot\|}$ is given by
  \begin{displaymath}
    \phiK _{\|\cdot\|}(u)=
    \begin{cases}
      m_{0}u-m_{0}a-\frac{\alpha _{0}}{e} &\text{ if } u\ge a,\\
      \frac{(\alpha _{i+1}-\alpha _{i})u-(\alpha _{i+1}-\alpha
        _{i})(a-i)-\alpha _{i}}{e} &\text{ if } a-i\ge u\ge a-i-1,\\
      m_{\infty}u-m_{\infty}(a-l)-\frac{\alpha _{l}}{e} &\text{ if }
      a-l\ge u.
    \end{cases}
  \end{displaymath}
In other words, 
if $\Pi $ is the polyhedral complex in $N_{\R}$ given by the
intervals
\begin{displaymath}
 (-\infty,a-l],\quad [a-i,a-i+1],\  i=1,\dots,l,\quad
[a,\infty),
\end{displaymath}
then
$ \phiK _{\|\cdot\|}$ is the rational piecewise affine function on $\Pi $
characterized by the conditions
\begin{enumerate}
\item $\rec(\phiK _{\|\cdot\|})=\Psi $,
\item the value of $\phiK
_{\|\cdot\|}$ at the point $a-i$ is $-\alpha _{i}/e$. 
\end{enumerate}
\end{lem}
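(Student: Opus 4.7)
The plan is a direct local computation. For each $u\in N_\R=\R$, I take the point $p_u:=\theta_\Sigma(\ee_K(u))\in X_0^{\an}$ provided by Proposition-Definition~\ref{prop:54}, which satisfies $\val_K(p_u)=u$; by the definition~\eqref{eq:84} (combined with Proposition~\ref{prop:6}, which reduces the computation to the toricification $\|\cdot\|_\SSinv$),
\[\psi_{\|\cdot\|}(u)=\lambda_K^{-1}\log\|s_\Psi(p_u)\|,\]
with the right-hand side computed from the model $(\mathcal{X},\mathcal{L},e)$ via Definition~\ref{def:7}. The crux is the observation that, whenever $c\in\mathcal{O}_{\mathcal{X},q}^{\times}$ is a local unit at a closed point $q$ of $\mathcal{X}_o$ with $\red(p_u)=q$, one has $|c(p_u)|=1$: this follows from~\eqref{eq:43} applied (on a sufficiently small affine open where $c$ is invertible) to both $c$ and $c^{-1}$.

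In each of the three regions of the lemma I would identify $\red(p_u)$ from Lemma~\ref{lemm:11}, choose a pair of regular local parameters for the semi-stable surface $\mathcal{X}$ at this point, express $t$ and $\varpi$ in terms of them using Lemma~\ref{lemm:8}, and write $s_\Psi^{\otimes e}=h\sigma$ for a local generator $\sigma$ of $\mathcal{L}$, reading off $\div(h)$ from~\eqref{eq:52}. For $u\ge a$, one has $\red(p_u)=q_0=D_0\cap E_0$; choosing parameters $x,y$ with $\div(x)=D_0$ and $\div(y)=E_0$ locally, we get $\varpi=y\cdot(\mathrm{unit})$, and since $\div(t)=D_0+aE_0$ near $q_0$ by Lemma~\ref{lemm:8}, also $t=xy^a\cdot(\mathrm{unit})$. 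Combining this with $\div(h)=-em_0D_0+\alpha_0E_0$ gives $h=c\,t^{-em_0}\varpi^{em_0a+\alpha_0}$ for a local unit $c$; the unit observation then yields $\|s_\Psi^{\otimes e}(p_u)\|=|\varpi|^{em_0(a-u)+\alpha_0}$, whence $\psi_{\|\cdot\|}(u)=m_0(u-a)-\alpha_0/e$, as asserted. The case $u\le a-k$ is symmetric, working at $q_{k+1}=E_k\cap D_\infty$ with parameters satisfying $x'=t^{-1}\varpi^{a-k}\cdot(\mathrm{unit})$ and $y'=\varpi\cdot(\mathrm{unit})$.

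For the middle range $u\in[a-i,a-i+1]$ with $1\le i\le k$, the reduction $q_i=E_{i-1}\cap E_i$ is a node with no horizontal component passing through it; the parameters $x_i=t\varpi^{-a+i}$ and $y_i=t^{-1}\varpi^{a-i+1}$ from the proof of Lemma~\ref{lemm:11} satisfy $x_iy_i=\varpi$, and the local expression reduces to $h=c\,x_i^{\alpha_{i-1}}y_i^{\alpha_i}$, giving $|h(p_u)|=|\varpi|^{(u-a+i)\alpha_{i-1}+(a-i+1-u)\alpha_i}$. This is affine on the interval with boundary values $-\alpha_{i-1}/e$ at $u=a-i+1$ and $-\alpha_i/e$ at $u=a-i$, matching the formula. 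The alternative characterization then follows because $\psi_{\|\cdot\|}$ is rational piecewise affine with breakpoints as described, the values at $a-i$ coincide with those obtained independently from equation~\eqref{eq:1} applied at $\xi_i$ (using Lemma~\ref{lemm:9} together with $\ord_{E_i}(\varpi)=1$ from semi-stability), and $\rec(\psi_{\|\cdot\|})=\Psi$ by Corollary~\ref{cor:8}. The principal obstacle is the algebraic bookkeeping---verifying at each node the precise relations between the regular local parameters, the coordinate $t$, and the uniformizer $\varpi$, and tracking the local units appearing when trivializing $\mathcal{L}$; once these relations are pinned down, the unit observation makes each norm computation immediate, and the boundary points $u=a-i$ (where $\red(p_u)$ is the generic point $\eta_i$ rather than a node) are handled by continuity of $\psi_{\|\cdot\|}$ or by direct evaluation at $\xi_i$.
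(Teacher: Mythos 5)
Your proof is correct and follows essentially the same route as the paper: fix $p_u=\theta_\Sigma(\ee_K(u))$, locate $\red(p_u)$ via Lemma~\ref{lemm:11}, and compute $\|s_\Psi^{\otimes e}(p_u)\|$ using the observation that a suitable monomial in $t$ and $\varpi$ times $s_\Psi^{\otimes e}$ has trivial divisor near the reduction point and hence norm $1$ there. The paper carries out only the case $u>a$ and dismisses the rest as analogous, whereas you spell out the middle and final ranges with explicit local parameters $x_i,y_i$; this is more verbose but the same argument.
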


\begin{proof}
  Let $p\in \Im (\theta _{\Sigma })$ be such that $\val_K(p)>a$, hence
  $|\chi^1(p)|<|\varpi |^{a}$. By Lemma~\ref{lemm:11}, this implies
  that $\red(p)=q_{0}$. In a neighbourhood of $q_{0}$, the divisor of
  $\cX$ of the rational section $s_{\Psi }^{\otimes
    e}\chi^{em_{0}}\varpi ^{-\alpha _{0}-em_{0}a}$ is zero, and so
  \begin{displaymath}
    \|s_{\Psi }^{\otimes e}(p)\chi^{em_{0}}(p)\varpi ^{-\alpha
    _{0}-em_{0}a}\|=1.
  \end{displaymath}
Let $u\in N_\R$ and $p\in \Im (\theta _{\Sigma })$ such that $\val_K(p)=u$. Then, by definitions \ref{def:88} and~\ref{def:82},
\begin{align*}
  \phiK _{\|\cdot\|}(u)&=\frac{\log\|s_{\Psi }^{\otimes e}(p)\|}{e \lambda _{K}}\\
  &=
  \frac{-em_{0}\log|\chi^1(p)|+(\alpha _{0}+em_{0}a)\log|\varpi |}
  {-e\log|\varpi |}\\
  &=m_{0}(u-a)-\frac{\alpha _{0}}{e}.
\end{align*}
The other cases are proved in a similar way.
\end{proof}

Since $\rec(\Pi )=\Sigma $, by Theorem \ref{thm:7}, this polyhedral
complex defines a toric model $\mathcal{X}_{\Pi }$ of $X_{\Sigma
}$.

\begin{prop}\label{prop:25}
  The identity map of $X_{\Sigma }$ extends to an isomorphism of models
  $\mathcal{X}_{\SSinv}\to \mathcal{X}_{\Pi }$.
\end{prop}
\begin{proof}
  The special fibre of $\mathcal{X}_{\Pi }$ is a chain of rational
  curves $E_{i}$, $i=0,\dots,l$, corresponding to the points
  $a-i$. The monomial $\chi ^{1}$ is a section of the trivial line
  bundle on $\cX_\Pi$ and corresponds to the function $\phiK (u)=-u$. Using
  Proposition 
  \ref{prop:51} we obtain that
  \begin{displaymath}
    \div(\chi ^{1})=D_{0}-D_{\infty}+\sum_{i=0}^{l}(a-i)E_{i}, 
  \end{displaymath}
  where $D_{0}$ and $D_{\infty}$ are again the horizontal divisors
  determined by the points $0$ and~$\infty$.

  Since the vertices of the polyhedral complex $\Pi $ are
  integral, using  the equation~\eqref{eq:44} we deduce that $\div(\varpi )$
  is reduced.   

  Then the result follows from \cite[Corollary 1.13]{MR0230724} using
  an explicit description of the local rings at the points of the
  special fibre as in the proof of  Lemma \ref{lemm:11}. 
\end{proof}

From Proposition \ref{prop:25}, we obtain a proper morphism
$\pi\colon \mathcal{X}\to \cX_{\SSinv}\simeq\mathcal{X}_{\Pi }$. 
Set $D_{\SSinv}=\pi _{\ast}D$ with $D$ the divisor on $\cX$ in the
equation~\eqref{eq:52}. Then
\begin{equation}\label{eq:85}
  D_{\SSinv}=-em_{0}\pi _{\ast}D_{0}+em_{\infty}\pi _{\ast}D_{\infty}+\sum_{i=0}^{l}\alpha 
    _{i}\pi _{\ast}E_{i}=\sum_{i=-1}^{l+1}\alpha 
    _{i}\pi _{\ast}E_{i},
\end{equation}
where in the last expression we use the conventions  \eqref{eq:2}.

Let $D_{e\phiK _{\|\cdot\|}}$ be the $\T$-Cartier divisor on
$\cX_{\Pi}$ determined by the function $e\phiK _{\|\cdot\|}$ as in
(\ref{eq:59}).  Using equation~\eqref{eq:85},
Proposition~\ref{prop:51} and Lemma \ref{lemm:10}, we see that
$D_{\SSinv}=D_{e\phiK _{\|\cdot\|}}$.  Thus
$(\cX_{\SSinv},\cO(D_{\SSinv}))$ is the toric model
of~$(X_\Sigma,L^{\otimes e})$ induced by $(\Pi,e\phiK_{\|\cdot\|})$
through Theorem~\ref{thm:11b} and Proposition~\ref{prop:89}. Let
$\|\cdot\| _{\SSinv}$ be the toric metric associated to $\|\cdot\|$
(Definition~\ref{def:82}). By Propositions \ref{prop:59} and
\ref{prop:14}, the metric $\|\cdot\|_{\SSinv}$ agrees with the toric
metric defined by the model $(\cX_{\SSinv},\cO(D_{\SSinv}),e)$. Thus,
we have identified a toric model that corresponds to the metric
$\|\cdot\|_{\SSinv}$.  This allows us to compute directly the
associated measure.

\begin{lem} \label{lemm:21} Let $X_{\Sigma }\simeq \P^{1}_{K}$ be a
  one-dimensional toric variety over $K$ and $L\simeq
  \mathcal{O}(D_{\Psi })$ a toric line bundle on $X_{\Sigma}$. Let
  $\|\cdot\|$ be an algebraic metric defined by a regular semi-stable
  model whose components and double points of the special fibre are
  defined over the residue field $K^{\circ}/K^{\circ \circ}$ and such
  that each irreducible component contains a rational point.  Let
  $\|\cdot\|_{\SSinv}$ the associated toric metric. Then
  \begin{displaymath}
    \chern_{1}(L,\|\cdot\|_{\SSinv})\land \delta _{X_{\Sigma
    }}=(\theta _{\Sigma })_{\ast}(\rho _{\Sigma
  })_{\ast}\left(\chern_{1}(L,\|\cdot\|)\land \delta _{X_{\Sigma
    }}\right) =(\theta _{\Sigma })_{\ast}(\ee_{K})_{\ast}(-\phiK''_{\|\cdot\|}),
  \end{displaymath}
where the second derivative $\phiK''_{\|\cdot\|}$ is taken in the
sense of distributions.
\end{lem}
\begin{proof}
  Let $(\cX,\cL,e)$ be a regular semi-stable model defining the metric
  $\|\cdot\|$. Let $D$ be the divisor on $\cX$ defined by the rational
  section $s_\Psi^{\otimes e}$, so that $\cL \simeq \cO(D)$.  Since the
  special fibre is reduced, by the equation \eqref{eq:11} we have
  \begin{displaymath}
     \chern_{1}(L,\|\cdot\|)\land \delta _{X_{\Sigma
    }} =\frac{1}{e}\sum_{i=0}^{l}\bigg(\deg_{\mathcal{L}}(E_{i})\delta _{\xi _{i}}
      +\sum _{j\in \Theta _{i}}\deg_{\mathcal{L}}(F_{i,j})\delta _{\xi
        _{i,j}}\bigg).
  \end{displaymath}
Denote  temporarily this measure by $\mu $. Then, using the conventions
 \eqref{eq:2} and Lemma \ref{lemm:9}, for $l>0$, we obtain,
\begin{align*}
  (\theta _{\Sigma })_{\ast}(\rho _{\Sigma })_{\ast}\mu&=
  \frac{1}{e}\sum_{i=0}^{l}\bigg(\deg_{\mathcal{L}}(E_{i})
      +\sum _{j\in \Theta _{i}}\deg_{\mathcal{L}}(F_{i,j})\bigg)\delta
    _{\xi _{i}}\\ 
    &=
  \frac{1}{e}\sum_{i=0}^{l}\bigg(D\cdot E_{i}
      +\sum _{j\in \Theta _{i}}D\cdot F_{i,j}\bigg)\delta
    _{\xi _{i}}\\
    &=\frac{1}{e}\sum_{i=0}^{l}
    \sum_{r=-1}^{l+1}\bigg(\alpha _{r} E_{r}
      +\sum _{s\in \Theta _{r}}\alpha _{r,s}F_{r,s}\bigg)\cdot
  \bigg(E_{i}
      +\sum _{j\in \Theta _{i}}F_{i,j}\bigg)\delta
    _{\xi _{i}}\\
    &=\frac{1}{e}\sum_{i=0}^{l}
    \bigg(\alpha _{i-1} E_{i-1}+\alpha _{i} E_{i}+\alpha _{i+1} E_{i+1}
      \bigg)\cdot
  \bigg(E_{i}
      +\sum _{j\in \Theta _{i}}F_{i,j}\bigg)\delta
    _{\xi _{i}}\\
    &=\frac{1}{e}\sum_{i=1}^{l-1}(\alpha _{i-1}-2\alpha _{i}+\alpha
    _{i+1})\delta_{\xi _{i}}+\frac{1}{e}\sum_{i=0,l}(\alpha
    _{i-1}-\alpha _{i}+\alpha 
    _{i+1})\delta_{\xi _{i}},
  \end{align*}
  while, for $l=0$, we obtain
  \begin{displaymath}
      (\theta _{\Sigma })_{\ast}(\rho _{\Sigma })_{\ast}\mu=
      (\alpha
    _{-1}+\alpha 
    _{1})\delta_{\xi _{0}}.
  \end{displaymath}

  In the previous computation we have used that, since $E_{r}\cdot \div(\varpi
  )=F_{r,s}\cdot \div(\varpi )=0$, then, for all $i,j,r,s$, 
  \begin{displaymath}
    F_{r,s}\cdot(E_{i}+\sum_{j\in \Theta _{i}}F_{i,j})=0
  \end{displaymath}
and
  \begin{displaymath}
       E_{r}\cdot\bigg(E_{i}+\sum_{j\in \Theta _{i}}F_{i,j}\bigg)=
    \begin{cases}
      0&\text{ if }r\not = i-1,i,i+1,\\
      1&\text{ if }r=i-1,i+1,\\
      -2&\text{ if }0<r=i<l,\\
      -1&\text{ if }0<l, r=i=0,l\\
      0&\text{ if }r=i=l=0.\\
    \end{cases}
  \end{displaymath}

  An analogous computation with the model $(\cX_{\SSinv},\cO(D_{\SSinv}),e)$ 
  shows that
$$\chern_{1}(L,\|\cdot\|_{\SSinv})\land \delta _{X_{\Sigma
  }}=\frac{1}{e}\sum_{i=0}^{l}\deg_{\cO(D_{\SSinv})}(\pi _{\ast}E_i)\delta_{\xi
  _{i}} = \frac{1}{e}\sum_{i=0}^{l}(D_{\SSinv}\cdot
\pi _{\ast}E_i) \delta_{\xi_i}$$ where the intersection product now is on
$\cX_{\SSinv}$.  Using again the conventions \eqref{eq:2} and~\eqref{eq:85}, we get
\begin{displaymath}
  D_{\SSinv}\cdot\pi _{\ast}E_i = \sum_{r=-1}^{l+1}\alpha_r
  (\pi _{\ast}E_r\cdot\pi _{\ast}E_i)  = 
  \begin{cases}
    \alpha_{i-1}-2\alpha_i+\alpha_{i+1}&\text{ if }0<i<l,\\ 
    \alpha_{i-1}-\alpha_i+\alpha_{i+1}&\text{ if }0<l,i=0,l,\\ 
    \alpha_{-1}+\alpha_{1}&\text{ if }0=l,i=0.\\     
  \end{cases}
\end{displaymath}
For $l>0$, we obtain
\begin{equation}\label{eq:86}
    \chern_{1}(L,\|\cdot\|_{\SSinv})\land \delta _{X_{\Sigma
    }}=
    \frac{1}{e}\sum_{i=1}^{l-1}(\alpha _{i-1}-2\alpha _{i}+\alpha
    _{i+1})\delta_{\xi _{i}}+\frac{1}{e}\sum_{i=0,l}(\alpha _{i-1}-\alpha _{i}+\alpha
    _{i+1})\delta_{\xi _{i}}
  \end{equation}
  while, for $l=0$,
  \begin{equation}
    \label{eq:145}
    \chern_{1}(L,\|\cdot\|_{\SSinv})\land \delta _{X_{\Sigma
    }}=\frac{1}{e}(\alpha _{-1}+\alpha
    _{1})\delta_{\xi _{0}}.
  \end{equation}
  The first equality follows then by comparing equations \eqref{eq:86}
  and \eqref{eq:145} with the previous computation of $(\theta
  _{\Sigma })_{\ast}(\rho _{\Sigma })_{\ast}\mu$.

For the second equality, Lemma \ref{lemm:11} and the definition of
$\xi_{i}$ imply that $\xi_{i}=\theta_{\Sigma }(\ee_{K}(a-i))$.  
Hence 
\begin{displaymath}
(\theta_{\Sigma})_{*}(\ee_{K})_{*}(\delta_{a-i})= \delta_{\xi_{i}}.  
\end{displaymath}
The result follows from the explicit description of $\phiK
_{\|\cdot\|}$ in Lemma \ref{lemm:10} and the explicit description of
$\chern_{1}(L,\|\cdot\|_{\SSinv})\land \delta _{X_{\Sigma }}$ given by
\eqref{eq:86} and \eqref{eq:145}.
\end{proof}

Using Proposition \ref{prop:12}, we can extend Lemma \ref{lemm:21} to the
case when the model is not semi-stable.

\begin{thm} \label{thm:28}
  Let $X_{\Sigma }\simeq \P^{1}_{K}$ be a
  one-dimensional toric variety over $K$. Let $L\simeq
  \mathcal{O}(D_{\Psi })$ be a toric line bundle, $\|\cdot\|$ an
  algebraic metric and
  $\|\cdot\|_{\SSinv}$ the associated toric
  metric. Then
  \begin{displaymath}
    \chern_{1}(L,\|\cdot\|_{\SSinv})\land \delta _{X_{\Sigma
    }} =(\theta _{\Sigma })_{\ast}(\rho _{\Sigma
  })_{\ast}\left(\chern_{1}(L,\|\cdot\|)\land \delta _{X_{\Sigma
    }} \right)  =(\theta _{\Sigma })_{\ast}(\ee_{K})_{\ast}
(-\phiK''_{\|\cdot\|}),
  \end{displaymath}
where the second derivative $\phiK''_{\|\cdot\|}$ is taken in the
sense of distributions.
\end{thm}
\begin{proof}
  Let $(\mathcal{X},\mathcal{L},e)$ be a proper model of $(X_{\Sigma
  },L^{\otimes e})$ that realizes the algebraic metric
  $\|\cdot\|$. For short, denote
  \begin{displaymath}
  \mu=\chern_{1}(L,\|\cdot\|)\land
  \delta _{X_{\Sigma }} ,\quad   
\mu_{\SSinv}=\chern_{1}(L,\|\cdot\|_{\SSinv})\land \delta_{X_{\Sigma }} .
  \end{displaymath}
  By Proposition \ref{prop:12}, there is a finite extension $K'$ of
  $K$, a regular semi-stable model $\cX'$ of $X_{\Sigma ,K'}$ and a
  proper morphism of models $\cX'\to\cX\times \Spec (K'^{\circ})$. We
  may further assume that all the components and the double points of
  the special fibre of $\cX'$ are defined over
  ${K'}^{\circ}/{K'}^{\circ\circ}$ and that each irreducible component
  contains a rational point. Let $(L',\|\cdot\|')$ be the metrized
  line bundle obtained by base change to $K'$. Using propositions
  \ref{prop:21} and \ref{prop:115}, it is possible to show that the
  toric metric $(\|\cdot\|')_{\SSinv}$ agrees with the metric obtained
  from $\|\cdot\|_{\SSinv}$ by base change. We denote by $\nu \colon
  X^{\an}_{\Sigma ,K'}\to X^{\an}_{\Sigma,K}$ the map of analytic
  spaces and by $\mu '$, $\mu '_{\SSinv}$, $\theta _{\Sigma }'$ and
  $\rho _{\Sigma }'$ the corresponding objects for $X_{\Sigma
    ,K'}$. Then, by the propositions \ref{prop:28}, \ref{prop:114} and
  \ref{prop:113} and the first equality in Lemma \ref{lemm:21},
  \begin{displaymath}
    \mu _{\SSinv}=\nu _{\ast}\mu '_{\SSinv}=\nu _{\ast}(\theta'
    _{\Sigma })_{\ast}(\rho' _{\Sigma })_{\ast}\mu'=
    (\theta _{\Sigma })_{\ast}(\rho' _{\Sigma })_{\ast}\mu'=
    (\theta _{\Sigma })_{\ast}(\rho _{\Sigma })_{\ast}\mu,
  \end{displaymath}
which proves the first equality of the theorem. 

Using the second equality in Lemma \ref{lemm:21} and Proposition
\ref{prop:114}, we deduce that
  \begin{equation}\label{eq:97}
    \mu _{\SSinv}=\nu _{\ast}\mu '_{\SSinv}=\nu _{\ast}(\theta'
    _{\Sigma })_{\ast}(\ee_{K'})_{\ast} (-\phiK''_{\|\cdot\|'})=(\theta
    _{\Sigma })_{\ast}(\ee_{K'})_{\ast} (-\phiK''_{\|\cdot\|'}).
  \end{equation}
  We have that, for $u\in N_{\R}$,
  \begin{displaymath}
    \ee_{K'}(u)=\ee_{K}\Big(\frac{u}{e_{K'/K}}\Big), \quad 
\phiK_{\|\cdot\|'}(u)=
  e_{K'/K}\, \phiK_{\|\cdot\|}\Big(\frac{u}{e_{K'/K}}\Big),
  \end{displaymath}
  where the second equality follows from Proposition \ref{prop:115}.
  Using these formulae, one can verify that
\begin{displaymath}
  (\ee_{K'})_{\ast} =   (\ee_{K'})_{\ast}
  \Big(\frac{1}{e_{K'/K}}\Big)_{*},\quad
\phiK''_{\|\cdot\|'}=({e_{K'/K}})_{*}
\phiK''_{\|\cdot\|},
\end{displaymath}
where $\frac{1}{e_{K'/K}}$ and $e_{K'/K}$ denote the corresponding
homotheties of $N_{\R}$. This implies  
\begin{displaymath}
  (\ee_{K'})_{\ast} \big(\phiK''_{\|\cdot\|'}\big)=(\ee_{K})_{\ast} 
\big(\phiK''_{\|\cdot\|}\big).
\end{displaymath}
The second equality in the statement then follows from this equation
together with~(\ref{eq:97}).
\end{proof}

We can now relate semipositivity of the metric with concavity of the
associated function in the one-dimensional case.

\begin{cor}\label{cor:5}
  Let $X_{\Sigma }\simeq \P^{1}_{K}$ be a one-dimensional toric
  variety over $K$. Let $(L,s)$ be a toric line bundle with a toric
  section and $\|\cdot\|$ a metric with a semipositive model. Then the
  function $\phiK _{\|\cdot\|}$ is concave and the toric metric
  $\|\cdot\|_{\SS}$ has a semipositive model.
\end{cor}

\begin{proof}
  Since the metric $\|\cdot\|$ has a semipositive model,
  $\chern_{1}(L,\|\cdot\|)\land \delta _{X_{\Sigma }} $ is a measure,
  and not just a signed measure. 
  Theorem \ref{thm:28} implies that the direct image by $
  (\val_{K})_{\ast }(\theta _{\Sigma })_{\ast}(\ee_{K})_{\ast}$ of
  this measure coincides with $-\phiK''_{\|\cdot\|}$. Hence
  $-\phiK''_{\|\cdot\|}$ is also a measure and so $\phiK _{\|\cdot\|}$
  is concave, proving the first statement.

  For the second statement, observe that $\phiK
  _{\|\cdot\|_{\SS}}= \phiK _{\|\cdot\|}$.  By Proposition
  \ref{prop:14}\eqref{item:124}, the recession of this function agrees
  with $\Psi$. Corollary \ref{cor:12}
  then implies that the metric $\|\cdot\|_{\SSinv}$ has a semipositive toric
  model.
\end{proof}

\section{Algebraic metrics and their associated
  measures} 
\label{sec:pos-algebr-metr}

We come back to the setting of \S \ref{sec:toric-algebr-metr}. We
assume that $K$ is a complete field with respect to an absolute value
associated to a nontrivial discrete valuation and that $\Sigma$ is a
complete fan.  Let $\Psi $ be a virtual support function on $\Sigma $ and set
$(L,s)=(L_{\Psi },s_{\Psi })$. 

\begin{prop} \label{prop:61} Let $\|\cdot\|$ be a metric with a
  semipositive model on $L^{\an}$. Then both functions
  $\psiabs_{\|\cdot\|}$ and $\phiK _{\|\cdot\|}$ are concave.
\end{prop}
\begin{proof} Assume that $\|\cdot\|$ has a semipositive model.  Since
  the condition of being concave is closed, if we prove that, for all
  choices of $u_{0}\in \lambda_{K}N_{\Q}$ and $v_{0}\in N$ primitive,
  the restriction of $\psiabs _{\|\cdot\|}$ to the line $u_{0}+\R
  v_{0}$ is concave, we will deduce that the function $\psiabs
  _{\|\cdot\|}$ is concave. Fix $u_{0}\in \lambda_{K}N_{\Q}$ and
  $v_{0}\in N$ primitive and let $e\ge1$ such that $e u_{0}\in
  \lambda_{K} N$. Then $K':=K(\varpi ^{1/e})$ is a finite extension of
  $K$ and there is a unique extension of the absolute value of $K$ to
  $K'$. We will denote with $'$ the objects obtained by base change to
  $K'$.
  
  We consider the affine map $A\colon \Z \to N$ given by $l\mapsto
  eu_{0}+l v_{0}$, and let $H$ be the linear part of $A$. By Theorem
  \ref{thm:6}\eqref{item:98}, the subset of $X^{\an}_{\Sigma ,K'}$ of
  algebraic points is dense. By \cite[Th\'eor\`eme 5.3]{Poineau:eba},
  we can choose a sequence $K'_{i}$, $i\in 
  \N$, of finite extensions of $K'$ and a sequence of points $\wt q_{i}\in
  X_{\Sigma ,0}(K'_{i})$, $i\in \N$, such that, if we denote by $q_{i}$
  the image of $\wt q_{i}$ in $X^{\an}_{\Sigma ,K'}$, then
  $\val(q_{i})=u_{0}$ and
  \begin{displaymath}
    \lim_{i\to\infty}q_{i}
    = \theta_\Sigma(\ee(u_0)).
  \end{displaymath}
  Recall the equivariant morphisms $\varphi_{\wt q_{i},H}\colon
  \P^{1}_{K'_{i}}\to X_{\Sigma,K'_{i} }$ of Theorem~\ref{thm:25}. By
  Definition \ref{def:88} and Proposition
  \ref{prop:95}\eqref{item:127}, we have
  \begin{displaymath}
    \psiabs_{\|\cdot\|}(u_0+uv_0)= \lim_{i\to\infty} 
\psiabs_{\varphi^{\ast}_{\wt q_{i},H}\|\cdot\|_{K'_{i}}}(u). 
  \end{displaymath}
  By Corollary \ref{cor:5}, for each $i\in \N$, the function
  $\psiabs_{\varphi^{\ast}_{\wt q_{i},H}\|\cdot\|_{K'_{i}}}$ is
  concave. Since the limit of concave functions is concave, the
  restriction of $\psiabs_{\|\cdot\|}$ to $u_{0}+\R v_{0}$ is concave. We
  conclude that $\psiabs _{\|\cdot\|}$ is concave. Hence, 
$ \phiK _{\|\cdot\|}$ is concave too. 
\end{proof}

\begin{cor}\label{cor:23}
  Let $\|\cdot\|$ be a metric with a semipositive model on
  $L^{\an}$. Then the toric metric $\|\cdot\|_{\SS}$ has a
  semipositive toric model.
\end{cor}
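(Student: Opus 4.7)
The proof proposal assembles in order three of the results that have just been established, so it should be short and direct.

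The plan is to check that $\psi_{\|\cdot\|}$ is a rational piecewise affine concave function with $\rec(\psi_{\|\cdot\|})=\Psi$, and then recognise $\|\cdot\|_{\SS}$ as the toric semipositive algebraic metric determined by this function. First I would invoke Proposition \ref{prop:60}: because $\|\cdot\|$ is algebraic (though not necessarily toric), the associated function $\psi_{\|\cdot\|}$ defined by formula \eqref{eq:84} is rational piecewise affine. Next, Proposition \ref{prop:61} gives that $\|\cdot\|$ being semipositive forces $\psi_{\|\cdot\|}$ to be concave. Combining these, $\psi_{\|\cdot\|}$ is a rational piecewise affine concave function on $N_{\R}$.

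I would then apply Proposition \ref{prop:6} to see that $\|\cdot\|_{\SS}$ is a toric metric with $\psi_{\|\cdot\|_{\SS}}=\psi_{\|\cdot\|}$. By Corollary \ref{cor:8} the difference $|\psi_{\|\cdot\|}-\Psi|$ is bounded, and since $\psi_{\|\cdot\|}$ is piecewise affine concave this forces $\rec(\psi_{\|\cdot\|})=\Psi$ (cf.\ Remark \ref{rem:25}). We are therefore in the situation of Theorem \ref{thm:12}: the rational piecewise affine concave function $\psi_{\|\cdot\|}$ with $\rec(\psi_{\|\cdot\|})=\Psi$ determines an equivalence class of semipositive toric models of $(X_{\Sigma},L_{\Psi})$. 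By Corollary \ref{cor:12}, any such model induces a semipositive algebraic metric that agrees with $\|\cdot\|_{\psi_{\|\cdot\|}}$ from Proposition \ref{prop:14}.

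Finally, the uniqueness clause of Proposition \ref{prop:14}, together with the equality $\psi_{\|\cdot\|_{\SS}}=\psi_{\|\cdot\|}$ from Proposition \ref{prop:6}, identifies $\|\cdot\|_{\SS}$ with $\|\cdot\|_{\psi_{\|\cdot\|}}$. Hence $\|\cdot\|_{\SS}$ is a semipositive algebraic toric metric, as required.

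There is no real obstacle: every nontrivial ingredient has already been isolated in earlier results. The only mild point of care is checking that the recession of the concave piecewise affine function $\psi_{\|\cdot\|}$ is indeed $\Psi$, which is needed to feed the function into Corollary \ref{cor:12}; this is handled by the boundedness of $\psi_{\|\cdot\|}-\Psi$ given by Corollary \ref{cor:8}.
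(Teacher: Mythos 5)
Your argument is correct and follows essentially the same route as the paper's own proof: combine Proposition~\ref{prop:61} (concavity from semipositivity) with Proposition~\ref{prop:60} (rational piecewise affine from algebraicity), and then apply Corollary~\ref{cor:12} to identify $\|\cdot\|_{\SSinv}$ with the semipositive algebraic metric determined by $\psi_{\|\cdot\|}$. The paper treats the verifications you spell out---the toricity of $\|\cdot\|_{\SSinv}$ via Proposition~\ref{prop:6}, the recession condition $\rec(\psi_{\|\cdot\|})=\Psi$ via boundedness, and the identification via the bijection of Proposition~\ref{prop:14}---as implicit, but you are right that they are the steps which make the invocation of Corollary~\ref{cor:12} legitimate.
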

\begin{proof}
  By Proposition \ref{prop:61}, the function $\phiK=\phiK _{\|\cdot\|}$ is
  concave.  By Theorem~\ref{thm:15}\eqref{item:114}, it is also
  rational piecewise affine. By Proposition
  \ref{prop:14}\eqref{item:124}, its recession agrees with $\Psi$,
  hence this latter is concave.  Corollary \ref{cor:12} then implies
  that the metric $\|\cdot\|_{\SSinv}=\|\cdot\|_{\phiK \lambda _{K}}$ has a
  semipositive toric model.
\end{proof}

  Putting together Proposition \ref{prop:61} and Theorem \ref{thm:15},
  we see that the relationship between semipositivity of the metric
  and concavity of the associated function given in the Archimedean
  case by Proposition \ref{prop:15} carries over to the
  non-Archimedean case.

\begin{cor}\label{cor:44} 
  Let $\|\cdot\|$ be a toric algebraic metric and $\phiK _{\|\cdot\|}$
  the associated function. Then $\|\cdot\|$ has a semipositive model if and
  only if the function $\phiK _{\|\cdot\|}$ is concave.
\end{cor}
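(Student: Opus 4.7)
The plan is to derive this corollary by directly combining two results already established in the excerpt, one for each implication.

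For the forward implication, I would assume that $\|\cdot\|$ is a semipositive toric algebraic metric. Proposition \ref{prop:61} applies to any semipositive algebraic metric on $L^{\an}$ (the hypothesis there is not that the metric be toric, only that it be semipositive and algebraic, with $L$ coming from the support function $\Psi$), and asserts precisely that the associated function $\psi_{\|\cdot\|}$ is concave. So this direction is immediate.

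For the reverse implication, I would assume that $\|\cdot\|$ is toric and algebraic and that $\psi_{\|\cdot\|}$ is concave. This is exactly the hypothesis covered by the second assertion of Theorem \ref{thm:15}: a toric algebraic metric whose associated function is (rational piecewise affine and) concave is semipositive and in fact comes from a toric model. Invoking that theorem gives the conclusion.

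There is no obstacle here, since both halves are direct citations of results already proved; the corollary exists to record that for toric algebraic metrics the two a priori distinct notions (analytic semipositivity, combinatorial concavity of the associated piecewise affine function) coincide, which is the non-Archimedean analogue of Proposition \ref{prop:15}. The only subtlety worth mentioning is that concavity of $\psi_{\|\cdot\|}$ implicitly uses Proposition \ref{prop:60} to know that $\psi_{\|\cdot\|}$ is rational piecewise affine, so that Corollary \ref{cor:12}/Theorem \ref{thm:15} can be applied to promote concavity to semipositivity via the toric model built from $\psi_{\|\cdot\|}$.
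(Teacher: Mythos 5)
Your proof is correct and coincides with the paper's own reasoning: the paper states, immediately before the corollary, that it follows by ``putting together Proposition~\ref{prop:61} and Theorem~\ref{thm:15},'' which is precisely your two-step argument. The only minor redundancy in your write-up is the appeal to Proposition~\ref{prop:60} for rational piecewise affinity, since the first assertion of Theorem~\ref{thm:15} already supplies that for toric algebraic metrics.
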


We can now characterize the Chambert-Loir measure associated to a toric
metric with a semipositive model.

\begin{thm} \label{thm:17} Let $\|\cdot\|$ be a toric metric on
  $L^{\an}$ with a semipositive model and $\phiK =\phiK _{\|\cdot\|}$
  the associated function on $N_{\R}$. Let $\chern_{1}(\overline
  L)^{\wedge n}\land \delta _{X_{\Sigma }}$ be the associated
  measure. Then
  \begin{equation}\label{eq:57}
    \val_{K,\ast}(\chern_{1}(\overline L)^{\wedge n}\land
  \delta _{X_{\Sigma }})=n!\, \ov{\mathcal{M}}_{M}(\phiK),
  \end{equation}
  where $\ov{\mathcal{M}}_{M}(\phiK)$ is the measure  in Definition
  \ref{def:83}. Moreover,
  \begin{equation}
    \label{eq:58}
    \chern_{1}(\overline L)^{\wedge n} \land
    \delta _{X_{\Sigma }} =(\theta _{\Sigma })_{\ast}(\ee_{K})_{\ast}n!
    \ov{\mathcal{M}}_{M}(\phiK ). 
  \end{equation}
\end{thm}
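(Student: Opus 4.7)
The plan is to realize $\|\cdot\|$ explicitly through a toric model and then directly compute both measures. By Theorem \ref{thm:15} together with Corollary \ref{cor:44}, the function $\psi$ associated to the toric semipositive algebraic metric $\|\cdot\|$ is rational piecewise affine and concave. Applying Theorem \ref{thm:12}, we can realize $\|\cdot\|$ as the metric induced by a toric model $(\cX_{\Pi}, \cO(D_{e\psi}), e)$ with $\Pi$ a complete SCR polyhedral complex satisfying $\rec(\Pi) = \Sigma$ and $e\ge 1$ such that $e\psi$ is an H-lattice function on $\Pi$. Since $\cX_{\Pi}$ is already normal, Definition \ref{def:76} applies with $\wt\cY = \cX_{\Pi}$, and by Remark \ref{rem:13} the irreducible components of $\cX_{\Pi,o}$ are indexed by the vertices $v\in \Pi^{0}$ via the correspondence $v\mapsto V(v)$.

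The next step is to compute $c_{1}(\ov L)^{n}\wedge\delta_{X_{\Sigma}}$ explicitly. For each $v\in \Pi^{0}$ we have $\ord_{V(v)}(\varpi) = \mult(v)$ by \eqref{eq:44}, while Proposition \ref{prop:31} applied to $e\psi$ yields $\mult(v)\,\deg_{D_{e\psi}}(V(v)) = n!\,e^{n}\,\Vol_{M}(\partial\psi(v))$ (using $M(v)=M$ for a vertex, together with the homogeneity $\partial(e\psi)=e\,\partial\psi$). Substituting into \eqref{eq:11} gives
\[
c_{1}(\ov L)^{n}\wedge\delta_{X_{\Sigma}}=n!\sum_{v\in \Pi^{0}}\Vol_{M}(\partial\psi(v))\,\delta_{\xi_{V(v)}}.
\]
To push forward by $\val_{K}$, I would combine \eqref{eq:7} with the identity $\ord_{V(v)}(\chi^{m})=\mult(v)\langle m,v\rangle$ (a direct consequence of Proposition \ref{prop:51} applied to the principal divisor $\div(\chi^{m})$) to obtain $\val_{K}(\xi_{V(v)})=v$. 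By Proposition \ref{prop:32} the resulting pushforward equals $n!\,\cM_{M}(\psi)$, noting that vertices $v\in\Pi^{0}$ not belonging to $\Pi(\psi)^{0}$ contribute zero since $\partial\psi(v)$ is then a polyhedron of dimension less than $n$. Since $\cM_{M}(\psi)$ is supported in $N_{\R}\subset N_{\Sigma}$, it agrees with $\ov{\cM}_{M}(\psi)$ as a measure on $N_{\Sigma}$, establishing \eqref{eq:57}.

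For equation \eqref{eq:58}, the computation above reduces matters to verifying $\xi_{V(v)}=\theta_{\Sigma}(\ee_{K}(v))$ for each $v\in\Pi^{0}$. By the uniqueness statement in Proposition \ref{prop:92}, it suffices to show $\red(\theta_{\Sigma}(\ee_{K}(v)))=\eta_{V(v)}$, which I would verify by a direct computation in $K^{\circ}[\cX_{v}]=K^{\circ}[\wt M_{v}]/(\chi^{(0,1)}-\varpi)$: the seminorm $\theta_{\Sigma}(\ee_{K}(v))$ evaluates a monomial $\chi^{(m,l)}$ to $|\varpi|^{l+\langle m,v\rangle}$, which is strictly less than $1$ precisely when $(m,l)\notin\wt M(v)=\wt M\cap \cc(v)^{\bot}$. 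Modulo $(\varpi)$, the resulting prime ideal becomes the nilradical of $k[\cX_{v}]$, which is the defining ideal of the generic point of $V(v)$ in $\cX_{v,o}$.

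The main obstacle I anticipate is this last identification $\xi_{V(v)}=\theta_{\Sigma}(\ee_{K}(v))$, which requires a careful local analysis near the generic point of $V(v)$, especially since the special fibre $\cX_{v,o}$ may be non-reduced whenever $\mult(v)>1$. Once it is in place, the remaining steps amount to routine bookkeeping with the explicit formulas for the Chambert-Loir measure of an algebraic metric and the Monge-Amp\`ere measure of a piecewise affine concave function.
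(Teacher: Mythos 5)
Your proof is correct and follows essentially the same route as the paper: realize $\|\cdot\|$ through a toric model via Theorem~\ref{thm:15} and Corollary~\ref{cor:44}, expand the Chambert--Loir measure using~\eqref{eq:11}, convert $\mult(v)\deg_{D_{e\psi}}(V(v))$ into a Monge--Amp\`ere volume via Proposition~\ref{prop:31}, and match with Proposition~\ref{prop:32}. The only divergence is in identifying $\val_K(\xi_{V(v)})=v$, where the paper invokes Corollary~\ref{cor:1} while you compute it directly from~\eqref{eq:7} together with Proposition~\ref{prop:51}, and in your spelling out the reduction-map verification $\xi_{V(v)}=\theta_{\Sigma}(\ee_K(v))$ that the paper merely states; both routes rest on the same underlying facts and are sound.
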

\begin{proof}
  Since the metric has a semipositive model, by Proposition
  \ref{prop:61} the function $\phiK $ is concave. By Theorem
  \ref{thm:15} it is defined by 
  a toric 
  model $(\cX_{\Pi },D_{\phiK },e)$ of $(X_{\Sigma },D_{\Psi
  })$ in the equivalence class determined by $\phiK $. As in Remark
  \ref{rem:13}, the irreducible
  components of $\cX_{\Pi ,o}$ are in bijection with the vertices of
  $\Pi $. For each vertex $v\in \Pi ^{0}$, let $\xi_{v}$ be the point of
  $X_{\Sigma }^{\an}$ corresponding to the generic point of
  $V(v)$ given by Proposition \ref{prop:92}. Then, by the equations
  \eqref{eq:11} and \eqref{eq:44},
  \begin{displaymath}
    \chern_{1}(\overline L)^{\wedge n} \land
    \delta _{X_{\Sigma }} 
    =\frac{1}{e^{n}}\sum_{v\in \Pi ^{0}}\mult(v)\deg_{D_{\phiK
      }}(V(v))\delta _{\xi_{v}}. 
  \end{displaymath}
  Thus, by Corollary \ref{cor:1},
  \begin{displaymath}
    \val_{K,\ast}(\chern_{1}(\overline L)^{\wedge n}\land
  \delta _{X_{\Sigma }})=
    \frac{1}{e^{n}}\sum_{v\in \Pi ^{0}}\mult(v)\deg_{D_{\phiK
      }}(V(v))\delta _{v}.
  \end{displaymath}
  But, using Proposition \ref{prop:32} and Proposition \ref{prop:31},
  the 
Monge-Amp\`ere measure is given by 
  \begin{align*}
    \mathcal{M}_{M}(\phiK )&=\frac{1}{e^{n}}\mathcal{M}_{M}(e\phiK )\\
    &=\frac{1}{e^{n}}\sum_{v\in \Pi ^{0}}\Vol_{M}(v^{\ast})\delta
    _{v}\\
    &=\frac{1}{n!e^{n}}\sum_{v\in \Pi ^{0}}\mult(v)\deg_{D_{\phiK
      }}(V(v))\delta _{v}.
  \end{align*}
  Since $\mathcal{M}_{M}(\phiK )$ is a finite sum of Dirac delta measures, we
  obtain that
  \begin{displaymath}
    \ov{\mathcal{M}}_{M}(\phiK )=
    \frac{1}{n!e^{n}}\sum_{v\in \Pi ^{0}}\mult(v)\deg_{D_{\phiK}}(V(v))\delta _{v}.
  \end{displaymath}
  Hence we have proved \eqref{eq:57}. 
  To prove 
  \eqref{eq:58}, we just observe that
  $\xi_{v}=\theta
  _{0}\circ \ee_{K}(v)$.
\end{proof}

We can rewrite Theorem \ref{thm:17} in terms of the function
$\psiabs_{\|\cdot\|}$ of Definition \ref{def:68}. 

\begin{cor} \label{cor:34}  Let $\|\cdot\|$ be a toric metric on
  $L^{\an}$ with a semipositive model and $\psiabs =\psiabs _{\|\cdot\|}$
  the associated function on $N_{\R}$. Let $\chern_{1}(\overline
  L)^{\wedge n}\land \delta _{X_{\Sigma }}$ be the associated
  measure. Then
  \begin{displaymath}
    \val_{\ast}(\chern_{1}(\overline L)^{\wedge n}\land
  \delta _{X_{\Sigma }})=n!\, \ov{\mathcal{M}}_{M}(\psiabs).
  \end{displaymath}
  Moreover,
  \begin{displaymath}
    \chern_{1}(\overline L)^{\wedge n} \land
    \delta _{X_{\Sigma }} =(\theta _{\Sigma })_{\ast}(\ee)_{\ast}n!
    \ov{\mathcal{M}}_{M}(\psiabs ). 
  \end{displaymath}
\end{cor}

\section{Semipositive{} and DSP metrics}
\label{sec:appr-integr-invar}

Let $K$ be a valued field and $\T$ an $n$-dimensional split torus over
$K$, as in the beginning of the chapter. In the non-Archimedean
case assume furthermore that the valuation is discrete.
Let $\Sigma $ be a complete fan in $N_{\R}$ and $\Psi $ a virtual
support function on
$\Sigma $, and denote by  $(L, s)$  the corresponding toric line
bundle and section.




We are now in position to characterize toric semipositive{} metrics.

\begin{thm}\label{thm:13} Let notation be as above. 
  \begin{enumerate}
  \item \label{item:37} The assignment $\|\cdot\|\mapsto \psiabs
    _{\|\cdot\|}$ is a bijection
    between the space of semipositive{} toric
    metrics on $L^{\an}$ and the space of 
    concave functions $\psiabs $ on $N_{\R}$ such that $|\psiabs -\Psi |$ is
    bounded. 
  \item \label{item:38} Assume that $\Psi$ is a support function and
    let $\Delta_{\Psi}$ be the corresponding polytope. The
    assignment $\|\cdot\|\mapsto \psiabs _{\|\cdot\|}^{\vee}$ is a
    bijection between the space of semipositive{} toric metrics on
    $L^{\an}$ and the space of continuous concave functions on $\Delta
    _{\Psi }$. 
  \end{enumerate}
\end{thm}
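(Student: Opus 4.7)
The plan is to establish part \eqref{item:37} by showing that the bijection of Proposition \ref{prop:14} restricts to a bijection between semipositive{} toric metrics and continuous concave functions $\psi$ on $N_\R$ with $|\psi-\Psi|$ bounded; part \eqref{item:38} will then follow by composing with Legendre-Fenchel duality. Two preparatory observations set up the correspondence on the level of \emph{continuous} functions: first, Corollary \ref{cor:8} shows that any toric metric yields a $\psi_{\|\cdot\|}$ with $|\psi_{\|\cdot\|}-\Psi|$ bounded; second, in the converse direction, a continuous concave $\psi$ with $|\psi-\Psi|$ bounded lies in $\ov{\mathscr{P}}(N_\R,\Delta_\Psi)$ by Proposition \ref{prop:16}\eqref{item:25}, hence is a uniform limit of rational piecewise affine concave functions $\psi_n\in \mathscr{P}(N_\R,\Delta_\Psi)$ by Proposition \ref{prop:22}. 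Each $\psi_n - \Psi$ extends continuously to $N_\Sigma$ (Remark \ref{rem:25}), and uniform convergence passes this property to the limit, so Proposition \ref{prop:14} produces a well-defined toric metric $\|\cdot\|_\psi$. It then remains to match semipositivity on the metric side with concavity on the function side.

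For the ``only if'' direction of \eqref{item:37}, suppose $\|\cdot\|$ is semipositive{} and toric, so it is a uniform limit of semipositive smooth (Archimedean) or semipositive algebraic (non-Archimedean) metrics $\|\cdot\|_n$ which need not themselves be toric. I apply the averaging construction: Proposition~\ref{prop:30} in the Archimedean case and Proposition~\ref{prop:6} combined with Corollary~\ref{cor:23} in the non-Archimedean case produce toric semipositive smooth or toric semipositive algebraic metrics $(\|\cdot\|_n)_\SSinv$. Averaging is continuous with respect to uniform convergence, and $\|\cdot\|_\SSinv=\|\cdot\|$ since the latter is already toric, so $(\|\cdot\|_n)_\SSinv\to \|\cdot\|$ uniformly on $X_\Sigma^\an$. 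The corresponding functions $\psi_{(\|\cdot\|_n)_\SSinv}$ are concave by Proposition \ref{prop:15} in the Archimedean case and Corollary \ref{cor:44} in the non-Archimedean case, and they converge uniformly to $\psi_{\|\cdot\|}$ on $N_\R$ (transported through $\theta_\Sigma$ and $\ee_K$, using the compactness of $N_\Sigma$). Being a uniform limit of concave functions, $\psi_{\|\cdot\|}$ is concave.

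For the ``if'' direction, given continuous concave $\psi$ with $|\psi-\Psi|$ bounded, take the uniform approximation by rational piecewise affine concave $\psi_n\in\mathscr{P}(N_\R,\Delta_\Psi)$ described above. Each satisfies $\rec(\psi_n)=\Psi_{\Delta_\Psi}=\Psi$, so Corollary \ref{cor:22} (which rests on Proposition \ref{prop:44} realising $\|\cdot\|_{\psi_n}$ as the pullback of the canonical metric on $\P^r$, semipositive{} in both cases) guarantees that $\|\cdot\|_{\psi_n}$ is semipositive{}. The uniform convergence $\psi_n\to\psi$ gives uniform convergence $\|\cdot\|_{\psi_n}\to\|\cdot\|_\psi$, and a diagonal extraction then expresses $\|\cdot\|_\psi$ itself as a uniform limit of semipositive smooth or algebraic metrics, which is the definition of semipositive{}.

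Part \eqref{item:38} is a formal consequence. By Proposition \ref{prop:16}, the space of continuous concave $\psi$ on $N_\R$ with $|\psi-\Psi|$ bounded is precisely $\ov{\mathscr{P}}(N_\R,\Delta_\Psi)$, and the space of continuous concave functions on $\Delta_\Psi$ is $\ov{\mathscr{P}}(\Delta_\Psi,N_\R)$. The Legendre-Fenchel transform is an involutive bijection between these two spaces by Proposition \ref{prop:13}\eqref{item:73},\eqref{item:74}, and composing it with the bijection of part \eqref{item:37} yields \eqref{item:38}. The main technical obstacle is the non-Archimedean half of the forward direction: the averaging $\|\cdot\|\mapsto\|\cdot\|_\SSinv$ of Proposition \ref{prop:6} is defined very indirectly through $\theta_0$, and its compatibility with algebraicity and semipositivity is nontrivial, being exactly the content of Corollary~\ref{cor:23}, which in turn rests on the one-dimensional analysis of \S\ref{sec:one-dimensional-case} and the reduction to the split case.
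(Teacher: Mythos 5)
Your proof is correct and follows essentially the same route as the paper's: Corollary~\ref{cor:8} for boundedness, averaging via Proposition~\ref{prop:30} / Corollary~\ref{cor:23} to pass from general semipositive approximants to toric ones and then to concave functions, Proposition~\ref{prop:22} plus Remark~\ref{rem:25} and Corollary~\ref{cor:22} for the converse, and Propositions~\ref{prop:13} and \ref{prop:16} to derive part~\eqref{item:38} from part~\eqref{item:37}. The only cosmetic differences are citation choices (you invoke Proposition~\ref{prop:15} and Corollary~\ref{cor:44} where the paper's proof cites Propositions~\ref{prop:30} and \ref{prop:61}, which come to the same thing) and your explicit mention of the diagonal extraction, which the paper leaves implicit.
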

\begin{proof}
  To prove the statement \eqref{item:37}, consider a toric
  semipositive{} metric $\|\cdot\|$.  By Corollary \ref{cor:8} the
  function $|\psiabs _{\|\cdot\|}-\Psi |$ is bounded.  By Definition
  \ref{def:65}, there is a sequence $(\|\cdot\|_{l})_{l\ge1}$ of
  smooth (respectively algebraic) semipositive metrics that converges
  to $\|\cdot\|$. Since $\|\cdot\|$ is toric, $\|\cdot\|_{\SSinv}=\|\cdot\|$. 
  Hence, by Proposition \ref{prop:60}, the sequence of toric metrics
  $(\|\cdot\|_{l,\SSinv})_{l\ge1}$ also converges to $\|\cdot\|$. We
  set $\psiabs _{l}=\psiabs_{\|\cdot\|_{l,\SSinv}}$.  By the
  propositions~\ref{prop:30} and~\ref{prop:15} in the Archimedean case
  and Proposition \ref{prop:61} and Corollary \ref{cor:23} in the
  non-Archimedean case, the functions $\psiabs _{l}$ are
  concave. Since, by Proposition \ref{prop:24}\eqref{item:126},
  the sequence $(\psiabs _{l})_{l\ge 1}$ converges uniformly on $N_{\R}$
  to $\psiabs _{\|\cdot\|}$, the latter is concave. 

  Conversely, let now $\psiabs $ be a concave function on $N_{\R}$
  such that $|\psiabs-\Psi |$ is bounded. Then $\Psi$ is a support
  function and $\Stab(\psiabs )=\Stab(\Psi )$ agrees with the polytope
  $ \Delta _{\Psi }$. Let~$\|\cdot\|$ be the metric on the restriction
  of $L^{\an}$ to $X_{0}^{\an}$ determined by $\psiabs $. Write
  $\phiK=\psiabs \lambda ^{-1}_{K}$. By
  Proposition~\ref{prop:22} there is a sequence of rational piecewise
  affine concave functions $(\phiK _{l})_{l\ge 1}$ with stability
  set $\Delta _{\Psi }$, that converge uniformly to $\phiK$. Since 
  $\Stab(\phiK _{l})=\Delta _{\Psi }$, by Proposition~\ref{prop:17},
  $\rec(\phiK _{l})=\Psi $. Since $\phiK_{l}$ is a piecewise
  affine concave function, by Remark \ref{rem:25}, $\phiK_{l}-\Psi 
  $ can be extended to a continuous function on $N_{\Sigma
  }$. Therefore, $\phiK-\Psi $ and hence $\psiabs-\Psi$, can be
  extended to a continuous 
  function on $N_{\Sigma }$. Consequently the metric $\|\cdot\|$ can
  be extended to $X^{\an}_\Sigma$.  Let $\|\cdot\|_{l}$ be the metric
  associated to $\phiK _{l}\lambda _{K}$. Then the sequence of metrics
  $(\|\cdot\|_{l})_{l\ge1}$ converges to $\|\cdot\|$. By Corollary
  \ref{cor:22}, the metric $\|\cdot\|_{l}$ is semipositive{}, both in
  the Archimedean and in the non-Archimedean cases.  We deduce that
  $\|\cdot\|$ is semipositive{}, which completes the proof
  of~\eqref{item:37}.

  The statement \eqref{item:38} follows from~\eqref{item:37} and
  propositions~\ref{prop:16} and \ref{prop:13}\eqref{item:73}.
\end{proof}

\begin{rem} \label{rem:32} With notations as in Theorem \ref{thm:13},
  if $\Psi$ is a support function, then the space in \eqref{item:37}
  coincides with $\ov{\mathscr{P}}(N_{\R},\Delta_{\Psi})$ (Definition
  \ref{def:52}), otherwise it is empty.  The space in \eqref{item:38}
  coincides with $\ov{\mathscr{P}}(\Delta_{\Psi},N_{\R})$.
\end{rem}

\begin{rem} \label{rem:12}
For the case $K=\C$, 
statement \eqref{item:38} in the above result is related to the Guillemin-Abreu classification
of K\"ahler structures on {symplectic toric varieties}
\index{toric variety!symplectic}%
as explained in~\cite{MR1969265}.
By definition, a symplectic toric variety is a compact symplectic  
manifold of dimension $2n$ together with a Hamiltonian action of the compact
torus $\SS \simeq (S^{1})^{n}$. These spaces are classified by
Delzant polytopes of $M_{\R}$, see for instance \cite[Definition page
8]{Gui94} for the definition of Delzant polytope and \cite[Appendix 1]{Gui94}
for the classification.
For a given {Delzant polytope}
$\Delta\subset M_{\R}$, 
the possible $(S^{1})^{n}$-invariant K\"ahler forms on the 
symplectic toric variety corresponding to $\Delta$ are classified by smooth convex functions on
$\Delta^{\circ}$ satisfying some conditions near the border of
$\Delta$.
Several differential geometric invariants of a K\"ahler toric variety can be
translated and studied in terms of this
convex function, also called the ``{symplectic
  potential}''.\index{symplectic potential}

For a  smooth positive toric metric $\|\cdot \|$ on 
 $L_{\Psi_{\Delta}}(\C)$, the Chern form defines a K\"ahler
structure on the complex toric variety $X_{\Sigma_{\Delta}}(\C)$. 
It turns out that the corresponding symplectic potential coincides
with minus the function  $\psiabs ^{\vee}_{\|\cdot\|}$.
It would be most interesting to explore further this connection.
\end{rem}

\begin{prop}\label{prop:49}
Let $\|\cdot\|$ be a semipositive metric on $L^{\an}$. Then
$\|\cdot\|_{\SS}$ is a semipositive toric metric. In particular,
$\psiabs_{\|\cdot\|}$ is concave.
\end{prop}

\begin{proof}
Let $(\|\cdot\|_{l})_{l\ge1}$ be a
  sequence of smooth (respectively algebraic) semipositive metrics on $L^{\an}$
  that converges to $\|\cdot\|$. By Proposition \ref{prop:60}, the sequence of toric metrics
  $(\|\cdot\|_{l,\SSinv})_{l\ge1}$ converges to $\|\cdot\|_{\SS}$. By
  Proposition \ref{prop:30} in the Archimedean case and Corollary
  \ref{cor:23} in the non-Archimedean case, the metrics
  $\|\cdot\|_{l,\SSinv}$ are smooth (respectively algebraic)
  semipositive. Hence, $\|\cdot\|_{\SS}$ is semipositive.
The last statement follows from Theorem \ref{thm:13}\eqref{item:37}.
\end{proof}

\begin{cor} \label{cor:14}
The line bundle $L^{\an}$ admits a semipositive metric if and only if
$L$ is generated by global sections.   
\end{cor}

\begin{proof}
  Suppose that $L^{\an}$ admits a semipositive metric $\|\cdot\|$. By
  Proposition \ref{prop:49}, $\psiabs_{\|\cdot\|}$ is concave. Hence,
  $\Psi=\rec(\psiabs_{\|\cdot\|})$ is concave too which, by
  Proposition~\ref{prop:99}\eqref{item:56}, is equivalent to the fact
  that $L$ is generated by global sections.
  
  Reciprocally, if $L$ is generated by its global sections, then the
  function $\Psi$ is concave and therefore defines a semipositive
  toric metric on $L^\an$, by Theorem \ref{thm:13}\eqref{item:37}.
\end{proof}

Here is what we can say about toric DSP metrics.  

\begin{thm} \label{thm:24} Let $\Psi $ be a virtual support function
  on~$\Sigma $. Then the map $\|\cdot\|\mapsto \psiabs _{\|\cdot\|}$
  is a bijection between:
  \begin{itemize}
  \item[$\bullet$]  the space of toric metrics on $L_{\Psi
  }^{\an}$ such that there is a refinement $\Sigma'$ of $\Sigma$
  with associated birational toric morphism
  $\varphi\colon X_{\Sigma'}\to X_{\Sigma}$ so that
  $\varphi^{*}\|\cdot\|$ is a DSP toric metric on
  $\varphi^{*}L_{\Psi}^{\an}$;
\item[$\bullet$] the space of functions $\psiabs \in
  \ov{\mathscr{D}}(N_{\R})_{\Z}$ (Definition \ref{def:23}) with
  $\rec(\psiabs )=\Psi $.
  \end{itemize}
\end{thm}

\begin{proof} Let $\|\cdot\|$ be a toric metric on $L_{\Psi}^{\an}$
  and $\Sigma'$ a refinementof $\Sigma$ with associated birational
  toric morphism $\varphi\colon X_{\Sigma'}\to X_{\Sigma}$ so that
  $\varphi^{*}\|\cdot\|$ is a DSP toric metric on
  $\varphi^{*}L_{\Psi}^{\an}$. By definition, there exists
  semipositive metrized line bundles $(L_{1},\|\cdot\|_{1})$ and
  $(L_{2},\|\cdot\|_{2})$ on $X_{\Sigma'}$ such that
  \begin{displaymath}
  (\varphi^{*}L_{\Psi},\varphi^{*}\|\cdot\|)= (L_{1},\|\cdot\|_{1})\otimes (
  L_{2},\|\cdot\|_{2})^{\otimes -1}.     
  \end{displaymath}
  By propositions \ref{prop:60} and \ref{prop:49}, $
  (\varphi^{*}L_{\Psi},\varphi^{*}\|\cdot\|)= (L_{1},\|\cdot\|_{1,\SS})\otimes (
  L_{2},\|\cdot\|_{2,\SS})^{\otimes -1}$ and $\|\cdot\|_{i,\SS}$, $i=1,2$, is
  a semipositive  toric metric.  By Theorem
  \ref{thm:13}\eqref{item:37},
  $\psiabs_{\|\cdot\|_{i,\SS}}\in\ov{\mathscr{P}}(N_{\R},\Delta _{i})$,
  where $\Delta_{i}$ denotes the lattice polytope corresponding to
  $L_{i}$. In particular, $\psiabs_{\|\cdot\|_{i,\SS}}\in
  \ov{\mathscr{P}}(N_{\R})_{\Z}$ (Definition \ref{def:52}), hence using
  Proposition \ref{prop:58},
  \begin{displaymath}
    \psiabs_{\|\cdot\|}=       \psiabs_{\varphi^{*}\|\cdot\|}=     \psiabs_{\|\cdot\|_{1,\SS}} -  \psiabs_{\|\cdot\|_{2,\SS}}\in
\ov{\mathscr{D}}(N_{\R})_{\Z}
  \end{displaymath}
and $\rec(\psiabs_{\|\cdot\|})=\rec(\psiabs_{\|\cdot\|_{1,\SS}}) -
\rec(\psiabs_{\|\cdot\|_{2,\SS}})=\Psi$.

Conversely, let $\psiabs \in \ov{\mathscr{D}}(N_{\R})_{\Z}$ such that
$\rec(\psiabs )=\Psi $. Let $ \psiabs= \psiabs_{1}-\psiabs_{2}$ with
$\psiabs_{i}\in \ov{\mathscr{P}}(N_{\R})_{\Z}$. By Definition
\ref{def:52} and Corollary~\ref{cor:2}, there are lattice polytopes
$\Delta _{i}$, $i=1,2$, such that $\psiabs
_{i}\in\ov{\mathscr{P}}(N_{\R},\Delta _{i})$. We can assume without
loss of generality that these lattice polytopes have dimension
$n$. Let $(X_{\Sigma_{i}},D_{i})$ be the polarized toric variety
determined by $\Delta _{i}$ by the correspondence in Theorem
\ref{thm:21}. Let $\Sigma'$ be a fan on $N_{\R}$ simultaneously
refining $\Sigma$ and $\Sigma_{i}$, $i=1,2$, and let $\varphi\colon
X_{\Sigma'}\to X_{\Sigma}$ and $\varphi_{i}\colon X_{\Sigma'}\to
X_{\Sigma_{i}}$ be the associated birational toric maps. Set
$L_{i}=\varphi_{i}^{*}O(D_{i})$, $i=1,2$.

Since $\Psi=\rec(\psiabs_{1})-\rec(\psiabs_{2}) $, we have
$\varphi^{*}L_{\Psi }=L_{1}\otimes L_{2}^{\otimes -1}$.  By Theorem
\ref{thm:13}\eqref{item:37}, the function $\psiabs _{i}$
determines a semipositive toric metric on $L_{i}^{\an}$
that we denote by $\|\cdot\|_{i}$. Then
\begin{displaymath}
  (\varphi^{*}L_{\Psi},\varphi^{*}\|\cdot\|):= (L_{1},\|\cdot\|_{1})\otimes (
  L_{2},\|\cdot\|_{2})^{\otimes -1}
\end{displaymath}
is a toric DSP metrized line bundle on $X_{\Sigma'}$ and $\psiabs _{\|\cdot\|}=\psiabs $.
\end{proof}

\begin{exmpl}\label{exm:39}
  Let $\Psi $ be a virtual support function on $\Sigma $. By Corollary
  \ref{cor:6}, $\Psi \in \mathscr{D}(N_{\R})_{\Z} \subset
  \ov{\mathscr{D}}(N_{\R})_{\Z}$ and, moreover $\rec(\Psi )=\Psi
  $. Therefore, by Theorem \ref{thm:24}, there is a birational toric
  morphism $\varphi\colon X_{\Sigma'}\to X_{\Sigma}$ so that the
  inverse image $\varphi^{*}\|\cdot\|_{\Psi}$ of the canonical metric
  on $L_{\Psi}^{\an}$ is a DSP toric metric on
  $\varphi^{*}L_{\Psi}^{\an}$. Note that $\varphi^{*}\|\cdot\|_{\Psi}$
  coincides with the canonical metric on $\varphi^{*}L_{\Psi}^{\an}$.

  By Theorem \ref{thm:13}, if the function $\Psi $ is concave or,
  equivalently by Proposition~\ref{prop:99}\eqref{item:56}, if the
  line bundle $\cO(D_{\Psi })$ is generated by global sections, then
  $\|\cdot\|_{\Psi}$ is semipositive.
\end{exmpl}

\begin{rem}
  \label{rem:37}
  The correspondence in Theorem \ref{thm:24} gives also a bijection
  between the space of DSP toric metrics on $L_{\Psi}^{\an}$ and the
  space of functions $\psiabs \in \ov{\mathscr{D}}(N_{\R})_{\Z}$ such
  that $\rec(\psiabs )=\Psi $ that can be written as $ \psiabs=
  \psiabs_{1}-\psiabs_{2}$ with $\psiabs
  _{i}\in\ov{\mathscr{P}}(N_{\R},\Delta _{i})$ for a lattice polytope
  $\Delta_{i}$ whose support function is compatible with the fan
  $\Sigma$ in the sense of Definition \ref{def:54}. This follows
  easily from the proofs of theorems \ref{thm:21} and \ref{thm:24}.

  Whether these spaces coincide with those in Theorem \ref{thm:24} is
  yet to be decided.
\end{rem}

We now study the compatibility of the restriction of
semipositive{} toric
metrics to toric orbits and its inverse image by equivariant maps with
direct and inverse image of concave functions. This is an extension of
propositions \ref{prop:52} and \ref{prop:48}. We start with
the case of orbits, and we state a variant of  Proposition
  \ref{prop:57} for semipositive{} metrics. 

\begin{prop}\label{prop:62} 
  Let $\Psi$ be a support function on $\Sigma$, set
  $L=L_{\Psi}$ and $s=s_{\Psi}$. Let $\|\cdot\|$ be a semipositive{} toric 
  metric on $L^{\an}$, denote $\ov L=(L,\|\cdot\|)$ and $\psiabs 
  =\psiabs _{\ov L,s}$ the associated concave function on $N_{\R}$. 
  Let $\sigma \in\Sigma $ and $m_{\sigma }\in M$ such that $\Psi |_{\sigma
  }=m_{\sigma }|_{\sigma }$. Let $\pi _{\sigma }\colon N_{\R}\to
  N(\sigma )_{\R}$ be the projection, $\pi ^{\vee}_{\sigma }\colon
  M(\sigma )_{\R}\to M_{\R}$ the dual inclusion and $\iota \colon
  V(\sigma )\to X_\Sigma$ the closed immersion. Set 
  $s_{\sigma}=\chi^{m_{\sigma}}s$. Then
  \begin{equation}
    \label{eq:91}
    \psiabs_{\iota ^{\ast}\ov L,\iota ^{\ast}s_{\sigma}}=(\pi _{\sigma })_{\ast}(\psiabs -m_{\sigma }).
  \end{equation}
  Dually, we have
  \begin{equation}
    \label{eq:92}
    \psiabs_{\iota ^{\ast}\ov L,\iota ^{\ast}s_{\sigma}}^{\vee}=(\pi ^{\vee} _{\sigma }+m_{\sigma
    })^{\ast}\psiabs ^{\vee}.  
  \end{equation}
  In other words, the Legendre-Fenchel dual of $\psiabs_{\iota
    ^{\ast}\ov L,\iota ^{\ast}s_{\sigma}}$ is the translate by $-m_{\sigma
  }$ of the restriction of $\psiabs ^{\vee}$ to the face $F_{\sigma
  }$.
\end{prop}
\begin{proof}
  As in the proof of Proposition \ref{prop:52}, it is enough to prove
  the equation \eqref{eq:91}. By replacing $\psiabs $ by $\psiabs -m_{\sigma
  }$, we assume without loss of generality that $m_{\sigma
  }=0$. By the continuity of the metric, the function $\psiabs$ can be
  extended to a continuous function $\ov{\psiabs }_{\sigma }$ on
  $N_{\sigma }$, where $N_{\sigma }$ is the compactification of
  $N_{\R}$ in the directions of the cone $\sigma $ (see
  \eqref{eq:48}). In this way, the function $\psiabs_{\iota ^{\ast}\ov L,\iota ^{\ast}s_{\sigma}}$ is the restriction of $\ov{\psiabs
  }_{\sigma }$ to $N(\sigma)_\R$. Fix $u_{0}\in N(\sigma )_{\R}$ and 
  write $s=\ov{\psiabs }_{\sigma }(u_{0})$. By continuity, for any 
  $\varepsilon>0$ there exists a neighbourhood $W$ of $u_0$ in $N_\sigma$ 
  such that for all $u\in W\cap N_\R$ we have $|f(u)-s|<\varepsilon$. 
  By the definition of the topology of $N_{\sigma }$ (see \eqref{eq:20}), 
  such a set $W\cap N_\R$ is of the form $U+p+\sigma$ with $U$ a 
  neighbourhood of a point $u\in N_\R$ such that $\pi_\sigma(u)=u_0$ and 
  $p\in\R\sigma$. Therefore, we conclude that for any $\varepsilon>0$ there 
  exists $u\in N_\R$ satisfying $\pi_\sigma(u)=u_0$ and $p\in \R\sigma $ 
  such that, for all $r\in p+\sigma$,
  \begin{equation*}
    s-\varepsilon < \psiabs (u+r) < s+\varepsilon .
  \end{equation*}
  Now, by definition
  \begin{displaymath}
    (\pi _{\sigma })_{\ast}(\psiabs)(u_{0})=\sup_{\stackrel{u\in N_\R}{\pi_\sigma(u)=u_0} } \psiabs(u).
  \end{displaymath}
  Thus it is clear that $(\pi _{\sigma })_{\ast}(\psiabs)(u_{0})\ge s$, 
  suppose $(\pi _{\sigma })_{\ast}(\psiabs)(u_{0})> s$. 
  Let $v\in N_\R$ satisfying $\pi_\sigma(v)=u_0$ be such
  that $\psiabs(v)> s$ and set $\varepsilon =\psiabs(v)-s>0$. By the
  previous discussion, there exists $u\in N_\R$ satisfying 
  $\pi_\sigma(u)=u_0$ and $p\in \R\sigma $ such that, for all $r\in p+\sigma$,
  \begin{equation}
    \label{eq:93}
    s-\varepsilon < \psiabs (u+r) < s+\varepsilon = \psiabs (v) .
  \end{equation}
  Write $q=v-u\in \R\sigma$, since $\sigma $ is a cone of maximal dimension 
  in $\R\sigma $, there
  exists a point $r\in (q+\sigma )\cap (p+\sigma )$. By the right
  inequality of \eqref{eq:93} $\psiabs (u+r)< \psiabs (u+q)$ and the 
  function $g(\lambda):=\psiabs (u+r+\lambda(r-q))$ of the variable $\lambda\in\R$, satisfies $g(0)<g(-1)$. 
  Furthermore, since the function $\psiabs$ is concave, so is $g$ which 
  therefore stays for $\lambda>0$ below a line of negative slope $g(0)-g(-1)$. 
  This implies $\lim_{\lambda \to +\infty}g(\lambda)=-\infty$, that is
  \begin{equation}\label{eq:94}
    \lim_{\lambda \to +\infty}\psiabs (u+r+\lambda (r-q))=-\infty.
  \end{equation}
  Since, by construction $r+\R_{\ge 0}(r-q)$ is contained in $p+\sigma$, 
  the equation \eqref{eq:94} contradicts the left inequality of
  \eqref{eq:93}. Hence, for $u_0\in N(\sigma)_\R$,
  $$(\pi_\sigma)_*(f)(u_0) = \sup_{\stackrel{u\in N_\R}{\pi_\sigma(u)=u_0} } 
  \psiabs(u) = s = \ov{\psiabs }_{\sigma }(u_{0}) = 
  \psiabs_{\iota ^{\ast}\ov L,\iota ^{\ast}s_{\sigma}}(u_0),$$
  which proves equation \eqref{eq:91}.
\end{proof}

We now interpret the inverse image of a semipositive{} toric metric by 
an equivariant map whose image intersects the principal open subset in 
terms of direct and inverse images of concave functions. 

\begin{prop}\label{prop:64}
  Let $N_{1}$ and $N_{2}$ be lattices and $\Sigma _{i}$ be a complete fan
  in $N_{i,\R}$, $i=1,2$. Let $H\colon N_{1}\to N_{2}$ be a linear map such
  that, for each $\sigma_{1} \in \Sigma _{1}$, there exists
  $\sigma _{2}\in \Sigma _{2}$ with $H(\sigma _{1})\subset \sigma
  _{2}$. Let $p\in X_{\Sigma_{2},0 }(K)$ and write $A\colon N_{1,\R}\to
  N_{2,\R}$ for the affine map $A=H+\val(p)$. Let $\Psi _{2}$ be a
  support function on $\Sigma _{2}$ and $\|\cdot\|$ a
  semipositive{} toric metric on $L_{\Psi_{2}}^{\an}$. Then
   \begin{displaymath}
     \psiabs _{\varphi_{p,H}^{\ast}\|\cdot\|}=A^{*}\psiabs _{\|\cdot\|}.
   \end{displaymath}
Moreover, the Legendre-Fenchel dual of this function is given by 
\begin{displaymath}
     \psiabs _{\varphi_{p,H}^{\ast}\|\cdot\|}^{\vee}=
     (H^{\vee})_{*}\big(\psiabs^{\vee}_{\|\cdot\|}-\val(p)\big).
\end{displaymath}
\end{prop}
\begin{proof}
The first statement is a direct consequence of Proposition
\ref{prop:58} while the second one follows from Proposition \ref{prop:101}\eqref{item:121}.
\end{proof}

Finally, we characterize the measures associated to semipositive{}
metrics. 

\begin{thm}\label{thm:16} Let  $\Psi $ be a 
  support function on $\Sigma $ and set $L=L_{\Psi}$.  Let $\|\cdot\|$
  be a semipositive{} metric on $L^{\an}$ and $\psiabs =\psiabs
  _{\|\cdot\|}$ the corresponding concave function.  Then
  \begin{equation}\label{eq:60}
    \val_{\ast}(\chern_{1}(\overline L)^{\wedge n}\land
  \delta _{X_{\Sigma }}) =n!\, \ov{\mathcal{M}}_{M}(\psiabs).    
  \end{equation}
  Moreover, the measure $\chern_{1}(\overline L)^{\wedge n}\land
  \delta _{X_{\Sigma }}$ is characterized, in the Archimedean
  case, by the equation \eqref{eq:60} and the fact of being
  toric, while in the non-Archimedean case it is given
  by 
  \begin{displaymath}
    \chern_{1}(\overline L)^{\wedge n} \land
    \delta _{X_{\Sigma }} =(\theta _{\Sigma })_{\ast}(\ee)_{\ast}n!
    \ov{\mathcal{M}}_{M}(\psiabs ). 
  \end{displaymath}
\end{thm}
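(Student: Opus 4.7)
\begin{Proof}{Proof proposal.}
The strategy is to reduce to the cases already handled: smooth metrics (Theorem~\ref{thm:18}) in the Archimedean setting and algebraic metrics (Theorem~\ref{thm:17}) in the non-Archimedean setting, and pass to the limit using the approximation provided by Theorem~\ref{thm:13}.

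By Theorem~\ref{thm:13}\eqref{item:37}, the concave function $\psi$ satisfies $|\psi-\Psi|$ bounded. In the Archimedean case, one can approximate $\psi$ uniformly by smooth strictly concave functions $\psi_l$ with $|\psi_l-\Psi|$ bounded (e.g.\ by convolution with a smoothing kernel and adding a small concave bump), and in the non-Archimedean case by rational piecewise affine concave functions with $\rec(\psi_l)=\Psi$ using Proposition~\ref{prop:22}. Let $\|\cdot\|_l=\|\cdot\|_{\psi_l}$ and $\ov L_l=(L,\|\cdot\|_l)$. By Propositions~\ref{prop:15} and~\ref{prop:61} (together with Corollary~\ref{cor:22} and Corollary~\ref{cor:12} in the non-Archimedean case), these are semipositive smooth, respectively semipositive algebraic, toric metrics, and they converge uniformly to $\|\cdot\|$. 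By Proposition~\ref{prop:2}, the measures $\mu_l := c_1(\ov L_l)^{\wedge n}\wedge\delta_{X_\Sigma}$ converge weakly to $\mu := c_1(\ov L)^{\wedge n}\wedge\delta_{X_\Sigma}$.

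Since $\Sigma$ is complete, the map $\val_K\colon X_\Sigma^{\an}\to N_\Sigma$ is proper (it is continuous with compact source in the Archimedean case, and proper in general by the discussion following \eqref{eq:89}); hence pushforward under $\val_K$ is continuous for weak convergence of finite measures. Thus $(\val_K)_*\mu_l \to (\val_K)_*\mu$ weakly on $N_\Sigma$. By Theorems~\ref{thm:18} and~\ref{thm:17}, we have $(\val_K)_*\mu_l = n!\,\ov\cM_M(\psi_l)$. On the other hand, by Proposition~\ref{prop:87}, $\cM_M(\psi_l)\to \cM_M(\psi)$ weakly on $N_\R$; since each $\Stab(\psi_l)=\Stab(\psi)=\Delta_\Psi$ (a bounded set), the total masses $\cM_M(\psi_l)(N_\R) = \Vol_M(\Delta_\Psi)$ are constant, so $\ov\cM_M(\psi_l)\to\ov\cM_M(\psi)$ weakly on $N_\Sigma$ as well. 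Equating the two limits yields~\eqref{eq:60}.

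For the uniqueness in the Archimedean case: the measures $\mu_l$ are $\SS^{\an}$-invariant (their defining Chern forms are built from $\SS^{\an}$-invariant data), and weak limits preserve this invariance, so $\mu$ is $\SS^{\an}$-invariant. The fibres of $\val_K$ are precisely the $\SS^{\an}$-orbits in $X_\Sigma^{\an}$, each carrying a unique $\SS^{\an}$-invariant probability measure; thus an $\SS^{\an}$-invariant Borel measure on $X_\Sigma^{\an}$ is determined by its pushforward under $\val_K$, giving the characterization. For the non-Archimedean formula, each approximant satisfies $\mu_l = (\theta_\Sigma)_*(\ee_K)_*\bigl(n!\,\ov\cM_M(\psi_l)\bigr)$ by Theorem~\ref{thm:17}; since $\theta_\Sigma\circ\ee_K\colon N_\Sigma\to X_\Sigma^{\an}$ is continuous and proper (Proposition-Definition~\ref{prop:54}), the pushforward is continuous for weak convergence and we pass to the limit. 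The main technical point to check is the weak continuity of the pushforwards of unbounded $\sigma$-finite measures, but since $\ov\cM_M(\psi)$ is finite of total mass $\Vol_M(\Delta_\Psi)$ (using that $\Delta_\Psi=\Stab(\psi)$ is a polytope), everything reduces to weak convergence of finite measures on compact spaces.
\end{Proof}
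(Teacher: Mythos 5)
The proposal follows the same limit-and-approximation scheme as the paper: approximate $\psi$, apply Theorems~\ref{thm:18}/\ref{thm:17} to the approximants, and pass to the limit using Propositions~\ref{prop:2},~\ref{prop:39}, and~\ref{prop:87}. However, there is a gap in your construction of the Archimedean approximating sequence, and it is precisely the gap that the paper's route is designed to avoid.

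You write ``Let $\|\cdot\|_l = \|\cdot\|_{\psi_l}$ \dots\ By Propositions~\ref{prop:15} and~\ref{prop:61}\dots these are semipositive smooth \dots\ toric metrics.'' This misreads Proposition~\ref{prop:15}: it asserts that a \emph{given} smooth toric metric is semipositive if and only if its $\psi_{\|\cdot\|}$ is concave; it does not say that every smooth concave $\psi$ with $|\psi-\Psi|$ bounded produces, via Proposition~\ref{prop:14}, a metric which is \emph{smooth} on all of $X_\Sigma^{\an}$. Smoothness of $\psi_l$ on $N_\R$ only controls the metric on the open orbit $X_0^{\an}$; whether the extension across $X_\Sigma^{\an}\setminus X_0^{\an}$ is $C^\infty$ in the analytic topology is a genuine additional constraint (compare the Fubini--Study case, where $\psi(u)+u = -\tfrac12\log(1+|z|^2)$ happens to be smooth in $z$, something not guaranteed for an arbitrary smooth concave $\psi_l$). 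Mollifying $\psi$ ``and adding a small concave bump'' does not address this, and in fact adding a bounded strictly concave correction is impossible on all of $N_\R$. So you cannot directly invoke Theorem~\ref{thm:18} for your $\|\cdot\|_{\psi_l}$.

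The paper's proof sidesteps this exactly: it starts from the \emph{given} approximating sequence of semipositive smooth (resp.\ algebraic) metrics $\|\cdot\|_l$ that comes with the definition of semipositivity, averages each over $\SS^{\an}$, and then Proposition~\ref{prop:30} (resp.\ Corollary~\ref{cor:23}) guarantees that $(\|\cdot\|_l)_{\SSinv}$ is again smooth (resp.\ algebraic), toric, and semipositive. It sets $\psi_l = \psi_{(\|\cdot\|_l)_{\SSinv}}$, applies Theorem~\ref{thm:18}/\ref{thm:17}, uses Proposition~\ref{prop:4} to see that $X_\Sigma^{\an}\setminus X_0^{\an}$ (hence $N_\Sigma\setminus N_\R$) is a null set for the limit measure, and then concludes by weak convergence of Monge--Amp\`ere measures. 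Your handling of total masses and the convergence $\ov\cM_M(\psi_l)\to\ov\cM_M(\psi)$ on $N_\Sigma$ is fine; the non-Archimedean half of your argument works because Proposition~\ref{prop:22} plus Corollary~\ref{cor:12} does put you in the hypotheses of Theorem~\ref{thm:17}. To repair the Archimedean half, you should either average the given approximating sequence as the paper does, or add a lemma that the metric $\|\cdot\|_{\psi_l}$ is smooth on $X_\Sigma^{\an}$ for your specific class of smoothed $\psi_l$.
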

\begin{proof}
  For short, denote $\mu =\val_{\ast}(\chern_{1}(\overline L)^{\wedge
    n}\land \delta _{X_{\Sigma }})$.  Let $\|\cdot\|_{l}$ be a
  sequence of semipositive smooth metrics (respectively metrics with a
  semipositive model) converging to $\|\cdot\|$. By
  Proposition~\ref{prop:2}, the measures
  $\chern_{1}(L,\|\cdot\|_{l})^{\wedge n}\land \delta _{X_{\Sigma }}$
  converge to $\chern_{1}(\ov L)^{\wedge n}\land \delta _{X_{\Sigma
    }}$. Therefore, the measures
  $\val_{\ast}(\chern_{1}(L,\|\cdot\|_{l})^{\wedge n}\land \delta
  _{X_{\Sigma }})$ converge to the measure $\mu $ on $N_{\Sigma }$.
  Theorem~\ref{prop:4}\eqref{item:12} implies that the measure of
  $X_{\Sigma }^{\an}\setminus X_{0}^{\an}$ with respect to
  $\chern_{1}(\ov L)^{\wedge n}\land \delta _{X_{\Sigma }}$ is
  zero. Therefore $N_{\Sigma }\setminus N_{\R}$ has $\mu $-measure
  zero.  Denote $\psiabs _{l}=\psiabs _{(\|\cdot\|_{l})_{\SSinv}}$. By
  Proposition \ref{prop:87}, the measures $\mathcal{M}_{M}(\psiabs
  _{l})$ converge to the measure $\mathcal{M}_{M}(\psiabs)$. Thus $\mu
  |_{N_{\R}}=n!  \mathcal{M}_{M}(\psiabs)$ by (\ref{eq:56}) and
  (\ref{eq:57}). Then, \eqref{eq:60} follows from this and the fact
  that the measure of $N_{\Sigma }\setminus N_{\R }$ is zero.

  The last statement of the theorem follows from Theorem \ref{thm:18}
  in the Archimedean case and Corollary \ref{cor:34} in the
  non-Archimedean case by a limit argument.
\end{proof}

\begin{cor}\label{cor:30}
  For $i=0,\dots, n-1$, let $\Psi _{i}$ be a support function on
  $\Sigma $ and set $L_{i}=L_{\Psi_{i}}$.  Let $\|\cdot\|_{i}$ be a
  semipositive{} metric on $L^{\an}_{i}$ and $\psiabs_{i} =\psiabs
  _{\|\cdot\|_{i}}$ the corresponding concave function.  Then
  \begin{displaymath}
    \val_{\ast}(\chern_{1}(\overline L_{0})\land\dots\land \chern_{1}(\overline L_{n-1})\land
  \delta _{X_{\Sigma }}) =n! \,\ov{\mathcal{M}}_{M}(\psiabs_{0},\dots,\psiabs_{n-1}).    
  \end{displaymath}
\end{cor}
\begin{proof}
  This follows from Theorem \ref{thm:16} by multilinearity.
\end{proof}

\section{Adelic toric metrics}
\label{sec:appr-integr-adel}

Now let $(\K,\mathfrak{M})$ be an adelic field
(Definition \ref{def:6}). We fix a complete fan $\Sigma$ in $N_{\R}$
and a virtual support function $\Psi$ on $\Sigma$. Let $X$ be the
associated toric variety and  
$(L,s)$ the associated toric line bundle and section. 


\begin{defn} \label{def:59} A \emph{toric metric} on~$L$
\index{toric metric}%
is a family  
  $(\|\cdot\|_{v})_{v\in \mathfrak{M}}$, where $\|\cdot\|_{v}$ is a toric
  metric on $L_{v}^{\an}$. A toric metric is \emph{adelic}
\index{toric metric!adelic}%
if
  $\psiabs_{\|\cdot\|_v} =\Psi$ for all but finitely many $v$.
\end{defn}

The following result is a direct consequence of Theorem \ref{thm:13}.

\begin{prop} \label{prop:102} With the previous notations,
  \begin{enumerate}
  \item there
    is a bijection between the set of semipositive{} adelic toric metric
    on $L$ and the set of families of continuous concave functions 
    $(\psiabs _{v})_{v\in \mathfrak{M}}$ on $N_{\R}$ such that $|\psiabs _{v}-\Psi |$ is
    bounded and $\psiabs _{v}=\Psi $ for all but finitely many $v$;
  \item there
    is a bijection between the set of semipositive{} adelic toric metric
    on $L$
    and the set of families of continuous concave functions 
    $(\psiabs ^{\vee}_{v})_{v\in \mathfrak{M}}$ on $\Delta _{\Psi }$ 
    such that $\psiabs ^{\vee}_{v}=0$ for all but finitely many $v$.
  \end{enumerate}
\end{prop}

For global fields, the notions of quasi-algebraic toric metric and
of adelic toric metric agree. 

\begin{thm} \label{thm:23} Let $\K$ be a global field. A
  toric metric on $L$ is
  quasi-algebraic (Definition \ref{def:64}) if and only if it is an adelic toric metric.
\end{thm}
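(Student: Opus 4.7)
The plan is to establish the two implications separately, using the canonical toric model over $\K^{\circ}_{S}$ as a bridge between the local and global descriptions. Throughout, $\Psi$, $\Sigma$, $X$, $L$ refer to the fixed combinatorial/geometric data; the canonical metric at a place $v$ is $\|\cdot\|_{\can,v}$, and by Example~\ref{exm:28} and Proposition-Definition~\ref{def:57} its associated function is precisely $\Psi$.

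For the direction adelic toric $\Rightarrow$ quasi-algebraic, let $(\|\cdot\|_{v})_{v}$ be an adelic toric metric, and choose a finite set $S\subset\mathfrak{M}_{\K}$ containing the Archimedean places such that $\psi_{\|\cdot\|_{v}}=\Psi$ for all $v\notin S$. By Proposition~\ref{prop:14}, the metric $\|\cdot\|_{v}$ is then the canonical metric $\|\cdot\|_{\can,v}$ for every $v\notin S$. The canonical toric scheme $\cX_{\Sigma,\K^{\circ}_{S}}$ constructed from the fan $\Sigma$ over $\Spec(\K^{\circ}_{S})$ is a proper model of $X$ (since $\Sigma$ is complete), and equipped with the $\T$-Cartier divisor $D_{\Psi}$ it gives a proper model $(\cX_{\Sigma,\K^{\circ}_{S}},\cO(D_{\Psi}),1)$ of $(X,L)$ over $\K^{\circ}_{S}$. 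Its localization at any $v\notin S$ is the canonical toric model of $(X_{v},L_{v})$ over $\K_{v}^{\circ}$, which by Example~\ref{exm:28} induces exactly $\|\cdot\|_{\can,v}$. Hence the given adelic family is induced by a proper model outside $S$, proving quasi-algebraicity.

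For the direction quasi-algebraic $\Rightarrow$ adelic toric, let $(\|\cdot\|_{v})_{v}$ be a quasi-algebraic toric metric with defining data $(S,e,\cX,\cL)$, so that $(\cX,\cL,e)$ is a proper model of $(X,L)$ over $\K^{\circ}_{S}$ inducing $\|\cdot\|_{v}$ for $v\notin S$. Consider in parallel the canonical toric model $(\cX_{\Sigma,\K^{\circ}_{S}},\cO(D_{e\Psi}),e)$. Both are proper models of $(X,L^{\otimes e})$ over $\K^{\circ}_{S}$, and the isomorphism of line bundles $\cL|_{X}\simeq \cO(D_{e\Psi})|_{X}=L^{\otimes e}$ holds generically. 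By Nagata compactification and standard noetherian spreading-out arguments, there exists a finite set $S'\supset S$ and a proper model $\cX'$ of $X$ over $\K^{\circ}_{S'}$ together with morphisms of models $\cX'\to \cX\times_{\K^{\circ}_{S}}\K^{\circ}_{S'}$ and $\cX'\to \cX_{\Sigma,\K^{\circ}_{S'}}$. The pullbacks of $\cL$ and of $\cO(D_{e\Psi})$ to $\cX'$ coincide on the generic fiber, so they differ by a line bundle associated to a vertical divisor whose components all lie above a finite set of closed points of $\Spec(\K^{\circ}_{S'})$; enlarging $S'$ further to absorb these places, we may assume the two pullbacks are isomorphic over $\K^{\circ}_{S'}$. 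By Proposition~\ref{prop:21}, for every $v\notin S'$ the metrics on $L^{\an}_{v}$ induced by $(\cX,\cL,e)_{v}$ and by $(\cX_{\Sigma},\cO(D_{e\Psi}),e)_{v}$ coincide; the latter is $\|\cdot\|_{\can,v}$, so $\psi_{\|\cdot\|_{v}}=\Psi$ for $v\notin S'$, which is the adelic condition.

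The main obstacle is the spreading-out step in the forward direction: one has to compare two \emph{a priori} unrelated proper models over $\K^{\circ}_{S}$ and show that, after enlarging $S$ finitely many times, the local metrics they induce agree. The key ingredients are Nagata compactification to produce a common dominating proper model, the fact that two line bundles on a proper model of $X$ with isomorphic generic fiber differ by a divisor supported on finitely many special fibers, and Proposition~\ref{prop:21} to convert an isomorphism of models into equality of induced metrics. Everything else is a direct translation between the combinatorial and metric descriptions already set up in Theorems~\ref{thm:13} and \ref{thm:23}'s preceding sections.
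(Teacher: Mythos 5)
Your proof is correct and follows essentially the same strategy as the paper's: one direction is immediate from the canonical toric model over $\K^{\circ}_{S}$, and the other reduces to a Noetherian spreading-out argument showing that the given model and the canonical model agree after removing finitely many more places. Where the paper simply observes that the isomorphism between the generic fibres (and its inverse) spreads out to an isomorphism of models over $\K^{\circ}_{S'}$ for some larger finite $S'$, you reconstruct this using a common dominating proper model, the vertical-divisor argument, and Proposition~\ref{prop:21}; this is a heavier but equivalent execution of the same idea, not a genuinely different route.
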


\begin{proof}
  Let $(\|\cdot\|_{v})_{v\in \mathfrak{M}_{\K}}$ be a metric on $L$ and
  write $\ov L=(L, (\|\cdot\|_{v})_{v\in \mathfrak{M}_{\K}})$.  Suppose first
  that $\ov L$ is toric and quasi-algebraic. Let $S\subset
  \mathfrak{M}_{\K}$ be a finite set containing the Archimedean places,
  $\K^{\circ}_{S}$ as in Definition \ref{def:66},
  $e\ge 1$
  an integer 
  and $(\cX,\cL)$ a proper model over $\K^{\circ}_{S}$ of
  $(X, L^{\otimes e})$ so that $\|\cdot \|_{v}$ is induced by
  the localization $\cL_{v}$ for all $v\notin S$. The generic fibre of
  $(\cX,\cL)$ is isomorphic with that of the canonical model
  $(\cX_{\Sigma},\cO(D_{e\Psi}))$  
(Definitions \ref{def:43} and \ref{def:55}). Since $\K^{\circ}_{S}$ 
  is Noetherian, this isomorphism and its inverse are defined 
  over $\K^{\circ}_{S'}$ for certain finite subset $S'$ containing
  $S$. Thus, enlarging the
  finite set $S$ if necessary, we can suppose that, over $\K^{\circ}_{S}$,
  $(\cX,\cL)$ agrees with the canonical model $(\cX_{\Sigma},
  \cO(D_{e\Psi}))$.
  Hence, $\|\cdot\|_{v}=\|\cdot\|_{v,e\Psi}^{1/e}=\|\cdot \|_{v,\Psi}$ 
  for all places~$v\notin S$. In consequence, it is an adelic toric metric.

  Conversely, suppose that $\ov L$ is a toric adelic
  metrized line bundle. Let $S$ be the union of the set of Archimedean
  places and $\{v\in \mathfrak{M}_{\K}| \psiabs_{v}\ne \Psi\}$. By
  definition, this is a finite set. Let $(\cX_{\Sigma}, \cO(D_{\Psi}))$
  be the canonical model over $\K^{\circ}_{S}$ of $(X_{\Sigma},
  L)$. Then $\|\cdot\|_{v}$ is the metric induced by this model, for
  all $v \notin S$. Hence $\ov L$ is quasi-algebraic.
\end{proof}


\chapter{Height of toric varieties}
\label{sec:heigh-toric-vari}

In this chapter, we will state and prove a formula to compute the
height of a toric variety with respect to a toric line bundle.

\section{Local heights of toric varieties}
\label{sec:local-heights-toric}

Let $K$ be either $\R$, $\C$ or a complete field with respect to an
absolute value associated to a nontrivial discrete valuation. Let
$N\simeq \Z^{n}$ be a lattice and 
$M=N^{\vee}$ the dual lattice. We will use the notations of \S
\ref{sec:toric-varieties} and we recall the definition of $\lambda
_{K}$ in \eqref{eq:95}.  

Let $\Sigma $ be a complete fan on $N_{\R }$ and $X_{\Sigma }$ the
corresponding proper toric variety. In Definition \ref{def:3} we
recalled the definition of local heights. These local heights depend,
not only on cycles and metrized line bundles, but also on the choice
of sections of the involved line bundles.  For toric line bundles,
Proposition-Definition~\ref{def:57}, provides us with a distinguished
choice of a toric metric, the canonical
metric. This metric is DSP and, if the line bundle is generated
by global sections, it is semipositive{} (see Example
\ref{exm:39}). By comparing any DSP 
metric to the canonical
metric, we can define a local height for toric line bundles that is
independent from the choice of sections.
 
\begin{defn}\label{def:35}
  Let $\ov L_{i}=(L_{i},\|\cdot\|_{i})$, $i=0,\dots,d$, be a family of
  toric line bundles, with DSP
  toric metrics. Denote by $\ov
  L_{i}^{\can}$ the same line bundles equipped with the canonical
  metric. Let $Y$ be a $d$-dimensional irreducible subvariety of $X_{\Sigma
  }$ and $\varphi\colon Y'\to Y$ a birational morphism with $Y'$
  projective. Then the \emph{toric local height} of $Y$  with respect
  to $\ov 
  L_{0},\dots,\ov L_{d}$
\index{height of cycles!toric local}%
\nomenclature[ahL04]{$\htor_{\ov L_{0},\dots,\ov L_{d}}(Y)$}{toric local height}%
is 
  \begin{displaymath}
    \htor _{\ov L_{0},\dots,\ov L_{d}}(Y)= 
    \h_{\varphi^{\ast}\ov L_{0},\dots,\varphi^{\ast}\ov
      L_{d}}(Y';s_{0},\dots,s_{d})- 
    \h_{\varphi^{\ast} \ov L_{0}^{\can},\dots,\varphi^{\ast} \ov
      L_{d}^{\can}}(Y';s_{0},\dots,s_{d}), 
  \end{displaymath}
  where 
  $s_{0},\dots,s_{d}$ are sections meeting 
  $Y'$ properly. We extend the definition to $d$-dimensional cycles by
  linearity. When $\ov L_{0}=\dots =\ov L_{d}=\ov L$ we
  will denote
  \begin{displaymath}
    \htor_{\ov L}(Y)=\htor_{\ov L_{0},\dots,\ov L_{d}}(Y).
  \end{displaymath}
\end{defn}

\begin{rem} \label{rem:18} 
  Even if the notion of toric local height in the
  above definition differs from that of local height of Definition
  \ref{def:3}, we will be able to use it to compute global heights
  because, for toric subvarieties and closures of orbits, the sum
  over all places of the local canonical heights is
  zero (see Proposition \ref{prop:88}). This is the case, in particular, for
  the height of the total space $X_{\Sigma }$.

  By Theorem \ref{thm:1} (\ref{item:8}, \ref{item:9}), the toric local
  height $\htor _{\ov L_{0},\dots,\ov L_{d}}(Y)$ does not depend on
  the choice of $Y'$ nor on the choice of sections. However, it does
  depend on the toric structure of the line bundles (see Definition
  \ref{def:71}), because the canonical metric depends on the toric
  structure.
\end{rem}

\begin{prop}\label{prop:85}
The toric local height is symmetric and multilinear
    with respect to tensor product of metrized toric line bundles. 
In particular, let $\Sigma$ be a complete fan, $\ov L_{i}$ a family of
  $d+1$ toric line bundles with DSP
  toric metrics  and $Y$ an algebraic cycle of $X_{\Sigma}$ of
dimension $d$. Then 
    \begin{equation}
      \label{eq:118}
      \htor_{\ov
      L_{0},\dots,\ov L_{d}}(Y)= 
      \frac{1}{(d+1)!}
\sum_{j=0}^d (-1)^{d - j} \sum_{0 \le i_0
  < \cdots < i_j \le d} \htor_{\ov L_{i_{0}}\otimes \cdots \otimes \ov
    L_{i_{j}}} (Y).
    \end{equation}  
\end{prop}

\begin{proof} It is enough to treat the case when $Y$ is a
  $d$-dimensional irreducible subvariety. Let $\varphi\colon Y'\to Y$
  be a birational map with $Y'$ projective. By abuse of language we
  will denote $\varphi^{\ast}\ov L_{i}$ by $\ov L_{i}$. By the Moving
  Lemma, we 
  can choose sections $s_{i}$ 
  of $L_{i}$,   $i=0,\dots, 
  d$, such that $s_{0},\dots,s_{d}$ meet $Y'$ properly.

 The symmetry of the toric local height follows readily from the
analogous property for the local height, see Theorem \ref{thm:1}\eqref{item:7}.
For the multilinearity, let $\ov L_{d}'$ be a further metrized line bundle.
Again by the moving lemma,
  there is a  section 
  $s_{d}'$ of $L_{d}'$ such that $s_{0},\dots, s_{d-1},
  s_{d}'$ meets $Y'$  properly too. 
  By Theorem \ref{thm:1}\eqref{item:7}, 
  \begin{multline*}
      \h_{\ov
      L_{0},\dots,\ov L_{d-1},\ov L_{d}\otimes \ov L_{d}'}(Y'; s_{0},\dots, s_{d-1},
 s_{d}\otimes s_{d}')=          \h_{\ov
      L_{0},\dots,\ov L_{d}}(Y'; s_{0},\dots, 
 s_{d})\\+          \h_{\ov
      L_{0},\dots,\ov L_{d-1}, \ov L_{d}'}(Y'; s_{0},\dots, s_{d-1},
 s_{d}')   
  \end{multline*} 
and a similar formula holds for the canonical metric.
By the definition of the toric local height,  $$ \htor_{\ov
      L_{0},\dots,\ov L_{d-1},\ov L_{d}\otimes \ov L_{d}'}(Y)= \htor_{\ov
      L_{0},\dots,\ov L_{d}}(Y)+ \htor_{\ov
      L_{0},\dots,\ov L_{d-1}, \ov L_{d}'}(Y).$$
The inclusion-exclusion formula follows readily from the symmetry and
the multilinearity of the local toric height. 
\end{proof}

\begin{defn}
  \label{def:34} Let $(\ov L, s)$ be a metrized toric line bundle with a
  toric section. Then
the \emph{roof function}\index{roof function} associated to $(\ov L,s)$ is the
concave function $\vartheta_{\ov L,s}\colon \Delta_{\Psi }\to \R$
defined as  
\begin{displaymath}
  \vartheta_{\ov L,s}=\psiabs_{\ov L,s}^{\vee}= \lambda_{K}\phiK_{\ov L,s}^{\vee}.
\end{displaymath}
\nomenclature[g08]{$\vartheta_{\ov L,s}$}{roof function}%
The concave function  $\phiK_{\ov L,s}^{\vee}$ will be called the
\emph{rational roof function}.
\index{roof function!rational}%
When the toric section
$s$ is clear from the context, we will denote $\psiabs_{\ov L,s}$ and
$\vartheta_{\ov L,s}$ by $\psiabs_{\|\cdot\|}$ and $\vartheta
_{\|\cdot\|}$ respectively. 
\end{defn}

In the non-Archimedean case, recall that the function $\phiK _{\|\cdot\|}$ is not
invariant under field extensions (see Proposition \ref{prop:115}) but
it has the advantage that, if the
metric $\|\cdot\|$ 
is algebraic, then it is rational with respect to the lattice $N$.  
By contrast, the function $\psiabs_{\|\cdot\|}$ is invariant under field
extensions. It is not rational, but it takes values in $\lambda
_{K}\Q$ on $\lambda _{K}N_{\Q}$. This is the function that appears in
\cite{BPS09}. In particular, the roof function is also invariant under
field extension.

\begin{lem}\label{lemm:5}
  Let ${K'}/K$ be a finite extension of valued fields of the type considered
  at the beginning of this section.
  Let $\ov L,s$ be as before and $\ov L_{{K'}},s_{{K'}}$ the toric metrized line
  bundle  with toric section obtained by base change. Then
  \begin{displaymath}
    \vartheta _{\ov L_{{K'}},s_{{K'}}}=\vartheta _{\ov L,s}.
  \end{displaymath}
\end{lem}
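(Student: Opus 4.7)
The plan is to observe that the function $f_{\ov L,s}$ defined just before the lemma is already invariant under finite field extension, and then to take Legendre-Fenchel duals. All the work is bookkeeping with the two scalars $e_{H/K}$ and $\lambda_H$ (recall that $\lambda_K = e_{H/K}\lambda_H$).

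First I would check how the function $\psi_{\ov L,s}$ itself transforms. Let $\pi\colon X^{\an}_{\Sigma,H}\to X^{\an}_{\Sigma,K}$ be the natural map given by restriction of seminorms, and let $p'\in X_{0,H}^{\an}$ with image $p\in X_{0,K}^{\an}$. For any $m\in M$ the two valuations satisfy
\begin{displaymath}
\lambda_H\langle m,\val_H(p')\rangle = -\log|\chi^{m}(p')| = -\log|\chi^{m}(p)| = \lambda_K\langle m,\val_K(p)\rangle,
\end{displaymath}
so $\val_H(p')=e_{H/K}\val_K(p)$. Since $\ov L_H$ is obtained by pullback along $\pi$ and $s_H=\pi^{\ast}s$, one has $\|s_H(p')\|_H=\|s(p)\|_K$ as real numbers. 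Setting $u=\val_K(p)$ and $u'=\val_H(p')=e_{H/K}u$, this yields
\begin{displaymath}
\psi_{\ov L_H,s_H}(u')=\frac{\log\|s_H(p')\|_H}{\lambda_H}=\frac{\lambda_K}{\lambda_H}\cdot\frac{\log\|s(p)\|_K}{\lambda_K}=e_{H/K}\,\psi_{\ov L,s}(u'/e_{H/K}),
\end{displaymath}
which in the notation of Proposition~\ref{prop:8} reads $\psi_{\ov L_H,s_H}=\psi_{\ov L,s}\,e_{H/K}$. This generalizes Proposition~\ref{prop:23}\eqref{item:28} from the algebraic to the arbitrary toric case (the argument only uses continuity and $\SS^{\an}$-invariance).

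Next I would substitute into the definition $f_{\ov L,s}(u)=\lambda_K\psi_{\ov L,s}(u/\lambda_K)$. For $u\in N_{\R}$,
\begin{displaymath}
f_{\ov L_H,s_H}(u)=\lambda_H\,\psi_{\ov L_H,s_H}(u/\lambda_H)=\lambda_H\,e_{H/K}\,\psi_{\ov L,s}\!\left(\frac{u}{e_{H/K}\lambda_H}\right)=\lambda_K\psi_{\ov L,s}(u/\lambda_K)=f_{\ov L,s}(u),
\end{displaymath}
so the two scaled potentials coincide. Taking Legendre-Fenchel duals and using that $(\cdot)^{\vee}$ is a bijection on closed concave functions (Proposition~\ref{prop:13}\eqref{item:74} applies once one knows $f_{\ov L,s}\in\ov{\mathscr{P}}$, which follows from Theorem~\ref{thm:13}\eqref{item:37} applied after rescaling), we conclude
\begin{displaymath}
\vartheta_{\ov L_H,s_H}=f_{\ov L_H,s_H}^{\vee}=f_{\ov L,s}^{\vee}=\vartheta_{\ov L,s}.
\end{displaymath}

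The only genuine obstacle is the first step, namely verifying the scaling relation for $\psi_{\ov L,s}$ in the general semipositive/DSP case rather than the algebraic case already covered by Proposition~\ref{prop:23}. But since both sides are defined purely in terms of the metric, the map $\pi$, and the valuation, the computation above is essentially forced; the rest of the proof is a two-line substitution.
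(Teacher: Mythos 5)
Your proof is correct and matches the paper's intended argument: the lemma carries no explicit proof in the paper, but the preceding paragraph already states that $f_{\ov L,s}$ is invariant under field extensions, which forces its Legendre--Fenchel dual $\vartheta_{\ov L,s}$ to be invariant as well. Your scaling computation $\psi_{\ov L_H,s_H}=\psi_{\ov L,s}\,e_{H/K}$ correctly extends Proposition~\ref{prop:23}\eqref{item:28} from the algebraic to the general toric case, since that proof only manipulates the defining formula~\eqref{eq:98} for $\psi$, the compatibility of norms under restriction of seminorms, and the relation $\lambda_K=e_{H/K}\lambda_H$ (with the Archimedean case being trivial since then $\lambda_K=\lambda_H=1$).
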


In case $\phiK_{\|\cdot\|}$ is a piecewise affine concave function,
$\vartheta_{\|\cdot\|}$ and $\phiK_{\|\cdot\|}^{\vee}$ parameterize the
upper envelope of some extended polytope, as explained in
Lemma~\ref{lemm:17}, hence the terminology ``roof function''.  In case
$K$ is non-Archimedean and $\|\cdot\|$ is algebraic, the function
$\phiK_{\|\cdot\|}^{\vee}$ is a rational piecewise affine concave function.

\begin{thm}\label{thm:19} Let $\Sigma$ be a complete fan on $N_{\R}$.
  Let $\ov L=(L,\|\cdot\|)$ be a toric line bundle on $X_{\Sigma }$
  equipped  with a semipositive toric metric.  Choose any toric section 
  $s$ of $L$, let $\Psi $ be the associated support function on $\Sigma $ 
  and put $\Delta _{\Psi }=\Stab (\Psi )$ for the associated polytope. Then,
  the toric local height of $X_{\Sigma }$ with respect to $\ov L$ is
  given by
  \begin{equation}\label{eq:108}
    \htor_{\ov L}(X_{\Sigma })=(n+1)!\int _{\Delta _{\Psi
      }}\vartheta_{\ov L,s}\dd \Vol_{M}=
    (n+1)!\, \lambda _{K} \int _{\Delta _{\Psi
      }}\phiK _{\ov L,s}^{\vee}\dd \Vol_{M},
  \end{equation}
  where $\dd \Vol_{M}$ is the unique Haar measure of $M_{\R}$ such
  that the covolume of $M$ is one and $\phiK_{\ov L,s}^{\vee}$ is the
  Legendre-Fenchel dual to the function $\phiK_{\ov L,s}$ (Definition \ref{def:68}).
\end{thm}
\begin{proof}
  We note that, by Theorem \ref{thm:13}\eqref{item:38}, the function
  $\psiabs_{\ov L,s}$ is concave because the metric $\|\cdot\|$ on 
  $L^{\an}$ is semipositive. For short, we set $\Delta = \Delta _{\Psi }$, 
  $\psiabs =\psiabs_{\|\cdot\|}$ and $\vartheta =\psiabs^{\vee}$.

  We first reduce to the case of an ample line bundle.
  Let $\Sigma _{\Delta
  }$ be the fan associated to $\Delta $ as in Remark
  \ref{rem:11}. There is a toric morphism $\varphi\colon X_{\Sigma }\to
  X_{\Sigma _{\Delta }}$. By Theorem \ref{thm:13}, the
  function $\psiabs ^{\vee}$ defines a semipositive{} metric
  $\|\cdot\|_{0}$ on the line bundle 
  $\mathcal{O}(D_{\Psi _{\Delta }})^{\an}$ over $X_{\Sigma _{\Delta
    }}$. We denote $\ov L_{0}=(\mathcal{O}(D_{\Psi 
    _{\Delta }}),\|\cdot\|_{0})$. Then there is an isometry
  $\varphi^{\ast}(\ov L_{0})=\ov L$. By Corollary \ref{cor:20} there is
  an isometry $\varphi^{\ast}(\ov L_{0}^{\can})=\ov L^{\can}$.

  If the dimension of $\Delta $ is less than $n$, then the right-hand
  side of equation \eqref{eq:108} is zero. Moreover, 
  $n=\dim(X_{\Sigma })>\dim(X_{\Sigma _{\Delta }})$ and the metrized
  line bundles $\ov L$ and $\ov L ^{\can}$ come from a variety of
  smaller dimension. Therefore, by Theorem \ref{thm:1}\eqref{item:8}, 
  the left-hand side 
  of the equation \eqref{eq:108} is also zero, because $\varphi_{*}
  X_{\Sigma}=0$. If $\Delta $ has dimension $n$
  then $\varphi$ is a birational morphism, so, by Theorem
  \ref{thm:1}\eqref{item:8}, 
  \begin{displaymath}
    \htor_{\ov L}(X_{\Sigma })=\htor_{\ov L_{0}}(X_{\Sigma _{\Delta }}).
  \end{displaymath}
  Therefore it is enough to prove the theorem for $X_{\Sigma _{\Delta
    }}$. By construction, the fan $\Sigma _{\Delta }$ is
  regular; hence the variety $X_{\Sigma _{\Delta }}$ is
  projective and $L_{0}$ is ample. Thus we are reduced to prove the
  theorem in the case when $\Sigma $ is
  regular and $L$ is ample. 
  
  Now the proof is done by
  induction on $n$, the dimension of $X_{\Sigma 
  }$. 
  If $n=0$ then $X_{\Sigma }=\P^{0}$, $\Psi =0$, $\Delta =\{0\}$ and 
  $L=\mathcal{O}(D_{0})=\mathcal{O}_{\P^{0}}$.
  By the equation \eqref{eq:98},  $\log \|s\|=\psiabs
  (0)$ and $\log \|s\|_{\can}=\Psi 
  (0)=0$. The Legendre-Fenchel dual of $\psiabs$ satisfies
  $\vartheta (0)=- 
  \psiabs (0)$. By the equation \eqref{eq:96}, $\h_{\ov L}(X_{\Sigma
  };s)=-\psiabs (0)$ and $\h_{\ov L^{\can}}(X_{\Sigma
  };s)=0$. Therefore
  \begin{displaymath}
    \htor_{\ov L}(X_{\Sigma
    })=-\psiabs(0)=\vartheta (0)=1!\, \int _{\Delta 
      }\vartheta \dd \Vol_{M}.
  \end{displaymath}
Let $n\ge 1$ and let $s_{0},\dots, s_{n-1}$ be rational sections of
$\cO(D_{\Psi})$ such that $s_{0},\dots, s_{n-1},s$ intersect
$X_{\Sigma}$ properly. 
By the construction of local heights (Definition \ref{def:3}),
\begin{align} \label{eq:100}
\h_{\ov L}(X_{\Sigma}; s_{0},\dots,  s_{n-1},s)
=\h_{\ov L}(\div(s); & s_{0},\dots,
  s_{n-1})\\
&-  \int_{X_{\Sigma}^{\an}}\log\| s\| \chern_{1}(\ov L)^{\wedge n}\wedge
\delta_{X_{\Sigma}} \nonumber
\end{align}
and a similar formula holds for the canonical metric. 

For each facet $F$ of $\Delta $, let $v_{F}\in N$ be as in Notation
\ref{def:79}. Since  $L$ is ample, Proposition \ref{prop:65} implies 
\begin{equation}\label{eq:104}
\h_{\ov L}(\div(s);  s_{0},\dots,
  s_{n-1})= \sum_{F}-\langle F, v_{F}\rangle 
\h_{\ov L}(V(\tau_{F}); s_{0},\dots,
  s_{n-1}),
\end{equation}
where the sum is over the facets $F$ of $\Delta$.
Observe that the local height of $V(\tau_{F})$ with respect to the
metrized line bundle $\ov L$ coincides with the local height
associated to the restriction of $\ov L$ to this subvariety. Moreover
by Corollary \ref{cor:19}, 
the restriction of the canonical metric of $L^{\an}$ to this subvariety agrees
with the canonical metric of $L^{\an}|_{V(\tau_{F})}$.
Hence, by 
substracting from the equation \eqref{eq:104} the analogous formula for the
canonical metric, we obtain
\begin{align}
  \label{eq:105}
\sum_{F}-\langle F, v_{F}\rangle 
\htor_{\ov L|_{V(\tau_{F})}}(V(\tau_{F})) = \h_{\ov L}(\div(s);  &s_{0},\dots,
  s_{n-1})\\
  &\nonumber -\h_{\ov L^{\can}}(\div(s);  s_{0},\dots,
  s_{n-1}).
\end{align}

Theorem \ref{prop:4}\eqref{item:12} implies that the measure of
$X_{\Sigma}^{\an}\setminus X_{\Sigma,0}^{\an}$ with respect to $\chern_{1}(\ov L)^{\wedge n}\wedge
\delta_{X_{\Sigma}}$ is zero. Hence, 
\begin{displaymath}
 \int_{X_{\Sigma}^{\an}}\log\| s\| \chern_{1}(\ov L)^{\wedge n}\wedge
\delta_{X_{\Sigma}}=  \int_{X_{\Sigma,0}^{\an}}\log\| s\| \chern_{1}(\ov L)^{\wedge n}\wedge
\delta_{X_{\Sigma}}.  
\end{displaymath}
By the equation \eqref{eq:98}, $\log\| s\|
=\val^{*}(\psiabs)$, where $\val$ is the valuation map
introduced in the
diagram \eqref{eq:89}.  Moreover
\begin{align*}
\int_{X_{\Sigma,0}^{\an}}\val^{*}(\psiabs) \chern_{1}(\ov L)^{\wedge n}\wedge
\delta_{X_{\Sigma}}
=& \int_{N_{\R}} \psiabs 
\, \val_{*}(\chern_{1}(\ov L)^{\wedge n}\wedge
\delta_{X_{\Sigma}}  )  
\end{align*}
and by Theorem \ref{thm:16}, $\val_{*}(\chern_{1}(\ov L)^{\wedge n}\wedge
\delta_{X_{\Sigma}}) =n! \cM_{M}(\psiabs)$. Hence,
\begin{equation}\label{eq:99}
 \int_{X_{\Sigma}^{\an}}\log\| s\| \chern_{1}(\ov L)^{\wedge n}\wedge
\delta_{X_{\Sigma}}= n!  \, \int_{N_{\R}}\psiabs 
\dd\cM_{M}(\psiabs ) .  
\end{equation}
By Example \ref{exm:30},  $\cM_{M}(\Psi) =
\Vol_{M}(\Delta) \delta_{0}$. Therefore,  in the case of the canonical
metric, the equation \eqref{eq:99} reads as
\begin{equation}\label{eq:102}
   \int_{X_{\Sigma}^{\an}}\log\| s\|_{\can} \chern_{1}(\ov L^{\can})^{\wedge n}\wedge
\delta_{X_{\Sigma}}=  n!  \,\Vol_{M}(\Delta) \Psi(0) =0.  
\end{equation}
Thus, substracting from \eqref{eq:100} the analogous formula for the
canonical metric and using the equations \eqref{eq:105},   \eqref{eq:99}
and \eqref{eq:102}, we obtain
\begin{displaymath}
  \htor_{\ov L}(X_{\Sigma})= \sum_{F}-\langle F, v_{F}\rangle 
\htor_{\ov L|_{V(\tau_{F})}}(V(\tau_{F}))
-  n!  \, \int_{N_{\R}}\psiabs 
\dd\cM_{M}(\psiabs ).
\end{displaymath}

By the induction hypothesis and the equation \eqref{eq:92} 
\begin{displaymath}
\htor_{\ov L|_{V(\tau_{F})}}(V(\tau_{F}))=
n!\,\int_{F}\vartheta  \dd \Vol_{M({F})}.  
\end{displaymath}
Hence, by Corollary \ref{cor:7},
\begin{align*}
  \htor_{\ov L}(X_{\Sigma})&= -n!\,\sum_{F}\langle
  F, v_{F}\rangle \int_{F}\vartheta  \dd \Vol_{M({F})} - n!\,
  \int_{N_{\R}}\psiabs
  \dd\cM_{M}(\psiabs )\\
  & =(n+1)!  \,\int_{\Delta} \vartheta \dd\Vol_{M},
  \end{align*}
proving the theorem.
\end{proof}

\begin{rem}\label{rem:20} The left-hand side of the equation
  \eqref{eq:108} only depends on the structure of toric line bundle of
  $L$ and not on a particular choice of toric section, while the
  right-hand side seems to depend on the section $s$. We can see
  directly that the right hand side actually does not depend on the
  section. If we pick a different toric section, say $s'$, then the
  corresponding support function $\Psi '$ differs from $\Psi $ by a
  linear functional. The polytope $\Delta _{\Psi '}$ is the translated
  of $\Delta _{\Psi '}$ by the corresponding element of $M$. The
  function $\psiabs _{\ov L,s'}$ differs from $\psiabs _{\ov L,s}$ by the
  same linear functional and $\vartheta _{\ov L,s'}$ is the
  translated of $\vartheta _{\ov L,s}$ by the same element of
  $M$. Thus the integral on the right has the same value whether we
  use the section $s$ or the section $s'$.
\end{rem}

Theorem \ref{thm:19} can
be reformulated in terms of an integral over $N_{\R}$. 

\begin{cor} \label{cor:21}
Let notation be as in Theorem \ref{thm:19} and write $\psiabs= \psiabs_{\ov
  L,s}$ for short. 
Then 
  \begin{displaymath} 
    \htor_{\ov L}(X_{\Sigma })=(n+1)!\, \int_{N_{\R}}
     (\vartheta \circ \partial  \psiabs) \dd \cM_{M}( \psiabs),
  \end{displaymath}
where $\vartheta \circ \partial  \psiabs$ is the integrable function
defined by \eqref{eq:121}.
When $\psiabs\in \cC^{2}(N_{\R})$, 
\begin{displaymath}
    \htor_{\ov L}(X_{\Sigma })=(-1)^{n}(n+1)!\, \int_{N_{\R}} (\langle
    \nabla \psiabs(u),u\rangle -\psiabs(u)) \det(\Hess(\psiabs)) \, \dd \Vol_{N}.
\end{displaymath}
When $\psiabs$ is piecewise affine,
\begin{displaymath}
\htor_{\ov L}(X_{\Sigma })=(n+1)!\, \sum_{v\in \Pi(\phiK)^{0}} 
 \int_{v^{*}} (\langle x,v\rangle - \psiabs(v))\, \dd \Vol_{M}(x),
\end{displaymath}
where $v^{*}\in \Pi(\vartheta )$ is the polytope corresponding to the
vertex $v$ with respect to the dual pair of convex decompositions
induced by $\psiabs$ (definitions \ref{def:9} and \ref{def:19}).
\end{cor}

\begin{proof}
  The first statement follows readily from  Theorem \ref{thm:19}
  and the equations~ \eqref{eq:121} and \eqref{eq:634}. The second
  statement follows from 
  Proposition \ref{prop:5} and Example \ref{exm:29}\eqref{item:96},
  while the third one 
  follows from Proposition \ref{prop:32} and Example~\ref{exm:29}\eqref{item:97}. 
\end{proof}

Theorem \ref{thm:19} can be extended to compute the local toric height
associated to distinct line bundles in term of the mixed integral of
the associated roof functions.

\begin{cor}
\label{cor:13}
  Let $\Sigma $ be a complete fan on $N_{\R}$ and $\ov
  L_{i}=(L_{i},\|\cdot\|_{i})$, $i=0,\dots, n$, be toric line bundles on
  $X_{\Sigma }$ equipped with semipositive{} toric metrics. Choose toric 
  sections $s_{i}$ of $L_{i}$ and let $\Psi_{i} $ be the corresponding 
  support functions. Then the toric height of $X_{\Sigma }$ with respect 
  to $\ov L_{0},\dots, \ov L_{n}$ is given by
  \begin{displaymath} \htor_{\ov L_{0},\dots, \ov L_{n}}(X_{\Sigma })
= \MI_{M}(\vartheta_{\|\cdot\|_{0}}, \dots,
\vartheta_{\|\cdot\|_{n}})=
\lambda_{K}\MI_{M}(\phiK_{\|\cdot\|_{0}}^{\vee}, \dots,
\phiK_{\|\cdot\|_{n}}^{\vee}).
  \end{displaymath}
\end{cor}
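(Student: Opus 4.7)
The plan is to reduce the mixed statement to the unmixed Theorem \ref{thm:19} via the polarization identity for symmetric multilinear forms of degree $n+1$. By Proposition \ref{prop:85}, the toric local height $\htor_{\ov L_0, \dots, \ov L_n}(X_\Sigma)$ is symmetric and multilinear with respect to tensor product, and equation \eqref{eq:118} expresses it as an inclusion-exclusion sum over subsets of $\{0, \dots, n\}$ of unmixed toric local heights $\htor_{\bigotimes_k \ov L_{i_k}}(X_\Sigma)$. Similarly, Definition \ref{def:49} writes $\MI_M(\vartheta_0, \dots, \vartheta_n)$ as an analogous inclusion-exclusion sum of integrals of sup-convolutions $\boxplus_k \vartheta_{i_k}$ over the Minkowski sums $\sum_k \Delta_{i_k}$, and the diagonal identity $\MI_M(g, \dots, g) = (n+1)! \int_Q g \, \dd\Vol_M$ matches the normalization of Theorem \ref{thm:19}.

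The crucial compatibility step is to verify that the assignment $(\ov L, s) \mapsto \vartheta_{\ov L, s}$ intertwines tensor product of toric line bundles (equipped with their toric sections) with sup-convolution of roof functions. By Proposition \ref{prop:24}\eqref{item:72}, we have $\psi_{\ov L_1 \otimes \ov L_2, s_1 \otimes s_2} = \psi_{\ov L_1, s_1} + \psi_{\ov L_2, s_2}$. Since semipositive{} toric metrics correspond to concave functions defined on all of $N_\R$ by Theorem \ref{thm:13}\eqref{item:37}, the hypothesis $\ri(\Dom(\psi_1)) \cap \ri(\Dom(\psi_2)) \neq \emptyset$ of Proposition \ref{prop:10}\eqref{item:33} holds trivially, yielding $(\psi_1 + \psi_2)^{\vee} = \psi_1^{\vee} \boxplus \psi_2^{\vee}$. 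Multiplying by the positive scalar $\lambda_K$, which commutes with sup-convolution, gives $\vartheta_{\ov L_1 \otimes \ov L_2, s_1 \otimes s_2} = \vartheta_{\ov L_1, s_1} \boxplus \vartheta_{\ov L_2, s_2}$; iteration yields the analogous identity for arbitrary tensor products, with stability set the Minkowski sum $\sum_k \Delta_{i_k}$.

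With these ingredients in place, I would substitute Theorem \ref{thm:19} into each unmixed term of the inclusion-exclusion formula: for each nonempty subset $\{i_0 < \cdots < i_j\} \subseteq \{0, \dots, n\}$,
\[
\htor_{\bigotimes_k \ov L_{i_k}}(X_\Sigma) = (n+1)!\int_{\sum_k \Delta_{i_k}} \vartheta_{i_0} \boxplus \cdots \boxplus \vartheta_{i_j} \, \dd\Vol_M.
\]
Plugging this into \eqref{eq:118} and matching with Definition \ref{def:49} produces $\htor_{\ov L_0, \dots, \ov L_n}(X_\Sigma) = \MI_M(\vartheta_0, \dots, \vartheta_n)$, the common factor of $(n+1)!$ arising from polarization cancelling on both sides. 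The second equality $\MI_M(\vartheta_0, \dots, \vartheta_n) = \lambda_K \MI_M(\psi_0^{\vee}, \dots, \psi_n^{\vee})$ follows from the positive homogeneity $\MI_M(\lambda g_0, \dots, \lambda g_n) = \lambda \MI_M(g_0, \dots, g_n)$ recorded after Definition \ref{def:49}, applied with $\lambda = \lambda_K > 0$.

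The main obstacle is bookkeeping the combinatorial factors of $(n+1)!$ introduced by polarizing a symmetric multilinear form of degree $n+1$: this factor appears identically on both sides, via Theorem \ref{thm:19} on the height side and via the diagonal identity for $\MI_M$ on the integral side, so it cancels once the compatibility between $\otimes$ and $\boxplus$ is established. A secondary point to check is independence from the choice of toric sections $s_i$: as in Remark \ref{rem:20}, a change of section translates each $\Delta_i$ and the corresponding $\psi_i^{\vee}$ by an element of $M$, and one verifies that the mixed integral is invariant under such coordinated translations by expanding its inclusion-exclusion definition.
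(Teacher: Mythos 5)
Your argument is correct and follows the same route as the paper's proof: establish the compatibility $\psi_{\ov L_1\otimes\ov L_2,s_1\otimes s_2}^{\vee}=\psi_{\ov L_1,s_1}^{\vee}\boxplus\psi_{\ov L_2,s_2}^{\vee}$ via Proposition \ref{prop:24}\eqref{item:72} and Proposition \ref{prop:10}\eqref{item:33}, then combine the inclusion-exclusion formula \eqref{eq:118} with Definition \ref{def:49} and Theorem \ref{thm:19}. The extra observations about $\lambda_K$-homogeneity of $\MI_M$ and independence of sections are correct but already implicit in the paper's more compressed write-up.
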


\begin{proof}
Let $0\le i_{0}<\dots<i_{j}\le n$. 
By the propositions~\ref{prop:24}~\eqref{item:72} and \ref{prop:10}~\eqref{item:33} 
\begin{displaymath}
  (\psiabs_{\ov L_{i_{0}}\otimes\dots\otimes \ov L_{i_{j}},
    s_{i_{0}}\otimes\dots\otimes  s_{i_{j}}})^{\vee}= 
  \psiabs_{\ov L_{i_{0}}, s_{i_{0}}} ^{\vee}\boxplus\dots\boxplus
  \psiabs_{\ov L_{i_{j}}, s_{i_{j}}} ^{\vee}. 
\end{displaymath}
The result then follows from \eqref{eq:118}, the definition
of the mixed integral (Definition~\ref{def:49}) and Theorem \ref{thm:19}. 
\end{proof}

\begin{rem}\label{rem:7}
In the DSP case, the toric height can be expressed as an
alternating sum of mixed integrals as follows. Let
$\ov L_{i}=(L_{i},\|\cdot\|_{i})$, $i=0,\dots, n$, be toric line bundles on $X_{\Sigma }$
  equipped with DSP
  toric metrics and set $\ov L_i= \ov L_{i,+} \otimes
  \ov L_{i,-}^{\otimes -1}$ for some 
semipositive{} metrized toric line bundles $\ov
L_{i,+}$, $ \ov L_{i,-}$. Choose a toric section for each line bundle
and write  $\vartheta_{i,+}$ and
$\vartheta_{i,-}$ for the corresponding roof functions. 
Then
  \begin{displaymath}
    \htor_{\ov L_{0},\dots, \ov L_{n}}(X_{\Sigma
    })=\sum_{\epsilon_{0},\dots, \epsilon_{n}\in \{\pm1\}}
\epsilon_{0}\dots \epsilon_{n}
\MI_{M}(\vartheta_{{0},\epsilon_{0}}, \dots,
\vartheta_{n,\epsilon_{n}}).
  \end{displaymath}
\end{rem}

We have defined and computed the local height of a toric variety. We
now will compute the toric height of toric subvarieties. We start with
the case of orbits.

\begin{prop}\label{prop:84} Let $\Sigma $ be a complete fan on $N_{\R}$
  and 
  $\sigma \in \Sigma $ a cone of codimension $d$. Let $V(\sigma )$ be
  the closure of the orbit associated to $\sigma $ and
   $\iota_{\sigma
  }\colon X_{\Sigma (\sigma 
    )}\to X_{\Sigma }$ the closed immersion of Proposition
  \ref{prop:73}. Let $L$ be a toric line bundle on $X_{\Sigma }$, 
  $s$ a toric section, $\Psi $ the corresponding support function and 
  $\|\cdot\|$ a semipositive{} toric metric on $L^{\an}$. As usual write 
  $\ov L=(L,\|\cdot\|)$. Then
  \begin{displaymath}
    \htor_{\ov L}(V(\sigma ))
    =\htor_{\iota_{\sigma }^{\ast} \ov L}(X_{\Sigma (\sigma
      )})= (d+1)! 
    \int_{F_{\sigma }}\vartheta_{\ov L,s} \dd\Vol_{M(F_{\sigma })},
  \end{displaymath}
  where $F_{\sigma }$ is the face of $\Delta _{\Psi }$ corresponding
  to $\sigma $, $M(F_{\sigma })$ is the lattice induced by $M$ on
  the linear space associated to $F_{\sigma }$ and $\iota_{\sigma } ^{\ast}L$
  has the toric line bundle structure of Proposition \ref{prop:81}.
\end{prop}

\begin{proof} By Corollary  \ref{cor:19} the restriction of the
  canonical metric of $L^{\an}$ is the canonical metric of $\iota_{\sigma } ^{\ast}
  L^{\an}$. Therefore, the equality $\htor_{\ov L}(V(\sigma
  ))=\htor_{\iota_{\sigma }^{\ast} 
    \ov L}(X_{\Sigma (\sigma 
      )})$  follows from Theorem
  \ref{thm:1}\eqref{item:8}.

  To prove the second equality, choose $m_{\sigma }\in F_{\sigma }\cap
  M$. We use the notation of Proposition \ref{prop:62}. By Theorem
  \ref{thm:19},
  \begin{displaymath}
    \htor_{\iota_{\sigma }^{\ast}
    \ov L}(X_{\Sigma (\sigma 
      )})=(d+1)!\int_{\Delta _{(\Psi -m_{\sigma })(\sigma )}}
    \vartheta _{\|\cdot\|_{\sigma }}\dd\Vol_{M(\sigma )}.
  \end{displaymath}
  By Proposition \ref{prop:47}, 
  \begin{math}
    \Delta _{(\Psi -m_{\sigma })(\sigma )}=(\pi _{\sigma
    }^{\vee}+m_{\sigma })^{-1}F_{\sigma }. 
  \end{math} By Proposition \ref{prop:62}
  \begin{displaymath}
    \vartheta _{\|\cdot\|_{\sigma }}=\psiabs^{\vee}_{\|\cdot\|_{\sigma }} =
    (\pi _{\sigma }^{\vee}+m_{\sigma
    })^{\ast}\psiabs^{\vee}_{\|\cdot\|}=
    (\pi _{\sigma }^{\vee}+m_{\sigma
    })^{\ast}\vartheta _{\|\cdot\|}.   
  \end{displaymath}
  Since $M(F_{\sigma })=M(\sigma )$, we obtain
  \begin{displaymath}
    \int_{\Delta _{(\Psi -m_{\sigma })(\sigma )}}
    \vartheta _{\|\cdot\|_{\sigma }}\dd\Vol_{M(\sigma )}=
    \int_{F_{\sigma }}\vartheta_{\|\cdot\|} \dd\Vol_{M(F_{\sigma })},
  \end{displaymath}
  proving the result.
\end{proof}

We now study the behaviour of the toric local height with respect to
toric morphisms. 

\begin{notn} \label{def:89}
Let $N_{1}$ be a lattice of rank $d$ and $M_{1}$ the
dual lattice. Let $H\colon
N_{1}\to N$ be a linear map and $\Sigma _{1}$ a complete fan on $N_{1,\R}$
such that, for each cone $\sigma \in \Sigma _{1}$, $H(\sigma )$ is
contained in a cone of $\Sigma $. Let $\varphi\colon X_{\Sigma
  _{1}}\to X_{\Sigma }$ be the associated morphism of proper toric
varieties over $K$. Denote
$Q=H(N_{1})^{\sat}$ the saturated sublattice of $N$ and let
$Y_{Q}$ be the image of $X_{\Sigma _{1}}$ under $\varphi$. Then
$Y_{Q}$ is equal to the toric subvariety $Y_{\Sigma ,Q}=Y_{\Sigma
  ,Q,x_{0}}$ of Definition \ref{def:74}, where we recall that $x_0$
denote the distinguished point of the principal orbit of
$X_{\Sigma}$.  
\end{notn}

\begin{prop} \label{prop:82} With Notation \ref{def:89}, let $\ov L$
  be a toric line bundle on $X_{\Sigma }$ equipped with a
  semipositive{} toric metric. We put on $\varphi^{\ast}
  L$ the structure of toric line bundle of Remark \ref{rem:19}. Choose
  a toric section $s$ of $L$ and let $\Psi $ be the associated support
  function.   
  \begin{enumerate}
  \item \label{item:94} If $H$ is not injective, then
    $\htor_{\varphi^{\ast}\ov L}(X_{\Sigma _{1}})=0$.
  \item \label{item:95} If $H$ is injective, then
    $\htor_{\varphi^{\ast}\ov L}(X_{\Sigma _{1}})=[Q:H(N_{1})]\htor_{\ov
      L}(Y_{Q})$. Moreover
    \begin{displaymath}
      \htor_{\varphi^{\ast}\ov L}(X_{\Sigma _{1}})=
      (d+1)!\int_{H^{\vee}(\Delta _{\Psi 
        })}(H^{\vee})_{\ast}(\vartheta _{\|\cdot\|} )\dd\Vol_{M_{1}}. 
    \end{displaymath}
  \end{enumerate}
\end{prop}
\begin{proof}
  By Corollary \ref{cor:20}, the inverse image of the canonical metric
  by a toric morphism is the canonical metric. Thus \eqref{item:94}
  and the first statement of \eqref{item:95} follow from
  Theorem \ref{thm:1}~\eqref{item:8} and the equation (\ref{eq:54}).
 
  By the propositions \ref{prop:11} and \ref{prop:58} and Theorem
  \ref{thm:19} we deduce 
  \begin{multline*}
    \htor_{\varphi^{\ast}\ov L}(X_{\Sigma _{1}})=(d+1)!\lambda
    _{K}\int_{\Delta _{\Psi
        \circ H}} (H^{\ast}\phiK_{\|\cdot\|})^{\vee}\dd\Vol_{M_{1}}\\=
    (d+1)!\int_{H^{\vee}(\Delta _{\Psi 
        })}(H^{\vee})_{\ast}(\vartheta _{\|\cdot\|} )\dd\Vol_{M_{1}},
  \end{multline*}
proving the result.
\end{proof}

We now study the case of an equivariant morphism. Let $N$, $N_{1}$,
$d$, $H$, $\Sigma $ and $\Sigma _{1}$ as in Notation \ref{def:89}. For
simplicity, we assume that $H\colon N_{1}\to N$ is injective and that
$Q=H(N_{1})$ is a saturated sublattice, because the effect of a
non-injective map or a non-saturated sublattice can be deduced from
Proposition \ref{prop:82}. Let $p\in X_{\Sigma ,0}(K)$ be a point of
the principal open subset and $u=\val(p)\in N_{\R}$. Denote
$\varphi=\varphi_{p,H}$ the equivariant morphism determined by $H$ and
$p$ as in \eqref{eq:16}, also denote $Y=Y_{\Sigma ,Q,p}$ the image of
$X_{\Sigma _{1}}$ by $\varphi$ as in~\eqref{eq:31}. Finally write
$A=H+u$ for the associated affine map.

Let $\ov L$ be a toric line bundle equipped with a
  semipositive{} toric metric.  As explained in Remark \ref{rem:19},
there is no natural structure of toric line bundle on the
inverse image $\varphi^{\ast}L$. To obtain one, we choose a toric
section~$s$ of $L$ and we denote by 
$\ov L_{1}$ the line bundle $\varphi
^{\ast}L$ with the metric induced by $\|\cdot\|$ and the toric
structure induced by the chosen section $s$. We denote by $\Psi $ the support
function associated to $(L,s)$.

\begin{prop}\label{prop:83}
  With the previous hypothesis and notations, the equality
  \begin{multline}
    \label{eq:88}
    \htor_{\ov L_{1}}(X_{\Sigma _{1}})=
(d+1)!\,
    \int_{H^{\vee}(\Delta _{\Psi
      })}(A^{\ast}\psiabs _{\ov L,s})^{\vee}\dd\Vol_{M_{1}}\\
    =(d+1)!\, 
    \int_{H^{\vee}(\Delta _{\Psi
      })}(H^{\vee})_{\ast}(\psiabs ^{\vee}_{\ov L,s}-u)\dd\Vol_{M_{1}}
  \end{multline}
  holds. Moreover
  \begin{multline}
    \label{eq:117}
    \htor_{\ov L_{1}}(X_{\Sigma _{1}})-\htor_{\ov L}(Y)= (d+1)!\, 
    \int_{H^{\vee}(\Delta _{\Psi
      })}(A^{\ast}\Psi)^{\vee}\dd\Vol_{M_{1}}\\
    =(d+1)!\,
    \int_{H^{\vee}(\Delta _{\Psi
      })}(H^{\vee})_{\ast}(\iota_{\Delta _{\Psi }} -u)\dd\Vol_{M_{1}},
  \end{multline}
  where $\iota _{\Delta _{\Psi }}$ is the indicator function of
  $\Delta _{\Psi }$ (see Example \ref{exm:7}).
\end{prop}

\begin{proof}
  By Proposition \ref{prop:58}, $\psiabs _{\ov
    L_{1},\varphi^{\ast} s}=A^{\ast}\psiabs _{\ov L,s}$. By Proposition
  \ref{prop:11}\eqref{item:23} we obtain that $\Stab(A^{\ast}\psiabs _{\ov
    L,s})=H^{\vee}(\Delta _{\Psi })$ and that
  \begin{displaymath}
    (A^{\ast}\psiabs _{\ov L,s})^{\vee}=H^{\vee}_{\ast}(\psiabs _{\ov L,s}-u).
  \end{displaymath}
  Then \eqref{eq:88} follows from Theorem \ref{thm:19}. 
  
  To prove (\ref{eq:117}), possibly replacing $\Sigma_{1}$ by a
  refinement, we assume that $X_{\Sigma_{1}}$ is projective.  Since
  $H$ is injective and $Q$ is saturated, by (\ref{eq:54}), the map
  $X_{\Sigma _{1}}\to Y$ has degree one. Then, by Definition
  \ref{def:35},
  \begin{equation}\label{eq:36}
    \htor_{\ov L}(Y) = \h_{\ov L_1}(X_{\Sigma_1};s_{0},
    \dots, s_{d-1}) - 
    \h_{\varphi^\ast(\ov L^\can)}(X_{\Sigma_1};s_{0},
    \dots, s_{d-1}),
  \end{equation}
  where $s_{i}$, $i=0,\dots,d-1$, is a collection of rational sections
  of $\varphi^\ast L$ meeting $X_{\Sigma_{1}}$ properly, and
  $\varphi^{\ast}(\ov L^{\can})$ has the toric structure induced by
  $s$ and the metric induced by the canonical metric of $L$. We recall
  that this metric may differ from the canonical metric of
  $\varphi^{\ast}L$.  Anyway, subtracting $\h_{\ov{\varphi^\ast
      L}^\can}(X_{\Sigma_1};s_{0}, \dots, s_{d-1}) $ from both terms
  of the difference in the right hand side of (\ref{eq:36}) and
  rearranging the equation, we get
  \begin{displaymath}
    \htor_{\ov L_{1}}(X_{\Sigma _{1}})-\htor_{\ov
      L}(Y)=\htor_{\varphi^{\ast}(\ov L^{\can})}(X_{\Sigma _{1}}).
  \end{displaymath}
  Now \eqref{eq:117} follows from 
  \eqref{eq:88}, Example \ref{exm:7} and the definition of the
  canonical metric.  
\end{proof}

\begin{cor}\label{cor:25} With the previous hypothesis
  \begin{displaymath}
    \htor_{\varphi^{\ast}(\ov L^{\can})}(X_{\Sigma _{1}})= (d+1)!\, 
    \int_{H^{\vee}(\Delta _{\Psi
      })}(A^{\ast}\Psi)^{\vee}\dd\Vol_{M_{1}}.
  \end{displaymath}
\end{cor}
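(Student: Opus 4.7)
The plan is to derive Corollary \ref{cor:25} as a direct application of the main formula Theorem \ref{thm:19}, applied to the metrized toric line bundle $\varphi^{\ast}(\ov L^{\can})$ itself. The key point is that $\varphi^{\ast}(\ov L^{\can})$, in the sense specified before the corollary, is a toric line bundle on $X_{\Sigma_{1}}$ equipped with the toric section $\varphi^{\ast}s$ (whose induced toric structure is $s(p)\in L_{p}$, as explained in Remark \ref{rem:19}) and with a semipositive{} toric metric obtained by pullback. Since $L$ is generated by global sections, so is $\varphi^{\ast}L$; since $\ov L^{\can}$ is semipositive{} (Theorem \ref{thm:13}), so is its pullback. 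Hence all the hypotheses of Theorem \ref{thm:19} are satisfied for this metrized toric line bundle with this toric section.

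The first main step is to identify the function $\psi_{\varphi^{\ast}(\ov L^{\can}),\varphi^{\ast}s}$ on $N_{1,\R}$. By Proposition-Definition \ref{def:57}, $\psi_{\ov L^{\can},s}=\Psi$, and then by Proposition \ref{prop:58} we obtain
\begin{displaymath}
\psi_{\varphi^{\ast}(\ov L^{\can}),\varphi^{\ast}s}=\Psi\circ A = A^{\ast}\Psi.
\end{displaymath}
The second main step is to compute the stability set of this function. By Proposition \ref{prop:11}\eqref{item:23} (the hypothesis $\ri(\Dom(\Psi))\cap\Im(A)=\Im(A)\neq\emptyset$ being automatic since $\Dom(\Psi)=N_{\R}$), we have $\Stab(A^{\ast}\Psi)=H^{\vee}(\Stab(\Psi))=H^{\vee}(\Delta_{\Psi})$.

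Applying Theorem \ref{thm:19} to the pair $(\varphi^{\ast}(\ov L^{\can}),\varphi^{\ast}s)$ on the toric variety $X_{\Sigma_{1}}$ of dimension $d$ now yields
\begin{displaymath}
\htor_{\varphi^{\ast}(\ov L^{\can})}(X_{\Sigma_{1}})= (d+1)!\,\lambda_{K}\int_{H^{\vee}(\Delta_{\Psi})}(A^{\ast}\Psi)^{\vee}\dd\Vol_{M_{1}},
\end{displaymath}
which is the desired identity. Alternatively, the same conclusion can be obtained by feeding the trivial identity $\htor_{\ov L_{1}}(X_{\Sigma_{1}})-\htor_{\ov L}(Y)=\htor_{\varphi^{\ast}(\ov L^{\can})}(X_{\Sigma_{1}})$, already observed in the proof of \eqref{eq:117}, into Proposition \ref{prop:83}. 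There is no genuine obstacle here; the only care needed is the bookkeeping of toric structures, since $\varphi^{\ast}(\ov L^{\can})$ refers to the pullback of the canonical metric on $L^{\an}$ but equipped with the \emph{non-canonical} toric structure induced by $s$ rather than with the canonical metric of $\varphi^{\ast}L^{\an}$.
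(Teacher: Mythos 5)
Your proof is correct and follows essentially the paper's own route: the Corollary is precisely equation \eqref{eq:88} of Proposition \ref{prop:83} specialized to $\psi_{\ov L,s}=\Psi$ (that is, $\|\cdot\|=\|\cdot\|_{\can}$), and that equation in turn comes directly from Theorem \ref{thm:19} via Propositions \ref{prop:58} and \ref{prop:11}, exactly the steps you trace out. Your remark at the end about the non-canonical toric structure of $\varphi^{\ast}(\ov L^{\can})$ is the one subtlety worth flagging, and it matches the caveat made in the paper's proof of \eqref{eq:117}.
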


\begin{exmpl}\label{exm:36}
  We continue with Example \ref{exm:35}. Let $\Z^{r}$
 be the standard lattice of rank $r$, $\Delta ^{r}$ the standard
 simplex of dimension $r$ and $\Sigma _{\Delta ^{r}}$ the fan of
 $\R^{r}$ associated to $\Delta ^{r}$. The corresponding toric variety
 is $\P^{r}$. Let $H\colon N\to \Z^{r}$ be an injective linear
 morphism such that $H(N)$ is a saturated sublattice. Denote
 $m_{i}=e_{i}^{\vee}\circ H\in M$, $i=1,\dots,r$. Let $\Sigma 
 $ be the regular fan on $N$ defined by $H$ and $\Sigma _{\Delta
   ^{r}}$. Let $\Psi _{\Delta ^{r}}$ be the support function of
 $\Delta ^{r}$ and let $\Psi =\Psi _{\Delta ^{r}}\circ H$. Explicitly,
 \begin{displaymath}
   \Psi (v)=\min(0,m_{1}(v),\dots,m_{r}(v)).
 \end{displaymath}
Let $p\in \P^{r}_{0}(K)$ and $u=\val(p)\in \R^{r}$. Write 
$u=(u _{1},\dots,u _{r})$. If $p=(1:\alpha _{1}:\dots:\alpha _{r})$,
then $u_{i}=-\log|\alpha_{i}|$. 
There is an equivariant morphism
$\varphi:=\varphi_{p,H}\colon X_{\Sigma }\to \P^{r}$. Consider the
toric line bundle with toric section determined by $\Psi _{\Delta
  ^{r}}$ with the canonical metric and denote by $(\ov L,s)$ the
induced toric line bundle with toric section on $X_{\Sigma }$ equipped
with the induced metric. Then
\begin{displaymath}
  \psiabs _{\ov L,s}(v)=\min(0,m_{1}(v)+u_{1},\dots,m_{r}(v)+u
  _{r}). 
\end{displaymath}
Thus $\Delta =\Stab(\psiabs _{\ov
  L,s})=\Conv(0,m_{1},\dots,m_{r})=H^{\vee}(\Delta ^{r})$. 
 By Proposition \ref{prop:19} the Legendre-Fenchel dual $\vartheta  _{\ov
   L,s}\colon \Delta \to \R$ is given by
 \begin{displaymath}
   \vartheta  _{\ov L,s}(x) =
   \sup\bigg\{\sum_{j=1}^{r}-\lambda_{j}u_{j}\bigg|\  
\lambda_{j}\ge 0, \sum_{j=1}^{r}\lambda_{j}\le 1, \  
\sum_{j=1}^{r}\lambda_{j}m_{j}=x \bigg \}\    \text{ for }  x\in \Delta .
 \end{displaymath}
Thus the roof function $\vartheta _{\ov L,s}=\psiabs
_{\ov L,s}^{\vee}$ is the upper
envelope of the extended polytope 
\begin{multline*}
  \Conv\left((0,0),(m_{1},-u_{1}),\dots,(m_{r},-u_{r})\right)\\=
  \Conv\left((0,0),(m_{1},\log|\alpha _{1}|),\dots,(m_{r},\log|\alpha
    _{r}|)\right ).
\end{multline*}
\end{exmpl}

\section{Global heights of toric varieties}
\label{sec:global-height-toric}

In this section we prove the integral formula for the global height of
a toric variety.
Let $(\K, \mathfrak{M})$ be an adelic field as in Definition
\ref{def:6}. Let $\Sigma $ be a complete
fan on $N_{\R}$ and  $\Psi_{i}$, $i=0,\dots, d$, be virtual support
functions 
on $\Sigma$. For each $i$, let $L_{i}=L_{\Psi_{i}}$ and
$s_{\Psi_{i}}$ be the 
associated toric line bundle and toric section, and $\|\cdot\|_{i}=(\|\cdot
\|_{i,v})_{v\in \mathfrak{M}}$ 
a DSP adelic  toric metric on $L_{i}$. 
Write $\ov L_{i}=(L_{i}, \|\cdot\|_{i})$ and $\ov L_{i}^{\can}$
for the same line bundles equipped with   
the canonical metric at all the places.
By Example \ref{exm:39}, it
is also a DSP adelic toric metric.

From the local toric height we can define a toric (global) height for
adelic toric metrics as follows.

\begin{defn} \label{def:67} Let $Y$ be a $d$-dimensional cycle
  of $X_{\Sigma }$.
  The \emph{toric height} of $Y$ with respect to 
$\ov L_{0},\dots, \ov L_{d}$
\index{height of cycles!toric} is 
\begin{displaymath}
  \htor_{\ov L_{0},\dots, \ov L_{d}}(Y)= \sum_{v\in \mathfrak{M}}n_{v}
  \htor_{v,\ov L_{0},\dots, \ov L_{d}}(Y)\in \R,
\end{displaymath}
where $\htor_{v}$ denotes the local toric height of $Y_{v}$.
\end{defn}
 

\begin{rem}\label{rem:22} 
  Definition \ref{def:67} makes sense because the
  condition of the metrics being adelic imply that only a finite 
  number of terms in the sum are nonzero. Moreover, 
  the value of the toric height depends on the toric structure of
  the involved line bundle, but its class in $\R/\!\df(\K^{\times})$
  does not.
\end{rem}

\begin{rem}\label{rem:23}
  In general, the toric height is not a global height in the
  sense of Definition \ref{def:61}. When the $d$-dimensional cycle $Y$
  is integrable 
  with respect to $\ov L_{0}^{\can},\dots, \ov L_{d}^{\can}$ 
  (Definition \ref{def:47}), then it 
  is also integrable with respect to $\ov L_{0},\dots, \ov
  L_{d}$ and
  \begin{displaymath}
    \htor_{\ov L_{0},\dots, \ov L_{d}}(Y)=
    \h_{\ov L_{0},\dots, \ov L_{d}}(Y)-
    \h_{\ov L_{0}^{\can},\dots, \ov L_{d}^{\can}}(Y).
  \end{displaymath}
  Observe also that, by Proposition \ref{prop:29} and Theorem
  \ref{thm:23}, when $\K$ is a global field, all cycles are
  integrable with respect to line bundles with DSP adelic toric
  metrics. 
\end{rem}

The next result shows that the closure of an
orbit or a toric subvariety is always integrable, even if the adelic
field is
not a global field, and that its global height 
agrees with its toric height.  

\begin{prop} \label{prop:88}
With notations as above, let $Y$ be either the closure of an orbit or
a toric subvariety. Then 
$Y$ is integrable with respect to $\ov L_{0},\dots, \ov
L_{d}$. Moreover, its global height is given by
\begin{equation*}
  \h_{\ov L_{0},\dots, \ov L_{d}}(Y)=
  \left[ \htor_{\ov L_{0},\dots, \ov L_{d}}(Y)\right] \in \R/\!\df(\K^{\times}).
\end{equation*}
\end{prop}

\begin{proof} In view of the propositions \ref{prop:84} and \ref{prop:82} and
  the fact that the restriction of the canonical metric to closures of
  orbits and to toric subvarieties is the canonical metric
  (corollaries \ref{cor:19} and \ref{cor:20}), we are reduced to treat
  the case $Y=X_{\Sigma }$. By the toric Chow's lemma \cite[Proposition
  2.17]{Oda88}, Proposition \ref{prop:94}\eqref{item:11} and Theorem
  \ref{thm:14}\eqref{item:93}, we can reduce to the case when
  $X_{\Sigma }$ is projective. 
  
  Thus we assume that $X_{\Sigma }$ has dimension $d$. 
We next prove that $X_{\Sigma}$ is integrable with respect to  $\ov
L_{0}^{\can},\dots, \ov 
L_{d}^{\can}$ and that the corresponding global height is zero.
By a polarization argument as in \eqref{eq:118}, we can reduce to the case
$\Psi_{0}=\dots=\Psi_{d}=\Psi$. 
The proof is done by induction on $d$. 
For short, write $L=\cO(D_{\Psi})$ and  $s=s_{\Psi}$. 
 
Let $d=0$. Then $X_{\Sigma }$ reduces to the point $x_{0}$. By
the equation \eqref{eq:96}, 
for each $v\in \mathfrak{M}$, 
\begin{displaymath}
  \h_{v,\ov L^{\can}}(X_{\Sigma};s)= -\log
  \|s(x_{0})\|_{v,\Psi}= \Psi(0)=0.
\end{displaymath}
Furthermore,  $  \h_{\ov L^{\can}}(X_{\Sigma};s)= \sum_{v}n_{v }\h_{v,\ov
  L^{\can}}(X_{\Sigma};s)=0$. 

Now let $d\ge 1$. Choose sections (non-necessarily toric) $s_{0},
\dots,s_{d-1}$ such that 
$s_{0}, \dots,s_{d-1}, s$ meet $X_{\Sigma }$ properly.
By the construction of local heights, for each $v\in \mathfrak{M}$,
\begin{align}
\h_{v,\ov L^{\can}}(X_{\Sigma};
s_{0},\dots,s_{d-1},s) 
=& \h_{v,\ov L^{\can}}(\div(s);  s_{0},\dots,
  s_{d-1})\\
& -  \int_{X_{\Sigma,v}^{\an}}\log\| s\|_{v,\Psi} 
\chern_{1}(\ov L^{v,\can})^{\wedge d}\wedge
\delta_{X_{\Sigma}}. \nonumber
\end{align}
As shown in \eqref{eq:102}, the last term in the equality
above vanishes. Hence
\begin{displaymath}
  \h_{v,\ov L^{\can}}(X_{\Sigma};
s_{0},\dots,s_{d-1},s) 
= \h_{v,\ov L^{\can}}(\div(s);  s_{0},\dots,
  s_{d-1}).
\end{displaymath}
The divisor $\div(s)$ is a linear combination of subvarieties of the
form $V(\tau)$, $\tau\in \Sigma^{1}$, and the restriction of the
canonical metric to these varieties coincides with their canonical
metrics. With the inductive hypothesis, this shows that $X_{\Sigma}$
is integrable with respect to  $\ov L^{\can}$. 
Adding up the resulting equalities over all places,
\begin{displaymath}
  \h_{\ov L^{\can}}(X_{\Sigma};s_{0},\dots,s_{d-1},s) 
= \h_{\ov L^{\can}}(\div(s);s_{0},\dots,s_{d-1}).
\end{displaymath}
Using again the inductive hypothesis, $  \h_{\ov
  L^{\can}}(X_{\Sigma};s_{0},\dots,s_{d-1},s)\in \df(\K^{\times})$. 

We now prove the statements of the theorem.  Again by a polarization
argument, we can also reduce to the case when $\ov L_{0}=\dots=\ov
L_{d}=\ov L$.  By the definition of semipositive{} adelic toric
metrics, $X_{\Sigma}$ is also integrable with respect to $\ov
L$. Furthermore,
\begin{displaymath}
  \htor_{\ov L}(X_{\Sigma})=  \h_{\ov L}(X_{\Sigma};s_{0},\dots,s_{d})
-\h_{\ov L^{\can}}(X_{\Sigma};s_{0},\dots,s_{d})
\end{displaymath}
for any choice of sections $s_{i}$ intersecting $X_{\Sigma}$ properly.
Hence, the classes of $  \htor_{\ov L}(X_{\Sigma})$ and of $ \h_{\ov
  L}(X_{\Sigma};s_{0},\dots,s_{d})$ agree up to
$\df(\K^{\times})$. But the latter is the global height of
$X_{\Sigma}$ with respect to $\ov L$, hence the second statement.
\end{proof}

Summing up the preceding results we obtain a formula for the height of
a toric variety.

\begin{thm} \label{thm:22} Let $\Sigma$ be a complete fan on $N_{\R}$.
  Let $\ov L_{i}$, $i=0,\dots, n$, be toric line
  bundles on $X_{\Sigma }$ equipped with semipositive
  adelic toric metrics. 
  For each $i$, let $s_{i}$ be a toric section of $L_{i}$.
  Then the height of $X_{\Sigma }$ with respect to
  $\ov L_{0},\dots, \ov L_{n}$ is
  \begin{align*}
    \h_{\ov L_{0},\dots, \ov L_{n}}(X_{\Sigma })
    =& 
\left[    \sum_{v\in \mathfrak{M}}n_{v}\MI_{M}(\vartheta_{v,\ov L_{0}, s_{0}}, \dots,
    \vartheta_{v,\ov L_{n},s_{n}}) \right] \in \R/\!\df(\K^{\times}),
  \end{align*}
  where $\vartheta_{v,\ov L, s}$ denotes the local roof function.
  In particular, if $\ov L_{0}=\dots=\ov L_{n}=\ov L$, let $s$ be a
  toric section and put
  $\Delta =\Stab (\Psi _{L,s} )$. Then 
  \begin{displaymath}
    \h_{\ov L}(X_{\Sigma }) =\left[(n+1)!\sum_{v\in
        \mathfrak{M}}n_{v}\int_{\Delta}
    \vartheta_{v,\ov L,s} \dd\Vol_{M}\right].
  \end{displaymath}
\end{thm}

\begin{proof}
  This follows readily from Corollary \ref{cor:13} and Proposition
  \ref{prop:88}.
\end{proof}

\begin{cor}\label{cor:26}
Let $H\colon N\to \Z^{r}$ be an injective map such that $H(N)$ is a
saturated sublattice of $\Z^{r}$,  $p\in \P_{0}^{r}(\K)$ a point in
the principal open subset and $Y\subset
\P^{r}$ the closure of the
image of the map $\varphi_{p,H}\colon \T\to \P^{r}$.  
Let $m_{0}\in M$ and $m_{i}=e_{i}^{\vee}\circ H+m_{0}\in M$,
$i=1,\dots,r$, and write  
$p=(p_{0}:\dots:p_{r})$ with $p_{i}\in \K^{\times}$. Let
$\Delta=\Conv(m_{0},\dots, m_{r})\subset M_{\R}$ and
$\vartheta_{v}\colon\Delta\to \R$ the function
parameterizing the upper
envelope of the extended polytope 
\begin{displaymath}
  \Conv\left((m_{0},\log|p_{0}|_{v}),\dots,(m_{r},\log|p
    _{r}|_{v})\right ) \subset M_{\R}\times \R.
\end{displaymath}
Let $\ov{\cO(1)}^{\can}$ be the universal line bundle on $\P^{r}$ with
the canonical metric as in Example~\ref{exm:37}\eqref{item:107}.
Then $Y$ is integrable with respect to $\ov{\cO(1)}^{\can}$ and
\begin{displaymath}
  \h_{\ov{\cO(1)}^{\can}}(Y)= \left[ (n+1)!  \sum_{v\in \mathfrak{M}} n_{v}
   \int _{\Delta}\vartheta _{v}\dd \Vol_{M}\right] \in \R/\!\df(\K^{\times}).
  \end{displaymath}
\end{cor}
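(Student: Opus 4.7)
The plan is to reduce the computation of $\h_{\ov{\cO(1)}^{\can}}(Y)$ to a height on the normalization of $Y$, which is a toric variety, and then to apply the integral formula of Theorem~\ref{thm:22} place-by-place, matching the resulting roof functions with those in the statement up to the defect.

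First, I would identify $Y$ as the translated toric subvariety $Y_{\Sigma_{\Delta^r},H(N),p}$ of~\eqref{eq:31}. Choosing any complete fan $\Sigma$ in $N_\R$ refining the pullback complex $H^{-1}(\Sigma_{\Delta^r})$, the equivariant morphism $\varphi=\varphi_{p,H}\colon X_\Sigma\to\P^r$ from Proposition~\ref{prop:45} factors as $X_\Sigma\twoheadrightarrow Y\hookrightarrow\P^r$, the first arrow being the normalization. Since $H$ is injective with saturated image, we have $\dim X_\Sigma=n=\dim Y$, so this normalization is birational and $\varphi_\ast X_\Sigma=Y$ as cycles. Granted the integrability (which will be automatic from the next step), Theorem~\ref{thm:14}\eqref{item:93} would give
$$\h_{\ov{\cO(1)}^{\can}}(Y)=\h_{\varphi^\ast\ov{\cO(1)}^{\can}}(X_\Sigma)\ \text{in}\ \R/\df(\K^\times).$$

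Next, I would verify that $\varphi^\ast\ov{\cO(1)}^{\can}$ is a semipositive adelic toric metric on the toric line bundle $\varphi^\ast\cO(1)$ endowed with the toric structure induced by $\varphi^\ast s_{\Psi_{\Delta^r}}$. Toricity at each place follows from Proposition~\ref{prop:79} because $\varphi$ is $\rho_H$-equivariant and $\rho_H(\SS^\an)\subset\SS^\an$, so the $\SS^\an$-invariance of the canonical metric passes to the pullback; the adelic condition holds because $\val_v(p)=0$ for all but finitely many places $v$. Proposition~\ref{prop:58} then identifies the local associated function as $\psi_v=A_v^\ast\Psi_{\Delta^r}$ with $A_v=H+\val_v(p)$, and Proposition~\ref{prop:101}\eqref{item:121} (as carried out in Example~\ref{exm:36}) yields $\Stab(\psi_v)=H^\vee(\Delta^r)$ together with the fact that the local roof function $\vartheta'_v:=\lambda_v\psi_v^\vee$ parametrizes the upper envelope of
$$\Conv\big((0,0),\,(e_1^\vee\circ H,\log|p_1/p_0|_v),\,\dots,\,(e_r^\vee\circ H,\log|p_r/p_0|_v)\big)\subset M_\R\times\R.$$
Viewing $X_\Sigma$ as a toric subvariety of itself, Proposition~\ref{prop:88} combined with Theorem~\ref{thm:22} would then produce
$$\h_{\varphi^\ast\ov{\cO(1)}^{\can}}(X_\Sigma)=\bigg[(n+1)!\sum_{v\in\mathfrak{M}}n_v\int_{H^\vee(\Delta^r)}\vartheta'_v\dd\Vol_M\bigg]\ \text{in}\ \R/\df(\K^\times).$$

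Finally, I would match this with the statement of the corollary. Since $m_i=e_i^\vee\circ H+m_0$ for $i\ge1$, the polytope $\Delta=\Conv(m_0,\dots,m_r)$ equals $m_0+H^\vee(\Delta^r)$, and the extended polytope $\Conv((m_i,\log|p_i|_v)_{i=0}^r)$ is the translate of the one above by the vector $(m_0,\log|p_0|_v)\in M_\R\times\R$. Hence $\vartheta_v(x+m_0)=\vartheta'_v(x)+\log|p_0|_v$ for $x\in H^\vee(\Delta^r)$, and integration gives
$$\int_\Delta\vartheta_v\dd\Vol_M=\int_{H^\vee(\Delta^r)}\vartheta'_v\dd\Vol_M+\Vol_M(H^\vee(\Delta^r))\log|p_0|_v.$$
Summing the correction over all places with weights $n_v$ yields $\Vol_M(H^\vee(\Delta^r))\,\df(p_0)\in\df(\K^\times)$, which vanishes in the quotient, thereby finishing the proof. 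The main technical obstacle is the initial identification of the pullback $\varphi^\ast\ov{\cO(1)}^{\can}$ as a semipositive adelic toric metric (so that Proposition~\ref{prop:88} and Theorem~\ref{thm:22} apply verbatim), together with the careful tracking of the two translations by $m_0$ and $\log|p_0|_v$ that end up being absorbed into $\df(\K^\times)$.
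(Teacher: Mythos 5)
Your proposal is correct and follows essentially the same route as the paper: reduce via the functoriality of the global height (Theorem~\ref{thm:14}\eqref{item:93}) to the toric variety $X_\Sigma$ dominating $Y$, invoke Theorem~\ref{thm:22} together with the description of the pullback metric from Example~\ref{exm:36} and Proposition~\ref{prop:58}, and absorb the translation discrepancies by $m_0$ and $\log|p_0|_v$ into $\df(\K^\times)$. The only cosmetic difference is that you spell out the normalization factorization through $Y_{\Sigma_{\Delta^r},H(N),p}$ and the verification that the pullback metric is adelic toric semipositive, whereas the paper phrases the reduction as "$\varphi$ has degree $1$" and leaves those checks implicit in the citation of Example~\ref{exm:36}.
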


\begin{proof}
By the definition of adelic field,  $\val_{\K_{v}}(p)=0$ for almost all $v\in
\mathfrak{M}$.
Therefore, the integrability of $Y$ follows as in the proof of
Proposition \ref{prop:88}. 

Let $\Sigma$ be the complete regular fan of $N_{\R}$ induced by $H$
and $\Sigma_{\Delta^{r}}$, and let
$X_{\Sigma}$ be the associated toric variety. 
Write $\varphi=\varphi_{p,H}$ for short.
The fact that  $H(N)$ is saturated implies that $\varphi$ has
degree 1 and so  $Y=\varphi_{*}
X_{\Sigma}$. 
By the functoriality of the global height (Theorem
\ref{thm:14}\eqref{item:93}), 
\begin{displaymath}
\h_{\ov{\cO(1)}^{\can}}(Y) = \h_{\varphi^{*}(\ov{\cO(1)}^{\can})}(X_{\Sigma}).   
\end{displaymath}
Let $v\in \mathfrak{M}$. Using the results in Example \ref{exm:36},
it follows from Theorem \ref{thm:22} that
  \begin{displaymath}
    \h_{\varphi^{\ast}(\ov \cO(1)^{\can})}(X_{\Sigma })= \left[(n+1)!
 \sum_{v}   \int_{\ov \Delta} n_{v}\ov \vartheta_{v} \dd\Vol_{M}\right].
  \end{displaymath}
where $\ov \Delta= \Conv(0, m_{1}-m_{0}, \dots, m_{r}-m_{0})\subset M_{\R}$
and $\ov \vartheta_{v}$ is the function parameterizing the upper envelope
of the extended polytope
\begin{displaymath}
  \Conv\left((0,0), (m_{1}-m_{0},
    \log|p_{1}/p_{0}|_{v}),\dots,(m_{r}-m_{0},
    \log|p_{r}/p_{0}|_{v})\right ) \subset M_{\R}\times \R.
\end{displaymath}
We have that $\ov \Delta= \Delta- m_{0}$ and
 $\ov \vartheta_{v}= \tau_{-m_{0}}\vartheta_{v}- \log|p_{0}|_{v}$. 
Hence,
\begin{displaymath}
  \int_{\ov \Delta} \ov \vartheta_{v} \dd\Vol_{M}=
 \int_{\Delta} \vartheta_{v} \dd\Vol_{M} - \log|p_{0}|_{v}
 \Vol_{M}(\Delta).  
\end{displaymath}
Using that $\sum_{v} n_{v}
\log|p_{0}|_{v}\in \df(\K^{\times})$ and that, by 
Proposition \ref{prop:46},
$n!\Vol_{M}(\Delta )=\deg_{\cO(1)}(Y)\in \Z$,
we deduce the result.
\end{proof}

\begin{rem}\label{rem:9}
The above corollary  can be easily extended to the mixed case by using 
an argument similar to that in the proof of Corollary \ref{cor:13}. 
Applying the obtained result to the case when $\K$ is a number field
(respectively, the field of rational functions of a complete curve) we
 recover  \cite[Th\'eor\`eme~0.3]{PS08a} (respectively,
\cite[Proposition 4.1]{MR2419926}). 
\end{rem}
 

\chapter{Metrics from polytopes} \label{Metricfrompolytopes}
\section{Integration on polytopes} \label{Intepol}

In this chapter, we present a closed formula for the integral over a
polytope of a function of one variable composed with a linear form,
extending in this direction Brion's formula for the case of a simplex
\cite{Brion:PointsEntiers}, see Proposition \ref
{propdefcoeffdintegration} and Corollary~\ref{cor:16} below.  In the
next section, these formulae will allow us to compute the height of
toric varieties with respect to some interesting metrics arising from
polytopes.

We consider the vector space $\R^{n}$ with its usual scalar
product, that we denote $\langle \cdot,\cdot \rangle$, and its Lebesgue
measure, that we denote $\Vol_{n}$. We also consider a polytope 
$\Delta\subset \R^n$ of dimension $n$.

\begin{defn}\label{def:53}
Let $u\in
\R^n$ be a vector. For each $c\in \R$, an \index{aggregate of a polytope}
\emph{aggregate of~$\Delta$ in the direction $u$} is the
union of all
faces of $\Delta$ contained in the affine subspace
\begin{displaymath}
 \{x\in \R^{n}\mid \langle x,u\rangle=c\}. 
\end{displaymath}
We denote by $\dim(V)$ the maximal dimension of a face of $\Delta $
contained in $V$. In particular,  $\dim(\emptyset)=-1$.

We write $\Delta(u)$ for the set of non-empty aggregates of $\Delta$ in the
direction $u$. In particular, $\Delta (0)=\{\Delta\}$. 
Note that, if $V \in \Delta(u)$ and $x$ is a point
in the affine space spanned by $V$, then the value $\langle
x,u\rangle$ is independent of $x$. We denote this common value by
$\langle V,u\rangle$. 
\end{defn}

For any two aggregates $V_{1},V_{2}\in
\Delta(u)$, we have $V_{1}=V_{2}$ if and only if $\langle V_{1},u\rangle
=\langle V_{2},u\rangle $.

\begin{exmpl}\ 
  \begin{enumerate}
  \item Every facet of a polytope is an aggregate in the direction
    orthogonal to the facet.
  \item If $u$ is general enough, the set $\Delta (u)$
    agrees with the
    set of vertices of $\Delta $.
  \item Let $\Delta =\{(x,y)\in \R^{2}\mid 0\le x,y\le 1\}$ be the
    unit square and $u=(1,1)$. Then the set of aggregates $\Delta (u)$
    contains three elements: $\{(0,0)\}$,
    $\{(1,0),(0,1)\}$ and $\{(1,1)\}$.
  \end{enumerate}
\end{exmpl}

In each facet $F$ of $\Delta$ we choose a point $m_{F}$. Let $L_{F}$
be the linear hyperplane defined by $F$ and $\pi_F$
the orthogonal projection of $\R^n$ onto $L_{F}$.
Then, $F-m_{F}$ is a
polytope in $L_{F}$ of full dimension $n-1$. To ease the notation, we
 identify $F-m_{F}$ with $F$. Observe that, with this
identification, for $V\in \Delta(u)$, the  
intersection $V\cap F$ is an aggregate of $F$ in the
direction $\pi _{F}(u)$. 
We also denote by $u_{F}$  the inner normal vector to $F$ of norm 1.

\begin{defn} \label{def:63} Let $u\in\R^{n}$ be a vector. For each
  aggregate $V$ in the direction of $u$, we define the
  coefficients $C_{k}(\Delta,u,V)$, $k\in \N$, recursively.
If $u=0$, then  $V$ is either $\emptyset$ or $\Delta$. For both cases,
we set
\nomenclature[aC904uv]{$C_{k}(\Delta,u,V)$}{coefficient of $C(\Delta,u,V)$}%
\begin{displaymath}
  C_k(\Delta,0,V)=
  \begin{cases}
    \Vol_{n}(V)&\text{ if }k=n,\\
     0&\text{ otherwise.}
  \end{cases}
\end{displaymath}
If $u\ne 0$, we set  
\begin{equation*} 
C_k(\Delta,u,V) = -\sum_{F} 
\frac{\langle u_{F},u\rangle}{\|u\|^{2}} C_k(F,\pi_F(u), V\cap F),
\end{equation*}
where the sum is over the facets $F$ of $\Delta$.
This recursive formula implies that $C_k(\Delta,u,V)=0$ for all $k>\dim(V)$.

Finally, we define the polynomial associated to an aggregate 
\nomenclature[aC904uv]{$C(\Delta,u,V)$}{polynomial associated to an aggregate}%
by
$$
C(\Delta,u,V)(z) = \sum_{k=0}^{\dim(V)}
\frac{k!}{\dim(V)!}C_k(\Delta,u,V)  z^{\dim(V)-k}  \in \R[z].
$$
In particular, we have always $C(\Delta,u,\emptyset)=0$.
\end{defn}

As usual, we write $\mathscr{C}^{n}(\R)$ for the space of 
functions of one real variable which are 
$n$-times continuously differentiable.
For $f\in \mathscr{C}^{n}(\R)$  and $0\le k\le n$, we write
$f^{(k)}$ for the $k$-th derivative of $f$.

We want to give a formula that, for $f\in \mathscr{C}^{n}(\R)$, computes
$\int_\Delta f^{(n)}(\langle x,u\rangle)\dd \Vol_{n}(x)$ in terms of the values
of the function $x\mapsto f(\langle x,u\rangle)$ at the vertices of
$\Delta$. However, when $u$ is orthogonal
to some faces of $\Delta$ of positive dimension, such a formula
necessarily depends
on the values of the derivatives of $f$. 



\begin{prop}\label{propdefcoeffdintegration}
Let $\Delta\subset \R^n$ be a polytope of dimension $n$ and $u\in
\R^n$. Then, for any $f\in \mathscr{C}^{n}(\R)$, 
\begin{align}\label{formuledintegration}
\int_\Delta f^{(n)}(\langle x,u\rangle)\dd \Vol_{n}(x) &=
\sum_{V\in\Delta(u)}\sum_{k\ge0} C_k(\Delta,u,V) f^{(k)}(\langle
V,u\rangle)\\
&=
\sum_{V\in\Delta(u)}
\frac{\dd^{\dim(V)}}{\dd z^{\dim(V)}}
\big(C(\Delta,u,V)(z)\cdot f(z+\langle V,u\rangle)\big)\big|_{z=0}.\notag
\end{align}
The coefficients $C_k(\Delta,u,V) $ are uniquely determined by this
identity. 
\end{prop}

\begin{proof}
In view of Definition \ref{def:63}, both formulae in the above
statement are equivalent and so it is enough to prove the first one. 
In case $u=0$, we have $\Delta(u)=\{\Delta\}$ and
formula~(\ref{formuledintegration}) holds because
$$
\int_\Delta f^{(n)}(\langle x,0\rangle) \dd \Vol_{n}(x) = \Vol(\Delta)
f^{(n)}(0) =\sum_{k\geq 0} C_k(\Delta,0,\Delta) f^{(k)}(0),
$$ 
We prove~(\ref{formuledintegration}) by induction on the dimension
$n$. In case $n=0$, we have $u=0$ and so the verification reduces to the above one. 
Hence, we assume 
 $n\ge 1$ and $u\ne 0$. 
For short, we write $\dd x=\dd x_{1}\wedge\dots \wedge \dd
x_{n}$. 
Choose any vector $v\in \R^n$ of norm $1$ such that $\langle v,u\rangle\not=0$. Performing 
an orientation-preserving orthonormal change of variables, we may assume $v=(1,0,\dots,0)$. We have
$$ f^{(n)}(\langle x,u\rangle) \dd x= \frac{1}{\langle v,u\rangle}\dd
\big(f^{(n-1)}(\langle
x,u\rangle)\dd x_2 \wedge\dots\wedge\dd x_n \big).$$
With {Stokes}' theorem, we obtain
\begin{align} \label{eqpreuveC}
\int_\Delta f^{(n)}(\langle x,u\rangle)\dd \Vol_{n}(x) &= 
\int_\Delta f^{(n)}(\langle x,u\rangle)\dd x \\
&= \frac{1}{\langle v,u\rangle}\sum_F \int_F f^{(n-1)}(\langle x,u\rangle)\dd x_2\wedge\dots\wedge\dd x_n.
\nonumber \end{align}
where the sum is over the facets $F$ of $\Delta$, and we equip each
facet with the induced  orientation.

For each facet $F$ of $\Delta$, we let 
$\iota_{u_{F}}(\dd x)$ be the differential form of order $n-1$
obtained by contracting $\dd x$ with the vector $u_{F}$.
The form  $\dd x_2\wedge\dots\wedge\dd x_n$ is invariant under
translations and its restriction 
to the  linear hyperplane $L_{F}$ coincides with  $ \langle
v,u_F\rangle \iota_{u_{F}}(\dd x)$. 
Therefore,
\begin{equation*}
 \int_F f^{(n-1)}(\langle x,u\rangle)\dd x_2\wedge\dots\wedge\dd x_n = 
\langle v,u_F\rangle  \int_{F-m_{F}} f^{(n-1)}(\langle x+m_{F},u\rangle) \iota_{u_{F}}(\dd
 x).
\end{equation*}
Let  $\Vol_{n-1}$ denote the Lebesgue measure on $L_{F}$. 
We can verify that  $\Vol_{n-1}$ coincides with the measure
associated to the differential form  $-\iota_{u_{F}}(\dd x)\mid_{L_{F}}$ and
the orientation of $L_{F}$ induced by $u_{F}$.
Let  $g\colon \R\to \R$ be the function defined as
$g(z) =f(z+\langle m_{F},u\rangle)$. Then $ f^{(n-1)}(\langle
x+m_{F},u\rangle)= g^{(n-1)}(\langle x,\pi_F(u)\rangle)$ for all $x\in
L_{F}$. Hence, 
\begin{displaymath}
 \int_{F-m_{F}} f^{(n-1)}(\langle x+m_{F},u\rangle) \iota_{u_{F}}(\dd
 x)
= - \int_{F-m_{F}} g^{(n-1)}(\langle x,\pi_{F}(u)\rangle) \dd \Vol_{n-1}(x).
\end{displaymath}
Applying the inductive hypothesis to $F$ and the function 
$g$ we obtain
\begin{align*}
\int_{F} g^{(n-1)}(\langle x,\pi_F(u)\rangle)\dd \Vol_{n-1}(x) &= \sum_{V'\in F(\pi_F(u))} \sum_{k\geq 0}
C_k(F,\pi_F(u),V') g^{(k)}(\langle V',\pi_F(u)\rangle) \\
&= \sum_{V'\in F(\pi_F(u))} \sum_{k\ge 0}
C_k(F,\pi_F(u),V') f^{(k)}(\langle V',u\rangle).
\end{align*}
Each aggregate $V'\in F(\pi_F(u))$ is contained in a unique
$V\in\Delta(u)$ and it coincides with $V\cap F$. 
Therefore, we can transform the right-hand side of the last equality in
$$
\sum_{V\in \Delta(u)} \sum_{k\ge 0}
C_k(F,\pi_F(u),V\cap F) f^{(k)}(\langle V,u\rangle) ,
$$
where, for simplicity, we have set
$C_k(F,\pi_F(u),V\cap F)=0$ whenever $V\cap F=\emptyset$.  
Plugging the resulting expression into~(\ref{eqpreuveC}) and exchanging the
summations on  $V$ and $F$,
 we obtain that $\int_\Delta f^{(n)}(\langle x,u\rangle)\dd \Vol_{n}(x) $ is equal to
\begin{equation}\label{eq:110}
\sum_{V\in \Delta(u)} \sum_{k\ge 0} \bigg( -\sum_{F} \frac{\langle v,u_{F}\rangle}{\langle v,u\rangle}
C_k(F,\pi_F(u),V\cap F) f^{(k)}(\langle V,u\rangle)\bigg).
\end{equation}
Specializing this identity to $v=u$, we readily derive 
formula~(\ref{formuledintegration}) from Definition~\ref{def:63} of the
coefficients $C_{k}(\Delta,u,V)$. 

For the last statement, 
observe that the values $f^{(k)}(\langle V,u\rangle)$ can be
arbitrarily chosen.
Hence, the  coefficients
$C_k(\Delta,u,V)$ are uniquely determined from the linear system
obtained from the identity~(\ref{formuledintegration}) 
for enough functions $f$.
\end{proof}

\begin{cor}\label{cor:17}
Let $\Delta\subset \R^n$ be a polytope of dimension $n$ and $u\in
\R^n$. Then, 
\begin{equation*}
\sum_{V\in \Delta (u)}\sum_{k=0}^{\min\{i,\dim(V)\}} C_k(\Delta,u,V) \frac{\langle V,u\rangle^{i-k}}{(i-k)!} =
\begin{cases}
0 &\mbox{for } i=0,\dots,n-1,\\
\Vol_n(\Delta) &\mbox{for } i=n.
\end{cases}
\end{equation*} 
\end{cor}

\begin{proof}
This follows from formula~(\ref{formuledintegration}) 
applied to the function $f(z)= z^{i}/i!$.
\end{proof}

\begin{prop}\label{prop:68}
Let $\Delta\subset \R^n$ be a polytope of dimension $n$ and $u\in
\R^n$. Let $V\in \Delta(u)$ and $k\ge 0$.  
\begin{enumerate}
\item \label{item:82} 
The coefficient  $C_k(\Delta,u,V)$ is homogeneous of weight $k-n$ in the sense
that, for $\lambda\in\R^{\times}$,
\begin{displaymath}
C_k(\Delta,\lambda u,V)=\lambda^{k-n} C_k(\Delta,u,V).  
\end{displaymath}
\item \label{item:88} The coefficients $C_k(\Delta,u,V)$
  satisfy the vector relation
\begin{equation}\label{eqpourCi}
C_k(\Delta,u,V) \cdot u = -\sum_{F} 
C_k(F,\pi_F(u), V\cap F) \cdot u_F,
\end{equation}
where the sum is over the facets $F$ of $\Delta$.
\item \label{item:86}
Let $\Delta_1,\Delta_2\subset \R^n$ be two polytopes of dimension $n$
intersecting along a common facet  and  such that
$\Delta=\Delta_1\cup\Delta_2$. 
Then $V\cap \Delta_{i}=\emptyset$ or $V\cap \Delta_{i}\in \Delta_{i}(u)$ and 
\begin{equation*}
C_k(\Delta,u,V) = C_k(\Delta_1,u,V\cap \Delta_{1}) + C_k(\Delta_2,u,V\cap \Delta_{2}).
\end{equation*}  
\end{enumerate}
\end{prop}

\begin{proof}
Statement~\eqref{item:82} follows easily from the definition of $C_{k}(\Delta,u,V)$.
For statement \eqref{item:88}, we use that, from~\eqref{eq:110}, 
the integral formula in Proposition \ref{propdefcoeffdintegration}
also holds for the choice of coefficients
\begin{displaymath}
-\sum_{F} \frac{\langle v,u_{F}\rangle}{\langle v,u\rangle}
C_k(F,\pi_F(u),V\cap F)   
\end{displaymath}
for any vector  $v$ of norm 1 such that $\langle v,u\rangle \ne 0$.
But the coefficients satisfying that formula are unique. Hence, this
choice necessarily coincides with $C_k(\Delta,u,V)$ for all such $v$.  
Hence,
\begin{displaymath}
\langle v,u\rangle C_k(\Delta,u,V)=
-\sum_{F} \langle v,u_{F}\rangle
C_k(F,\pi_F(u),V\cap F)   
\end{displaymath}
and formula~(\ref{eqpourCi}) follows. 
Statement \eqref{item:86} follows from formula~(\ref{formuledintegration}) applied to
$\Delta$, $\Delta_1$ and $\Delta_2$ together with the additivity of
the integral and the fact that the coefficients $C_k(\Delta,u,V)$ are
uniquely determined.
\end{proof}

In case $\Delta$ is a simplex, the linear system given by Corollary
\ref{cor:17} has as many unknowns as equations.  In this case, the
coefficients corresponding to an aggregate in a given direction
are determined by this linear system. The following result gives a
closed formula for those coefficients.

\begin{prop}\label{prop:103}
Let $\Delta\subset \R^{n}$ be simplex and $u\in \R^{n}$. 
Write  $d_{W}=\dim(W)$ for $W\in \Delta(u)$. Then, for $V\in \Delta(u)$ and $0\le k\le \dim(V)$, 
\begin{equation*}
C_k(\Delta, u,V) = (-1)^{d_{V}-k} \frac{n!}{k!}
\Vol_n(\Delta) 
\sum_{\substack{\eta\in \N^{\Delta(u)\setminus \{V\}}\\ |\eta|=d_{V}-k}} \
\prod_{W\in \Delta(u)\setminus \{V\}}\frac{{d_{W}+\eta_{W}\choose
    d_{W}}}{\langle V-W ,u\rangle^{d_{W}+\eta_{W}+1}}\ .
\end{equation*}
\end{prop}

\begin{proof}
 Consider the Hermite interpolation polynomial
  $p_{V,k}\in \R[t]$ of degree $n$ characterized by the conditions
  that, for $W\in \Delta(u)$ and $l=0,\dots, d_{W}$,
\begin{displaymath}
  p_{V,k}^{(l)}(\langle u,W\rangle)=
  \begin{cases}
    l! & \text{ if } W=V \text{ and } l=k,\\
0 & \text{ otherwise.} 
  \end{cases}
\end{displaymath}
  
By Proposition \ref{propdefcoeffdintegration} and the choice of~$p_{V,k}$, 
\begin{displaymath}
\int_{\Delta}   p_{V,k}^{(n)}(\langle u,x\rangle)\dd \Vol_{n}(x)=  k!\,
C_k(\Delta, u,V). 
\end{displaymath}
Furthermore, $\int_{\Delta}   p_{V,k}^{(n)}(\langle u,x\rangle)\dd
\Vol_{n}(x)=  n!  \Vol_{n}(\Delta)
\coeff_{t^{n}}(p_{V,k})$, where
$\coeff_{t^{n}}(p_{V,k}) $ denotes the leading coefficient of
$p_{V,k}$. 

An explicit formula for $p_{V,k}$ can be found, for instance, in
\cite[Proposition~2.3]{DAndreaKrickSzanto:smr}. From that formula, we
deduce that
\begin{displaymath}
  \coeff_{t^{n}}(p_{V,k})= (-1)^{d_{V}-k} 
\sum_{\substack{\eta\in \N^{\Delta(u)\setminus \{V\}}\\ |\eta|=d_{V}-k}} \
\prod_{W\in \Delta(u)\setminus \{V\}}\frac{{d_{W}+\eta_{W}\choose
    d_{W}}}{\langle V-W ,u\rangle^{d_{W}+\eta_{W}+1}}\ , 
\end{displaymath}
which concludes the proof.
\end{proof}

\begin{rem}\label{rem:27} We can rewrite the formula in Proposition
  \ref{prop:103} in terms of vertices instead of aggregates as
  follows:
  \begin{equation}\label{eq:29}
    C_k(\Delta, u,V) = (-1)^{d_{V}-k} \frac{n!}{k!}
    {\Vol}_n(\Delta)
    \sum_{|\beta|=d_{V}-k}\prod_{\nu \notin V}\langle V-\nu ,u\rangle^{-\beta_\nu -1},
  \end{equation} 
  where the product is over the vertices  $\nu$ of  $\Delta$ not lying
  in $V$ and the sum is over the tuples 
  $\beta$ of non negative integers of length $d_{V}-k$, indexed by
  those same vertices of $\Delta$ that are not in $V$, that is,
  $\beta\in\N^{n-d_V}$ and $|\beta|=d_{V}-k$.
\end{rem}

\begin{exmpl}\label{integrationsimplexe}
  Let $\Delta\subset \R^{n}$ be a simplex and $u\in \R^{n}$. If 
  a vertex $\nu_{0}$ of $\Delta$  is an aggregate in the
  direction of $u$, then formula \eqref{eq:29}  reduces to
\begin{equation}\label{formulecassommetsimplex}
C_0(\Delta, u,\nu _0) = n!
\Vol_n(\Delta)\prod_{\nu \ne \nu _0}\langle \nu _0-\nu ,u\rangle^{-1},
\end{equation} 
where the product runs over all vertices of $\Delta $ different from
$\nu_{0}$. 
Suppose that the simplex is presented as the intersection of $n+1$
halfspaces as
\begin{displaymath}
  \Delta = \bigcap_{i=0}^n\{ x \in \R^n | \, \langle x,u_i\rangle
-\lambda_i\ge0\}
\end{displaymath}
with $u_i\in \R^n\setminus \{0\}$ and $\lambda_i\in\R$. Up to a
reordering, we can assume that $u_{0}$ is an inner normal vector to
the unique face of $\Delta$ not containing~$\nu_{0}$. We denote by
$\varepsilon$ the sign of $(-1)^{n}\det(u_{1},\dots,u_{n})$.  Then the
above coefficient can be alternatively written as
\begin{equation*}
C_0(\Delta, u,\nu _0) = \frac{\varepsilon
  \det(u_1,\dots,u_n)^{n-1}}{\prod_{i=1}^n \det(u_1,\dots,
  u_{i-1},u,u_{i+1},\dots, u_n)}.
\end{equation*}
\end{exmpl}

From the equation (\ref{formulecassommetsimplex}), we obtain the following extension of  Brion's ``short
formula'' 
\index{Brion's ``short formula''}%
for the case of a
simplex~\cite[Th\'eor\`eme~3.2]{Brion:PointsEntiers}, see
also~\cite{BBLKV08}.

\begin{cor} \label{cor:16}
Let $\Delta\subset \R^{n}$ be a simplex of dimension $n$ that is the
convex hull of points $\nu_{i}$,
$i=0,\dots, n$,  and let $u\in \R^n$ such that $\langle
\nu_{i},u\rangle \ne \langle 
\nu_{j},u\rangle$ for $i\ne j$.
Then, for any $f\in \mathscr{C}^{n}(\R)$, 
\begin{align*}
\int_\Delta f^{(n)}(\langle x,u\rangle)\dd \Vol_{n}(x) =  n!\Vol_n(\Delta)\sum_{i=0}^{n}
\frac{f(\langle \nu_i,u\rangle )}{\prod_{j\ne i}\langle \nu_i-\nu_j ,u\rangle}.
\end{align*}
\end{cor}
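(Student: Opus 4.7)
The plan is to derive this corollary as a direct specialization of Proposition \ref{propdefcoeffdintegration} combined with the closed formula for the coefficients given in Example \ref{integrationsimplexe}. The key observation is that the non-degeneracy hypothesis $\langle u,\nu_i\rangle \ne \langle u,\nu_j\rangle$ for $i\ne j$ forces every aggregate of $\Delta$ in the direction $u$ to be a single vertex. Indeed, an aggregate is a union of faces lying in a common affine hyperplane orthogonal to $u$; since any such face is the convex hull of a subset of $\{\nu_0,\dots,\nu_n\}$, and distinct vertices have distinct values of $\langle u,\cdot\rangle$, no face of positive dimension can lie in such a hyperplane. Hence $\Delta(u) = \{\nu_0,\dots,\nu_n\}$ and each aggregate has dimension $0$.

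Next, I would apply Proposition \ref{propdefcoeffdintegration}. Since $\dim(V)=0$ for every $V\in\Delta(u)$, the inner sum on the right-hand side of \eqref{formuledintegration} collapses to the single term $k=0$, giving
\begin{displaymath}
\int_\Delta f^{(n)}(\langle u,x\rangle)\dd \Vol_{n} = \sum_{i=0}^{n} C_0(\Delta,u,\nu_i)\, f(\langle u,\nu_i\rangle).
\end{displaymath}
To finish, I would invoke the closed formula \eqref{formulecassommetsimplex} from Example \ref{integrationsimplexe}, which yields
\begin{displaymath}
C_0(\Delta,u,\nu_i) = n!\, \Vol_n(\Delta)\prod_{j\ne i}\langle \nu_i-\nu_j,u\rangle^{-1}.
\end{displaymath}
Substituting this into the preceding display gives the stated identity at once.

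There is essentially no obstacle in this chain of reasoning, as the heavy lifting is already done by the general integration formula and the example. The only mildly delicate point is to ensure that the hypothesis $\langle u,\nu_i\rangle\ne\langle u,\nu_j\rangle$ is precisely what is needed to guarantee that $\Delta(u)$ consists of the $n+1$ vertices (so that all derivatives of $f$ beyond order zero drop out). As a sanity check, one can verify that the right-hand side has the expected homogeneity: under $u\mapsto \lambda u$, both sides scale by $\lambda^{-n}$, consistent with Proposition \ref{prop:68}\eqref{item:82}.
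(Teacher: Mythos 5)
Your proposal is correct and follows exactly the paper's own proof, which cites Proposition \ref{propdefcoeffdintegration} together with equation \eqref{formulecassommetsimplex}; you have merely spelled out the (correct) observation that the non-degeneracy hypothesis forces $\Delta(u)$ to consist of the $n+1$ vertices so that only the $k=0$ terms survive. No gaps.
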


\begin{proof}
  This follows from Proposition \ref{propdefcoeffdintegration} and
the equation \eqref{formulecassommetsimplex}.
\end{proof}

In the next section, we will have to compute integrals over a polytope
of functions of 
the form $\ell(x)\log(\ell(x))$ where $\ell$ is an affine
function. The following result gives the value of such integral for
the case of a simplex. 

\begin{prop} \label{intsimplexepourpotgui}
Let $\Delta\subset \R^{n}$ be a simplex of dimension $n$ and
let $\ell\colon \R^{n}\to \R$ be an affine function which is non-negative on
$\Delta$. Write $\ell(x)=\langle x,u\rangle-\lambda $ for some vector
$u$ and constant $\lambda $.
Then  $\displaystyle \frac{1}{\Vol_n(\Delta)}\int_\Delta
\ell(x)\log(\ell(x))\dd \Vol_{n}(x)$ equals
\begin{equation}\label{integrationsimplexepotgui} 
  \sum_{V\in\Delta(u)}
  \sum_{\beta'} \binom{n}{n-|\beta'|} \frac{\ell(V)\left(\log(\ell(V)) -
      \sum_{j=2}^{|\beta'|+1}\frac{1}{j}\right)}{(|\beta'|+1)\prod_{\nu
      \notin V}\left(-\big(\frac{\ell(\nu
        )}{\ell(V)}-1\big)^{\beta'_\nu }
    \right)},
\end{equation}
where the second sum runs over $\beta '\in(\N^\times)^{n-\dim(V)}$ with
$ |\beta'|\leq n$ and the product is over the $n-\dim(V)$ vertices
$\nu $ of $\Delta$
not in $V$. 

If $\ell(x)$ is the defining equation of a hyperplane containing a
facet $F$ of $\Delta$, then
\begin{equation}\label{integrationsimplexepotguifac}
\frac{1}{\Vol_n(\Delta)}\int_\Delta \ell(x)\log(\ell(x))\dd x = \frac{\ell(\nu_{F} )}{n+1}\bigg(\log(\ell(\nu_{F} )) - \sum_{j=2}^{n+1}\frac{1}{j}\bigg),
\end{equation}
where $\nu_{F} $  denotes the unique vertex of $\Delta$ not contained
in $F$.
\end{prop}

\begin{proof}
This follows from the formulae~(\ref{formuledintegration})
and (\ref{eq:29}) with the function
$f^{(n)}(z)=(z-\lambda)\log(z-\lambda)$, a $(n-k)$-th primitive of which is
$$f^{(k)}(z) =
\frac{(z-\lambda)^{n-k+1}}{(n-k+1)!}\left(\log(z-\lambda) -
  \sum_{j=2}^{n-k+1}\frac{1}{j}\right).
$$  
\end{proof}

We end this section with a lemma specific to integration on the standard simplex.

\begin{lem}\label{lemmesemimonomail}
Let $\Delta^{r}$ be the standard simplex of $\R^{r}$ and  $\beta=(\beta_0,\dots,\beta_{r-1})\in\N^r$.
Let $f\in \mathscr{C}^{|\beta|+r}([0,1])$ where  $|\beta|
=\beta_0+\dots+\beta_{r-1}$.
For $(w_{1},\dots, w_{r})\in \Delta^{r}$ write $w_0 =1-w_1-\dots-w_r$. 
Then
$$\int_{\Delta^r}\bigg(
\prod_{i=0}^{r-1}\frac{w_i^{\beta_i}}{\beta_{i}!}\bigg)
f^{(|\beta|+r)}(w_r) \dd w_{1}\wedge \cdots\wedge \dd w_{r} = f(1)-\sum_{j=0}^{|\beta|+r-1}\frac{f^{(j)}(0)}{j!}.
$$
\end{lem}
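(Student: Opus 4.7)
The plan is to reduce the $r$-fold integral to a one-dimensional integral by first integrating out $w_1,\dots,w_{r-1}$ with $w_r$ fixed, and then apply the classical integral form of Taylor's remainder theorem.

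First I would invoke Fubini to write the integral as
\begin{equation*}
  \int_0^1 f^{(|\beta|+r)}(w_r) \Biggl(\int_{\Delta^{r-1}(w_r)}\prod_{i=0}^{r-1}\frac{w_i^{\beta_i}}{\beta_i!}\,\dd w_1\wedge\cdots\wedge\dd w_{r-1}\Biggr)\dd w_r,
\end{equation*}
where $\Delta^{r-1}(w_r)=\{(w_1,\dots,w_{r-1})\in\R_{\ge 0}^{r-1}\mid w_1+\dots+w_{r-1}\le 1-w_r\}$ is the simplex of ``size'' $1-w_r$, and $w_0=1-w_1-\dots-w_{r-1}-w_r$ on the domain of integration.

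Next I would evaluate the inner integral. Making the dilation $w_i=(1-w_r)w_i'$ for $i=1,\dots,r-1$, one has $w_0=(1-w_r)w_0'$ with $w_0'=1-w_1'-\dots-w_{r-1}'$, and the Jacobian contributes $(1-w_r)^{r-1}$. The inner integral therefore equals
\begin{equation*}
  (1-w_r)^{|\beta|+r-1}\int_{\Delta^{r-1}}\prod_{i=0}^{r-1}\frac{(w_i')^{\beta_i}}{\beta_i!}\,\dd w_1'\wedge\cdots\wedge\dd w_{r-1}'.
\end{equation*}
This last integral is a classical Dirichlet integral, equal to $\tfrac{1}{(|\beta|+r-1)!}$; if needed, one verifies it by induction on $r$ using the Beta function identity $\int_0^1(1-s)^a s^b\,\dd s=\tfrac{a!\,b!}{(a+b+1)!}$. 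Hence the inner integral equals $\frac{(1-w_r)^{|\beta|+r-1}}{(|\beta|+r-1)!}$.

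Finally, setting $n=|\beta|+r$ and applying the integral form of Taylor's theorem,
\begin{equation*}
  f(1)=\sum_{j=0}^{n-1}\frac{f^{(j)}(0)}{j!}+\int_0^1\frac{(1-t)^{n-1}}{(n-1)!}f^{(n)}(t)\,\dd t,
\end{equation*}
the remaining one-dimensional integral is exactly $f(1)-\sum_{j=0}^{|\beta|+r-1}\frac{f^{(j)}(0)}{j!}$, which is the desired identity. There is no substantial obstacle here: the only step that requires a non-trivial computation is Dirichlet's integral, and that is entirely standard.
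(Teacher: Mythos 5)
Your proof is correct and follows essentially the same route as the paper's: reduce to a one-dimensional integral in $w_r$ via Fubini and the dilation $w_i=(1-w_r)w_i'$, then apply Taylor's theorem with integral remainder. The only stylistic difference is that the paper organises this as an induction on $r$ (deriving the Dirichlet value $1/(|\beta|+r-1)!$ by applying the inductive hypothesis with the polynomial $f(z)=z^{|\beta|+r-1}/(|\beta|+r-1)!$, and using the $r=1$ case to finish), whereas you invoke Dirichlet's integral directly and then conclude with the integral form of Taylor's remainder, which is precisely what the $r=1$ case amounts to.
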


\begin{proof}
We proceed by induction on $r$. Let $r=1$. Applying 
$\beta_{0}+1$ successive integrations by parts, the
integral computes as  
$$\sum_{j=0}^{\beta_{0}}\left[\frac{(1-w_1)^{j}}{j!}f^{(j)}(w_1)\right]_0^1
= f(1) - \sum_{j=0}^{\beta_{0}}\frac{f^{(j)}(0)}{j!},
$$
as stated. Let $r\ge 2$. Applying the case $r-1$ to the function $f(z)=\frac{z^{|\beta|+r-1}}{(|\beta|+r-1)!}$,
$$\frac{1}{\beta_0!\dots\beta_{r-1}!}\int_{\Delta^{r-1}}w_0^{\beta_0}w_1^{\beta_1}\dots
w_{r-1}^{\beta_{r-1}} \dd w_1\wedge\dots\wedge \dd w_{r-1} = \frac{1}{(|\beta|+r-1)!}
$$
and,  after rescaling, 
$$\frac{1}{\beta_0!\dots\beta_{r-1}!}\int_{(1-w_r)\Delta^{r-1}}w_0^{\beta_0}w_1^{\beta_1}\dots
w_{r-1}^{\beta_{r-1}} \dd w_1\wedge\dots\wedge \dd w_{r-1} = \frac{(1-w_r)^{|\beta|+r-1}}{(|\beta|+r-1)!}
 .$$
Therefore, the left-hand side of the equality to be proved reduces to
$$\frac{1}{(|\beta|+r-1)!}\int_0^1(1-w_r)^{|\beta|+r-1}f^{(|\beta|+r)}(w_r)\dd w_r.
$$
Applying the case $r=1$ and index $|\beta|+r-1\in
\N$, we find that this integral equals  $f(1) -
\sum_{j=0}^{|\beta|+r-1}f^{(j)}(0)/j!$, which concludes the proof.
\end{proof}

\begin{cor} \label{calculmonomeetmonomelog}
Let $\alpha\in\N^{r+1}$. For $(w_{1},\dots, w_{r})\in \Delta^{r}$, write $w_0 =1-w_1-\dots-w_r$. 
Then 
$$\int_{\Delta^r}w_0^{\alpha_0}w_1^{\alpha_1}\dots w_{r}^{\alpha_{r}}
\dd w_{1}\wedge \dots\wedge \dd w_{r} = \frac{\alpha_0!\dots\alpha_r!}{(|\alpha|+r)!}
$$
and, for $i=0,\dots,r$,
$$\int_{\Delta^r}w_0^{\alpha_0}w_1^{\alpha_1}\dots w_{r}^{\alpha_{r}}\log(w_i) \dd w_{1}\wedge \dots\wedge \dd w_{r} = -\frac{\alpha_0!\dots\alpha_r!}{(|\alpha|+r)!}\sum_{j=\alpha_i+1}^{|\alpha|+r}\frac{1}{j}.
$$
\end{cor}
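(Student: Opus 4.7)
My plan is to deduce both identities from Lemma \ref{lemmesemimonomail} applied with $\beta=(\alpha_0,\dots,\alpha_{r-1})$, so that $|\beta|+r=|\alpha|+r-\alpha_r$, for two cleverly chosen test functions, and then to recover the case of a general index $i$ via the natural symmetry of the standard simplex.

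For the first identity, I would set
\[
f(z)=\frac{\alpha_r!}{(|\alpha|+r)!}\,z^{|\alpha|+r}.
\]
A direct computation shows that $f^{(|\beta|+r)}(z)=z^{\alpha_r}$ and $f^{(j)}(0)=0$ for every $j\le|\beta|+r-1<|\alpha|+r$. Plugging into Lemma \ref{lemmesemimonomail} yields $\int_{\Delta^r}\prod_{i<r}\frac{w_i^{\alpha_i}}{\alpha_i!}\,w_r^{\alpha_r}\,\dd w=f(1)=\frac{\alpha_r!}{(|\alpha|+r)!}$, and clearing the denominators $\alpha_0!\cdots\alpha_{r-1}!$ gives the first equality.

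For the second identity in the distinguished case $i=r$, I would choose
\[
f(z)=\frac{\alpha_r!}{(|\alpha|+r)!}\,z^{|\alpha|+r}\Bigl(\log z-\sum_{j=\alpha_r+1}^{|\alpha|+r}\frac{1}{j}\Bigr),
\]
which one recognizes as $\partial_s\bigl[z^{s+k}/((s+1)\cdots(s+k))\bigr]$ evaluated at $s=\alpha_r$, $k=|\beta|+r$. Differentiating under $\partial_s$ shows that $f^{(|\beta|+r)}(z)=z^{\alpha_r}\log z$, while each lower-order derivative $f^{(j)}$ at $z>0$ is a combination of terms $z^{|\alpha|+r-j}\log z$ and $z^{|\alpha|+r-j}$; since $j\le|\beta|+r-1<|\alpha|+r$, all such terms tend to $0$ as $z\to 0^+$. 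The value $f(1)$ is exactly the right-hand side of the claimed formula, so the Lemma gives the $i=r$ case.

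To pass from $i=r$ to an arbitrary index $i\in\{0,\dots,r\}$, I would use the fact that the swap $w_i\leftrightarrow w_r$ is a measure-preserving involution of $\Delta^r$: for $i\neq 0$ it is the obvious permutation of coordinates, and for $i=0$ it is the affine involution $(w_1,\dots,w_r)\mapsto(w_1,\dots,w_{r-1},1-w_1-\dots-w_r)$, whose Jacobian is $-1$. Applying this change of variables reduces the integral with $\log(w_i)$ and exponents $(\alpha_0,\dots,\alpha_r)$ to the $i=r$ integral with exponents $(\alpha_0,\dots,\alpha_{i-1},\alpha_r,\alpha_{i+1},\dots,\alpha_{r-1},\alpha_i)$, and the resulting closed form indeed has $\sum_{j=\alpha_i+1}^{|\alpha|+r}1/j$ as required.

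The one point that needs care is that the $f$ chosen for the logarithmic case is not $\mathscr{C}^{|\beta|+r}$ on all of $\R$, whereas Lemma \ref{lemmesemimonomail} is stated for such functions; this will be the main technical obstacle. I would resolve it by applying the Lemma to the regularized function $f_\varepsilon(z)=\frac{\alpha_r!}{(|\alpha|+r)!}\,z^{|\alpha|+r}\bigl(\log(z+\varepsilon)-\sum_{j=\alpha_r+1}^{|\alpha|+r}1/j\bigr)$ for $\varepsilon>0$, which is smooth on $\R_{\ge 0}$, and then letting $\varepsilon\to 0^+$. Dominated convergence handles the integral on the left-hand side (since $w_r^{\alpha_r}\log(w_r+\varepsilon)$ is dominated by an integrable function on $\Delta^r$), and on the right-hand side one only has to check that $f_\varepsilon^{(j)}(0)\to 0$ for $j\le|\beta|+r-1$ and $f_\varepsilon(1)$ converges to $f(1)$, both of which are straightforward.
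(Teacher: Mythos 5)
Your proposal is correct and follows essentially the same route as the paper: apply Lemma~\ref{lemmesemimonomail} with $\beta=(\alpha_0,\dots,\alpha_{r-1})$ to the polynomial test function for the first identity and to $z^{|\alpha|+r}\bigl(\log z-\sum_{j=\alpha_r+1}^{|\alpha|+r}1/j\bigr)$ (up to normalization) for the second, then reduce from $i=r$ to general $i$ by a coordinate permutation or the affine involution swapping $w_0$ and $w_r$. The one place you go beyond the paper is your explicit $\varepsilon$-regularization to deal with $f$ not being $\mathscr{C}^{|\beta|+r}(\R)$ — a legitimate point that the paper's proof silently elides.
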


\begin{proof}
The formula for the first integral follows from Lemma~\ref{lemmesemimonomail} applied
with 
 $\beta=(\alpha_0,\dots,\alpha_{r-1})$ and
 $f(z)=\frac{z^{|\alpha|+r}}{(|\alpha|+r)!}$.
The second one follows similarly,  applying
Lemma~\ref{lemmesemimonomail} to the  function
$f(z)=\frac{z^{|\alpha|+r}}{(|\alpha|+r)!}\left(\log(z)-\sum_{j=\alpha_i+1}^{|\alpha|+r}\frac{1}{j}\right)$,
after some possible permutation (for $i=1,\dots, r-1$)  or linear
change of variables (for $i=0$).
\end{proof}

\section{Metrics and heights from polytopes} \label{Metrics}

In this section we will consider some metrics 
arising from polytopes. 
We will use the notation of \S 4 and \S 5. In particular, we consider a 
split torus over the field of rational numbers $\T\simeq
\G_{m,\Q}^{n}$ and we denote 
by $N,M,N_{\R},M_{\R}$ the lattices and dual spaces corresponding to $\T$. 

Let $\Delta\subset M_\R$ be a lattice polytope of dimension $n$. Let
$\ell_i$, $i=1,\dots, r$, be affine
functions on $M_{\R}$ defined as $\ell_{i}(x) =\langle x,u_i\rangle-\lambda_i$ for some $u_{i}\in N_{\R}$ and $\lambda_{i}\in
\R$ such that $\ell_{i}\ge 0$ on $\Delta$ and let also $c_{i}>0$.
Write $\ell=(\ell_{1},\dots, \ell_{r})$ and
$c=(c_{1},\dots, c_{r})$.  
We consider the function $\vartheta_{\Delta,\ell,c}\colon\Delta\to \R $
defined, for $x\in \Delta$, by 
\begin{equation}\label{potentialfunction}
\vartheta_{\Delta,\ell,c}(x)= -\sum_{i=1}^rc_i\ell_i(x)\log(\ell_i(x)).
\end{equation}
When $\Delta,\ell,c$ are clear from the context, we write  for short
$\vartheta=\vartheta_{\Delta,\ell,c}$.

\begin{lem} \label{lemm:18} Let notation be as above.
  \begin{enumerate} 
  \item \label{item:90} The function $\vartheta_{\Delta,\ell,c}$ is concave. 
  \item \label{item:91} If the family  $\{u_{i}\}_{i}$ generates $N_{\R}$,
    then  $\vartheta_{\Delta,\ell,c}$ is strictly concave. 
  \item \label{item:92} If $\Delta=\bigcap_{i}\{ x\in M_{\R}| \ell_{i}(x)\geq0\}$, then the
    restriction of $\vartheta_{\Delta,\ell,c}$ to $\Delta^{\circ}$,
    the interior of the polytope, is of Legendre type
    (Definition~\ref{def:28}). 
  \end{enumerate}
\end{lem}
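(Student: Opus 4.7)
The plan is to treat each of the three items in turn, with (1) essentially formal, (2) reduced to the strict concavity of the one-variable function $-t\log t$, and (3) built from (2) plus a boundary analysis of the gradient.

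For (1), observe that the function $f\colon[0,\infty)\to\R$ given by $f(t)=-t\log(t)$ (with $f(0)=0$) is concave, and each $\ell_i$ is affine on $M_\R$. Therefore $f\circ\ell_i$ is concave on $\Delta$, and a positive linear combination of concave functions is concave, giving the concavity of $\vartheta_{\Delta,\ell,c}$.

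For (2), I would first check that $f$ is in fact strictly concave on the whole of $[0,\infty)$: on $(0,\infty)$ this is immediate from $f''(t)=-1/t<0$, and the case where one argument equals $0$ is a short direct verification. Consequently each summand $-c_i\ell_i\log\ell_i$ is strictly concave along any segment in $\Delta$ on which $\ell_i$ is non-constant, i.e.\ for which $\langle u_i,x_2-x_1\rangle\ne 0$. If $\{u_i\}$ spans $N_\R$, then for every pair $x_1\ne x_2$ in $\Delta$ some index $i$ satisfies $\langle u_i,x_2-x_1\rangle\ne 0$, yielding strict concavity of one of the summands and hence of the sum.

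For (3), note first that the hypothesis $\Delta=\bigcap_i\{\ell_i\ge 0\}$ combined with boundedness of $\Delta$ forces $\{u_i\}$ to span $N_\R$ (otherwise $\Delta$ would contain a translate of the orthogonal complement of $\operatorname{span}\{u_i\}$). So part (2) already gives strict concavity. Moreover $\vartheta$ is $C^\infty$ on $\Delta^\circ$, since all $\ell_i$ are strictly positive there, with
\[
\nabla\vartheta(x)=-\sum_{i=1}^r c_i\bigl(\log\ell_i(x)+1\bigr)u_i.
\]
The remaining, and main, step is to show that $\|\nabla\vartheta(x_k)\|\to\infty$ for every sequence $x_k\to x_\infty\in\partial\Delta$. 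Let $I=\{i:\ell_i(x_\infty)=0\}$, which is non-empty because $x_\infty\in\partial\Delta$. For $i\notin I$ the coefficient $\log\ell_i(x_k)+1$ stays bounded, while for $i\in I$ we have $-\log\ell_i(x_k)\to+\infty$.

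Suppose for contradiction that $\|\nabla\vartheta(x_k)\|$ stays bounded along some subsequence. Set $t_i^{(k)}=-c_i(\log\ell_i(x_k)+1)>0$ for $i\in I$ and $k$ large, let $T^{(k)}=\max_{i\in I}t_i^{(k)}\to\infty$, and normalize $s_i^{(k)}=t_i^{(k)}/T^{(k)}\in[0,1]$. Passing to a further subsequence, $s_i^{(k)}\to s_i\ge 0$ with $\max_{i\in I}s_i=1$. Dividing the (bounded) gradient expression by $T^{(k)}$ and letting $k\to\infty$ yields $\sum_{i\in I}s_i u_i=0$. Choosing any $x_0\in\Delta^\circ$ and pairing with $x_0-x_\infty$ gives
\[
0=\sum_{i\in I}s_i\langle u_i,x_0-x_\infty\rangle=\sum_{i\in I}s_i\ell_i(x_0),
\]
using $\ell_i(x_\infty)=0$ for $i\in I$. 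Since each $\ell_i(x_0)>0$ and $s_i\ge 0$, this forces $s_i=0$ for all $i\in I$, contradicting $\max_{i\in I}s_i=1$. Hence the gradient norm blows up near the boundary, and $\vartheta|_{\Delta^\circ}$ is of Legendre type.

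The main obstacle is (3), specifically the possibility of cancellation among the boundary-blowing-up terms of the gradient; the renormalization and compactness argument above, together with the positivity of $\ell_i$ at an interior point, is the key device that rules this out.
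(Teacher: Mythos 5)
Your proof is correct, and for items (1) and (2) it follows essentially the same line as the paper (strict concavity of $-t\log t$ pulled back along affine maps, then spanning of $\{u_i\}$ gives strict concavity of the sum). The interesting divergence is in (3). The paper's argument is terse: after observing that some $\ell_{i_1}(x_j)\to 0$, it simply asserts that the gradient norm blows up, without addressing the possibility that the divergent terms $-c_i(\log\ell_i(x_j)+1)u_i$ for different $i$ might cancel against each other. You correctly identify this as the real content of (3) and close the gap with a normalize-and-pass-to-a-subsequence argument, forcing a vector $\sum_{i\in I}s_iu_i=0$ with $s_i\ge 0$ not all zero, which you refute by pairing against $x_0-x_\infty$. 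This is sound. Note, though, that the same pairing trick works \emph{directly}, without the compactness machinery or the reductio: for an interior point $x_0$, the scalar
\[
\langle\nabla\vartheta(x_k),\,x_0-x_\infty\rangle
=-\sum_{i}c_i\bigl(\log\ell_i(x_k)+1\bigr)\bigl(\ell_i(x_0)-\ell_i(x_\infty)\bigr)
\]
is, for $i\in I$, a sum of terms each tending to $+\infty$ (since $\ell_i(x_\infty)=0$ and $\ell_i(x_0)>0$), and for $i\notin I$ it is bounded; hence it goes to $+\infty$, so $\|\nabla\vartheta(x_k)\|\to\infty$ by Cauchy--Schwarz. This single-signedness of the contributions in the chosen direction is exactly why no cancellation can occur, and it renders your subsequence argument unnecessary, though in no way incorrect. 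In short: (1), (2) match the paper; your (3) is a correct and more careful version of the paper's, and can be streamlined.
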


\begin{proof}
Let $1\le i\le r$ and consider the affine map $\ell_{i}\colon
\Delta\to \R_{\ge0}$. We have that  $-z\log(z)$ is a strictly concave
function on $\R_{\ge0}$ and  $-\ell_i\log(\ell_i)=
\ell_{i}^{*}(-z\log(z))$. Hence, each function $-c_i\ell_i(x)\log(\ell_i(x))$ is
concave and so is $\vartheta$, as stated in \eqref{item:90}.

For statement~\eqref{item:91}, let $x_{1},x_{2}$ be two different points
of $\Delta$. The assumption that~$\{u_{i}\}_{i}$ generates 
$N_{\R}$ implies that
$\ell_{i_{0}}(x_{1})\ne \ell_{i_{0}}(x_{2})$  for some $i_{0}$. Hence, 
the affine map $\ell_{i_{0}}$ gives an injection of the segment
$\ov{x_{1}x_{2}}$ into $\R_{\ge0}$. We deduce that 
$-c_{i_{0}}\ell_{i_{0}}\log(\ell_{i_{0}})$ is strictly concave on
$\ov{x_{1}x_{2}}$ and so is $\vartheta$. Varying $x_{1},x_{2}$, we
deduce that $\vartheta$ is strictly concave on $\Delta$.

For statement \eqref{item:92}, it is clear that $\vartheta|_{\Delta^{\circ}}$
is differentiable. Moreover, the assumption that $\Delta$ is the intersection of the halfspaces
defined by the $\ell_{i}$'s implies that the $u_{i}$'s generate
$N_{\R}$ and so $\vartheta$ is strictly concave.
The gradient of $\vartheta$ is given, for $x\in \Delta^{\circ}$, by
\begin{equation}\label{eq:113}
  \nabla \vartheta(x)= -\sum_{i=1}^{r}c_{i}u_{i}(\log (\ell_{i}(x))+1).
\end{equation}
Let $\|\cdot\|$ be a fixed norm on $M_{\R}$ and  $(x_{j})_{j\ge 0}$ a
sequence in $\Delta^{\circ}$ converging to a point in the border. Then there exists some $i_{1}$ such $
\ell_{i_{1}}(x_{j})\stackrel{j}{\to}0$. Thus, $\|  \nabla
\vartheta(x_{j})\|\stackrel{j}{\to} \infty$ and the  statement follows. 
\end{proof}

\begin{defn} \label{def:62}
Let $\Sigma_{\Delta}$ and $\Psi_{\Delta}$ be the fan and the support
function on $N_{\R}$ induced by $\Delta$. Let $(X_{\Sigma_{\Delta}},
D_{\Psi_{\Delta}})$ be the associated polarized toric variety over $\Q$ and
write $L=\cO(D_{\Psi_{\Delta}})$.
By Lemma \ref{lemm:18}\eqref{item:90},  $\vartheta_{\Delta,\ell,c} $
is a concave 
function on $\Delta$. By  Theorem~\ref{thm:13}, it corresponds
to some semipositive{} toric metric on $L(\C)$. We denote
this metric by $\|\cdot\|_{\Delta,\ell,c}$. 
We write $\ov L$ for the line bundle $L$ equipped with the 
metric $\|\cdot\|_{\Delta,\ell,c}$ at the Archimedean place of $\Q$
and with the canonical metric at the non-Archimedean places. This is
an example of an adelic toric metric.
\end{defn}

\begin{exmpl} \label{exm:31} Following the notation in Example
  \ref{exm:5}, consider the standard simplex $\Delta^{n}$ and the
  concave function $\vartheta=\frac12 \varepsilon_{n}$ on
  $\Delta^{n}$.  From examples \ref{exm:5}
  and~\ref{exm:37}\eqref{item:107}, we deduce that the corresponding 
  metric is the Fubini-Study metric on
  $\cO(1)^{\an}$ over $\C$. 
\end{exmpl}

In case $\Delta$ is the intersection of the halfspaces defined by the $\ell_{i}$'s,
Lemma \ref{lemm:18}\eqref{item:92} shows that $\vartheta|_{\Delta^{\circ}}$ of Legendre type
    (Definition~\ref{def:28}). 
By Theorem \ref{thm:2} and equation~\eqref{eq:113}, the gradient of
    $\vartheta$ gives a  homeomorphism between
    $\Delta^{\circ}$ and $N_{\R}$ and, for $x\in \Delta^{\circ}$,
    \begin{equation*}
\vartheta^{\vee}(\nabla \vartheta(x)) 
=  -\sum_{i=1}^r c_i\left(\lambda_i\log(\ell_i(x))+\langle x,u_i\rangle\right).
    \end{equation*}
This gives an explicit expression of the function
$\psiabs_{\|\cdot\|_{\Delta,\ell,c}}=\vartheta^{\vee}$, and {\it a fortiori} of the
metric ${\|\cdot\|_{\Delta,\ell,c}}$, in the coordinates of the
polytope. 
Up to our knowledge, there is  no simple expression for $\psiabs$ in
linear coordinates of $N_{\R}$, except for special cases like Fubini-Study.

\begin{rem} \label{rem:14}
This kind of metrics are interesting when studying the K\"ahler
geometry of toric varieties.
Given a Delzant polytope $\Delta\subset M_{\R}$ (Remark \ref{rem:12}),
Guillemin has
constructed a ``canonical'' K\"ahler structure on the associated
symplectic toric variety~\cite{Gui94}. 
The corresponding  symplectic potential is the function
$-\vartheta_{\Delta,\ell, c}$, for the case when  $r$ is the number of facets of $\Delta$,  $c_{i}=1/2$ for
all $i$, and  
$u_{i}$ is a primitive vector in $N$ and $\lambda_{i}$ is an integer such that 
 $\Delta=\{x\in
M_{\R}| \langle x,u_{i}\rangle \ge \lambda_{i}, i=1,\dots, r\}$, see~\cite[Appendix 2, (3.9)]{Gui94}.  

In this case, the metric $\|\cdot \|_{\Delta,\ell,c}$ on the line
bundle  $\cO(D_{\Psi})^{\an}$ is smooth
and positive and, as explained in Remark \ref{rem:12}, its 
Chern form gives this canonical K\"ahler form. 
\end{rem}

We obtain the following formula for the height of
$X_{\Sigma_{\Delta}}$ with respect to the line bundle with adelic
toric metric
$\overline{L}$, in terms of the coefficients $C_{k}(\Delta,u_{i},V)$. 

\begin{prop}\label{prop:69}
  Let notation be as in Definition \ref{def:62}. 
  Then $\h_{\overline{L}}(X_{\Sigma_{\Delta}})$ equals
  $$ {(n+1)!} \sum_{i=1}^{r}c_i\sum_{V\in\Delta(u_i)}
  \sum_{k=0}^{\dim(V)}C_k(\Delta,u_i,V)\frac{\ell_i(V)^{n-k+1}}{(n-k+1)!}
  \left(\sum_{j=2}^{n-k+1}\frac{1}{j}-\log(\ell_i(V))\right).
  $$
  Suppose furthermore that $\Delta\subset\R^n$ is a simplex, $r=n+1$ and 
  that  $\ell_i$, 
  $i=1, \dots, n+1$, are affine functions such that $\Delta=
  \bigcap_{i}\{x\in M_{\R}| \ell_{i}(x)\ge0\}$.
  Then
  \begin{equation}\label{eq:115}
    \h_{\overline{L}}(X_{\Sigma_{\Delta}})= n!\Vol_{M}(\Delta)
    \sum_{i=1}^{n+1}c_i\ell_i(\nu_i) \bigg(
    \sum_{j=2}^{n+1}\frac{1}{j}-\log(\ell_i(\nu_i))\bigg), 
  \end{equation}
  where $\nu_{i}$ is the unique vertex of $\Delta$ not contained in the
  facet defined by $\ell_{i}$.
\end{prop}

\begin{proof}
  The first statement follows readily from Theorem \ref{thm:22} and
  Proposition~\ref{propdefcoeffdintegration} applied to the functions
  $f_{i}(z)=\left(\log(z-\lambda_i)-\sum_{j=2}^{n+1}\frac{1}{j}\right)(z-\lambda_i)^{n+1}/(n+1)!$.
  The second statement follows similarly from Proposition
  \ref{intsimplexepourpotgui}.
\end{proof}

\begin{exmpl}\label{exm:32}
Let $\cO(1)$ be the universal line bundle of $\P^{n}$. As we have seen
in Example \ref{exm:31}, the Fubini-Study metric of $\cO(1)^{\an}$ 
corresponds to the case of the standard simplex, $\ell_{i}(x)=x_{i}$,
$i=1,\dots, n$, and $\ell_{n+1}(x)=1-\sum_{i=1}^{n}x_{i}$ and the
choice 
$c_i= 1/2$ for all $i$.
Hence we recover from \eqref{eq:115} the well known expression for the
height of $\P^{n}$ with respect to the Fubini-Study metric in
\cite[Lemma~3.3.1]{BostGilletSoule:HpvpGf}: 
\begin{displaymath}
  \h_{\ov{\cO(1)}}(\P^{n})= \frac{n+1}{2}\sum_{j=2}^{n+1}\frac1j=\sum_{h=1}^n\sum_{j=1}^h\frac{1}{2j}.
\end{displaymath}
\end{exmpl}

\begin{exmpl}\label{exm:33}
In dimension $1$, a polytope is an interval of the form
$\Delta=[m_{0},m_{1}]$ for some $m_{i}\in \Z$.
The corresponding roof function in \eqref{potentialfunction} writes down, 
for $x\in[m_0,m_1]$, as 
\begin{equation*}
\vartheta(x) = -\sum_{i=1}^rc_i\ell_i(x)\log(\ell_i(x))
\end{equation*}
for affine function $\ell_i(x)=u_ix-\lambda_i$ which take non negative
values on $\Delta$ and $c_{i}>0$

The polarized toric variety corresponding to $\Delta$ is $\P^1$ 
together with the ample divisor $m_1[(0:1)]-m_0[(1:0)]$. 
Write $L={\mathcal O}_{\P^{1}}(x_1-x_0)$ for the associate line bundle and $\ov
L$ for the line bundle with adelic toric metric corresponding to the
function~$\vartheta$.
The { Legendre-Fenchel} dual to
$-c_i\ell_i(x)\log(\ell_i(x))$ is the function $f_i\colon\R\rightarrow
\R$  defined, for $v\in \R$,  by
$$f_i(v) = \frac{\lambda_i}{u_i}v -c_i{\e}^{-1-\frac{v}{{c_iu_i}}}.
$$ 
Therefore, the function $\psiabs=\vartheta^{\vee}$
is the sup-convolution of these function, namely 
$\psiabs=f_1\boxplus\dots\boxplus f_m$
For the height, a simple computation shows that 
$$
{\h_{\overline{L}}(\P^1)} = 2\int_{m_{0}}^{m_{1}} \vartheta \dd x=
\sum_{i=1}^r
\frac{c_i}{2u_i}\Big[\ell_i(x)^2
\left(1-2\log(\ell_i(x))\right)\Big]^{m_{1}}_{m_{0}}.
$$
\end{exmpl}

\section{Heights and entropy}
\label{sec:height-entropy}

In some cases, the height of a toric variety with respect to the
metrics constructed in the previous section
has an interpretation in terms of the average entropy of a family of
random processes.

Let $\Gamma$ be an arbitrary polytope containing $\Delta$. 
For a point $x \in \ri(\Delta)$, we consider the partition $\Pi_x$ of
$\Gamma$ which consists of the cones $\eta_{x,F}$ of vertex $x$ and base the
relative interior of each proper face $F$ of $\Gamma$.
We consider $\Gamma$  as a probability space endowed with the 
uniform probability distribution. Let $\beta_x$ be the random variable 
that maps a
point $y \in\Gamma$ to the base $F$ of the unique cone $\eta_{x,F}$ 
that contains $y$. 
Clearly, the probability that a given face $F$ is
returned is the ratio of the volume of the 
cone based on $F$ to the volume of $\Gamma$. 
We have 
$\Vol_n(\eta_{x,F})={n}^{-1}{\dist}(x,F)\Vol_{n-1}(F)$ where, as
before, $\Vol_{n}$ and $\Vol_{n-1}$ denote the Lebesgue measure on
$\R^{n}$ and on $L_{F}$, respectively. 
Hence, 
\begin{equation}\label{eq:116}
  P(\beta_x=F)=
  \begin{cases}
\displaystyle \frac{{\dist}(x,F)\Vol_{n-1}(F)}{n\Vol_{n}(\Gamma)} & \text{ if }
\dim(F) =n-1,\\
    0 & \text{ if } \dim(F)\le n-2.
  \end{cases}
\end{equation}
\index{entropy}%
The {entropy} of the random variable $\beta_x$ is
\begin{displaymath}
\cE(x) = -\sum_{F} P(\beta_x=F)\log(P(\beta_x=F)),
\end{displaymath}
where the sum is over the facets $F$ of $\Gamma$.

For each facet $F$ of $\Gamma$ we let $u'_{F}\in \R^{n}$ be the inner normal
vector to $F$ of Euclidean norm $(n-1)!\Vol_{n-1}(F)$. Therefore
$u'_{F}=(n-1)!\Vol_{n-1}(F) u_{F}$, with $u_{F}$ as
in \S \ref{Intepol}. Set
\begin{displaymath}
  \lambda({F})=\Psi_{\Gamma}(u'_{F})=(n-1)!\Vol_{n-1}(F)\Psi_{\Gamma}(u_{F}).
\end{displaymath}
Consider the affine
form defined as $\ell_{F}(x)=\langle x,u'_{F}\rangle
-\lambda({F})$, so that
\begin{displaymath}
  \Gamma=\{x\in M_{\R}| \ell_{F}(x)\ge 0,\ \forall F\}.
\end{displaymath}
Set 
\begin{math}
\lambda(\Gamma)=\sum_{F}\lambda(F),  
\end{math}
where the sum is over the facets $F$ of $\Gamma$. 
Since, by \cite[Lemma~5.1.1]{Sch93}, the vectors $u'_{F}$ satisfy the
Minkowski condition 
$\sum_{F}u'_{F}=0$, we deduce that
\begin{displaymath}
 \sum_{F}\ell_{F}=-\sum_{F}\lambda({F})=-\lambda(\Gamma).   
\end{displaymath}


Let $c>0$ be a real number.
The concave function 
\begin{math}
  \vartheta(x)= -\sum_{F} {c\, \ell_{F}(x)}\log ({\ell_{F}(x)})
\end{math}
belongs to the class of functions considered in Definition
\ref{def:62}. Thus, we obtain a line bundle with an adelic
toric metric $\ov L$ on $X_{\Delta }$. For short, we write
$X=X_{\Delta }$.   

The following result shows that the average entropy of the random
variable $\beta_{x}$ with respect to the uniform distribution on
$\Delta$ can be expressed in terms of the height of the toric variety
$X$ with respect to $\ov L$. 

\begin{prop} \label{prop:71}
With the above notation, 
\begin{displaymath}
\frac{1}{\Vol_n(\Delta)}\int_{\Delta}\cE\dd\Vol_n = 
\frac{1}{n!\Vol_n(\Gamma)} \bigg( \frac{\h_{\ov L}(X)}{c(n+1) \deg_{L}(X)}-
\lambda(\Gamma){\log(n!\Vol_n(\Gamma))}
\bigg).
\end{displaymath}
In particular, if $\Gamma=\Delta$, 
\begin{displaymath}
\frac{1}{\Vol_n(\Delta)}\int_{\Delta}\cE\dd\Vol_n = 
\frac{\h_{\ov L}(X)}{c(n+1) \deg_{L}(X)^{2}}-
\lambda(\Gamma)\frac{\log(\deg_{L}(X))}{ \deg_{L}(X)}.
\end{displaymath}
\end{prop}

\begin{proof}
  For $x\in \ri(\Delta)$ and $F$ a facet of $\Gamma$, we deduce from
the equation  \eqref{eq:116} that 
$P(\beta_x=F)= \ell_{F}(x)/(n!\Vol_n(\Gamma))$. Hence, 
\begin{align*}
  \cE(x) &= -\sum_{F} \frac{\ell_{F}(x)}{n!\Vol_n(\Gamma)}
\log \Big(\frac{\ell_{F}(x)}{n!\Vol_n(\Gamma)}\Big)\\
&= \frac{1}{n!\Vol_n(\Gamma)}
\bigg( -\sum_{F} {\ell_{F}(x)}\log ({\ell_{F}(x)}) 
-\lambda(\Gamma)\log ({n!\Vol_n(\Gamma)})
\bigg)\\
&= \frac{1}{n!\Vol_n(\Gamma)}
\bigg( \frac{\vartheta(x)}{c}
-\lambda(\Gamma)\log ({n!\Vol_n(\Gamma)})
\bigg).
\end{align*}
The result then follows from Theorem \ref{thm:22} and (\ref{eq:3}).
\end{proof}

\begin{exmpl} \label{exm:34}
The Fubini-Study metric of $\cO(1)^{\an}$ corresponds to the case
when $\Gamma$ and $\Delta$ are the standard simplex $\Delta^{n}$ and
$c=1/2$. In that case, the average entropy of the random variable
$\beta_{x}$ is 
  \begin{displaymath}
 \frac{1}{n!}\int_{\Delta^{n}}\cE \dd\Vol_n =    \frac{2\h_{\ov{\cO(1)}}(\P^{n})}{(n+1)}=\sum_{j=2}^{n+1}\frac1j.
  \end{displaymath}
\end{exmpl}


\chapter{Variations on Fubini-Study metrics} \label{courbes}

\section{Height of toric projective curves}\label{Projective toric schemes}

In this chapter, we study the Arakelov invariants of curves which are
the image of an equivariant map into a projective space. In the
Archimedean case we equip the projective space with
the Fubini-Study metric, while in the non-Archimedean case we equip it
with the canonical metric. For each of these curves, the metric, measure
and toric local height can be computed in terms of the roots of a
univariate polynomial associated to the relevant equivariant map. 

\medskip Let $K$ be either $\R, \C$ or a complete field with respect
to an absolute value associated to a nontrivial discrete
valuation. On $\P^{r}$, we consider the universal line bundle $\cO(1)$
equipped with the Fubini-Study metric in the Archimedean case, and
with the canonical metric in the non-Archimedean case. We write
$\ov{\cO(1)}$ for the resulting metrized line bundle. We also consider
the toric section $s_{\infty}$ of $\cO(1)$ whose Weil divisor is the
hyperplane at infinity.  The next result gives the function
$\psiabs_{\|\cdot\|}$ associated to the induced metric
on a subvariety of~$\P^{r}$ which is the image of an equivariant map.

\begin{prop}\label{prop:86}
  Let $H\colon N\to \Z^{r}$ be an injective map such that $H(N)$ is a
  saturated sublattice of $\Z^{r}$, and $p\in \P^{r}_{0}(K)$.
  Consider the map $\varphi_{H,p}\colon\T\to \P^{r}$, set $\ov L=
  \varphi_{H,p}^{*}\ov{\cO(1)}$ and $s=\varphi_{H,p}^{*}s_{\infty}$,
  and let $\psiabs_{\ov L,s}\colon N_{\R}\to \R$ be the associated
  concave function. Let $e_{i}^{\vee}$ be the $i$th vector in the dual
  standard basis of $\Z^{r}$ and set $m_{i}=e_{i}^{\vee}\circ H\in M$,
  $i=1,\dots,r$, and $p=(1:p_{1}:\dots:p_{r})$ with $p_{i}\in
  K^{\times}$. Then, for $u\in N_{\R}$,
\begin{displaymath}
\psiabs_{\ov L,s}(u)=
\begin{cases}
  -\frac{1}{2} \log(1+\sum_{i=1}^{r} |p_{i}|^{2} \e^{-2\langle
    m_{i},u\rangle} ) & \text{in the Archimedean
    case},\\
  \min_{1\le i\le r}\{0, \langle m_{i},u\rangle +\val (p_{i})\} &
  \text{in the non-Archimedean case}.
\end{cases}
\end{displaymath}
\end{prop}

\begin{proof}
  In the Archimedean case, the expression for the concave function
  $\psiabs$ follows from that for $\P^{r}_{K}$ (Example
  \ref{exm:37}\eqref{item:108}) and Proposition \ref{prop:58}.  
  The non-Archimedean case follows from Example \ref{exm:35}.
\end{proof}

Let $Y\subset \P^{r}$ be the closure of the image of the map
$\varphi_{H,p}$. In the Archimedean case, the roof function seems difficult
to calculate. Hence it is difficult to use it directly to compute the
toric local height
(see Example \ref{exm:16}). A more promising approach is to apply the
formula of Corollary \ref{cor:21}. Writing $\psiabs =\psiabs _{\ov L,s}$
this formula reads
\begin{equation}\label{eq:134}
    \htor_{\ov L}(Y) = (n+1)! \, \int_{N_{\R}}
    \psiabs^{\vee}\circ \partial \psiabs 
    \dd \cM_{M}(\psiabs).
\end{equation}
To make this formula more explicit in the 
Archimedean case, we choose a basis of $N$, hence
   coordinate systems in $N_{\R}$ and $M_{\R}$ and we write
 \begin{displaymath}
   g=(g_{1},\dots,g_{n}):=\nabla \psiabs \colon N_{\R}\longrightarrow \Delta,
 \end{displaymath}
 where $\Delta =\Stab
 (\psiabs )$ is the associated polytope. Then, from 
 Proposition \ref{prop:5} and
Example \ref{exm:29}\eqref{item:96}, we derive
\begin{align*}
  \htor_{\ov L}(Y) &= (n+1)! \int_{N_{\R}}
  (\langle \nabla
\psiabs (u), u\rangle - \psiabs(u))\,(-1)^{n}\det(\Hess(\psiabs)) \,
\dd\Vol_{N}\notag \\
& = (n+1)! \int_{N_{\R}}
   (\left< g(u),u \right>-\psiabs (u))\,(-1)^{n}\dd g_{1}\land\dots \land \dd g_{n}.
\end{align*}

When $K$ is not Archimedean, we
have $ \cM_{M}(\psiabs)= \sum_{v\in
  \Pi(\psiabs)^{0}}\Vol_{M}(v^{*})\delta_{v}$ and, for 
$v\in \Pi(\psiabs)^{0}$,
$$ \psiabs^{\vee}\circ \partial
\psiabs(v)=\frac{1}{\Vol_{M}(v^{*})}\int_{v^{*}} \langle x,v\rangle \dd
\Vol_{M} - \psiabs(v),$$ see Proposition~\ref{prop:32} and Example
\ref{exm:29}\eqref{item:97}. Thus, if now we denote by $g\colon
N_{\R}\to M_{\R}$ the function that sends a point $u$ to the barycentre of
$\partial\psiabs (u)$, then   
\begin{equation*}
    \htor_{\ov L}(Y) = (n+1)! \, \sum_{v\in \Pi(\psiabs)^{0}} 
  (\left< g(v),v \right>-\psiabs (v)).
\end{equation*}

\index{toric curve}%
In the case of curves, the integral in \eqref{eq:134}, can be
transformed into another integral that will prove useful for explicit
computations. We introduce a notation for derivatives of concave
functions of one variable. Let
$f\colon \R\to \R$  be a concave function. For $u\in \R$, write
\begin{equation} \label{eq:133}
  f'(u)=\frac{1}{2}(D_{+}f(u)+D_{-}f(u)),
\end{equation} 
where $D_{+}f$ and $D_{-}f$ denote the right and left derivatives  of
$f$ respectively, that exist always \cite[Theorem 23.4]{Roc70}. Then $f'$ is monotone and is
continuous almost
everywhere (with respect to the Lebesgue measure).
The associated distribution agrees with the derivative of $f$ in
the sense of distributions. This implies that, if $(f_{n})_{n}$ is a sequence
of concave functions converging uniformly to $f$ on compacts, then $(f'_{n})_{n}$
converges to $f'$ almost everywhere.

\begin{lem}\label{lemm:7}
  Let $\psiabs\colon \R \to \R$ be a concave function whose stability set is
  an interval $[a,b]$ and $  \psiabs ^{\vee}\circ \partial \psiabs$ the
  $\cM_{\Z}(\psiabs )$-measurable function defined in (\ref{eq:121}). Then 
  \begin{displaymath}
    2\int_{\R}
  \psiabs ^{\vee}\circ \partial \psiabs \dd \cM_{\Z}(\psiabs )=  (b-a)(\psiabs
  ^{\vee}(a)+\psiabs ^{\vee}(b)) +\int_{\R}(\psiabs '(u)-a)(b-\psiabs '(u))\dd
  u. 
  \end{displaymath}
\end{lem}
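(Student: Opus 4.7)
The plan is to rewrite the left-hand side as an integral of $\psi^\vee$ over $[a,b]$, integrate by parts twice against the quadratic $\phi(x)=(x-a)(b-x)$, and match the result to the right-hand side via the change of variables given by Legendre--Fenchel duality.

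By the transformation formula~\eqref{eq:121} applied with $g=\psi^\vee$, together with~\eqref{eq:634} which guarantees that $\partial\psi(\R)$ agrees with $\Stab(\psi)=[a,b]$ up to a Lebesgue-null set, the left-hand side of the lemma equals $2\int_a^b\psi^\vee(x)\,\dd x$. Setting $\phi(x)=(x-a)(b-x)$, one has $\phi''\equiv-2$, $\phi(a)=\phi(b)=0$, $\phi'(a)=b-a$, and $\phi'(b)=-(b-a)$. A first integration by parts of $\int_a^b\psi^\vee\phi''\,\dd x$ gives
$$-2\int_a^b\psi^\vee\,\dd x=-(b-a)(\psi^\vee(a)+\psi^\vee(b))-\int_a^b(\psi^\vee)'(x)\phi'(x)\,\dd x.$$
A second integration by parts, this time transferring the derivative off $(\psi^\vee)'$ onto $\phi$ so that the Stieltjes measure $\dd(\psi^\vee)'$ appears, yields $\int_a^b(\psi^\vee)'\phi'\,\dd x=[\phi(\psi^\vee)']_a^b-\int_a^b\phi\,\dd(\psi^\vee)'=-\int_a^b\phi(x)\,\dd(\psi^\vee)'(x)$, the boundary term vanishing because $\phi$ vanishes at the endpoints and the blow-up of $(\psi^\vee)'$ there is tamed by its Lebesgue integrability near $a$ and $b$ (a consequence of the finiteness of $\psi^\vee(a)$ and $\psi^\vee(b)$) together with the linear vanishing of $\phi$. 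Combining,
$$2\int_a^b\psi^\vee(x)\,\dd x=(b-a)(\psi^\vee(a)+\psi^\vee(b))-\int_a^b\phi(x)\,\dd(\psi^\vee)'(x).$$

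To conclude, the Stieltjes integral over $[a,b]$ is translated into a Lebesgue integral over $\R$ using that $(\psi^\vee)'$ is monotone decreasing on $(a,b)$ with inverse $\psi'$ (this mutual inversion of the symmetric derivatives is the infinitesimal form of the Legendre--Fenchel duality between $\psi$ and $\psi^\vee$). Equivalently, the push-forward of Lebesgue measure on $\R$ by $\psi'$ coincides on $(a,b)$ with $-\dd(\psi^\vee)'$: for $c_1<c_2$ in $(a,b)$, the preimage $\{u:\psi'(u)\in[c_1,c_2]\}$ has length $(\psi^\vee)'(c_1)-(\psi^\vee)'(c_2)$. Hence
$$-\int_a^b f(x)\,\dd(\psi^\vee)'(x)=\int_\R f(\psi'(u))\,\dd u$$
for any bounded measurable $f\colon[a,b]\to\R$, and taking $f=\phi$ produces the integral $\int_\R(\psi'(u)-a)(b-\psi'(u))\,\dd u$ on the right-hand side, completing the proof. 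The main obstacle is the rigorous justification of the second integration by parts in the non-smooth case: one must resolve the indeterminate form $0\cdot\infty$ at the endpoints using the local integrability of $(\psi^\vee)'$ near $a$ and $b$, and interpret $\dd(\psi^\vee)'$ as the Stieltjes measure associated with the monotone function $(\psi^\vee)'$, exploiting that $\psi^\vee$ is locally absolutely continuous on $(a,b)$ by concavity.
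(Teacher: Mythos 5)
Your proof is correct, and it follows a genuinely different route from the paper's. The paper first observes that both sides of the identity are continuous in $\psi$ with respect to uniform convergence (via Propositions~\ref{prop:3} and~\ref{prop:87} and the remarks preceding the lemma), then reduces to the case where $\psi$ is smooth and strictly concave, and finally verifies the resulting differential identity by exhibiting an explicit antiderivative
\begin{displaymath}
  \gamma(u)=\Big(\psi'(u)-\tfrac{a+b}{2}\Big)\psi(u)-u\,\frac{(\psi')^{2}}{2}+u\,\frac{ab}{2}
\end{displaymath}
and evaluating limits at $\pm\infty$. You instead work directly with the general concave function: you rewrite the left-hand side as $2\int_a^b\psi^\vee\,\dd x$ via~\eqref{eq:121} and~\eqref{eq:634}, then perform two integrations by parts against the polynomial $\phi(x)=(x-a)(b-x)$, and finally convert the resulting Stieltjes integral $-\int_a^b\phi\,\dd(\psi^\vee)'$ into $\int_\R\phi(\psi'(u))\,\dd u$ using the Legendre--Fenchel inversion of the monotone derivatives $\psi'$ and $(\psi^\vee)'$ (i.e.\ the characterization $x\in\partial\psi(u)\iff u\in\partial\psi^\vee(x)$ from Proposition~\ref{prop:36}). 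The paper's route avoids any delicate boundary analysis at the cost of a non-constructive density step and an antiderivative that appears from nowhere; yours is more direct and makes the appearance of the quadratic $(x-a)(b-x)$ transparent, at the cost of Stieltjes/BV machinery. Your treatment of the boundary terms is essentially right but could be tightened: rather than appealing to Lebesgue integrability of $(\psi^\vee)'$, the cleanest argument is the elementary concavity estimate $\frac{\psi^\vee(b)-\psi^\vee(x)}{b-x}\le(\psi^\vee)'(x)\le\frac{\psi^\vee(x)-\psi^\vee(a)}{x-a}$ for $x\in(a,b)$, which combined with the continuity of $\psi^\vee$ at $a$ (a consequence of concavity plus finiteness there) gives $(x-a)(\psi^\vee)'(x)\to 0$ directly, and symmetrically at $b$.
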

\begin{proof} 
  We argue as in the proof of Theorem \ref{thm:20}. By the properties
  of the Monge-Amp\`ere measure (Proposition \ref{prop:87}) and of the
  Legendre-Fenchel dual (Proposition \ref{prop:3}), the left-hand side
  is continuous with respect to uniform convergence of
  functions. Again by Proposition \ref{prop:3} and the discussion
  before the lemma, the right-hand side is also continuous with
  respect to uniform convergence of functions. By the compacity of the
  stability set of $\psiabs$, Lemma \ref{lemm:20} implies that there is a
  sequence of strictly concave smooth 
  functions $(\psiabs_{n})_{n\ge 1}$ converging uniformly to $\psiabs$.
  Hence, it is enough to treat the case when $\psiabs $ is smooth and
  strictly concave.

Using Example \ref{exm:29}\eqref{item:96}, we obtain
  \begin{displaymath}
    \int_{\R}
    \psiabs ^{\vee}\circ \partial \psiabs \dd \cM_{\Z}(\psiabs )=\int_{\R}
    (\psiabs (u)-u\psiabs '(u))\psiabs ''(u)\dd u.
  \end{displaymath}
  Consider the function
  \begin{align*}
    \gamma (u)&=\Big(\psiabs '(u)-\frac{a+b}{2}\Big)\psiabs(u) -u
    \frac{(\psiabs 
      '(u))^{2}}{2}+u\frac{ab}{2}\\&=
    -\Big(\psiabs '(u)-\frac{a+b}{2}\Big)\psiabs ^{\vee}(\psiabs
    '(u))-\frac{u}{2}(\psiabs 
    '(u)-a)(b-\psiabs '(u)). 
  \end{align*}
  Then
  \begin{displaymath}
    \lim_{u\to \infty}\gamma (u)=\frac{b-a}{2}\psiabs ^{\vee}(a),\qquad
    \lim_{u\to -\infty}\gamma (u)=\frac{a-b}{2}\psiabs ^{\vee}(b),
  \end{displaymath}
  and
  \begin{displaymath}
    \dd \gamma =(\psiabs -u\psiabs ')\psiabs ''\dd
    u-\frac{1}{2}(\psiabs '-a)(b-\psiabs 
    ')\dd u,
  \end{displaymath}
  from which the result follows.
\end{proof}

\medskip
With the notation in Proposition \ref{prop:86}, assume that $N=\Z$. 
The elements $m_{j}\in N^{\vee}$ can be identified
with integer numbers and the hypothesis that the image of $H$ is a
saturated sublattice is equivalent to $\gcd(m_{1},\dots,m_{r})=1$. Moreover, by
reordering the variables of 
$\P^{r}$ and multiplying the expression of $\varphi_{H,p}$ by a
monomial (which does not change the equivariant map), we may assume
that $0\le m_{1}\le \dots \le m_{r}$. We make the further hypothesis
that $0< m_{1}< \dots <m_{r}$. With these conditions, we 
next obtain explicit expressions for the
concave function $\psiabs$ and the associated measure and toric local
height in terms of the roots of a univariate polynomial.  
We consider the absolute value $|\cdot|$
of the algebraic closure $\ov K$ extending the absolute value of $K$.

\begin{thm}\label{thm:26} Let $0< m_{1}<\dots <m_{r}$ be
  integers with $\gcd(m_{1},\dots ,m_{r})=1$, and
  $p_{1},\dots,p_{r}\in K^{\times}$. Let 
  $\varphi\colon \T\to \P^{r}$ be the map given by
  $\varphi(t)=(1:p_{1}t^{m_{1}}:\dots :p_{r}t^{m_{r}})$ and let $Y$ be
  the closure of the image of $\varphi$. 
  Consider the polynomial $q\in K[z]$
  defined as
  \begin{align*}
    q=
    \begin{cases}
      1+\sum_{j=1}^{r}|p_{j}|^{2}z^{m_{j}} & \text{ in the
        Archimedean case},\\
      1+\sum_{j=1}^{r}p_{j}z^{m_{j}} & \text{ in the non-Archimedean
        case}.
    \end{cases}
  \end{align*}
  Let $\{\xi_{i}\}_{i}\subset \ov K^{\times}$ be the set of roots of
  $q$ and, for each $i$, let $\ell_{i}\in \N$ be the multiplicity
  of $\xi_{i}$. Let $\ov L$ and $s$ be as in Proposition
  \ref{prop:86}. 
Then, in the Archimedean case,
  \begin{enumerate}
\item \label{item:100} $\displaystyle \psiabs_{\ov L,s}(u)=-\log|p_{r}|
  -\frac{1}{2}\sum_{i}\ell_{i}\log|\e^{-2u}-\xi_{i}|$ for $u\in \R$,
  \item \label{item:99} $\displaystyle \cM_{\Z}(\psiabs_{\ov L,s})=
    -2\sum_{i}\ell_{i}\frac{\xi_{i} \e^{2u}}{(1-\xi_{i} \e^{2u})^{2}}
    \, \dd u$,
  \item \label{item:109} $\displaystyle \htor_{\ov L}(Y)=
    m_{r}\log|p_{r}| + \frac{1}{2}\sum_{i}\ell_{i}^{2}+
    \frac{1}{2}\sum_{i<j}\ell_{i}\ell_{j}
    \frac{\xi_{i}+\xi_{j}}{\xi_{i}-\xi_{j}}(\log(-\xi_{i}) 
    - \log(-\xi_{j}))$,
    where $\log $ is the principal determination of the logarithm.
\end{enumerate}
While in the non-Archimedean case, 
\begin{enumerate}
\setcounter{enumi}{3}
\item \label{item:110} $\displaystyle \psiabs_{\ov L,s}(u)= \val(p_{r}) +
  \sum_{i}\ell_{i}\min\{u, \val(\xi_{i})\}$ for $u\in \R$,
\item \label{item:111} $\displaystyle \cM_{\Z}(\psiabs_{\ov L,s})=
  \sum_{i}\ell_{i}\delta_{\val(\xi_{i})}$,
\item \label{item:112} $\displaystyle \htor_{\ov L}(Y)= m_{r}\log|p_{r}|+
  \sum_{i<j}\ell_{i}\ell_{j} \big|\log|\xi_{i}|- \log|\xi_{j}| \big|$.
\end{enumerate}
\end{thm}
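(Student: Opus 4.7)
I will prove each of the six assertions by combining the closed-form expression for $\psi = \psi_{\ov L,s}$ from Proposition~\ref{prop:86} with the factorization $q(z) = c \prod_i (z - \xi_i)^{\ell_i}$ over $\ov K$ (with $c = |p_r|^2$ in the Archimedean case, $c = p_r$ in the non-Archimedean case), and then computing heights via Lemma~\ref{lemm:7} and Corollary~\ref{cor:21}.

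\emph{Parts (1), (4), (2) and (5).} The Archimedean formula for $\psi$ in (1) follows from substituting $z = \e^{-2u}$ into the factored form of $q$ and applying $-\tfrac12 \log$; positivity of all coefficients ensures $q(\e^{-2u}) > 0$, so the modulus distributes over the product. For (4), the key is a tropical factorization: for $z \in \ov K^\times$ with $\val(z) = u$ distinct from every $\alpha_i = \val_{\ov K}(\xi_i)$, the ultrametric triangle inequality gives $\val(z - \xi_i) = \min(u, \alpha_i)$, so the factorization yields $\val(q(z)) = \val(p_r) + \sum_i \ell_i \min(u, \alpha_i)$, which agrees with $\min_j\{0, m_j u + \val(p_j)\} = \psi(u)$ from the monomial expansion of $q$; continuity extends the equality to all $u$. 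Part (2) is a direct computation: $\cM_\Z(\psi) = -\psi''(u)\dd u$ in dimension one, and the logarithmic-derivative identity $q'(v)/q(v) = \sum_i \ell_i/(v - \xi_i)$ with $v = \e^{-2u}$ gives the formula after simplification. Part (5) is immediate from Proposition~\ref{prop:32}: the Monge-Amp\`ere measure of the piecewise-affine $\psi$ from (4) is concentrated on its kinks $\alpha_i$, with Dirac mass equal to the slope drop $\ell_i$ at each.

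\emph{Parts (3) and (6).} In both cases, $\Stab(\psi) = [0, m_r]$ and, from the asymptotic behaviour of $\psi$ at $\pm\infty$, one finds $\psi^\vee(0) = 0$ and $\psi^\vee(m_r) = -\val_K(p_r)$. Combining Lemma~\ref{lemm:7} with equation~\eqref{eq:119} yields
\[
\htor_{\ov L}(Y) = m_r \log|p_r| + \lambda_K \int_\R \psi'(u)\bigl(m_r - \psi'(u)\bigr) \dd u.
\]
In the Archimedean case, write $\psi'(u) = v q'(v)/q(v) = \sum_i \ell_i\, v/(v - \xi_i)$ with $v = \e^{-2u}$ and expand the integrand into a double sum. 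The diagonal terms evaluate, via the substitution $v = \e^{-2u}$, to $\int \xi_i v/(v - \xi_i)^2 \,\dd u = -\tfrac12$, contributing $\tfrac12 \sum_i \ell_i^2$; the off-diagonal terms reduce by partial fractions to
\[
\int_{-\infty}^\infty \frac{v \, \dd u}{(v - \xi_i)(v - \xi_j)} = \frac{1}{2}\int_0^\infty \frac{\dd v}{(v - \xi_i)(v - \xi_j)} = -\frac{\log(-\xi_i) - \log(-\xi_j)}{2(\xi_i - \xi_j)},
\]
where the principal branch of the logarithm is well defined because positivity of the coefficients of $q$ forces $\xi_i \notin \R_{\ge 0}$, hence $-\xi_i \notin \R_{\le 0}$. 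In the non-Archimedean case, $\psi'(u) = \sum_{i:\,u \le \alpha_i}\ell_i$ is piecewise constant, so $\psi'(u)(m_r - \psi'(u)) = \sum_{i,j}\ell_i\ell_j\, \mathbf{1}(\alpha_j < u \le \alpha_i)$; the integral collapses to $\sum_{i,j:\,\alpha_i > \alpha_j}\ell_i\ell_j(\alpha_i - \alpha_j)$, and the identity $\lambda_K(\alpha_j - \alpha_i) = \log(|\xi_i|/|\xi_j|)$ converts this into the stated logarithmic expression (with a suitable ordering of the roots).

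\emph{Main obstacle.} The essential difficulty lies in the Archimedean off-diagonal integral in part (3): a consistent choice of principal branch of the complex logarithm must be made along the positive real axis, and the imaginary contributions arising from complex roots must be shown to cancel. This cancellation follows because $q \in \R[z]$ has real coefficients, so non-real roots $\xi_i$ occur in complex-conjugate pairs whose contributions to the double sum conjugate-pair to produce a real number, consistent with the fact that $\htor_{\ov L}(Y) \in \R$.
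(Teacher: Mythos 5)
Your proposal is correct and follows essentially the same route as the paper: factor $q$, push the factorization through $\psi(u) = -\tfrac12\log q(\e^{-2u})$ (resp.\ through $\val_{\ov K}(q(\zeta))$ and density in the non-Archimedean case), compute $\psi'$, $\psi''$, and then evaluate $\htor_{\ov L}(Y)$ via Lemma~\ref{lemm:7} together with Corollary~\ref{cor:21}, splitting the integrand $\psi'(m_r-\psi')$ into diagonal and paired off-diagonal terms. The only cosmetic differences are the change of variable $v=\e^{-2u}$ and your slightly compressed phrasing of the off-diagonal reduction (one needs to symmetrize the $(i,j)$ and $(j,i)$ terms to produce the $(\xi_i+\xi_j)$ factor before it reduces to $\int v\,\dd u/((v-\xi_i)(v-\xi_j))$); your observations about the branch of $\log$ and conjugate-pair cancellation are correct and correspond to the remark that the paper places after the theorem.
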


\begin{rem} The real roots of the polynomial $q$ are all negative,
  which allows the use of the principal determination of the logarithm
  in~\eqref{item:109}. Introducing the argument
  $\theta_i\in]-\pi,\pi[$ of $-\xi_i$, the last sum
  in~\eqref{item:109} can be rewritten
$$\frac{1}{2}\sum_{i<j}\ell_{i}\ell_{j}
\frac{(|\xi_i|^2-|\xi_j|^2)\log|\xi_i/\xi_j| +
  2|\xi_i||\xi_j|(\theta_i-\theta_j)
  \sin(\theta_i-\theta_j)}{|\xi_i|^2+
  |\xi_j|^2-2|\xi_i||\xi_j|\cos(\theta_i-\theta_j)}  
$$
showing that it is real.
\end{rem}

\begin{proof}[{Proof of Theorem \ref{thm:26}}]
  Write $\psiabs=\psiabs_{\ov L,s}$ for short.  First we
  consider the Archime\-dean case. We have that 
  $q=|p_{r}|^{2}\prod_{i}(z-\xi_{i})^{\ell_{i}}$. By Proposition
  \ref{prop:86},
\begin{displaymath}
  \psiabs(u)= -\frac{1}{2}\log (q(\e^{-2u})) = -\log|p_{r}|
  -\frac{1}{2}\sum_{i}\ell_{i}\log|\e^{-2u}-\xi_{i}|,
\end{displaymath}
which proves \eqref{item:100}. Hence,   
\begin{displaymath}
 \psiabs'(u)= \sum_{i}\ell_{i}\frac{1}{1-\xi_{i} \e^{2u}}\quad \text{and} \quad
\psiabs''(u)=  \sum_{i}2\ell_{i}\frac{\xi_{i} \e^{2u}}{(1-\xi_{i} \e^{2u})^{2}}.
\end{displaymath}
The  Monge-Amp\`ere measure of $\psiabs$ is given by
$-\psiabs''\dd u$, and so the above proves \eqref{item:99}.
To prove \eqref{item:109} we apply the equation (\ref{eq:134}) and
Lemma \ref{lemm:7}. We have that 
$\Stab(\psiabs )=[0,m_{r}]$, $\psiabs ^{\vee}(0)=0$, and $\psiabs
^{\vee}(m_{r})=\log|p_{r}|$. Thus,
\begin{equation}\label{eq:130}
  \htor_{\ov L}(Y)=m_{r}\log|p_{r}|+ \int_{-\infty}^{\infty}
  (m_{r}-\psiabs')\psiabs' \dd u.  
\end{equation}
We have $\displaystyle m_{r}-\psiabs'(u)= \sum_{i}\ell_{i}\bigg(1-
\frac{1}{1-\xi_{i} \e^{2u}}\bigg) =-
\sum_{i}\ell_{i}\frac{\xi_{i}\e^{2u}}{1-\xi_{i} \e^{2u}}$.  Hence,
\begin{multline*}
  (m_{r}-\psiabs'(u))\psiabs'(u)=
  -\bigg(\sum_{i}\ell_{i}\frac{\xi_{i}\e^{2u}}{1-\xi_{i}
    \e^{2u}}\bigg)\bigg( \sum_{j}\ell_{j}\frac{1}{1-\xi_{j}
    \e^{2u}}\bigg) \\
  = -\sum_{i}\ell_{i}^{2} \frac{\xi_{i}\e^{2u}}{(1-\xi_{i}
    \e^{2u})^{2}}- \sum_{i\ne
    j}\ell_{i}\ell_{j}\frac{\xi_{i}\e^{2u}}{(1-\xi_{i}
    \e^{2u})(1-\xi_{j} \e^{2u})}.
\end{multline*}
Moreover
\begin{math} \displaystyle \int_{-\infty}^{\infty}
  \frac{\xi_{i}\e^{2u}}{(1-\xi_{i} \e^{2u})^{2}}\dd u =
  \bigg[\frac{1}{2(1-\xi_{i} \e^{2u})}\bigg]^{\infty}_{-\infty} =
  -\frac{1}{2}
\end{math}
and  
\begin{multline*}
  \int_{-\infty}^{\infty} \frac{\xi_{i}\e^{2u}}{(1-\xi_{i}
    \e^{2u})(1-\xi_{j} \e^{2u})} \dd u=
  \bigg[\frac{\xi_{i}}{2(\xi_{i}-\xi_{j})}( \log(1-\xi_{j}
  \e^{2u})-\log(1-\xi_{i} \e^{2u})) \bigg]^{\infty}_{-\infty}\\
  =\frac{\xi_{i}}{2(\xi_{i}-\xi_{j})}( \log
  (-\xi_{j})-\log(-\xi_{i})),
\end{multline*}
for the principal determination of $\log$. These calculations together
with the equation 
\eqref{eq:130} imply that
\begin{multline*}
  \htor_{\ov L}(Y)= m_{r}\log|p_{r}|+ \frac{1}{2}\sum_{i}\ell_{i}^{2} +
  \frac{1}{2}\sum_{i\ne
    j}\ell_{i}\ell_{j}\frac{\xi_{i}}{\xi_{i}-\xi_{j}}( \log
  (-\xi_{i})-\log(-\xi_{j})) \\= m_{r}\log|p_{r}|+
  \frac{1}{2}\sum_{i}\ell_{i}^{2} +
  \frac{1}{2}\sum_{i<j}\ell_{i}\ell_{j}\frac{\xi_{i}+\xi_{j}}{\xi_{i}-\xi_{j}}( \log
  (-\xi_{i})-\log(-\xi_{j})) ,
\end{multline*}
which proves \eqref{item:109}.

Next we consider the non-Archimedean case. Let $\zeta\in \ov
K^{\times}$ and write $v_{i}=\val(\xi_{i})$ for short. Proposition
\ref{prop:86} and the condition $m_{i}\not 
= m_{j}$ for $i\not = j$, imply, after possibly multiplying $\zeta$ by
a sufficiently general root of unity, that
\begin{displaymath}
  \psiabs(\val(\zeta))= 
  \min_{i}\{0, m_{i}\val(\zeta) + \val(p_{i})\}= 
  \val(q(\zeta)).
\end{displaymath}
By the factorization of $q$, 
\begin{displaymath}
\val(q(\zeta))= \val(p_{r}) + \sum_{i} \ell_{i}\val(\zeta-\xi_{i})  
= \val(p_{r}) + \sum_{i} \ell_{i}\min\{\val(\zeta),v_{i}\}.
\end{displaymath}
The image of $\val\colon \ov K^{\times}\to \R$
is a dense subset. For $u\in \R$, we deduce that
\begin{displaymath}
  \psiabs(u)= \val(p_{r}) +
  \sum_{i}\ell_{i}\min\{u, v_{i}\},
\end{displaymath}
which proves \eqref{item:110}. The
sup-differential of this function is, for $u\in \R$, 
\begin{displaymath}
  \partial \psiabs(u) =
  \begin{cases}
    \Big[\sum_{j: v_{j}>v_{i}}\ell_{j} ,\sum_{j: v_{j}\ge
      v_{i}}\ell_{j}\Big] & \text{ if } 
    u=v_{i} \text{ for some } i,\\
    \sum_{j: v_{j}>u}\ell_{j} & \text{ otherwise.}
  \end{cases}
\end{displaymath}
Hence, the associated Monge-Amp\`ere measure is
\begin{math}
 \sum_{i}\ell_{i}\delta_{v_{i}},
\end{math} which proves \eqref{item:111}.
The derivative of $\psiabs $ in the sense of \eqref{eq:133} is, for $u\in \R$,
\begin{displaymath}
  \psiabs'(u) =
  \begin{cases}
    \sum_{j: v_{j}>v_{i}}\ell_{j}+\frac{1}{2}\sum_{j: v_{j}=v_{i}}\ell_{j}  & \text{ if } 
    u=v_{i} \text{ for some } i,\\
    \sum_{j: v_{j}>u}\ell_{j} & \text{ otherwise.}
  \end{cases}
\end{displaymath}
Moreover, $\Stab(\psiabs )=[0,m_{r}]$, $\psiabs ^{\vee}(0)=0$ and $\psiabs
^{\vee}(m_{r})=-\val(p_{r})$. By (\ref{eq:134}) and  Lemma \ref{lemm:7}
\begin{equation}\label{eq:137}
  \htor_{\ov L}(Y)=-m_{r}\val(p_{r})
  +\int_{-\infty}^{\infty}(m_{r}-\psiabs ')\psiabs '\dd u.
\end{equation}
If we write
\begin{displaymath}
  f_{i}(u)=
  \begin{cases}
    0& \text{ if }u\le v_{i}\\
    \ell_{i} &\text{ if } u> v_{i},
  \end{cases}
\end{displaymath}
then, we have that $\psiabs '(u)=\sum_{i}\ell_{i}-f_{i}(u)$
and $m_{r}-\psiabs '(u)=\sum_{i}f_{i}$ almost everywhere. Therefore
\begin{equation}\label{eq:136}
  \int_{-\infty}^{\infty}(m_{r}-\psiabs ')\psiabs '\dd u=\sum_{i,j}
  \int_{-\infty}^{\infty}f_{i}(\ell_{j}-f_{j})\dd u=
  \sum_{i,j}\ell_{i}\ell_{j}\max\{0,v_{j}-v_{i}\}.
\end{equation}
Thus, joining together \eqref{eq:137}, \eqref{eq:136} and the
relation $\log|\zeta|=-\val(\zeta )$ we deduce
\begin{displaymath}
  \htor_{\ov L}(Y)=m_{r}\log|p_{r}|+
  \sum_{i,j}\ell_{i}\ell_{j}\max\{0,\log(|\xi _{i}|/|\xi _{j}|)\}, 
\end{displaymath}
finishing the proof of the theorem since,  for $i<j$,
\begin{displaymath}
\max\{0,\log(|\xi _{i}|/|\xi _{j}|)\}
+ \max\{0,\log(|\xi _{j}|/|\xi _{i}|)\} = \big|\log|\xi_{i}|-
\log|\xi_{j}| \big|.  
\end{displaymath}
\end{proof}
We now treat the global case. \index{toric curve!height of}

\begin{cor}\label{cor:24}
  Let $\K$ be a global field. Let $0< m_{1}<\dots
  <m_{r}$ be integer numbers with $\gcd(m_{1},\dots ,m_{r})=1$, and
  $p_{1},\dots,p_{r}\in \K^{\times}$. Let 
  $\varphi\colon \T\to \P^{r}$ be the map given by
  $\varphi(t)=(1:p_{1}t^{m_{1}}:\dots :p_{r}t^{m_{r}})$, $Y$
  the closure of the image of $\varphi$, and
  $\ov L= \varphi^{*}\ov{\cO(1)}$, where $\ov{\cO(1)}$ is equipped
  with the Fubini-Study
  metric for the Archimedean places and with the
  canonical metric for the non-Archimedean places.
  For $v\in \mathfrak{M}_{\K}$, set 
  \begin{align*}
    q_{v}=
    \begin{cases}
      1+\sum_{j=1}^{r}|p_{j}|_{v}^{2}z^{m_{j}} & \text{ if } v \text{ is
        Archimedean},\\
      1+\sum_{j=1}^{r}p_{j}z^{m_{j}} & \text{ if } v \text{ is
        not Archimedean}. 
    \end{cases}
  \end{align*}
  Let $\{\xi_{v,i}\}\subset \ov \K_{v}^{\times}$ be the
  set of roots of $q_{v}$ and, for each $i$, let $\ell_{v,i}\in \N$ denote the
  multiplicity of $\xi_{v,i}$. Then
  \begin{multline*}
    \h_{\ov L}(Y)= \sum_{v|\infty} n_{v} \bigg(
    \frac{1}{2}\sum_{i}\ell_{v,i}^{2}+
    \frac{1}{2}\sum_{i<j}\ell_{v,i}\ell_{v,j}\frac{\xi_{v,i}+\xi_{v,j}}
    {\xi_{v,i}-\xi_{v,j}}(\log(-\xi_{v,i})-\log(-\xi_{v,j}))\bigg) 
    \\+ \sum_{v\nmid \infty} n_{v}
    \bigg(  \sum_{i<j}\ell_{v,i}\ell_{v,j} \big|\log|\xi_{v,i}|- \log|\xi_{v,j}| \big|\bigg).
  \end{multline*}    
\end{cor}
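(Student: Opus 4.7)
The plan is to globalize the local toric height computations of Theorem \ref{thm:26} by summing over all places of $\K$ and exploiting the product formula. First I will identify $Y$ as a translated toric subvariety of $\P^{r}$ in the sense of Definition \ref{def:74}: the sublattice $H(\Z)\subset \Z^{r}$ spanned by $(m_{1},\dots,m_{r})$ is saturated precisely because $\gcd(m_{1},\dots,m_{r})=1$, and $Y=Y_{\Sigma_{\Delta^{r}},H(\Z),p}$ with $p=(1:p_{1}:\dots:p_{r})$. Since $\K$ is a global field and $\ov{\cO(1)}$ is quasi-algebraic (the canonical metric comes from the canonical model $\P^{r}_{\K^{\circ}}$, and the Fubini-Study metric at the Archimedean places is smooth and semipositive), Proposition~\ref{prop:29} gives integrability of $Y$, and Proposition~\ref{prop:88} together with the product formula ($\df(\K^{\times})=0$ for global fields, see Examples~\ref{exempleQ} and~\ref{exempleF}) yields the equality in $\R$:
\begin{equation*}
\h_{\ov L}(Y) \; = \; \sum_{v\in \mathfrak{M}_{\K}} n_{v}\,\htor_{v,\ov{\cO(1)}}(Y).
\end{equation*}

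Next I would apply Theorem~\ref{thm:26} at each place $v\in \mathfrak{M}_{\K}$. At each $v$ the local field $\K_{v}$ is exactly of the type considered in that theorem, the exponents $0<m_{1}<\dots<m_{r}$ with $\gcd=1$ are fixed globally, and the coefficients $p_{j}\in \K^{\times}\subset \K_{v}^{\times}$ remain nonzero. The associated local polynomial $q_{v}$ is precisely the one defined in the statement of the corollary, so its roots $\xi_{v,i}\in \ov{\K_{v}}^{\times}$ and multiplicities $\ell_{v,i}$ are well defined (after fixing once and for all embeddings of $\ov\K$ into $\ov{\K_{v}}$). Therefore parts~\eqref{item:109} and~\eqref{item:112} of Theorem~\ref{thm:26} give
\begin{equation*}
\htor_{v,\ov{\cO(1)}}(Y)\; = \; m_{r}\log|p_{r}|_{v} \; + \; R_{v},
\end{equation*}
where $R_{v}$ is the Archimedean or non-Archimedean tail appearing in the statement of the corollary.

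Finally I would sum these identities against the weights $n_{v}$. The terms $m_{r}\log|p_{r}|_{v}$ aggregate as $m_{r}\sum_{v}n_{v}\log|p_{r}|_{v}=m_{r}\df(p_{r})=0$ by the product formula, and the remaining sum $\sum_{v}n_{v}R_{v}$ is visibly equal to the right-hand side of the corollary. The only mildly delicate point is to confirm that this sum has only finitely many nonzero contributions: for all but finitely many non-Archimedean $v$ we have $|p_{j}|_{v}=1$ for every $j$, the polynomial $q_{v}$ reduces modulo $v$ to a polynomial with unit leading coefficient whose roots all have absolute value~$1$, so $|\xi_{v,i}|_{v}=1$ for all $i$ and $R_{v}=0$. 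This is essentially the same observation used in Theorem~\ref{thm:23} to see that $\ov{\cO(1)}$ defines an adelic toric metric, and I anticipate it as the only bookkeeping step in the argument; the substantive analytic content has already been packaged into Theorem~\ref{thm:26}.
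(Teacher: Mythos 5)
Your proposal follows essentially the same route as the paper: apply Proposition~\ref{prop:88} and the product formula to reduce the global height to a sum of toric local heights, then invoke Theorem~\ref{thm:26} at each place and observe that the $m_{r}\log|p_{r}|_{v}$ terms cancel by the product formula. The finiteness bookkeeping you flag at the end is precisely what makes the sum in Definition~\ref{def:67} well defined, and the paper treats it implicitly via the adelic condition; otherwise the two arguments coincide.
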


\begin{proof}
  This follows readily from Proposition \ref{prop:88},
  Theorem~\ref{thm:26}, and the product formula.
\end{proof}

\index{Veronese curve}%
\begin{cor}
  \label{cor:27} Let $C_{r}\subset \P^{r}_{\Q}$ be the Veronese
  curve of degree $r$ and $\ov{\cO(1)}$ the universal line bundle on $
  \P^{r}_{\Q}$ equipped with the Fubini-Study metric at the
  Archimedean place and with the canonical metric at the
  non-Archimedean ones. Then
\begin{equation*}
  \h_{\ov{\cO(1)}}(C_r) = \frac{r}{2}+\pi\sum_{j=1}^{\lfloor
    r/2\rfloor}\bigg(1-\frac{2\, j}{r+1} \bigg)\,
  \cot\bigg(\frac{\pi\, j}{r+1}\bigg) \in \frac{r}{2} 
  + \pi \, \ov \Q.
\end{equation*} 
\end{cor}

\begin{proof}
  The curve $C_{r}$ coincides
  with the closure of the image of the map $\varphi\colon \T\to
  \P^{r}$ given by $\varphi(t)=(1:t:t^{2}:\dots:t^{r})$.
  With the notation in Corollary~\ref{cor:24}, this map corresponds to $m_{i}=i$
  and $p_{i}=1$, for $i=1,\dots,r$. Then $q_{v}= \sum_{j=0}^{r}z^{j}$ for all $v\in
  \mathfrak{M}_{\Q}$.  Consider the primitive $(r+1)$-th root of unity
  $\omega=\e^{\frac{2\pi i}{r+1}}$. The polynomial $q_{v}$ is
  separable and its set of roots is $\{\omega^{l}\}_{l=1,\dots,
    r}$. Since $|\omega^{l}|_{v}=1$ for all~$v$, 
  Corollary~\ref{cor:24} implies that
\begin{multline}
  \label{eq:129} \h_{\ov L}(C_{r})= \frac{r}{2}+
  \frac{1}{2}\sum_{l<j}\frac{\omega^{l}+\omega^{j}}
  {\omega^{l}-\omega^{j}}(\log(-\omega^{l})-\log(-\omega^{j})) \\=
  \frac{r}{2}+ \frac{1}{2}\sum_{l\ne j}
  \frac{\omega^{l}+\omega^{j}}{\omega^{l}-\omega^{j}}\log(-\omega^{l}).
\end{multline}
We have that
\begin{displaymath}
\sum_{j=1}^{r}\frac{\omega^{j}+1}{\omega^{j}-1} = 
\sum_{j=1}^{r}\frac{\omega^{j}}{\omega^{j}-1} + \sum_{j=1}^{r}\frac{1}{\omega^{j}-1} = 
\sum_{j=1}^{r}\frac{1}{1- \omega^{-j}} + \sum_{j=1}^{r}\frac{1}{\omega^{j}-1} = 0.
\end{displaymath}
This implies, for $l=1,\dots, r$, 
\begin{displaymath}
 \sum_{1\le j\le r, j\ne
  l}\frac{\omega^{l}+\omega^{j}}{\omega^{l}-\omega^{j}} =
-\frac{\omega^{l}+1}{\omega^{l}-1} = i \cot\Big( \frac{\pi l}{r+1}\Big).
\end{displaymath}
Hence, 
\begin{multline*}
    \frac{1}{2}\sum_{l\ne
       j}\frac{\omega^{l}+\omega^{j}}{\omega^{l}-\omega^{j}}\log(-\omega^{l})
=   \frac{i}{2}\sum_{l=1}^{r} \cot\Big( \frac{\pi l}{r+1}\Big) \log(-\omega^{l})
\\=   \pi\sum_{l=1}^{\lfloor r/2\rfloor} \cot\Big( \frac{\pi
  l}{r+1}\Big) \Big( 1- \frac{2l}{r+1}\Big) ,
\end{multline*}
since $\cot( \frac{\pi (r+1-l)}{r+1}) \log(-\omega^{r+1-l}) =\cot(
\frac{\pi l}{r+1}) \log(-\omega^{l})$ for $l=1,\dots, \lfloor
r/2\rfloor$ and $\log(-\omega^{\frac{r+1}{2}})=0$ whenever $r$ is
odd. The statement follows from these calculations together with~\eqref{eq:129}.
\end{proof}

Here follow some special values:
\begin{center} 
{\begin{tabular*}{0.89\textwidth}{@{\extracolsep{1.8mm}}c||ccccc} 
$r$ & $1$& $2$& $3$& $5$& $7$ \\
\hline\\[-3mm] 
$\h_{\ov{\cO(1)}}(C_{r})$ & 
$\displaystyle\frac{1}{2}$ &
$\displaystyle 1 + \frac{1}{3\, \sqrt{3}} \, \pi $ &
$\displaystyle\frac{3}{2} + \frac{1}{2} \, \pi $ &
$\displaystyle\frac{5}{2} + \frac{7}{3\, \sqrt{3}}\, \pi $ &
$\displaystyle\frac{7}{2} + (1+\sqrt{2}) \, \pi$ 
\end{tabular*} }
\end{center}

\medskip

\begin{cor} 
\label{cor:28} With the notation of Corollary \ref{cor:27}, 
   $\h_{\ov{\cO(1)}}(C_{r})= r\log r + O(r)$ for
$r\to \infty$. 
\end{cor}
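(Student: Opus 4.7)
The plan is to read off the asymptotics directly from the closed formula of Corollary~\ref{cor:27}, namely
\[
\h_{\ov{\cO(1)}}(C_r)=\frac{r}{2}+\pi\sum_{j=1}^{\lfloor r/2\rfloor}\Big(1-\frac{2j}{r+1}\Big)\cot\Big(\frac{\pi j}{r+1}\Big).
\]
The term $r/2$ is already $O(r)$, so everything reduces to analyzing the trigonometric sum. The point is that the summand has a $1/j$ singularity coming from the cotangent near $0$, and isolating this singularity will produce a harmonic sum, which is the source of the $r\log r$ behaviour.

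Concretely, first I would introduce the auxiliary function
\[
g\colon(0,1/2]\longrightarrow\R,\qquad g(x)=(1-2x)\cot(\pi x)-\frac{1}{\pi x}.
\]
Using the Laurent expansion $\cot(\pi x)=\tfrac{1}{\pi x}-\tfrac{\pi x}{3}+O(x^3)$ near $x=0$, one checks that $g$ extends continuously to $0$ with value $-2/\pi$; near $x=1/2$, writing $x=1/2+u$, the factor $(1-2x)\cot(\pi x)=2u\tan(\pi u)$ tends to $0$. Hence $g$ is bounded on $[0,1/2]$, say $|g|\le M$. Substituting $x_j=j/(r+1)$ gives
\[
\Big(1-\frac{2j}{r+1}\Big)\cot\Big(\frac{\pi j}{r+1}\Big)=\frac{r+1}{\pi j}+g\Big(\frac{j}{r+1}\Big).
\]

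Summing for $j=1,\dots,\lfloor r/2\rfloor$ and using $\sum_{j=1}^{N}1/j=\log N+O(1)$ with $N=\lfloor r/2\rfloor$ yields
\[
\sum_{j=1}^{\lfloor r/2\rfloor}\Big(1-\frac{2j}{r+1}\Big)\cot\Big(\frac{\pi j}{r+1}\Big)=\frac{r+1}{\pi}\bigl(\log r+O(1)\bigr)+O(r)=\frac{r}{\pi}\log r+O(r),
\]
since $(r+1)/\pi\cdot O(1)=O(r)$ and the remainder sum $\sum g(j/(r+1))$ is bounded by $M\lfloor r/2\rfloor=O(r)$. Multiplying by $\pi$ and adding the term $r/2$ gives $\h_{\ov{\cO(1)}}(C_r)=r\log r+O(r)$, which is the claim.

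The only mildly delicate step is verifying that $g$ is bounded on the whole interval $(0,1/2]$; the behaviour at $0$ is handled by the Laurent expansion of $\cot$ and the behaviour at $1/2$ by the double zero of $(1-2x)\cot(\pi x)$ there. Everything else is the standard harmonic-sum asymptotic, so no real obstacle is expected.
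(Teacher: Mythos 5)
Your proof is correct and follows essentially the same route as the paper's: both isolate the $1/x$ pole of $\cot(\pi x)$ near $0$ to convert the cotangent sum into a harmonic sum producing $r\log r$, with the remainder controlled by a uniform bound on $[0,1/2]$. Your more explicit handling (introducing $g$ and checking boundedness at both endpoints) fleshes out the step that the paper compresses into the one-line remark $\pi\cot(\pi x)=1/x+O(1)$.
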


\begin{proof}
  We have that $\displaystyle \pi\cot(\pi x)= \frac{1}{x}+O(1)$ for $x\to
  0$. Hence, 
  \begin{displaymath}
      \h_{\ov{\cO(1)}}(C_r) = \sum_{j=1}^{\lfloor
    r/2\rfloor}\bigg(1-\frac{2\, j}{r+1} \bigg)\,
 \frac{r+1}{j} + O(r) = r \bigg(\sum_{j=1}^{\lfloor
    r/2\rfloor}\frac{1}{j}\bigg)+ O(r)= r\log r + O(r). 
  \end{displaymath}
\end{proof}

\index{successive minima}%
By the theorem of algebraic successive minima~\cite[Theorem 5.2]{Zhang:plb},
\begin{displaymath}
  \mu^{\ess}(C_{r})\le \frac{  \h_{\ov{\cO(1)}}(C_r)}{
    \deg_{{\cO(1)}}(C_r)} \le 2 \mu^{\ess}(C_{r})
\end{displaymath}
\index{essential minimum}%
The essential minimum of $C_{r}$ is $\mu^{\ess}(C_{r})=
\frac{1}{2}\log(r+1)$ \cite[Th\'eor\`eme 0.1]{Sombra:msvtp}. Hence, the quotient $\frac{
  \h_{\ov{\cO(1)}}(C_r)}{
    \deg_{{\cO(1)}}(C_r)}$ is asymptotically closer to the upper bound
  than to the lower bound. 

\section{Height of toric bundles} \label{Height of toric bundles}
\index{toric projective bundle}%

Let $n\ge 0$ and write $\P^n=\P^n_{\Q}$ for short.  Given integers $a_{r}\ge \dots \ge
a_{0}\ge 1$, consider the bundle $\P(E)\rightarrow\P^n$ of
hyperplanes of the vector bundle
$$
E=\cO(a_0)\oplus \cO(a_1)\oplus \dots\oplus
\cO(a_r) \longrightarrow \P^n,
$$
where $\cO(a_{j})$ denotes the $a_{j}$-th power of the universal line
bundle of $\P^{n}$.  Equivalently, $\P(E)$ can be defined as the bundle of lines of the
dual vector bundle $E^{\vee}$.  The fibre of the map
$\pi\colon\P(E)\to \P^n$ over each point $p\in \P^n(\ov \Q)$ is a projective
space of dimension $r$.  This bundle is a smooth toric variety over
$\Q$ of dimension $n+r$, see \cite[pages~58--59]{Oda88},
\cite[page~42]{Ful93}.  The particular case $n=r=1$ corresponds to
Hirzebruch surfaces\index{Hirzebruch surface}%
: for $b\ge 0$, we have
$\F_{b}=\P(\cO(0)\oplus \cO(b))\simeq
\P(\cO(a_{0})\oplus \cO(a_{0}+b)) $ for any $a_{0}\ge1$.

The \emph{tautological line bundle} of $\P(E)$, denoted $\cO_{\P(E)}(-1)$, is
defined as a subbundle of~$\pi^{*}E^{\vee}$. Its fibre over a point of
$\P(E)$ is the inverse image under $\pi$ of the line in $E^{\vee}$
which is dual to the hyperplane of $E$ defining the given point.  The
\emph{universal line bundle} $\cO_{\P(E)}(1)$ of $\P(E)$ is defined as
the dual of the tautological one.  Since $\cO(a_{j})$,
$j=0,\dots, r$, is ample, the universal line bundle is also
ample~\cite[propositions~2.2 and 3.2]{MR0193092}.
This is the line bundle corresponding to the Cartier divisor $a_0
D_0+D_1$, where $D_0$ denotes 
the inverse image in $\P(E)$ of the hyperplane at infinity of
$\P^n$ and $D_1=\P(0\oplus \cO(a_1)\oplus\dots\oplus
\cO(a_r))$.
Observe that, although  $\P(E)$ is isomorphic to the bundle
associated to the family of  
integers $a_{i}+c$ for any $c\in \N$, this is not the case for the
associated universal 
line bundle, that depends on the choice of $c$.

Following Example \ref{exm:10}, we regard $\P^{n}$ as a toric
variety over $\Q$ equipped with the action of the split torus
$\G_{m}^{n}$.  Let $s$ be the toric section
of~$\cO(1)$ which corresponds to the hyperplane at infinity
$H_0$ and let $s_{j}=s^{\otimes -a_{j}}$, which is a section of
$\cO(-a_{j})$.  Let $U= \P^{n}\setminus H_{0}$. The
restriction of $\P(E)$ to $U$ is isomorphic to $U\times\P^{r}$
through the map $\varphi$ defined, for $p\in U$ and $q\in \P^{r}$,
as
\begin{displaymath}
  (p,q) \longmapsto (p,q_{0}s_{0}(p)\oplus \dots \oplus
  q_{r}s_{r}(p)).
\end{displaymath}
The torus $\T:=\G_{m}^{n+r}$ can then be included as an open subvariety
of $\P(E)$ through the map $\varphi$ composed with the standard
inclusion of $\G_{m}^{n+r}$ into $U\times\P^{r}$.  The action
of $\T$ on itself by translation extends to an action
of the torus on the whole of $\P(E)$. Hence $\P(E)$
is a toric variety over $\Q$. With this action the divisor $a_0
D_0+D_1$ is a $\T$-Cartier divisor.
 
By abuse of notation, we also denote $E^{\vee}$ the total space
associated to the vector bundle $E^{\vee}$.
The map $\G_{m}^{n+r}\to E^{\vee}$ defined as
\begin{equation}\label{eq:143}
  (z,w)\longmapsto ((1:z),(s_{0}(1:z)\oplus w_{1}s_{1}(1:z)\oplus
  \dots\oplus w_{r}s_{r}(1:z))) 
\end{equation}
induces a nowhere vanishing section of the tautological line bundle of
$\P(E)$ over the open subset $\T$.  Its inverse defines a rational
section of $\cO_{\P(E)}(1)$, denoted $s_{\P(E)}$, that is regular and
nowhere vanishing on $\T$. In particular, this section induces a
structure of toric line bundle on $\cO_{\P(E)}(1)$. The divisor of
$s_{\P(E)}$ is precisely the $\T$-Cartier divisor $a_{0}D_{0}+D_{1}$
considered above.

We now introduce an adelic toric metric on $\cO_{\P(E)}(1)$. For
$v=\infty$, we consider the complex vector bundle $E(\C)$ that can be
naturally metrized by the direct sum of the Fubiny-Study metric on
each factor $\cO(a_j)(\C)$.  By duality, this gives a metric on
$E^{\vee}(\C)$, which induces by restriction a metric on the
tautological line bundle.  Applying duality one more time, we obtain
a smooth metric, denoted $\|\cdot \|_{\infty}$, on~$O_{\P(E)(\C)}(1)$.
Since the Fubini-Study metric on each $\cO(a_j)(\C)$ is toric, then
$\|\cdot \|_{\infty}$ is toric too.  For $v\in {\mathfrak M}_{\Q}\setminus
\{\infty\}$, we equip $O_{\P(E)}(1)$ with the canonical metric
(Proposition-Definition~\ref{def:57}).  We write
$\overline{\cO_{\P(E)}(1)}=(\cO_{\P(E)}(1),(\|\cdot\|_{v})_{v\in
  {\mathfrak M}_{\Q}})$ for the obtained adelic metrized toric line bundle.

We have made a choice of splitting of $\T$ and
therefore a choice of an identification $N=\Z^{n+r}$. Thus we obtain
a system of coordinates in the real 
vector space associated to the toric variety $\P(E)$, namely
$N_{\R}=\R^{n+r}=\R^{n}\times\R^{r}$.
Since the metric considered at each non-Archimedean place is the
canonical one, the only nontrivial contribution to the global height will
come from the Archimedean place. 
The restriction to the principal open
subset    $\P(E)_{0}(\C)\simeq (\C^{\times})^{n+r}
=(\C^{\times})^{n}\times(\C^{\times})^{r}$ 
of the valuation map is expressed, in
these coordinates, as the map
$\val\colon (\C^{\times})^{n+r} \to N_{\R}$ defined by 
$$\val (z,w)=
(-\log|z_{1}|, \dots,  -\log|z_{n}|, -\log|w_{1}|,\dots,
-\log|w_{r}|).
$$ 

Write $\psiabs_{\infty}\colon N_{\R}\to \R$ for the function
corresponding to the metric $\|\cdot\|_{\infty}$ and the toric section
$s_{\P(E)}$ defined above.  

\begin{lem} \label{lemm:16} 
The function $\psiabs_{\infty}$ is defined,
  for $u \in \R^{n}$ and  $v\in \R^{r}$, as
  \begin{displaymath}
\psiabs_{\infty}(u,v)=-\frac{1}{2}\log\left(
\sum_{j=0}^r{\e}^{-2v_j}\left(\sum_{i=0}^n
  {\e}^{-2u_i}\right)^{a_j}\right),
  \end{displaymath}
with the convention $u_{0}=v_{0}=0$.
It is a strictly concave function. 
\end{lem}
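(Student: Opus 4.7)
The plan is to establish the formula by direct computation of $\Vert s\Vert_{\infty}$ from the definitions of the metric, and then verify strict concavity via a Hessian computation that exploits the log-sum-exp structure of the resulting expression.

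For the formula, the key is to unfold the chain of dual and direct-sum metrics defining $\Vert\cdot\Vert_{\infty}$. First I would recall that at a point $q=(1:z)\in U(\C)$, the Fubini-Study metric on $\cO(1)$ yields $\Vert s(q)\Vert_{\FS}=(1+\sum_{i=1}^{n}|z_{i}|^{2})^{-1/2}$; the tensor metric then gives $\Vert s^{\otimes a_{j}}(q)\Vert = (1+\sum|z_{i}|^{2})^{-a_{j}/2}$, and dualising yields $\Vert s_{j}(q)\Vert_{\cO(-a_{j})} = (1+\sum|z_{i}|^{2})^{a_{j}/2}$. By the very construction of the toric section $s$ in the excerpt, $s(z,w)\in\cO_{\P(E)}(1)_{(z,w)}$ is the linear functional on the tautological line that sends the generator $\xi = s_{0}(q)\oplus w_{1}s_{1}(q)\oplus\cdots\oplus w_{r}s_{r}(q)\in E^{\vee}_{q}$ to $1$. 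Since the metric on $E^{\vee}$ is the direct sum of those on the $\cO(-a_{j})$, duality gives $\Vert s(z,w)\Vert^{2} = 1/\Vert\xi\Vert^{2} = 1/\sum_{j=0}^{r}|w_{j}|^{2}(1+\sum_{i}|z_{i}|^{2})^{a_{j}}$ (with $w_{0}=1$). Substituting $|z_{i}|=\e^{-u_{i}}$ and $|w_{j}|=\e^{-v_{j}}$ with $u_{0}=v_{0}=0$, and using $\psi_{\infty}=\log\Vert s\Vert$ (since $\lambda_{\C}=1$), gives the stated formula.

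For strict concavity, set $F(u,v) = \sum_{j=0}^{r}\e^{-2v_{j}}\bigl(\sum_{i=0}^{n}\e^{-2u_{i}}\bigr)^{a_{j}}$, let $h(u)=\log\sum_{i=0}^{n}\e^{-2u_{i}}$, and write $g_{j}(u,v) = -2v_{j}+a_{j}h(u)$ so that $\log F = \operatorname{logsumexp}_{j}(g_{j})$. The standard identity for the Hessian of a log-sum-exp reads
$$
\nabla^{2}\log F \;=\; \sum_{j}p_{j}\,\nabla^{2}g_{j} \;+\; \operatorname{Var}_{p}(\nabla g_{j}),
$$
where $p_{j}=\e^{g_{j}}/F$ is a probability vector, all $p_{j}>0$. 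The first term equals $(\sum_{j}p_{j}a_{j})\nabla^{2}h$ on the $u$-block and vanishes on the $v$-block, while the variance term is positive semidefinite. The key sublemma I would prove is that $\nabla^{2}h$ is strictly positive definite on $\R^{n}$: a short computation gives $\partial_{k}\partial_{\ell}h = 4(\delta_{k\ell}q_{k}-q_{k}q_{\ell})$ with $q_{i}=\e^{-2u_{i}}/\sum_{k=0}^{n}\e^{-2u_{k}}$, and this covariance-type matrix is positive definite because $q_{0}>0$ (a standard variance argument).

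I would then conclude by case analysis on a nonzero $(x,y)$: if $x\ne 0$, the first term contributes $(\sum p_{j}a_{j})\,x^{T}\nabla^{2}h\,x > 0$ since $\sum p_{j}a_{j}\ge a_{0}p_{0}>0$; if $x=0$ and $y\ne 0$, then $Z_{j}:=\nabla g_{j}\cdot(0,y) = -2y_{j}$ (with $y_{0}=0$) is non-constant in $j$ under $p$, so $\operatorname{Var}_{p}(Z_{j})>0$. In both cases $\nabla^{2}\log F \succ 0$, and hence $\psi_{\infty}=-\tfrac{1}{2}\log F$ is strictly concave. The main obstacle I anticipate is purely bookkeeping: tracking the metric conventions (primal versus dual, tensor metric versus direct-sum metric) so that the exponents $a_{j}$ and the placement of $(1+\sum|z_{i}|^{2})$ come out correctly in the final formula; the strict concavity argument itself is routine once the log-sum-exp structure has been isolated.
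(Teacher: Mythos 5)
Your derivation of the formula is essentially the same as the paper's: you unfold the direct-sum Fubini--Study metric on $E^{\vee}$, restrict it to the tautological line, and dualise, arriving at $\|s\|^{2}=1/\sum_{j}|w_{j}|^{2}(1+\sum_{i}|z_{i}|^{2})^{a_{j}}$; the paper writes the same computation using $s^{\otimes -1}$ and invokes Proposition~\ref{prop:24}\eqref{item:79} to flip the sign of $\psi$. For strict concavity, however, you take a genuinely different route. The paper argues that each summand $\e^{-2v_{j}}(\sum_{i}\e^{-2u_{i}})^{a_{j}}$ is log-strictly-convex because $-\tfrac12$ of its logarithm is ``the'' Fubini--Study concave function, and then cites~\cite[§3.5.2]{MR2061575} for closure of log-strict convexity under sums. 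That appeal is terse: as a function on $\R^{n}\times\R^{r}$ each single summand $\log f_{j}=-2v_{j}+a_{j}h(u)$ is only strictly convex in the $u$-directions (affine in $v$), so the strictness of the \emph{sum} is not a formal consequence of strictness of each summand alone; it requires noticing that the $v$-gradients $-2e_{j}$ differ across $j$. Your argument through the log-sum-exp Hessian identity $\nabla^{2}\log F=\sum_{j}p_{j}\nabla^{2}g_{j}+\operatorname{Var}_{p}(\nabla g_{j})$ makes exactly this point explicit: the curvature term $\sum_{j}p_{j}a_{j}\,\nabla^{2}h$ handles the $u$-directions (after your correct verification, using the hidden index $i=0$, that $\nabla^{2}h\succ0$), and the variance term handles the complementary $v$-directions where the $\nabla g_{j}$ genuinely differ. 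So your proof is more elementary, entirely self-contained, and arguably tighter; the paper's is shorter at the cost of resting on a somewhat delicate external reference.
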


\begin{proof}
The metric on $E^{\vee}(\C)$ is given, for $p\in \P^{n}(\C)$ and
$q_{0},\dots,q_{r}\in \C$, by 
\begin{displaymath}
  ||q_{0}s_{0}(p)\oplus \dots \oplus
  q_{r}s_{r}(p)||_{\infty}^{2}=|q_{0}|^{2}||s_{0}(p)||^{2}+ \dots
  + |q_{r}|^{2}||s_{r}(p)||^{2},
\end{displaymath}
where $||s_{j}(p)||$ is the norm of $s_{j}(p)$ with respect to the
Fubini-Study metric on $\cO(-a_{j})^{\an}$. By Example \ref{exm:3}, 
\begin{displaymath}
  ||s_{j}(p)||^{2}= \bigg(
  \frac{|p_{0}|^{2}}{|p_{0}|^{2}+\dots+|p_{n}|^{2}}\bigg)^{-a_{j}}.
\end{displaymath}
Using Definition \ref{def:68} and Proposition
\ref{prop:24}\eqref{item:79}, we compute the function $\psiabs_\infty$ via
the Green function $-\log\Vert s_{\P(E)}^{\otimes -1} \Vert$ relative to the
toric section $s_{\P(E)}^{\otimes -1}$, defined in \eqref{eq:143}, of the
tautological bundle, dual ${\cO}_{\P(E)}(1)$.  The explicit
description (\ref{eq:143}) of the section $s_{\P(E)}^{\otimes -1}$
implies that, for $(z,w)=(z_{1},\dots,z_{n},w_{1},\dots,w_{r})\in
\G_{m}^{n+r}$, writing $z_{0}=w_{0}=1$, we have
\begin{equation}\label{eq:144}
\|s_{\P(E)}^{\otimes -1}(z,w) \|^{2}= 
\sum_{j=0}^r|w_{j}|^{2}\left(\sum_{i=0}^n |z_{i}|^{2}\right)^{a_j}.
\end{equation}
If $\val(z,w)=(u_{1},\dots,u_{n},v_{1},\dots,v_{r})$ and writing
$u_{0}=v_{0}=0$, equation (\ref{eq:144}) can be written as
\begin{equation}\label{eq:109}
\|s_{\P(E)}^{\otimes -1}(z,w) \|^{2}= 
\sum_{j=0}^r{\e}^{-2v_j}\left(\sum_{i=0}^n {\e}^{-2u_i}\right)^{a_j}.
\end{equation}
Now  $\psiabs_\infty(u,v)$ equals $-\log\Vert s_{\P(E)}^{\otimes -1}(z,w) \Vert$, that is $-1/2$ times the logarithm of the right hand side in \eqref{eq:109}. This proves the equality of the lemma.

For the last statement, observe that the functions 
${\e}^{-2v_j}(\sum_{i=0}^n {\e}^{-2u_i})^{a_j}$
  are log-strictly convex, because $-1/2$ times their logarithm is the function
  associated to the Fubini-Study metric on $\cO(a_{j})^{\an}$,
  which is a strictly concave function.
Their sum is also log-strictly convex~\cite[\S 3.5.2]{MR2061575}. Hence,
$\psiabs_{\infty}$ is strictly concave.
\end{proof}

\begin{cor} \label{cor:15}
    The metric $\|\cdot\|_{\infty}$ is a 
    semipositive smooth toric metric.
\end{cor}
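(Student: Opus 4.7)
The plan is to verify the three properties \emph{smooth}, \emph{toric}, and \emph{semipositive} in turn, leveraging the explicit formula for $\psi_{\infty}$ already furnished by Lemma~\ref{lemm:16} together with the classification of Proposition~\ref{prop:15}.

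First I would establish smoothness. The metric $\|\cdot\|_{\infty}$ is obtained from the Fubini--Study metrics on each $\cO(a_{j})(\C)$ by the sequence of standard operations direct sum, dualization, restriction to a subbundle, dualization again; each of these preserves smoothness, so $\|\cdot\|_{\infty}$ is smooth. A self-contained verification is immediate from formula~\eqref{eq:109}: the right-hand side is a smooth, strictly positive function of the real coordinates $(u,v)$, so $\|s\|_{\infty}^{2}$ is smooth and nowhere zero on the principal open subset. Since $s$ is a toric section generating $\cO_{\P(E)}(1)$ on the principal open subset and the smoothness of the metric in a neighbourhood of a point in $\P(E)\setminus \P(E)_{0}$ can be checked after multiplying $s$ by a suitable monomial (which amounts to a change of local trivialization), smoothness extends to the whole of $\P(E)(\C)$.

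Next I would check that the metric is toric. By Proposition~\ref{prop:79}, it suffices to exhibit one toric section whose norm is $\SS^{\an}$-invariant. Formula~\eqref{eq:109} shows that $\|s^{\otimes -1}\circ\theta_{0}\circ \ee_{\C}(u,v)\|_{\infty}^{2}$ depends only on $(u,v)=\val(z,w)$, so $\|s\|_{\infty}$ is constant along the fibres of $\val$, which are exactly the $\SS^{\an}$-orbits on the principal open subset. Hence $\|\cdot\|_{\infty}$ is toric, and the function associated to it via Definition~\ref{def:68} is precisely the $\psi_{\infty}$ computed in Lemma~\ref{lemm:16}.

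Finally, for semipositivity, I would invoke Proposition~\ref{prop:15}, which asserts that a smooth toric metric on a toric line bundle is semipositive if and only if its associated function on $N_{\R}$ is concave. By Lemma~\ref{lemm:16}, $\psi_{\infty}$ is strictly concave; therefore $\|\cdot\|_{\infty}$ is semipositive. There is no real obstacle here: all the work has been concentrated in Lemma~\ref{lemm:16}, whose strict concavity conclusion is exactly the ingredient needed to feed into Proposition~\ref{prop:15}.
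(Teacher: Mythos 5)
Your argument is correct and follows essentially the route the paper intends: the corollary is an immediate consequence of Lemma~\ref{lemm:16} (explicit formula and strict concavity of $\psi_{\infty}$) together with Proposition~\ref{prop:15}, and your verifications of smoothness and toricness are the natural unwinding of the definitions via~\eqref{eq:109} and Proposition~\ref{prop:79}.
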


\begin{proof}
    The facts that $\|\cdot\|_{\infty}$  is smooth and toric follow from
    its construction. The fact that it is semipositive follows from
    Lemma \ref{lemm:16} and Theorem \ref{thm:13}\eqref{item:37}.
\end{proof}

In the following result, we summarize the combinatorial data describing
the toric structure of $\P(E)$ and
$\ov {\cO_{\P(E)}(1)}$.

\begin{prop} \label{prop:67} 
\begin{enumerate}
  \item \label{item:83} Let $e_i$, $1\le i\le n$, and $f_j$,  $1\le j\le r$,
be the $i$-th and $(n+j)$-th vectors of the standard basis of
$N=\Z^{n+r}$. Set $f_0=-f_1-\cdots-f_r$ and
$e_0=a_0f_0+\cdots+a_rf_r-e_1-\cdots -e_n$. 
The fan $\Sigma$ corresponding to $\P(E)$ is the fan in $N_{\R}$ whose maximal cones
are the convex hull of the rays generated by the vectors
$$e_0,\cdots,e_{k-1},e_{k+1},\cdots,e_n,f_0,\cdots,f_{\ell-1},f_{\ell+1},\cdots,f_{r}$$
for $0\le k\le n, 0\le \ell\le r$.
This is a complete regular fan.

\item \label{item:84} The support function $\Psi\colon N_{\R}\to {\R}$
  corresponding to the universal line bundle $\cO_{\P(E)}(1)$ and the
  toric section $s_{\P(E)}$ is defined, for $u\in \R^{n}$ and
  $v\in\R^{r}$, as
    \begin{displaymath}
      \Psi(u,v)=\mathop{\min_{0\le k\le n}}_{0\le \ell\le r} (a_{\ell}u_{k}+v_{\ell}),
    \end{displaymath}
where, for short, we have set $u_{0}=v_{0}=0$. 
  \item \label{item:85} The polytope $\Delta$ in $M_{\R}=\R^{n}\times\R^{r}$ associated to $(\Sigma,\Psi)$ is 
$$
\Big\{(x,y)| y_1,\dots,y_r\geq0,\
\sum_{\ell=1}^ry_\ell\leq 1,\ x_1,\dots,x_n\geq 0,\ \sum_{k=1}^nx_k\leq
L(y)\Big\}
$$
with $L(y)=a_0+\sum_{\ell=1}^r(a_\ell-a_0) y_\ell$. Using the
convention $y_{0}=1-\sum_{\ell=1}^{r}y_{\ell}$ and
$x_{0}=L(y)-\sum_{k=1}^{n}x_{k}$, then $L(y)=\sum_{\ell=0}^ra_\ell
y_\ell$ and the polytope $\Delta $ can be written as
$$
\Big\{(x,y)| y_0,\dots,y_r\geq0,\
x_0,\dots,x_n\geq 0\Big\}.
$$\item \label{item:87}  The 
Legendre-Fenchel dual of $\psiabs_\infty$ is the
concave function $\vartheta _\infty \colon \Delta\to \R$ defined, for
$(x,y)\in \Delta$, as 
$$
\vartheta _\infty(x,y)=
\frac{1}{2}\left(\varepsilon_r(y_1,\dots,y_r) + L(y)\cdot 
  \varepsilon_n\left(\frac{x_1}{L(y)},\dots,
    \frac{x_n}{L(y)}\right)\right),
$$
where, for $k\ge 0$,  $\varepsilon_k$ is the
function defined in \eqref{eq:103}.
For $v\ne \infty$, the concave function  $\vartheta _{v}=\psiabs^\vee_v$ is the
indicator function of $\Delta$.
  \end{enumerate}
\end{prop}

\begin{proof}
By Corollary \ref{cor:8}, we have $\Psi=\rec(\psiabs_{\infty})$. By
the equation \eqref{eq:49}, we have 
$\rec(\psiabs_{\infty})(u,v)= \lim_{\lambda\to \infty}
\lambda^{-1}\psiabs_{\infty}(\lambda (u,v))$.  
Statement \eqref{item:84} follows readily from this and from the
expression for $\psiabs_{\infty}$ in Lemma \ref{lemm:16}. 

The function $\Psi$ is strictly concave on $\Sigma$, because
$\cO_{\P(E)}(1)$ is an ample line bundle. Hence $\Sigma= \Pi(\Psi)$
and this is the fan described in statement \eqref{item:83}.

Let
$(e_{1}^{\vee},\dots,e_{n}^{\vee},f_{1}^{\vee},\dots,f_{r}^{\vee})$ be
the dual basis of $M$ induced by the basis of $N$.
By Proposition \ref{prop:19} and statement \eqref{item:84}, we have 
\begin{displaymath}
\Delta= \Conv\Big(0 , (a_{0}e^{\vee}_{k})_{1\le k\le n}, (f^{\vee}_{\ell})_{1\le \ell\le
r}, (a_{\ell}e^{\vee}_{k}+f^{\vee}_{\ell})_{\stackrel{1\le k\le n}{\scriptscriptstyle
  1\le \ell\le r}}\Big).
\end{displaymath}
Statement \eqref{item:85} follows readily from this. 

For the first part of statement \eqref{item:87}, 
it suffices to compute the Legendre-Fenchel dual of $\psiabs_{\infty}$ at a
point $(x,y)$ in the interior of the polytope. 
Lemma \ref{lemm:16} shows that $\psiabs_{\infty}$ is strictly
concave. Hence, by Theorem \ref{thm:2}\eqref{item:16},
 $\nabla \psiabs_{\infty}$ is a homeomorphism between $N_{\R}$
and $\Delta^{\circ}$.
Thus, there exist a unique $(u,v)\in N_{\R}$ such that, for
$i=1,\dots, n$ and $j=1,\dots, r$,  
\begin{displaymath}
  x_{i}=\frac{\partial \psiabs_{\infty}}{\partial u_{i}}(u,v), 
\quad   y_{j}=\frac{\partial \psiabs_{\infty}}{\partial v_{j}}(u,v). 
\end{displaymath}
We use the conventions
$x_{0}=L(y)-\sum_{i=1}^{n}x_{i}$, $y_{0}=1-\sum_{j=1}^{r}y_{j}$, and 
$u_{0}=v_{0}=0$ as before, and also $\eta=\sum_{i=0}^{n}\e^{-2u_{i}}$
and $\psiabs=\psiabs_{\infty}$, so that 
$-2\psiabs=\log
\big(\sum_{j=0}^{r}\e^{-2v_{j}}\eta^{a_{j}}\big)$. 
Computing the gradient of $\psiabs$, we obtain, for
$i=1,\dots, n$ and $j=1,\dots, r$,  
\begin{displaymath}
  x_{i} \e^{-2\psiabs} =\Big(\sum_{j=0}^{r}a_{j}\eta^{a_{j}-1}\e^{-2v_{j}}\Big)
  \e^{-2u_{i}},\quad 
y_{j} \e^{-2\psiabs}=\eta^{a_{j}} \e^{-2v_{j}}.
\end{displaymath}
Combining these expressions, we obtain, for $i=0,\dots, n$ and $j=0,\dots, r$,  
\begin{displaymath}
  \frac{x_{i}}{L(y)}=\frac{\e^{-2u_{i}}}{\eta},\quad 
y_{j}= {\eta^{a_{j}}}{\e^{-2v_{j}+2\psiabs}}.
\end{displaymath}
From the case $i=0$ we deduce $\eta= L(y)/x_{0}$ and from the case
$j=0$ it results $2\psiabs = \log(y_{0})+a_{0}\log(x_{0}/L(y))$.
From this, one can verify 
\begin{displaymath}
  u_{i}=\frac12 \log\Big(\frac{x_{0}}{x_{i}}\Big) ,
\quad v_{j}= \frac12 \log\Big( \frac{y_{0}}{y_{j}}\Big) + 
 \frac{a_{0}-a_{j}}{2}\log\Big(   \frac{x_{0}}{L(y)}\Big).
\end{displaymath}
From Theorem \ref{thm:2}\eqref{item:51}, we have $\psiabs^{\vee}(x,y)=
\langle x,u\rangle+\langle y,v\rangle -\psiabs(u,v)$. Inserting the
expressions above for $\psiabs$, $u_{i}$ and $v_{j}$ in terms of $x,y$,
we obtain the stated formula.

For $v\ne \infty$, we
have $\psiabs_{v}=\Psi$. The last statement follows  from Example \ref{exm:7}.
\end{proof}

Proposition \ref{prop:46} and Theorem \ref{thm:22} imply
\begin{equation}\label{eq:toricbundle}
  \begin{aligned}
    \deg_{\cO_{\P(E)}(1)}(\P(E)) &= (n+r)! \Vol(\Delta) ,\\
    \h_{\overline{\cO_{\P(E)}(1)}}(\P(E)) &= (n+r+1)! \int_\Delta
    \psiabs^\vee_\infty \ \dd x \, \dd y,
  \end{aligned}
\end{equation}
where, for short, $\dd x$ and $\dd y$ stand for $\dd x_{1}\dots\dd
x_{n}$ and $\dd y_{1}\dots\dd y_{r}$, respectively.

We now compute these volume and integral giving the degree and the 
height of $\P(E)$. We show, in particular, that the
height is a rational number.
Recall that $\Delta^r$ and  $\Delta^n$ are the standard simplexes of
$\R^r$ and $\R^{n}$, respectively.

\begin{lem}\label{prop:20} 
With the above notation, we have
  \begin{align*}
&\deg_{\cO_{\P(E)}(1)}(\P(E)) =
\frac{(n+r)!}{n!}\int_{\Delta^r}L(y)^n\dd y,\\
&  \h_{\overline{\cO_{\P(E)}(1)}}(\P(E))=\frac{(n+r+1)!}{(n+1)!}
    \h_{\ov{\cO(1)}}(\P^n) \int_{\Delta^r} L(y)^{n+1} \dd y\\ 
&\hspace{195pt} + \frac{(n+r+1)!}{2\, n!} \int_{\Delta^r}L(y)^{n} \varepsilon_r(y)
    \dd y, \nonumber
  \end{align*}
  where $\h_{\ov{\cO(1)}}(\P^n)=\sum_{h=1}^n\sum_{j=1}^h\frac{1}{2j}$ is the
  height of the projective space relative to the Fubini-Study metric.
\end{lem}
\begin{proof}

The equation~(\ref{eq:toricbundle}) shows that the degree of $\P(E)$ is
equal to $(n+r)!\Vol(\Delta)$.
The same equation together with
  Proposition~\ref{prop:67}\eqref{item:87} gives that the height of 
$ \P(E)$ is equal to
  \begin{equation}\label{forhprop:20}
    \frac{(n+r+1)!}{2}\left(\int_{\Delta}\varepsilon_r(y)\dd x\dd y +
      \int_{\Delta}L(y)\cdot\varepsilon_n(L(y)^{-1}x)\dd x\dd y
    \right). 
  \end{equation}
 Let $I_1$ and $I_2$ be the two above integrals. Observe $\Delta =
  \bigcup_{y\in \Delta^r}(\{y\}\times (L(y)\cdot \Delta^n))$. Then
  \begin{align*}
\Vol(\Delta)&= \int_{\Delta^r}\left(\int_{L(y)\cdot\Delta^n}dx\right)\dd
  y=  \frac{1}{n!}\int_{\Delta^r}L(y)^n\dd y , \\
I_1 &= \int_{\Delta^r}\left(\int_{L(y)\cdot \Delta^n}\dd
  x\right)\varepsilon_r(y)\dd y =
\frac{1}{n!}\int_{\Delta^r}L(y)^n\varepsilon_r(y)\dd y,
  \end{align*}
since $\int_{L(y)\cdot \Delta^n}\dd x = L(y)^n/n!$. For the
second integral, we have that
\begin{align*}
  I_2 &= \int_{\Delta^r}L(y)\left(\int_{L(y)\cdot \Delta^n}\varepsilon_n(L(y)^{-1}x)\dd x\right)\dd y \\
  &= \left(\int_{\Delta^r}L(y)^{n+1}\dd
    y\right)\left(\int_{\Delta^n}\varepsilon_n(x)\dd x\right) =
  \frac{2\h_{\ov{\cO(1)}}(\P^n)}{(n+1)!}\int_{\Delta^r}L(y)^{n+1}\dd
  y ,
\end{align*}
since $\int_{L(y)\cdot \Delta^n}\varepsilon_n(L(y)^{-1}x)\dd x =
L(y)^n\int_{\Delta^n}\varepsilon_n(x)\dd x$ and, by Example \ref{exm:32},
$$\int_{\Delta^n}\varepsilon_n(x)\dd x =
\frac{1}{(n+1)!}\sum_{h=1}^n\sum_{j=1}^h\frac{1}{j} =
\frac{2\h_{\ov{\cO(1)}}(\P^n)}{(n+1)!}. 
$$ 
 The expression for
$\Vol(\Delta)$ gives the formula for the degree. Lemma~\ref{prop:20}
then follows by carrying up the expressions of $I_1$ and $I_2$ into
(\ref{forhprop:20}).
\end{proof}

\begin{thm}\label{fibrestoriques}
  With the above notation, we have
\index{toric projective bundle!degree of}%
\index{toric projective bundle!height of}%
  \begin{align*}
    \deg_{\cO_{\P(E)}(1)}(\P(E)) &= \sum_{{i_0,\dots,i_r\in{\N}}\atop{i_0+\dots+i_r=n}} a_0^{i_0}\dots a_r^{i_r}\\
    \h_{{\overline{\cO_{\P(E)}(1)}}}(\P(E)) &= \left(\sum_{{i_0,\dots,i_r\in{\N}}\atop{i_0+\dots+i_r=n+1}} a_0^{i_0}\dots a_r^{i_r}\right) \h_{\ov{\cO_{\P^n}(1)}}(\P^n)\\
    &\kern3.5cm+ \sum_{{i_0,\dots,i_r\in{\N}}\atop{i_0+\dots+i_r=n}} a_0^{i_0}\dots a_r^{i_r}
    A_{n,r}(i_0,\dots,i_r) ,
  \end{align*}
  where 
  $A_{n,r}(i_0,\dots,i_r)=\sum_{m=0}^r(i_m+1)\sum_{j=i_m+2}^{n+r+1}\frac{1}{2j}$.
In particular, the height of $\P(E)$ is a positive rational number.
\end{thm}

\begin{proof}
 To prove this result it suffices to
  compute the two integrals appearing in Lemma~\ref{prop:20}. However
$$L(y)=a_0+\sum_{\ell=1}^r(a_\ell-a_0) y_\ell=a_0y_0+\dots+a_ry_r
,$$ with $y_0=1-y_1-\dots-y_r$, and therefore
$$
L(y)^n
=\sum_{\stackrel{\alpha\in\N^{r+1}}{\scriptscriptstyle|\alpha|=n}}
\binom{n}{\alpha_0,\dots,\alpha_r} \prod_{\ell=0}^r(a_\ell
y_\ell)^{\alpha_\ell}
$$
and similarly for $L(y)^{n+1}$. Now, Corollary~\ref{calculmonomeetmonomelog} gives
\begin{align*}
  \int_{\Delta^r}y_0^{\alpha_0}y_1^{\alpha_1}\dots y_{r}^{\alpha_{r}} \dd y &= \frac{\alpha_0!\dots\alpha_r!}{(|\alpha|+r)!},\\
  \int_{\Delta^r}y_0^{\alpha_0}y_1^{\alpha_1}\dots
  y_{r}^{\alpha_{r}}\log(y_j) \dd y &=
  -\frac{\alpha_0!\dots\alpha_r!}{(|\alpha|+r)!}\sum_{\ell=\alpha_j+1}^{|\alpha|+r}\frac{1}{\ell},
\end{align*}
which, combined with the above expression for $L(y)^n$ and
$L(y)^{n+1}$, gives
\begin{align*}
  \int_{\Delta^r} L(y)^{n} dy &= \sum_{\stackrel{\alpha\in\N^{r+1}}{\scriptscriptstyle|\alpha|=n}} \frac{n!}{(n+r)!} \prod_{\ell=0}^r a_\ell^{\alpha_\ell} = \frac{n!}{(n+r)!}\sum_{\stackrel{i_0,\dots,i_r\in\N}{\scriptscriptstyle i_0+\dots+i_r=n}} \prod_{\ell=0}^r a_\ell^{i_\ell}\\
  \int_{\Delta^r} L(y)^{n+1} dy &= \sum_{\stackrel{\alpha\in\N^{r+1}}{\scriptscriptstyle|\alpha|=n+1}} \frac{(n+1)!}{(n+1+r)!} \prod_{\ell=0}^r a_\ell^{\alpha_\ell} = \frac{(n+1)!}{(n+1+r)!}\sum_{\stackrel{i_0,\dots,i_r\in\N}{\scriptscriptstyle i_0+\dots+i_r=n+1}} \prod_{\ell=0}^r a_\ell^{i_\ell}
\end{align*}
and
\begin{align*}
  \int_{\Delta^r}L(y)^{n} \varepsilon_r(y) dy &=
  \sum_{m=0}^{r}\sum_{\stackrel{\alpha\in\N^{r+1}}{\scriptscriptstyle|\alpha|=n}}
  \frac{n!(\alpha_m+1)}{(n+1+r)!} \bigg( \prod_{\ell=0}^r
  a_\ell^{\alpha_\ell} \bigg) \sum_{\ell=\alpha_m+2}^{n+1+r}\frac{1}{\ell}\\
  &=
  \frac{n!}{(n+1+r)!}\sum_{\stackrel{i_0,\dots,i_r\in\N}{\scriptscriptstyle
      i_0+\dots+i_r=n}} \bigg( \prod_{\ell=0}^r a_\ell^{i_\ell}\bigg) \sum_{m=0}^r(i_m+1)\sum_{\ell=i_m+2}^{n+1+r}\frac{1}{\ell}\\
  &=
  \frac{2\, n!}{(n+1+r)!}\sum_{\stackrel{i_0,\dots,i_r\in\N}{\scriptscriptstyle
      i_0+\dots+i_r=n}} \bigg( \prod_{\ell=0}^r
  a_\ell^{i_\ell} \bigg)A_{n,r}(i_0,\dots,i_r) .
\end{align*}
The statement follows from these expressions together with Lemma~\ref{prop:20}.
\end{proof}

\begin{rem} \label{rem:24}
We check $A_{1,1}(0,1)=A_{1,1}(1,0)={3}/{4}$.
Let $b\ge 0$ and let 
 $\ov{\cO_{\F_{b}}(1)}$ the adelic line bundle
on $\F_{b}$  associated to $a_{0}=1$ and $a_{1}=b+1$.
Putting $n=r=1$, $a_0=1$ and $a_1=b+1$ in Theorem \ref{fibrestoriques}, 
we recover the expression
  for the height of Hirzebruch surfaces established in~\cite{Mou06}:
$\h_{\ov{\cO_{\F_{b}}(1)}}(\F_b)=\frac{1}{2}b^2+\frac{9}{4}b+3$.
\end{rem}


\backmatter

\bibliographystyle{smfalpha}
\newcommand{\etalchar}[1]{$^{#1}$}
\newcommand{\noopsort}[1]{} \newcommand{\printfirst}[2]{#1}
  \newcommand{\singleletter}[1]{#1} \newcommand{\switchargs}[2]{#2#1}
  \def\cprime{$'$}
\providecommand{\bysame}{\leavevmode\hbox to3em{\hrulefill}\thinspace}
\providecommand{\MR}{\relax\ifhmode\unskip\space\fi MR }
\providecommand{\MRhref}[2]{%
  \href{http://www.ams.org/mathscinet-getitem?mr=#1}{#2}
}
\providecommand{\href}[2]{#2}

\printnomenclature[43mm]
\printindex

\end{document}